\newtheorem{theorem}{Theorem}[section]
\newtheorem{proposition}[theorem]{Proposition}
\newtheorem{thm}[theorem]{Theorem}
\newtheorem{ex}[theorem]{Example}
\newtheorem{defin}[theorem]{Definition}
\newtheorem{corollary}[theorem]{Corollary}
\newtheorem{example}[theorem]{Example}
\newtheorem{lemma}[theorem]{Lemma}
\newtheorem{remark}[theorem]{Remark}
\newtheorem{conjecture}[theorem]{Conjecture}
\newcommand{\dom}{\mathbf{d}}
\newcommand{\ran}{\mathbf{r}}
\newcommand{\rank}{\hbox{rank}}
\newcommand{\supp}{\hbox{Supp}}
\newcommand{\im}{\hbox{im}}
\newcommand{\Obj}{\hbox{Obj}}
\newcommand{\Arr}{\hbox{Arr}}
\newcommand{\Idem}{\hbox{Idem}}
\newcommand{\Aut}{\hbox{Aut}}
\newcommand{\Vect}{\hbox{Vec}}
\newcommand{\bbox}{\hfill\rule[.25mm]{1.9mm}{1.9mm}}
\newcommand{\dm}{\mathbf{d}}
\newcommand{\rn}{\mathbf{r}}
\DeclareMathOperator{\C}{\bf{C}}
\DeclareMathOperator{\Set}{\bf{Set}}
\DeclareMathOperator{\Mod}{\bf{Mod}}
\DeclareMathOperator{\Premod}{\bf{Premod}}
\DeclareMathOperator{\IdemProj}{\bf{IdemProj}}
\DeclareMathOperator{\Etale}{\bf{Etale}}
\DeclareMathOperator{\Proj}{\bf{Proj}}
\DeclareMathOperator{\Proje}{\hbox{Proj}}
\title{Semigroup and Category-Theoretic Approaches to Partial Symmetry}
\author{Alistair R. Wallis}
\date{July 2013}
\begin{document}
\pagenumbering{roman}
\maketitle

\begin{abstract}
This thesis is about trying to understand various aspects of partial symmetry using ideas from semigroup and category theory. In Chapter 2 it is shown that the left Rees monoids underlying self-similar group actions are precisely monoid HNN-extensions. In particular it is shown that every group HNN-extension arises from a self-similar group action. Examples of these monoids are constructed from fractals. These ideas are generalised in Chapter 3 to a correspondence between left Rees categories, self-similar groupoid actions and category HNN-extensions of groupoids, leading to a deeper relationship with Bass-Serre theory. In Chapter 4 of this thesis a functor $K$ between the category of orthogonally complete inverse semigroups and the category of abelian groups is constructed in two ways, one in terms of idempotent matrices and the other in terms of modules over inverse semigroups, and these are shown to be equivalent. It is found that the $K$-group of a Cuntz-Krieger semigroup of a directed graph $G$ is isomorphic to the operator $K^{0}$-group of the Cuntz-Krieger algebra of $G$ and the $K$-group of a Boolean algebra is isomorphic to the topological $K^{0}$-group of the corresponding Boolean space under Stone duality.  
\end{abstract}

\renewcommand{\abstractname}{Acknowledgements}
\begin{abstract}
I would like to thank all who have encouraged and aided me in my PhD studies. Above all I would like to express sincere gratitude to my supervisor Mark V. Lawson for all his support and guidance. I am also grateful for the additional support of my second supervisor Jim Howie and others in the mathematics department at Heriot-Watt University with whom I have had discussions on mathematics, and to those with whom I have shared an office for their succor and stimulation. Finally, I would like to express my thanks to my family and friends who have kept me going through all my research, in particular Alex Bailey, James Hyde and Nick Loughlin for all the exciting ideas we have had about mathematics and the world of business. The human element is always important in keeping one on one's toes.
\end{abstract}

\newpage 

%\setcounter{page}{1}
%\listoffigures{}
\tableofcontents{}
\pagenumbering{arabic}
\setcounter{page}{0}

\chapter{Introduction}

\section{Overview}

A semigroup is a set with an associative binary operation and a monoid is a semigroup with identity. Introductions to semigroup theory include \cite{CliffordPreston1},\cite{CliffordPreston2}, \cite{HowieBook}. Some of the earliest work on semigroups was done by Suschkewitsch and Rees, and in fact one of the fundamental objects of study in Chapter 2 of this thesis are left Rees monoids, named in honour of David Rees who studied a particular class of such semigroups in his paper \cite{Rees}. A monoid $M$ is \emph{right rigid} if $aM \cap bM \neq \emptyset$ implies that $aM \subseteq bM$ or $bM \subseteq aM$;
this terminology is derived from Cohn \cite{Cohn71}. A left Rees monoid is a right rigid left cancellative monoid which satisfies an ascending chain condition on the chains of principal right ideals. Right Rees monoids are defined to be the right-hand dual, i.e. left rigid right cancellative monoids with an ascending chain condition on chains of principal left ideals. A Rees monoid is one which is both left and right Rees. It is proved in \cite{LawLRM1} that every right cancellative left Rees monoid is in fact a Rees monoid. Semigroups and monoids can often be thought of in a similar manner to rings, the idea being that the multiplicative structure of a ring has the structure of a semigroup and if the ring has an identity then this semigroup is a monoid. This thought will be pursued further later, suffice to say for the moment that one of the most important initial considerations of a semigroup is its ideal structure, and this gives an indication as to why Rees was already thinking about similar structures to those which we will be studying so early on in the history of semigroup theory. Self-similar group actions were introduced by Nivat and Perrot (\cite{NivatPerrot},\cite{Perrot1},\cite{Perrot2}) in the study of certain $0$-bisimple inverse monoids, though this is not the terminology they used. We will see how their ideas came about in Section 3.9. The concept of self-similar group actions re-emerged with the work of Grigorchuk, Bartholdi, Nekrashevych and others in the study of groups generated by automata. We will study such automata with examples in Section 2.7. Cain (\cite{cain}) has generalised these ideas to the notion of a self-similar semigroup. Lawson in \cite{LawLRM1} showed that there is in fact a one-one correspondence between left Rees monoids and self-similar group actions. The idea here is that given a self-similar group action of a group $G$ on a free monoid $X^{\ast}$, when one takes their Zappa-Sz\'{e}p product (essentially a two-sided semidirect product) the resulting structure is a left Rees monoid, and all left Rees monoids can be constructed in this manner. We will summarise the details of this in Section 2.2. In Section 2.4 we will consider when one can extend a self-similar group action of a group $G$ on a free monoid $X^{\ast}$ to self-similar action of the group $G$ on the free group $FG(X)$; this turns out to be precisely when the left Rees monoid is \emph{symmetric}.
We will briefly consider the representation theory of left Rees monoids in Section 2.8.

One way of viewing self-similar group actions is in terms of homomorphisms into the automorphism groups of regular rooted trees, giving rise to the so-called wreath recursion, details of which are summarised in Section 2.2. The salient point to note is that we have a group acting on a tree. Groups acting on trees give rise to graphs of groups (\cite{LyndonSchupp},\cite{Serre}). A number of interesting groups arise as the fundamental groups of graphs of groups. Perhaps the simplest situation is where one has a single vertex with associated group $G$, and $|I|$ loops from the vertex to itself, each labelled by an injective homomorphism $\alpha_{i}$ from a subgroup $H_{i}$ of $G$ into $G$. The fundamental group of such a graph of groups is called an \emph{HNN-extension}. The resulting group $\Gamma$ has group presentation
$$\Gamma = \langle G, t_{i}:i\in I| \mathcal{R}(G), ht_{i} = t_{i}\alpha_{i}(h) \quad h\in H_{i}, i\in I\rangle,$$
where $\mathcal{R}(G)$ denotes the relations of $G$. Note that several authors have the condition reversed, i.e. they adopt the convention $t_{i}h = \alpha_{i}(h)t_{i}$. One of the main results of this thesis is that if one takes such a presentation, and let us not assume that the maps are injective, then if we instead take a monoid presentation the resulting monoid is a left Rees monoid, and every left Rees monoid is such a monoid HNN-extension (see Section 2.3). It then follows as a corollary that if we in fact have a Rees monoid then its group of fractions is a group HNN-extension, and every group HNN-extension arises in this way. In addition one finds that if the Rees monoid is symmetric then this group HNN-extension is a Zappa-Sz\'{e}p product of a free group and a group. Part of the inspiration for this result is a theorem by Cohn on the embeddability of cancellative right rigid monoids into groups, whose proof in \cite{Cohn71} has the flavour of taking the fundamental group of a graph of groups. The author suspects that in fact this proof doesn't quite work, and this is further evidenced by the fact that Cohn utilises a different proof in the second edition of the same book (\cite{Cohn85}) (see more details in Section 2.3). These results were then to some extent generalised to the situation of categories embedding in groupoids by von Karger (\cite{VKarger}). The notion of HNN-extension has previously been generalised to the situation of semigroups in \cite{GilbertHNN}, \cite{Howie63} and \cite{Yamamura}. Gilbert and Yamamura consider the case where the semigroup is inverse and Howie considers the situation where $tt^{\prime}$ and $t^{\prime}t$ are idempotents for some $t^{\prime}$.

The term \emph{fractal} was coined by Mandelbrot in the 1970's to describe a number of geometric structures which were very jagged in structure (here \emph{fractal} is derived from the Latin word \emph{fractum} meaning \emph{broken}). One of the key properties many fractals share is that of \emph{self-similarity}. The idea here is that if we zoom in on a fractal we find a structure similar to the one with which we started. Self-similarity can be seen as one form of partial symmetry, a notion to be considered again later. Fractal-like structures appear in a variety of contexts in the natural world, for example in modelling coastlines and certain ferns (\cite{PrusinkiewiczLindenmayer}). They have also been used in the modelling of electrical resistance networks (\cite{Bajorin}, \cite{Boyle}). Another application is in optimising reception while at the same time minimising surface area in mobile telephone aerials (\cite{Puenta}).
It has been realised for some time that there exist connections between self-similar group actions and fractals and other interesting geometric structures (see for example \cite{BartGrigNekra}, \cite{BartholdiHenriquesNekrashevych}, \cite{GrigNekSus}, \cite{KelleherSteinhurstWong}, \cite{NekrashevychHyperbolic}, \cite{NekrashevychBook}). In many of the examples considered the fractal is obtained as a kind of limit space of a self-similar group action; that is, the regular rooted tree modulo the action in a specified way gives rise to a geometric structure with fractal-like properties. In this thesis it will in fact be shown that the monoid of similarity transformations of the attractor of an iterated function system is often a Rees monoid. This will be proved in Section 2.5 and a number of examples will be considered. This fact is actually used implicitly in the calculations of \cite{Bajorin}. The idea is that given an iterated function system $f_{1},\ldots,f_{n}:F\rightarrow F$, one finds in a number of examples that the semigroup generated by these maps is free and thus letting $X = \left\{f_{1},\ldots,f_{n}\right\}$ we can sometimes get a self-similar action of the group of isometries $G$ of $F$ on $X^{\ast}$. Moreover, the group $G$ is in several cases finite and so one has associated an automaton which gives rise to this self-similar action. One can then view the automaton as describing a computer programme with two recursively defined functions, one for the action and the other for the restriction, together with a number of base cases. A Scala programme is given in Appendix A which models this situation.
The algebraic properties of transformations on fractals have previously been studied in \cite{FalconerOConnor}, \cite{Strichartz} and \cite{Strichartz2}. Bandt and Retta (\cite{BandtRetta}) have discovered a number of fractal-like structures whose properties depend only up to homeomorphism, and such that every into-homeomorphism is in fact a similarity transformation. We describe some corollaries of their work in Section 2.6.

Another mathematical structure which will be important in this thesis is a \emph{category}. Categories were introduced by Samuel Eilenberg and Saunders MacLane in the 1940's in the study of the many functors arising in algebraic topology. Since then categories have found their way into many other areas of mathematics. For example, Lawvere and Rosebrugh have shown that much of axiomatic set theory can be reformulated in the language of category theory \cite{LawvereRosebrugh}. All undefined terms from category theory can be found in \cite{Awodey},\cite{Borceux},\cite{MacLane},\cite{Mitchell}. We will say more about categories in the following section.

\emph{Left Krieger semigroups} were introduced by Lawson in \cite{LawsonSubshifts} as a generalisation of left Rees monoids, these being precisely left Krieger semigroups which do not have a zero element. It was found that there existed a similar description of arbitary left Krieger semigroups in terms of Zappa-Sz\'{e}p products. The underlying category of a left Krieger semigroup categorical at zero was termed a \emph{left Rees category}. These left Rees categories were then further investigated in \cite{JonesLawsonGraph} in the study of graph inverse semigroups. By adapting slightly the notions and results of \cite{LawsonSubshifts} one is led to a correspondence between left Rees categories and self-similar groupoids (here the free monoid on a set $X$ will be replaced by the free category on a graph $\mathcal{G}$). In Chapter 3 we will show that many of the concepts and properties of left Rees monoids and self-similar group actions can be generalised to the context of left Rees categories and self-similar groupoids. In Section 3.2 we will briefly describe how one arrives at the correspondence between left Rees categories and self-similar groupoid actions from the work of \cite{LawsonSubshifts} and \cite{JonesLawsonGraph}. In Sections 3.3  we will see that our results about monoid HNN-extensions can be rephrased for the categorical context. This will then lead to further connections with Bass-Serre theory. We will then see in Sections 3.4 and 3.5 that, under suitable assumptions on the graphs and categories in question, we can replace the concepts of automorphism group of the regular rooted tree and the wreath recursion by suitable categorical notions. We also, in Section 3.6, define automaton groupoids analogously to the group situation. A different form of automaton groupoids had previously been considered in \cite{ChoJorgensen}. An indication will be given in Section 3.7 of how one might want to generalise the results about iterated function systems to graph iterated function systems. In Section 3.8 we will consider the representation theory of left Rees categories. One of the curious aspects about left Rees categories (unlike the situation for left Rees monoids) is that one can get finite examples which are not just groupoids. This will then lead to a connection with the representation theory of finite-dimensional algebras.
Finally, in Section 3.9 we will see how one can naturally associate an inverse semigroup to a left Rees category, and this section will act as a bridge between the work of Chapters 2 and 3 and that of Chapter 4. Examples that can be constructed in this manner include the polycyclic monoids and graph inverse semigroups.

In recent decades it has been realised that there exist deep connections between three mathematical structures: inverse semigroups, topological groupoids and $C^{\ast}$-algebras (for example, see \cite{Exel}, \cite{KhoshkamSkandalis}, \cite{LawsonShortNCSD}, \cite{LawsonLongNCSD}, \cite{LawsonLenz}, \cite{LawsonMargolisSteinberg}, \cite{LawsonSteinberg}, \cite{Lenz}, \cite{MatsnevResende}, \cite{MilanSteinberg}, \cite{Paterson}, \cite{Renault}, \cite{Resende}, \cite{Tu}). Good introductions to inverse semigroups, topology and $C^{\ast}$-algebras are \cite{LawsonBook}, \cite{Simmons} and \cite{Landsman}. The connection between $C^{\ast}$-algebras and topological groupoids can be seen as a generalisation of the Gelfand representation theorem viewing commutative $C^{\ast}$-algebras as rings of functions over topological spaces. Chapter 4 of this thesis can be considered as fitting within this framework. Jones and Lawson have shown that the representation theory of the Cuntz $C^{\ast}$-algebras as considered in \cite{BratteliJorgensen} can be thought in terms of the representation theory of the polycyclic monoid (\cite{JonesLawsonReps}). It was mentioned earlier that semigroups can often be thought of in similar ways to rings. An example of this is in the study of Morita equivalence in the context of semigroups (\cite{Knauer}, \cite{LawsonLocal}, \cite{Talwar}). More recently it has been realised that one can describe Morita equivalence for inverse semigroups in a manner analogous to $C^{\ast}$-algebras (\cite{AfaraLawson}, \cite{FunkLawStein}, \cite{LawsonHeaps}, \cite{Steinberg}). 
%Leinster (\cite{Leinster}) has shown how ultrafilters can be viewed as probability measures and using this one can then integrate with respect to %ultrafilters. 
One example of the correspondence between these three mathematical structures is given by the polycyclic monoids $P_{n}$, the Cuntz groupoids $G_{n}$ and Cuntz algebras $\mathcal{O}_{n}$. The Cuntz algebra was introduced by Cuntz in \cite{Cuntz} and can be constructed from the Cuntz groupoid. The Cuntz monoid considered by Lawson in \cite{LawPolyThomp} is the distributive completion of the polycyclic monoid. One can construct the Cuntz groupoid from the Cuntz monoid via the theory of \cite{LawsonLongNCSD} and \cite{LawsonLenz}. This construction is an example of a non-commutative Stone duality. Graph inverse semigroups are a generalisation of the polycyclic monoids. The $C^{\ast}$-algebra associated to a graph inverse semigroup is then the Cuntz-Krieger algebra, and again one can use the Cuntz-Krieger semigroups, the distributive completions of graph inverse semigroups, to construct the Cuntz-Krieger algebras. Leavitt path algebras (\cite{Tomforde}) are the algebras generated in the same way as the Cuntz-Krieger algebras without requiring one ends up with a $C^{\ast}$-algebra. It has been shown in \cite{AraMorenoPardo} that these are Morita equivalent to the Cuntz-Krieger algebras. Lawson \cite{LawsonAF} has introduced AF-monoids as the inverse semigroup counterpart to AF-algebras \cite{VershikKerov}.

The aim of Chapter 4 of this thesis is to define a functor $K$ from the category of orthogonally complete inverse semigroups to the category of abelian groups, in analogy with the $K_{0}$-group of algebraic $K$-theory. Other homological approaches to inverse semigroups include those in \cite{GilbertDerivations}, \cite{Lausch}, \cite{Loganathan}. The $K$-theory of $C^{\ast}$-algebras associated to inverse semigroups has previously been investigated in \cite{EphremSpielberg}, \cite{Mcclanahan} and \cite{Norling}. Standard references on $K$-theory include \cite{Atiyah}, \cite{BlackadarKT} and \cite{Rosenberg}. We will give a brief overview of some aspects of topological and algebraic $K$-theory in Section 1.4. $K$-theory was originally introduced by Grothendieck in the study of coherent sheaves over algebraic varieties. Atiyah and Hirzebruch then introduced topological $K^{0}$-groups by observing that vector bundles over manifolds are in some sense akin to coherent sheaves over algebraic varieties. The Serre-Swan theorem then says that these vector bundles are in one-one correspondence with the finitely generated projective modules of a $C^{\ast}$-algebra of continuous functions. This then gave rise to algebraic and operator $K$-theories. It is also possible to define higher $K$-groups. $K$-theory is used in the classification of topological spaces, rings and operator algebras. The author believes that the classification of semigroup $C^{\ast}$-algebras by their $K_{0}$-groups is really a $K$-theoretic classification of inverse semigroups. We will see that if $S$ is a (unital) Boolean algebra then $K(S)$ will be isomorphic to (topological) $K^{0}(B(S))$, where $B(S)$ denotes the corresponding Boolean space and if $CK_{\mathcal{G}}$ is the Cuntz-Krieger semigroup of a graph $\mathcal{G}$ then $K(CK_{\mathcal{G}})$ will be isomophic to (operator) $K^{0}(\mathcal{O}_{\mathcal{G}})$. Topological $K$-theory is used in the study of characteristic classes in differential topology and operator $K$-theory is employed in Connes' programme of non-commutative differential geometry (\cite{Connes}). Operator $K$-theory is also used in the gap-labelling theory of tilings (\cite{BaakeMoody}, \cite{Bellissard}, \cite{ExelGoncalvesStarling}, \cite{KellendonkGap}, \cite{KellendonkSubstitution}, \cite{KellendonkNonComm}, \cite{KellendonkInteger}, \cite{KellendonkLawson1}, \cite{KellendonkLawson2}, \cite{KellendonkPutnam}). It is believed that it might be possible to describe this gap-labelling theory in terms of the tiling semigroups by using inverse semigroup $K$-theory.

An \emph{inverse semigroup} $S$ is a semigroup such that for each element $s\in S$ there exists a unique element $s^{-1}\in S$ with $ss^{-1}s = s$ and $s^{-1}ss^{-1} = s^{-1}$. Inverse semigroups were introduced independently by Viktor Wagner and Gordon Preston in the 1950's. In the same way that we can think of groups as describing \emph{symmetry}, we can view inverse semigroups as describing \emph{partial symmetry}. The idea here is that each element of the semigroup can be thought of as describing a bijective map from part of a structure to another part of the structure. For example, if this structure is a set, then our inverse semigroup is simply a subsemigroup of the symmetric inverse monoid on that set. In fact, the \emph{Wagner-Preston representation theorem} says that every inverse semigroup embeds in a symmetric inverse monoid. This can be thought of as being analogous to Cayley's theorem for groups. Another example of our semigroup describing partial symmetry is when our structure is a topological space and our inverse semigroup is a pseudogroup of transformations of this space. Associated with a pseudogroup of transformations one has the groupoid of germs of the action. This is an example of how one can naturally associate topological groupoids to inverse semigroups, and Paterson's universal groupoid is a generalisation of this idea. Some of the connections between inverse semigroups and topological groupoids can be thought of as non-commutative versions of the natural dualities between certain classes of lattice-like algebraic structures and discrete topological spaces. These dualities are collectively known as Stone dualities, named in honour of Marshall Stone who introduced the original example relating Boolean algebras and Boolean spaces (\cite{Johnstone}). The important point here is that an inverse semigroup comes equipped with a natural partial order. We say $s\leq t$ if $s = ts^{-1}s$. In the case where these are maps on a set, this should be interpreted as saying that the domain of $s$ is a subset of the domain of $t$, and that $t$ restricted to this subset is equal to $s$. The set of idempotents $E(S)$ of a semigroup $S$ forms a meet semilattice, where the natural partial order on $E(S)$ is given by $e\leq f$ if and only if $e = ef$, so that in general the product $ef$ of elements $e,f\in E(S)$ should be thought of as their order-theoretic meet (greatest lower bound). Given an element $s\in S$ there are two idempotents which we associate with $s$: the \emph{range} of $s$ given by $\ran(s) = ss^{-1}$ and the \emph{domain} of $s$ given by $\dom(s) = s^{-1}s$. We write $e \stackrel{s}{\rightarrow} f$ to mean $e = \dm(s)$ and $f = \rn(s)$. In terms of the symmetric monoid this is saying that the set-theoretic domain of $s$ has identity map on this set given by $s^{-1}s$ and likewise for the range of $s$. All the inverse semigroups we will be considering will have a zero: an element $0$ with $0 = 0s = s0$ for all $s\in S$. This will be the least element in the natural partial order. We will say that two elements $s,t\in S$ are \emph{orthogonal}, and write $s\perp t$, if $st^{-1} = s^{-1}t = 0$. Again, thinking in terms of the symmetric inverse monoid, this means that the domains and ranges of $s$ and $t$ do not intersect. An equivalent condition for elements $s,t\in S$ to be orthogonal is that $\dom(s)\wedge\dom(t) = 0$ and $\ran(s)\wedge \ran(t) = 0$. We will denote, if it exists, the least upper bound (join) of two elements $s,t\in S$ by $s\vee t$. We will call an inverse semigroup with $0$ \emph{orthogonally complete} if every pair of orthogonal elements has a join and multiplication distributes over finite orthogonal joins. It was shown in \cite{LawOrthCompPoly} that every inverse semigroup $S$ with $0$ has an \emph{orthogonal completion}; that is, we take the semigroup $S$ and force every pair of orthogonal elements to have a join in such a way that we end up with an orthogonally complete inverse semigroup. 

\section{Categories and groupoids}

A few remarks might now be helpful to clarify the notation and terminology which will be used with respect to categorical constructions found in this thesis. We will treat small and large categories in different ways. All categories in Chapter 3 will be assumed to be small and all categories in Chapter 4 will be assumed to be large. A \emph{small category} is one such that the classes of objects and arrows of the category are actually sets. For us, a \emph{large category} will simply be any category which is not assumed to be small. That is, a large category may in fact be small. The point of this distinction is that the objects of a large category will be important whereas we will merely be interested in the arrows of a small category. Large categories will be denoted by bold font, as in $\C$. The class of objects of the category $\bf{C}$ will be denoted by $\Obj(\C)$ and the class of arrows will be denoted by $\Arr(\C)$. The class of arrows from an object $A\in \Obj(\bf{C})$ to an object $B\in \Obj(\bf{C})$ will be denoted $\C(A,B)$. Our categories will mainly be \emph{locally small}; that is, the classes $\C(A,B)$ are all sets, in which case we call $\C(A,B)$ the \emph{hom-set} between $A$ and $B$. 

We will treat small categories as algebraic structures, i.e. as sets with partially defined binary operations. 
The \emph{elements} of these small categories are the arrows, and we will replace objects by identity arrows. 
Each arrow $x$ has a {\em domain}, denoted by $\dom(x)$, and a {\em codomain} denoted by $\ran(x)$,
both of these are identities and $x = x\dom(x) = \ran(x)x$. We will write this as $\dom(x) \stackrel{x}{\rightarrow} \ran(x)$.
The set of all identity arrows of a small category $C$ will be denoted by $C_{0}$ and the set of all non-identity arrows by $C_{1}$, so that $C$ is the disjoint union of $C_{0}$ and $C_{1}$. 
Given an identity $e$ the set $eCe$ of all arrows that begin and end at $e$ forms a monoid called the {\em local monoid at} $e$.
An arrow $x$ is {\em invertible} if there is an arrow $x^{-1}$ such that $x^{-1}x = \dom(x)$ and $xx^{-1} = \ran(x)$. We call the element $x^{-1}$ the \emph{inverse} of $x$; this element is necessarily unique.
We shall say that a pair of identities $e$ and $f$ in a category $C$ are \emph{strongly connected} if and only if $eCf \neq \emptyset$ and $fCe \neq \emptyset$. 
A small category in which every arrow is invertible is called a {\em groupoid}.
We denote the subset of invertible elements of $C$ by $G(C)$.
This forms a groupoid.
If $G(C) = C_{0}$ then we shall say that the groupoid of invertible elements is {\em trivial}.
We say that a category $C$ has {\em trivial subgroups} if the only invertible elements in the local monoids are the identities.
A category $C$ will be said to be \emph{totally disconnected} if $\ran(x) = \dom(x)$ for all $x\in C$. This means that the category $C$ is just a disjoint union of monoids.
Two categories $C$ and $D$ are \emph{isomorphic} if there is a bijective functor $f:C\rightarrow D$ (so that $f|_{C_{0}}$ and $f|_{C_{1}}$ are both bijections).

A \emph{directed graph} $\mathcal{G}$ is a collection of \emph{vertices} $\mathcal{G}_0$ and a collection of \emph{edges} $\mathcal{G}_1$ 
together with two functions $\dom , \ran : \mathcal{G}_1 \rightarrow \mathcal{G}_0$ called the \emph{domain} and the \emph{range}, respectively.
All graphs in this thesis will be assumed to be directed.
Two edges $x$ and $y$ are said to be \emph{composable} if $\ran (y) =\dom (x)$.
A \emph{route} in $\mathcal{G}$ is any sequence of edges $x_1 \ldots x_n$ such that $x_i$ and $x_{i+1}$ are composable for all $i=1, \ldots , n$.
The \emph{free category} $\mathcal{G}^{\ast}$ generated by the directed graph $\mathcal{G}$ is the category with $\mathcal{G}^{\ast}_0=\{1_v : v \in \mathcal{G}_0 \}$,
where we have again identified identity arrows with objects of the category and the non-identity arrows, $\mathcal{G}_{1}^{\ast}$, is the set of all non-empty routes in $\mathcal{G}$ and composition of composable routes is by concatenation. We will view $\mathcal{G}_{1}$ as being a subset of $\mathcal{G}_{1}^{\ast}$ and we will identify $\mathcal{G}_{0}$ and $\mathcal{G}^{\ast}_0$.
Given an edge $x$ in a graph $\mathcal{G}$ we can consider the formal reversed edge $x^{-1}$ which has $\dom(x^{-1}) = \ran(x)$ and $\ran(x^{-1}) = \dom(x)$.
A \emph{path} in $\mathcal{G}$ consists of a sequence $x_{1}^{\epsilon_{1}} \ldots x_{n}^{\epsilon_{n}}$ where each $x_{i}$ is an edge, $\epsilon_{i}$ is either 1 or $-1$ and for each $i$ we have $\ran(x_{i+1}^{\epsilon_{i+1}}) = \dom(x_{i}^{\epsilon_{i}})$.
We will say a path is \emph{reduced} if it has no subpath of the form $xx^{-1}$ or $x^{-1}x$. Two paths will be considered equivalent if they can be reduced to the same path.
The \emph{free groupoid} $\mathcal{G}^{\dagger}$ generated by the directed graph $\mathcal{G}$ will have $\mathcal{G}^{\dagger}_0=\{1_v : v \in \mathcal{G}_0 \}$ and the non-identity arrows are all reduced paths in $\mathcal{G}$. Multiplication in $\mathcal{G}^{\dagger}$ will consist of concatenation of composable reduced paths plus reduction if possible. 
Observe that if a graph $\mathcal{G}$ has a single vertex and has edge set $\mathcal{G}_{1} = X$ then the free category on $\mathcal{G}$ is isomorphic to the free monoid $X^{\ast}$ on the set $X$ and the free groupoid on $\mathcal{G}$ is isomorphic to the free group $FG(X)$ on the set $X$. 

A \emph{category presentation} for a small category $C$ is written as follows
$$C = \langle \mathcal{G} | x_{i} = y_{i}, x_{i},y_{i}\in \mathcal{G}^{\ast}, i\in I \rangle\ ,$$
where $\mathcal{G}$ is a directed graph, $I$ is an index set, elements of $C$ are equivalence classes of elements of $\mathcal{G}^{\ast}$, $\dom(x_{i}) = \dom(y_{i})$ and $\ran(x_{i}) = \ran(y_{i})$ for each $i\in I$, and the relation $x_{i} = y_{i}$ tells us that every time we have a route $wx_{i}v$ in $\mathcal{G}$ then this is equivalent to the route $wy_{i}v$ and vice versa. %We implicitly assume that in categories described in this way that identity elements behave as identity elements. 

In a similar manner a \emph{groupoid presentation} for a groupoid $G$ is written as follows 
$$G = \langle \mathcal{G} | x_{i} = y_{i}, x_{i},y_{i}\in \mathcal{G}^{\dagger}, i\in I \rangle\ $$
where $\mathcal{G}$ is a directed graph, $I$ is an index set, elements of $G$ are now equivalence classes of elements of $\mathcal{G}^{\dagger}$, $\dom(x_{i}) = \dom(y_{i})$ and $\ran(x_{i}) = \ran(y_{i})$ for each $i\in I$, and the relation $x_{i} = y_{i}$ tells us that every time we have an element $wx_{i}v$ in $\mathcal{G}^{\dagger}$ then this equivalent to $wy_{i}v$ and vice versa. %We implicitly assume now that in groupoids described in this way that identity elements behave as identity elements and groupoid inverses behave as groupoid inverses. 

It is possible by being careful to give presentations of categories and groupoids where $\dom(x_{i})\neq \dom(y_{i})$ (see for example \cite{HigginsCat} or \cite{Moore} for details) but we will not often be considering this situation. In order to avoid confusion whenever both category and groupoid presentations are being used we may denote category presentations by $\langle | \rangle_{C}$ and groupoid presentations by $\langle | \rangle_{G}$. 

Given a small category $C$ there is a (unique up to isomorphism) groupoid $U(C)$ and functor $u:C\rightarrow U(C)$ such that if $f:C\rightarrow G$ is any functor from $C$ to a groupoid $G$ then there is a unique functor $g:U(C)\rightarrow G$ such that $gu = f$. We call the groupoid $U(C)$ the \emph{groupoid of fractions} of $C$ (\cite{GabrielZisman}). Some authors use the terminology \emph{universal groupoid} (and hence our usage of the notation $U(C)$), but this phrase is used to describe a slightly different construction in \cite{HigginsCat} and \cite{Moore}, and there is in addition Paterson's universal groupoid of an inverse semigroup, so to avoid confusion we will always call it the groupoid of fractions. Other authors use the term groupoid of fractions as a synonym for what \cite{GabrielZisman} calls a category of left fractions which is the situation where every element of the groupoid of fractions $U(C)$ has the form $x^{-1}y$ for some $x,y\in C$. In most of our examples what we are calling the groupoid of fractions is not a category of left fractions.

The following is a rephrased version of how to construct the groupoid of fractions found in \cite{GabrielZisman} in terms of our language of category presentations.

\begin{proposition}
\label{univgroupoid}
Let 
$$C = \langle \mathcal{G} | R \rangle_{C}$$
be a category given by category presentation and let
$$G = \langle \mathcal{G} | R \rangle_{G}$$
be the groupoid generated by the same generating graph and relations but such that we are working with a groupoid presentation.
Then $G$ is isomorphic to the groupoid of fractions $U(C)$ of $C$.
\end{proposition}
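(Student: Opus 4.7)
The plan is to verify that the groupoid $G = \langle \mathcal{G} \mid R \rangle_G$ satisfies the universal property characterising the groupoid of fractions $U(C)$, and then invoke uniqueness up to isomorphism.

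First I would construct the canonical functor $u : C \to G$. Since $\mathcal{G}^{\ast}$ sits naturally inside $\mathcal{G}^{\dagger}$ (every route is in particular a reduced path using only $+1$ exponents), there is an obvious map $\mathcal{G}^{\ast} \to \mathcal{G}^{\dagger}$, hence a map on equivalence classes $C \to G$. This is well-defined on $C$ because the defining relations of $C$ are literally among the defining relations of $G$, so anything identified in $C$ is already identified in $G$; it is clearly functorial (identities and composition are preserved by inclusion).

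Next, given any functor $f : C \to H$ into a groupoid $H$, I would construct the unique factorisation $g : G \to H$ with $gu = f$ as follows. On objects, set $g(1_v) = f(1_v)$. On the generating edges of $\mathcal{G}$, set $g(x) = f([x]_C)$, and on the formal reversed edges set $g(x^{-1}) = f([x]_C)^{-1}$; the latter makes sense because $H$ is a groupoid. Extend $g$ multiplicatively to all of $\mathcal{G}^{\dagger}$. This extension descends to $G$ because (i) the free-groupoid reductions $xx^{-1} \mapsto 1$ and $x^{-1}x \mapsto 1$ are automatically respected, since $f([x]_C)f([x]_C)^{-1}$ and its twin collapse to identities in $H$, and (ii) the relations in $R$, interpreted in $\mathcal{G}^{\dagger}$, are respected because the same relations interpreted in $\mathcal{G}^{\ast}$ already hold in $C$ and $f$ is a functor. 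By construction $gu = f$.

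For uniqueness, any groupoid functor $g' : G \to H$ satisfying $g'u = f$ must agree with $f$ on the images of generators of $\mathcal{G}$, and since $g'$ is a groupoid homomorphism it must send $x^{-1}$ to $g'(x)^{-1} = f([x]_C)^{-1}$; since $G$ is generated as a groupoid by the images of $\mathcal{G}$ and their inverses, $g'$ is forced to equal $g$ everywhere. This establishes that $(G,u)$ satisfies the defining universal property of $U(C)$, and so $G \cong U(C)$ by the standard universal-property argument.

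The main obstacle is essentially bookkeeping: one must check that the extension of $g$ from $\mathcal{G} \cup \mathcal{G}^{-1}$ to $\mathcal{G}^{\dagger}$ is well-defined modulo both free-groupoid reduction and the imposed relations $R$. Both checks are however immediate from the groupoid structure of $H$ and the functoriality of $f$, so no substantive technical difficulty arises beyond writing the verifications out carefully.
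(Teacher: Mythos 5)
Your proposal is correct and takes essentially the same route as the paper: both arguments verify that $G$, equipped with the evident functor $u:C\rightarrow G$, satisfies the universal property of the groupoid of fractions, and both establish uniqueness of the factorising functor by checking it is forced on the generators $x\in\mathcal{G}_{1}$ and their inverses, before invoking uniqueness of universal objects. The only cosmetic difference is that you construct the induced functor $g:G\rightarrow H$ directly by sending edges to $f([x]_{C})$ and formal inverses to $f([x]_{C})^{-1}$ and extending from the free groupoid, whereas the paper obtains it by rewriting $H$ via an enlarged category presentation; your version of this step is, if anything, the more standard one.
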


\begin{proof}
Let $\mathcal{H}$ be the same graph as $\mathcal{G}$ except with all edges reversed. We will identify the vertices of $\mathcal{G}$ and $\mathcal{H}$. The element $x\in G_{1}$ will have corresponding element $x^{-1}$ in $\mathcal{H}_{1}$ so that $\dom(x) = \ran(x^{-1})$ and $\dom(x^{-1}) = \ran(x)$. Let $\mathcal{M}$ be the union of the graphs $\mathcal{G}$ and $\mathcal{H}$. Observe that $G$ can be given in terms of a category presentation as:
$$G = \langle \mathcal{M} | R, S \rangle_{C},$$
where $S$ denotes the set of relations saying $xx^{-1}$ and $x^{-1}x$ are identities for each $x\in \mathcal{G}_{1}$.
We have a functor $u:C\rightarrow G$ given by $u(x) = x$.

Now let $f:C\rightarrow H$ be any functor from the category $C$ to a groupoid $H$. Then since $f$ is a functor we must be able to write $H$ in terms of the following category presentation:
$$H = \langle \mathcal{N} | R, S, T \rangle_{C},$$
where $\mathcal{M}$ is a subgraph of $\mathcal{N}$ and $T$ are any additional relations needed to define $H$ (for example identifying some of the edges of $\mathcal{M}$). We have assumed that $f$ will map $x\in \mathcal{G}_{1}$ to $x\in \mathcal{N}_{1}$.

We now define the functor $g:G\rightarrow H$ to be the one which maps elements of $\mathcal{M}_{1}$ to elements of $\mathcal{M}_{1}$ in $\mathcal{N}_{1}$. Observe that $gu = f$. To see that $g$ is unique, suppose that $h:G\rightarrow H$ is a functor such that $hu = f$. Then $h$ must agree with $g$ on elements of $\mathcal{G}_{1}$. Now let $x\in \mathcal{G}_{1}$ viewed as an element of $G$. Then
$$\dom(x) = h(x^{-1}x) = h(x^{-1})h(x) = h(x^{-1})g(x)$$
and so $h(x^{-1}) = (g(x))^{-1} = g(x^{-1})$. Thus $g = h$. 

Since universal groups are unique up to isomorphism, $G$ and $U(C)$ must be isomorphic as categories.
\begin{comment}
Let $f: M \rightarrow G$ be a homomorphism from $M$ to a group $G$. Since $f$ is a homomorphism, $G$ can be written in the form
$$G = \langle X, X^{\prime}, Y | R, S, T \rangle_{M}$$
where $X^{\prime}$ is a set of formal inverses of elements of $X$, the set $Y$ may contain some more elements of $G$, $R$ are the relations of $M$, $S$ is the set of relations saying $xx^{-1} = 1$ and $x^{-1}x = 1$ for all $x\in X$, where $x^{-1} \in X^{-1}$ is its formal inverse and $T$ are any additional relations (for example, $T$ will have relations of the form $x = y$, where $x,y\in X$ are such that $f(x) = f(y)$). We assume that $f$ maps elements of $X$ in $M$ to elements of $X$ in $G$. Now $U(M)$ can be given by monoid presentation as
$$U(M) = \langle X, X^{\prime} | R, S \rangle_{M}$$
where these sets are exactly as for $\Gamma$. Observe that there is a homomorphism $g:M\rightarrow U(M)$ mapping elements of $X$ in $M$ to elements of $X$ in $U(M)$ and a group homomorphism $h:G\rightarrow U(M)$ mapping mapping elements of $X$ in $U(M)$ to elements of $X$ in $G$ and elements of $X^{-1}$ in $U(M)$ to elements of $X^{-1}$ in $G$. Further $hg = f$. 

To see that $h$ is unique, suppose that $k:U(M)\rightarrow G$ is a homomorphism such that $kg = f$. Then $k$ must agree with $h$ on elements of $X$. Now let $x\in X$. Then
$$1 = k(x^{-1}x) = k(x^{-1})k(x) = k(x^{-1})h(x)$$
and so $k(x^{-1}) = (h(x))^{-1} = h(x^{-1})$. Thus $k = h$.
\end{comment}
\end{proof}

In particular, if $M$ is a monoid given by monoid presentation, then the group $G$ with the same presentation instead viewed as a group presentation will be the group of fractions of $M$.

Now suppose $G$ is a groupoid given by groupoid presentation $G = \langle \mathcal{G} | \mathcal{R}(G) \rangle\ $,
where here we are denoting the relations of $G$ by $\mathcal{R}(G)$ and suppose there is an index set $I$, subgroups $H_{i}:i\in I$ of $G$ and functors $\alpha_{i}:H_{i}\rightarrow G$. Let $e_{i},f_{i}\in \mathcal{G}_{0}$ be such that $H_{i}\subseteq e_{i}Ge_{i}$ and $K_{i} = \alpha_{i}(H_{i}) \subseteq f_{i}Gf_{i}$. Define $\mathcal{H}$ to be the graph with $\mathcal{H}_{0} = \mathcal{G}_{0}$ and 
$$\mathcal{H}_{1} = \mathcal{G}_{1} \cup \left\{t_{i}|i\in I\right\}$$
where $\ran(t_{i}) = e_{i}$ and $\dom(t_{i}) = f_{i}$. We will say that $\Gamma$ is a \emph{groupoid HNN-extension} of $G$ if $\Gamma$ is given by the groupoid presentation:
$$\Gamma = \langle \mathcal{H} | \mathcal{R}(G), xt_{i} = t_{i}\alpha_{i}(x) \forall x\in H_{i}, i\in I \rangle\ .$$
We call the arrows $t_{i}$ \emph{stable letters}.
Note that since $\alpha_{i}$ is injective it follows that $K_{i}$ is a subgroup of $G$ isomorphic to $H_{i}$. Groupoid HNN-extensions have previously been considered by Moore (\cite{Moore}) and Gilbert (\cite{GilbertHNN}). In the case of \cite{Moore}, $H$ is a wide subgroupoid of $G$ rather than being a subgroup and the situation where $H$ is an arbitrary subgroupoid of $G$ is considered in \cite{GilbertHNN}. In both cases, they define the HNN-extension as a pushout of a certain diagram of functors. It can be checked that their definition is equivalent to the one given here when $H$ is a subgroup of $G$. If $G$ is a group then $\Gamma$ is a group HNN-extension.

Let $G$ be a groupoid, $H$ a subgroup of $G$ with identity $e\in G_{0}$ and let 
$$K = \left\{g\in G| \dom(g) = e\right\}.$$
Then a \emph{transversal} $T$ of $H$ is a subset of $K$ such that
$$K = \coprod_{g\in T}{gH}.$$
Each set $gH$ will have cardinality equal to the cardinality of $H$. Furthermore, the cardinality of $T$ is independent of the choice of representatives so we define $|G:H| = |T|$. 

The following is a straightforward generalisation of Higgins' unique normal form theorem for fundamental groupoids (\cite{HigginsFund}), as stated without proof as Theorem 2.1.26 in \cite{Moore}.

\begin{proposition}
\label{normalformgroupoidHNN}
Let 
$$\Gamma = \langle \mathcal{H} | \mathcal{R}(G), xt_{i} = t_{i}\alpha_{i}(x) \forall x\in H_{i}, i\in I \rangle\ $$
be a groupoid HNN-extension of a groupoid $G$, for each subgroup $H_{i}$ let $T_{i}$ be a transversal of $H_{i}$ in $G$ and for each subgroup $K_{i} = \alpha(H_{i})$ let $T_{i}^{\prime}$ be a transversal of $K_{i}$ in $G$. Then each element $g$ of $\Gamma$ can be written uniquely in the form
$$g = g_{1}t_{i_{1}}^{\epsilon_{1}}g_{2}t_{i_{2}}^{\epsilon_{2}}\cdots g_{m}t_{i_{m}}^{\epsilon_{m}}u$$
where $\epsilon_{k}\in \left\{-1,1\right\}$, $g_{k}\in T_{i_{k}}$ if $\epsilon_{k} = 1$ and $g_{k}\in T_{i_{k}}^{\prime}$ if $\epsilon_{k} = -1$, $u\in G$ is arbitrary subject to the condition that the domains and ranges match up appropriately and if $t_{i_{k}} = t_{i_{k+1}}$ and $\epsilon_{k} + \epsilon_{k+1} = 0$ then $g_{k+1}$ is not an identity.
\end{proposition}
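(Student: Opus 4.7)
The plan is to mirror the classical Britton normal form proof for group HNN-extensions, paying attention to the partial composition of the groupoid. I would split the argument into two halves: existence via a rewriting algorithm, and uniqueness via the van der Waerden / Britton action trick.

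For existence, start with any word representing $g$ in the generators of $\Gamma$ and induct on the total number of occurrences of stable letters $t_i^{\pm 1}$. Scanning, say, from left to right, whenever a subword $a t_i^{\epsilon}$ arises with $a \in G$ and domains matching, use the transversal decomposition of the appropriate subgroup: for $\epsilon = 1$, write $a = g' h$ with $g' \in T_i$ and $h \in H_i$, and then the relation $h t_i = t_i \alpha_i(h)$ lets us replace $a t_i$ by $g' t_i \alpha_i(h)$ and merge $\alpha_i(h)$ into the following $G$-block; symmetrically for $\epsilon = -1$ using $T_i'$ and $K_i$ together with $k t_i^{-1} = t_i^{-1} \alpha_i^{-1}(k)$. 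Any pinch of the form $t_i \cdot 1 \cdot t_i^{-1}$ or $t_i^{-1} \cdot 1 \cdot t_i$ produced along the way is collapsed on the spot, enforcing the non-cancellation clause. After finitely many such steps the word is in the required form.

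For uniqueness, let $N$ be the set of all formal tuples $(g_1, t_{i_1}^{\epsilon_1}, \ldots, g_m, t_{i_m}^{\epsilon_m}, u)$ satisfying every condition in the statement, including the length-zero case $(u)$ with $u \in G$. Define a partial left action of each generator of $\Gamma$ on $N$: for $a \in G$, prepend $a$ and apply the rewriting recipe from the existence half; for $t_j^{\pm 1}$, prepend with trivial transversal representative and collapse any resulting pinch at the front. The crucial step is to verify that this partial action respects the defining relations of $\Gamma$. The relations $\mathcal{R}(G)$ follow from composition in $G$, while for each HNN relation $x t_i = t_i \alpha_i(x)$ with $x \in H_i$, one must check directly that $(x t_i) \cdot \tau = (t_i \alpha_i(x)) \cdot \tau$ for every tuple $\tau \in N$, a calculation whose correctness rests on the compatibility of the transversals $T_i$ and $T_i'$ with the isomorphism $\alpha_i$. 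Once the action is well defined on $\Gamma$, uniqueness is immediate: the normal form of $g$ is recovered as the action of $g$ on the trivial tuple $(\dom(g))$, and hence two normal forms representing the same element must agree.

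The main obstacle is this last verification for the HNN relations: one must chase the cascade of rewrites triggered by either order of multiplication and confirm they terminate at the same tuple, an argument that is conceptually clear but notationally fiddly. A cleaner route, supporting the paper's remark that this is a straightforward generalisation of Higgins' theorem, is to realise $\Gamma$ as the fundamental groupoid of a graph of groups with vertex groupoid $G$ and one loop edge per index $i \in I$ labelled by the inclusion $H_i \hookrightarrow G$ and the map $\alpha_i$, and then invoke the normal form theorem from \cite{HigginsFund} directly; the transversal-based normal form stated here then corresponds to Higgins' normal form under the evident bijection.
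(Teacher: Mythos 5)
Your proposal follows essentially the same route as the paper: existence by the transversal-based rewriting algorithm pushing subgroup elements past the stable letters, and uniqueness by the Artin--van der Waerden trick of letting $\Gamma$ act on the set of normal-form words (the paper builds a groupoid $B$ of bijections $X_a\rightarrow X_b$, checks $\pi_h\pi_{t_k}=\pi_{t_k}\pi_{\alpha_k(h)}$, and recovers the normal form by applying the image of $g$ to the identity word, exactly as you recover it from the trivial tuple $(\dom(g))$). One small caution about your suggested shortcut: since $G$ here is an arbitrary groupoid rather than a disjoint union of vertex groups, $\Gamma$ is not literally the fundamental groupoid of a graph of groups, so Higgins' theorem in \cite{HigginsFund} cannot be invoked verbatim --- the paper treats the proposition as a generalisation of that result, proved by the action argument you outline.
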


\begin{proof}
Let $g\in \Gamma$. Then $g$ can definitely be written in the form
$$g = s_{1}t_{i_{1}}^{\epsilon_{1}}s_{2}t_{i_{2}}^{\epsilon_{2}}\cdots s_{m}t_{i_{m}}^{\epsilon_{m}}u,$$
where $s_{k},u\in G$ are arbitrary but such that all the domains and ranges match up correctly. If $\epsilon_{1} = 1$ then we can write $s_{1}$ uniquely in the form
$$s_{1} = g_{1}h_{1}$$
where $g_{1}\in T_{i_{1}}$ and $h_{1}\in H_{i_{1}}$. We can then rewrite $g$ as
$$g = g_{1}t_{i_{1}}\alpha_{i_{1}}(h_{1})s_{2}t_{i_{2}}^{\epsilon_{2}}\cdots s_{m}t_{i_{m}}^{\epsilon_{m}}u.$$
If $\epsilon_{1} = -1$ then we can write $s_{1}$ uniquely in the form
$$s_{1} = g_{1}h_{1}$$
where $g_{1}\in T_{i_{1}}^{\prime}$ and $h_{1}\in K_{i_{1}}$. We can then rewrite $g$ as
$$g = g_{1}t_{i_{1}}^{-1}\alpha_{i_{1}}^{-1}(h_{1})s_{2}t_{i_{2}}^{\epsilon_{2}}\cdots s_{m}t_{i_{m}}^{\epsilon_{m}}u.$$
We then continue along in a similar manner, by rewriting $\alpha_{i_{k}}(h_{k})s_{k+1} = g_{k+1}h_{k+1}$ where $h_{k+1}\in T_{i_{k+1}}$ if $\epsilon_{k+1} = 1$ and $h_{k+1}\in T_{i_{k+1}}^{\prime}$ if $\epsilon_{k+1} = -1$, and then moving the $h_{k+1}$ beyond $t_{i_{k+1}}$ by applying $\alpha_{i_{k+1}}$ or its inverse, while at the same time cancelling any pair $t_{k}t_{k}^{-1}$ or $t_{k}^{-1}t_{k}$.
We now wish to prove that these normal forms are unique normal forms. We will do this using an Artin-van der Waerden type argument. Let us denote by $X$ the set of normal form words (where words which are equal in $\Gamma$ are not equated) and let $X_{a}$ be the set of normal forms $w\in X$ with $\ran(w) = a$. We can define a groupoid $B$ as follows. $B_{0}$ will just be equal to $\Gamma_{0}$ (and therefore also to $G_{0}$).
Elements of $B_{1}$ will be bijections $\pi:X_{a}\rightarrow X_{b}$, where we define $\dom(\pi) = a$ and $\ran(\pi) = b$ in $B$. It is readily verified that this gives $B$ the structure of a groupoid. We will define a functor $\Gamma \rightarrow B$. For $g\in G$ with $\dom(g) = a$, $\ran(g)=b$ let us define $\pi_{g}:X_{a}\rightarrow X_{b}$ to be the map with
$$\pi_{g}(g_{1}t_{i_{1}}^{\epsilon_{1}}\cdots g_{m}t_{i_{m}}^{\epsilon_{m}}u) 
= g_{1}^{\prime}t_{j_{1}}^{\delta_{1}}\cdots g_{n}t_{j_{n}}^{\delta_{n}}v $$
where what we have done is premultiplied $g_{1}t_{i_{1}}^{\epsilon_{1}}g_{2}t_{i_{2}}^{\epsilon_{2}}\cdots g_{m}t_{i_{m}}^{\epsilon_{m}}u$ by $g$ and then rewritten this in normal form using the algorithm described above. Observe that $\pi_{gh} = \pi_{g}\pi_{h}$ for $g,h\in G$ with $\dom(g)=\ran(h)$. In particular, $\pi_{gg^{-1}} = \pi_{\ran(g)}$ so that $\pi_{g}\in B$ for each $g\in G$. Define $\pi_{t_{k}}:X_{f_{k}}\rightarrow X_{e_{k}}$ as follows. We define
$$\pi_{t_{k}}(t_{k}^{-1}g_{1}t_{i_{1}}^{\epsilon_{1}}\cdots g_{m}t_{i_{m}}^{\epsilon_{m}}u) = g_{1}t_{i_{1}}^{\epsilon_{1}}\cdots g_{m}t_{i_{m}}^{\epsilon_{m}}u.$$
Otherwise we define
$$\pi_{t_{k}}(g_{1}t_{i_{1}}^{\epsilon_{1}}\cdots g_{m}t_{i_{m}}^{\epsilon_{m}}u) = t_{k}g_{1}t_{i_{1}}^{\epsilon_{1}}\cdots g_{m}t_{i_{m}}^{\epsilon_{m}}u.$$
In a similar manner we define $\pi_{t_{k}^{-1}}:X_{e_{k}}\rightarrow X_{f_{k}}$. We define
$$\pi_{t_{k}^{-1}}(t_{k}g_{1}t_{i_{1}}^{\epsilon_{1}}\cdots g_{m}t_{i_{m}}^{\epsilon_{m}}u) = g_{1}t_{i_{1}}^{\epsilon_{1}}\cdots g_{m}t_{i_{m}}^{\epsilon_{m}}u.$$
Otherwise we define
$$\pi_{t_{k}^{-1}}(g_{1}t_{i_{1}}^{\epsilon_{1}}\cdots g_{m}t_{i_{m}}^{\epsilon_{m}}u) = t_{k}^{-1}g_{1}t_{i_{1}}^{\epsilon_{1}}\cdots g_{m}t_{i_{m}}^{\epsilon_{m}}u.$$
Observe that $\pi_{t_{k}}\pi_{t_{k}^{-1}} = \pi_{t_{k}t_{k}^{-1}} = \pi_{e_{k}}$ and $\pi_{t_{k}^{-1}}\pi_{t_{k}} = \pi_{t_{k}^{-1}t_{k}} = \pi_{f_{k}}$ so that $\pi_{t_{k}},\pi_{t_{k}^{-1}}\in B_{1}$ for each $k\in I$.
We will now check that $\pi_{h}\pi_{t_{k}} = \pi_{t_{k}}\pi_{\alpha_{k}(h)}$ for every $h\in H_{k}$. We have two cases. First,
\begin{eqnarray*}
\pi_{h}(\pi_{t_{k}}(t_{k}^{-1}g_{1}t_{i_{1}}^{\epsilon_{1}}\cdots g_{m}t_{i_{m}}^{\epsilon_{m}}u)) &=& \pi_{h}(g_{1}t_{i_{1}}^{\epsilon_{1}}\cdots g_{m}t_{i_{m}}^{\epsilon_{m}}u) \\
&=& g_{1}^{\prime}t_{j_{1}}^{\delta_{1}}\cdots g_{n}t_{j_{n}}^{\delta_{n}}v,
\end{eqnarray*}
where $g_{1}^{\prime}t_{k_{1}}^{\delta_{1}}\cdots g_{n}t_{k_{n}}^{\delta_{n}}v$ is $hg_{1}t_{i_{1}}^{\epsilon_{1}}\cdots g_{m}t_{i_{m}}^{\epsilon_{m}}u$ reduced using the algorithm described above.
On the other hand, noting that $\alpha_{k}(h)t_{k}^{-1} = t_{k}^{-1}h$,
\begin{eqnarray*}
\pi_{t_{k}}(\pi_{\alpha_{k}(h)}(t_{k}^{-1}g_{1}t_{i_{1}}^{\epsilon_{1}}\cdots g_{m}t_{i_{m}}^{\epsilon_{m}}u)) 
&=& \pi_{t_{k}}(t_{k}^{-1}g_{1}^{\prime}t_{k_{1}}^{\delta_{1}}\cdots g_{n}t_{k_{n}}^{\delta_{n}}v)\\
&=& g_{1}^{\prime}t_{j_{1}}^{\delta_{1}}\cdots g_{n}t_{j_{n}}^{\delta_{n}}v.
\end{eqnarray*}
Now the second case:
\begin{eqnarray*}
\pi_{h}(\pi_{t_{k}}(g_{1}t_{i_{1}}^{\epsilon_{1}}\cdots g_{m}t_{i_{m}}^{\epsilon_{m}}u)) &=& \pi_{h}(t_{k}g_{1}t_{i_{1}}^{\epsilon_{1}}\cdots g_{m}t_{i_{m}}^{\epsilon_{m}}u) \\
&=& t_{k} \pi_{\rho_{k}(h)}(g_{1}t_{i_{1}}^{\epsilon_{1}}\cdots g_{m}t_{i_{m}}^{\epsilon_{m}}u) \\
&=& \pi_{t_{k}}(\pi_{\rho_{k}(h)}(g_{1}t_{i_{1}}^{\epsilon_{1}}\cdots g_{m}t_{i_{m}}^{\epsilon_{m}}u)).
\end{eqnarray*}
Thus $\pi_{h}\pi_{t_{k}} = \pi_{t_{k}}\pi_{\alpha_{k}(h)}$ for every $h\in H_{k}$. It follows that the map $\pi:\Gamma\rightarrow B$ defined by
$$\pi(s_{1}t_{i_{1}}^{\epsilon_{1}}\cdots s_{m}t_{i_{m}}^{\epsilon_{m}}u)
= \pi_{s_{1}}\pi_{t_{i_{1}}^{\epsilon_{1}}}\cdots \pi_{s_{m}}\pi_{t_{i_{m}}^{\epsilon_{m}}}\pi_{u}$$
is a functor. Finally, to see that the normal forms are unique note that if 
$$g_{1}t_{i_{1}}^{\epsilon_{1}}\cdots g_{m}t_{i_{m}}^{\epsilon_{m}}u, g_{1}^{\prime}t_{j_{1}}^{\delta_{1}}\cdots g_{n}t_{j_{n}}^{\delta_{n}}v\in \Gamma$$
are elements written in normal form both with domain $e\in \Gamma_{0}$ then
$$\pi(g_{1}t_{i_{1}}^{\epsilon_{1}}\cdots g_{m}t_{i_{m}}^{\epsilon_{m}}u)(e) = g_{1}t_{i_{1}}^{\epsilon_{1}}\cdots g_{m}t_{i_{m}}^{\epsilon_{m}}u$$
while
$$\pi(g_{1}^{\prime}t_{j_{1}}^{\delta_{1}}\cdots g_{n}t_{j_{n}}^{\delta_{n}}v)(e) = g_{1}^{\prime}t_{j_{1}}^{\delta_{1}}\cdots g_{n}t_{j_{n}}^{\delta_{n}}v$$
and thus they are mapped to different elements of $B$, so must be distinct in $\Gamma$.
\end{proof} 

\section{A brief foray into Bass-Serre theory}

We will now give a brief outline of some aspects of Bass-Serre theory. Our definition of graph of groups is taken from \cite{Moore}, except that we do not assume that the underlying graph is connected. The definition of \cite{Serre} is equivalent. For us a graph of groups $\mathcal{G}_{G}$ will consist of:
\begin{itemize}
\item A graph $\mathcal{G}$.
\item An involution $t\mapsto \bar{t}$ on the edges of $\mathcal{G}$.
\item A group $G_{a}$ for each vertex $a\in \mathcal{G}_{0}$.
\item A subgroup $G_{t}\leq G_{\ran(t)}$ for each edge $t\in \mathcal{G}_{0}$.
\item An isomorphism $\phi_{t}:G_{t}\rightarrow G_{\bar{t}}$ for each edge $t\in \mathcal{G}_{1}$ such that $\phi_{\bar{t}} = \phi_{t}^{-1}$.
\end{itemize}

A \emph{path} in $\mathcal{G}_{G}$ consists of a sequence $g_{1}t_{1}g_{2}t_{2}\cdots g_{m}t_{m}g_{m+1}$ where $t_{k}\in \mathcal{G}_{1}$ for each $k$, $g_{k}\in G_{\ran(t_{k})}$ for $k=1,\ldots,m$ and $g_{k+1}\in G_{\dom(t_{k})}$ for $k = 1,\ldots,m$. We allow for the case $m = 0$, i.e. paths of the form $g\in G_{a}$ for some $a\in\mathcal{G}_{0}$. We write $\dom(g_{1}t_{1}g_{2}t_{2}\cdots g_{m}t_{m}g_{m+1}) = \dom(t_{m})$ and $\ran(g_{1}t_{1}g_{2}t_{2}\cdots g_{m}t_{m}g_{m+1}) = \ran(t_{1})$. For $g\in G_{a}$ viewed as a path we write $\dom(g) = \ran(g) = a$. Let $\sim$ be the equivalence relation on paths in $\mathcal{G}_{G}$ generated by $p\bar{t}htq \sim p\phi_{t}(h)q$, where $p,q$ are paths and $h\in G_{t}$. We say that $p\phi_{t}(h)q$ is a \emph{reduction} of $p\bar{t}htq$. It can be shown that every path reduces to a unique fully reduced path.

Given a graph of groups $\mathcal{G}_{G}$, we define its \emph{fundamental groupoid} $\Gamma(\mathcal{G}_{G})$ (\cite{HigginsFund}) to be the groupoid whose arrows correspond to equivalence classes of $\sim$. Composition of arrows is simply concatenation of composable paths multiplying group elements at each end. The fundamental groupoid of a graph of groups is precisely a groupoid HNN-extension of a totally disconnected groupoid. 

To see this, suppose 
$$\Gamma = \langle \mathcal{H} | \mathcal{R}(G), ht_{i} = t_{i}\alpha_{i}(h) \forall h\in H_{i}, i\in I \rangle\ $$ 
is a groupoid HNN-extension of a totally disconnected groupoid $G$. Then the associated graph of groups $\mathcal{G}_{G}$ will have vertices corresponding to the identities of $\Gamma$. The group at the vertex corresponding to the identity $a\in G_{0}$ will be the local monoid $aGa$. The edges of $\mathcal{G}_{G}$ will be the generating elements $t_{i}$ and their inverses. The involution in the graph will map $t_{i}$ to $t_{i}^{-1}$ and $t_{i}^{-1}$ to $t_{i}$. The groups $H_{t_{i}}$ associated with the edges $t_{i}$ will be the groups $H_{i}$ and the groups $H_{t_{i}^{-1}}$ associated to the edges $t_{i}^{-1}$ will be the groups $\alpha_{i}(H_{i})$. We define $\phi_{t_{i}} = \alpha_{i}$ and $\phi_{t_{i}^{-1}} = \alpha_{i}^{-1}$. We then see that the fundamental groupoid of $\mathcal{G}_{G}$ will be isomorphic to $\Gamma$.

On the other hand, suppose $\mathcal{G}_{G}$ is a graph of groups. We let $G$ be the disjoint union of all the vertex groups of $\mathcal{G}_{G}$ viewed as a totally disconnected groupoid with identities corresponding to the vertices of $\mathcal{G}_{G}$. For each pair $\left\{t,\bar{t}\right\}$ where $t$ is an edge in $\mathcal{G}_{G}$ we pick one edge; these edges will be our arrows $t_{i}$. We define $H_{i} = G_{t_{i}}$ and let $\alpha_{i} = \phi_{t_{i}}$. Then it is easy to see that the groupoid HNN-extension $\Gamma$ of $G$ with respect to the subgroups $H_{i}$, stable letters $t_{i}$ and monomorphisms $\alpha_{i}$ will be isomorphic to the fundamental groupoid of $\mathcal{G}_{G}$. 

Let $\Gamma(\mathcal{G}_{G})$ be the fundamental groupoid of a graph of groups $\mathcal{G}_{G}$ and for each edge $t\in \mathcal{G}_{1}$ let $T_{t}$ be a transversal of the left cosets of $H_{t}$ in $G_{\ran(t)}$. Using the normal form result Proposition \ref{normalformgroupoidHNN} we see that each element of $\Gamma(\mathcal{G}_{G})$ can be written uniquely in the form
$$g_{1}t_{1}g_{2}t_{2}\cdots g_{m}t_{m}u$$
where $g_{1}t_{1}g_{2}t_{2}\cdots g_{m}t_{m}u$ is a path in $\mathcal{G}_{G}$, $g_{i}\in T_{t_{i}}$ for $i=1,\ldots,m$ and $u\in G_{\dom(t_{m})}$ is arbitrary, subject to the condition that if $g_{i} = \dom(t_{i-1}) = \ran(t_{i})$ then $\overline{t_{i-1}}\neq t_{i}$.

If $\mathcal{G}_{G}$ is a graph of groups and $a$ is a vertex in $\mathcal{G}$ (which we have identified with the identity element of $G_{a}$) then the \emph{fundamental group of $\mathcal{G}_{G}$ at $a$}, denoted $\pi_{1}(\mathcal{G}_{G},a)$, is $a\Gamma(\mathcal{G}_{G})a$, the local group at $a$, i.e. all paths in $\mathcal{G}_{G}$ which start and end at $a$. Fundamental groups with respect to vertices in the same connected component of $\mathcal{G}$ will be isomorphic. If $\mathcal{G}_{G}$ has a single vertex $a$ then $\pi_{1}(\mathcal{G}_{G},a) = \Gamma(\mathcal{G}_{G})$ will be a group HNN-extension, and every group HNN-extension is the fundamental group of a graph of groups with a single vertex. 

We have seen that given a graph of groups $\mathcal{G}_{G}$ we can construct its fundamental groupoid $\Gamma(\mathcal{G}_{G})$ and the fundamental groups $\pi_{1}(\mathcal{G}_{G},a)$. It will now be shown how the groups $\pi_{1}(\mathcal{G}_{G},a)$ have natural actions on trees.   

Let $\mathcal{G}_{G}$ be a graph of groups, let $a$ be a vertex of $\mathcal{G}_{G}$ and let $P_{a}$ denote the set of paths in $\mathcal{G}_{G}$ with range $a$. For $p,q\in P_{a}$ we will write $p \approx q$ if $\dom(p) = \dom(q)$ and $p \sim qg$ for some $g\in G_{\dom(p)}$. This defines an equivalence relation on $P_{a}$. We will denote the $\approx$-equivalence class containing the path $p$ by $[p]$. We now define the (undirected) Bass-Serre tree $T$ with respect to the vertex $a$ as follows. The vertices of $T$ are $\approx$-equivalence classes of paths in $P_{a}$. Two vertices $[p],[q]\in T_{0}$ are connected by an edge if there are $g\in G_{\dom(p)}$ and $t\in \mathcal{G}_{1}$ such that 
$$q \approx pgt.$$
It can be verified that $T$ is indeed a tree. We will now define an action of $\pi_{1}(\mathcal{G}_{G},a)$ on $T_{0}$ by
$$g\cdot [p] = [gp].$$
This will then naturally extend to an action of $\pi_{1}(\mathcal{G}_{G},a)$ on $T$.

Let us now consider these ideas from the point of view of groupoid HNN-extensions. By definition two paths $p,q$ in $\mathcal{G}_{G}$ are $\sim$-related if they correspond to the same elements of $\Gamma(\mathcal{G}_{G})$. So let $\Gamma$ be an arbitrary groupoid HNN-extension of a totally disconnected groupoid $G$, let $a\in \Gamma_{0} = G_{0}$ and let 
$$P_{a} = \left\{g\in \Gamma| \ran(g) = a\right\}.$$
For $p,q\in P_{a}$, we define $p \approx q$ if $p = qg$ for some $g\in G$. This defines an equivalence relation on $P_{a}$ and we denote the $\approx$-equivalence class containing $p$ by $[p]$. We now define an undirected tree $T$ with respect to the identity $a$ as follows. The vertices of $T$ will correspond to $\approx$-equivalence classes of elements of $P_{a}$. Two vertices $[p],[q]\in T_{0}$ are connected by an edge if 
$$q = pgt_{i}^{\epsilon}h$$
for some $g,h\in G$, $i\in I$, $\epsilon\in \left\{-1,1\right\}$. We then have an action of $a\Gamma a$ on $T_{0}$ given by 
$$g \cdot [p] = [gp]$$
which naturally extends to an action of $a\Gamma a$ on $T$. We can in fact make the tree $T$ directed by specifying that if $[p],[q]\in T_{0}$ then there is an edge $s\in T_{1}$ with $\ran(s) = [p]$ and $\dom(s) = [q]$ if $q = pgt_{i}h$ for some $g,h\in G$, $i\in I$. It is then clear that this construction works for an arbitrary groupoid HNN-extension, so we do not require that $G$ is totally disconnected. 

\begin{comment}
It should be clear that our requirement that $G$ was totally disconnected does not appear in most of the above ideas, so that all of the above works for an arbitrary groupoid $\Gamma$. The only non-trivial statement is that $T$ is a tree. NEED TO CHECK THIS.
\end{comment}

\newpage
\section{Topological and algebraic $K$-theory}

Let us begin by recalling the definition of the Grothendieck group of a commutative semigroup. If $S$ is a commutative semigroup then there is a unique (up to isomorphism) commutative group $G = \mathcal{G}(S)$, called the \emph{Grothendieck group} of $S$, and a homomorphism $\phi:S\rightarrow G$, such that for any commutative group $H$ and homomorphism $\psi:S\rightarrow H$, there is a unique homomorphism $\theta:G\rightarrow H$ with $\psi = \theta \circ \phi$. In fact $\mathcal{G}$ is really a functor from commutative semigroups to abelian groups. It is easy to check that the Grothendieck group of a commutative semigroup is precisely its group of fractions.

Let us now briefly outline topological and algebraic $K$-theory in order to motivate the theory of Chapter 4. Our treatment follows that of \cite{Rosenberg}. Suppose $X$ is a compact Hausdorff topological space (it is possible to extend the definition of $K_{0}$-group to locally-compact spaces, but we will leave that aside for the moment). Let $\mathbb{F}$ be either $\mathbb{R}$ or $\mathbb{C}$. An $\mathbb{F}$-\emph{vector bundle} consists of a topological space $E$ and a continuous open surjective map $p:E\rightarrow X$, with extra structure defined by the following:
\begin{itemize}
\item Each fibre $p^{-1}(x)$ of $p$ for $x\in X$ is a finite-dimensional vector space over $\mathbb{F}$.
\item There are continuous maps $E\times E\rightarrow E$ and $\mathbb{F}\times E\rightarrow E$ which restrict to vector addition and scalar multiplication on each fibre.
\end{itemize}
We will denote such a vector bundle by $E\stackrel{p}{\rightarrow} X$ or by $(E,p)$. One can consider the category $\Vect_{X}^{\mathbb{F}}$ of all $\mathbb{F}$-vector bundles over $X$. The morphisms in this category are continuous maps $f:(E,p)\rightarrow (F,q)$ such that they are linear on each fibre and such that $qf = p$. The category has a binary operation $\oplus$ called \emph{Whitney sum} defined on objects $(E,p),(F,q)$ by
$$E\oplus F = \left\{(x,y)\in E\times F|p(x) = q(y)\right\}$$
with $p\oplus q:E\oplus F\rightarrow X$ given by $(p\oplus q)(x,y) = p(x) = q(y)$.

For a space $X$ and $n\in \mathbb{N}$ the \emph{trivial vector bundle of rank} $n$ is $(X\times \mathbb{F}^{n},\pi_{n})$ where $\pi_{n}:X\times \mathbb{F}^{n}\rightarrow X$ is given by $\pi_{n}(x,z) = z$. A \emph{locally trivial} $\mathbb{F}$-vector bundle is a vector bundle $(E,p)$ such that for each $x\in X$ there is an open set $U$ containing $x$ and vector bundle isomorphism from $p^{-1}(U)\stackrel{p|_{p^{-1}(U)}}{\longrightarrow} U$ to a trivial bundle of some rank over $U$. The \emph{rank} of such a bundle $(E,p)$ is then a continuous function $\rank_{E}:X\rightarrow \mathbb{N}$ given by $\rank_{E}(x) = \dim(p^{-1}(x))$. 

Let us denote the set of locally trivial $\mathbb{F}$-vector bundles over $X$ by $V_{\mathbb{F}}(X)$. Then $(V_{\mathbb{F}}(X),\oplus)$ is a commutative monoid with identity the trivial vector bundle of rank $0$. We define
$$K^{0}_{\mathbb{F}}(X) = \mathcal{G}(V_{\mathbb{F}}(X)).$$
We will only be concerned with complex topological $K$-theory in this thesis so we write $K^{0}(X) = K^{0}_{\mathbb{C}}(X)$.

There is an alternative way of computing the $K_{0}$-group of a compact Hausdorff space $X$. Let $C(X)$ be the set of complex-valued continuous functions on $X$. $C(X)$ has the structure of a commutative ring under pointwise addition and multiplication (in fact it can be given the structure of a $C^{\ast}$-algebra). Let $\Gamma_{X}$ be the set of finitely generated projective modules of $C(X)$. Then $(\Gamma_{X},\oplus)$ is a commutative monoid. In fact, we have the following theorem:

\begin{thm}
(Serre-Swan)
There is a monoid isomorphism $\phi:V_{\mathbb{C}}(X)\rightarrow \Gamma_{X}$.
\end{thm}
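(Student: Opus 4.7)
The plan is to define $\phi$ by sending a vector bundle $(E,p)$ to its $C(X)$-module of continuous sections $\Gamma(E) = \{s:X\rightarrow E \text{ continuous} \mid p\circ s = \mathrm{id}_{X}\}$, where the $C(X)$-action is pointwise scalar multiplication in each fibre. A trivial bundle $X\times \mathbb{C}^{n}$ obviously has $\Gamma(X\times \mathbb{C}^{n}) \cong C(X)^{n}$, which is free of rank $n$. Morphisms of vector bundles over $X$ induce $C(X)$-linear maps on sections functorially, and the Whitney sum respects sections in the sense that $\Gamma(E\oplus F)\cong \Gamma(E)\oplus \Gamma(F)$, so $\phi$ is at least a monoid homomorphism into the category of $C(X)$-modules.

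The first serious step is to show that $\Gamma(E)$ lies in $\Gamma_{X}$, i.e.\ is finitely generated and projective. The idea is to use compactness of $X$: locally $E$ is trivial, so we can cover $X$ by finitely many open sets $U_{1},\ldots,U_{m}$ over each of which $E$ trivialises, and use a partition of unity $\{\rho_{i}\}$ subordinate to this cover (which exists because compact Hausdorff spaces are normal, hence paracontinuous functions supply partitions of unity) to glue local trivialising frames into finitely many global sections that span every fibre. This produces a surjection $C(X)^{N}\twoheadrightarrow \Gamma(E)$; equivalently, $E$ is a direct summand of a trivial bundle $X\times \mathbb{C}^{N}$ via a bundle map that is a fibrewise surjection and a bundle idempotent $e\in \mathrm{End}(X\times\mathbb{C}^{N})$ whose image is $E$. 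Applying $\Gamma$ to $E\oplus E^{\perp}\cong X\times \mathbb{C}^{N}$ (where $E^{\perp}$ is a fibrewise complement, obtained for instance by choosing a continuously varying Hermitian metric, again via partition of unity) gives $\Gamma(E)\oplus \Gamma(E^{\perp})\cong C(X)^{N}$, so $\Gamma(E)$ is a direct summand of a free module and hence projective.

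Injectivity of $\phi$ (up to isomorphism) is delicate but standard: from a $C(X)$-linear isomorphism $\Gamma(E)\cong \Gamma(F)$ one reconstructs a bundle isomorphism $E\cong F$ by evaluating sections at points, using that for each $x\in X$ the maximal ideal $\mathfrak{m}_{x} = \{f\in C(X) \mid f(x)=0\}$ satisfies $\Gamma(E)/\mathfrak{m}_{x}\Gamma(E)\cong p^{-1}(x)$; here one needs to know that any $v\in p^{-1}(x)$ extends to a global section, which follows from the summand-of-trivial property together with Tietze extension. For surjectivity, given $P\in \Gamma_{X}$ one writes $P\oplus Q\cong C(X)^{n}$, which gives an idempotent matrix $e\in M_{n}(C(X))$ with image $P$; then define
$$E = \{(x,v)\in X\times \mathbb{C}^{n} \mid v\in e(x)\mathbb{C}^{n}\}$$
with $p$ the first projection. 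Local triviality follows from the fact that a continuous family of idempotents in $M_{n}(\mathbb{C})$ of locally constant rank is locally conjugate to a constant idempotent (one shows ranks are locally constant because $\rank_{E}(x)=\mathrm{tr}(e(x))$ is continuous and $\mathbb{N}$-valued), and then $\Gamma(E)\cong eC(X)^{n}\cong P$.

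The main obstacle will be the technical glueing argument showing that every locally trivial vector bundle over a compact Hausdorff space embeds as a direct summand of a trivial bundle: this is where the topological hypotheses (compact and Hausdorff, hence paracompact and normal) are essential, and it is the linchpin that both forces $\Gamma(E)$ to be projective and provides the reverse construction from idempotents. Once that is in hand the rest is categorical bookkeeping, and one checks that $\phi$ is a monoid isomorphism by verifying it preserves $\oplus$ and inverting it via $e\mapsto \mathrm{im}(e)$ as above.
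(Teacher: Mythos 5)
The paper does not prove this statement: it is quoted as background (following the cited treatment of Rosenberg) in Section 1.4, so there is no in-paper argument to compare against. Your sketch is the standard Serre--Swan proof from exactly that source --- sections functor, partition-of-unity embedding of $E$ as a summand of a trivial bundle to get finite generation and projectivity, fibres recovered as $\Gamma(E)/\mathfrak{m}_{x}\Gamma(E)$ for injectivity, and the idempotent-matrix construction with locally constant rank for surjectivity --- and it is correct at the level of detail offered (the word ``paracontinuous'' should read ``paracompact'', and strictly one should note that a $C(X)$-module map $\Gamma(E)\rightarrow\Gamma(F)$ is induced by a continuous bundle map, via $\mathrm{Hom}_{C(X)}(\Gamma(E),\Gamma(F))\cong\Gamma(\mathrm{Hom}(E,F))$, but these are cosmetic).
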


It then follows that $K^{0}(X) \cong \mathcal{G}(\Gamma_{X})$. This then leads to the definition of algebraic $K$-theory. If we let $\Proje(R)$ denote the set of finitely generated projective modules of a ring $R$ then we define $K_{0}(R) = \mathcal{G}(\Proje(R))$. Viewing $C(X)$ as a $C^{\ast}$-algebra we can give another definition of $K^{0}(X)$ in terms of this structure, and when generalised this gives operator $K$-theory. 

It is possible give an alternative description of algebraic $K$-theory. Let $M_{n}(R)$ be the set of $n\times n$ matrices over $R$ and let $M(R)$ denote the set of $\mathbb{N}$ by $\mathbb{N}$ matrices over $R$ with finitely many non-zero entries. One can think of $M(R)$ as being the union of all the $M_{n}(R)$. Given an idempotent matrix $E\in M(R)$, viewed as a homomorphism $R^{n}\rightarrow R^{n}$, the image of $E$ is a projective $R$-module.
On the other hand if $P$ is a projective module, there is an idempotent matrix $E$ with image $P$. We will say idempotent matrices $E, F\in M_{n}(R)$ are \emph{similar}, and write $E\sim F$, if $E = XY$ and $F = YX$ for some matrices $X,Y\in M(R)$. This will define an equivalence relation on the set of idempotent matrices $\Idem(R)$. We have the following proposition:
\begin{proposition}
Idempotent matrices $E,F\in \Idem(R)$ define the same projective module if and only if $E\sim F$.
\end{proposition}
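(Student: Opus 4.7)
The plan is to exploit the concrete description of the projective module attached to an idempotent $E \in M_n(R)$, namely $\im E = E R^n \subseteq R^n$, together with the fact that the splitting $R^n = \im E \oplus \ker E$ lets us factor $E = \iota_E \pi_E$ as the composition of the canonical projection $\pi_E : R^n \to \im E$ and the inclusion $\iota_E : \im E \hookrightarrow R^n$ (and similarly for $F$). Both directions of the equivalence can then be reduced to manipulating these factorisations.

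For the ``if'' direction, I would assume $E = XY$ and $F = YX$ and aim to produce mutually inverse isomorphisms between $\im E$ and $\im F$. The key observation is that idempotence of $E$ and $F$ forces the intertwining relations $XF = XYX = EX$ and $YE = YXY = FY$, so left multiplication by $X$ sends $\im F$ into $\im E$ and left multiplication by $Y$ sends $\im E$ into $\im F$. To verify these restrictions are mutual inverses, pick $v \in \im F$, so that $v = Fv$, and compute $YX v = F v = v$; the argument on $\im E$ is symmetric. Hence $\im E \cong \im F$ as $R$-modules.

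For the converse, suppose $\phi : \im E \to \im F$ is an $R$-module isomorphism with inverse $\psi$. The natural guess is to set $Y := \iota_F \phi \pi_E$ and $X := \iota_E \psi \pi_F$. A direct computation using $\pi_E \iota_E = \mathrm{id}_{\im E}$ and $\pi_F \iota_F = \mathrm{id}_{\im F}$ then gives $XY = \iota_E \psi \phi \pi_E = \iota_E \pi_E = E$ and $YX = \iota_F \phi \psi \pi_F = \iota_F \pi_F = F$, as required.

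The only piece of bookkeeping to be careful about is that $X$ and $Y$ must genuinely lie in $M(R)$, i.e. have only finitely many non-zero entries. This should follow because $\im E$ and $\im F$ are finitely generated (they are summands of $R^n$ and $R^m$), so once one picks finite generating sets the maps $\phi, \psi$ and the projections and inclusions are all representable by finite matrices, whence so are their compositions. I do not anticipate any deeper obstacle: the argument is essentially a faithful translation between the matrix and module pictures of projective modules, with idempotence doing all the real work.
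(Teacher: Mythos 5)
Your proof is correct. The paper itself states this proposition without proof, as standard background taken from the $K$-theory literature (Rosenberg), and your argument is exactly the standard one: in the forward direction the intertwining relations $XF=EX$ and $YE=FY$ show that left multiplication by $X$ and $Y$ restrict to mutually inverse homomorphisms between $\im F$ and $\im E$ (so there is no need to first replace $X,Y$ by $EXF$, $FYE$ as some references do), and in the converse direction the factorisation $E=\iota_{E}\pi_{E}$ through the direct summand $\im E$ of $R^{n}$ yields $X=\iota_{E}\psi\pi_{F}$ and $Y=\iota_{F}\phi\pi_{E}$ with $XY=E$, $YX=F$; your finiteness worry is settled even more simply than you suggest, since $X$ and $Y$ are homomorphisms between the finite free modules $R^{m}$ and $R^{n}$ and hence are automatically finite matrices, i.e.\ elements of $M(R)$.
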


Denote the set of idempotent matrices by $\Idem(R)$ and define a binary operation on $\Idem(R)/\sim$ by
$$[E]+[F] = [E^{\prime} + F^{\prime}],$$
where if a row in $E^{\prime}$ has non-zero entries then that row in $F^{\prime}$ has entries only zeros, similarly for columns of $E^{\prime}$, and for rows and columns of $F^{\prime}$, and such that $E^{\prime}\sim E$ and $F^{\prime}\sim F$. We then have the following result:
\begin{proposition}
This is a well-defined operation and the monoids $\Idem(R)/\sim$ and $\Proj_{R}$ are isomorphic.
\end{proposition}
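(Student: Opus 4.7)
The plan is to leverage the previous proposition, which identifies $\sim$-classes of idempotent matrices with isomorphism classes of finitely generated projective $R$-modules, and translate everything about the proposed addition into direct sum of modules. Define
\[
\phi : \Idem(R)/\!\sim \; \longrightarrow \; \Proje_{R}, \qquad \phi([E]) = \im(E).
\]
By the previous proposition this map is well-defined and injective, and it is surjective since every finitely generated projective module arises as the image of some idempotent matrix. The strategy is: first show the addition is well-defined by interpreting $E' + F'$ as the idempotent whose image is $\im(E')\oplus \im(F')$; then show $\phi$ is a monoid isomorphism onto $(\Proje_{R},\oplus)$.

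The first step is existence of suitable representatives. Given any $[E],[F]$, pick any $E\in [E]$; since elements of $M(R)$ have only finitely many non-zero entries, $E$ is supported on a finite set of row/column indices. Choose a permutation $\pi$ of $\mathbb{N}$ moving the support of $F$ to indices disjoint from that of $E$, and let $P$ be the corresponding permutation matrix. Setting $F' = PFP^{-1}$ and writing $F' = (PF)(P^{-1})$ and $F = (P^{-1})(PF)$ shows $F' \sim F$. So representatives $E',F'$ with disjoint row and column support always exist, and $E' + F'$ is then visibly idempotent since $(E'+F')^{2} = (E')^{2} + E'F' + F'E' + (F')^{2} = E' + F'$ (the cross terms vanish because the non-zero columns of $E'$ lie among the zero rows of $F'$ and vice versa).

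Next, independence of the choice of disjoint representatives. With disjoint supports, $\im(E'+F') = \im(E') \oplus \im(F')$ as an internal direct sum of $R$-submodules. Moreover $E' \sim E$ and $F'\sim F$, so the previous proposition gives $\im(E')\cong \im(E)$ and $\im(F')\cong \im(F)$. Hence $\im(E'+F') \cong \im(E)\oplus \im(F)$, and this isomorphism class depends only on $[E]$ and $[F]$. Applying the previous proposition in the other direction, any two disjointly supported choices yield $\sim$-equivalent sums, so $[E'+F']$ depends only on $[E]$ and $[F]$. The main obstacle, such as it is, lies exactly here: one has to invoke both directions of the previous proposition to pass between equivalence of idempotents and isomorphism of images.

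Finally, $\phi$ is a monoid homomorphism: the identity $[0]$ maps to the zero module, and
\[
\phi([E]+[F]) = \phi([E'+F']) = \im(E') \oplus \im(F') \cong \im(E) \oplus \im(F) = \phi([E]) \oplus \phi([F]).
\]
Combined with the bijectivity noted above, $\phi$ is an isomorphism of commutative monoids, establishing both claims simultaneously. Commutativity and associativity of the addition on $\Idem(R)/\!\sim$ then come free from the corresponding properties of $\oplus$ on $\Proje_{R}$, though they can also be verified directly by permuting supports.
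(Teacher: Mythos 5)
Your argument is correct, and since the paper states this proposition as standard background (following Rosenberg) without giving any proof, there is no proof of record to compare it with; what you give is the usual argument. One small repair is needed in the step producing disjointly supported representatives: your witnesses for $F' \sim F$ are $X = PF$ and $Y = P^{-1}$, but $P^{-1}$ does not belong to $M(R)$, since even the matrix of a finitely supported permutation of $\mathbb{N}$ has infinitely many diagonal entries equal to $1$, whereas the definition of $\sim$ requires both witnesses to have finitely many non-zero entries. The fix is immediate: with $F' = PFP^{-1}$ take instead $X = PF$ and $Y = FP^{-1}$, both of which lie in $M(R)$; then $XY = PF^{2}P^{-1} = F'$ and $YX = FP^{-1}PF = F^{2} = F$, so $F' \sim F$ as required. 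With that adjustment the rest goes through exactly as you write it: disjoint supports give $E'F' = F'E' = 0$, hence $E' + F'$ is idempotent and $\im(E'+F') = \im(E')\oplus\im(F')$; well-definedness of the addition follows by using both directions of the preceding proposition (similar idempotents have isomorphic images, and idempotents with isomorphic images are similar); and the map $[E]\mapsto \im(E)$ is then a bijection respecting the operations, so associativity, commutativity and the identity transport along it, giving the isomorphism with $(\Proj_{R},\oplus)$.
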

This then gives us an alternative way of viewing $K_{0}(R)$. We have
$$K_{0}(R) = \mathcal{G}(\Idem(R)/\sim).$$

\chapter{Left Rees Monoids}

\section{Outline of chapter}

The aim of this chapter is to study left Rees monoids in detail. We will consider the correspondence found in \cite{LawLRM1} between left Rees monoids and self-similar group actions in Section 2.2. In Section 2.3 we will see that left Rees monoids and monoid HNN-extensions of groups are one and the same thing. We will then use this to investigate the structure of left Rees monoids in more detail. In Section 2.4 we will show that the group of fractions of a symmetric Rees monoid is a Zappa-Sz\'{e}p product of groups. It will also be shown that every Rees monoid with finite group of units is in fact a symmetric Rees monoid. From this we deduce that a group HNN-extension of a finite group $G$ is isomorphic to a Zappa-Sz\'{e}p product of a free group and the group $G$. Sections 2.5 and 2.6 are devoted to the study of Rees monoids arising from fractals. In Section 2.7 we will look at examples of left Rees monoids described in terms of automata. Finally, in Section 2.8, we will briefly explore the representation theory of left Rees monoids.

%Section~2
\section{The correspondence}\setcounter{theorem}{0}

All unproved assertions in this section are proved in \cite{LawLRM1}. Recall from the introduction that a monoid $M$ will be called a {\em left Rees monoid} if it satisfies the following conditions:
\begin{description}
\item[{\rm (LR1)}] $M$ is a left cancellative monoid.

\item[{\rm (LR2)}] $M$ is right rigid: incomparable principal right ideals are disjoint.

\item[{\rm (LR3)}] Each principal right ideal is properly contained in only a finite number of principal
right ideals.
\end{description}

We shall always assume that left Rees monoids are not groups.
We define {\em right Rees monoids} dually.
Every left Rees monoid $M$ admits a surjective homomorphism $\lambda \colon \: M \rightarrow \mathbb{N}$ such that $\lambda^{-1}(0) = G(M)$,
the group of units of $M$. Any such homomorphism we call a {\em length function}.
Such functions can always be chosen so that their value on generators of maximal proper principal right ideals is one.
Left Rees monoids with trivial groups of units are precisely the free monoids,
and so our monoids are natural generalisations of free monoids.
It is worth recalling here that a free monoid $X^{\ast}$ on a set $X$ consists of
all finite sequences of elements of $X$ called {\em strings}, 
including the empty string $\varepsilon$, which we often denote by 1, 
with multiplication given by {\em concatenation}
of strings. The length $|x|$ of a string $x$ is the total number of elements of $X$ that occur in it.
If $x = yz$ then $y$ is called a {\em prefix} of $x$.
A left Rees monoid which is cancellative is automatically a right Rees monoid,
and a monoid which is both a left Rees monoid and a right Rees monoid is called a {\em Rees monoid}.

%%%%%%%%%%%%%%%%%%%%%%%%%%%%%%%%%%%%%%%%%%%%%%%%%%%%%%%%%%%%%%%%%%%%%%%%%%%%%%%%%%%%%%%%%%%%%%%%%%%%%%%%%%%%%%%%%%%%%%%%%%%%%%%%%%%%%%%%%%%%%5

We will now describe the construction of the Zappa-Sz\'ep product of two monoids. These were first considered by Zappa (\cite{Zappa}) for groups and then later developed in a series of papers by Sz\'ep, beginning with \cite{Szep}. Kunze then considered the setup for two semigroups (\cite{Kunze}); in this situation the lack of identities means one only uses the first 4 of the axioms listed below. Our treatment follows that of Wazzan's PhD thesis (\cite{Wazzan}).

We will say two monoids $A$ and $S$ form a \emph{matched pair} if there are two maps $A \times S \rightarrow S$ denoted $(a,s)\mapsto a\cot s$ and $A \times S \rightarrow A$ denoted $(a,s)\mapsto a|_{s}$ satisfying the following eight axioms, for $a,b\in A$, $s,t\in S$ and $1_{A}$, $1_{S}$ denoting the identities, respectively, of $A$ and $S$:
\begin{description}
\item[{\rm (ZS1)}] $(ab) \cdot s = s$.
\item[{\rm (ZS2)}] $a \cdot (st) = (a \cdot s)(a|_{s} \cdot t)$.
\item[{\rm (ZS3)}] $a|_{st} = (a|_{s})|_{t}$.
\item[{\rm (ZS4)}] $(ab)|_{s} = a|_{b \cdot s} b|_{s}$.
\item[{\rm (ZS5)}] $a \cdot 1_{S} = 1_{S}$.
\item[{\rm (ZS6)}] $a|_{1_{S}} = a$.
\item[{\rm (ZS7)}] $1_{A}\cdot s = s$.
\item[{\rm (ZS8)}] $1_{A}|_{s} = 1|_{A}$.
\end{description}

Given a matched pair $(A,S)$ denote by $S \bowtie A$ the Cartesian product of $S$ and $A$ endowed with the following binary operation:
$$(s,a)(t,b) = (s (a\cdot t), (a|_{t})b).$$
We call this the \emph{Zappa-Sz\'{e}p product} of $S$ and $A$. One can check that $S \bowtie A$ is in fact a monoid with identity $(1_{S},1_{A})$, the sets 
$$S^{\prime} = \left\{(s,1_{A})|s\in S\right\}$$
and
$$A^{\prime} = \left\{(1_{S},a)|a\in A\right\}$$
are isomorphic, respectively, to $S$ and $A$ as monoids and that $S \bowtie A = S^{\prime}A^{\prime}$ uniquely.

On the other hand, if $M$ is a monoid and $S$, $A$ are submonoids of $M$ such that $M = SA$ uniquely then one can define maps $A \times S \rightarrow S$ and $A \times S \rightarrow A$ by 
$$as = (a\cdot s)(a|_{s})$$
and one can check that these maps will satisfy (ZS1) - (ZS8).

Thus we have the following, originally proved in \cite{Kunze}:

\begin{thm}
\label{sufconlrm}
Let $M$ be a monoid and let $A$, $S$ be submonoids of $M$. Then $M = SA$ uniquely if and only if there are maps $A \times S \rightarrow S$ and $A \times S \rightarrow A$ satisfying (ZS1) - (ZS8) such that $M \cong S\bowtie A$.
\end{thm}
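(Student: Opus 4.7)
The plan is to prove both implications by exploiting the uniqueness of the factorisation $M = SA$, turning multiplication in $M$ into the data of the two actions, and vice versa.

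For the forward direction I would begin by assuming $M = SA$ uniquely. Then given $a \in A$ and $s \in S$, the product $as \in M$ admits a unique expression $as = s'a'$ with $s' \in S$ and $a' \in A$, so I define $a \cdot s := s'$ and $a|_s := a'$; thus by construction $as = (a \cdot s)(a|_s)$. The eight axioms will then drop out by computing suitable products in two different ways and invoking uniqueness of the factorisation. The cleanest route is: rewrite $(ab)s$ as $a(bs) = a(b\cdot s)(b|_s) = (a\cdot(b\cdot s))(a|_{b\cdot s})(b|_s)$ and compare with $((ab)\cdot s)((ab)|_s)$ to obtain (ZS1) and (ZS4); rewrite $a(st) = (as)t = (a\cdot s)(a|_s)t = (a\cdot s)(a|_s \cdot t)(a|_s|_t)$ and compare with $(a\cdot(st))(a|_{st})$ to obtain (ZS2) and (ZS3); and finally handle the identity axioms (ZS5)–(ZS8) by using the unique factorisations $a = 1_S \cdot a = a \cdot 1_A$ and $s = s \cdot 1_A = 1_A \cdot s$. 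To finish this direction, I would define $\phi : S \bowtie A \to M$ by $\phi(s,a) = sa$, check bijectivity directly from the uniqueness hypothesis, and verify
\[
\phi((s,a)(t,b)) = \phi\bigl(s(a\cdot t),\,(a|_t)b\bigr) = s(a\cdot t)(a|_t)b = s(at)b = \phi(s,a)\,\phi(t,b),
\]
where the middle equality uses the definition of the action and restriction.

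For the converse, I would assume maps $A \times S \to S$ and $A \times S \to A$ satisfy (ZS1)–(ZS8) and that $M \cong S \bowtie A$. The excerpt has already observed that inside $S \bowtie A$ the submonoids $S' = \{(s,1_A)\}$ and $A' = \{(1_S,a)\}$ are isomorphic to $S$ and $A$ respectively, and that every element factors uniquely as $(s,a) = (s,1_A)(1_S,a)$, i.e.\ $S \bowtie A = S'A'$ uniquely. Transporting this along the isomorphism $M \cong S \bowtie A$ gives $M = SA$ uniquely, completing the proof.

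The main obstacle I anticipate is purely bookkeeping: one must be careful in the forward direction that the two computations of $a(bs)$ and $a(st)$ each yield an expression of the form (element of $S$)(element of $A$), so that the uniqueness of $M = SA$ really does force equality of the two pieces separately. This is where associativity in $M$ is doing all the work, and where writing intermediate products always as $(\text{something in }S)(\text{something in }A)$ is essential; once this is done carefully the eight axioms fall out almost mechanically.
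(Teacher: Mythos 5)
Your proposal is correct and takes essentially the same route as the paper, which states the theorem (crediting Kunze) and only sketches the argument in the surrounding text: define the action and restriction from the unique factorisation $as = (a\cdot s)(a|_{s})$, derive (ZS1)--(ZS8) from associativity together with uniqueness of the decomposition, obtain the isomorphism via $(s,a)\mapsto sa$, and read the converse off from the unique factorisation $S\bowtie A = S^{\prime}A^{\prime}$ transported through the isomorphism. Nothing further is needed.
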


We will be interested in a particular case of Zappa-Sz\'{e}p products where $A$ is a group, now denoted $G$, and $S$ is the free monoid on a set $X$. We will identify the identities of $G$ and $X^{\ast}$ and we will now relabel the axioms as follows for this special case:
\begin{comment}
Let $G$ be a group, $X$ a set, $G \times X^{\ast} \rightarrow X^{\ast}$ an operation, called the {\em action}, denoted by
$(g,x) \mapsto g \cdot x$, and $G \times X^{\ast} \rightarrow G$ an operation, called the {\em restriction}, denoted by $(g,x) \mapsto g|_{x}$,
such that the following eight axioms hold:
\end{comment}
\begin{description}
\item[{\rm (SS1)}] $1 \cdot x = x$.
\item[{\rm (SS2)}] $(gh) \cdot x = g \cdot (h \cdot x)$.
\item[{\rm (SS3)}] $g \cdot 1 = 1$.
\item[{\rm (SS4)}] $g \cdot (xy) = (g \cdot x)(g|_{x} \cdot y)$.
\item[{\rm (SS5)}] $g|_{1} = g$.
\item[{\rm (SS6)}] $g|_{xy} = (g|_{x})|_{y}$.
\item[{\rm (SS7)}] $1|_{x} = 1$.
\item[{\rm (SS8)}] $(gh)|_{x} = g|_{h \cdot x} h|_{x}$.
\end{description}
We will then say that there is a {\em self-similar action of the group $G$ on the free monoid $X^{\ast}$}.
When we refer to a `self-similar group action $(G,X)$', we shall assume that the action and restriction have been chosen and are fixed.
It is easy to show that such an action is {\em length-preserving}, in the sense that $|g \cdot x| = |x|$ for all $x \in X^{\ast}$,
and {\em prefix-preserving}, in the sense that $x = yz$ implies that $g \cdot x = (g \cdot y)z'$ for some string $z'$.

The following was proved in \cite{LawLRM1}.

\begin{lemma} Let $(G,X)$ be a self-similar group action.
\label{inverserest}
\begin{description}
\item[{\rm (i)}]  $(g|_{x})^{-1} = g^{-1}|_{g \cdot x}$ for all $x \in X^{\ast}$ and $g \in G$.
\item[{\rm (ii)}] $(g^{-1}|_{x})^{-1} = g|_{g^{-1} \cdot x}$ for all $x \in X^{\ast}$ and $g \in G$. \bbox \\
\end{description}
\end{lemma}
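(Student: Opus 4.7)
The plan is to derive both identities as immediate consequences of axiom (SS8), using (SS7) and (SS1) to simplify the right-hand side when one of the group elements is the inverse of the other.

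First I would prove (i). Applying axiom (SS8) with the pair $(g^{-1},g)$ in place of $(g,h)$ yields
\[
(g^{-1}g)|_{x} \;=\; g^{-1}|_{g\cdot x}\,g|_{x}.
\]
The left-hand side is $1|_{x}$, which equals $1$ by axiom (SS7). Hence $g^{-1}|_{g\cdot x}\,g|_{x} = 1$ in the group $G$, so $g^{-1}|_{g\cdot x}$ is a left inverse of $g|_{x}$; since $G$ is a group, this is the two-sided inverse, giving $(g|_{x})^{-1} = g^{-1}|_{g\cdot x}$.

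For (ii) I would repeat the same argument with the roles of $g$ and $g^{-1}$ swapped. Apply (SS8) to the pair $(g,g^{-1})$:
\[
(gg^{-1})|_{x} \;=\; g|_{g^{-1}\cdot x}\,g^{-1}|_{x},
\]
and again the left-hand side is $1$ by (SS7). So $g|_{g^{-1}\cdot x}$ is the inverse of $g^{-1}|_{x}$, which is exactly (ii). Alternatively, (ii) can be obtained from (i) by substituting $g^{-1}$ for $g$ and then inverting both sides.

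There is no real obstacle here: the whole lemma is a one-line consequence of the cocycle-style identity (SS8), with the group axiom (that one-sided inverses are two-sided) converting the computed identity into the stated formula. The only care needed is to apply (SS8) in the correct order so that the restriction that appears on the right is evaluated at $g\cdot x$ (for (i)) rather than at $x$.
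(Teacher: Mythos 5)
Your proof is correct: applying (SS8) to the pairs $(g^{-1},g)$ and $(g,g^{-1})$ and simplifying with (SS7) is exactly the standard argument, and the paper itself states this lemma without proof (it is cited to Lawson's paper), so there is nothing different to compare against. One tiny remark: (SS1) is never actually needed, only (SS7) and (SS8).
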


If $x \in G$ then $G_{x}$ is the stabiliser of $x$ in $G$ with respect to the action and so a subgroup of $G$.
The following lemma will play a useful role in what follows.

\begin{lemma} 
\label{usefullemma1}
Let $(G,X)$ be a self-similar group action.
\begin{description}
\item[{\rm (i)}] The function $\phi_{x} \colon \: G_{x} \rightarrow G$ given by $g \mapsto g|_{x}$ is a homomorphism.
\item[{\rm (ii)}] Let $y = g \cdot x$. 
Then $G_{y} = gG_{x}g^{-1}$
and
$$\phi_{y}(h) = g|_{x}\phi_{x}(g^{-1}hg) (g|_{x})^{-1}.$$
\item[{\rm (iii)}] If $\phi_{x}$ is injective then $\phi_{g \cdot x}$ is injective.
\item[{\rm (iv)}] $\phi_{x}$ is injective for all $x \in X$ iff $\phi_{x}$ is injective for all $x \in X^{\ast}$.
\item[{\rm (v)}] The function $\rho_{x}$ from $G$ to $G$ defined by $\rho_{x}:g \mapsto g|_{x}$ is injective for all $x \in X$ iff it is injective for all $x \in X^{\ast}$.
\item[{\rm (vi)}] The function $\rho_{x}$ from $G$ to $G$ defined by $\rho_{x}:g \mapsto g|_{x}$ is injective for all $x \in X$ iff for all $x \in X$, if $g|_{x} = 1$ then $g = 1$.
\item[{\rm (vii)}] The function $\phi_{x}$ is surjective for all $x \in X$ iff it is surjective for all $x \in X^{\ast}$.
\item[{\rm (viii)}] The function $\rho_{x}$ from $G$ to $G$ given by $\rho_{x}:g \mapsto g|_{x}$ is surjective for all $x \in X$ iff it is surjective for all $x \in X^{\ast}$.
\end{description}
\end{lemma}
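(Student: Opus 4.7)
The plan is to work through the eight parts in order, exploiting axioms (SS1)--(SS8) and Lemma~\ref{inverserest} systematically; the key mechanical tools are (SS8), which rewrites restrictions of products, and (SS6), which drives all the length inductions. For (i), since $h \in G_x$ means $h\cdot x = x$, axiom (SS8) immediately yields $(gh)|_x = g|_{h\cdot x}h|_x = g|_x h|_x$. For (ii), the identification $G_y = gG_xg^{-1}$ falls out of $h\cdot y = (hg)\cdot x = g\cdot((g^{-1}hg)\cdot x)$, so $h \in G_y$ iff $g^{-1}hg \in G_x$. The conjugation formula is then extracted by computing $(hg)|_x$ in two ways via (SS8): the factorisation $hg = g\cdot(g^{-1}hg)$ with $g^{-1}hg \in G_x$ gives $(hg)|_x = g|_x \cdot \phi_x(g^{-1}hg)$, while the direct factorisation $hg = h\cdot g$ gives $(hg)|_x = h|_{g\cdot x}\cdot g|_x = \phi_y(h)\cdot g|_x$; equating the two and solving yields the stated formula. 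Part (iii) is then immediate from (ii): conjugation by $g|_x$ is a bijection of $G$, while $k\mapsto g^{-1}kg$ restricts to a bijection $G_y \to G_x$, so injectivity of $\phi_x$ transfers directly to $\phi_y$.

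For (iv) and (vii), the engine is (SS6): for $x,y \in X^{\ast}$, $\phi_{xy}(g) = g|_{xy} = (g|_x)|_y$. A small preliminary step is needed, namely that $G_{xy} = \{g \in G_x : g|_x \in G_y\}$; this follows from (SS4) combined with unique factorisation in the free monoid, using that the action is length- and prefix-preserving. With this in hand $\phi_{xy} = \phi_y \circ (\phi_x|_{G_{xy}})$, and a straightforward induction on $|x|$ handles both (iv) and (vii); for surjectivity in (vii) one additionally notes that given $h\in G$, pulling back first through $\phi_y$ to some $k\in G_y$ and then through $\phi_x$ to some $g\in G_x$ with $g|_x = k$ automatically places $g$ in $G_{xy}$. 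Parts (v) and (viii) are the easier $\rho$-analogues, where no stabiliser bookkeeping is required: (SS6) now reads $\rho_{xy} = \rho_y \circ \rho_x$ as honest self-maps of $G$, so the induction on $|x|$ is routine.

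For (vi), the forward direction is immediate, since $1|_x = 1$ by (SS7) means $g|_x = 1 = 1|_x$ forces $g=1$ by injectivity of $\rho_x$. For the converse, given $g|_x = h|_x$, apply (SS8) to the factorisation $g = (gh^{-1})h$, obtaining $g|_x = (gh^{-1})|_{h\cdot x}\cdot h|_x$ and hence $(gh^{-1})|_{h\cdot x} = 1$; since $h\cdot x \in X$ by length preservation, the hypothesis forces $gh^{-1}=1$. The main obstacle, or at any rate the only point requiring genuine care rather than routine bookkeeping, is part (ii): one has to spot the right two factorisations of $hg$ and apply (SS8) to each correctly, and (iv)/(vii) then depend on the slightly fiddly identification of the stabiliser $G_{xy}$. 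Everything else is a disciplined pass through the axioms combined with induction on word length.
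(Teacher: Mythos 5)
Your proposal is correct and follows essentially the same route as the paper: part (i) via (SS8), part (ii) by conjugation bookkeeping, (iii) from (ii), and (iv)--(viii) by induction on word length driven by (SS6), with (vi) handled by a cancellation argument reducing to a letter. The only differences are cosmetic local computations — in (ii) you expand $(hg)|_{x}$ in two ways and in (vi) you factor $g=(gh^{-1})h$, thereby bypassing the explicit use of Lemma~\ref{inverserest} that the paper makes, but the underlying argument is the same.
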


\begin{proof}
(i) Let $g,h \in G_{x}$.
Then 
$$\phi_{x}(gh) = (gh)|_{x} = g|_{h \cdot x} h|_{x} = g|_{x} h|_{x} = \phi_{x}(g) \phi_{x}(h),$$
using (SS8), as required.

(ii) Let $h\in gG_{x}g^{-1}$. Then $h = gkg^{-1}$ for some $k\in G_{x}$ and so
$$h\cdot y = (gkg^{-1})\cdot (g\cdot x) = g\cdot (k\cdot x) = g\cdot x = y.$$
Thus $h\in G_{y}$. On the other hand, let $h\in G_{y}$. Then
$$(g^{-1}hg)\cdot x = g^{-1}\cdot y = g^{-1}\cdot (g\cdot x)  = x$$
and so $h\in gG_{x}g^{-1}$. If $h\in G_{y}$ then
$$\phi_{y}(h) = h|_{y} = (gg^{-1}hgg^{-1})|_{g\cdot x} = (gg^{-1}hg)|_{x}g^{-1}|_{g\cdot x} = g|_{x}\phi_{x}(g^{-1}hg)(g|_{x})^{-1}.$$

(iii) This follows by (ii) above.

(iv) We need only prove one direction.
We prove the result by induction on the length of $x$.
The result is true for strings of length one by assumption.
We assume the result is true for strings of length $n$.
We now prove it for strings of length $n+1$.
Let $y\in X^{\ast}$ be of length $n+1$.
Then $y = zx$ where $z$ has length $n$ and $x$ has length one.
We prove that $\phi_{y}$ is injective on $G_{y}$.
Let $h,k \in G_{y}$.
Then $h \cdot y = y = k \cdot y$.
By comparing lengths, it follows that 
$h \cdot z = z = k \cdot z$
and
$h|_{z} \cdot x = x = k|_{z} \cdot x$.
Suppose that $\phi_{y}(h) = \phi_{y}(k)$.
Then $h|_{y} = k|_{y}$.
By axiom (SS6), we have that
$(h|_{z})|_{x} = (k|_{z})|_{x}$.
But $h|_{z}, k|_{z} \in G_{x}$, and so
by injectivity for letters $h|_{z} = k|_{z}$.
Also $h,k \in G_{z}$, and so by the induction hypothesis $h = k$, as required.

(v) Just one direction needs proving. We again prove the result by induction. It is true for strings of length one by assumption. Let us assume it is true for strings of length $n$. Let $y\in X^{\ast}$ be a string of length $n+1$ and suppose $g|_{y} = h|_{y}$ for some $g,h\in G$. Then $y = zx$ for some $z,x\in X^{\ast}$ with $|z|=n$ and $|x| =1$. It follows from (SS8) that $(g|_{z})|_{x} = (h|_{z})|_{x}$. Since $\rho_{x}$ is injective we see that $g|_{z} = h_{z}$ and since $\rho_{z}$ is injective we must have $g = h$.

(vi) One direction is clear.
We prove the other direction.
Suppose that for all $x \in X$, if $g|_{x} = 1$ then $g = 1$.
We prove that the function from $G$ to $G$ defined by $g \mapsto g|_{x}$ is injective for all $x \in X$.
Suppose that $g|_{x} = h|_{x}$.
Then $g|_{x}(h|_{x})^{-1} = 1$.
By Lemma \ref{inverserest}, $(h|_{x})^{-1} = h^{-1}|_{h \cdot x}$.
Put $y = h \cdot x$.
Then 
$$1 = g|_{x}(h|_{x})^{-1} = (g|_{h^{-1} \cdot y})( h^{-1}|_{y}) = (gh^{-1})|_{y}$$
by (SS8).
By assumption $gh^{-1} = 1$ and so $g = h$.

(vii) Only one direction needs to be proved.
We assume the result holds for strings of length 1.
Suppose that the result holds for strings $n$.
Let $y$ be a string of length $n+1$.
Then $y = zx$ where $x$ is a letter and $z$ has length $n$.
Let $g \in G$.
Then because $\phi_{x}$ is surjective, 
there exists $h \in G_{x}$ such that $\phi_{x}(h) = g$.
By the induction hypothesis, there exists $k \in G_{z}$ such that $\phi_{z}(k) = h$.
We now calculate 
$$k \cdot y = k \cdot (zx) = (k \cdot z)(k|_{z} \cdot x) = zx = y.$$
Thus $k \in G_{y}$ and $\phi_{y}(k) = k|_{zx} = (k|_{z})|_{x} = h|_{x} = g$,
as required using axiom (SS6).

(viii) We need only prove one direction. Again we prove by induction.
Assume that $\rho_{x}$ is surjective for all $x\in X$. 
Suppose $\rho_{x}$ is surjective for all $x$ of length $n$.
Let $y$ be a string of length $n+1$ so that $y = zx$ for some strings $z,x$ with $|z| = n$ and $|x| = 1$ and let $h\in G$ be arbitrary.
Then there exist $g\in G$ with $g|_{x} = h$ and $k\in G$ with $k|_{z} = g$ by the induction hypotheses so that using (SS6) we have
$$k|_{y} = k|_{zx} = (k|_{z})|_{x} = g|_{x} = h.$$
Thus $\rho_{y}$ is surjective.
\end{proof}

%%%%%%%%%%%%%%%%%%%%%%%%%%%%%%%%%%%%%%%%%%%%%%%%%%%%%%%%%%%%%%%%%%%%%%%%%%%%%%%%%%%%%%%%%%%%%%%%%%%%%%%%%%%%%%%%%%%%%%%%%%%%%%%%%%
Let $M$ be a left Rees monoid, let $G = G(M)$ be its group of units,
let $X$ be a transversal of the generators of the maximal proper principal right ideals,
and denote by $X^{\ast}$ the submonoid generated by the set $X$.
Then $X^{\ast}$ is free, 
$M = X^{\ast}G$,
and each element of $M$ can be written uniquely as a product of an element of $X^{\ast}$ and an element of $G$.
Let $g \in G$ and $x \in X^{\ast}$.
Then $gx \in M$ and so can be written uniquely in the form
$gx = x'g'$ where $x' \in X^{\ast}$ and $g' \in G$.
Define $x' = g \cdot x$ and $g' = g|_{x}$.
Then it is easy to check that this defines a self-similar action of $G$ on $X^{\ast}$.

Let $(G,X)$ be an arbitrary self-smilar group action.
On the set $X^{\ast} \times G$ define its Zappa-Sz\'{e}p product as above by
$$(x,g)(y,h) = (x(g \cdot y), g|_{y}h).$$ 
Then $X^{\ast} \times G$ is a left Rees monoid containing copies of $X^{\ast}$ and $G$
such that $X^{\ast} \times G$ can be written as a unique product of these copies.
% This monoid is called the {\em Zappa-Sz\'ep product of $X^{\ast}$ and $G$} and is denoted $X^{\ast} \bowtie G$.

It follows that a monoid is a (non-group) left Rees monoid 
if and only if it is isomorphic to a Zappa-Sz\'ep
product of a free monoid by a group. 

In turn, Zappa-Sz\'ep products of free monoids by groups determine,
and are determined by, self-similar group actions.
We have therefore set up a correspondence between left Rees monoids
and self-similar group actions in which
each determines the other up to isomorphism.

%%%%%%%%%%%%%%%%%%%%%%%%%%%%%%%%%%%%%%%%%%%%%%%%%%%%%%%%%%%%%%%%%%%%%%%%%%%%%%%%%%%%%%%%%%%%%%%%%%%%%%%%%%%%%%%%%%%%%%%%%%%%%%%%%%%%%%%%%%%%%%%%%%%%%
Throughout this section let $M = X^{\ast}G$ be a left Rees monoid.
Define
$$\mathcal{K}(M) = \{g \in G \colon \: gs \in sG \mbox{ for all } s \in S \},$$
a definition due to Rees \cite{Rees}.
This is a normal subgroup of $G$
which we call the {\em kernel} of the left Rees monoid.
Left Rees monoids $S$ for which $\mathcal{K}(M) = \{1 \}$ are said to be {\em fundamental}.
It can be checked that $\mathcal{K}(M) = \bigcap_{x \in X^{\ast}}G_{x}$,
and so a left Rees monoid is fundamental iff the corresponding group action is faithful.

%%%%%%%%%%%%%%%%%%%%%%%%%%%%%%%%%%%%%%%%%%%%%%%%%%%%%%%%%%%%%%%%%%%%%%%%%%%%%%%%%%%%%%%%%%%%%%%%%%%%%%%%%%%%%%%%%%%%%%%%%%%%%%%%%%%%%%%%%%%%%%%%%%%%%%%%%%%%%%%%%%%%%

Let us summarise some facts and notions relating to self-similar group actions which are described in detail in \cite{NekrashevychBook}. A group $G$ acts by \emph{automorphisms} on a regular rooted tree if the action is level-preserving, if it does not move the root and if $\dom(g\cdot x)=g\cdot \dom(x)$ and $\ran(g\cdot x)=g\cdot \ran(x)$ for each edge $x$. Viewing $X^{*}$ as a tree, we see that in a self-similar action $G$ acts on $X^{*}$ in a length-preserving manner and therefore by automorphisms. We see that $G\leq Aut(X^{*})$ if and only if $G$ acts faithfully. 

Let $G\leq \Aut(X^{*})$ be a subgroup acting on the left on the rooted tree $X^{*}$ (so, in particular, it acts faithfully). Then for each $x\in X^{*}$ and $g\in G$ there is a unique automorphism $g|_{x}\in \Aut(X^{*})$ such that $g\cdot (xy) = (g\cdot x)(g|_{x}\cdot y)$ for each $y\in X^{*}$. Call this the \emph{restriction} of $g$ by $x$. Denote both the identity of $G$ and the root of the tree by $1$. It can be checked that restrictions satisfy the following properties, for all $g,h\in G$ and $x,y\in X^{*}$:
\begin{enumerate}
\item $g|_{1}=g$
\item $g|_{xy}=(g|_{x})|_{y}$
\item $1|_{x}=1$
\item $(gh)|_{x}=g|_{h\cdot x}g|_{x}$
\end{enumerate}
So we see that subgroups of the automorphism group of $X^{*}$ which are closed under restriction give rise to unique fundamental left Rees monoids. On the other hand, given a fundamental left Rees monoid $M = X^{\ast}G$, then $G$ is a subgroup of $\Aut(X^{*})$ closed under the restriction maps.

Let $H$ be a group acting on the left by permutations on a set $X$ and let $G$ be an arbitrary group. Then the \emph{(permutational) wreath product} $H\wr G$ is the semi-direct product $H \ltimes G^{X}$, where $H$ acts on the direct power $G^{X}$ by the respective permutations of the direct factors.

Let $M=X^{*}G$ be a left Rees monoid, $|X|=d$ and let $\mathcal{S}(X)$ denote the symmetric group on the set $X$. Then we have a homomorphism $\psi:G\rightarrow \mathcal{S}(X)\wr G$ given by:
$$\psi(g)=\sigma(g|_{x_{1}},\ldots,g|_{x_{d}}),$$
where $\sigma$ is the permutation on $X$ determined by the action of $g$ on $X$. On the other hand, given a homomorphism $\psi:G\rightarrow \mathcal{S}(X)\wr G$, we have a unique induced self-similar action. The map $\psi$ is called the \emph{wreath recursion}.

We know that the definition of left Rees monoids involves principal right ideals. Green's $\mathcal{R}$-relation is defined on monoids $M$ by $s\mathcal{R} t$ if $sM = tM$; that is, they generate the same principal right ideals. In our situation we have the following lemma:

\begin{comment}
The structure of the principal right ideals will play an important role in our calculations.
These are handled using Green's relations $\mathcal{R}$, $\mathcal{L}$, $\mathcal{H}$, $\mathcal{J}$ and $\mathcal{D}$. We will now summarise some results about these relations from LRMPAPER

Green's relation $\mathcal{R}$ is defined by $a \,\mathcal{R}\, b$ iff $aS = bS$.
It is worth recalling a well-known result here.
If $a \, \mathcal{R} \, b$ then $a = bg$ and $b = ah$ for some $g,h \in S$.
Then $a = bg = ahg$ and $b = ah = bgh$.
Thus by left cancellation we have that $1 = hg = gh$
and so $g$ and $h$ are mutually inverse units.
The proof of the following is now immediate using the uniqueness of the decomposition into elements of $X^{\ast}$ and units.

\end{comment}

\begin{lemma}
\label{Rclasslrm}
Let $xg, yh \in M$. Then
$xg \,\mathcal{R}\, yh$
iff
$x = y$.
In particular, each $\mathcal{R}$-class contains exactly one element from $X^{\ast}$.
\end{lemma}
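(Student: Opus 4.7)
The plan is to prove both directions directly using the unique factorisation $M = X^{\ast}G$ and left cancellativity.

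For the ``if'' direction, suppose $x = y$. Then $yh = xh = (xg)(g^{-1}h)$, so $yh \in xgM$, and similarly $xg = (yh)(h^{-1}g) \in yhM$. Hence $xgM = yhM$, i.e.\ $xg \,\mathcal{R}\, yh$.

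For the ``only if'' direction, suppose $xg \,\mathcal{R}\, yh$. Then there exist $s,t \in M$ with $xg = (yh)s$ and $yh = (xg)t$. Substituting gives $xg = (xg)(ts)$, and left cancellativity (LR1) forces $ts = 1$; symmetrically $st = 1$, so $s$ and $t$ are mutually inverse units in $G$. Writing $xg = y(hs)$ with $hs \in G$ and comparing with the unique decomposition $xg \in X^{\ast}G$ yields $x = y$ (and incidentally $g = hs$).

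The ``in particular'' clause then follows: if $x, y \in X^{\ast}$ lie in the same $\mathcal{R}$-class, we apply the lemma with units $g = h = 1$ to conclude $x = y$; conversely every element $xg$ is $\mathcal{R}$-related to the element $x \in X^{\ast}$ (take $g=1$, $h=g$ in the first direction). There is no real obstacle here: the only thing one must be careful about is invoking left cancellativity to upgrade the witnesses $s,t \in M$ to units, which is the standard argument already flagged in the commented-out remark about Green's $\mathcal{R}$-relation.
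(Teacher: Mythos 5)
Your proof is correct and follows essentially the same route as the paper's: show the two elements differ by a unit for the easy direction, and for the converse use left cancellativity to force the witnesses $s,t$ to be mutually inverse units, then invoke the uniqueness of the decomposition $M = X^{\ast}G$ to get $x = y$. No gaps; the handling of the ``in particular'' clause is also fine.
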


\begin{proof}
We see that $xgg^{-1}h = xh$ and $xhh^{-1}g = xg$ so that $xg\mathcal{R}xh$. 
On the other hand if $xg\mathcal{R}yh$ then $xgu = yh$ and $yhv = xg$ for some $u,v\in M$. 
Thus $xguv = xg$ and $yhvu = yh$. By left cancellativity $uv = vu = 1$ and so $u,v\in G$.
Thus since elements of $M$ can be uniquely written in the form $xg$ for $x\in X^{\ast}$, $g\in G$ it follows that $x = y$. 
\end{proof}

In fact if $x,y \in X^{\ast}$ then $xM \subseteq yM$ iff $x = yz$ for some $z \in X^{\ast}$.
Combined with Lemma \ref{Rclasslrm}, this tells us that the partially ordered set $M/\mathcal{R}$ of
$\mathcal{R}$-classes is order-isomorphic to the set $X^{\ast}$ equipped with the prefix ordering.\\

Green's $\mathcal{J}$-relation is defined on monoids by $s\, \mathcal{J}\, t$ iff $MsM = MtM$; that is, the principal two-sided ideals generated by $s$ and $t$ are equal. We have the following for left Rees monoids:

\begin{lemma}
\label{greenJLRM}
\begin{description}

\item[{\rm (i)}]  $MxgM\subseteq MyhM$ implies $|y|\leq |x|$.

\item[{\rm (ii)}] Let $xg,yh\in M$. Then
$xg \,\mathcal{J}\, yh$
iff
$x$ and $y$ are in the same orbit under the action of $G$.

\end{description}
\end{lemma}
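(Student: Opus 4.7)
The plan is to exploit the length function $\lambda \colon M \to \mathbb{N}$ from the introduction, which vanishes exactly on units and is a monoid homomorphism, together with the uniqueness of the normal form $xg$ with $x \in X^{\ast}$ and $g \in G$.

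For part (i), I would note that $MxgM \subseteq MyhM$ forces $xg = u \cdot yh \cdot v$ for some $u,v \in M$. Applying $\lambda$ and using that $\lambda(g) = \lambda(h) = 0$ gives
$$|x| = \lambda(xg) = \lambda(u) + |y| + \lambda(v) \geq |y|.$$
This is the entire content of (i); no further machinery is required.

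For the forward direction of (ii), suppose $xg \,\mathcal{J}\, yh$. Part (i) immediately yields $|x| = |y|$. Writing $xg = u \cdot yh \cdot v$ as above, the length equation now forces $\lambda(u) = \lambda(v) = 0$, hence $u, v \in G$. Setting $u = g'$ and computing $g' y = (g' \cdot y)(g'|_{y})$ turns the equation into
$$xg = (g' \cdot y)(g'|_{y})\, h v,$$
and the uniqueness of decomposition in $M = X^{\ast}G$ lets me read off $x = g' \cdot y$. Thus $x$ and $y$ are in the same $G$-orbit.

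For the converse, if $y = g' \cdot x$ then $g' x = y \cdot g'|_{x}$ in $M$, so
$$yh = g' x (g'|_{x})^{-1} h = g' (xg) g^{-1} (g'|_{x})^{-1} h \in M\, xg\, M.$$
Replacing $g'$ by $(g')^{-1}$ and exchanging the roles of $x$ and $y$ gives the reverse inclusion, yielding $xg \,\mathcal{J}\, yh$. I do not anticipate any genuine obstacle here; the only point that needs care is keeping track of which restrictions and inverses sit on which side, which is handled mechanically by the Zappa–Sz\'{e}p axioms (SS1)--(SS8).
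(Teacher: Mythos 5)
Your proof is correct and follows essentially the same route as the paper: additivity of length for (i), then for (ii) the length count forces the outer factors to be units, the unique $X^{\ast}G$ decomposition identifies $x$ with $g'\cdot y$, and the converse is the same explicit manipulation with restrictions and inverses of units. The only cosmetic difference is your explicit appeal to the length function $\lambda$, which the paper leaves implicit.
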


\begin{proof}
(i) If $MxgM \subseteq MyhM$ then there exist $s,t\in M$ with $syht = xg$ and so $|y|\leq |x|$.

(ii) By (i), if $MxgM = MyhM$ then there exist $u,v,w,z\in G$ with $uxgv = yh$ and $wyhz = xg$ and so by the unique normal form of elements of $M$ we have $y = u\cdot x$. Thus $x$ and $y$ are in the same orbit under the action of $G$. 

Let $g,h\in G$ be arbitrary. If $x,y\in X^{\ast}$ are such that $y = u\cdot x$ for some $u\in G$ then $uxgg^{-1}(u|_{x})^{-1}h = yh$ and $u^{-1}yhh^{-1}u|_{x}g = xg$. Thus $MxgM = MyhM$.
\end{proof}

We will say the a self-similar group action $(G,X)$ is \emph{transitive} if the action of $G$ on $X$ is transitive and \emph{level-transitive} if the action of $G$ on $X^{n}$ is transitive for each $n$. We then have the following corollaries of Lemma \ref{greenJLRM}:

\begin{corollary}
\begin{description}
\item[{\rm (i)}]  A self-similar group action $(G, X)$ is transitive if and only if the associated left Rees monoid has a unique maximal proper principal two-sided ideal.

\item[{\rm (ii)}] A self-similar group action $(G, X)$ is level-transitive if and only if the principal two-sided ideals of the associated left Rees monoid form an infinite descending chain.
\end{description}
\end{corollary}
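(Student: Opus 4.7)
The plan is to reduce both parts directly to Lemma \ref{greenJLRM}, using the fact that principal two-sided ideals correspond to $\mathcal{J}$-classes.

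For (i) I would first pin down which principal two-sided ideals are maximal proper. By Lemma \ref{greenJLRM}(i), any containment $MxgM \subseteq MyhM$ forces $|yh| \leq |xg|$, so if $|xg| \geq 2$ then writing $x = x_1 x'$ with $x_1 \in X$ yields $MxgM \subsetneq Mx_1M$; hence only generators of length one can produce maximal proper ideals. Conversely, for $x \in X$ the ideal $MxM$ is proper (it contains no unit) and maximal, because a strict containment $MxM \subsetneq MyhM$ would need $|yh|<1$, forcing $yh$ to be a unit. Lemma \ref{greenJLRM}(ii) then gives $MxM = MyM$ (for $x,y \in X$) precisely when $x,y$ lie in the same $G$-orbit, so the number of maximal proper principal two-sided ideals equals the number of $G$-orbits on $X$, which is $1$ iff the action on $X$ is transitive.

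For (ii) the key auxiliary observation I would establish first is a rigidity statement: if $x,y \in X^{\ast}$ with $|x|=|y|$ and $MxM \subseteq MyM$, then $MxM = MyM$. Indeed, writing $x = syt$ with $s,t \in M$ and comparing lengths forces $s,t \in G$; expanding $gyh$ in the Zappa-Sz\'ep normal form $M=X^{\ast}G$ yields $x = g \cdot y$, so $x$ and $y$ are $G$-orbit equivalent and Lemma \ref{greenJLRM}(ii) gives equality. Combined with Lemma \ref{greenJLRM}(i), this says ideals at distinct lengths are strictly nested when comparable, while ideals at the same length are either equal or incomparable.

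With this in hand, (ii) follows quickly. If the action is level-transitive then at each length $n$ there is a unique $\mathcal{J}$-class and therefore a unique principal two-sided ideal $I_n$; choosing $x \in X^n$ and any letter $y$ with $xy \in X^{n+1}$ shows $I_{n+1} \subsetneq I_n$, producing the infinite descending chain $M = I_0 \supsetneq I_1 \supsetneq I_2 \supsetneq \cdots$. Conversely, if the principal two-sided ideals form a chain, then by the rigidity observation there can be at most one $\mathcal{J}$-class at each length (two distinct ones would be incomparable), which by Lemma \ref{greenJLRM}(ii) means $G$ acts transitively on $X^n$ for every $n$; infinitude is automatic since $M$ has elements of every length.

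The only non-bookkeeping step is the short rigidity computation $MxM \subseteq MyM,\ |x|=|y| \Rightarrow MxM = MyM$, but this is a direct consequence of the uniqueness of the Zappa-Sz\'ep decomposition together with the length function, so no real obstacle is expected.
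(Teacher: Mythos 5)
Your proof is correct and follows exactly the route the paper intends: the corollary is stated there as an immediate consequence of Lemma \ref{greenJLRM}, and your argument is precisely that reduction, with the only substantive addition being the rigidity step (equal lengths plus containment of principal two-sided ideals forces equality via the unique Zappa--Sz\'ep normal form), which is correct and indeed the detail needed to make both directions work. The one cosmetic point is that you invoke this rigidity already in part (i) (to get maximality of $MxM$ for $x\in X$) before establishing it in part (ii), so it would read better stated up front as a lemma.
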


It will be useful later to know whether our left Rees monoid is in fact cancellative (and therefore a Rees monoid). The following will be proved for the more general case of left Rees categories as Lemma \ref{LRCcancphi} in Chapter 3:

\begin{lemma} 
\label{cancLRM}
Let $M$ be a left Rees monoid.
Then the following are equivalent.
\begin{description}

\item[{\rm (i)}] The functions $\phi_{x} \colon \: G_{x} \rightarrow G$ are injective for all $x \in X^{\ast}$.

\item[{\rm (ii)}] The monoid $M$ is right cancellative (and so cancellative).

\end{description}
\end{lemma}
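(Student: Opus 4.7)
The plan is to prove both directions using the unique normal form $xg$ with $x \in X^{\ast}$ and $g \in G$, together with the Zappa--Sz\'ep multiplication rule $(xg)(yh) = x(g \cdot y)(g|_{y}h)$, plus Lemma~\ref{inverserest} on inverse restrictions.

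For the easy direction, suppose $M$ is right cancellative and take $g,h \in G_{x}$ with $g|_{x} = h|_{x}$. Since $g,h$ fix $x$, one computes $gx = (g \cdot x)(g|_{x}) = x \, g|_{x}$ and similarly $hx = x \, h|_{x}$, so $gx = hx$. Right cancellation by $x$ then gives $g = h$, proving injectivity of $\phi_{x}$.

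For the harder direction, assume every $\phi_{x}$ is injective and suppose $(xg)(zk) = (yh)(zk)$. Expanding via the Zappa--Sz\'ep product yields
\[
x(g \cdot z) \,(g|_{z}k) \;=\; y(h \cdot z) \,(h|_{z}k).
\]
Uniqueness of the normal form gives $x(g \cdot z) = y(h \cdot z)$ in $X^{\ast}$ and $g|_{z}k = h|_{z}k$ in $G$. Group cancellation gives $g|_{z} = h|_{z}$. Since the action is length preserving and $|z|$ is common to both sides, comparing lengths forces $|x| = |y|$, and then comparing prefixes of equal-length strings gives $x = y$ and $g \cdot z = h \cdot z$. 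In particular $h^{-1}g \in G_{z}$. Now axiom (SS8) and Lemma~\ref{inverserest}(i) give
\[
(h^{-1}g)|_{z} \;=\; h^{-1}|_{g \cdot z}\, g|_{z} \;=\; h^{-1}|_{h \cdot z}\, g|_{z} \;=\; (h|_{z})^{-1} g|_{z} \;=\; 1,
\]
using $g|_{z} = h|_{z}$. Thus $\phi_{z}(h^{-1}g) = 1 = \phi_{z}(1)$, and injectivity of $\phi_{z}$ yields $g = h$. Combined with $x = y$ this gives $xg = yh$, establishing right cancellativity.

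The main obstacle is the calculation showing $(h^{-1}g)|_{z} = 1$, which is what tethers the hypothesis (injectivity of $\phi_{z}$ on the stabiliser) to the conclusion (equality of two units); everything else is bookkeeping with unique normal forms. Once one realises that right cancellation of $(zk)$ must split into a string part and a unit part and that the string part pins down $g \cdot z = h \cdot z$, the use of (SS8) together with Lemma~\ref{inverserest} to convert $g|_{z} = h|_{z}$ into a statement about $\phi_{z}$ is essentially forced.
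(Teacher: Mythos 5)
Your proof is correct and follows essentially the same route as the paper's own argument (given in the more general categorical form as Lemma~\ref{LRCcancphi}): expand the products in normal form, use length-preservation and uniqueness to split into string and unit parts, and then convert $g|_{z}=h|_{z}$ into injectivity of a stabiliser map via (SS8) and Lemma~\ref{inverserest}. The only cosmetic difference is that you apply injectivity of $\phi_{z}$ to $h^{-1}g\in G_{z}$, whereas the paper applies injectivity of $\phi_{t}$ to $gk^{-1}\in G_{t}$ with $t=g\cdot z$; both are available since (i) is assumed for all strings.
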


\section{Monoid HNN-extensions}

%Throughout this section, certain finiteness conditions will be tacitly assumed.

We have seen above that left Rees monoids and self-similar group actions are two different, but equivalent, ways of viewing the same mathematical idea. In this section, we describe a third way, namely, in terms of monoid HNN-extensions. We will only sketch the proof as the theorem will appear in greater generality as Theorem \ref{HNNLRC} in Chapter 3. We will then explain some consequences of this result, and touch on the relationship with Bass-Serre theory which will be expanded further in Chapter 3. Finally, we will demonstrate the main theorem with a simple example.

First, let us briefly explain the motivation behind the main result, which comes from a result of Cohn on proving certain monoids embed in their groups of fractions, as found in \cite{Cohn71}. It is difficult to see how his proof works and he uses a completely different proof in terms of string rewriting appears in the second edition of the same book (\cite{Cohn85}). Let us outline his argument. Let $M$ be a cancellative right rigid monoid, let $G$ be its group of units and for each $a\in M$ let 
$$G_{1}(a) = \left\{u\in G|ua\in aG\right\}$$
and
$$G_{-1}(a) = \left\{u\in G|au\in Ga\right\}.$$
Define $a\sim b$ in $M$ if $a = ubv$ for some $u,v\in G$. One can easily verify that $G_{1}(a), G_{-1}(a)$ are subgroups of $G$, if $a\sim b$ then $G_{1}(a)\cong G_{1}(b)$ and for any $u\in G$ we have $G_{1}(u) = G_{-1}(u) = G$. Let $T_{1}(a),T_{-1}(a)$ be complete sets of left coset representatives of $G_{1}(a),G_{-1}(a)$ respectively in $G$ with $1$ represented by itself and let $A$ be a complete set of representatives of $\sim$-classes of $M$ with $G$ represented by $1$. Cohn then considers the set of expressions 
$$t_{1}a_{1}^{\epsilon_{1}}t_{2}a_{2}^{\epsilon_{2}}\cdots t_{m}a_{m}^{\epsilon_{m}}u$$
where $t_{i}\in T_{\epsilon_{i}}(a_{i})$, $a_{i}\in A$, $\epsilon_{i}\in \left\{-1,1\right\}$, $u\in G$, subject to the condition that if $t_{i} = 1$ and $a_{i-1} = a_{i}$ then $\epsilon_{i-1} + \epsilon_{i} \neq 0$ for $i=2,\ldots,r$. He then claims that it is a routine though tedious exercise to verify that the permutation group on the set of such expressions contains the original monoid as a subsemigroup and that elements of this permutation group have a unique normal form, namely as one of the expressions. To see why I am unclear how this would work suppose $M = X^{\ast}G$ is a Rees monoid. Let $x_{1},x_{2}\in X$ and let $y = x_{1}x_{2}\in X^{\ast}$ be such that these are each in $A$. Then the expressions $x_{1}x_{2}$ and $y$ are distinct according to Cohn's rule but clearly they represent the same element of $M$. It is difficult to see therefore how one would embed $M$ in the permutation group on such expressions. The key idea, however, that seems to have something to it is that there might be a connection between certain cancellative right rigid monoids and Bass-Serre theory (compare the preceding argument and Proposition \ref{normalformgroupoidHNN}).

Let $S$ be a monoid, $I$ an index set, $\breve{S} = S\setminus \left\{1\right\}$, $H_{i}:i\in I$ submonoids of $S$ and let $\rho_{i}:H_{i}\rightarrow S$ be homomorphisms for each $i\in I$. Then $M$ is a \emph{monoid HNN-extension} of $S$ if $M$ can be defined by the following monoid presentation
$$M=\langle \breve{S}, t_{i}:i\in I | \mathcal{R}(S), \quad h t_{i} = t_{i} \rho_{i}(h) \quad \forall h\in H_{i}, i\in I \rangle, $$ 
where $\mathcal{R}(S)$ denotes the relations of $S$. We will call the the generators $t_{i}:i\in I$ \emph{stable letters}, and say that $M$ is a \emph{monoid HNN-extension on a single stable letter} if $|I| = 1$. For the moment, the use of the phrase \emph{monoid HNN-extension} is simply based on the similarity of presentation to that of a group HNN-extension (c.f. Sections 1.2 and 1.3). In Chapter 3 we will see that they in fact appear in the study of graphs of groups and have natural actions on trees.

The following is the main result of this section, though we will only sketch the proof as a more general version will appear as Theorem \ref{HNNLRC} in Chapter 3.

\begin{thm}
\label{mainthm1}
Let $M$ be a monoid HNN-extension of a group $G$ where each associated submonoid $H_{i}$ is in fact a subgroup of $G$. Then $M$ is a left Rees monoid. On the other hand, if $M$ is a left Rees monoid then $M$ is isomorphic to a monoid HNN-extension of a group.
\end{thm}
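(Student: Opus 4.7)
The plan is to prove both directions via normal forms. The technical backbone is a monoid analogue of Proposition \ref{normalformgroupoidHNN}: in a monoid HNN-extension $M$ with data $\{(H_i,\rho_i):i\in I\}$ and fixed left transversals $T_i$ of $H_i$ in $G$ with $1\in T_i$, every element of $M$ has a unique expression
$$g_{1}t_{i_{1}}g_{2}t_{i_{2}}\cdots g_{m}t_{i_{m}}u,\quad g_{k}\in T_{i_{k}},\ u\in G.$$
This is easier than the groupoid case because the absence of inverse stable letters removes all cancellation moves; the Artin--van der Waerden style argument via a faithful action of $M$ on the set of such words adapts immediately.

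For the forward direction, given such an $M$, set $X=\{gt_{i}:g\in T_{i},\ i\in I\}$. The normal form exhibits $M$ as a unique product $M=X^{\ast}G$ with $X^{\ast}$ free. To see left cancellativity, it suffices to show that left multiplication by each generator is injective: this is clear for units $g\in G$, while prepending a letter $gt_{i}\in X$ to a normal form already yields a normal form, since each existing $g_{k}\in T_{i_{k}}$ is unaffected. Right rigidity and the ascending chain condition then follow from the prefix structure on $X^{\ast}$: if $su=tv$ in $M$, writing $s,t$ in normal form shows that their $X^{\ast}$-parts must be comparable as prefixes, whence $sM\subseteq tM$ or $tM\subseteq sM$; and $xM$ has only finitely many proper containments in $M$ because $x$ has only finitely many prefixes in $X^{\ast}$.

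For the converse, let $M=X^{\ast}G$ be a left Rees monoid with its associated self-similar action. Choose a set $\{x_{i}:i\in I\}$ of representatives of the $G$-orbits on $X$, and set $H_{i}=G_{x_{i}}$ with $\rho_{i}=\phi_{x_{i}}\colon G_{x_{i}}\to G$ the restriction homomorphism of Lemma \ref{usefullemma1}(i). Form the corresponding monoid HNN-extension $N$, and define $\phi\colon N\to M$ on generators by the identity on $G$ and $\phi(t_{i})=x_{i}$. For $g\in G_{x_{i}}$ one computes $gx_{i}=(g\cdot x_{i})(g|_{x_{i}})=x_{i}\rho_{i}(g)$ in $M$, so the defining relations are respected and $\phi$ is well-defined. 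Surjectivity is immediate since every $y\in X$ has the form $g\cdot x_{i}=gx_{i}(g|_{x_{i}})^{-1}$ for suitable $i$ and $g\in T_{i}$, hence lies in the image.

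The main obstacle is injectivity of $\phi$, which I would address by matching the two normal forms. Applying $\phi$ to an $N$-normal form $g_{1}t_{i_{1}}\cdots g_{m}t_{i_{m}}u$ and repeatedly rewriting via the Zappa-Sz\'ep identity $hx=(h\cdot x)(h|_{x})$ yields an $M$-normal form $y_{1}y_{2}\cdots y_{m}v$ with $y_{1}=g_{1}\cdot x_{i_{1}}$ and recursively $y_{k+1}=(h_{k}g_{k+1})\cdot x_{i_{k+1}}$, where $h_{k}\in G$ is the accumulated restriction. This process is reversible: since the $x_{i}$ lie in pairwise distinct orbits, $i_{k}$ is recovered from the orbit of $y_{k}$; then $g_{k}\in T_{i_{k}}$ is the unique transversal element with $g_{k}\cdot x_{i_{k}}=h_{k-1}^{-1}\cdot y_{k}$; and finally $u$ is recovered from $v$ by undoing the accumulated restrictions. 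Hence distinct $N$-normal forms produce distinct $M$-elements, giving injectivity. The technically demanding ingredient is really the monoid HNN normal form theorem, which the author defers to the more general categorical setting of Chapter 3.
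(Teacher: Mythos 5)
Your proposal is correct and takes essentially the same route as the paper: transversal normal forms $g_{1}t_{i_{1}}\cdots g_{m}t_{i_{m}}u$ with $X=\{gt_{i} \mid g\in T_{i},\ i\in I\}$ yielding the unique decomposition $M=X^{\ast}G$ in one direction, and orbit representatives $x_{i}$, stabilisers $H_{i}=G_{x_{i}}$, restrictions $\rho_{i}=\phi_{x_{i}}$ and the generator-level map $t_{i}\mapsto x_{i}$ in the other. The only difference is organisational: you establish normal-form uniqueness directly (van der Waerden-style action) and then verify (LR1)--(LR3) and injectivity by hand via prefix and recovery arguments, whereas the paper's full proof (Theorem \ref{HNNLRC}) constructs the self-similar action on $X^{\ast}$, invokes the Zappa-Sz\'{e}p correspondence for the left Rees property, and settles both isomorphisms by a presentation argument, with uniqueness of normal forms emerging as a by-product.
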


\begin{comment}
\begin{lemma}
Let $M$ be a monoid HNN-extension of a group $G$ where each associated submonoid $H_{i}$ is in fact a finite index subgroup. Then each element of $M$ has a unique normal form.
\end{lemma}
\end{comment}

\begin{proof}
(Sketch)
Suppose that $M$ is a monoid given by the following presentation:
$$M=\langle \breve{G}, t_{i}:i\in I | \mathcal{R}(G), \quad h t_{i} = t_{i} \rho_{i}(h) \quad \forall h\in H_{i}, i\in I \rangle.$$
For each $i\in I$, let $T_{i}$ be a transversal of left coset representatives of $H_{i}$. Note that for each $i$ an element $u\in G$ can be written uniquely in the form $u = gh$, where $g\in T_{i}$ and $h\in H_{i}$. We further suppose that $1\in T_{i}$ for each $i$.

One can show that every element $s\in M$ can be written in the form
$$s = g_{1}t_{i_{1}}g_{2}t_{i_{2}}\cdots g_{m}t_{i_{m}}u$$
where $g_{k}\in T_{i_{k}}$ and $u\in G$ (in fact it will turn out this is a unique normal form).

Letting $X = \left\{gt_{i}|g\in T_{i}, i\in I\right\}$ one can define a self-similar group action of $G$ on $X^{\ast}$ by rewriting $gx = (g\cdot x)x$ for $g\in G$, $x\in X^{\ast}$. The resulting left Rees monoid will be isomorphic to the original monoid $M$.

Now suppose $M = X^{\ast}G$ is a left Rees monoid. For each orbit of the action of $G$ on $X$ choose an element $x_{i}\in X$, $i\in I$ where $|I|$ is the number of orbits. For each $i\in I$ let $H_{i} = G_{x_{i}}$ be the stabiliser of $G$ acting on $x_{i}$ and let $T_{i}$ be a transversal of left coset representatives of $H_{i}$ in $G$. An arbitrary element $x\in X$ can be written uniquely in the form $x = gx_{i}(\rho_{i}(g))^{-1}$ where $i\in I$ and $g\in T_{i}$. Now define $\Gamma$ to be the monoid given by monoid presentation:
$$\Gamma = \langle \breve{G}, t_{i}:i\in I | \mathcal{R}(G), \quad h t_{i} = t_{i} \rho_{i}(h) \quad \forall h\in H_{i}, i\in I \rangle.$$
One can then check that every element of $\Gamma$ can be written uniquely in the form
$$g_{1}t_{i_{1}}(\rho_{i_{1}}(g_{1}))^{-1}g_{2}t_{i_{2}}(\rho_{i_{2}}(g_{2}))^{-1}\cdots g_{m}t_{i_{m}}(\rho_{i_{m}}(g_{m}))^{-1}u$$
where $g_{k}\in T_{i_{k}}$ and $u\in G$. 
It is then easy to see that the map $\iota:\Gamma\rightarrow M$ given on generators by $\iota(t_{i}) = x_{i}$ for $i\in I$ and $\iota(g) = g$ for $g\in G$ is an isomorphism.   
\end{proof}

Combining Theorem \ref{mainthm1}, Lemma \ref{usefullemma1} (iii) and Lemma \ref{cancLRM}, we have the following.

\begin{corollary}
Let $M$ be a monoid HNN-extension of a group $G$ where each associated submonoid $H_{i}$ is in fact a subgroup of $G$ with associated maps $\rho_{i}:H_{i}\rightarrow G$. Then $M$ is cancellative and therefore a Rees monoid if and only if the maps $\rho_{i}$ are injective for each $i\in I$.
\end{corollary}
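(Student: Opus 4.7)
The plan is to use Theorem \ref{mainthm1} to realise $M$ as a left Rees monoid $M=X^{\ast}G$ equipped with a self-similar action, then invoke Lemma \ref{cancLRM}, which reduces right cancellativity (equivalently cancellativity, since every left Rees monoid is left cancellative) to the injectivity of the restriction homomorphisms $\phi_{x}\colon G_{x}\to G$ for every $x\in X^{\ast}$. The strategy is therefore to translate injectivity of the $\rho_{i}$ into injectivity of the $\phi_{x}$, propagate this injectivity along orbits using Lemma \ref{usefullemma1} (iii), and then promote it from letters to strings using Lemma \ref{usefullemma1} (iv).

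First I would unpack the construction in Theorem \ref{mainthm1}. The alphabet $X$ produced there consists of the letters $gt_{i}$ with $g$ running over a transversal $T_{i}$ of $H_{i}$ in $G$ and $i\in I$, with $t_{i}$ itself corresponding to the representative $1\in T_{i}$. The self-similar action is dictated by the HNN relations $ht_{i}=t_{i}\rho_{i}(h)$ for $h\in H_{i}$: writing $g=g'h$ uniquely with $g'\in T_{i}$ and $h\in H_{i}$ gives $g t_{i}=g't_{i}\rho_{i}(h)$, so $g\cdot t_{i}=g't_{i}$ and $g|_{t_{i}}=\rho_{i}(h)$. A quick normal-form check then shows $G_{t_{i}}=H_{i}$ and, crucially, $\phi_{t_{i}}=\rho_{i}$ as homomorphisms $H_{i}\to G$.

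Next, observe that the orbit of $t_{i}$ under $G$ on $X$ is exactly $\{gt_{i}:g\in T_{i}\}$, so every $G$-orbit on $X$ contains precisely one chosen representative $t_{i}$. By Lemma \ref{usefullemma1} (iii), injectivity of $\phi_{x}$ is invariant along the orbit of $x$, so $\phi_{x}$ is injective for all $x\in X$ if and only if $\phi_{t_{i}}$ is injective for every $i\in I$, which, by the identification above, is if and only if every $\rho_{i}$ is injective. Lemma \ref{usefullemma1} (iv) then upgrades this to injectivity of $\phi_{x}$ for all $x\in X^{\ast}$, and Lemma \ref{cancLRM} delivers cancellativity of $M$. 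Since a cancellative left Rees monoid is automatically a Rees monoid (as noted in the introduction, citing \cite{LawLRM1}), this proves one direction; the converse is immediate by reading the chain of equivalences backwards.

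The only real obstacle is the bookkeeping needed to justify the identification $\phi_{t_{i}}=\rho_{i}$ and the equality $G_{t_{i}}=H_{i}$ rigorously from the HNN presentation; this amounts to verifying that the normal form used in the proof of Theorem \ref{mainthm1} forces $g\cdot t_{i}=t_{i}$ exactly when $g\in H_{i}$. Once this is in place, the corollary is a straightforward chain of applications of results already established in the excerpt.
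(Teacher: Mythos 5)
Your proposal is correct and follows essentially the same route as the paper, which proves the corollary simply by combining Theorem \ref{mainthm1}, Lemma \ref{usefullemma1} and Lemma \ref{cancLRM}. Your identification $G_{t_{i}}=H_{i}$, $\phi_{t_{i}}=\rho_{i}$ and the orbit-plus-letters-to-strings propagation via Lemma \ref{usefullemma1} (iii) and (iv) is exactly the bookkeeping that the paper leaves implicit in that one-line combination.
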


Note the above construction tells us that the action of the group $G$ is transitive on $X$ if and only if the HNN-extension is on a single stable letter.
Let $M = X^{\ast}G$ be a left Rees monoid and suppose we split $X$ into its orbits $X_{i}:i\in I$ under the action of $G$, so that 
$$X = \bigcup_{i\in I}{X_{i}}.$$
Given the self-similar action of $G$ on $X^{\ast}$ there is an induced self-similar action of $G$ on $X_{i}^{\ast}$ as none of the axioms of self-similar group actions move elements of $X_{i}^{\ast}$ outside an orbit. We therefore have submonoids $M_{i} = X_{i}^{\ast}G\leq M$, where we are identifying the identity element of $M_{i}$ with that of $M$. Observe that $M_{i}\cap M_{j} = G$ for $i\neq j$. We can then form a \emph{semigroup amalgam}, in the sense of Chapter 8 of \cite{HowieBook}. Viewing the $M_{i}$'s as disjoint monoids and letting $\alpha_{i}$ be the embedding of the group $G$ into each $M_{i}$, $\mathcal{A} = [G; M_{i}; \alpha_{i}]$ is a semigroup amalgam. We can then form the amalgamated free product $S = {\ast}_{G}{M_{i}}$ of the amalgam $\mathcal{A}$. 

We now make use of the following classical theorem of Bourbaki (\cite{Bourbaki}), in the left hand dual of the form given by Dekov in \cite{Dekov}.

\begin{theorem}
\label{bourbakiproduct}
Let $M_{i}:i\in I$ be a family of monoids, let $G$ be a submonoid of $M_{i}$ for each $i\in I$ and let $G = M_{i}\cap M_{j}$ for all $i,j\in I$ with $i\neq j$. Assume that for each $i\in I$ there exists a subset $S_{i}$ of $M_{i}$ containing the identity $1$ and such that the mapping $\psi:S_{i}\times G\rightarrow M_{i}$ given by $\psi(x,g) = xg$ is a bijection. Then every $s\in {\ast}_{G}{M_{i}}$ can be written uniquely in the form 
$$s = x_{1}\cdots x_{n}g$$
where $x_{k} \in S_{i_{k}}\setminus \left\{1\right\}$ and $i_{k}\neq i_{k+1}$ for each $k = 1,\ldots,n$, and $g\in G$. 
\end{theorem}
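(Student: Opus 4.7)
The plan is to prove existence constructively by successive reductions, and uniqueness via an Artin--van der Waerden style permutation representation, closely paralleling the argument used in Proposition \ref{normalformgroupoidHNN}.

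For existence, any element of $\ast_{G} M_{i}$ is represented by some product $m_{1}m_{2}\cdots m_{k}$ with $m_{j} \in M_{i_{j}}$. I would first decompose each $m_{j}$ via the bijection $\psi$ for $M_{i_{j}}$ as $m_{j} = x_{j}g_{j}$ with $x_{j} \in S_{i_{j}}$ and $g_{j} \in G$. Since $G \subseteq M_{i_{j+1}}$, the element $g_{j}m_{j+1}$ lies in $M_{i_{j+1}}$ and can be re-decomposed using $\psi$; processing left to right pulls all group elements into a single $g \in G$ at the right. Whenever two consecutive indices coincide, I collapse the corresponding factors into a single element of the common monoid and re-decompose; and whenever a factor becomes $1$, I delete it, merging neighbouring blocks if their indices then coincide. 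Since each reduction strictly decreases the number of listed factors, the process terminates with a tuple of the asserted shape.

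For uniqueness, let $X$ denote the set of formal tuples $(x_{1}, \ldots, x_{n}, g)$ satisfying the stated constraints (with $n = 0$ allowed). For each $i \in I$ I shall define a left action of $M_{i}$ on $X$. Given $m \in M_{i}$ and $w = (x_{1}, \ldots, x_{n}, g)$, if $n = 0$ or $i_{1} \neq i$, write $m = yh$ with $y \in S_{i}$, $h \in G$ via $\psi$, and put
$$m \cdot w \;=\; \begin{cases} (y, x_{1}, \ldots, x_{n}, hg) & \text{if } y \neq 1,\\ (x_{1}, \ldots, x_{n}, hg) & \text{if } y = 1. \end{cases}$$
If instead $i_{1} = i$, compute $m x_{1} \in M_{i}$, decompose it as $yh$, and set the result to $(y, x_{2}, \ldots, x_{n}, hg)$ or $(x_{2}, \ldots, x_{n}, hg)$ according as $y \neq 1$ or $y = 1$. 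A routine verification using uniqueness of $\psi$-decompositions shows this is a monoid action of $M_{i}$; moreover, when $m \in G$ both cases reduce to $(x_{1}, \ldots, x_{n}, mg)$, so the actions of different $M_{i}$ agree on $G = M_{i} \cap M_{j}$.

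By the universal property of the amalgamated free product, the family of actions $M_{i} \to \mathrm{Sym}(X)$ assembles into a single action of $\ast_{G} M_{i}$ on $X$. Evaluating the action of a normal-form expression $s = x_{1}\cdots x_{n}g$ on the trivial tuple $(1_{G}) \in X$ recovers exactly the formal tuple $(x_{1}, \ldots, x_{n}, g)$, so distinct formal tuples represent distinct elements of $\ast_{G} M_{i}$. The main obstacle is verifying the action compatibility $(m'm) \cdot w = m' \cdot (m \cdot w)$ for $m, m' \in M_{i}$, which requires somewhat tedious case analysis according to whether each successive multiplication triggers the $y = 1$ branch and thereby shifts the leading block; however, every such check reduces to the uniqueness of the $\psi$-decomposition in $M_{i}$, so no genuinely new ideas are needed.
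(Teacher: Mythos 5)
The paper itself offers no proof of this statement (it is quoted as a classical result of Bourbaki, in Dekov's form), so your argument must stand on its own. Your existence argument is fine, and your verification that each separately defined map $M_{i}\rightarrow \mathcal{T}(X)$ is a monoid homomorphism does in fact go through, each check reducing to uniqueness of the $\psi$-decomposition as you say (note only that the target should be the full transformation monoid of $X$ rather than $\mathrm{Sym}(X)$, since elements of $M_{i}$ need not act bijectively; this is harmless because the universal property of $\ast_{G}M_{i}$ only needs monoid homomorphisms).

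The genuine gap is the claim that these actions agree on $G$. With your ``lazy'' rule the group element produced by a decomposition is parked at the far right of the tuple instead of being pushed through the word. So for $m\in G$ and $w=(x_{1},\ldots,x_{n},g)$ with leading index $i_{1}$: viewing $m$ inside $M_{j}$ with $j\neq i_{1}$, your rule gives $m\cdot w=(x_{1},\ldots,x_{n},mg)$ (since $m=1\cdot m$ is its $\psi$-decomposition), whereas viewing $m$ inside $M_{i_{1}}$ gives $(y,x_{2},\ldots,x_{n},hg)$ where $mx_{1}=yh$. These coincide precisely when $y=x_{1}$ and $h=m$, i.e.\ when $mx_{1}=x_{1}m$, which fails in general: in any left Rees monoid $M_{i_{1}}=X^{\ast}G$ with non-trivial action (e.g.\ the Sierpinski gasket monoid, where $\rho T=R\rho$) one gets $(R,\rho g)$ from one computation and $(T,\rho g)$ from the other. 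Hence the family of homomorphisms does not glue to an action of $\ast_{G}M_{i}$, and the evaluation-at-base-point step has nothing to evaluate. The repair is to let $m$ act by the full normal-form rewriting from your existence step: decompose, push the resulting group element through the remaining blocks by repeated $\psi$-decompositions, deleting blocks that become $1$ and merging neighbours. With that definition agreement on $G$ is immediate (both computations begin with $mx_{1}$ and then push through), but the real work — the inductive verification of $(m'm)\cdot w=m'\cdot(m\cdot w)$, including the cases where blocks collapse or merge — is exactly what the shortcut was designed to avoid, and it still has to be carried out, much as in the proof of Proposition \ref{normalformgroupoidHNN}.
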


Applying this theorem to the setup above with $S_{i} = X_{i}^{\ast}$ it follows that each element of $S = {\ast}_{G}{M_{i}}$ can be uniquely written in the form $xg$ where $x\in X^{\ast}$ and $g\in G$. Consequently we have the following result.

\begin{theorem}
Let $M = X^{\ast}G$ be a left Rees monoid, $X_{i}:i\in I$ the orbits of $X$ under the action of $G$ and let $M_{i} = X_{i}^{\ast}G$ for each $i$. Then
$${\ast}_{G}{M_{i}} \cong M.$$
\end{theorem}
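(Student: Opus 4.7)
The plan is to apply Bourbaki's theorem (Theorem~\ref{bourbakiproduct}) together with the unique normal form of elements of $M$, comparing the two unique expressions term by term.

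First I would invoke the universal property of the amalgamated free product. Since each $M_i$ embeds into $M$ and these embeddings agree on the amalgamated subgroup $G$, there is a unique monoid homomorphism $\phi : \ast_{G} M_i \to M$ restricting to the given inclusions. Because $X = \bigcup_{i \in I} X_i$, the union $\bigcup_i M_i$ generates $M$ as a monoid, so $\phi$ is surjective.

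For injectivity I would compare normal forms. By Theorem~\ref{bourbakiproduct} applied with $S_i = X_i^{\ast}$ (which contains $1$ and satisfies $X_i^{\ast} \times G \to M_i$, $(x,g) \mapsto xg$, bijectively, since $M_i$ is itself a left Rees monoid), every element $s \in \ast_G M_i$ has a unique expression
$$s = x_1 x_2 \cdots x_n g, \qquad x_k \in X_{i_k}^{\ast} \setminus \{1\},\ i_k \neq i_{k+1},\ g \in G.$$
On the other hand, every element of $M$ can be written uniquely as $yg$ with $y \in X^{\ast}$ and $g \in G$, and any non-identity $y \in X^{\ast}$ admits a unique factorisation $y = y_1 y_2 \cdots y_m$ into maximal consecutive blocks of letters drawn from a single orbit, i.e.\ $y_k \in X_{j_k}^{\ast} \setminus \{1\}$ with $j_k \neq j_{k+1}$. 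I would then observe that $\phi(s) = x_1 x_2 \cdots x_n g$, and that the $(x_k)$ are precisely the orbit-blocks of this element of $X^{\ast}$. Hence, from $\phi(s) = \phi(s')$ together with the uniqueness of the block decomposition in $X^{\ast}$ and the uniqueness of the decomposition $M = X^{\ast}G$, one recovers equality of the Bourbaki normal forms, so $s = s'$.

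The main obstacle is the bookkeeping in the injectivity step: one has to be sure that the $(x_k)$ appearing in the Bourbaki normal form of $s$ really do coincide with the orbit-blocks of the word $\phi(s) \in X^{\ast}$, and not merely with some coarser or finer grouping. This is where the condition $i_k \neq i_{k+1}$ is crucial, as it forces the blocks in $\ast_G M_i$ to align with the maximal orbit-blocks of $X^{\ast}$. Once this alignment is established, uniqueness on both sides forces $\phi$ to be a bijection, and since it is already a monoid homomorphism we conclude $\ast_G M_i \cong M$.
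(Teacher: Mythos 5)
Your proposal is correct and follows essentially the same route as the paper: both apply Bourbaki's normal form theorem with $S_i = X_i^{\ast}$ and compare the resulting unique expressions with the unique decomposition $M = X^{\ast}G$. The extra detail you supply (universal property of the amalgamated product, surjectivity, and the alignment of Bourbaki blocks with maximal orbit-blocks) just makes explicit what the paper leaves implicit.
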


In terms of monoid HNN-extensions what we are saying is that the monoids
$$\langle \breve{G}, t_{i}:i\in I | \mathcal{R}(G), h t_{i} = t_{i} \rho_{i}(h) \forall h\in H_{i}, i\in I \rangle$$
and
$${\ast}_{G} \langle \breve{G}, t_{i} | \mathcal{R}(G), h t_{i} = t_{i} \rho_{i}(h) \forall h\in H_{i} \rangle$$
are isomorphic.

Let us therefore now consider the case of left Rees monoids given as monoid HNN-extensions which have a single stable letter. Recall that this is equivalent to the group of units acting transitively on the elements of $X$, and to the monoid having a single maximal proper two-sided principal ideal.

If $H$ is a subgroup of a group $G$, then a homomorphism $\phi:H\rightarrow G$ will be called a \emph{partial endomorphism}. Nekrashevych (\cite{NekrashevychBook}) calls such homomorphisms \emph{virtual endomorphisms} in the case where $H$ is a finite index subgroup of $G$. Two partial endomorphisms $\phi_{1}:H_{1}\rightarrow G$, $\phi_{2}:H_{2}\rightarrow G$ will be said to be \emph{conjugate} if there exist inner automorphisms $\alpha,\beta$ of $G$ with $\alpha(H_{1}) = H_{2}$ and $\beta\phi_{1} = \phi_{2}\alpha$. Partial endomorphisms $\phi_{1}:H_{1}\rightarrow G_{1}$, $\phi_{2}:H_{2}\rightarrow G_{2}$ will be said to be \emph{isomorphic} if there exist group isomorphisms $\alpha,\beta:G_{1}\rightarrow G_{2}$ with $\alpha(H_{1}) = H_{2}$ and $\beta\phi_{1} = \phi_{2}\alpha$. If $\phi:H\rightarrow G$ is a partial endomorphism then we will define $M(\phi)$ to be the left Rees monoid with presentation
$$M(\phi) = \langle \breve{G}, t | \mathcal{R}(G), h t = t \phi(h) \forall h\in H \rangle.$$

\begin{proposition}
\label{virtendLRMiso}
Let $\phi_{1}:H_{1}\rightarrow G$, $\phi_{2}:H_{2}\rightarrow G$ be conjugate partial endomorphisms. Then the monoids $M(\phi_{1})$ and $M(\phi_{2})$ are isomorphic.
\end{proposition}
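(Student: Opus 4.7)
The plan is to write down an explicit isomorphism $\iota: M(\phi_1) \to M(\phi_2)$ using the conjugating inner automorphisms, then verify it respects the one nontrivial relation by a direct calculation, and finally produce an inverse by symmetry.

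Since $\alpha, \beta$ are inner automorphisms of $G$, choose $a, b \in G$ with $\alpha(h) = aha^{-1}$ and $\beta(g) = bgb^{-1}$. The hypotheses then read $aH_1a^{-1} = H_2$ and $b\phi_1(h)b^{-1} = \phi_2(aha^{-1})$ for all $h \in H_1$. The natural guess is to conjugate the stable letter by the appropriate elements on each side. Specifically, I would define $\iota$ on generators by $\iota(g) = g$ for every $g \in G$ and $\iota(t) = a^{-1} t b$ (writing $t$ for both stable letters, with the understanding that the $t$ on the right is the stable letter of $M(\phi_2)$). To see this extends to a homomorphism it suffices, since $G$ is carried to itself via the identity, to verify the relation $ht = t\phi_1(h)$ for each $h \in H_1$. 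Computing the left side gives $\iota(ht) = h a^{-1} t b = a^{-1}(aha^{-1}) t b = a^{-1} \alpha(h) t b$, and since $\alpha(h) \in H_2$ we may push it past $t$ in $M(\phi_2)$ to obtain $a^{-1} t \phi_2(\alpha(h)) b$. Computing the right side gives $\iota(t\phi_1(h)) = a^{-1} t b \phi_1(h) = a^{-1} t (b\phi_1(h)b^{-1}) b = a^{-1} t \beta(\phi_1(h)) b$. These coincide precisely because $\beta\phi_1 = \phi_2\alpha$.

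To produce an inverse I would appeal to symmetry: $\alpha^{-1}$ and $\beta^{-1}$ are again inner (conjugation by $a^{-1}$ and $b^{-1}$) and the equation $\beta\phi_1 = \phi_2\alpha$ rearranges to $\beta^{-1}\phi_2 = \phi_1\alpha^{-1}$, so $\phi_2$ and $\phi_1$ are conjugate with the roles reversed. Applying the same recipe gives a homomorphism $\iota': M(\phi_2) \to M(\phi_1)$ fixing $G$ pointwise and sending $t \mapsto a t b^{-1}$. A one-line computation on generators then shows $\iota'\iota$ and $\iota\iota'$ are the identity: for instance $\iota'(\iota(t)) = \iota'(a^{-1} t b) = a^{-1}(a t b^{-1})b = t$.

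The only real point to be careful about is the verification of the defining relation, and this is where the exact form of the conjugacy hypothesis $\beta\phi_1 = \phi_2\alpha$ gets used; the choice $\iota(t) = a^{-1} t b$ is essentially forced by wanting to absorb the conjugation by $a$ on the source side and the conjugation by $b$ on the target side simultaneously. Everything else is formal: the map is defined on generators, the group relations $\mathcal{R}(G)$ are automatically preserved because $G$ is mapped identically, and the inverse is constructed by the same procedure applied to the symmetric data.
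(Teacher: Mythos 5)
Your construction of the homomorphism is the same as the paper's (up to the harmless relabelling $a\mapsto a^{-1}$ coming from your convention $\alpha(h)=aha^{-1}$ versus the paper's $\alpha(g)=a^{-1}ga$): fix $G$ pointwise, send the stable letter to a conjugate $a^{-1}t b$ of the other stable letter, and verify the single relation $ht=t\phi_{1}(h)$ using $\beta\phi_{1}=\phi_{2}\alpha$ — that computation matches the paper line for line. Where you genuinely diverge is in proving bijectivity. The paper checks surjectivity directly and then establishes injectivity by a fairly long reduction argument: it fixes transversals $T_{1},T_{2}$ of $H_{1},H_{2}$, writes elements in the normal form $g_{1}t_{1}\cdots g_{m}t_{1}u$, and peels off factors one at a time using left cancellation to conclude that equal images force equal normal forms. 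You instead observe that the conjugacy relation is symmetric (from $\beta\phi_{1}=\phi_{2}\alpha$ one gets $\beta^{-1}\phi_{2}=\phi_{1}\alpha^{-1}$, and $\alpha^{-1},\beta^{-1}$ are again inner), apply the same recipe to obtain $\iota':M(\phi_{2})\rightarrow M(\phi_{1})$ with $t\mapsto atb^{-1}$, and check on generators that $\iota'\iota$ and $\iota\iota'$ are identities; since both composites are homomorphisms agreeing with the identity on a generating set of a presented monoid, this suffices. Your route is correct and noticeably shorter: it buys you freedom from the normal-form machinery entirely, at the cost of nothing beyond the (easily verified) symmetry of the hypothesis. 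The paper's approach, by contrast, stays within the transversal/normal-form framework it has already set up for monoid HNN-extensions, and the same style of injectivity argument is reused later (e.g.\ in the categorical analogue, Theorem \ref{LRCconjclass}), which is presumably why the author argues that way rather than exploiting the symmetry as you do.
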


\begin{proof}
We have 
$$M(\phi_{1}) = \langle \breve{G}, t_{1} | \mathcal{R}(G), h t_{1} = t_{1} \phi_{1}(h) \forall h\in H_{1} \rangle$$
and
$$M(\phi_{2}) = \langle \breve{G}, t_{2} | \mathcal{R}(G), h t_{2} = t_{2} \phi_{2}(h) \forall h\in H_{2} \rangle.$$
Since $\phi_{1}$ and $\phi_{2}$ are conjugate there exist inner automorphisms $\alpha,\beta$ of $G$ with $\alpha(H_{1}) = H_{2}$ and $\beta\phi_{1} = \phi_{2}\alpha$. Suppose $\alpha(g) = a^{-1}ga$ and $\beta(g) = bgb^{-1}$ for some $a,b\in G$. 
Define $f:M(\phi_{1})\rightarrow M(\phi_{2})$ by $f(g) = g$ for $g\in G$ and $f(t_{1}) = at_{2}b$. To verify that $f$ is a homomorphism we just need to check that $f(h)f(t_{1}) = f(t_{1})f(\phi_{1}(h))$ for every $h\in H_{1}$ since it is clear that $f(gh) = f(g)f(h)$ for every $g,h\in G$. For $h\in H_{1}$ we have
\begin{eqnarray*}
f(h)f(t_{1}) &=& hat_{2}b = aa^{-1}hat_{2}b = at_{2}\phi_{2}(a^{-1}ha)b = at_{2}\phi_{2}(\alpha(h))b \\
&=& at_{2}\beta(\phi_{1}(h))b = at_{2}b(\phi_{1}(h))b^{-1}b = at_{2}b(\phi_{1}(h)) = f(t_{1})f(\phi_{1}(h)).
\end{eqnarray*}
Thus $f$ is a homomorphism.
Since $f(a^{-1}t_{1}b^{-1}) = t_{2}$, $f$ is also surjective. 
Let $T_{1}$ and $T_{2}$ be transversals of the left coset representatives of $H_{1}$ and $H_{2}$ in $G$.
Our final task is to check that $f$ is injective. Suppose
$$f(g_{1}t_{1}\cdots g_{m}t_{1}u) = f(g_{1}^{\prime}t_{1}\cdots g_{n}^{\prime}t_{1}v),$$
where $g_{k},g_{k}^{\prime}\in T_{1}$ for each $k$.
Observe that the number of $t_{1}'s$ mapped across is constant so that $m = n$. So
$$g_{1}at_{2}b\cdots g_{m}at_{2}b u = g_{1}^{\prime}at_{2}b\cdots g_{m}^{\prime}at_{2}b v.$$
Reducing this into normal form and using the fact that $\alpha:H_{1}\rightarrow H_{2}$ we see that there must exist unique $c_{1},\ldots,c_{m}\in T_{2}$, $h_{1},\ldots,h_{m},h_{1}^{\prime},\ldots,h_{m}^{\prime}\in H_{1}$ with
$$g_{1}a = c_{1}\alpha(h_{1}), \quad g_{1}^{\prime}a = c_{1}\alpha(h_{1}^{\prime}),$$
$$c_{k}\alpha(h_{k}) = \phi_{2}(\alpha(h_{k-1}))bg_{k}a = \beta(\phi_{1}(h_{k-1}))bg_{k}a = b\phi_{1}(h_{k-1})g_{k}a,$$
$$c_{k}\alpha(h_{k}^{\prime}) = b\phi_{1}(h_{k-1}^{\prime})g_{k}^{\prime}a,$$
for $k = 2,\ldots,m$ and
$$b\phi_{1}(h_{m})b^{-1}bu = b\phi_{1}(h_{m}^{\prime})b^{-1}bv.$$
Thus
$$\phi_{1}(h_{m})u = \phi_{1}(h_{m}^{\prime})v$$
and so
$$h_{m}t_{1}u = h_{m}^{\prime}t_{1}v$$
from which it follows that
$$ac_{m}^{-1}b\phi_{1}(h_{m-1})g_{m}t_{1}u = ac_{m}^{-1}b\phi_{1}(h_{m-1}^{\prime})g_{m}^{\prime}t_{1}v.$$
Cancelling on the left we have
$$\phi_{1}(h_{m-1})g_{m}t_{1}u = \phi_{1}(h_{m-1}^{\prime})g_{m}^{\prime}t_{1}v.$$
From this we deduce that
$$h_{m-1}t_{1}g_{m}t_{1}u = h_{m-1}^{\prime}t_{1}g_{m}^{\prime}t_{1}v.$$
Continuing in this way we find that
$$g_{1}t_{1}\cdots g_{m}t_{1}u = g_{1}^{\prime}t_{1}\cdots g_{m}^{\prime}t_{1}v$$
and so $f$ is indeed injective.
\begin{comment}
(ii) Let
$$M(\phi_{1}) = \langle \breve{G}, t_{1} | \mathcal{R}(G), h t_{1} = t_{1} \phi_{1}(h) \forall h\in H_{1} \rangle$$
and
$$M(\phi_{2}) = \langle \breve{G}, t_{2} | \mathcal{R}(G), h t_{2} = t_{2} \phi_{2}(h) \forall h\in H_{2} \rangle.$$
Since $f$ is an isomorphism, $f(t_{1})$ must generate a maximal proper principal right ideal in $M(\phi_{2})$. Thus $f(t_{1}) = at_{2}b$ for some $a,b\in G$. Define inner automorphisms $\alpha,\beta:G\rightarrow G$ by $\alpha(g) = a^{-1}ga$ and $\beta(g) = bgb^{-1}$. If $h\in H_{1}$ then 
\begin{eqnarray*}
\alpha(h)t_{2} &=& a^{-1}hat_{2} = a^{-1}hat_{2}bb^{-1} = a^{-1}f(ht_{1})b^{-1} = a^{-1}f(t_{1}\phi_{1}(h))b^{-1} \\
&=& a^{-1}f(t_{1})f(\phi_{1}(h))b^{-1} = a^{-1}at_{2}b\phi_{1}(h)b^{-1} = t_{2}\beta(\phi_{1}(h)).
\end{eqnarray*}
Thus $\alpha(H_{1})\subseteq H_{2}$ and $\beta\phi_{1} = \phi_{2}\alpha$. Further, if $h\in H_{2}$ then
\begin{eqnarray*}
f(\alpha^{-1}(h)t_{1}) &=& f(aha^{-1}t_{2}) = aha^{-1}at_{2}b = at_{2}\phi_{2}(h)b \\
&=& at_{2}bb^{-1}\phi_{2}(h)b = f(t_{1} b^{-1}\phi_{2}(h)b).
\end{eqnarray*}
Since $f$ is an isomorphism this therefore implies that $\alpha^{-1}(h)t_{1} = t_{1} b^{-1}\phi_{2}(h)b$ and so $\alpha^{-1}(H_{2}) = H_{1}$. Thus $\phi_{1}$ and $\phi_{2}$ are conjugate partial endomorphisms.
\end{comment}
\end{proof}

We now have the following straightforward corollary.

\begin{corollary}
\label{endomorphismcor1}
Let $G$ be a group, $H_{i},H_{i}^{\prime}:i\in I$ subgroups of $G$ and let $\phi_{i}:H_{i}\rightarrow G$, $\phi_{i}^{\prime}:H_{i}^{\prime}\rightarrow G$ be partial endomorphisms such that $\phi_{i}$ is conjugate to $\phi_{i}^{\prime}$ for each $i\in I$. Then the monoids $\ast_{G}{M(\phi_{i})}$ and $\ast_{G}{M(\phi_{i}^{\prime})}$ are isomorphic. 
\end{corollary}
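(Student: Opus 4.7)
The plan is to obtain the required isomorphism by combining the isomorphisms supplied by Proposition \ref{virtendLRMiso} for each index $i$ and then invoking the universal property of the amalgamated free product.

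First I would recall that, for each $i \in I$, Proposition \ref{virtendLRMiso} produces an isomorphism $f_{i}:M(\phi_{i})\rightarrow M(\phi_{i}^{\prime})$. The critical feature to extract from the construction used in that proposition is that $f_{i}$ restricts to the identity on the common subgroup $G$ (in the proof one sets $f_{i}(g)=g$ for all $g\in G$ and moves only the stable letter $t$ by conjugation through the elements $a_{i},b_{i}\in G$ implementing the conjugacy of $\phi_{i}$ and $\phi_{i}^{\prime}$). Thus each $f_{i}$ agrees with the identity on the common submonoid $G\hookrightarrow M(\phi_{i})$ of the amalgam, which is exactly the compatibility needed in order to glue them together.

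Next I would invoke the universal property of the amalgamated free product ${\ast}_{G}M(\phi_{i})$. Composing each $f_{i}$ with the canonical embedding $M(\phi_{i}^{\prime})\hookrightarrow {\ast}_{G}M(\phi_{i}^{\prime})$ yields a family of monoid homomorphisms $g_{i}:M(\phi_{i})\rightarrow {\ast}_{G}M(\phi_{i}^{\prime})$ which all agree on $G$; by the universal property they therefore extend to a single homomorphism
$$\Phi:\ast_{G}M(\phi_{i})\longrightarrow \ast_{G}M(\phi_{i}^{\prime}).$$
Performing the symmetric construction with the inverses $f_{i}^{-1}:M(\phi_{i}^{\prime})\rightarrow M(\phi_{i})$, which also fix $G$, gives a homomorphism $\Psi$ in the opposite direction.

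Finally I would verify that $\Phi$ and $\Psi$ are mutually inverse. Because $\Phi\Psi$ and $\Psi\Phi$ are homomorphisms out of amalgamated free products which restrict to the identity on each factor $M(\phi_{i}^{\prime})$ (respectively $M(\phi_{i})$), the uniqueness clause of the universal property forces them to be the identity maps. There is essentially no obstacle beyond this routine book-keeping; the only point that requires care is confirming that the isomorphisms supplied by Proposition \ref{virtendLRMiso} can be chosen to fix $G$ pointwise, which is immediate from inspecting that proof rather than merely the statement. One can, if desired, give an alternative normal-form argument using the Bourbaki-type decomposition of Theorem \ref{bourbakiproduct} applied to both amalgams, but the universal-property route is cleaner.
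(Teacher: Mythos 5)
Your proof is correct and follows the route the paper intends: the corollary is stated as an immediate consequence of Proposition \ref{virtendLRMiso}, whose proof constructs isomorphisms $f_{i}$ fixing $G$ pointwise, and gluing these over the amalgam via the universal property (with the uniqueness clause giving mutual inverseness) is exactly the "straightforward" argument being left to the reader. You also correctly flag the one point needing care, namely that fixing $G$ pointwise is read off from the proof of Proposition \ref{virtendLRMiso} rather than its statement.
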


\begin{proposition}
Let $G_{1},G_{2}$ be groups, $H_{i}:i\in I$ subgroups of $G_{1}$, $H_{j}^{\prime}:j\in J$ subgroups of $G_{2}$, $\phi_{i}:H_{i}\rightarrow G_{1}$, $\phi_{j}^{\prime}:H_{j}^{\ast}\rightarrow G_{2}$ partial endomorphisms for each $i\in I$, $j\in J$ and suppose that $M_{1} = \ast_{G_{1}}{M(\phi_{i})}$ and $M_{2} = \ast_{G_{2}}{M(\phi_{j}^{\prime})}$ are isomorphic left Rees monoids. Then there is a bijection $\gamma:I\rightarrow J$ such that the partial endomorphisms $\phi_{i}$ and $\phi_{\gamma(i)}^{\prime}$ are isomorphic for each $i\in I$. 
\end{proposition}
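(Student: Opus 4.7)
The plan is to mimic the strategy of Proposition \ref{virtendLRMiso}, but replacing the conjugating inner automorphisms by isomorphisms induced by the given monoid isomorphism $f \colon M_{1} \rightarrow M_{2}$. First I would observe that $f$ must send the group of units $G_{1}$ of $M_{1}$ bijectively onto $G_{2}$, since units are characterised as those elements admitting a two-sided inverse, equivalently as $\lambda^{-1}(0)$ for the length function $\lambda$. Because the length function itself is intrinsic to the monoid (its values are determined by chain lengths of principal right ideals), $f$ also carries length-one elements to length-one elements.

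For each stable letter $t_{i}$ of $M_{1}$, the element $f(t_{i})$ has length one in $M_{2}$. Using the normal forms of Theorem \ref{mainthm1} applied to $M_{2}=\ast_{G_{2}}M(\phi_{j}^{\prime})$, there is a unique $j\in J$ together with $a_{i}\in T_{j}^{\prime}$ and $b_{i}\in G_{2}$ such that $f(t_{i})=a_{i}t_{j}^{\prime}b_{i}$; I set $\gamma(i)=j$. By Lemma \ref{greenJLRM} and its corollary, the $\mathcal{J}$-classes of a left Rees monoid correspond bijectively to the orbits of $G$ on $X$, which in the HNN picture correspond to the stable letters; since $f$ preserves $\mathcal{J}$-classes, $\gamma$ is independent of the choice of transversals, and applying the same construction to $f^{-1}$ yields its two-sided inverse. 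Hence $\gamma\colon I\rightarrow J$ is a bijection.

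For each $i\in I$ I would then define
$$\alpha_{i}(g)=a_{i}^{-1}f(g)a_{i},\qquad \beta_{i}(g)=b_{i}f(g)b_{i}^{-1},$$
which are group isomorphisms $G_{1}\rightarrow G_{2}$, being composites of the restriction of $f$ with inner automorphisms of $G_{2}$. Applying $f$ to the defining relation $ht_{i}=t_{i}\phi_{i}(h)$ for $h\in H_{i}$ and cancelling $a_{i}$ on the left and $b_{i}$ on the right yields
$$\alpha_{i}(h)\,t_{\gamma(i)}^{\prime}=t_{\gamma(i)}^{\prime}\,\beta_{i}(\phi_{i}(h)).$$
Writing $\alpha_{i}(h)=ck$ with $c\in T_{\gamma(i)}^{\prime}$ and $k\in H_{\gamma(i)}^{\prime}$, the left-hand side reduces to $c\,t_{\gamma(i)}^{\prime}\phi_{\gamma(i)}^{\prime}(k)$; the uniqueness of the HNN normal form in $M_{2}$ then forces $c=1$ and $\phi_{\gamma(i)}^{\prime}(k)=\beta_{i}(\phi_{i}(h))$. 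This proves both $\alpha_{i}(H_{i})\subseteq H_{\gamma(i)}^{\prime}$ and $\beta_{i}\circ\phi_{i}=\phi_{\gamma(i)}^{\prime}\circ\alpha_{i}$. The opposite inclusion $\alpha_{i}(H_{i})\supseteq H_{\gamma(i)}^{\prime}$ follows by running the same argument with $f^{-1}$ and the analogous decomposition of $f^{-1}(t_{\gamma(i)}^{\prime})$, which by inverting $f(t_{i})=a_{i}t_{\gamma(i)}^{\prime}b_{i}$ is forced to have $t_{i}$ as its stable letter.

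The hard part will be the bookkeeping showing that the data produced from $f$ and from $f^{-1}$ genuinely fit together, so that $\alpha_{i}(H_{i})=H_{\gamma(i)}^{\prime}$ holds with equality and the inverse construction delivers $\alpha_{i}^{-1}$ and $\beta_{i}^{-1}$ as expected. Both amount to careful normal-form uniqueness arguments in left Rees monoids, but one has to push coset representatives between $M_{1}$ and $M_{2}$ consistently under $f$; I expect to handle this by reusing the cancellation and normal-form matching trick already used in the proof of Proposition \ref{virtendLRMiso}.
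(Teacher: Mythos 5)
Your proposal is correct and follows essentially the same route as the paper: the bijection $\gamma$ comes from matching the generators of the maximal proper principal two-sided ideals (your $\mathcal{J}$-class/orbit argument is the same observation), you define the same maps $\alpha_{i}(g) = a_{i}^{-1}f(g)a_{i}$ and $\beta_{i}(g) = b_{i}f(g)b_{i}^{-1}$, and the inclusion $\alpha_{i}(H_{i})\subseteq H_{\gamma(i)}^{\prime}$ together with $\beta_{i}\phi_{i} = \phi_{\gamma(i)}^{\prime}\alpha_{i}$ is obtained by the same normal-form argument. The only divergence is at the reverse inclusion, where the "bookkeeping" you defer is handled in the paper not by re-running the argument with $f^{-1}$ (which needs the extra observation that the resulting conjugator lies in $H_{\gamma(i)}^{\prime}$, since a subgroup conjugated into itself need not equal itself) but by directly computing $f(\alpha_{i}^{-1}(h)t_{i}) = f\bigl(t_{i}f^{-1}(b_{i}^{-1}\phi_{\gamma(i)}^{\prime}(h)b_{i})\bigr)$ for $h\in H_{\gamma(i)}^{\prime}$ and invoking injectivity of $f$ plus the normal form in $M_{1}$ to conclude $\alpha_{i}^{-1}(H_{\gamma(i)}^{\prime})\subseteq H_{i}$.
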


\begin{proof}
We can write $M_{1}$ and $M_{2}$ in terms of monoid presentations as
$$M_{1} = \langle \breve{G_{1}}, t_{i}:i\in I | \mathcal{R}(G_{1}), h t_{i} = t_{i} \phi_{i}(h) \forall h\in H_{i}, i\in I \rangle $$
and
$$M_{2} = \langle \breve{G_{2}}, r_{j}:j\in J | \mathcal{R}(G_{2}), h r_{j} = r_{j} \phi_{j}^{\prime}(h) \forall h\in H_{j}^{\prime}, j\in J \rangle .$$
Suppose $f:M_{1}\rightarrow M_{2}$ is an isomorphism. Note that $f(G_{1}) = G_{2}$.
Each maximal proper principal two-sided ideal of $M_{1}$ is generated by a $t_{i}$ and likewise for $M_{2}$. 
Since these monoids are isomorphic there must be a bijection between principal two-sided ideals. 
It follows that there is a bijection $\gamma:I\rightarrow J$ and elements $a_{i},b_{i}\in G_{2}$ for each $i\in I$ with $f(t_{i}) = a_{i}r_{\gamma(i)}b_{i}$. Define maps $\alpha_{i},\beta_{i}:G_{1}\rightarrow G_{2}$ for each $i\in I$ by
$\alpha_{i}(g) = a_{i}^{-1}f(g)a_{i}$ and $\beta_{i}(g) = b_{i}f(g)b_{i}^{-1}$. We now verify that $\alpha_{i}:H_{i}\rightarrow H_{\gamma(i)}^{\prime}$ and $\beta_{i}\phi_{i} = \phi_{\gamma(i)}^{\prime}\alpha_{i}$ for each $i\in I$. If $h\in H_{i}$ then
\begin{eqnarray*}
\alpha_{i}(h)r_{\gamma(i)} &=& a_{i}^{-1}f(h)a_{i}r_{\gamma(i)} = a_{i}^{-1}f(h)a_{i}r_{\gamma(i)}b_{i}b_{i}^{-1} = a_{i}^{-1}f(h)f(t_{i})b_{i}^{-1} \\
&=& a_{i}^{-1}f(ht_{i})b_{i}^{-1} = a_{i}^{-1}f(t_{i}\phi_{i}(h))b_{i}^{-1} = a_{i}^{-1}f(t_{i})f(\phi_{i}(h))b_{i}^{-1} \\
&=& r_{\gamma(i)}b_{i}f(\phi_{i}(h))b_{i}^{-1} = r_{\gamma(i)}\beta_{i}(\phi_{i}(h)).
\end{eqnarray*}
Thus $\alpha_{i}(H_{i})\subseteq H_{\gamma(i)}^{\prime}$ and $\beta_{i}\phi_{i} = \phi_{\gamma(i)}^{\prime}\alpha_{i}$.
Further, if $h\in H_{\gamma(i)}^{\prime}$ then
\begin{eqnarray*}
f(\alpha_{i}^{-1}(h)t_{i}) &=& f(f^{-1}(a_{i}ha_{i}^{-1})t_{i}) = a_{i}ha_{i}^{-1}a_{i}r_{\gamma(i)}b_{i} = a_{i}hr_{\gamma(i)}b_{i}
= a_{i}r_{\gamma(i)}\phi_{\gamma(i)}^{\prime}(h)b_{i}\\
&=& a_{i}r_{\gamma(i)}b_{i}b_{i}^{-1}\phi_{\gamma(i)}^{\prime}(h)b_{i} = f(t_{i}f^{-1}(b_{i}^{-1}\phi_{\gamma(i)}^{\prime}(h)b_{i})).
\end{eqnarray*}
Since $f$ is an isomorphism this therefore implies that $\alpha_{i}^{-1}(h)t_{i} = t_{i}f^{-1}(b_{i}^{-1}\phi_{\gamma(i)}^{\prime}(h)b_{i})$ and so 
$\alpha_{i}^{-1}(H_{\gamma(i)}^{\prime}) = H_{i}$. Thus $\phi_{i}$ and $\phi_{\gamma(i)}^{\prime}$ are isomorphic for each $i\in I$.
\end{proof}

Note that if in the previous result we had $G = G_{1} = G_{2}$ and $f(g) = g$ for each $g\in G$ in our isomorphism $f:M_{1}\rightarrow M_{2}$ then the partial endomorphisms $\phi_{i}$ and $\phi_{\gamma(i)}^{\prime}$ would in fact be conjugate.

\begin{comment}

\begin{proof}
Assume $ac = bc$, where $a = h_{1}tu_{1}$, $b = h_{2}tu_{2}$, $c = gtv$, $h_{1},h_{2},g \in G$, $u_{1}, u_{2}, v \in S$.
$$ac = bc \quad \Rightarrow \quad h_{1}tu_{1}gtv = h_{2}tu_{2}gtv \quad \Rightarrow \quad h_{1}tu_{1}gt = h_{2}tu_{2}gt $$
Because of the unique normal form, $h_{1}=h_{2}$ and then from the fact that $\rho$ is injective it follows that $u_{1} = u_{2}$.
\end{proof}
\end{comment}

\begin{comment}
All of the above leads to the following main result:

\begin{thm}
Left Rees monoids and monoid HNN-extensions are one and the same thing. Left Rees monoids with transitive action of $G$ on $X$ correspond to monoid HNN-extensions with single stable letter. Rees monoids correspond to monoid HNN-extensions with injective associated homomorphisms.
\end{thm}
\end{comment}

Let $G$ be a group, $H_{i}:i\in I$ subgroups of $G$ and $\rho_{i}:H_{i}\rightarrow G$ be injective partial endomorphisms for each $i$. Recall that $\Gamma$ is a \emph{group HNN-extension} of $G$ if $\Gamma$ can be defined by the following group presentation
$$\Gamma=\langle G, t_{i}:i\in I | \mathcal{R}(G), \quad h t_{i} = t_{i} \rho_{i}(h) \quad \forall h\in H_{i}, i\in I \rangle, $$ 
where $\mathcal{R}(G)$ denotes the relations of $G$.

\begin{comment}
For each monoid $S$ there is a group $U(S)$ and a homomorphism $\iota: S \rightarrow U(S)$
such that for each homomorphism $\phi: S \rightarrow G$ to a group there is a unique homomorphism
$\bar{\phi}: U(S) \rightarrow G$ such that
$\phi = \bar{\phi} \iota$.
The group $U(S)$ is called the {\em universal group} of $S$.
If $S$ is given in terms of a semigroup presentation then $U(S)$ is just the group given by this presentation viewed as a group presentation (c.f. \cite{Cohn71}). 
The monoid $S$ can be embedded in a group iff $\iota$ is injective.
\end{comment}
If a monoid $M$ embeds in its group of fractions then it has to be cancellative (though the converse is not in general true).
If $M$ is a Rees monoid then combining Theorem \ref{mainthm1} and Proposition \ref{univgroupoid} we see that its group of fractions $U(M)$ is a group HNN-extension and noting the normal form results for monoid HNN-extensions and group HNN-extensions we see that in fact $M$ consists of every element of $U(M)$ which does not contain any $t_{i}^{-1}$. So we have the following:

\begin{lemma}
Rees monoids embed in their groups of fractions.
\end{lemma}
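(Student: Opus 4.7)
The plan is to combine Theorem \ref{mainthm1}, Proposition \ref{univgroupoid} and the normal form results to show directly that the canonical map $\iota \colon M \to U(M)$ is injective. First, given a Rees monoid $M$, apply Theorem \ref{mainthm1} to realise $M$ as a monoid HNN-extension of its group of units $G$,
$$M = \langle \breve{G},\, t_i : i \in I \mid \mathcal{R}(G),\ h t_i = t_i \rho_i(h)\ \forall h\in H_i,\, i \in I \rangle,$$
for some subgroups $H_i \leq G$ and homomorphisms $\rho_i \colon H_i \to G$. Since $M$ is cancellative, the corollary immediately following Theorem \ref{mainthm1} forces each $\rho_i$ to be injective. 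By Proposition \ref{univgroupoid}, applied in the special case where the category is a monoid and the groupoid of fractions is consequently a group, the group $U(M)$ is given by the \emph{same} presentation reinterpreted as a group presentation, so that $U(M)$ is the genuine group HNN-extension of $G$ associated with the $\rho_i$.

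Next I would compare normal forms. From the sketch proof of Theorem \ref{mainthm1}, with a fixed choice of left-coset transversal $T_i$ of $H_i$ in $G$, every element of $M$ can be uniquely written in the form
$$g_1 t_{i_1} g_2 t_{i_2} \cdots g_m t_{i_m}\, u,\qquad g_k \in T_{i_k},\ u \in G.$$
On the other hand, the one-object specialisation of Proposition \ref{normalformgroupoidHNN} (essentially Britton's lemma for group HNN-extensions) says that, after also choosing a transversal $T_i'$ of $\rho_i(H_i)$ in $G$, every element of $U(M)$ has a unique normal form
$$g_1 t_{i_1}^{\epsilon_1} g_2 t_{i_2}^{\epsilon_2} \cdots g_m t_{i_m}^{\epsilon_m}\, u,$$
with $\epsilon_k \in \{-1,+1\}$, $g_k \in T_{i_k}$ when $\epsilon_k = +1$ and $g_k \in T_{i_k}'$ when $\epsilon_k = -1$, subject to the usual forbidden-subword condition. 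With the transversals chosen coherently on the positive side, every monoid normal form is \emph{verbatim} a group normal form with all $\epsilon_k = +1$, and this constraint is trivially compatible with the no-cancellation condition.

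The conclusion is then immediate: if $s, s' \in M$ have $\iota(s) = \iota(s')$, write each in monoid normal form and regard these expressions as group normal forms in $U(M)$; uniqueness of group normal forms forces the two strings to agree term by term, hence $s = s'$ in $M$. Thus $\iota$ is injective. The only real point to check — and the main (mild) obstacle — is the compatibility of the two normal form theorems under a common choice of transversals; once this bookkeeping is done, the embedding drops out with no further work. Note that injectivity of the $\rho_i$ is used implicitly here, since without it neither $U(M)$ nor the forbidden-subword cancellation rule would behave as required.
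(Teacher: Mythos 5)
Your argument is correct and follows essentially the same route as the paper: Theorem \ref{mainthm1} plus Proposition \ref{univgroupoid} exhibit $U(M)$ as the group HNN-extension with the same presentation, and the embedding is read off by comparing the monoid normal form with the Britton normal form of Proposition \ref{normalformgroupoidHNN}, the monoid elements being exactly the words with no $t_i^{-1}$. Your write-up just makes explicit the transversal bookkeeping that the paper leaves implicit.
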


On the other hand, we see that every group HNN-extension of a group $G$ is the group of fractions of a Rees monoid, and so there is an underlying self-similar group action. 

\begin{comment}
A subsemigroup $U$ of a semigroup $S$ is \emph{unitary} if for all $u\in U$, $s\in S$ we have $us\in U$ implies $s\in U$ and $su\in U$ implies $s\in U$. Howie (\cite{Howie62}) proved the following result:

\begin{theorem}
\label{unitamalgam}
If $[U;S_{i};\alpha_{i}]$ is an amalgam of semigroups such that $U$ is a unitary subsemigroup of $S_{i}$ for each $i$ then the amalgam is embeddable.
\end{theorem}

We thus have the following:
\end{comment}
\begin{proposition}
Let $G_{1},G_{2}$ be groups, $H_{i}:i\in I$ subgroups of $G_{1}$, $H_{j}^{\prime}:j\in J$ subgroups of $G_{2}$, $\rho_{i}:H_{i}\rightarrow G_{1}$, $\rho_{j}^{\prime}:H_{i}^{\prime}\rightarrow G_{2}$ partial endomorphisms for each $i\in I$ and $j\in J$ and let 
$$M_{1} = \langle\ \breve{G_{1}}, t_{i}:i\in I | \mathcal{R}(G_{1}), h t_{i} = t_{i} \rho_{i}(h) \forall h\in H_{i}, i\in I \rangle$$
and
$$M_{2} = \langle\ \breve{G_{2}}, r_{j}:j\in J | \mathcal{R}(G_{2}), h r_{j} = r_{j} \rho_{j}^{\prime}(h) \forall h\in H_{j}^{\prime}, j\in J \rangle$$
be the associated monoid HNN-extensions. Let $K$ be a group and let $\alpha_{1}:K\rightarrow M_{1}$, $\alpha_{2}:K\rightarrow M_{2}$ be injective homomorphisms. Then
$M_{1}{\ast}_{K} M_{2}$
is a left Rees monoid. Further $U(M_{1}{\ast}_{K} M_{2}) \cong U(M_{1})\ast_{K} U(M_{2})$.
\end{proposition}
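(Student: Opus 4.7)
The plan is to reduce both assertions to Theorem~\ref{mainthm1} and Proposition~\ref{univgroupoid} by realising $M_{1}\ast_{K}M_{2}$ as a monoid HNN-extension of a single group, namely the group amalgamated free product $G = G_{1}\ast_{K}G_{2}$.

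First I would observe that since $K$ is a group, each injective homomorphism $\alpha_{\ell}\colon K\to M_{\ell}$ must land in the group of units $G_{\ell}$: indeed $\alpha_{\ell}(k)\alpha_{\ell}(k^{-1})=1$, so $\alpha_{\ell}(k)$ is invertible in $M_{\ell}$, and the invertible elements of a left Rees monoid form its group of units. Consequently I may identify $K$ with a subgroup of both $G_{1}$ and $G_{2}$, and form the group-theoretic amalgamated free product $G = G_{1}\ast_{K}G_{2}$, into which both $G_{1}$ and $G_{2}$ embed. The subgroups $H_{i}\leq G_{1}$ and $H_{j}^{\prime}\leq G_{2}$ thus sit inside $G$, and the partial endomorphisms $\rho_{i}\colon H_{i}\to G_{1}\hookrightarrow G$ and $\rho_{j}^{\prime}\colon H_{j}^{\prime}\to G_{2}\hookrightarrow G$ become partial endomorphisms of $G$.

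Next I would form the monoid HNN-extension
$$N = \langle\, \breve{G},\, t_{i}\, (i\in I),\, r_{j}\, (j\in J) \mid \mathcal{R}(G),\ h t_{i}=t_{i}\rho_{i}(h)\ \forall h\in H_{i},\ h r_{j}=r_{j}\rho_{j}^{\prime}(h)\ \forall h\in H_{j}^{\prime}\,\rangle$$
and argue that $N\cong M_{1}\ast_{K}M_{2}$ by comparing presentations. Since $\mathcal{R}(G)$ is obtained from $\mathcal{R}(G_{1})\cup\mathcal{R}(G_{2})$ by adjoining the amalgamating relations $\alpha_{1}(k)=\alpha_{2}(k)$ for $k\in K$, the presentation for $N$ is visibly the pushout of the presentations of $M_{1}$ and $M_{2}$ over $K$, which is precisely $M_{1}\ast_{K}M_{2}$. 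The verification is purely formal: one constructs mutually inverse homomorphisms by sending generators to generators and checks that all defining relations are respected. Applying Theorem~\ref{mainthm1} to $N$ then yields the first assertion.

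For the second claim I would appeal to the monoid version of Proposition~\ref{univgroupoid} (stated explicitly in the sentence immediately following its proof): the group of fractions $U(P)$ of a monoid $P$ given by a monoid presentation is the group given by the same presentation. Applying this to $M_{1}\ast_{K}M_{2}\cong N$ produces
$$U(M_{1}\ast_{K}M_{2}) = \langle\, G,\, t_{i},\, r_{j}\mid \mathcal{R}(G),\ h t_{i}=t_{i}\rho_{i}(h),\ h r_{j}=r_{j}\rho_{j}^{\prime}(h)\,\rangle_{\text{group}},$$
and the same pushout-of-presentations argument now identifies the right-hand side with $U(M_{1})\ast_{K}U(M_{2})$, where $U(M_{1})$ and $U(M_{2})$ are the group HNN-extensions of $G_{1}$ and $G_{2}$ associated to the $\rho_{i}$ and $\rho_{j}^{\prime}$ respectively.

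The main obstacle I expect is the rigorous identification of the presentation of $N$ with the pushout presentation of $M_{1}\ast_{K}M_{2}$: one must check that no extra relations are smuggled in when passing to $G = G_{1}\ast_{K}G_{2}$ (so that the HNN-relations for the $t_{i}$ and $r_{j}$ do not accidentally interact with the amalgamation), and conversely that every relation forced by the pushout is already present in $N$. This is essentially bookkeeping with presentations, but the fact that all of $H_{i}$ sits inside $G_{1}$ and all of $H_{j}^{\prime}$ sits inside $G_{2}$ means the HNN-relations are confined to the two sides of the amalgam and do not mix, so the argument goes through cleanly.
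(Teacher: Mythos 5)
Your proposal is correct and follows essentially the same route as the paper: identify $\alpha_{1}(K),\alpha_{2}(K)$ inside the groups of units, pass to $G=G_{1}\ast_{K}G_{2}$, recognise $M_{1}\ast_{K}M_{2}$ by its pushout presentation as a monoid HNN-extension of $G$ (hence a left Rees monoid by Theorem~\ref{mainthm1}), and compare group presentations via Proposition~\ref{univgroupoid} for the universal group claim. The only cosmetic difference is that the paper additionally invokes Howie's unitary-subsemigroup embedding theorem to note that $M_{1}$ and $M_{2}$ embed in the amalgam, which is not needed for the presentation argument itself.
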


\begin{proof}
Observe that $\alpha_{1}(K)\subseteq G_{1}$ and $\alpha_{2}(K)\subseteq G_{2}$. Since $K$ is a unitary subsemigroup of $M_{1}$ and $M_{2}$ it follows (\cite{Howie62}) that $M_{1}$ and $M_{2}$ embed in $M_{1}{\ast}_{K} M_{2}$. We have
\begin{comment}
are unitary subsemigroups of $M_{1}$ and $M_{2}$ respectively. We can therefore apply Howie's theorem to give
\end{comment}
$$M_{1}{\ast}_{K} M_{2} = \langle\ \breve{G_{1}}, \breve{G_{2}},t_{i}:i\in I,r_{j}:j\in J | \mathcal{R}(G_{1}), \mathcal{R}(G_{2}), h t_{i} = t_{i} \rho_{i}(h) \forall h\in H_{i}, i\in I,$$
$$ h r_{j} = r_{j} \rho_{j}^{\prime}(h) \forall h\in H_{j}^{\prime}, j\in J, \alpha_{1}(g) = \alpha_{2}(g) \forall g\in K \rangle\ .$$
Now let $G = G_{1}\ast_{K} G_{2}$ so that $G$ is given by the following group presentation
$$G = \langle\ G_{1}, G_{2}|\mathcal{R}(G_{1}),\mathcal{R}(G_{2}), \alpha_{1}(g) = \alpha_{2}(g) \forall g\in K \rangle\ .$$
We can therefore write 
$$M_{1}{\ast}_{K} M_{2} \cong \langle\ \breve{G},t_{i}:i\in I,r_{j}:j\in J | \mathcal{R}(G), h t_{i} = t_{i} \rho_{i}(h) \forall h\in H_{i}, i\in I,$$
$$ h r_{j} = r_{j} \rho_{j}^{\prime}(h) \forall h\in H_{j}^{\prime}, j\in J\rangle\ .$$
We then see that $M_{1}{\ast}_{K} M_{2}$ is a monoid HNN-extension of $G$ with associated subgroups $H_{i}:i\in I$ and $H_{j}^{\prime}:j\in J$. Thus $M_{1}{\ast}_{K} M_{2}$ is a left Rees monoid. Note that
$$U(M_{1})\cong \langle\ \breve{G_{1}}, t_{i}:i\in I | \mathcal{R}(G_{1}), h t_{i} = t_{i} \rho_{i}(h) \forall h\in H_{i}, i\in I \rangle$$
and
$$U(M_{2}) = \langle\ \breve{G_{2}}, r_{j}:j\in J | \mathcal{R}(G_{2}), h r_{j} = r_{j} \rho_{j}^{\prime}(h) \forall h\in H_{j}^{\prime}, j\in J \rangle ,$$
where these are now group presentations. So
$$U(M_{1})\ast_{K} U(M_{2}) \cong \langle\ \breve{G_{1}}, \breve{G_{2}},t_{i}:i\in I,r_{j}:j\in J | \mathcal{R}(G_{1}), h t_{i} = t_{i} \rho_{i}(h) \forall h\in H_{i}, i\in I,$$
$$\mathcal{R}(G_{2}), h r_{j} = r_{j} \rho_{j}^{\prime}(h) \forall h\in H_{j}^{\prime}, j\in J, \alpha_{1}(g) = \alpha_{2}(g) \forall g\in K \rangle\  \cong U(M_{1}{\ast}_{K} M_{2}).$$
\end{proof}

To demonstrate the above theory, let us now consider an example. Let $G = \mathbb{Z}\times\mathbb{Z}$, $H = 2\mathbb{Z}\times2\mathbb{Z}$, an index 4 subgroup of $G$, and let $\rho:H\rightarrow G$ be given by
$$\rho(2m,2n) = (m,3n),$$
for $m,n\in \mathbb{Z}$. We see that this is a monomorphism and so we can therefore define an associated group HNN-extension $\Gamma$ of $G$ on a single stable letter $t$ given as a group presentation by
$$\Gamma = \langle\ a,b,t \quad | \quad ab = ba, a^{2}t = ta, b^{2}t = tb^{3}\rangle\ $$
by noting that 
$$\mathbb{Z}\times\mathbb{Z}\cong \langle\ a,b \quad | \quad ab = ba \rangle\ ,$$
where we identify $(1,0)$ with $a$ and $(0,1)$ with $b$.
We see that $\Gamma$ is the group of fractions of the Rees monoid $M$ with monoid presentation
$$M = \langle\ a,a^{-1},b,b^{-1},t \quad |\quad ab = ba, aa^{-1} = a^{-1}a = bb^{-1} = b^{-1}b = 1, a^{2}t = ta, b^{2}t = tb^{3}\rangle\ .$$
Since $|G:H| = 4$, the monoid $M$ has 4 maximal proper principal right ideals so that $M \cong X^{\ast}\bowtie G$ for some $X$ with 4 elements. Observe that $G = H \cup aH \cup bH \cup abH$. Let $x_{1},\ldots,x_{4}$ be defined by
$$x_{1} = t, \quad x_{2} = at, \quad x_{3} = bt, \quad x_{4} = abt$$
and let $X = \left\{x_{1},x_{2},x_{3},x_{4}\right\}$.
We define a self-similar group action of $G$ on $X$ as follows:
$$a \cdot x_{1} = x_{2}, \quad a \cdot x_{2} = x_{1}, \quad b \cdot x_{1} = x_{3}, \quad b\cdot x_{3} = x_{1},$$ 
$$a \cdot x_{3} = x_{4}, \quad a \cdot x_{4} = x_{3}, \quad b \cdot x_{2} = x_{4}, \quad b\cdot x_{4} = x_{2},$$ 
$$a|_{x_{1}} = b|_{x_{1}} = a|_{x_{3}} = b|_{x_{2}} = 1,$$
$$a|_{x_{2}} = a|_{x_{4}} = a$$
and
$$b|_{x_{3}} = b|_{x_{4}} = b^{3}.$$
Note that since $G$ is abelian, there won't be any partial endomorphisms conjugate to $\rho$.

\newpage
\section{Symmetric Rees monoids}

We will say that a left Rees monoid $M = X^{\ast}\bowtie G$ is \emph{symmetric} if the functions $\rho_{x}:G\rightarrow G$ defined in Lemma \ref{usefullemma1} are bijective for every $x\in X$. 

Let $X$ be a set. We will denote by $FG(X)$ the free group on $X$. The Zappa-Sz\'ep product is defined for any monoid $S$ and group $G$ by replacing $x\in X^{*}$ with $s\in S$ in the self-similarity axioms. A natural question now arises: when is it possible to extend a self-similar action of a group $G$ on a free monoid $X^{\ast}$ to an action of $G$ on $FG(X)$ such that $X^{\ast}\bowtie G\leq FG(X)\bowtie G$? The next theorem will give us the necessary and sufficient condition for this to be the case. 

\begin{thm}
\label{symembed}
Let $M=X^{*}\bowtie G$ be a left Rees monoid. Then the Zappa-Sz\'ep product $X^{*} \bowtie G$ can be extended to a Zappa-Sz\'ep product $FG(X) \bowtie G$ respecting the actions if and only if $M$ is symmetric.
\end{thm}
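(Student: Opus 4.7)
The plan is to prove both directions by exploiting axioms (SS5) and (SS6) applied to the identities $xx^{-1}=1$ and $x^{-1}x=1$ in $FG(X)$.

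For the ``only if'' direction, assume the extended Zappa--Sz\'ep product $FG(X)\bowtie G$ exists and agrees with $X^{\ast}\bowtie G$ on $X^{\ast}$. Fix $x\in X$ and $g\in G$. Since the extended axioms hold for all elements of $FG(X)$, and since $xx^{-1}=x^{-1}x=1$ there,
\begin{align*}
(g|_{x})|_{x^{-1}} &= g|_{xx^{-1}} = g|_{1} = g,\\
(g|_{x^{-1}})|_{x} &= g|_{x^{-1}x} = g|_{1} = g.
\end{align*}
The first equation shows $\rho_{x}$ is injective: if $g|_{x}=h|_{x}$, apply $(\,\cdot\,)|_{x^{-1}}$ to obtain $g=h$. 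The second shows $\rho_{x}$ is surjective: the element $g|_{x^{-1}}\in G$ is a pre-image of $g$ under $\rho_{x}$. Hence $M$ is symmetric.

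For the ``if'' direction, assume $M$ is symmetric. By Lemma \ref{usefullemma1}(v) and (viii), $\rho_{x}$ is bijective for every $x\in X^{\ast}$. I will define the extension on negative letters by
$$g|_{x^{-1}} \;:=\; \rho_{x}^{-1}(g), \qquad g\cdot x^{-1} \;:=\; \bigl(\rho_{x}^{-1}(g)\cdot x\bigr)^{-1},$$
for $x\in X$ and $g\in G$, and then propagate to an arbitrary reduced word $y_{1}^{\epsilon_{1}}\cdots y_{n}^{\epsilon_{n}}\in FG(X)$ by iterating the prescriptions (SS4) and (SS6), which force the definition uniquely once the values on positive and negative letters are fixed. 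By construction the resulting maps restrict to the original action on $X^{\ast}$.

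The main obstacle, and the real content of the argument, is showing that this extension descends to a well-defined Zappa--Sz\'ep product on $FG(X)$, i.e.\ that it is compatible with the relations $xx^{-1}=x^{-1}x=1$. The crucial computations, using bijectivity of $\rho_{x}$, are
$$g|_{xx^{-1}} = (g|_{x})|_{x^{-1}} = \rho_{x}^{-1}(g|_{x}) = g,$$
$$g\cdot(xx^{-1}) = (g\cdot x)(g|_{x}\cdot x^{-1}) = (g\cdot x)\bigl(\rho_{x}^{-1}(g|_{x})\cdot x\bigr)^{-1} = (g\cdot x)(g\cdot x)^{-1} = 1,$$
together with the symmetric identities $g|_{x^{-1}x}=g$ and $g\cdot(x^{-1}x)=1$ obtained by the same pattern. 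Once consistency with free-group reduction is established, the remaining axioms (SS1)--(SS8) are checked on a mixed word letter by letter: each time a negative letter is encountered, the verification reduces to one application of the identity $\rho_{x}\rho_{x}^{-1}=\rho_{x}^{-1}\rho_{x}=\mathrm{id}_{G}$, so the induction on reduced length goes through without complication and yields the required extended matched pair.
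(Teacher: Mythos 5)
Your argument follows essentially the same route as the paper's proof: the forward direction uses exactly the identities $(g|_{x})|_{x^{-1}}=g$ and $(g|_{x^{-1}})|_{x}=g$ coming from (SS6), and the converse defines $g|_{x^{-1}}=\rho_{x}^{-1}(g)$ and $g\cdot x^{-1}=(\rho_{x}^{-1}(g)\cdot x)^{-1}$, propagates via (SS4) and (SS6), and verifies the remaining axioms letter by letter, just as the paper does. The only minor difference is that the paper's checks of (SS8) and (SS2) on inverse letters amount to slightly more than a single use of $\rho_{x}\rho_{x}^{-1}=\mathrm{id}_{G}$ (one applies $\rho_{x}$ to both sides and invokes (SS8) for $M$), but this is the same idea carried out in detail.
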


\begin{proof}
($\Rightarrow$) Suppose for a left Rees monoid $M=X^{*}\bowtie G$ the Zappa-Sz\'ep product $\Gamma=FG(X) \bowtie G$ exists such that $M$ is a submonoid of $\Gamma$. We need to show that $\rho_{x}$ is bijective for all $x\in X^{*}$. Let $x,y\in X^{*}$ and $g\in G$. Then since (SS6) says $g|_{xy}=(g|_{x})|_{y}$, we have
$$g = g|_{1} = g|_{x^{-1} x}=(g|_{x^{-1}})|_{x} \quad (1)$$
$$g = g|_{1} = g|_{x x^{-1}}=(g|_{x})|_{x^{-1}} \quad (2)$$
Letting $h=g|_{x^{-1}}$, (1) implies that for every $x\in X^{*}$ and $g\in G$ there exists an $h\in G$ such that $h|_{x}=g$, and so $\rho_{x}$ is surjective for every $x\in X^{*}$. Now suppose $g|_{x}=h|_{x}$. Then (2) implies, upon restriction to $x^{-1}$, that $g=h$, and so $\rho_{x}$ is injective. 

($\Leftarrow$) Let $M=X^{*}G$ be a symmetric left Rees monoid. For $x\in X^{\ast}$, $g\in G$, define $(\rho_{x}\circ \rho_{y})(g) = \rho_{y}(\rho_{x}(g))$. Axiom (SS6) tells us that the map $\rho:X^{\ast}\rightarrow S_{G}$ given by $\rho(x) = \rho_{x}$ is a monoid homomorphism. For $x\in X$, $g\in G$ define
$$g|_{x^{-1}}:=\rho^{-1}_{x}(g).$$
This is well defined since $\rho$ is injective. Now extend the restriction to $g|_{x}$ for $x\in FG(X)$ by using rule (SS6): $$g|_{x_{1}^{\epsilon_{1}}x_{2}^{\epsilon_{2}}\ldots x_{n}^{\epsilon_{n}}}=((g|_{x_{1}^{\epsilon_{1}}})|_{x_{2}^{\epsilon_{2}}})\ldots|_{x_{n}^{\epsilon_{n}}} 
\quad x_{i}\in X, \epsilon_{i}=\pm 1.$$
The preceding remarks tell us that this definition makes sense. Now for $x\in X$, $g\in G$ define
$$g\cdot x^{-1}:=(g|_{x^{-1}}\cdot x)^{-1}.$$
For $x,y\in FG(X)$, define 
$$g\cdot xy:=(g\cdot x)(g|_{x}\cdot y).$$
To see that this is morally the correct definition, let us check that for all $x\in X^{\ast}$, $g\in G$ we have
$$g\cdot(x^{-1}) = (g|_{x^{-1}}\cdot x)^{-1}.$$
We will prove this claim by induction. By definition the claim is true for $|x| = 1$, i.e. $x\in X$. So let us assume that this holds for all $x\in X^{\ast}$ with $|x|\leq n$ for some $n\in \mathbb{N}$. Suppose $z = yx$ where $|y|,|x|\leq n$ and let $g\in G$ be arbitrary. Then
$$g\cdot(z^{-1}) = g\cdot (yx)^{-1} = (g\cdot x^{-1})(g|_{x^{-1}}\cdot y^{-1}).$$
First, let $k=g|_{(yx)^{-1}}$. Applying the rules,
$$(g\cdot x^{-1})(g|_{x^{-1}}\cdot y^{-1})=(g|_{x^{-1}}\cdot x)^{-1}((g|_{x^{-1}})|_{y^{-1}}\cdot y)^{-1}=(g|_{x^{-1}}\cdot x)^{-1}(k\cdot y)^{-1}.$$
Then,
$$(g|_{x^{-1}}\cdot x)^{-1}(k\cdot y)^{-1}=((k\cdot y)(g|_{x^{-1}}\cdot x))^{-1}=((k\cdot y)(k|_{y}\cdot x))^{-1}.$$
But now we can use (SS4) for $M$ to get
$$((k\cdot y)(k|_{y}\cdot x))^{-1}=(k\cdot (yx))^{-1}=(g|_{(yx)^{-1}}\cdot (yx))^{-1} = (g|_{z^{-1}}\cdot z)^{-1},$$
and the claim is proved.

We now need to show that the above definitions taken together give us a well-defined group action of $G$ and $FG(X)$ satisfying axioms (SS1)-(SS8). Note that in our definition, we are assuming (SS4) and (SS6). 
\begin{comment}

Checking (SS6) and (SS4) will be checking that these definitions are well-defined. 
\\(SS6) We want to show that $\rho_{xy}^{-1}(g)=\rho_{x}^{-1}(\rho_{y}^{-1}(g))$. Since $\rho_{xy}$, $\rho_{x}$ and $\rho_{y}$ are bijective, there exist $h\in G$ such that $h|_{xy}=g$, $k\in G$ such that $k|_{y}=g$ and $u\in G$ such that $u|_{x}=k$. We see that $\rho_{xy}^{-1}(g)=h$ and $\rho_{x}^{-1}(\rho_{y}^{-1}(g))=u$. We have to show that $u=h$. Since $(h|_{x})|_{y}=g=k|_{y}$, injectivity of $\rho_{y}$ implies $k=h|_{x}$. Since $u|_{x}=k=h|_{x}$, injectivity of $\rho_{x}$ then implies that $u=h$ and we are done.
\\(SS4) We want to show for every $x,y\in X^{*}$ and $g\in G$ that 
\end{comment}
\begin{description}
\item[{\rm (SS3) and (SS5)}] only involve $M$ and so are true.
\item[{\rm (SS7)}] For $x\in X$, $1|_{x^{-1}}=\rho_{x}^{-1}(1)=1$, and so it follows by (SS6) for all $x\in FG(X)$.
\item[{\rm (SS1)}] For $x\in X$, $1\cdot x^{-1} = (1|_{x^{-1}}\cdot x)^{-1} = (1 \cdot x)^{-1} = x^{-1}$ , and so it follows by (SS4) for all $x\in FG(X)$.
\item[{\rm (SS8)}] We need to show that for every $x\in X$ and $g,h\in G$ 
$$(gh)|_{x^{-1}}=g|_{(h\cdot x^{-1})}h|_{x^{-1}}.$$ 
First note $g|_{(h\cdot x^{-1})} = g|_{(h|_{x^{-1}}\cdot x)^{-1}}$. We will in fact show that $\rho_{x}((gh)|_{x^{-1}}) = \rho_{x}(g|_{(h\cdot x^{-1})}h|_{x^{-1}})$ and the result will follow since $\rho_{x}$ is a bijection. So,
$$\rho_{x}(g|_{(h|_{x^{-1}}\cdot x)^{-1}}h|_{x^{-1}})=(g|_{(h|_{x^{-1}}\cdot x)^{-1}}h|_{x^{-1}})|_{x} = (g|_{(h|_{x^{-1}}\cdot x)^{-1}(h|_{x^{-1}}\cdot x)})(h|_{x^{-1}x}),$$ using (SS8) for $M$. But this is simply
$$g|_{1} h|_{1} = gh = \rho_{x}((gh)|_{x^{-1}}).$$
The result holds for $x\in FG(X)$ by (SS4) and (SS6).
\item[{\rm (SS2)}] We need to show for every $g,h\in G$ and $x\in X$ that $(gh)\cdot x^{-1} = g\cdot(h\cdot x^{-1})$. So,
$$g\cdot(h\cdot x^{-1}) = g\cdot(h|_{x^{-1}}\cdot x)^{-1} = (g|_{(h|_{x^{-1}}\cdot x)^{-1}}\cdot(h|_{x^{-1}}\cdot x))^{-1} $$
$$= ((g|_{(h|_{x^{-1}}\cdot x)^{-1}}h|_{x^{-1}})\cdot x)^{-1} = ((g|_{h\cdot x^{-1}}h|_{x^{-1}})\cdot x)^{-1} = ((gh)|_{x^{-1}}\cdot x)^{-1}.$$
But this is just the definition of $(gh)\cdot x^{-1}$.
\end{description}
\end{proof}

\begin{comment}
\begin{lemma}
\label{univalg1}
Let $\Gamma_{1}=\langle X|\mathcal{R},\mathcal{S}\rangle$ and $\Gamma_{2}=\langle X|\mathcal{R}\rangle$ be two groups given by group presentation. If $f:\Gamma_{1}\rightarrow \Gamma_{2}$ is a homomorphism mapping generators of $\Gamma_{1}$ to generators of $\Gamma_{2}$ (so $f(x)=x$ for every $x\in X$), then $f$ is in fact an isomorphism.
\end{lemma}

\begin{proof}
\par Let us first check injectivity. Suppose $f(x_{1}^{\pm 1}\cdots x_{n}^{\pm 1})=f(y_{1}^{\pm 1}\cdots y_{k}^{\pm 1})$ for $x_{i}, y_{j} \in X$. Then $x_{1}^{\pm 1}\cdots x_{n}^{\pm 1}=y_{1}^{\pm 1}\cdots y_{k}^{\pm 1}$ in $\Gamma_{2}$, but since there are no more relations in $\Gamma_{2}$ than $\Gamma_{1}$, $x_{1}^{\pm 1}\cdots x_{n}^{\pm 1}=y_{1}^{\pm 1}\cdots y_{k}^{\pm 1}$ in $\Gamma_{1}$ and we are done.
\par Now let us check surjectivity. Let $g=x_{1}^{\pm 1}\ldots x_{n}^{\pm 1}$ in $\Gamma_{2}$. Then 
$$g=f(x_{1}^{\pm 1})\cdots f(x_{n}^{\pm 1}).$$
But since $f$ is a homomorphism, we then have 
$$g=f(x_{1}^{\pm 1}\cdots x_{n}^{\pm 1})$$
and we are done.
\end{proof}
\end{comment}

We will now show that if $M = X^{\ast}\bowtie G$ is a symmetric Rees monoid then the group of fractions of $M$ is isomorphic to the extension $FG(X)\bowtie G$ described in Theorem \ref{symembed}.

\begin{thm}
\label{univgroupsym}
Let $M=X^{*}G$ be a symmetric Rees monoid. Then the group of fractions of $M$ is isomorphic to a Zappa-Sz\'{e}p product of the free group on $X$ and $G$. That is,
$$U(M)\cong FG(X)\bowtie G.$$
\end{thm}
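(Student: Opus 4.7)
The plan is to construct mutually inverse homomorphisms between $U(M)$ and $FG(X)\bowtie G$, leveraging Theorem~\ref{symembed} to embed $M$ in $FG(X)\bowtie G$ and the universal property of the group of fractions in the other direction. Because both $FG(X)$ and $G$ are groups, the matched pair $FG(X)\bowtie G$ is itself a group: every element $(w,g)$ has inverse $(g^{-1}\cdot w^{-1},\, g^{-1}|_{w^{-1}})$, using the action and restriction defined on $FG(X)$ in the proof of Theorem~\ref{symembed}. Since $M$ is symmetric, Theorem~\ref{symembed} gives an embedding $\iota\colon M\hookrightarrow FG(X)\bowtie G$, and the universal property of $U(M)$ yields a unique group homomorphism $\phi\colon U(M)\to FG(X)\bowtie G$ extending $\iota$. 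Its image contains both $X$ and $G$, and hence also $X^{-1}$, so it exhausts $FG(X)\cdot G=FG(X)\bowtie G$; thus $\phi$ is surjective.

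To produce an inverse $\psi\colon FG(X)\bowtie G\to U(M)$, I would set $\psi(w,g)=wg$, interpreting the product inside $U(M)$; this is legitimate because $M$ embeds in $U(M)$ (as $M$ is a Rees monoid), so in particular $X$ and $G$ sit inside $U(M)$, and therefore so does $FG(X)$. Checking that $\psi$ is a homomorphism amounts to verifying the identity
\[
gw \;=\; (g\cdot w)(g|_w)\qquad (g\in G,\ w\in FG(X))
\]
holds inside $U(M)$, where the action and restriction are those extended to $FG(X)$ in the proof of Theorem~\ref{symembed}. For $w\in X^{\ast}$ this is exactly the Zappa-Szép relation defining $M$.

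The main hurdle is the case of inverse letters, and this is where symmetry enters decisively. For $w=x^{-1}$ with $x\in X$, set $h:=g|_{x^{-1}}=\rho_x^{-1}(g)$, which is well defined by bijectivity of $\rho_x$ and satisfies $h|_x=g$. The monoid relation $hx=(h\cdot x)(h|_x)$ then rearranges inside $U(M)$ to
\[
gx^{-1} \;=\; h|_x\,x^{-1} \;=\; (h\cdot x)^{-1}h \;=\; (g\cdot x^{-1})(g|_{x^{-1}}),
\]
using the very definition $g\cdot x^{-1}:=(g|_{x^{-1}}\cdot x)^{-1}$. The general case $w=x_1^{\epsilon_1}\cdots x_n^{\epsilon_n}$ then follows by induction on $n$, applying axioms (SS4) and (SS6) for the extended action established in the proof of Theorem~\ref{symembed}.

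Once $\psi$ is known to be a homomorphism, $\phi\circ\psi$ and $\psi\circ\phi$ agree with the identity on the generating sets $X\cup G$ of $FG(X)\bowtie G$ and $U(M)$ respectively, so they are the identity everywhere. Hence $\phi$ is an isomorphism. I expect the trickiest bookkeeping to be the inductive extension of the Zappa-Szép relation from single inverse letters to arbitrary reduced words in $FG(X)$, where one must simultaneously track the action and the restriction; symmetry is used throughout, since without bijectivity of each $\rho_x$ the symbol $g|_{x^{-1}}$ would not even be defined.
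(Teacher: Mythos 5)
Your proof is correct, but it takes a genuinely different route from the paper's. The paper first rewrites $M$ as a monoid HNN-extension via Theorem~\ref{mainthm1}, so that $U(M)$ is the group HNN-extension with the same presentation, and then argues through normal forms: it takes the Britton normal form from Proposition~\ref{normalformgroupoidHNN}, converts it into a ``Rees normal form'' whose factors $g\,t_{i}^{\pm 1}\beta_{i,\pm 1}(g)$ are shown to multiply without collapse, writes $FG(X)\bowtie G$ by a presentation with possible extra relations $\mathcal{S}$, and shows the resulting canonical surjection $U(M)\to FG(X)\bowtie G$ separates Rees normal forms, hence is injective. You avoid presentations and normal forms altogether: Theorem~\ref{symembed} supplies $\iota\colon M\to FG(X)\bowtie G$, the universal property of $U(M)$ gives $\phi$, and injectivity comes from exhibiting the explicit inverse $\psi(w,g)=\bar{w}g$, which reduces to the single identity $g\bar{v}=\overline{g\cdot v}\;\overline{g|_{v}}$ in $U(M)$; for $v=x^{-1}$ this is exactly your rearrangement of $hx=(h\cdot x)g$ with $h=\rho_{x}^{-1}(g)$, and the general case follows by induction using (SS4) and (SS6) of the extended matched pair, which Theorem~\ref{symembed} guarantees. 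Your argument is shorter and more conceptual, and it re-proves as a by-product both that $M$ embeds in $U(M)$ and that the images of $X$ generate a free subgroup; what it does not reproduce is the explicit unique (Britton/Rees) factorisation of arbitrary elements of $U(M)$, which the paper's route delivers and which ties this theorem into the HNN-extension and Bass--Serre picture used elsewhere. One phrase should be repaired: you justify $\psi$ by saying $FG(X)$ ``sits inside'' $U(M)$, which is circular, since freeness of the subgroup of $U(M)$ generated by $X$ is part of the conclusion; all that is needed is the canonical homomorphism $FG(X)\to U(M)$ determined by $x\mapsto x$ (so $\psi$ is well defined on pairs $(w,g)$ regardless), and its injectivity then follows once $\phi$ and $\psi$ are shown to be mutually inverse.
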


\begin{proof}
Let $\left\{x_{i}:i\in I\right\}$ be a set of representatives for orbits of $X$ (where $|X| = |I|$), let $H_{i}^{1} = G_{x_{i}}$ be the stabiliser of $x_{i}$, let $\rho_{i} = \rho_{x_{i}}$, let $T_{i}^{1}$ be a transversal of left coset representatives for $H_{i}^{1}$, let $H_{i}^{-1} = \rho_{i}(H_{i}^{1})$ and let $T_{i}^{-1}$ be a transversal of left coset representatives for $H_{i}^{-1}$. Assume that $1\in T_{i}^{1}$ and $1\in T_{i}^{-1}$ for each $i\in I$.

For each $i\in I$, $\epsilon \in \left\{-1,1\right\}$, define maps $\beta_{i,\epsilon}:G\rightarrow G$ by 
$\beta_{i,1}(g) = (\rho_{i}(g))^{-1}$ and $\beta_{i,-1}(g) = (\rho_{i}^{-1}(g))^{-1}$. Since $\rho_{i}:G\rightarrow G$ is a bijection for each $i\in I$, this latter map is well-defined.

By Theorem \ref{mainthm1}, $M$ is isomorphic to the following monoid presentation:
$$M \cong \langle\ \breve{G}, t_{i}:i\in I| \mathcal{R}(G), ht_{i} = t_{i}\rho_{i}(h), h\in H_{i}^{1}, i\in I \rangle\ .$$

It follows that 
$$U(M) \cong \langle\ \breve{G}, t_{i}:i\in I| \mathcal{R}(G), ht_{i} = t_{i}\rho_{i}(h), h\in H_{i}^{1}, i\in I \rangle\ ,$$
where here we are working with a group presentation.

We know from Proposition \ref{normalformgroupoidHNN} that every element of $U(M)$ can be uniquely written in the form
$$g = g_{1}t_{i_{1}}^{\epsilon_{1}}g_{2}t_{i_{2}}^{\epsilon_{2}}\cdots g_{m}t_{i_{m}}^{\epsilon_{m}}u,$$
where $\epsilon_{k}\in \left\{-1,1\right\}$, $g_{k}\in T_{i_{k}}^{\epsilon_{k}}$, $u\in G$ is arbitrary all subject to the condition that if $t_{i_{k}} = t_{i_{k+1}}$ and $\epsilon_{k} + \epsilon_{k+1} = 0$ then $g_{k+1}$ is not an identity. We call this the \emph{Britton normal form}.

We claim that every element of $U(M)$ can in fact be uniquely written in the form
$$g = g_{1}t_{i_{1}}^{\epsilon_{1}}\beta_{i_{1},\epsilon_{1}}(g_{1})g_{2}t_{i_{2}}^{\epsilon_{2}}\beta_{i_{2},\epsilon_{2}}(g_{2})\cdots g_{m}t_{i_{m}}^{\epsilon_{m}}\beta_{i_{m},\epsilon_{m}}(g_{m})u,$$
where $\epsilon_{k}\in \left\{-1,1\right\}$, $g_{k}\in T_{i_{k}}^{\epsilon_{k}}$, $u\in G$ is arbitrary all subject to the condition that 
$$g_{k}t_{i_{k}}^{\epsilon_{k}}\beta_{i_{k},\epsilon_{k}}(g_{k}) \neq (g_{k+1}t_{i_{k+1}}^{\epsilon_{k+1}}\beta_{i_{k+1},\epsilon_{k+1}}(g_{k+1}))^{-1}$$
for any $k$. An element in such a form will be said to be in \emph{Rees normal form}. Observe that part of our claim is that the elements $gt_{i}^{\pm{1}}\beta_{i,\pm{1}}(g)$ generate a free subgroup of $U(M)$.

Let us first show that every element of $U(M)$ can be written in such a form. Let $g = g_{1}t_{i_{1}}^{\epsilon_{1}}\cdots g_{m}t_{i_{m}}^{\epsilon_{m}}u$ be an arbirtary element of $U(M)$ written in Britton normal form. There exist unique elements $g_{2}^{\prime}\in T_{i_{2}}^{\epsilon_{2}}$, $h_{2}\in H_{i_{2}}^{\epsilon_{2}}$ with $g_{2}^{\prime}h_{2} = (\beta_{i_{1},\epsilon_{1}}(g_{1}))^{-1}g_{2}$. We then define $g_{k}^{\prime}\in T_{i_{k}}^{\epsilon_{k}}$, $h_{k}\in H_{i_{k}}^{\epsilon_{k}}$ inductively for $3\leq k\leq m$ to be the unique elements with 
$$g_{k}^{\prime}h_{k} = (\beta_{i_{k-1},\epsilon_{k-1}}(g_{k-1}^{\prime}))^{-1}\rho_{i_{k-1}}(h_{k-1})g_{k}$$
if $\epsilon_{k-1} = 1$ and 
$$g_{k}^{\prime}h_{k} = (\beta_{i_{k-1},\epsilon_{k-1}}(g_{k-1}^{\prime}))^{-1}\rho_{i_{k-1}}^{-1}(h_{k-1})g_{k}$$
if $\epsilon_{k-1} = -1$. Finally we let $u^{\prime} = (\beta_{i_{m},\epsilon_{m}}(g_{m}^{\prime}))^{-1}\rho_{i_{m}}(h_{m})u$ if $\epsilon_{m} = 1$ and $u^{\prime} = (\beta_{i_{m},\epsilon_{m}}(g_{m}^{\prime}))^{-1}\rho_{i_{m}}^{-1}(h_{m})u$ if $\epsilon_{m} = -1$. One then finds that
$$g = g_{1}t_{i_{1}}^{\epsilon_{1}}\beta_{i_{1},\epsilon_{1}}(g_{1})g_{2}^{\prime}t_{i_{2}}^{\epsilon_{2}}\beta_{i_{2},\epsilon_{2}}(g_{2}^{\prime})\cdots g_{m}^{\prime}t_{i_{m}}^{\epsilon_{m}}\beta_{i_{m},\epsilon_{m}}(g_{m}^{\prime})u^{\prime}.$$
One then reduces if possible by cancelling inverses so that $g$ is in Rees normal form.

Now suppose that 
$$g_{1}t_{i_{1}}^{\epsilon_{1}}\beta_{i_{1},\epsilon_{1}}(g_{1})\cdots g_{m}t_{i_{m}}^{\epsilon_{m}}\beta_{i_{m},\epsilon_{m}}(g_{m})u 
= g_{1}^{\prime}t_{j_{1}}^{\delta_{1}}\beta_{j_{1},\delta_{1}}(g_{1}^{\prime})\cdots g_{n}^{\prime}t_{j_{n}}^{\delta_{n}}\beta_{j_{n},\delta_{n}}(g_{n}^{\prime})v$$
where these are both in Rees normal form and assume $n\leq m$. Then since the Britton normal form is a unique normal form and by our reduction method in Proposition \ref{normalformgroupoidHNN} it follows that $g_{1} = g_{1}^{\prime}$ and $t_{i_{1}}^{\epsilon_{1}} = t_{j_{1}}^{\delta_{1}}$. We therefore cancel to get
$$g_{2}t_{i_{2}}^{\epsilon_{2}}\beta_{i_{2},\epsilon_{2}}(g_{2})\cdots g_{m}t_{i_{m}}^{\epsilon_{m}}\beta_{i_{m},\epsilon_{m}}(g_{m})u 
= g_{2}^{\prime}t_{j_{2}}^{\delta_{2}}\beta_{j_{2},\delta_{2}}(g_{2}^{\prime})\cdots g_{n}^{\prime}t_{j_{n}}^{\delta_{n}}\beta_{j_{n},\delta_{n}}(g_{n}^{\prime})v.$$
We then continue in this way to find 
$$g_{n+1}t_{i_{n+1}}^{\epsilon_{n+1}}\beta_{i_{n+1},\epsilon_{n+1}}(g_{n+1})\cdots g_{m}t_{i_{m}}^{\epsilon_{m}}\beta_{i_{m},\epsilon_{m}}(g_{m})u = v.$$
Suppose $n+1<m$. It then follows that there exists $k$, $n+1\leq k<m$, such that $$g_{k}t_{i_{k}}^{\epsilon_{k}}\beta_{i_{k},\epsilon_{k}}(g_{k})g_{k+1}t_{i_{k+1}}^{\epsilon_{k+1}}\beta_{i_{k+1},\epsilon_{k+1}}(g_{k+1}) \in G.$$
This means that $i_{k} = i_{k+1} = j$ for some $j\in I$ and $\epsilon_{k} + \epsilon_{k+1} = 0$. There are two possibilities: either $\epsilon_{k} = 1$ and $\epsilon_{k+1} = -1$ or $\epsilon_{k} = -1$ and $\epsilon_{k+1} = 1$. Suppose first that $\epsilon_{k} = 1$. Then we are saying that
$$g_{k}t_{j}(\rho_{j}(g_{k}))^{-1}g_{k+1}t_{j}^{-1}(\rho_{j}^{-1}(g_{k+1}))^{-1}\in G$$
with $g_{k}\in T_{j}^{1}$ and $g_{k+1}\in T_{j}^{-1}$. Then $(\rho_{j}(g_{k}))^{-1}g_{k+1} = \rho_{j}(h)$ for some $h\in H_{j}^{1}$. Thus
\begin{eqnarray*}
g_{k}t_{j}(\rho_{j}(g_{k}))^{-1}g_{k+1}t_{j}^{-1}(\rho_{j}^{-1}(g_{k+1}))^{-1} 
&=& g_{k}t_{j}\rho_{j}(h)t_{j}^{-1}(\rho_{j}^{-1}(g_{k+1}))^{-1} \\
&=& g_{k}t_{j}t_{j}^{-1}h(\rho_{j}^{-1}(g_{k+1}))^{-1} \\
&=& g_{k}h(\rho_{j}^{-1}(\rho_{j}(g_{k})\rho_{j}(h)))^{-1} \\
&=& g_{k}h(\rho_{j}^{-1}(\rho_{j}(g_{k}h)))^{-1}\\
&=& g_{k}h(g_{k}h)^{-1} = 1.
\end{eqnarray*}
This contradicts the assumption that our initial word was in Rees normal form and so $n = m$. It follows that $g_{k} = g_{k}^{\prime}$, $i_{k} = j_{k}$, $\delta_{k} = \epsilon_{k}$ for each $k$ and $u=v$. Now suppose $\epsilon_{k} = -1$. Then we have
$$g_{k}t_{j}^{-1}(\rho_{j}^{-1}(g_{k}))^{-1}g_{k+1}t_{j}(\rho_{j}(g_{k+1}))^{-1}\in G$$
with $g_{k}\in T_{j}^{-1}$ and $g_{k+1}\in T_{j}^{1}$. Then $(\rho_{j}^{-1}(g_{k}))^{-1}g_{k+1} = h$ for some $h\in H_{j}^{1}$. Thus
\begin{eqnarray*}
g_{k}t_{j}^{-1}(\rho_{j}^{-1}(g_{k}))^{-1}g_{k+1}t_{j}(\rho_{j}(g_{k+1}))^{-1} 
&=& g_{k}t_{j}^{-1}ht_{j}(\rho_{j}(g_{k+1}))^{-1} \\
&=& g_{k}t_{j}^{-1}t_{j}\rho_{j}(h)(\rho_{j}(g_{k+1}))^{-1}\\
&=& g_{k}\rho_{j}(h)(\rho_{j}(\rho_{j}^{-1}(g_{k})h))^{-1} \\
&=& g_{k}\rho_{j}(h)(g_{k}\rho_{j}(h))^{-1} = 1.
\end{eqnarray*}
Again this contradicts the assumption that our initial word was in Rees normal form and so $n = m$. It follows that $g_{k} = g_{k}^{\prime}$, $i_{k} = j_{k}$, $\delta_{k} = \epsilon_{k}$ for each $k$ and $u=v$.

We have shown that the Rees normal form is a unique normal form for elements of $U(M)$. Let us now consider the monoid $M$. Recall that every element $x \in X$ can be uniquely written in the form $x = gx_{i}(\rho_{i}(g))^{-1}$ for some $i\in I$ and $g\in T_{i}^{1}$. We will now show that every element $x\in X$ can be written uniquely as $x = \rho_{i}^{-1}(g)x_{i}g^{-1}$ with $i\in I$, $g\in T_{i}^{-1}$.

First, let $x\in X$. Then $x = gx_{i}(\rho_{i}(g))^{-1}$ for unique $i\in I$ and $g\in T_{i}^{1}$.  We can write $\rho_{i}(g) = g|_{x_{i}}$ uniquely in the form $g|_{x_{i}} = uh|_{x_{i}}$ where $u\in T_{i}^{-1}$ and $h\in H_{i}^{1}$. Now
\begin{eqnarray*}
x &=& gx_{i}(h|_{x_{i}})^{-1}u^{-1} = gx_{i}(h^{-1})|_{x_{i}}u^{-1} = gh^{-1}x_{i}u^{-1} = \rho_{i}^{-1}((gh^{-1})|_{x_{i}})x_{i}u^{-1} \\
&=& \rho_{i}^{-1}(g|_{x_{i}}h^{-1}|_{x_{i}})x_{i}u^{-1} = \rho_{i}^{-1}(u)x_{i}u^{-1}.
\end{eqnarray*}
Now let $i\in I$, $g\in T_{1}^{-1}$. We will show that $\rho_{i}^{-1}(g)x_{i}g^{-1}\in X$. Let $u\in T_{i}^{1}$, $h\in H_{i}^{1}$ be the unique elements with $uh = \rho_{i}^{-1}(g)$. Then $\rho_{i}(uh) = g$ and so $g = \rho_{i}(u)\rho_{i}(h)$.  Then
\begin{eqnarray*}
\rho_{i}^{-1}(g)x_{i}g^{-1} &=& uhx_{i}g^{-1} = ux_{i}\rho_{i}(h)g^{-1} = ux_{i}(gh|_{x_{i}}^{-1})^{-1} \\
&=& ux_{i}(\rho_{i}(u)h|_{x_{i}}h|_{x_{i}}^{-1})^{-1} =  ux_{i}(\rho_{i}(u))^{-1}\in X.
\end{eqnarray*}
Finally let $\rho_{i}^{-1}(g_{1})x_{i}g_{1}^{-1} = \rho_{j}^{-1}(g_{2})x_{j}g_{2}^{-1}$ in $X$ with $i,j\in I$, $g_{1}\in T_{i}^{-1},g_{2}\in T_{j}^{-1}$. First, since the $x_{i}$'s are representatives of orbits, it follows that $i = j$. Now suppose $\rho_{i}^{-1}(g_{1}) = u_{1}h_{1}$ and $\rho_{i}^{-1}(g_{2}) = u_{2}h_{2}$ for $u_{1},u_{2}\in T_{i}^{1}$, $h_{1},h_{2}\in H_{i}^{1}$. We must have $u_{1} = u_{2}$. We therefore have
$$\rho_{i}(u_{1}) = g_{1}\rho_{i}(h_{1}^{-1}) = g_{2}\rho_{i}(h_{2}^{-1}).$$
Since $g_{1},g_{2}\in T_{i}^{-1}$ and $\rho_{i}(h_{1}^{-1}),\rho_{i}(h_{2}^{-1})\in H_{i}^{-1}$ it follows by the unique decomposition of elements into the product of a coset representative and an element of a subgroup that $g_{1} = g_{2}$. Thus, every element $x\in X$ can be written uniquely as $x = \rho_{i}^{-1}(g)x_{i}g^{-1}$ with $i\in I$, $g\in T_{i}^{-1}$.

Since $FG(X)\bowtie G$ is generated by elements of the form $(1,g)$ for $g\in G$ and $(x_{i},1)$ for $i\in I$ we see that we can write $FG(X)\bowtie G$ in terms of a group presentation as
$$FG(X)\bowtie G \cong \langle\ \breve{G}, x_{i}:i\in I|\mathcal{R}(G), hx_{i} = x_{i}\rho_{i}(h), \mathcal{S} \rangle\ ,$$
where $\mathcal{S}$ is some set of extra relations which are needed to make this really a presentation for $FG(X)\bowtie G$. It follows that there is a surjective homomorphism $f:U(M)\rightarrow FG(X)\bowtie G$ given on generators by $f(g) = (1,g)$ for $g\in G$ and $f(t_{i}) = (x_{i},1)$ for $i\in I$. All of the above argument tells us that two elements of $U(M)$ written in Rees normal form map to the same elements in $FG(X)\bowtie G$ under $f$ if and only if they are equal. Thus $f$ is also injective.  
\end{proof}  

Let $M = X^{*} \bowtie G$ be a left Rees monoid. We will call $X$ a \emph{basis} for $M$. If $Y$ is such that $M\cong Y^{*} \bowtie G$, then we will say $Y$ is a \emph{change of basis} of $X$. 

\begin{lemma}
\label{basis1}
Let $M = X^{*} G$ be a left Rees monoid such that the action of $G$ on $X$ is transitive and $G$ is finite. If, for some $x\in X$, $\rho_{x}$ is bijective, then $\rho_{y}$ is bijective for all $y\in X^{*}$.
\end{lemma}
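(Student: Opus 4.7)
The plan is to reduce first to the case of letters and then lift to $X^{\ast}$ via the ``letters-to-strings'' parts of Lemma \ref{usefullemma1}. Concretely, I will show that whenever $\rho_{x}$ is bijective for some $x\in X$, transitivity of the $G$-action on $X$ forces $\rho_{z}$ to be bijective for every $z\in X$; then parts (v) and (viii) of Lemma \ref{usefullemma1} immediately give bijectivity of $\rho_{y}$ for every $y\in X^{\ast}$.

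The key computation is the following derivation of $\rho_{g\cdot x}$ from $\rho_{x}$. Given $z\in X$, transitivity provides $g\in G$ with $z=g\cdot x$. Applying axiom (SS8) to the pair $(h,g)$ evaluated at the letter $x$,
$$(hg)|_{x} \;=\; h|_{g\cdot x}\,g|_{x},$$
so that
$$\rho_{z}(h) \;=\; h|_{g\cdot x} \;=\; \rho_{x}(hg)\,(\rho_{x}(g))^{-1}$$
for every $h\in G$. This expresses $\rho_{z}$ as the composition of three self-maps of $G$: right translation by $g$, the map $\rho_{x}$, and right translation by $(\rho_{x}(g))^{-1}$. The first and third are bijections for any group $G$, and the middle is a bijection by hypothesis, so $\rho_{z}$ is bijective for every $z\in X$.

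Once this is in hand, the finiteness of $G$ allows us to translate bijectivity into injectivity (equivalently surjectivity) of a self-map of the finite set $G$. Applying Lemma \ref{usefullemma1}(v) we conclude that $\rho_{y}$ is injective for all $y\in X^{\ast}$, and applying Lemma \ref{usefullemma1}(viii) we conclude that $\rho_{y}$ is surjective for all $y\in X^{\ast}$; since $G$ is finite, either of these conditions (or the two together) is equivalent to bijectivity of $\rho_{y}$ on $G$, which is what we wanted.

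There is really no hard part: the only thing one has to spot is that (SS8) plus transitivity controls how $\rho$ behaves across an orbit, and the finiteness assumption is needed solely to bridge the gap between the ``injective'' and ``surjective'' formulations in Lemma \ref{usefullemma1} and the ``bijective'' formulation in the present statement. If anything is worth watching, it is just the bookkeeping in the formula $\rho_{g\cdot x}(h)=\rho_{x}(hg)(\rho_{x}(g))^{-1}$, since one must be careful about the order of multiplication in $G$ when applying (SS8).
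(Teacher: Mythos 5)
Your proof is correct, and while it pivots on the same identity as the paper's, it packages it more cleanly and in slightly greater generality. Both arguments hinge on (SS8) in the form $(hg)|_{x}=h|_{g\cdot x}\,g|_{x}$ and both finish by lifting from letters to strings via Lemma \ref{usefullemma1} (v) and (viii). The difference is how bijectivity is carried around the orbit: the paper argues injectivity of $\rho_{g\cdot x}$ by contradiction, using the criterion of Lemma \ref{usefullemma1} (vi) to produce $h\neq 1$ with $h|_{g\cdot x}=1$, and then invokes the finiteness of $G$ to upgrade an injective self-map of $G$ to a surjective one; you instead solve (SS8) for $h|_{g\cdot x}$, obtaining $\rho_{g\cdot x}(h)=\rho_{x}(hg)(\rho_{x}(g))^{-1}$, which exhibits $\rho_{g\cdot x}$ as a composite of two right translations with $\rho_{x}$ and so transfers bijectivity in one step. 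A pleasant consequence is that your argument does not actually use the finiteness of $G$ anywhere: once $\rho_{z}$ is bijective for every $z\in X$, part (v) gives injectivity and part (viii) gives surjectivity of $\rho_{y}$ for all $y\in X^{\ast}$, and the two together are bijectivity regardless of cardinality. So your closing remark that finiteness is needed to bridge injectivity and surjectivity undersells your own proof; that hypothesis is genuinely used only in the paper's version of this lemma (and later in Proposition \ref{basis2}, whose change-of-basis argument does rely on it).
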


\begin{proof}
Let $y\in X$ and suppose $y = g\cdot x$ for some $g\in G$. Suppose $\rho_{y}$ is not injective. Then by Lemma \ref{usefullemma1} (vi) there exists $h\in G$ with $h|_{y} = 1$ and $h\neq 1$. Then 
$$(hg)|_{x} = h|_{g\cdot x} g|_{x} = g|_{x}.$$
But by assumption, $\rho_{x}$ was injective, and thus $h = 1$, a contradiction. An injective map from a finite set into itself must also be surjective and thus $\rho_{y}$ must be bijective. It then follows by Lemma \ref{usefullemma1} (v) and (viii) that $\rho_{y}$ is bijective for all $y\in X^{*}$.
\end{proof}

\begin{comment}

The following follows from the right cancellativity of Rees monoids, but we will prove it a slightly different way anyway.

\begin{lemma}
\label{basis2}
Let $M = X^{*} G$ be a Rees monoid. If for some $x\in X$, $g,h\in G$, $gx = hx$, then $g = h$.
\end{lemma}

\begin{proof}
Suppose $gx = hx$. Then $gh^{-1}hx = hx$ and so $gh^{-1} \in G_{h\cdot x}$. Now
$$(gh^{-1})|_{h\cdot x} = (g|_{x})(h|_{x})^{-1} = 1.$$
Since $\phi_{h\cdot x}$ is injective, it follows $gh^{-1} = 1$ and so $g=h$.
\end{proof}
\end{comment}

\begin{proposition}
\label{basis2}
Let $M = X^{*} G$ be a Rees monoid with $G$ finite. Then there exists a change of basis $Y$ of $X$ such that $M \cong Y^{*} \bowtie G$ is a symmetric Rees monoid.
\end{proposition}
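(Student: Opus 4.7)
The plan is to reduce to the case of a single $G$-orbit on $X$ and, within that orbit, to pick new basis elements by coset-representative manipulation of the partial endomorphism $\phi_{x_{0}}$. Since the self-similar action preserves $G$-orbits on $X$, altering the basis within one orbit has no bearing on the others: writing $X = \bigsqcup_{i}O_{i}$, once I find, for each $O_{i}$, a set $Y_{i}\subseteq M$ of generators of the maximal proper principal right ideals of $M$ lying in $O_{i}\cdot G$ such that the self-similar action restricted to $Y_{i}$ is symmetric, the disjoint union $Y := \bigsqcup_{i}Y_{i}$ satisfies $M\cong Y^{\ast}\bowtie G$ and is a symmetric basis. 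I therefore assume from now on that $G$ acts transitively on $X$.

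Fix $x_{0}\in X$, set $H := G_{x_{0}}$ and $\phi := \phi_{x_{0}}\colon H\to G$. Lemma \ref{cancLRM} guarantees that $\phi$ is injective, since $M$ is cancellative. Choose a transversal $T$ of the left cosets of $H$ in $G$ with $1\in T$. Because $G$ is finite and $\phi$ is injective, $|\phi(H)| = |H|$, so $[G:\phi(H)] = [G:H] = |T|$; I fix a transversal $\{r_{t}:t\in T\}$ of the left cosets of $\phi(H)$ in $G$. Setting $h_{t} := \rho_{x_{0}}(t)\,r_{t}^{-1}$ and $y_{t} := (t\cdot x_{0})\,h_{t}$, put $Y := \{y_{t}:t\in T\}$. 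Each $y_{t}$ still generates the principal right ideal $(t\cdot x_{0})M$, so $Y$ is a transversal of the generators of the maximal proper principal right ideals of $M$, giving $M = Y^{\ast}\bowtie G$.

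To finish, I compute the restriction in the new structure at $y_{1} = x_{0}h_{1}$. Writing each $k\in G$ uniquely as $k = k_{T}k_{H}$ with $k_{T}\in T$ and $k_{H}\in H$, axiom (SS8), together with $k_{H}\cdot x_{0} = x_{0}$, gives
$$\rho_{x_{0}}(k) = (k_{T}k_{H})|_{x_{0}} = k_{T}|_{k_{H}\cdot x_{0}}\,k_{H}|_{x_{0}} = \rho_{x_{0}}(k_{T})\,\phi(k_{H}).$$
Expanding $ky_{1} = (k\cdot x_{0})\,\rho_{x_{0}}(k)\,h_{1} = y_{k_{T}}\,h_{k_{T}}^{-1}\rho_{x_{0}}(k_{T})\phi(k_{H})h_{1} = y_{k_{T}}\,r_{k_{T}}\,\phi(k_{H})\,h_{1}$, I read off the new restriction $k|_{y_{1}} = r_{k_{T}}\,\phi(k_{H})\,h_{1}$. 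If this equals $r_{k'_{T}}\phi(k'_{H})h_{1}$, then $r_{k'_{T}}^{-1}r_{k_{T}} \in \phi(H)$, so by the transversal property $k_{T} = k'_{T}$; injectivity of $\phi$ then yields $k_{H} = k'_{H}$, whence $k = k'$. Hence the new $\rho_{y_{1}}$ is injective, and, $G$ being finite, bijective. The $G$-action on $Y$ remains transitive since it was transitive on $X$, so Lemma \ref{basis1} promotes this to bijectivity of $\rho_{y}$ (in the new structure) for every $y\in Y$.

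The main obstacle I foresee is recognising that the correct transversal is of $\phi(H)$ rather than $H$: the new restriction decomposes through the coset structure of $\phi(H)$, and the equality $[G:\phi(H)] = [G:H]$ that allows a transversal of $\phi(H)$ to be indexed by $T$ is precisely where both the finiteness of $G$ and the injectivity of $\phi$ come into play.
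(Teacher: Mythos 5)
Your proof is correct, and its skeleton matches the paper's: reduce to a transitive action, alter the basis elements inside the orbit by right multiplication by units so that the restriction map at a single letter becomes injective, then invoke finiteness of $G$ and Lemma \ref{basis1} (with injectivity of $\phi_{x_{0}}$ supplied by Lemma \ref{cancLRM}). Where you genuinely differ is in how the injectivity is arranged. The paper repairs collisions one at a time: whenever $g|_{x}=h|_{x}$ with $g\neq h$ it replaces $y=g\cdot x$ by $y(g|_{x})k^{-1}$ for some $k\notin \im(\rho_{x})$ and repeats, leaving implicit the counting that guarantees such a $k$ exists at every stage and that no new collisions are created. You instead construct the whole new basis in one step from a transversal $T$ of $H=G_{x_{0}}$ together with a transversal of $\phi(H)$ indexed by $T$, setting $y_{t}=(t\cdot x_{0})\rho_{x_{0}}(t)r_{t}^{-1}$, and you verify injectivity of the new $\rho_{y_{1}}$ directly from the unique factorisation $k=k_{T}k_{H}$ and (SS8). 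Your version is more explicit and isolates exactly where finiteness enters, namely the index equality $|G:H|=|G:\phi(H)|$; this is precisely the condition the paper itself singles out, in the remark following Corollary \ref{ZSFGUM}, as what would be needed to push the argument beyond finite groups of units. The trade-off is brevity versus transparency: the paper's greedy repair is shorter, while your transversal construction gives a closed-form change of basis and a cleaner injectivity check.
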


\begin{proof}
In what follows, we will be working with orbits of elements and so without loss of generality let us assume the action of $G$ on $X$ is transitive. Let $x\in X$. We know $\phi_{x}$ is injective. We will form a change of basis such that $\rho_{x}$ is injective. So suppose $g,h\in G$ are such that $g|_{x} = h|_{x}$. By the right cancellativity of $M$, we know $g\cdot x \neq h\cdot x$. Suppose $y = g\cdot x$ and suppose $k\notin im(\rho_{x})$. Let $y^{\prime} = y (g|_{x}) k^{-1}$. Then
$$g x = y g|_{x} = y^{\prime} k.$$ 
So changing $y$ to $y^{\prime}$, we have $g\cdot x = y^{\prime}$ and $g|_{x} = k \neq h|_{x}$. Repeat this process for each $g\in G$ and we will have constructed a change of basis so that $\rho_{x}$ is bijective, and thus by Lemma \ref{basis1} the theorem has been proven. 
\end{proof}

Combining Theorem \ref{univgroupsym} and Proposition \ref{basis2} we have the following:

\begin{corollary}
\label{ZSFGUM}
Let $\Gamma$ be a group HNN-extension of a finite group $G$. Then there is a set $X$ such that $\Gamma \cong FG(X) \bowtie G$.
\end{corollary}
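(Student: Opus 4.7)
The plan is to chain together three of the results already established in this section: the identification of group HNN-extensions as groups of fractions of Rees monoids (Theorem \ref{mainthm1} together with the remarks on groups of fractions), the change-of-basis result for Rees monoids with finite unit group (Proposition \ref{basis2}), and the description of the group of fractions of a symmetric Rees monoid as a Zappa-Sz\'ep product (Theorem \ref{univgroupsym}).

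First I would realise $\Gamma$ as the group of fractions of a Rees monoid. Since $\Gamma$ is a group HNN-extension of $G$, it is presented as
$$\Gamma = \langle G, t_i : i \in I \mid \mathcal{R}(G),\ h t_i = t_i \rho_i(h)\ \forall h \in H_i, i \in I\rangle,$$
with each $\rho_i : H_i \to G$ an injective partial endomorphism. Taking the same presentation as a monoid presentation yields, by Theorem \ref{mainthm1} and the observation that injective $\rho_i$'s make the resulting left Rees monoid cancellative (the corollary following Theorem \ref{mainthm1}), a Rees monoid $M$ whose group of fractions $U(M)$ is isomorphic to $\Gamma$ (by Proposition \ref{univgroupoid} applied to the monoid and group presentations).

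Next I would apply Proposition \ref{basis2}: since the group of units of $M$ is the finite group $G$, there is a change of basis $Y$ such that $M \cong Y^{\ast} \bowtie G$ is a \emph{symmetric} Rees monoid. Crucially, a change of basis does not alter the group of fractions, so $U(M)$ is still isomorphic to $\Gamma$.

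Finally I would invoke Theorem \ref{univgroupsym}, which tells us that the group of fractions of a symmetric Rees monoid $Y^{\ast} \bowtie G$ is isomorphic to $FG(Y) \bowtie G$. Combining these isomorphisms gives $\Gamma \cong U(M) \cong FG(Y) \bowtie G$, so taking $X = Y$ finishes the proof. The only real content here is verifying that the three cited results genuinely compose: the first produces a Rees monoid (requiring injectivity of the $\rho_i$, which holds by the HNN hypothesis), the second preserves the group of fractions while upgrading the Rees monoid to a symmetric one (which is the delicate step, but it is exactly what Proposition \ref{basis2} was set up to achieve under the finiteness assumption on $G$), and the third then applies verbatim. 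No separate calculation is needed.
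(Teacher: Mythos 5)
Your argument is correct and is exactly the route the paper takes: it obtains the corollary by combining Theorem \ref{mainthm1} (realising $\Gamma$ as the group of fractions of a Rees monoid with group of units $G$), Proposition \ref{basis2} (change of basis to a symmetric Rees monoid, using finiteness of $G$), and Theorem \ref{univgroupsym}. The only difference is that you spell out the first step, which the paper leaves implicit.
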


If $M = X^{\ast}G$ is a Rees monoid with $\phi_{x}:G_{x}\rightarrow G$ bijective for each $x\in X^{\ast}$ (e.g. the adding machine Rees monoid described in Section 2.7.1) then one cannot use the same change of basis argument as Proposition \ref{basis2} to write $U(M)$ as the Zappa-Sz\'{e}p product of a free group and $G$ since every element of $G$ is in the image of $\phi_{x}$ for each $x\in X^{\ast}$. It therefore seems unlikely that Corollary \ref{ZSFGUM} will still be true in general if the finiteness assumption on $G$ is removed. On the other hand, if $M = X^{\ast}G$ is a Rees monoid such that $|G:G_{x}| = |G:\phi_{x}(G_{x})|$ for each $x\in X$ then one may be able adapt the argument of Proposition \ref{basis2} for this situation. 

\begin{comment}

\begin{thm}
Let $M = X^{*} \bowtie D_{n}$ be a (symmetric) Rees monoid. Then there exists a change of basis $Y$ of $X$ such that $\rho_{x}$ is an isomorphism for each $x\in Y$
\end{thm}

\begin{proof}
Let $D_{n}$ be given by the following group presentation
$$D_{n} = \langle \alpha, \beta | \alpha^{n} = \beta^{2} = 1, \alpha\beta = \beta\alpha^{n-1} \rangle.$$
By the above, assume wlog $\rho_{x}$ is bijective for each $x\in X$. We can perform a change of basis so that $\alpha|_{x} = \alpha$ for each $x\in Y$. Since $\rho_{x}$ is a bijection, it follows that $\beta|_{x} = \alpha^{i} \beta$ for some $i\in \mathbb{N}$. Fix $x\in Y$. We have to show that if $\rho_{x}(\alpha) = \alpha$ and $\rho_{x}(\beta) = \alpha^{k}\beta$, for any $g,h\in G$, 
$$\rho_{x}(gh) = \rho_{x}(g)\rho_{x}(h).$$
This is a straightforward calculation, by checking different cases of possible $g$'s and $h$'s.
\end{proof}

\end{comment}

\newpage
\section{Iterated function systems}

In this section we will provide examples of iterated function systems which give rise to fractals with a Rees monoid as similarity monoid. For undefined notions from fractal geometry see \cite{Falconer}.

Let $D$ be a compact subset of $\mathbb{R}^{k}$. A map $f:D\rightarrow D$ is a \emph{similarity contraction} if $f$ is continuous, injective and there exists a constant $0<c<1$ such that $d(f(x),f(y))=c d(x,y)$ for every $x,y\in D$. 

Let $M(D)$ denote the monoid of all similarity contractions and isometries of $D$ (where $(ab)(x)=a(b(x))$ for $a,b\in M(D)$ and $x\in D$). We will denote by $\dim_{H}(D)$ the Hausdorff dimension of $D$. Since injective maps are monics in the category \textbf{Top} this monoid $M(D)$ will be left cancellative. We will now investigate further this monoid $M(D)$.

\begin{comment}
\begin{lemma}
\label{fracleft}
Let $D\subseteq \mathbb{R}^{k}$ be compact and non-empty. Then $M(D)$ is left cancellative.
\end{lemma}

\begin{proof}
Follows from the fact that injective maps are monics in the category \bf{Top}
\end{proof}
\end{comment}

\begin{lemma}
\label{condim}
Let $D\subseteq \mathbb{R}^{k}$ be compact and let $a\in M(D)$. Then
$$\dim_{H}(D)=\dim_{H}(a(D))$$
\end{lemma}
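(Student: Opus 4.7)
The plan is to use the standard scaling behaviour of Hausdorff measure under similarity maps. Recall that if $f \colon \mathbb{R}^{k} \to \mathbb{R}^{k}$ satisfies $d(f(x),f(y)) = c\, d(x,y)$ for some fixed $c > 0$ and all $x,y$ in its domain, then for every subset $A$ of the domain and every $s \geq 0$,
$$\mathcal{H}^{s}(f(A)) = c^{s}\,\mathcal{H}^{s}(A),$$
where $\mathcal{H}^{s}$ denotes $s$-dimensional Hausdorff (outer) measure. This is immediate from the definition: any $\delta$-cover $\{U_{i}\}$ of $A$ with diameters $|U_{i}|$ yields a $c\delta$-cover $\{f(U_{i})\}$ of $f(A)$ with diameters $c|U_{i}|$, and conversely (using that $f$ is a bijection onto its image with inverse a similarity of ratio $1/c$), so the approximating sums $\sum |U_{i}|^{s}$ are scaled by exactly $c^{s}$ in passing between covers of $A$ and of $f(A)$.

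Applying this to $a \in M(D)$, which by hypothesis is either an isometry ($c = 1$) or a similarity contraction ($0 < c < 1$), and taking $A = D$, I would obtain
$$\mathcal{H}^{s}(a(D)) = c^{s}\,\mathcal{H}^{s}(D)$$
for every $s \geq 0$, with $c^{s} \in (0,\infty)$. Since the Hausdorff dimension is characterised by
$$\dim_{H}(E) = \inf\{s \geq 0 : \mathcal{H}^{s}(E) = 0\} = \sup\{s \geq 0 : \mathcal{H}^{s}(E) = \infty\},$$
multiplying by the strictly positive finite constant $c^{s}$ preserves the properties of being zero and of being infinite. Thus $\mathcal{H}^{s}(D) = 0$ if and only if $\mathcal{H}^{s}(a(D)) = 0$, and similarly for the infinite case, giving $\dim_{H}(a(D)) = \dim_{H}(D)$.

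There is no real obstacle here: the result reduces to the scaling identity for Hausdorff measure under similarities, which is a standard fact recorded for example in \cite{Falconer}. The only minor point worth noting is that $a$ need not extend to a similarity of all of $\mathbb{R}^{k}$, but this is irrelevant because the scaling identity only needs $a$ to be a similarity on its actual domain $D$, which is exactly the hypothesis.
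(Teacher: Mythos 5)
Your argument is correct and is essentially the same as the paper's: both transfer $\delta$-covers of $D$ to covers of $a(D)$ via $a$ (and back via the inverse similarity), deduce the scaling identity $\mathcal{H}^{s}(a(D)) = c^{s}\mathcal{H}^{s}(D)$, and conclude equality of Hausdorff dimensions since $c^{s}$ is a positive finite constant. Your write-up is if anything slightly more careful about the direction of the scaling factor than the paper's own sketch.
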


\begin{proof}
Let $\delta > 0$. Denote by $|a|$ the contraction factor of $a$. Suppose $\left\{U_{i}\right\}$ is a $\delta$-cover for $D$. Then $\left\{a(U_{i})\right\}$ will be a $\delta/|a|$-cover of $a(D)$. It is clear that all coverings of $a(D)$ can be constructed in this manner. It therefore follows that for each $s$ we have $H^{s}(a(D))=\frac{1}{|a|^{s}}H^{s}(D)$ and so $\dim_{H}(D)=\dim_{H}(a(D))$.
\end{proof}

\begin{lemma}
\label{diml}
Let $D$ be a compact subset of $\mathbb{R}^{k}$ such that $\dim_{H}(D)>k-1$. Let $Y\subset D$ be such that for some $b,c\in M$, $b\neq c$, $b(x)=c(x)$ for all $x\in Y$. Then $\dim_{H}(Y)\leq k-1$
\end{lemma}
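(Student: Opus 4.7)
The plan is to show that the agreement set of two distinct similarity maps of $\mathbb{R}^{k}$ is contained in a proper affine subspace, and then invoke the monotonicity of Hausdorff dimension.

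First I would argue that every element of $M(D)$ extends uniquely to a global similarity of $\mathbb{R}^{k}$. The hypothesis $\dim_{H}(D)>k-1$ forces $D$ not to lie in any affine hyperplane (otherwise the Hausdorff dimension would be at most $k-1$), so $D$ contains $k+1$ affinely independent points $p_{0},\ldots,p_{k}$. Given any $b\in M(D)$, I would define $\tilde{b}$ on $\mathbb{R}^{k}$ as the unique similarity agreeing with $b$ on these $k+1$ points (this exists because the points $b(p_{0}),\ldots,b(p_{k})$ have pairwise distances equal to $|b|$ times the pairwise distances of the $p_{i}$). For any $y\in D$, the distances $d(\tilde{b}(y),\tilde{b}(p_{i}))=|b|\,d(y,p_{i})=d(b(y),b(p_{i}))$ coincide for each $i$, and since a point of $\mathbb{R}^{k}$ is determined by its distances from $k+1$ affinely independent points, $\tilde{b}(y)=b(y)$. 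Thus the extension is genuine, and distinct $b,c\in M(D)$ give rise to distinct extensions $\tilde{b},\tilde{c}$.

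Next I would exploit the affine structure. Writing $\tilde{b}(x)=\lambda_{1}A_{1}x+v_{1}$ and $\tilde{c}(x)=\lambda_{2}A_{2}x+v_{2}$ with $A_{i}$ orthogonal and $\lambda_{i}>0$, the condition $\tilde{b}(x)=\tilde{c}(x)$ rewrites as the linear system
\[
(\lambda_{1}A_{1}-\lambda_{2}A_{2})x=v_{2}-v_{1}.
\]
Let $B=\lambda_{1}A_{1}-\lambda_{2}A_{2}$. If $B=0$ then $\tilde{b}$ and $\tilde{c}$ have identical linear parts, and since $\tilde{b}\neq\tilde{c}$ we get $v_{1}\neq v_{2}$, so the equation has no solution and $Y=\emptyset$. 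Otherwise $B\neq 0$ and the solution set is either empty or an affine subspace of codimension $\geq 1$, hence of dimension at most $k-1$.

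Finally, since $Y\subseteq\{x\in\mathbb{R}^{k}:\tilde{b}(x)=\tilde{c}(x)\}$, which is contained in a proper affine subspace of $\mathbb{R}^{k}$ (or is empty), and Hausdorff dimension is monotone under inclusion, I conclude $\dim_{H}(Y)\leq k-1$. The main subtlety, and where I would take most care, is the extension step: it is crucial that the dimensional hypothesis on $D$ rules out $D$ lying in a hyperplane, for otherwise the action on $D$ would underdetermine the global similarity and the argument would collapse. Once the extension is in place, the rest is routine linear algebra plus the defining property of Hausdorff dimension.
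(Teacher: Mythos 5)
Your proof is correct, and it takes a genuinely different route from the paper's. The paper argues by contradiction entirely inside $D$: assuming $\dim_{H}(Y)>k-1$, it picks $k+1$ affinely independent points in $Y$ itself (three non-collinear points in the $k=2$ case it writes out), notes that agreement on $Y$ forces $b$ and $c$ to have the same contraction factor, and then uses the same rigidity fact you use --- a point, hence a similarity, is determined by its distances to an affinely independent $(k+1)$-tuple --- to conclude $b=c$ on all of $D$, contradicting $b\neq c$; the hypothesis $\dim_{H}(D)>k-1$ plays essentially no role in that argument. You instead give a direct proof: the hypothesis on $D$ supplies $k+1$ affinely independent points of $D$, which lets you extend $b$ and $c$ uniquely to global similarities $\tilde{b},\tilde{c}$ of $\mathbb{R}^{k}$, and the agreement locus of two distinct affine similarities is empty or an affine subspace of codimension at least $1$, so monotonicity of Hausdorff dimension finishes the job. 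Your version buys a slightly stronger and cleaner conclusion ($Y$ literally lies in a proper affine subspace), handles unequal contraction ratios automatically through the linear algebra rather than needing the separate step $|b|=|c|$, and is more carefully justified than the paper's ``by length considerations''; the paper's version is shorter and never leaves $D$. Both are sound.
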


\begin{proof}
We will prove for the case when $k=2$. The argument can easily be generalised to the case $k\geq 3$ by working with $k-1$-dimensional hyperplanes. So let $D$ be a subset of $\mathbb{R}^{2}$ such that $\dim_{H}(D)>1$, let $Y\subseteq D$ be such that for some $b,c\in M$, $b(x)=c(x)$ for all $x\in Y$ and suppose that $\dim_{H}(Y) > 1$. Let $x,y,z \in Y$ and assume wlog that $x,y,z$ are not collinear (if all points in $Y$ are collinear then $\dim_{H}(Y)\leq 1$). Now let $T$ be the triangle in $\mathbb{R}^{2}$ with vertices at $x,y,z$. Then since $b$ and $c$ must have the same contraction factor, by length considerations, $b(t)=c(t)$ for every $t\in T\cap D$. It then follows since $b$ and $c$ are similarity transformations that $b(t)=c(t)$ for every $t\in D$. 

\end{proof}

\begin{lemma}
\label{fracright}
Let $D$ be a compact subset of $\mathbb{R}^{k}$ such that $\dim_{H}(D)>k-1$. Then $M(D)$ is right cancellative.
\end{lemma}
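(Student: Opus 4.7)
The plan is to reduce right cancellativity directly to the two preceding lemmas. Suppose $a,b,c \in M(D)$ satisfy $ab = cb$. Then for every $x \in D$ we have $a(b(x)) = c(b(x))$, which is to say that $a$ and $c$ agree on the set $Y = b(D)$.

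First I would apply Lemma \ref{condim} to the similarity contraction/isometry $b$ to conclude that
$$\dim_{H}(Y) = \dim_{H}(b(D)) = \dim_{H}(D) > k-1.$$
This gives a subset $Y \subseteq D$ on which the two elements $a,c \in M(D)$ coincide, yet whose Hausdorff dimension exceeds $k-1$.

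Next I would invoke Lemma \ref{diml} in its contrapositive form: if $a \neq c$ were two distinct elements of $M(D)$ agreeing on all of $Y$, then $\dim_{H}(Y) \leq k-1$, contradicting the bound just obtained. Hence $a = c$, establishing right cancellativity.

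The main (and really only) step is the correct bookkeeping: noting that $b(D)$ is itself a compact subset of $D$ so that Lemma \ref{diml} legitimately applies to it, and that we do not need any hypothesis on the contraction factor of $b$ beyond what is already packaged into the definition of $M(D)$. There is no substantive obstacle here since Lemmas \ref{condim} and \ref{diml} have already done the geometric work; the present lemma is essentially a one-line consequence once they are in hand.
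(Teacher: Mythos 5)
Your proof is correct and follows the paper's argument exactly: set $Y$ equal to the image of the right factor, use Lemma \ref{condim} to see $\dim_{H}(Y)>k-1$, and then conclude via (the contrapositive of) Lemma \ref{diml} that the two left factors coincide. The only differences are notational (you cancel $b$ in $ab=cb$ where the paper cancels $c$ in $ac=bc$).
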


\begin{proof}
Suppose $a,b,c\in M$ are such that $ac=bc$. Let $Y = c(D)$. By Lemma \ref{condim} we have $\dim_{H}(Y) > k-1$ and $a(x) = b(x)$ for all $x\in Y$. It thus follows from Lemma \ref{diml} that $a=b$. 
\end{proof}

Let $D$ be a compact subset of $\mathbb{R}^{k}$. An \emph{iterated function system} (IFS) is a finite family of similarity contractions $f_{1},\ldots,f_{n}:D\rightarrow D$. Theorem 9.1 of \cite{Falconer} says that there is a unique non-empty compact subset $F$ of $D$ satisfying 
$$F=\bigcup_{i=1}^{n}{f_{i}(F)}$$
which we call the \emph{attractor} of $f_{1},\ldots,f_{n}$.

\begin{thm}
\label{mainthm2}
Let $D\subseteq \mathbb{R}^{k}$ be a compact path-connected subspace, let $f_{1},\ldots,f_{n}:D\rightarrow D$ be an IFS with attractor $F\subseteq D$, $d=\dim_{H}(F)$ and let $\mu = H^{d}$, the $d$-dimensional Hausdorff measure and assume $d>k-1$ and $0<\mu(F)<\infty$. Let $G$ be the group of isometries of $F$ and denote by $X=\left\{f_{1},\ldots,f_{n}\right\}$, $I=\left\{1,\ldots,n\right\}$ and suppose further the following:
\begin{enumerate}
\item [(i)] For every $i,j\in I$, we have $\mu(f_{i}(F)\cap f_{j}(F))=0$.
\item [(ii)] There are no contractions $h$ of $F$ such that $f_{i}(F)\subset h(F)$ for some $i\in I$.
\item [(iii)] For every $i\in I$, $g\in G$ there exists $j\in I$ such that $g(f_{i}(F))=f_{j}(F)$
\end{enumerate}
Then we have the following:
\begin{enumerate}
\item $\langle X\rangle$ is a free subsemigroup of $M(F)$ (and so we denote by $X^{\ast} = \langle X, 1\rangle$).
\item Let $M:=\langle X, G\rangle \subseteq M(F)$. Then $M = X^{*}G$ uniquely. 
\item $M$ is a Rees monoid.
\begin{comment}
\item If in fact $f_{i}(C)\cap f_{j}(C)=\emptyset$ for every $i,j\in I$, then each element of $F$ can be identified with a unique element of $X^{\omega}$ and vice-versa.
\end{comment}
\item If for every element $s\in M(F)$ there is an $f\in X^{\ast}$ with $s(F) = f(F)$ then $M = M(F)$ and $M$ is a fundamental Rees monoid.
\end{enumerate}
\end{thm}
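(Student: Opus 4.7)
The plan is to establish each of the four assertions in turn, leveraging hypotheses (i) and (iii) together with Lemma \ref{fracright} and the left cancellativity of $M(F)$ noted at the start of the section, and then appealing to the Zappa-Sz\'ep correspondence of Section 2.2 for (3).

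For (1), suppose $f_{i_1}\cdots f_{i_m}=f_{j_1}\cdots f_{j_n}$ in $M(F)$. Evaluating both maps on $F$ gives a common image contained in $f_{i_1}(F)\cap f_{j_1}(F)$; by hypothesis (i) this intersection has $\mu$-measure zero when $i_1\ne j_1$, whereas the image has $\mu$-measure $\bigl(\prod c_{i_k}\bigr)^{d}\mu(F)>0$. So $i_1=j_1$, and left cancellativity of $M(F)$ allows me to strip the leading factor and iterate. The case $m\ne n$ is ruled out because a non-trivial composition of contractions has contraction ratio strictly less than $1$ and cannot equal the identity, so $\langle X\rangle$ is free.

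For (2) and (3), the key step is to build a self-similar action of $G$ on $X$. Given $g\in G$ and $f_i$, hypothesis (iii) produces $f_j$ with $g(f_i(F))=f_j(F)$; a Hausdorff-measure comparison forces $c_i=c_j$, so $f_j^{-1}\circ g\circ f_i$ is a bijective isometry of $F$ onto itself, i.e.\ an element of $G$. Setting $g\cdot f_i=f_j$ and $g|_{f_i}=f_j^{-1}\circ g\circ f_i$ yields the Zappa-Sz\'ep relation $gf_i=(g\cdot f_i)(g|_{f_i})$ in $M(F)$; verification of axioms (SS1)--(SS8) and extension to $X^\ast$ follows the template of Section 2.2. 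Consequently every element of $M=\langle X,G\rangle$ has the form $xg$ with $x\in X^\ast$ and $g\in G$. For uniqueness, if $xg=yh$ then $x(F)=y(F)$ since $g,h$ stabilise $F$ setwise, and the cylinder-comparison argument of (1) forces $x=y$, whence $g=h$ by left cancellation. Claim (3) is then immediate: $M\cong X^\ast\bowtie G$ is a left Rees monoid by the correspondence of Section 2.2, it is right cancellative by Lemma \ref{fracright} (since $\dim_HF>k-1$), and a cancellative left Rees monoid is automatically a Rees monoid.

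For (4), given $s\in M(F)$ choose $f\in X^\ast$ with $s(F)=f(F)$; the similarity ratios of $s$ and $f$ agree by the same Hausdorff-measure comparison, so $f^{-1}\circ s$ is a bijective isometry of $F$, hence some $g\in G$, giving $s=fg\in M$. Fundamentality requires $\bigcap_{x\in X^\ast}G_x=\{1\}$: any $g$ in this intersection preserves every cylinder $f_{i_1}\cdots f_{i_m}(F)$ setwise, and since cylinder diameters shrink to zero as $m\to\infty$ while every point of $F$ lies in a nested sequence of cylinders converging to it, $g$ fixes $F$ pointwise and is the identity. The main obstacle I anticipate is in (2), namely the bookkeeping needed to confirm that the maps defined at the level of letters really do extend coherently to $X^\ast$ and satisfy all eight self-similarity axioms; once that machinery is in place, (3) is a citation and (4) reduces to a short measure-and-diameter argument.
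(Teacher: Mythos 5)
Your proposal is correct and follows essentially the same route as the paper's proof: a Hausdorff-measure comparison with condition (i) plus cancellativity in $M(F)$ for freeness, condition (iii) to produce the unique factorisation $gf_{i}=f_{j}h$ and hence $M=X^{\ast}G$ uniquely, Lemma \ref{fracright} for right cancellativity so that the Section 2.2 correspondence makes $M$ a Rees monoid, and composing with the inverse of the relevant $f\in X^{\ast}$ to get part (4). The only departures are minor: you dispose of unequal word lengths via the contraction-ratio (measure) argument where the paper invokes hypothesis (ii), and you make explicit the shrinking-cylinder argument for fundamentality, which the paper merely asserts.
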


\begin{proof}
\begin{enumerate}
\item We know that $f_{i_{1}}(\cdots (f_{i_{r}}(F)))\subseteq f_{i_{r}}(F)$ and Lemma \ref{condim} tells us that $\mu(f_{i_{1}}(\cdots (f_{i_{r}}(F)))) = d$. Suppose $f_{i_{1}}\cdots f_{i_{r}} = f_{j_{1}}\cdots f_{j_{s}}$. Then condition (i) and the previous remark tells us that $f_{i_{r}} = f_{j_{s}}$. These are elements of $M(F)$ which is right cancellative and thus $f_{i_{1}}\cdots f_{i_{r-1}} = f_{j_{1}}\cdots f_{j_{s-1}}$. Continuing in this way and using condition (ii) (where $h$ here is in fact an element of $X$) tells us that $r = s$ and $f_{i_{k}} = f_{j_{k}}$ for each $k$. Thus $<X>$ is free.
\item Let $f_{i}\in X$ and $g\in G$. We know by (i) and (iii) that $g(f_{i}(F)) = f_{j}(F)$ for a unique $j\in I$. Further the group of isometries of $f_{j}(F)$ is isomorphic to $G$ and each isometry of $f_{j}(F)$ is the restriction from $F$ to $f_{j}(F)$ of a unique element of $G$. Thus there exists a unique $h\in G$ with $f_{j}h = gf_{i}$ as maps $F\rightarrow F$. We can then use this argument and (i) to show that for each $x\in X^{\ast}$ and $g\in G$ there are unique elements $y\in X^{\ast}$ and $h\in G$ with $gx = yh$ as maps $F\rightarrow F$. Thus $M = X^{\ast}G$ uniquely. 
\item We know by Lemma \ref{fracright} that $M$ is cancellative. We see that $M$ satisfies the conditions of Theorem \ref{sufconlrm}. Thus $M$ is a Rees monoid. 
\item Let $s\in M(F)$ and let $f\in X^{\ast}$ be such that $s(F) = f(F)$. Note that $f$ is necessarily unique. Since $s$ is a similarity transformation there must exist $g\in G(f(F))$ with $s = gf$. But as noted above we can extend every $g\in G(f(F))$ to a $g\in G$. Thus $s\in M$. The fact that $G$ is the group of isometries tells us that $M$ is fundamental. 
\end{enumerate}
\end{proof}

Suppose we have two fractals $F_{1}\subseteq\mathbb{R}^{n}$ and $F_{2}\subseteq\mathbb{R}^{n}$ satisfying the conditions of Theorem \ref{mainthm2}. If $\theta:F_{1}\rightarrow F_{2}$ is an isometry between them, we see that we can map similarity transformations of $F_{1}$ bijectively to similarity transformations of $F_{2}$ by defining $\phi(s) = \theta s \theta^{-1}$. Thus we have the following: 

\begin{thm}
Let $F_{1}\subseteq\mathbb{R}^{n}$ and $F_{2}\subseteq\mathbb{R}^{n}$ be compact spaces satisfying the conditions of Theorem \ref{mainthm2}, and let $M_{1} = M(F_{1})$ and $M_{2} = M(F_{2})$ be their associated similarity monoids. If $F_{1}$ and $F_{2}$ are isometric, then $M_{1}$ and $M_{2}$ are isomorphic.
\end{thm}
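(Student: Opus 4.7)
The proof is essentially contained in the remark preceding the statement, but let me spell out how I would carry it out. Let $\theta:F_{1}\rightarrow F_{2}$ be the hypothesised isometry, so $\theta$ is a distance-preserving bijection (in particular a homeomorphism) with $d(\theta(x),\theta(y)) = d(x,y)$ for all $x,y\in F_{1}$, and likewise for $\theta^{-1}:F_{2}\rightarrow F_{1}$. The plan is to define $\phi:M_{1}\rightarrow M_{2}$ by $\phi(s) = \theta \circ s \circ \theta^{-1}$ and verify that this is a monoid isomorphism whose inverse is $\psi(t) = \theta^{-1}\circ t\circ \theta$.

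First I would check that $\phi$ lands in $M_{2}$, i.e.\ that whenever $s\in M_{1}$ is a similarity contraction or isometry of $F_{1}$ with ratio $c$, then $\phi(s)$ is a similarity contraction or isometry of $F_{2}$ with the same ratio $c$. This is immediate: for $x,y\in F_{2}$,
$$d(\phi(s)(x),\phi(s)(y)) = d(\theta(s(\theta^{-1}(x))),\theta(s(\theta^{-1}(y)))) = d(s(\theta^{-1}(x)),s(\theta^{-1}(y))) = c\,d(x,y),$$
using that $\theta$ is an isometry twice. Continuity and injectivity are preserved because $\theta$ and $\theta^{-1}$ are homeomorphisms and $s$ is continuous and injective. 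Next I would verify that $\phi$ is a monoid homomorphism: $\phi(1_{F_{1}}) = \theta\theta^{-1} = 1_{F_{2}}$, and
$$\phi(s_{1}s_{2}) = \theta s_{1} s_{2}\theta^{-1} = (\theta s_{1}\theta^{-1})(\theta s_{2}\theta^{-1}) = \phi(s_{1})\phi(s_{2}).$$

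Finally I would observe that by the symmetric construction applied to $\theta^{-1}$, the map $\psi:M_{2}\rightarrow M_{1}$ with $\psi(t) = \theta^{-1}t\theta$ is also a well-defined monoid homomorphism, and a direct computation gives $\psi\phi(s) = \theta^{-1}\theta s\theta^{-1}\theta = s$ and similarly $\phi\psi = \mathrm{id}_{M_{2}}$, so $\phi$ is a monoid isomorphism. There is no real obstacle here: the only substantive point is observing that conjugation by $\theta$ preserves the two defining properties of elements of $M(F)$ (being a similarity contraction, or being an isometry), which follows because $\theta$ preserves distances exactly. No appeal to the extra hypotheses of Theorem \ref{mainthm2} is needed for this particular result; they are only there to ensure $M_{1}$ and $M_{2}$ have the desired Rees-monoid structure, not for the isomorphism itself.
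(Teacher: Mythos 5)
Your proposal is correct and follows the same route as the paper, which likewise defines the isomorphism by conjugation, $\phi(s) = \theta s \theta^{-1}$, for the given isometry $\theta:F_{1}\rightarrow F_{2}$; you have simply filled in the routine verifications that the paper leaves implicit. Nothing further is needed.
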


\begin{comment}

We extend this argument from isometries as far as homeomorphisms:

\begin{lemma}
\label{homnatact}
Let $F$ and $G$ be fractals satisfying the conditions of the theorem such that $F$ and $G$ are homeomorphic, and let $M$ and $N$ be their associated similarity monoids. Then $N$ acts naturally on $F$.
\end{lemma}

\begin{proof}
Let $f:F\rightarrow G$ be the homeomorphism. For $s\in N$ and $x\in F$ define 
$$s\cdot x:= f^{-1}(s\cdot f(x))$$
Then this defines an action of $N$ on $F$. Firstly, if $s=1$, then $1\cdot f(x) = f(x)$ and since $f$ is a bijection we have $1\cdot x = 1$. Now we need to show $(st)\cdot x = s\cdot (t\cdot x)$. 
$$s\cdot (t\cdot x) = f^{-1}(s\cdot f(t\cdot x)) = f^{-1}(s\cdot f(f^{-1}(t\cdot f(x)))) $$
$$= f^{-1}(s\cdot (t\cdot f(x))) = f^{-1}((st)\cdot f(x)) = (st)\cdot x $$
\end{proof}

\end{comment}

It would be nice if one could prove a result of the following kind:

\begin{conjecture}
Let $C$ be the category with objects fractals as in Theorem \ref{mainthm2} and arrows suitable homeomorphisms and let $D$ be the category with objects Rees monoids and arrows suitable homomorphisms. Then there is a functor from $C$ to $D$.
\end{conjecture}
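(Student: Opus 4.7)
The plan is to define the functor in stages, starting from the case where the construction is essentially forced. Observe that the theorem immediately preceding the conjecture already gives a functor on the subcategory of $C$ whose morphisms are isometries: to $F$ assign $M(F)$, and to an isometry $\theta\colon F_{1}\to F_{2}$ assign the conjugation map $\phi_{\theta}\colon M(F_{1})\to M(F_{2})$ defined by $\phi_{\theta}(s)=\theta s\theta^{-1}$. This is well defined because conjugation by an isometry sends similarity contractions to similarity contractions with the same contraction factor and isometries to isometries, and it is automatically a monoid homomorphism. Functoriality, namely $\phi_{\theta\eta}=\phi_{\theta}\phi_{\eta}$ and $\phi_{\mathrm{id}_{F}}=\mathrm{id}_{M(F)}$, is immediate.

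The real question is how to enlarge the class of morphisms in $C$ beyond isometries while still obtaining a well-defined assignment on $M(F)$. I would approach this in two ways. First, using the Bandt-Retta results cited in the introduction, there is a natural class of fractals for which every into-homeomorphism is in fact a similarity transformation. On the full subcategory of $C$ consisting of such fractals, any homeomorphism is automatically a similarity, so the conjugation construction above extends verbatim and yields a functor to $D$ whose arrows are Rees monoid isomorphisms. Second, for more general fractals one can take the morphisms in $C$ to be those homeomorphisms $f\colon F_{1}\to F_{2}$ that intertwine chosen iterated function systems, in the sense that $f\circ f_{1,i}=f_{2,i}\circ f$ for generators $f_{1,i}$ of $F_{1}$ and $f_{2,i}$ of $F_{2}$ under a fixed bijection of indices, and that similarly intertwine the isometry groups. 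Then $f$ induces a monoid homomorphism $M(F_{1})\to M(F_{2})$ by sending a normal form $xg$ (with $x\in X_{1}^{\ast}$ and $g\in G(F_{1})$) to the corresponding word over the target generators; checking that this is a well-defined homomorphism uses the uniqueness of the normal form guaranteed by Theorem \ref{mainthm2}.

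The main obstacle, and the reason the statement is posed as a conjecture rather than a theorem, is that neither class of \emph{suitable} morphisms is entirely satisfactory: the Bandt-Retta class is restrictive, while the intertwining class is close to tautological because the compatibility condition is precisely what is required for the construction to work. A deeper answer would require identifying intrinsic structure on a fractal attractor that determines $M(F)$ up to isomorphism and that is preserved by a geometrically meaningful class of maps; for instance, the $\mathcal{R}$- and $\mathcal{J}$-class structure of $M(F)$ is visibly reflected in the self-similarity decomposition of $F$, so invariants extracted from the IFS (such as the address map onto $X^{\omega}$ or the open set condition data) are natural candidates. A promising route, in the spirit of the correspondence between left Rees categories and self-similar groupoid actions developed in Chapter 3, would be to work with morphisms of the underlying self-similar group actions and transport these to $C$ via Theorem \ref{mainthm2}; the conjecture would then reduce to showing that a suitable class of fractal homeomorphisms induces such morphisms of self-similar actions, at which point functoriality is automatic.
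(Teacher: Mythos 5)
There is no proof of this statement in the paper for your proposal to be measured against: the statement is posed precisely as a conjecture (``It would be nice if one could prove a result of the following kind''), and the thesis leaves it open. Your proposal does not close it either. What you actually establish is the content the paper already has: the theorem stated immediately before the conjecture, namely that an isometry $\theta\colon F_{1}\to F_{2}$ induces an isomorphism $s\mapsto\theta s\theta^{-1}$ of the similarity monoids, and the routine observation that this assignment is functorial on the subcategory of isometries. That is a sensible starting point but it is not the conjecture, whose whole difficulty lies in identifying a class of homeomorphisms strictly larger than the similarities for which an induced homomorphism of Rees monoids exists and composes correctly.

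Your two proposed enlargements do not supply that missing content. The Bandt--Retta route only works on the (restrictive) subclass of attractors which both satisfy the hypotheses of Theorem \ref{mainthm2} and have the rigidity property that every into-homeomorphism is a similarity; even there the argument needs the rigidity of the \emph{target} $F_{2}$ to see that $\theta s\theta^{-1}$, a priori only an into-homeomorphism of $F_{2}$, is again a similarity, and restricting the objects of $C$ in this way changes the statement rather than proving it. The intertwining route is, as you yourself concede, essentially tautological: the condition $f\circ f_{1,i}=f_{2,i}\circ f$ together with compatibility with the isometry groups is exactly what is needed for the normal-form assignment $xg\mapsto x'g'$ to be well defined, so declaring such maps to be the ``suitable'' morphisms assumes the conclusion instead of deriving it from geometric data. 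The genuine gap, which your final paragraph correctly diagnoses but does not fill, is the identification of an intrinsic class of maps of fractals (or of the underlying self-similar actions) that is both geometrically meaningful and provably induces morphisms of the associated Rees monoids; until that is done the statement remains, as in the paper, a conjecture.
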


\begin{comment}
\begin{conjecture}
Let $F$ and $G$ be fractals satisfying the conditions of the theorem, and let $M$ and $N$ be their associated similarity monoids. If $M$ and $N$ are isomorphic, then $F$ and $G$ are homeomorphic.
\end{conjecture}

In each of the following examples, using the terminology of \cite{N1}, we have the following:
\begin{itemize}
\item The \emph{digit tile} is in fact equal to the set of all infinite strings on $X$. We therefore have a nicely defined action of the inverse semigroup defined in section 6 of \cite{Lawson07a} on the digit tile.
\item Each equivalence class of the \emph{limit $G$-space} contains two elements of $\Omega(M)$
\end{itemize}
\end{comment}

We will now show that a number of interesting fractals satisfy the conditions of Theorem \ref{mainthm2}.

\subsection{Sierpinski gasket}

This example appeared in \cite{LawLRM1} and was in fact the motivation for the above theorem. Consider the monoid $M$ of similarities of the Sierpinski gasket (Figure 1). Let $R$, $L$ and $T$ be the maps which quarter the size of the gasket and translate it, respectively, to the right, left and top of itself, $\rho$ be rotation by $2\pi/3$ degrees and $\sigma$ be reflection in the verticle axis. Then the monoid generated by $L$, $R$ and $T$ is free and the group of units $G=\langle \sigma,\rho\rangle\cong D_{6}$. We see that the conditions of Theorem \ref{mainthm2} are satisfied and so $M = \langle R,L,T,\sigma,\rho\rangle$ is a symmetric Rees monoid. Explicitly,
$$\rho T = R \rho, \quad \rho L = T\rho, \quad \rho R = L\rho$$
and
$$\sigma T = T\sigma, \quad \sigma L = R\sigma, \quad \sigma R = L\sigma.$$
We see that the action of $G$ on $X = \left\{L,R,T\right\}$ is transitive and noting that $G_{T} = \left\{1,\sigma\right\}$ we can apply Theorem \ref{mainthm1} to give $M$ by the following monoid presentation:
$$M=\langle \sigma,\rho,t|\sigma^{2}=\rho^{3}=1,\sigma \rho = \rho^{2} \sigma, \sigma t= t\sigma\rangle.$$

\begin{center}
\includegraphics[angle=90,width=60mm]{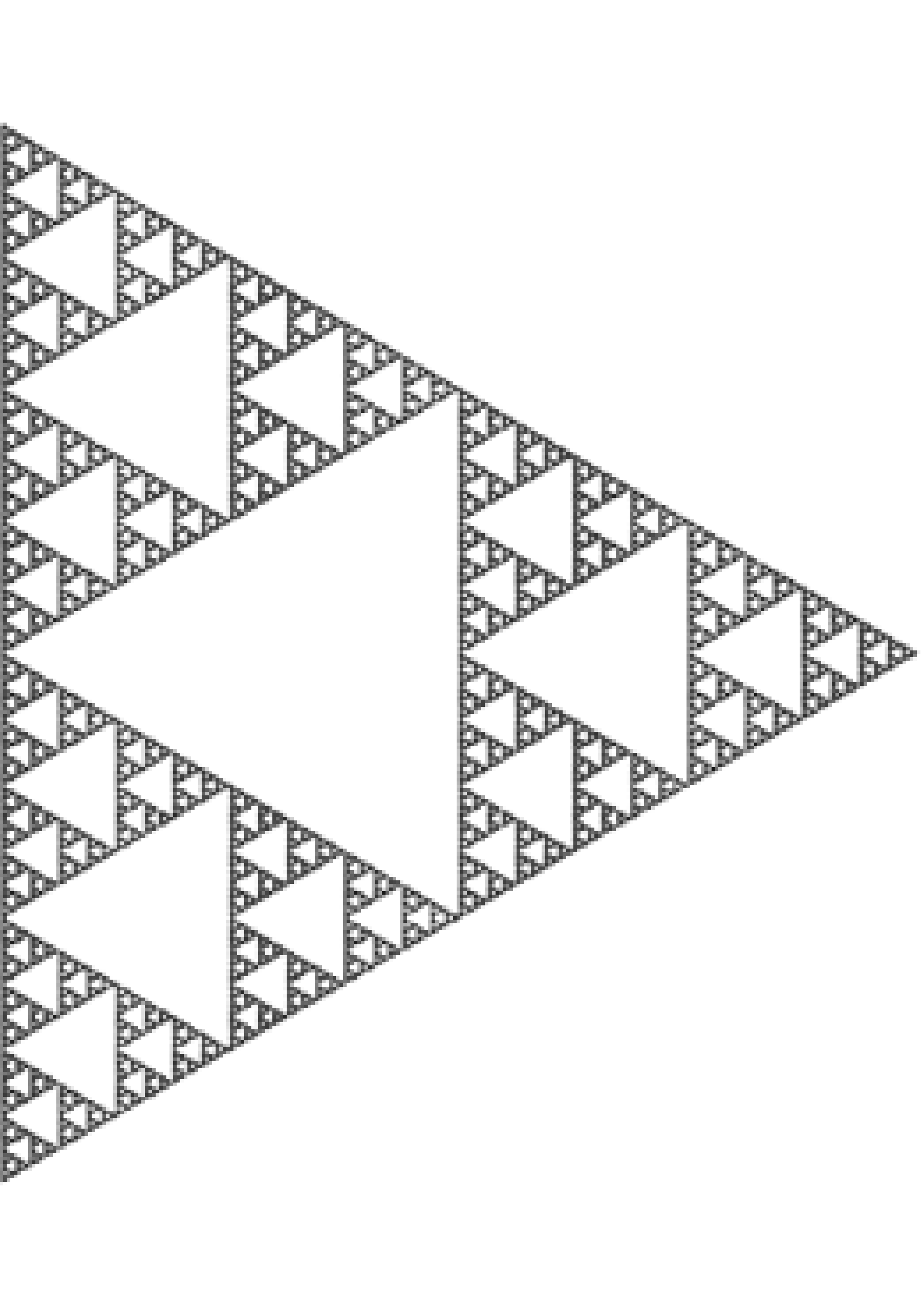}
\end{center}
\begin{center}
Figure 1: Sierpinski gasket (source \cite{SGS})
\end{center}

Let us suppose the corners of the Sierpinski gasket to be at the points $(0,1)$, $(-\frac{\sqrt{3}}{2},-0.5)$ and $(\frac{\sqrt{3}}{2},-0.5)$ so that it is centred on $(0,0)$. Then simple calculations give:
$$\rho = \begin{pmatrix}
  -0.5 & \frac{\sqrt{3}}{2}\\
  -\frac{\sqrt{3}}{2} & -0.5 
 \end{pmatrix}$$
and
$$\sigma = \begin{pmatrix}
  -1 & 0\\
  0 & 1 
 \end{pmatrix}$$
Further, 
$$L(\mathbf{x}) = \frac{1}{2}(\mathbf{x} - \frac{1}{2}(\sqrt{3},1)),$$
$$R(\mathbf{x}) = \frac{1}{2}(\mathbf{x} + \frac{1}{2}(\sqrt{3},-1)),$$
and
$$T(\mathbf{x}) = \frac{1}{2}(\mathbf{x} + \frac{1}{2}(0,1)).$$

Since $M$ is symmetric, it can be extended to a Zappa-Sz\'ep product of a free group and a group, which is the universal group of $M$. So,
$$U(M) \cong FG(X) \bowtie G \cong \langle \sigma,\rho,t|\sigma^{2}=\rho^{3}=1,\sigma \rho = \rho^{2} \sigma, \sigma t= t\sigma\rangle,$$
where this is a group presentation. 
\begin{comment}
If we consider an extended Sierpinski gasket, call it $\mathcal{T}$, where each element of $\mathcal{T}$ is a bi-infinite string of $L$'s, $R$'s and $T$'s then we see that in some sense $U(M)$ is the similarity monoid of $\mathcal{T}$. 

We can apply lemma \ref{homnatact} to this example with the Cantor-$3$ set.
\end{comment}

\subsection{Cantor set}

Consider the monoid $M$ of similarities of the Cantor set $F$ (construction shown in Figure 2).  Let $R$ and $L$ be the maps which divide the Cantor set by 3 and move it, respectively, to the right and left of itself and $\sigma$ be reflection in the verticle axis. We have the following relations:
$$\sigma L = R\sigma, \quad \quad \sigma R = L\sigma.$$
Then the monoid generated by $L$ and $R$ is free, the group of units $G=\langle \sigma\rangle\cong C_{2}$ and $M=\langle R,L,\sigma\rangle$. We see that the conditions of Theorem \ref{mainthm2} are satisfied and so $M$ is a symmetric Rees monoid. Since $G_{R} = G_{L} = \left\{1\right\}$, we find that $M$ is given by the following monoid presentation:

$$M=\langle \sigma,t|\sigma^{2} = 1 \rangle.$$

\begin{center}
\includegraphics[angle=90,width=60mm]{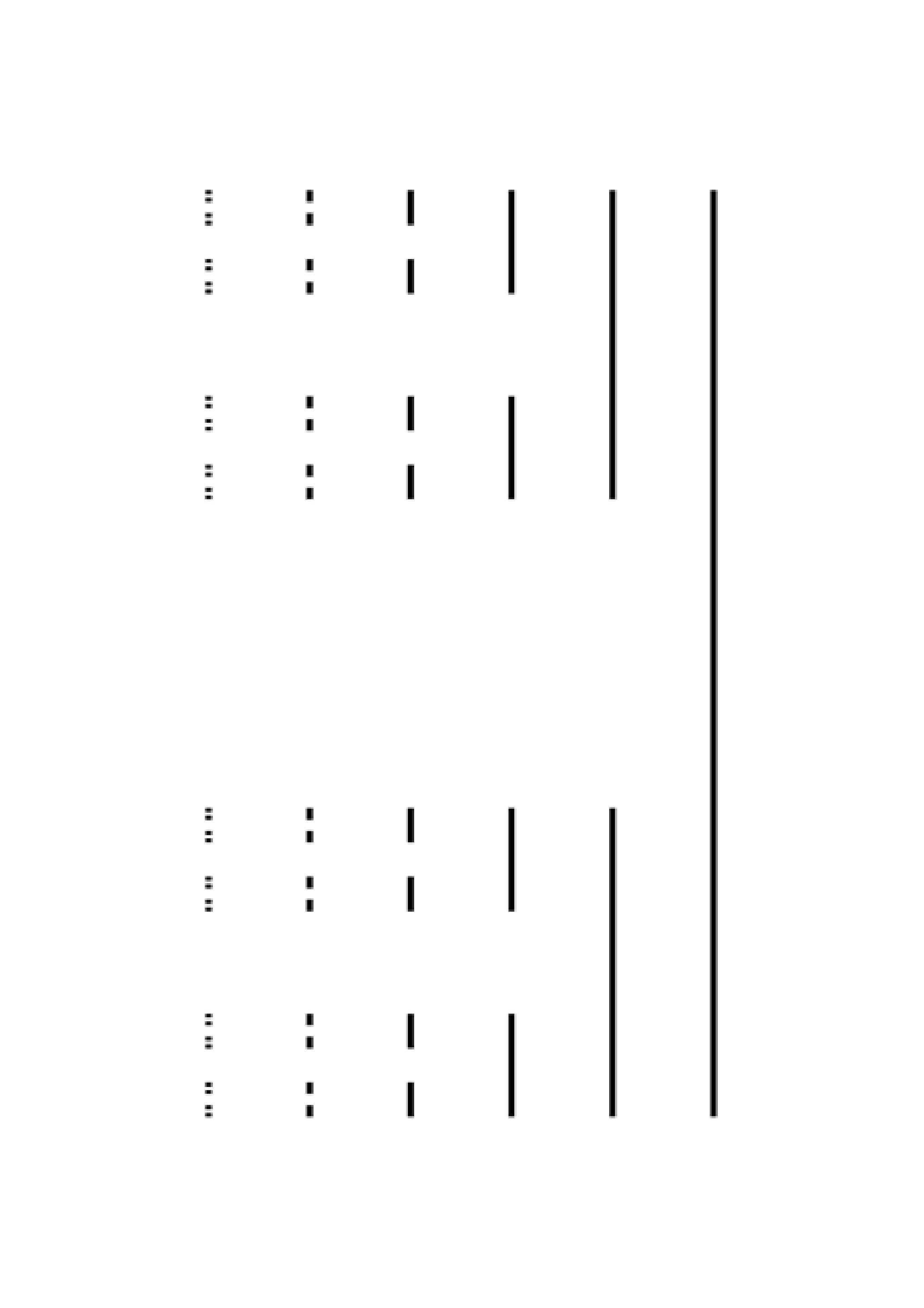}
\end{center}
\begin{center}
Figure 2: Construction of Cantor set (source \cite{CSS})
\end{center}

Notice that each element of $F$ can be written as an infinite word over $X$.
We see in fact that we can identify elements of the free monoid with self-similar subsets of the Cantor set. Using this same identification with the random Cantor set, we see that if we add a random element to the construction, as in Chapter 15 of \cite{Falconer}, then we have an action of a Rees monoid on a random fractal, where the action is piece-wise.

\subsection{Sierpinski carpet}

In this section we will see a group action which is not transitive. Consider the monoid $M$ of similarities of the Sierpinski carpet $F$ (Figure 3).  Let $L_{1}$, $L_{2}$, $R_{1}$, $R_{2}$, $T$, $S_{1}$, $S_{2}$ and $B$ be the maps which map $F$, respectively, to the top left, bottom left, top right, bottom right, top centre, left centre, right centre and bottom centre of itself, $\rho$ be rotation by $\pi/4$ degrees and $\sigma$ be reflection in the verticle axis. We have the following relations:
$$\sigma L_{1} = R_{1} \sigma, \quad \sigma L_{2} = R_{2} \sigma, \quad \sigma R_{1} = L_{1} \sigma, \quad \sigma R_{2} = L_{2} \sigma,$$
$$\sigma T = T \sigma, \quad \sigma B = B \sigma, \quad \sigma S_{1} = S_{2} \sigma, \quad \sigma S_{2} = S_{1} \sigma,$$
$$\rho L_{1} = R_{1} \rho, \quad \rho L_{2} = L_{1} \rho, \quad \rho R_{1} = R_{2} \rho, \quad \rho R_{2} = L_{2} \rho$$
and
$$\rho T = S_{2} \rho, \quad \rho S_{2} = B \rho, \quad \rho B = S_{1} \rho, \quad \rho S_{1} = T \rho.$$

Let $X = \left\{L_{1},L_{2},R_{1},R_{2},T,S_{1},S_{2},B\right\}$. Then the monoid generated by $X$ is free, the group of units $G=\langle \sigma,\rho\rangle\cong D_{8}$ and $M=\langle L_{1},L_{2},R_{1},R_{2},T,S_{1},S_{2},B,\sigma,\rho\rangle$. Again, the conditions of Theorem \ref{mainthm2} are satisfied and so we see that $M$ is a symmetric Rees monoid. We see there are two orbits of $G$ on $X$; $\left\{L_{1},L_{2},R_{1},R_{2}\right\}$ and $\left\{T,S_{1},S_{2},B\right\}$. We find that
$$G_{L_{1}} = \left\{1,\sigma\rho\right\}, \quad \quad G_{T} = \left\{1,\sigma\right\}.$$
Applying Theorem \ref{mainthm1}, $M$ is thus given by the following monoid presentation:

$$M=\langle \sigma,\rho,t,r|\sigma^{2}=\rho^{4}=1, \sigma \rho \sigma = \rho^{3}, \sigma \rho t= t\sigma\rho, \sigma r = r \sigma \rangle.$$

\begin{center}
\includegraphics[scale=0.2]{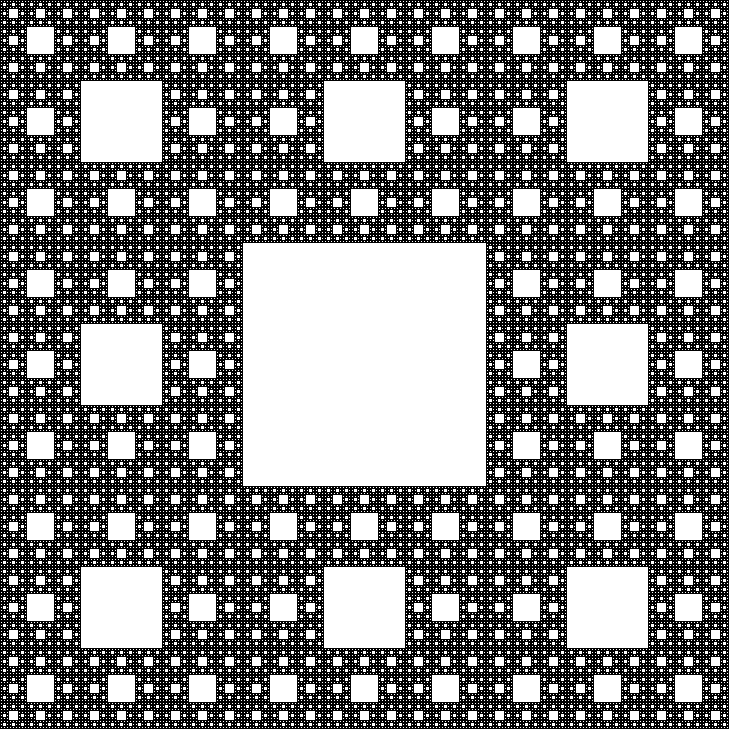}
\end{center}
\begin{center}
Figure 3: Sierpinski carpet (source \cite{SCS})
\end{center}

\subsection{Von Koch curve}

Consider the monoid $M$ of similarities of the von Koch curve $F$ (Figure 4). Let $L$ be the map which rotates $F$ by $3\pi/4$ radians about the central axis and sends it to the left hand side and let $R$ be the map which rotates $F$ by $5\pi/4$ radians and sends it to the right hand side. Letting $X = \left\{L,R\right\}$, $G = C_{2} = \langle \sigma \rangle$ we see that $\sigma L = R \sigma$ and $\sigma R = L \sigma$. The conditions of Theorem \ref{mainthm2} are satisfied and so 
$$M = \langle \sigma,t|\sigma^{2} = 1 \rangle,$$
which is isomorphic to the monoid for the Cantor set. This demonstrates that fractals with the same similarity monoids can have very different geometric structures. 

Let $L_{1}$, $L_{2}$, $R_{1}$ and $R_{2}$ be the maps which map $F$, respectively, to the far left, the left diagonal, the right diagonal and the far right of itself and $\sigma$ again be reflection in the verticle axis. We see that $L_{1} = LR$, $L_{2} = L^{2}$, $R_{1} = R^{2}$ and $R_{2} = RL$. We have the following relations:
$$\sigma L_{1} = R_{2} \sigma, \quad \sigma L_{2} = R_{1} \sigma, \quad \sigma R_{1} = L_{2} \sigma, \quad \sigma R_{2} = L_{1} \sigma.$$

Then the monoid generated by $X = \left\{L_{1},L_{2},R_{1},R_{2}\right\}$ is free, the group of units $G=\langle \sigma\rangle\cong C_{2}$ and $N=\langle L_{1}, L_{2}, R_{1},R_{2},\sigma\rangle$ will again be a Rees monoid, this time a submonoid of the monoid of similarity transformations of $F$. $N$ is given by the following monoid presentation: 

$$N=\langle \sigma,t,r|\sigma^{2} =1\rangle.$$

\begin{center}
\includegraphics[angle=90,width=60mm]{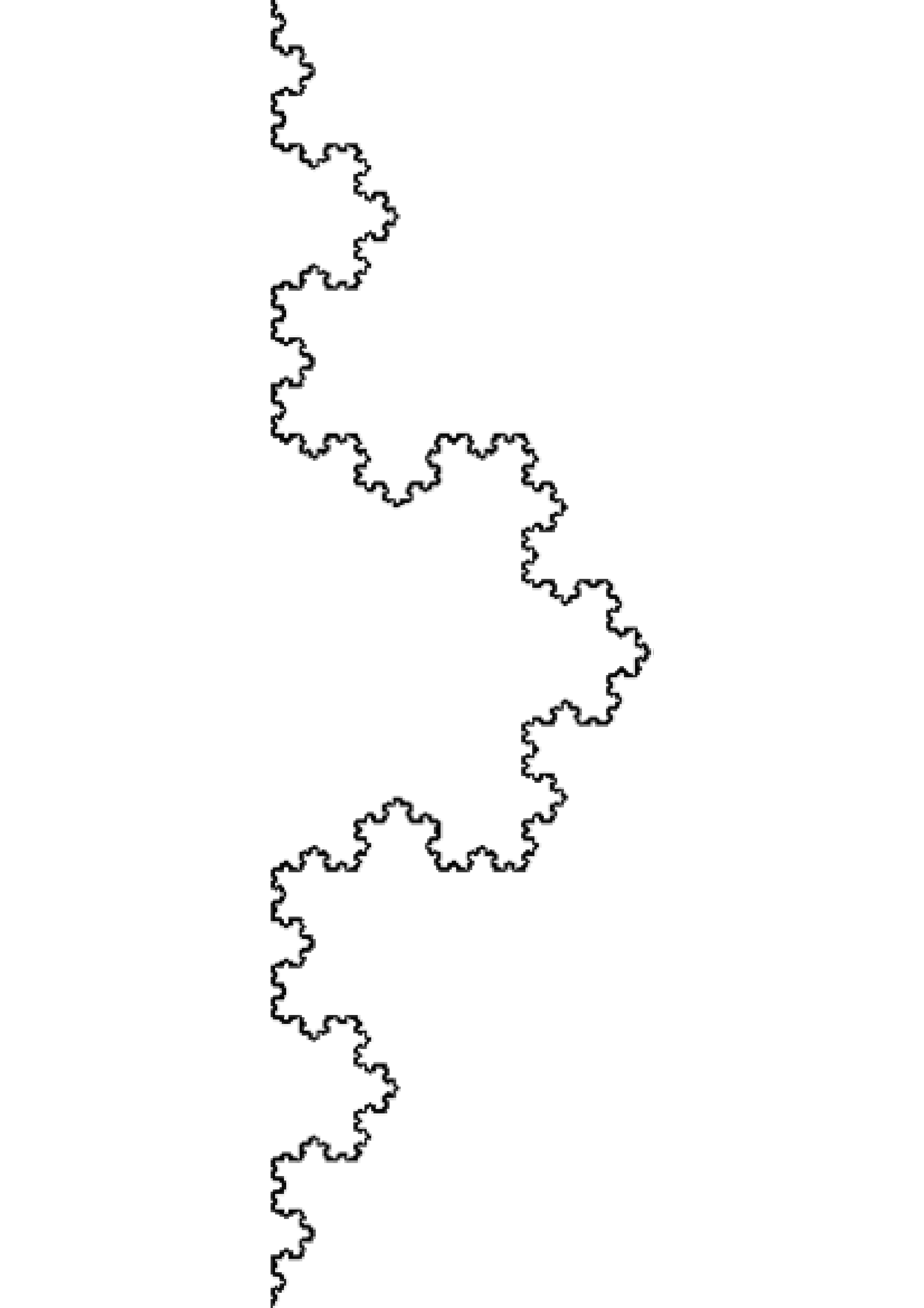}
\end{center}
\begin{center}
Figure 4: Von Koch curve (source \cite{VKCS})
\end{center}

\subsection{Some examples in $\mathbb{R}^{3}$} 
We can also consider examples in 3 dimensional space. We can define the Sierpinski tetrahedron, Cantor cylinder and Sierpinski cube in analogy with the constructions described in the previous section. Note that the Sierpinski tetrahedron and Sierpinski cube satisfy the conditions of Theorem \ref{mainthm2}. We find that for the Sierpinski tetrahedron $|X| = 4$ and $G$ is the isometry group of a tetrahedron, and for the Sierpinski cube $|X| = 20$ and $G$ is the isometry group of a cube. 

The Cantor cylinder doesn't quite work as the obvious contraction maps are not similarity transformations, but there is still a nice associated Rees monoid, where we set $|X| = 2$, $G = S^{1} \rtimes C_{2}$, $G_{x} = G_{y} = S^{1}$ and $gx = xg$ for all $g\in S^{1}$. 

\section{Topological fractals}

In this section we describe a construction of a fractal-like topological space found in Bandt and Retta \cite{BandtRetta}. They show that certain fractals are really determined up to homeomorphism. We prove that the monoid of into-homeomorphisms of certain examples arising from their construction is a left Rees monoid. 

Let $S = \left\{1,\ldots,m\right\}$ be a finite set, $C = S^{\infty}$ the space of sequences $s = s_{1}s_{2}\ldots$ with the product topology, $S^{*}$ the free monoid on $S$ and $S^{<n}$ the set of words of length smaller than $n$. If $s\in C \cup S^{*}$, then the prefix word of length $k$ of $s$ is denoted by $s|_{k} = s_{1}\ldots s_{k}$. An equivalence relation $\sim$ on $C$ will be called \emph{invariant} if $\sim$ is a closed set in $C\times C$ and for all $s,t\in C$ and $i\in S$
$$s \sim t \Leftrightarrow  is\sim it,$$
that is, it is a left congruence with respect to $S$. $A = C/\sim$ will be called an \emph{invariant factor} for $C$. Let us now fix such a relation $\sim$. Let 
$$M = \left\{s\in C| \exists t\in C: \quad s\sim t \quad \wedge \quad s_{1} \neq t_{1} \right\}.$$
We will call $Q = M/\sim$ the \emph{critical points} of $A$. Let $p:C \rightarrow A$ be the associated projection with respect to $\sim$. We will say $p$ is \emph{finite-to-one} if $M$ is finite and contains no periodic sequence. If there do not exist $s\in M$ and $w\in S^{*}$ such that $ws$ also belongs to $M$, the relation $\sim$ and the factor $A$ are called \emph{simple}.

For $w\in S^{\ast}$ denote by $C_{w} = \left\{ws | s\in C\right\}$ and let $A_{w} = p(C_{w})$. It was proved in \cite{BandtKeller} that there is a unique homeomorphism $f_{w}: A\rightarrow A$ such that $f_{w}(p(s)) = p(ws)$.

We will now define a sequence of undirected hypergraphs $G_{n}$. The vertex set of $G_{n}$ is $S^{n}$ and the edge set is $S^{<n} \times Q$. The endpoints of the edge $(v,q)$ will be the words $(vs)|_{n}$, with $s\in q$. If each equivalence class $q\in Q$ contains less than 2 elements, then these will in fact be graphs. 

A connected graph $G$ is said to be \emph{2-connected} if $G\setminus\left\{u\right\}$ is connected for each $u\in V(G)$. A connected graph $G$ with $m$ vertices and $c$ edges is said to be \emph{edge-balanced} if for each $k$ with $1<k<m$, the graph cannot be divided into $k$ components by deleting $(k-1)c/(m-1)$ or less edges.

Let $M(A)$ denote the set of homeomorphisms from $A$ into subspaces of $A$ and let $G(A)$ be the group of homeomorphisms from $A$ to itself. Bandt and Retta proved the following result \cite{BandtRetta}:

\begin{thm}
\label{toplrmclass}
Let $A$ be a simple finite-to-one invariant factor such that $G_{1}$ is edge-balanced and $G_{2}$ is 2-connected. Then for each $f\in M(A)$, there exists $w\in S^{*}$ such that $im(f) = A_{w}$.
\end{thm}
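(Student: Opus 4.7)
The plan is to show that every $f\in M(A)$ has image equal to one of the natural sub-tiles $A_w$ by iterating a "level-descent" argument: at each stage one proves that $f(A)$ is contained in a single level-$1$ tile, peels off that tile, and repeats. Throughout, I would exploit that $f(A)$ is itself homeomorphic to $A$ and therefore carries its own natural decomposition $f(A)=f(A_{1})\cup\cdots\cup f(A_{m})$ inherited via $f$ from $A=A_{1}\cup\cdots\cup A_{m}$, with pairwise overlaps contained in the image of the finite critical set $Q$.

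The main step is the claim: $f(A)\subseteq A_{i}$ for some $i\in S$. To prove it I would argue by contradiction, supposing $f(A)$ meets the interiors of two distinct level-$1$ tiles $A_{i}$ and $A_{j}$. Then $f(A)$ is covered by the trace of the level-$2$ tiling $\{A_{w}:w\in S^{2}\}$, and since $f(A)$ is a full topological copy of $A$, the incidence pattern of these traces must match, up to the homeomorphism, the hypergraph $G_{2}$ on $f(A)$. The $2$-connectedness of $G_{2}$ forces any partial covering of $f(A)$ by level-$2$ tiles of $A$ to be "rigid": the complement of any single $A_{w}$ stays connected through critical points, so $f(A)$ cannot split across the boundary between $A_{i}$ and $A_{j}$ without producing a missing or duplicated connection inconsistent with its intrinsic $G_{2}$. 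The edge-balanced condition on $G_{1}$ then supplies a counting lemma ruling out that $f(A)$ fills some proper sub-union of level-$1$ tiles: edge-balancedness says no $k$-component split is possible below a critical edge-count, which translates into saying the tiles $f(A_{s})$ cannot be distributed across fewer than a full level-$1$ cell's worth of $A_{w}$'s. Combining these, $f(A)$ must lie in a single $A_{i}$.

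Given that step, I would iterate as follows. If $f(A)\subseteq A_{i_{1}}$, then $f_{i_{1}}^{-1}\circ f$ is again a well-defined element of $M(A)$ (since $f_{i_{1}}:A\to A_{i_{1}}$ is a homeomorphism), and the same argument produces $i_{2}$ with $f_{i_{1}}^{-1}\circ f(A)\subseteq A_{i_{2}}$, i.e.\ $f(A)\subseteq A_{i_{1}i_{2}}$. Continuing, one obtains a sequence $w_{1}w_{2}\cdots\in S^{\infty}$ with $f(A)\subseteq\bigcap_{n}A_{w_{1}\cdots w_{n}}$. Either the descent terminates in finitely many steps — meaning at some stage $f_{w}^{-1}\circ f$ is a surjective self-homeomorphism $g\in G(A)$, whence $f=f_{w}\circ g$ and $\operatorname{im}(f)=A_{w}$ as required — or it continues forever, forcing $\bigcap_{n}A_{w_{1}\cdots w_{n}}$ to be a single point of $A$ (since diameters shrink to $0$ in the natural metric on the invariant factor), contradicting that $f(A)$ is a homeomorphic copy of the non-degenerate continuum $A$.

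The principal obstacle is the rigidity step: translating the combinatorial hypotheses on $G_{1}$ and $G_{2}$ into a genuine topological constraint on the embedded copy $f(A)$. The delicate point is that $f$ need not respect the decomposition of $A$ into level-$1$ tiles, so one must argue intrinsically — via the homeomorphism type of $(A,Q)$ as a topological space with a distinguished finite "cut set" — that any embedding must align with the tiling. This is essentially the heart of the Bandt--Retta argument and relies on both the finiteness of $Q$ (ensuring that the obstructing combinatorics are finite) and the simplicity assumption (preventing critical points from being nested, which would otherwise allow exotic embeddings).
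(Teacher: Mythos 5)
The thesis does not actually prove this theorem: it is quoted from Bandt and Retta (\cite{BandtRetta}), so there is no in-paper proof to compare with, and your sketch has to stand on its own. Judged that way, it has a genuine gap at its central step. First, a small but real defect: the claim ``$f(A)\subseteq A_{i}$ for some $i\in S$'' is false as stated, since any surjective $f$ (e.g.\ an element of $G(A)$) meets the interiors of all level-one pieces; what you need is the dichotomy ``either $f$ is onto or $f(A)$ lies in a single $A_{i}$''. Much more seriously, the justification you offer for that dichotomy is not a proof. The assertion that the traces $A_{w}\cap f(A)$, $w\in S^{2}$, must reproduce ``up to the homeomorphism'' the intrinsic hypergraph $G_{2}$ of $f(A)$ is a non sequitur: nothing forces an embedding to carry pieces of $A$ to pieces of $A$ at any level, and that alignment of the embedded copy with the tiling is precisely what the theorem asserts. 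Similarly, ``edge-balancedness supplies a counting lemma'' is a placeholder, not an argument: the hypotheses that $G_{1}$ is edge-balanced and $G_{2}$ is 2-connected enter the Bandt--Retta proof through quantitative separation properties of the finite critical set $Q$ (how many critical points are needed to cut a union of pieces into components), and extracting from them the statement that a non-surjective embedded copy of $A$ cannot straddle two level-one pieces --- for instance cannot have image $A_{i}\cup A_{j}$ --- is the whole content of their theorem. You concede this yourself when you call the rigidity step ``essentially the heart of the Bandt--Retta argument''; as written, the proposal assumes the hard part rather than proving it.

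The surrounding scaffolding, by contrast, is sound. Granting the dichotomy, $f_{i_{1}}^{-1}\circ f$ is again in $M(A)$ because each $f_{w}$ is a homeomorphism onto $A_{w}$ (the Bandt--Keller fact quoted just before the theorem), so the descent is legitimate; if it terminates you get $f=f_{w}\circ g$ with $g\in G(A)$ and hence image exactly $A_{w}$, and if it never terminates then, since $p$ is uniformly continuous and the cylinders $C_{w_{1}\cdots w_{n}}$ shrink, the diameters of $A_{w_{1}\cdots w_{n}}$ tend to $0$, forcing $f(A)$ to be a point, which is impossible for an uncountable compactum. So the reduction-and-descent outline is a reasonable frame for the proof; what is missing is the actual topological--combinatorial argument (a cut-point/separation analysis of $(A,Q)$ using simplicity, finiteness of $Q$, edge-balancedness of $G_{1}$ and 2-connectedness of $G_{2}$) that pins a non-surjective embedded copy of $A$ inside a single level-one piece. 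Without that, the proposal restates the theorem's difficulty rather than resolving it.
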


We then get the following immediate corollary.

\begin{corollary}
Each element of $h\in M(A)$ can be written $h = f_{w}g$, where $w\in S^{*}$ and $g\in G(A)$.
\end{corollary}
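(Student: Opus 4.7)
The plan is a direct, essentially one-line argument on top of Theorem \ref{toplrmclass}, since the hard work has been done there. Given $h \in M(A)$, the theorem immediately supplies a word $w \in S^{*}$ with $\operatorname{im}(h) = A_{w}$. So the only task of the corollary is to peel off the $f_{w}$ piece from $h$ and check that what remains is a self-homeomorphism of $A$.

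First I would recall that the map $f_{w}\colon A \to A$ constructed from the Bandt--Keller result, cited just before Theorem \ref{toplrmclass}, is a homeomorphism of $A$ onto $A_{w}$; in particular it has a continuous inverse $f_{w}^{-1}\colon A_{w}\to A$. Simultaneously, by definition of $M(A)$, the map $h$ is a homeomorphism from $A$ onto its image $A_{w}$. Hence the composition $g := f_{w}^{-1}\circ h$ is well-defined as a map $A \to A$, and being the composition of two homeomorphisms is itself a homeomorphism of $A$ with itself. Thus $g \in G(A)$, and by construction $f_{w}g = f_{w}f_{w}^{-1}h = h$, which is the claimed factorisation.

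There is essentially no obstacle here: the whole content is contained in Theorem \ref{toplrmclass}, and the only subtlety I would flag is making explicit the (implicit) fact that $f_{w}$ is a homeomorphism \emph{onto} $A_{w}$, so that the inversion step makes sense. If desired one could additionally remark that, since $f_{w}$ is determined by $w$ and $g$ is then determined by $h$ and $f_{w}$, the decomposition $h = f_{w}g$ is unique given $w$; combined with the uniqueness of $w$ coming from $A_{w} = \operatorname{im}(h)$ (which would follow from injectivity of $w \mapsto A_{w}$ in this setting, guaranteed by the simple finite-to-one hypothesis), this would put $M(A)$ in the $X^{*}G$ form required by the Rees monoid framework of Section 2.5, setting up the subsequent application of Theorem \ref{mainthm2}.
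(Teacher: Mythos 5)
Your argument is exactly the paper's: given $\im(h)=A_{w}$ from Theorem \ref{toplrmclass}, set $g=f_{w}^{-1}h$ and observe it is a self-homeomorphism of $A$, so $h=f_{w}g$. The paper states this in one line; your extra remarks on $f_{w}$ being a homeomorphism onto $A_{w}$ and on uniqueness are harmless elaborations of the same proof.
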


\begin{proof}
If $h(A) = A_{w}$, then $g = f_{w}^{-1}h$.
\end{proof}

\begin{proposition}
For $A$ as above, $M(A)$ is a left Rees monoid.
\end{proposition}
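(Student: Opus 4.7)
(Proposal)
The plan is to exhibit $M(A)$ as a Zappa-Sz\'{e}p product of a free monoid by a group, and then invoke the characterisation of left Rees monoids recalled just after Theorem \ref{sufconlrm}: a (non-group) monoid is a left Rees monoid precisely when it is isomorphic to such a Zappa-Sz\'{e}p product. Since $M(A)$ contains proper into-homeomorphisms (for instance $f_{i}$ with $i\in S$), it is not a group, so this route will suffice.

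First I would verify left cancellation (LR1): the elements of $M(A)$ are injective continuous maps, so if $hk_{1}=hk_{2}$ pointwise on $A$, then $k_{1}=k_{2}$. Next, set $X = \{f_{i}:i\in S\}$. I claim the submonoid generated by $X$ is free on $X$, so that it may be identified with $X^{\ast}$. Indeed, if $f_{w}=f_{v}$ for $w,v\in S^{\ast}$ then their images coincide, i.e.\ $A_{w}=A_{v}$; under the hypotheses of Theorem \ref{toplrmclass}, the combinatorial tree structure furnished by the simple finite-to-one invariant factor $A$ ensures that the map $w \mapsto A_{w}$ is injective on $S^{\ast}$ (this is a basic property of the Bandt-Retta construction: iterating the defining relation $A = \bigcup_{i} A_{i}$ partitions $A$, modulo the finite critical set, into the pieces $A_{w}$ at each level, and the "simple" hypothesis prevents collapse). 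A fortiori $|w|=|v|$, which will matter below.

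Next I would establish the unique decomposition $M(A) = X^{\ast}\,G(A)$. The corollary to Theorem \ref{toplrmclass} already tells us that every $h\in M(A)$ can be written $h=f_{w}g$ with $w\in S^{\ast}$ and $g\in G(A)$. For uniqueness of the decomposition: if $f_{w}g = f_{v}g'$ then $A_{w} = h(A) = A_{v}$, so by the preceding paragraph $w=v$, and then left cancellation gives $g=g'$. This also shows that the hypothesis of Theorem \ref{sufconlrm} is met, so $M(A) \cong X^{\ast}\bowtie G(A)$. Explicitly, for $g\in G(A)$ and $f_{w}\in X^{\ast}$, the product $gf_{w}$ lies in $M(A)$ and its image is $g(A_{w})$; by Theorem \ref{toplrmclass} this equals $A_{v}$ for some unique $v\in S^{\ast}$, and since homeomorphisms of $A$ permute the level-$n$ pieces (a consequence of $2$-connectedness of $G_{2}$ and edge-balance of $G_{1}$ in Bandt-Retta's framework), we must have $|v|=|w|$; the required element $h\in G(A)$ with $gf_{w}=f_{v}h$ is then $h = f_{v}^{-1}gf_{w}$. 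This yields the action $g\cdot w = v$ and restriction $g|_{w}=h$ satisfying (ZS1)-(ZS8).

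Having realised $M(A)$ as $X^{\ast}\bowtie G(A)$, the characterisation quoted above delivers (LR2) and (LR3) for free, and the proposition follows. The main obstacle in carrying this out is the claim that $A_{w}=A_{v}$ forces $w=v$ and, more subtly, that every $g\in G(A)$ sends a level-$n$ piece $A_{w}$ to another level-$n$ piece; both rely on extracting from the $2$-connectedness and edge-balance hypotheses of Theorem \ref{toplrmclass} that the hierarchical cell decomposition of $A$ is a topological invariant preserved by all self-homeomorphisms. The remaining axioms are then verified essentially by unwinding definitions.
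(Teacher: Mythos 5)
Your proof is correct and takes essentially the same route as the paper: left cancellativity from injectivity, the unique decomposition $M(A) = S^{\ast}G(A)$ coming from the corollary to Theorem \ref{toplrmclass}, and then Theorem \ref{sufconlrm} together with the Zappa-Sz\'ep characterisation of left Rees monoids. The paper's own proof is just a terser version of this, leaving implicit the freeness and uniqueness details (and the length-preservation remark, which is not actually needed) that you spell out.
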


\begin{proof}
$M(A)$ is clearly left cancellative since by assumption its elements are injective. Identifying an element $f_{w}$ with $w\in S^{\ast}$, we see that $M(A) = S^{*}G(A)$ uniquely. Thus by Theorem \ref{sufconlrm},  $M(A)$ is a left Rees monoid.
\end{proof}

\begin{ex} 
Let $S =\left\{1,2,3\right\}$, denote by $\bar{i}$ the sequence consisting just of the character $i$, say $i\bar{j} \sim j\bar{i}$ for $i,j = 1,2,3$, and extend this equivalence relation to $C$. Then $A = C/\sim$ is homeomorphic to the Sierpinski gasket and the conditions of Theorem \ref{toplrmclass} are satisfied. 
\end{ex}

\begin{proposition}
Let $A$ and $B$ be two invariant factors such that $f:A\rightarrow B$ is a homeomorphism. Then $M(A)$ is isomorphic to $M(B)$.
\end{proposition}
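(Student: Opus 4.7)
The plan is simply to use conjugation by the homeomorphism $f$. Define the map $\phi \colon M(A) \to M(B)$ by $\phi(h) = f \circ h \circ f^{-1}$ for each $h \in M(A)$. First I would check that $\phi$ is well defined: if $h \colon A \to A$ is a homeomorphism onto its image, then $f \circ h \circ f^{-1}$ is a composition of homeomorphisms, hence a homeomorphism from $B$ onto $f(h(A)) \subseteq B$, so $\phi(h) \in M(B)$.

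Next I would verify that $\phi$ is a monoid homomorphism. The identity is preserved because $\phi(\operatorname{id}_A) = f \circ \operatorname{id}_A \circ f^{-1} = \operatorname{id}_B$, and for composition we have
$$\phi(h_1 h_2) = f \circ h_1 \circ h_2 \circ f^{-1} = (f \circ h_1 \circ f^{-1})(f \circ h_2 \circ f^{-1}) = \phi(h_1)\phi(h_2).$$

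Finally, to show $\phi$ is an isomorphism, I would exhibit its two-sided inverse $\psi \colon M(B) \to M(A)$ given by $\psi(k) = f^{-1} \circ k \circ f$, verifying analogously that it lands in $M(A)$ and that $\psi \circ \phi$ and $\phi \circ \psi$ are the respective identities on $M(A)$ and $M(B)$. There is no real obstacle here — the whole argument is the standard fact that conjugation by an isomorphism between two structures induces an isomorphism between their endomorphism/partial-automorphism monoids, and nothing in the definitions of $M(A)$ and $M(B)$ beyond being homeomorphisms into the respective spaces is used.
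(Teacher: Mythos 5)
Your proof is correct and is exactly the paper's argument: the paper also defines the map $h \mapsto f h f^{-1}$ and notes it is clearly an isomorphism. You have simply spelled out the routine verifications that the paper leaves implicit.
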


\begin{proof}
Define $S:M(A)\rightarrow M(B)$ by $S(h) = fhf^{-1}$. Then $S$ is clearly an isomorphism.
\end{proof}

\begin{comment}
\begin{conjecture}
Let $h:M(A)\rightarrow M(B)$ be a homomorphism (or isomorphism) and define a map $f:A\rightarrow B$ by $f(x_{1}x_{2}\ldots) = h(x_{1})h(x_{2})\ldots$. Then $f$ is interesting.
\end{conjecture}
\end{comment}

\section{Automaton presented groups}

We now describe how self-similar group actions arise from automata. This is described in detail in \cite{NekrashevychBook}. For the present an \emph{automaton} $\mathcal{A}=(A,X,\lambda,\pi)$ consists of 
\begin{itemize}
\item a set $A$ whose elements are called \emph{states};
\item a set $X$ called the \emph{alphabet};
\item a map $\lambda:A\times X\rightarrow X$ called the \emph{output function}; 
\item a map $\pi:A\times X\rightarrow A$ called the \emph{transition function}. 
\end{itemize}
In theoretical computer science, these structures are normally called \emph{deterministic real-time synchronous transducers} \cite{cain}. We will denote $\lambda(q,x)$ by $q\cdot x$ and $\pi(q,x)$ by $q|_{x}$.

We can use \emph{Moore diagrams} to represent automata as follows. The states are represented by labelled circles. For $a\in A$ and $x\in X$, there exists an arrow from $a$ to $a|_{x}$ labelled by the ordered pair $(x,a\cdot x)$. 

Let us define:
\begin{itemize}
\item $q|_{\emptyset}=q$
\item $q\cdot\emptyset= \emptyset$
\end{itemize}

Given an automaton $(A,X)$ we can construct an automaton $(A,X^{n})$ for each $n$ by defining the transition and output functions recursively as follows, for $x,y\in X^{*}$: 
\begin{enumerate}
\item $q|_{xy}=(q|_{x})|_{y}$
\item $q\cdot(xy)=q\cdot(x)q|_{x}\cdot y$
\end{enumerate}
Let $(A,X)$, $(B,X)$ be two automata. Then we can define their composition automaton $(A\times B, X)$ with transition and output functions as follows:
\begin{enumerate}
\item $(pq)\cdot x=p\cdot(q\cdot x)$
\item $(pq)|_{x}=p|_{(q\cdot x)}q|_{x}$
\end{enumerate}

\par
We see that states of automata describe endomorphisms of free monoids as trees. An automaton is \emph{invertible} if each of its states describes an invertible transformation of a free monoid. That is, an automaton is invertible if and only if $\lambda(a,\cdot)$ is a bijection for each $a\in A$. Given an invertible automaton $(A,X)$, we can construct an automaton $(A^{-1},X)$ whose states are in bijective correspondence with those of $A$ and whose transition and output functions are inverted. We call an automaton whose states each define different endomorphism of $X^{\ast}$ \emph{reduced}. We thus have a homomorphism $FG(A) \rightarrow Aut(X^{*})$. The group generated by the image of this homomorphism we will denote by $G$ or $G(\mathcal{A})$. Each of the elements of $G$ will correspond to one or more compositions of states of $A$. We see that $G$ acts on $X^{\ast}$ self-similarly and faithfully. Note that the kernel of this homomorphism will be
$$\mathcal{K}(\mathcal{A}) = \bigcap_{x \in X^{\ast}}G_{x}.$$
So by the first isomorphism theorem, we have
$$G \cong FG(A) / \mathcal{K}(\mathcal{A}). $$
\par
We call a left Rees monoid \emph{finite-state} if each of the sets $\left\{g|_{x}:x\in X^{*}\right\}$ for $g\in G$ is finite.

\par
We now see that given an invertible reduced automaton $\mathcal{A}$ with a finite number of states over a finite alphabet, we can then construct a fundamental left Rees monoid $M(\mathcal{A})$. On the other hand, given a finite state fundamental left Rees monoid with finitely generated group of units and finite $X$, then we can describe it by a finite-state reduced invertible automaton.

Let $\mathcal{A} = (A,X,\lambda_{1},\pi_{1})$ and $\mathcal{B} = (B,Y,\lambda_{2},\pi_{2})$ be finite state automata. We will say $\mathcal{A}$ and $\mathcal{B}$ are \emph{computationally equivalent} and write $\mathcal{A} \sim \mathcal{B}$ if 
\begin{enumerate}
\item There is an isomorphism $\theta:X^{\ast} \rightarrow Y^{\ast}$.
\item For all $a\in FG(A)$ there exists $b\in FG(B)$ such that $a\cdot x = b\cdot \theta(x)$ for all $x\in X^{*}$.
\item For all $b\in FG(B)$ there exists $a\in FG(B)$ such that $a\cdot \theta^{-1}(y) = b\cdot y$ for all $y\in Y^{*}$.
\end{enumerate}

It is clear that $\sim$ defines an equivalence relation.

\begin{proposition}
Let $\mathcal{A}$, $\mathcal{B}$ be automata. Then
$\mathcal{A} \sim \mathcal{B}$ if and only if $M(\mathcal{A})$ and $M(\mathcal{B})$ are isomorphic monoids.
\end{proposition}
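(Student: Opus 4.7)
The plan is to establish each direction by translating computational equivalence of automata into an isomorphism of the underlying self-similar group actions, and vice versa. Throughout, I would use the description $M(\mathcal{A}) = X^{\ast}\bowtie G(\mathcal{A})$ and $M(\mathcal{B}) = Y^{\ast}\bowtie G(\mathcal{B})$, so that a monoid isomorphism amounts to a bijection between the generating alphabets together with a group isomorphism between the groups of units that intertwines both the action and the restriction data.

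For the direction $(\Rightarrow)$, I would first use conditions (2) and (3) to build a group isomorphism $\phi\colon G(\mathcal{A})\to G(\mathcal{B})$. Given $g\in G(\mathcal{A})$, pick any representative $a\in FG(A)$; condition (2) supplies $b\in FG(B)$, and I would set $\phi(g)=[b]$ in $G(\mathcal{B}) = FG(B)/\mathcal{K}(\mathcal{B})$. Since $G(\mathcal{B})$ acts faithfully on $Y^{\ast}$, the class $[b]$ is determined by the requirement $\theta(a\cdot x) = b\cdot\theta(x)$ for all $x\in X^{\ast}$, which gives well-definedness; condition (3) provides an inverse, so $\phi$ is bijective, and the homomorphism property is immediate by composition of actions. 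I would then define $F\colon M(\mathcal{A})\to M(\mathcal{B})$ by $F(xg) = \theta(x)\phi(g)$, and check the monoid-homomorphism condition on the relation $gx = (g\cdot x)(g|_{x})$. The intertwining $\theta(g\cdot x) = \phi(g)\cdot\theta(x)$ holds by construction; the corresponding identity for restrictions, $\phi(g|_{x}) = \phi(g)|_{\theta(x)}$, falls out by evaluating $\phi(g)\theta(xy)$ in two ways using (SS4) and invoking uniqueness in the Zappa--Sz\'ep decomposition of $M(\mathcal{B})$.

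For the direction $(\Leftarrow)$, given an iso $F\colon M(\mathcal{A})\to M(\mathcal{B})$, I would restrict to groups of units to obtain $\phi := F|_{G(\mathcal{A})}\colon G(\mathcal{A})\to G(\mathcal{B})$, and extract $\theta$ from the action of $F$ on the alphabet. Since $F$ sends maximal proper principal right ideals of $M(\mathcal{A})$ bijectively onto those of $M(\mathcal{B})$, for each $x\in X$ I can write $F(x) = y_{x}g_{x}$ uniquely with $y_{x}\in Y$ and $g_{x}\in G(\mathcal{B})$, and set $\theta(x) = y_{x}$. Distinctness of the $y_{x}$ follows because distinct letters generate distinct principal right ideals, so $\theta$ extends to a free monoid isomorphism $X^{\ast}\to Y^{\ast}$. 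To verify condition (2), for $a\in FG(A)$ with image $g_{a}\in G(\mathcal{A})$ pick $b\in FG(B)$ representing $\phi(g_{a})$. Apply $F$ to the identity $g_{a}x = (g_{a}\cdot x)(g_{a}|_{x})$ in $M(\mathcal{A})$, write $F(x) = \theta(x)\gamma(x)$ for the unit correction $\gamma(x)\in G(\mathcal{B})$, and expand $\phi(g_{a})\theta(x)\gamma(x)$ and $\theta(g_{a}\cdot x)\gamma(g_{a}\cdot x)\phi(g_{a}|_{x})$ by the Zappa--Sz\'ep product; matching the ``word parts'' under unique decomposition gives $\theta(g_{a}\cdot x) = \phi(g_{a})\cdot\theta(x)$, which is condition (2). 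Condition (3) follows by running the same argument with $F^{-1}$.

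The main obstacle I expect is the reverse direction, because $F$ does not in general map $X^{\ast}$ into $Y^{\ast}$; each $F(x)$ carries a non-trivial unit $g_{x}$, and there is no literal restriction $F|_{X^{\ast}}\to Y^{\ast}$ to exploit. The key is that although $F(x)\neq\theta(x)$, Zappa--Sz\'ep uniqueness forces the two different expansions of $F(g_{a}x)$ to have the same word component, and this is precisely the intertwining needed for computational equivalence. Once $\theta$ and $\phi$ are separated out by bookkeeping the word part versus the unit part of $F$, the remaining verification is a formal manipulation with axioms (SS4) and (SS8).
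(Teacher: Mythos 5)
Your forward direction is correct and essentially fills in the sketch the paper gives: faithfulness of the automaton groups makes $\phi$ well defined and a homomorphism, and since condition (2) hands you the intertwining $\theta(g\cdot w)=\phi(g)\cdot\theta(w)$ for \emph{every} $w\in X^{\ast}$ at once, the restriction identity $\phi(g|_{w})=\phi(g)|_{\theta(w)}$ does follow by cancelling in the free monoid and invoking faithfulness, so $F(xg)=\theta(x)\phi(g)$ is an isomorphism.

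The reverse direction, however, has a genuine gap at exactly the step you flag, and it cannot be repaired. Comparing the two expansions of $F(gx)$ by unique decomposition gives $\theta(g\cdot x)=\phi(g)\cdot\theta(x)$ only for single letters $x\in X$, together with $\phi(g)|_{\theta(x)}=\gamma(g\cdot x)\,\phi(g|_{x})\,\gamma(x)^{-1}$ on the unit side; note also that for $|w|\geq 2$ the word part of $F(w)$ is $y_{x_{1}}(\gamma(x_{1})\cdot y_{x_{2}})\cdots$, not $\theta(w)$, so ``matching word parts'' is only legitimate letter by letter. Condition (2) demands a single $b$ working for all of $X^{\ast}$, and the induction from letters to longer words needs $\phi(g|_{x})=\phi(g)|_{\theta(x)}$ on the nose, which the unit corrections $\gamma(x),\gamma(g\cdot x)$ destroy in general. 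Indeed the implication is false as stated: take $X=Y=\{x,y\}$ and $G(\mathcal{A})=G(\mathcal{B})=C_{2}=\langle\sigma\rangle$, with $\mathcal{A}$ defined by $\sigma\cdot x=y$, $\sigma\cdot y=x$, $\sigma|_{x}=\sigma|_{y}=1$ (change only the first letter) and $\mathcal{B}$ by $\sigma\cdot x=y$, $\sigma\cdot y=x$, $\sigma|_{x}=\sigma|_{y}=\sigma$ (change every letter). In both cases the action on letters is transitive with trivial stabiliser, so Theorem \ref{mainthm1} gives $M(\mathcal{A})\cong\langle\sigma,t\mid\sigma^{2}=1\rangle\cong M(\mathcal{B})$. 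But any isomorphism $\theta:X^{\ast}\rightarrow Y^{\ast}$ is a relabelling of the two letters, and $\theta\sigma_{\mathcal{A}}\theta^{-1}$ sends $xx$ to $yx$, whereas the transformations induced by $FG(B)$ send $xx$ to $xx$ or to $yy$; so condition (2) fails for every $\theta$ and $\mathcal{A}\not\sim\mathcal{B}$. The paper itself dismisses this direction as ``clear'', so your difficulty is really pointing at a defect in the statement rather than in your bookkeeping: the isomorphism class of $M(\mathcal{A})$ records only $G(\mathcal{A})$ together with the stabiliser restriction maps $\phi_{x_{i}}:G_{x_{i}}\rightarrow G$ up to conjugacy (compare the later propositions on partial endomorphisms), not the full action of $G(\mathcal{A})$ on $X^{\ast}$, while computational equivalence remembers the action up to relabelling.
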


\begin{proof}
($\Rightarrow$) Let $\mathcal{A} = (A,X,\lambda_{1},\pi_{1}) \sim \mathcal{B} = (B,Y,\lambda_{2},\pi_{2})$. Then we can assume $X = Y$ by (1). By (2) and (3), we have a bijective map $f:G(\mathcal{A})\rightarrow G(\mathcal{B})$. Further since in group actions $(ab)\cdot x = a\cdot(b\cdot x)$, $f$ is a homomorphism and thus an isomorphism
\\($\Leftarrow$) This is clear.
\end{proof}

\subsection{Adding machine}

Let us describe an example found in \cite{NekrashevychBook}. Let $G=\langle a\rangle\cong\mathbb{Z}$ and $X=\left\{x,y\right\}$. Then define $a\cdot x= y$, $a\cdot y= x$, $a|_{x}=1$ and $a|_{y}= a$. This defines a self-similar action. In terms of the wreath recursion we have $a=\sigma(1,a)$, where $\sigma$ is the permutation in $\mathcal{S}(X)$ permuting $x$ and $y$. Let us denote this left Rees monoid by $M$. It is called the \emph{dyadic adding machine}. Identify $x$ with 0 and $y$ with 1 so that finite and infinite words over $X$ become expansions of dyadic integers. The action of $a$ on a word $w$ is then equivalent to adding 1 to the dyadic integer corresponding to $w$. We can see that $M$ acts on the Cantor set by identifying elements of the Cantor set with dyadic integers. The associated automaton has the following Moore diagram:

\begin{center}
\setlength{\unitlength}{0.75mm}
\begin{picture}(60,60)

\put(9,30){\circle{17}}
\put(9,38.5){\vector(1,0){1}}
\put(4,41){$(1,0)$}

\put(20,30){$a$}
\put(21,31){\circle{7}}

\put(24.5,31){\vector(1,0){27}}
\put(32,33){$(0,1)$}

\put(54,30){$1$}
\put(55,31){\circle{7}}

\put(55,43){\circle{17}}
\put(55,51.5){\vector(1,0){1}}
\put(50,53.5){$(1,1)$}

\put(55,19){\circle{17}}
\put(55,10.5){\vector(1,0){1}}
\put(50,6){$(0,0)$}

\end{picture}
\end{center}

\begin{center}
Figure 5: Moore diagram of the dyadic adding machine
\end{center}

\vspace{3 mm}

We see that $G_{x}=G_{y}=\left\{a^{2n}:n\in\mathbb{Z}\right\}$. We have that
$$\phi_{x}(a^{2})=(a^{2})|_{x}=(a|_{a\cdot x})(a|_{x})=(a|_{y})(a|_{x})=a.$$
Then for $n>1$ we have 
$$\phi_{x}(a^{2n})=(a^{2n})|_{x}=(a^{2n-2}|_{a^{2}\cdot x})(a^{2}|_{x})=\phi_{x}(a^{2n-2})a$$
and so by induction $\phi_{x}(a^{2n})=a^{n}$.
Similarly $\phi_{y}(a^{2n})=a^{n}$. Therefore $\phi_{x}$ and $\phi_{y}$ are both injective so $M$ is in fact a Rees monoid. The action is transitive and so the Rees monoid will have the following monoid presentation  
$$M=\langle a,a^{-1},t|aa^{-1} = a^{-1}a = 1, ta^{n}=a^{2n}t,n\in\mathbb{Z}\rangle,$$
which can be further reduced to give
$$M = \langle a,a^{-1},t|aa^{-1} = a^{-1}a = 1,ta=a^{2}t\rangle.$$ 
Therefore the universal group will be
$$U(M)=\langle a,t|tat^{-1}=a^{2}\rangle \cong BS(1,2),$$
\par
where the Baumslag-Solitar group $BS(m,n)$ is given by the following group presentation
$$BS(m,n) = \langle a,t|ta^{m}t^{-1}=a^{n}\rangle.$$
We can in fact generalise the above to construct an automaton whose associated monoid's universal group is $BS(k,n)$, where $k < n$. Let $A = \left\{a,1\right\}$ and $X = \left\{0,\ldots,n-1 \right\}$. There will be $n$ arrows starting at $a$ and these will be labelled by tuples $(x, x+1 \mod n)$, $x\in X$. The first $k$ arrows will be from $a$ to itself, and the remaining $n-k$ arrows will go from $a$ to $1$. The arrows from $1$ to itself will be labelled by pairs $(x,x)$, $x\in X$. We have that $G = \mathbb{Z}$ and for each $x \in X$, we have $G_{x} = \langle a^{n}\rangle$ and $a^{n}|_{x}=a^{k}$. This gives a monoid with presentation: 
$$M=\langle a,a^{-1},t|aa^{-1} = a^{-1}a = 1, ta^{k}=a^{n}t\rangle$$
and so
$$U(M)=\langle a,t|ta^{k}t^{-1}=a^{n}\rangle \cong BS(k,m).$$
Note that $BS(k,m)\cong BS(m,k)$ and so this really gives us all of the Baumslag-Solitar groups.

\subsection{Baumslag-Solitar group actions}

The following example is adapted from one given in \cite{BartholdiHenriquesNekrashevych}. Consider the automaton $\mathcal{A}$ given by the following Moore diagram. 

\begin{center}
\setlength{\unitlength}{0.75mm}
\begin{picture}(60,60)

\put(-20,30){\circle{17}}
\put(-20,38.5){\vector(1,0){1}}
\put(-26.5,41){$(x,x)$}

\put(-9,30){$\alpha$}
\put(-8,31){\circle{7}}

\put(-4.5,34){\vector(1,0){30}}
\put(3,37){$(y,y)$}

\put(26,28){\vector(-1,0){30}}
\put(3,22){$(x,y)$}

\put(28,30){$\beta$}
\put(29,31){\circle{7}}

\put(32.5,34){\vector(1,0){30}}
\put(39.5,37){$(y,x)$}

\put(63,28){\vector(-1,0){30}}
\put(39.5,22){$(x,x)$}

\put(65,30){$\gamma$}
\put(66,31){\circle{7}}

\put(78,30){\circle{17}}
\put(78,38.5){\vector(1,0){1}}
\put(72,41){$(y,y)$}

\end{picture}
\end{center}

\begin{center}
Figure 6: Baumslag-Solitar machine
\end{center}

Thinking of $x$ as representing 0 mod 2 and $y$ as 1 mod 2 and identifying $X^{\omega}$ with $\mathbb{Z}_{2}$ we can consider $\alpha$, $\beta$ and $\gamma$ as the maps defined on the dyadic integers given by $\alpha(X) = 3X$, $\beta(X) = 3X+1$ and $\gamma(X) = 3X+2$.
Letting 
$$G = FG(\alpha,\beta,\gamma)/\mathcal{K}(\mathcal{A}),$$
we have $G\cong BS(1,3)$ where the isomorphism $\theta:BS(1,3)\rightarrow G$ is given on generators by $\theta(t) = \alpha$ and $\theta(a) = \beta \alpha^{-1}$, viewing $t$ and $a$ as the maps on $\mathbb{Z}_{2}$ given by $t(X) = 3X$ and $a(X) = X+1$. The action of $G$ on $X = \left\{x,y\right\}$ is transitive. An arbitrary element $g\in G$ can be written
$$g = (\prod_{k=1}^{n}{t^{i_{k}}a^{j_{k}}})t^{r},$$
where $i_{k},j_{k},r\in \mathbb{Z}$.
We see that 
$$\theta(t)\cdot x = \alpha\cdot x = x$$
and
$$\theta(a)\cdot x = (\beta \alpha^{-1})\cdot x = y.$$
Similarly, $\theta(t)\cdot y = y$ and $\theta(a)\cdot y = x$.
From this we deduce that 
$$G_{x} = G_{y} = \left\{(\prod_{k=1}^{n}{t^{i_{k}}a^{j_{k}}})t^{r}|\sum_{k=1}^{n}{j_{k}} \hbox{ even}\right\}.$$
Note that
$$\alpha|_{x} = \alpha, \quad \alpha|_{y} = \beta = \theta(at), \quad (\beta \alpha^{-1})|_{x} = \alpha \alpha^{-1} = 1, \quad (\beta \alpha^{-1})|_{y} = \gamma \beta^{-1} = \beta \alpha^{-1}.$$
Now the group $G_{x}$ is generated as a group by the elements $a^{2}$, $ata$ and $t$. 
We see that when we consider the monoid presentation of the monoid $M$ of the automaton $\mathcal{A}$, if we have the relation $gr = rh$ for some $g,h\in G$ then this gives for free 
$$g^{-1}r = g^{-1}grh^{-1} = rh^{-1}.$$
Thus $M$ has monoid presentation
\begin{comment}
Denote by $[z]$ the floor function where we adopt the convention that $[10.5] = 10$ and $[-10.5] = [-11]$. Now define $\rho: G_{x}\rightarrow G$ by
$$\rho((\prod_{k=1}^{n}{t^{i_{k}}a^{j_{k}}})t^{r}) = (\prod_{k=1}^{n}{s_{k}^{i_{k}}a^{r_{k}}})t^{r},$$
where
$$r_{n} = [\frac{i_{n}}{2}],$$
and for $1 < k \leq n$,
$$c_{k} = \sum_{m=k}^{n}{j_{m}},$$
$$s_{k} = t, \quad r_{k-1} = [\frac{i_{n}}{2}]$$
for $c_{k}$ even and
$$s_{k} = at, \quad r_{k-1} = [\frac{i_{n}+1}{2}]$$
for $c_{k}$ odd.
\end{comment}
$$M = \langle a,a^{-1},t, t^{-1},r\quad | \quad aa^{-1} = a^{-1}a = tt^{-1} = t^{-1}t = 1, ta = a^{3}t, $$
$$atar = ra^{2}t, tr = rt, a^{2}r = ra \rangle .$$
It is not clear whether $M$ is right cancellative or not. Its universal group $U(M)$ has group presentation
$$U(M) = \langle a,t,r|ta = a^{3}t, atar = ra^{2}t, tr = rt, a^{2}r = ra \rangle .$$

\begin{comment}
Let us now consider an action of $BS(1,3)$ on $X^{*}$, where $X=\left\{x,y\right\}$ as described in. Firstly, define $a:\mathbb{R}\rightarrow \mathbb{R}$ by $a(X)=X+1$ and $t:\mathbb{R}\rightarrow \mathbb{R}$ by $t(X)=3X$. Then $BS(1,3)$ is equal to the group generated by $a$ and $t$. Now define $f:\mathbb{R}\rightarrow \mathbb{R}$ by $f(X)=3X$, $g:\mathbb{R}\rightarrow \mathbb{R}$ by $g(X)=3X+1$ and $h:\mathbb{R}\rightarrow \mathbb{R}$ by $h(X)=3X+2$. Then the group generated by $f$, $g$ and $h$ is also isomorphic to $BS(1,3)$, under the isomorphism $\theta$ given by $\theta(t) = f$, $\theta(a) \mapsto gf^{-1}$. Now instead of being defined on $\mathbb{R}$, consider these as maps on $\mathbb{Z}_{2}$, the dyadic integers. This then gives rise to an action of $BS(1,3)$ on $X^{*}$, with wreath recursion given by $\psi(f)=(f,g)$, $\psi(g)=\sigma(f,h)$, $\psi(h)=(g,h)$, where $\sigma$ is the flip in $\mathcal{S}(X) \cong C_{2}$. 
The embedding of the triadic adding machine into $BS(1,3)$ then maps, from our original Rees monoid,
$$x\mapsto a, y\mapsto b, z\mapsto c, a\mapsto ba^{-1}$$
\end{comment}

\subsection{Sierpinski gasket}

Let $G = D_{6}$, $X = \left\{L, R, T\right\}$ and suppose $M$ is the monoid of similarity transformations of the Sierpinski gasket as described in Section 2.5.1. Observe that $M$ is in fact the monoid associated with the following automaton:

\begin{center}
\setlength{\unitlength}{0.75mm}
\begin{picture}(130,50)

\put(9,30){\circle{17}}
\put(9,38.5){\vector(1,0){1}}
\put(4,41){(L,T)}

\put(30,20){\circle{17}}
\put(30,28.5){\vector(1,0){1}}
\put(26,31){(T,R)}

\put(11,13){\circle{17}}
\put(11,4.5){\vector(1,0){1}}
\put(6,0){(R,L)}

\put(17,21.5){$\rho$}
\put(18,22.5){\circle{7}}

\put(79,30){\circle{17}}
\put(79,38.5){\vector(1,0){1}}
\put(74,41){(L,R)}

\put(100,20){\circle{17}}
\put(100,28.5){\vector(1,0){1}}
\put(96,31){(T,T)}

\put(81,13){\circle{17}}
\put(81,4.5){\vector(1,0){1}}
\put(76,0){(R,L)}

\put(87,21.5){$\sigma$}
\put(88,22.5){\circle{7}}

\end{picture}
\end{center}

\begin{center}
Figure 7: Moore diagram of Sierpinski gasket automaton
\end{center}

\subsection{Grigorchuk group}

Here we give an example taken from \cite{NekrashevychBook} of a left Rees monoid which is not a Rees monoid. The \emph{Grigorchuk group} $G$ is defined to be the group of units of the left Rees monoid generated by four elements $a,b,c,d$ with $X=\left\{0,1\right\}$ and wreath recursion
$$a=\sigma, \quad b=(a,c), \quad c=(a,d), \quad d=(1,b),$$
where $\sigma\in\mathcal{S}(X)$ is again the flip map and the associated Moore diagram is: 

\begin{center}
\includegraphics[scale=0.7]{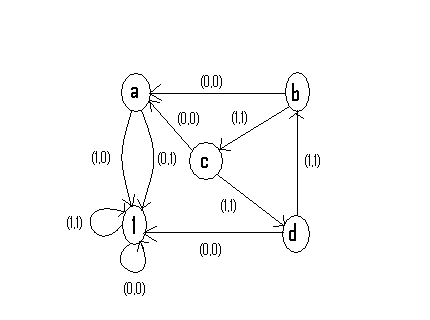}
\end{center}

\begin{center}
Figure 8: Moore diagram of Grigorchuk group action automaton
\end{center}

It is not a Rees monoid because $b,c\in G_{0}$ and $\phi_{0}(b)=\phi_{0}(c)=a$.

\section{Associated bialgebra} 

The following construction is based on ideas found in \cite{Kassel}. Let $K$ be a field and let $M$ be a monoid. We can form the monoid bialgebra $KM$ as follows. An element $v$ of $KM$ is a finite sum 
$$v = \sum_{i=1}^{n}\alpha_{i}x_{i},$$
where $\alpha_{i}\in K$ and $x_{i}\in M$. We define addition $+$, convolution $\circ$ and scalar multiplication as follows:
$$\sum_{i=1}^{n}\alpha_{i}x_{i} + \sum_{i=1}^{m}\beta_{i}y_{i} = \sum_{i=1}^{n+m}\alpha_{i}x_{i},$$
where for $n+1\leq i\leq n+m$, $\alpha_{i} = \beta_{i-n}$ and $x_{i} = y_{i-n}$,
$$\sum_{i=1}^{n}\alpha_{i}x_{i} \circ \sum_{i=1}^{m}\beta_{i}y_{i} = \sum_{i=1}^{n}\sum_{j=1}^{m} \alpha_{i}\beta_{j}x_{i}y_{j},$$
where $x_{i}y_{j}$ is the product in $M$ and
$$\lambda \sum_{i=1}^{n}\alpha_{i}x_{i} = \sum_{i=1}^{n}\lambda\alpha_{i}x_{i},$$
where for all of the above $\lambda, \alpha_{i}, \beta_{i}\in K$ and $x_{i},y_{i}\in M$. This gives $KM$ the structure of a unital $K$-algebra. (Note if $K =\mathbb{C}$, we may want to take the complex conjugate of the $\beta_{i}$'s in the definition of the convolution.) We can make $KM$ a cocommutative bialgebra by specifying the comultiplication $\Delta$ on the elements $x\in M$ to be
$$\Delta(x) = x\otimes x$$
and counit $\epsilon$ to be $\epsilon(x) = 1$. If $M$ is a group, we can make $KM$ into a Hopf algebra by defining the antipode $S(g) = g^{-1}$. Now suppose $M = X^{*}\bowtie G$ is a left Rees monoid. Then $KM$ is isomorphic to the \emph{bicrossed product bialgebra} (\cite{Majid}) $KX^{*}\bowtie KG$ with unit $1\otimes 1$, multiplication on generators given by
$$(x\otimes g)(y\otimes h) = x(g\cdot y)\otimes (g|_{y})h,$$
comultiplication $\Delta(x\otimes g) = (x\otimes g)\otimes (x\otimes g)$, counit $\epsilon(x\otimes g) = 1$. If $M$ is symmetric, then we can form $K\Gamma = KFG(X)\bowtie KG$, as above with antipode $S(x\times g) = (g^{-1}\cdot x^{-1})\otimes(g^{-1}|_{x^{-1}})$. 

As in the representation theory of finite groups, we see that if $f: M\rightarrow M_{n}(K)$ is a homomorphism, then $K^{n}$ can naturally be given the structure of a finitely generated left $KM$-module, by setting 
$$\left(\sum_{i}{\alpha_{i}s_{i}}\right)\cdot x = \sum_{i}{\alpha_{i}(f(s_{i})\cdot x)},$$
for $\alpha_{i}\in K$, $s_{i}\in M$ and $x\in K^{n}$.

We now consider what the ring $KM$ looks like. Throughout $M = X^{*} G$ is a left Rees monoid and $R = KM$. We will view $R$ as a left $R$-module.

\begin{lemma}
$R$ is not artinian.
\end{lemma}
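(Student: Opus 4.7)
The plan is to exhibit an infinite strictly descending chain of left ideals of $R = KM$, using the length function on the left Rees monoid.

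Recall from the excerpt that every left Rees monoid $M$ admits a length function $\lambda : M \to \mathbb{N}$ with $\lambda^{-1}(0) = G(M)$, and that $\lambda$ can be chosen so that $\lambda(x) = 1$ for every $x \in X$. Since $M$ is assumed not to be a group, $X$ is nonempty; pick any $x \in X$. In the free monoid $X^{\ast}$ the powers $x, x^{2}, x^{3}, \dots$ are all distinct and satisfy $\lambda(x^{n}) = n$. Now consider the chain of principal left ideals of $R$
$$
Rx \supseteq Rx^{2} \supseteq Rx^{3} \supseteq \cdots,
$$
where the containment $Rx^{n+1} \subseteq Rx^{n}$ holds because $x^{n+1} = x \cdot x^{n}$.

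The key step is to verify that each containment is strict, i.e.\ $x^{n} \notin Rx^{n+1}$. Suppose for contradiction that $x^{n} = \sum_{i=1}^{k}\alpha_{i}\, s_{i} x^{n+1}$ for some $\alpha_{i} \in K$ and $s_{i} \in M$. Each product $s_{i}x^{n+1}$ is a single element of $M$ of length $\lambda(s_{i}) + n + 1 \geq n+1 > n = \lambda(x^{n})$, since $\lambda$ is a monoid homomorphism to $\mathbb{N}$. Thus none of the monoid elements $s_{i}x^{n+1}$ equals $x^{n}$. But elements of $M$ form a $K$-basis for $R = KM$, so when the right-hand side is expanded and collected as a $K$-linear combination of distinct elements of $M$, the basis element $x^{n}$ does not appear with nonzero coefficient, contradicting the assumption that the sum equals $x^{n}$.

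Consequently $Rx \supsetneq Rx^{2} \supsetneq Rx^{3} \supsetneq \cdots$ is an infinite strictly descending chain of left ideals in $R$, and so $R$ is not (left) artinian. I do not anticipate any serious obstacle here: the only subtlety is the appeal to the $K$-basis property of $M$ in $KM$ together with the fact that the length function rules out any monoid-level equality between elements of lengths $n$ and elements of length $\geq n+1$.
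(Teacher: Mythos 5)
Your proof is correct and follows essentially the same route as the paper, which simply exhibits the chain $Rx \supseteq Rx^{2} \supseteq Rx^{3} \supseteq \cdots$ and asserts it is infinite. Your length-function and $K$-basis argument supplies the strictness of the inclusions that the paper leaves implicit.
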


\begin{proof}
Consider the chain of left ideals $Rx\supseteq Rx^{2} \supseteq Rx^{3} \supseteq ...$. Then this chain is infinite and so $R$ does not satisfy the descending chain condition.
\end{proof}

\begin{lemma}
If $|X|\geq 2$ then $R$ is not noetherian.
\end{lemma}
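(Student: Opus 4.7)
The plan is to construct an infinite strictly ascending chain of right ideals in $R$, which establishes failure of the ascending chain condition (and hence $R$ is not Noetherian, in the symmetric sense at least). Since $|X|\geq 2$ we may fix two distinct letters $x,y\in X$. For each $n\geq 0$ put $z_n = y^n x\in X^{\ast}\subseteq M$ and consider
$$J_n \;=\; \sum_{i=0}^{n} z_i\cdot R.$$
The assertion to prove is $J_n\subsetneq J_{n+1}$, equivalently $z_{n+1}\notin J_n$.

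The key observation is that $z_i=y^i x\in X^{\ast}$ has trivial group component, so for any $m=wg\in M$ written in Zappa-Sz\'{e}p normal form the product $z_i\cdot m = (y^i xw)\cdot g$ is itself already in normal form --- no application of the action of $G$ on $X^{\ast}$ is required. Consequently $z_i\cdot M$ is precisely the set of elements of $M$ whose $X^{\ast}$-component has the string $y^i x$ as a prefix. Because $M$ is a $K$-basis of $R$, the question of whether $y^{n+1}x\in J_n$ reduces, by linear independence, to whether $y^{n+1}x$ has $y^i x$ as a prefix for some $i\leq n$. But for each $i\leq n$, the character at position $i$ of $y^{n+1}x$ is $y$, while the character at position $i$ of $y^i x$ is $x$, so no such prefix relation can hold, completing the argument.

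The main difficulty, should one prefer a chain of \emph{left} ideals in line with the preceding ``not artinian'' lemma, is that the naive analogue $\sum_{i} R\cdot xy^i$ yields strict inclusions only when $x$ and $y$ lie in distinct $G$-orbits on $X$: a parallel normal-form analysis shows that a term $w g\cdot xy^i$ can equal $xy^{n+1}$ only when there is $g\in G$ with $g\cdot x = y$. In the transitive case (as in the adding machine) this can happen, and together with compatible restrictions the ``extra'' letter is absorbed and the chain collapses. Repairing the left-sided argument in this transitive case appears to need additional structure --- for example selecting $w\in X^{\ast}$ whose restriction $\phi_w\colon G_w\to G$ has infinite-index image and building the chain from distinct $\phi_w(G_w)$-cosets, or else changing basis to a non-transitive setup. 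For the statement as written, however, the right-sided chain above is clean and entirely insensitive to the action of $G$.
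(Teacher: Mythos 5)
Your right-ideal argument is correct and complete: since $z_i=y^ix$ has trivial group component, left multiplication by $z_i$ simply prepends the string $y^ix$, so $z_iM$ consists exactly of the elements of $M$ whose $X^{\ast}$-component has $y^ix$ as a prefix; the prefix comparison together with linear independence of $M$ in $KM$ then gives $z_{n+1}\notin J_n$, so your chain of right ideals is strictly ascending. This is a genuinely different route from the paper's, which takes the chain of \emph{left} ideals $J_k$ generated by $\{yx,yx^2,\ldots,yx^k\}$ and does not verify strictness at all. Your cautionary remark about left-sided chains is in fact exactly where the paper's chain can break: in the dyadic adding machine one has $a^{-1}\cdot y=x$ with $a^{-1}|_{y}=1$, hence $a^{-1}\cdot(yx^i)=x^{i+1}$ and $a^{-1}|_{yx^i}=1$, so $yx^{k+1}=(yx^{k-i}a^{-1})\cdot yx^i$ already lies in the left ideal generated by $yx^i$ and that chain stabilises. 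What your version buys is robustness: a right-sided chain built from elements of $X^{\ast}$ never invokes the action or restriction of $G$, so it works uniformly for every left Rees monoid with $|X|\geq 2$.

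The one caveat is sidedness. The paper has just announced that it views $R$ as a left $R$-module, and the preceding "not artinian" lemma uses left ideals, so "noetherian" there is most naturally read as the ascending chain condition on left ideals; your argument disproves right noetherianity, and hence disproves "noetherian" only under the two-sided convention. You identify correctly that the naive left-sided analogue genuinely fails when $G$ acts transitively on $X$ (and when the maps $\rho_x$ are bijective, as in the adding machine, it is unclear the generators can be repaired at all), so if the left-sided statement is what is intended, an additional argument along the lines of your orbit or coset refinement would be needed --- and the paper itself does not supply one.
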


\begin{proof}
Let $J_{k}$ be the left ideal generated by the set $\left\{yx, yx^{2}, \ldots, yx^{k} \right\}$. Then $J_{k}\subseteq J_{k+1}$ for $k\geq 1$, and so $R$ does not satisfy the ascending chain condition.
\end{proof}

\begin{comment}
\begin{corollary}
$R$ has left and right ideals which are not finitely generated. 
\end{corollary}

Question: Are there finitely generated 2-sided ideals?

\begin{lemma}
$R$ has no non-zero nilpotent or nil left, right or 2-sided ideals
\end{lemma}

\begin{lemma}
$R$ satisfies Schur's lemma
\end{lemma}

\end{comment}

We can give $R$ a grading by letting $R_{k}$ be the set of elements of $R$ with maximal length of a string from $X^{\ast}$ being $k$.

For a ring $R$ let the \emph{Jacobson radical} be defined as follows
$$J(R) = \bigcap_{I}{I},$$
where the intersection is taken over all maximal proper right ideals $I$ of $R$. If $J(R) = 0$, then $R$ is said to be \emph{semisimple}.
By Lemma I.1.3 of \cite{Assem} and the fact that an element $x\in X$ is not invertible we have

\begin{lemma}
If $|X| \geq 1$, $G$ arbitrary then $R$ is semisimple.
\end{lemma}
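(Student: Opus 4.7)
The plan is to invoke Lemma I.1.3 of \cite{Assem}, which gives the standard characterization $r \in J(R)$ iff $1-ar$ is right invertible in $R$ for every $a \in R$. So to prove $J(R)=0$, for each nonzero $r$ I must exhibit an element $a \in R$ such that $1-ar$ has no right inverse. The natural candidate is $a = x$ for some $x \in X$, since the hypothesis $|X|\geq 1$ gives us a distinguished non-invertible element of $M$.

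First I would exploit the grading $R = \bigoplus_{k \geq 0} R_k$ induced by the length function $\lambda: M \to \mathbb{N}$. Given $0 \neq r \in R$, let $n$ be the top degree of $r$ and $r_n \in R_n$ its leading component. Pick any $x \in X$; because $x$ is not invertible, $\lambda(xm) = \lambda(m)+1$ for every $m \in M$, and left cancellativity of $M$ makes left multiplication by $x$ an injection on the monoid basis, so $xr_n \in R_{n+1}$ is nonzero and $xr$ has top degree exactly $n+1$. I claim $1-xr$ cannot be right invertible in $R$. If $(1-xr)s = 1$ for some $s \in R$, then $s = 1 + (xr)s$. Letting $d$ be the top degree of $s \neq 0$ with leading part $s_d$, the right-hand side has top-degree component $(xr_n)(s_d)$ living in $R_{n+1+d}$, which is nonzero by the no-cancellation argument below. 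The equation $s = 1 + (xr)s$ then forces $d = n+1+d$, which is impossible. Hence $1-xr$ has no right inverse. Since $J(R)$ is a two-sided ideal, $r \in J(R)$ would force $xr \in J(R)$ and contradict the Assem characterization, so $r \notin J(R)$, proving $J(R) = 0$.

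The main obstacle is verifying that $(xr_n)(s_d) \in R_{n+1+d}$ is nonzero. This reduces to showing that when the monomials $xm_i \cdot h_j$ (with $m_i$ in the support of $r_n$ and $h_j$ in the support of $s_d$) are rewritten in the normal form $M = X^* G$ using the Zappa-Sz\'ep relations $gw = (g \cdot w)(g|_w)$, distinct pairs $(i,j)$ produce distinct elements of $M$. Writing $m_i = w_i g_i$ and $h_j = u_j k_j$, the product $xm_i h_j$ has $X^{\ast}$-part $xw_i(g_i \cdot u_j)$ of length $n+1+d$; uniqueness of the $X^{\ast}G$ decomposition combined with left cancellativity in the free monoid $X^{\ast}$, together with the length-preserving action of $G$, is meant to force distinctness of these basis elements. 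Pinning down this bookkeeping — keeping track of how different $g_i$'s in $r_n$ interact with different $u_j$'s in $s_d$ through the action and restriction — is the technical heart of the argument, and is where all the structure of left Rees monoids developed in Section 2.2 is finally used.
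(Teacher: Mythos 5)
There is a genuine gap, and it sits exactly where you placed it: the claim that distinct pairs $(i,j)$ from the supports of $r_n$ and $s_d$ give distinct monomials $xm_ih_j$, so that $(xr_n)(s_d)\neq 0$. This is false, because the group of units lets supports recombine: if $m_i=w_ig_i$ and $m_{i'}=w_ig_ig$ both occur in $r_n$, and $h_j$, $g^{-1}h_j$ both occur in $s_d$, the products coincide and the coefficients can cancel. Concretely, take $X=\{t\}$ and $G=C_2=\{1,g\}$ with trivial action and restriction, so that $M\cong\{t\}^{\ast}\times C_2$ is a left Rees monoid and $R\cong (KC_2)[t]$. With $r_n=t^n(1-g)$ and $s_d=t^d(1+g)$, both nonzero and homogeneous, one has $(tr_n)s_d=t^{n+1+d}(1-g)(1+g)=t^{n+1+d}(1-g^2)=0$ over \emph{any} field. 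So a product of nonzero homogeneous elements of $R$ can vanish whenever $R_0=KG$ has zero divisors, the equation $s=1+(xr)s$ does not force the top degree of $s$ to grow, and the degree-counting contradiction evaporates.

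Moreover the gap cannot be repaired in the stated generality, because the lemma itself fails for arbitrary $G$ and $K$: with the same $M$ and $K=\mathbb{F}_2$, the element $\epsilon=1+g$ is central and satisfies $\epsilon^2=0$, so $(1-a\epsilon)(1+a\epsilon)=1$ for every $a\in R$; by the very characterization of the radical you invoke (Lemma I.1.3 of Assem et al.), $0\neq\epsilon\in J(R)$, and in particular your candidate $1-xr$ with $r=\epsilon$ is invertible, $(1-t\epsilon)(1+t\epsilon)=1$. For comparison, the paper offers no proof beyond an appeal to that same lemma together with the non-invertibility of $x\in X$, and that appeal is subject to exactly the same objection. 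Any correct version needs an extra hypothesis ruling out nonzero nil ideals in $KG$ (e.g.\ restrictions on $\mathrm{char}\,K$ relative to torsion in $G$), and even then the argument should proceed not by survival of leading terms but by something like the fact that the Jacobson radical of an $\mathbb{N}$-graded ring is a graded ideal, reducing the question to the degree-zero part $KG$.
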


\begin{comment}
Again, from now on $R = KM$, where $M = X^{\ast} G$.

Firstly, we assume $X$ is finite because otherwise $X^{\ast}$ is very complicated and we assume $G$ is finite because of the following lemma:

\begin{lemma}
(Maschke) If $G$ is finite and $|X|=0$, then $R$ is semisimple.
\end{lemma}

In fact by Lemma I.1.3 of \cite{Assem} and the fact that an element $x\in X$ is far from invertible we have

\begin{lemma}
If $|X| > 1$, $G$ arbitrary then $R$ is semisimple.
\end{lemma}

\end{comment}

Now consider $KX^{n}$ as a $k^{n}$-dimensional vector space, viewed as the $n$th tensor power of $KX$. We will now construct an embedding of $KG$ into $M_{k^{n}}(K)$, which will act in a nice way on $KX^{n}$.

Let $s_{1}:KG\rightarrow M_{k}(K)$ be defined as follows. For each $g\in G$, $1\leq i \leq k$, if $g\cdot x_{i} = x_{j}$, then there will be a $1$ in the $(j,i)$th entry of the matrix $s_{1}(g)$. All other entries will be $0$. We see that $s_{1}(g)$ is a doubly stochastic matrix with a single $1$ in every row and column. Now define 
$$s_{1}(\sum_{i=1}^{|G|}{\alpha_{i}g_{i}}) = \sum_{i=1}^{|G|}{\alpha_{i}s_{1}(g_{i})}.$$

We will now describe inductively $s_{k}:KG\rightarrow M_{k^{n}}(K)$ for $k>1$. For $g\in G$, let $s_{k}(g)$ be as $s_{1}(g)$, except that the $1$ in the $(j,i)$th position is replaced by a $k^{n-1}\times k^{n-1}$ matrix $A_{j}$, and the $0$'s are replaced by blocks of $0$'s. Here 
$$A_{j} = s_{k-1}(g|_{x_{i}}).$$
Similarly,
$$s_{k}(\sum_{i=1}^{|G|}{\alpha_{i}g_{i}}) = \sum_{i=1}^{|G|}{\alpha_{i}s_{k}(g_{i})}.$$

For example, let us consider the monoid $M$ of similarity transformations of the Sierpinski gasket. In order to clarify the construction, we will change the basis from the one used above. We will define $L$ to be the map which rotates the gasket by $2\pi/3$ radians and then maps to the bottom left hand corner. $R$ and $T$ will still map the gasket to the bottom right and top parts and $\rho$ and $\sigma$ will remain unchanged. So, our new relations will be

$$\rho T = R \rho, \quad \rho L = T\rho^{2}, \quad \rho R = L$$
and
$$\sigma T = T\sigma, \quad \sigma L = R\rho^{2}\sigma, \quad \sigma R = L\rho^{2}\sigma.$$

\begin{comment}
$X=\left\{L^{\prime},R,T\right\}$, $G = D_{3}$ and let $M = X^{*} \bowtie G$ be  Here we have replaced $L$ by $L^{\prime} = L\rho$, so that $\rho|_{x} \neq \rho$ for all $x\in X$ (I've just done that to make the construction clearer). So,

$$\rho L^{'} = T \rho^{2}.$$  
\end{comment}

Then we have

\begin{displaymath}
     s_{1}(\rho) =
      \begin{pmatrix}
        0 & 1 & 0 \\
        0 & 0 & 1 \\
        1 & 0 & 0
      \end{pmatrix},
\end{displaymath}
\begin{displaymath}
     s_{1}(\sigma) =
      \begin{pmatrix}
        0 & 1 & 0 \\
        1 & 0 & 0 \\
        0 & 0 & 1
      \end{pmatrix},
\end{displaymath}
\begin{displaymath}
     s_{2}(\rho) =
      \begin{pmatrix}
        0 & 0 & 0 & 1 & 0 & 0 & 0 & 0 & 0 \\
				0 & 0 & 0 & 0 & 1 & 0 & 0 & 0 & 0 \\
				0 & 0 & 0 & 0 & 0 & 1 & 0 & 0 & 0 \\
				0 & 0 & 0 & 0 & 0 & 0 & 0 & 1 & 0 \\
				0 & 0 & 0 & 0 & 0 & 0 & 0 & 0 & 1 \\
				0 & 0 & 0 & 0 & 0 & 0 & 1 & 0 & 0 \\
				0 & 0 & 1 & 0 & 0 & 0 & 0 & 0 & 0 \\
				1 & 0 & 0 & 0 & 0 & 0 & 0 & 0 & 0 \\
				0 & 1 & 0 & 0 & 0 & 0 & 0 & 0 & 0 
      \end{pmatrix}
\end{displaymath}
and
\begin{displaymath}
     s_{2}(\sigma) =
      \begin{pmatrix}
        0 & 0 & 0 & 0 & 0 & 1 & 0 & 0 & 0 \\
				0 & 0 & 0 & 0 & 1 & 0 & 0 & 0 & 0 \\
				0 & 0 & 0 & 1 & 0 & 0 & 0 & 0 & 0 \\
				0 & 0 & 1 & 0 & 0 & 0 & 0 & 0 & 0 \\
				0 & 1 & 0 & 0 & 0 & 0 & 0 & 0 & 0 \\
				1 & 0 & 0 & 0 & 0 & 0 & 0 & 0 & 0 \\
				0 & 0 & 0 & 0 & 0 & 0 & 0 & 1 & 0 \\
				0 & 0 & 0 & 0 & 0 & 0 & 1 & 0 & 0 \\
				0 & 0 & 0 & 0 & 0 & 0 & 0 & 0 & 1 
      \end{pmatrix}.
\end{displaymath}

\chapter{Left Rees Categories}

\section{Outline of chapter}

The aim of this chapter is to generalise ideas from the theory of left Rees monoids and self-similar group actions to the context of left Rees categories and self-similar groupoid actions. The hope is that by generalising these ideas, one can apply the algebraic theory of self-similarity more widely. 
\begin{comment}
It has been known for some time that there exist connections between self-similar groups, automaton presented groups and automorphism groups of trees.
In our earlier work, we proved further connections between the structures of self-similar groups, monoid HNN-extensions and left Rees monoids.
In this paper, we will generalise each of the above concepts from monoids / groups to categories / groupoids. Self-similar groups become self-similar groupoids, automaton presented groups become automaton presented groupoids, automorphism groups of regular rooted trees become path automorphism groupoids of directed graphs, monoid HNN-extensions become category HNN-extensions and left Rees monoids become left Rees categories.
\end{comment}
In Section 3.2 we will describe how the correspondence between left Rees monoids and self-similar group actions outlined in Section 2.2 can naturally be generalised to a correspondence between left Rees categories and self-similar groupoid actions. This can essentially be deduced from the work of \cite{JonesLawsonGraph} and \cite{LawsonSubshifts}. We will here flesh out the details for the sake of completeness. 
%In Section 3.3 we will discover when Zappa-Sz\'ep products of free categories and self-similar groupoids can be extended to Zappa-Sz\'ep products of free groupoids and self-similar groupoids. 
We will show in Section 3.3 that every left Rees category is the category HNN-extension of a groupoid. From this we will deduce several facts relating self-similar groupoid actions to Bass-Serre theory, in particular showing that fundamental groupoids of graphs of groups are precisely the groupoids of fractions of Rees categories with totally disconnected groupoids of invertible elements.
In Section 3.4 we will encounter the notion of a path automorphism groupoid of a graph. This is a direct generalisation of the automorphism group of a regular rooted tree, as the vertices of a regular $n$-rooted tree can be viewed as paths in a graph with one vertex and $n$ edges. We will see that certain self-similar groupoid actions can be described in terms of a functor into a path automorphism groupoid. In Section 3.5 we will consider how one might define wreath products in the context of groupoid theory and we will see how one might generalise the wreath recursion to the context of self-similar groupoid actions.
We will see in Section 3.6 a method of obtaining self-similar groupoid actions from automata. 
In Section 3.7 an indication will be given as to how one might generalise the ideas of self-similar group actions arising from iterated function systems to self-similar groupoid actions arising from graph iterated function systems.
We will investigate the representation theory of left Rees categories in Section 3.8 and will show a connection with the representation theory of finite-dimensional algebras when the left Rees category is finite.
Finally, in Section 3.9 we will see how one can associate an inverse semigroup to a left Rees category in a natural way. This will allow us to connect this work with the work of Nivat and Perrot, and will be useful in tackling examples in Chapter 4.

\section{Left Rees categories and self-similar groupoid actions}

We begin by giving the background definitions required for this chapter. As stated in the introduction all categories in this chapter will be assumed to be small. 

A {\em principal right ideal} in a category $C$ is a subset of the form $xC$ where $x \in C$. 
Analogously to the case of monoids, a category $C$ will be said to be {\em right rigid} if $xC \cap yC \neq \emptyset$ implies that $xC \subseteq yC$ or $yC \subseteq xC$.
We will then use the term {\em left Rees category} to describe a left cancellative, right rigid small category in which each principal right ideal is properly contained in only finitely many distinct principal right ideals. 
A left Rees monoid is then precisely a left Rees category with a single object. 
A \emph{Leech category} is a left cancellative small category such that any pair of arrows with a common range that can be completed to a commutative square have a pullback and so left Rees categories are examples of Leech categories. 
Analogously, a \emph{right Rees category} is a right cancellative, left rigid category in which each principal left ideal is properly contained in only finitely many distinct principal left ideals. A category is \emph{Rees} if it is both a left and right Rees category.

An element $x$ in a category $C$ is said to be {\em indecomposable} iff $x = yz$ implies that either $y$ or $z$ is invertible.
A principal right ideal $xC$ is said to be {\em submaximal} if $xC \neq \ran(x)C$ and there are no proper principal right ideals between $xC$ and $\ran(x)C$.

We will now summarise some results about left cancellative categories whose proofs can be found in \cite{JonesLawsonGraph}.

\begin{lemma} Let $C$ be a left cancellative category. 
\label{proplcc}
\begin{enumerate}
\item If $a = xy$ is an identity then $x$ is invertible with inverse $y$.
\item We have that $xC = yC$ iff $x = yg$ where $g$ is an invertible element.
\item $xC = aC$ for some identity $a\in C_{0}$ iff $x$ is invertible.
\item The maximal principal right ideals are those generated by identities.
\item The non-invertible element $x$ is indecomposable iff $xC$ is submaximal.
\item The set of invertible elements is trivial iff for all identities $a\in C_{0}$ we have that $a = xy$ implies that either $x$ or $y$ is an identity.
\end{enumerate}
\end{lemma}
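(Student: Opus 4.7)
My plan is to prove the six parts in order, since each later part leans on the earlier ones, with part (1) being the engine that drives everything else.

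For part (1), the key identity is $a = xy$ with $a$ an identity, which forces $\ran(x) = \dom(y) = a$ and $\ran(y) = \dom(x)$. Then from $xy = a = \ran(x)$ I would compute $xyx = \ran(x)\,x = x = x\,\dom(x)$, and apply left cancellation of $x$ to conclude $yx = \dom(x)$. Combined with $xy = a = \ran(x)$, this exhibits $y$ as the (necessarily unique) inverse of $x$. The only thing to be careful about is matching up sources and targets so the composites are defined; after that, left cancellation does all the work.

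For part (2), the backward direction is immediate: if $x = yg$ with $g$ invertible then $xC \subseteq yC$ because $x \in yC$, and $y = xg^{-1} \in xC$ gives the reverse inclusion. For the forward direction, $xC = yC$ forces $x = yg$ and $y = xh$ for some $g,h$; substituting gives $x = xhg$ and $y = ygh$, and two applications of left cancellation yield $hg = \dom(x)$ and $gh = \dom(y)$, so $g$ is invertible. Part (3) is then a quick corollary: if $xC = aC$ for an identity $a$, then by (2), $x = ag$ with $g$ invertible, and since composability forces $\ran(g) = a$, we get $x = g$; conversely if $x$ is invertible, then $\ran(x) \in xC$ and $x \in \ran(x)C$ by obvious composites, so $xC = \ran(x)C$. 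Part (4) then follows: if $aC \subseteq xC$ with $a$ an identity, then $a = xy$ for some $y$, so by (1) $x$ is invertible, so by (3) $xC = \ran(x)C$, and necessarily $\ran(x) = a$; conversely, if $xC$ is maximal, the inclusion $xC \subseteq \ran(x)C$ forces equality and then (3) identifies $x$ as invertible, hence $xC$ is generated by the identity $\ran(x)$.

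For part (5), I would analyze the interval of principal right ideals between $xC$ and $\ran(x)C$. If $x$ is indecomposable and non-invertible and $xC \subseteq yC \subseteq \ran(x)C$, then $x \in yC$ gives $x = yz$ and $y \in \ran(x)C$ forces $\ran(y) = \ran(x)$; indecomposability says either $y$ is invertible (in which case (3) yields $yC = \ran(y)C = \ran(x)C$) or $z$ is invertible (in which case (2) yields $xC = yC$), so $xC$ is submaximal. The converse is essentially the same argument run backwards: given $x = yz$, the chain $xC \subseteq yC \subseteq \ran(x)C$ plus submaximality collapses one of the inclusions, and (2) or (3) translates that collapse into invertibility of $z$ or $y$ respectively. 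Finally, part (6) drops straight out of part (1): the forward direction observes that if $a = xy$ is an identity then (1) makes $x$ invertible, hence an identity by hypothesis, and $y = x^{-1}$ is likewise invertible hence an identity; the backward direction notes that any invertible $g$ satisfies $\ran(g) = gg^{-1}$, so applying the hypothesis at the identity $\ran(g)$ forces $g$ itself to be an identity.

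The main obstacle I anticipate is purely bookkeeping — keeping track of which composites are defined and which identity plays the role of domain or codomain at each step — rather than any substantive difficulty. The moment I invoke left cancellation I need a defined composite on both sides, so in each part I would begin by writing down the types $\dom$, $\ran$ of every arrow under consideration before manipulating equalities.
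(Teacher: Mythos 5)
Your proposal is correct and follows essentially the same route as the paper's argument (which is the standard one from the cited source): left cancellation applied to $xyx=x$, etc., drives part (1), parts (2)--(4) are the same cancellation/substitution arguments, and (5)--(6) reduce to (1)--(3) exactly as in the paper. The only details you gloss are trivial: in (5) you should note that $xC\neq\ran(x)C$ follows from (3) since $x$ is non-invertible, and in (6) that if $g^{-1}$ is an identity then so is $g$.
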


One is then (\cite{JonesLawsonGraph}) led to the following result which is a generalisation of a similar result for free monoids.

\begin{proposition} 
\label{freetriv}
A category is free if and only if it is a left Rees category having a trivial groupoid of invertible elements.
\end{proposition}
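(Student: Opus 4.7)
The plan is to prove both directions by directly verifying the axioms (for $\Rightarrow$) and by explicitly constructing a generating graph (for $\Leftarrow$).

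For the forward direction, let $C = \mathcal{G}^{\ast}$ be a free category on a graph $\mathcal{G}$. Left cancellation is immediate because concatenation of routes is cancellative. For right rigidity, if $x\mathcal{G}^{\ast}\cap y\mathcal{G}^{\ast}\neq\emptyset$ then some route factors through both $x$ and $y$, which forces one of $x,y$ to be a prefix of the other, so the principal right ideals are comparable. The ascending chain condition follows because $x\mathcal{G}^{\ast}\subseteq z\mathcal{G}^{\ast}$ means $z$ is a prefix of $x$, and a route of length $n$ has exactly $n+1$ prefixes. Triviality of invertibles: if $x\in \mathcal{G}^{\ast}$ has an inverse $y$ with $xy = \ran(x)$, then a length argument (concatenation adds lengths) forces both $x$ and $y$ to be identities.

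For the converse, let $C$ be a left Rees category with trivial groupoid of invertible elements. I would take $\mathcal{G}$ to be the graph with $\mathcal{G}_{0} = C_{0}$ and $\mathcal{G}_{1}$ equal to the set of non-identity indecomposable arrows of $C$, with $\dom$ and $\ran$ inherited from $C$. By Lemma~\ref{proplcc}(5), $\mathcal{G}_{1}$ is precisely the set of generators of submaximal principal right ideals, and by Lemma~\ref{proplcc}(2) combined with triviality of $G(C)$, each principal right ideal has a unique generator. Define a functor $\phi\colon \mathcal{G}^{\ast}\to C$ by sending each edge to itself and extending by composition.

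To prove $\phi$ is surjective I would induct on the length $n$ of a maximal chain of proper principal right ideals $xC\subsetneq x_{n-1}C\subsetneq\cdots\subsetneq x_{1}C\subsetneq \ran(x)C$, which is finite by the left Rees axiom. Each inclusion $x_{i+1}C\subsetneq x_{i}C$ is submaximal in $x_{i}C$, which after applying left cancellation yields an expression of $x$ as a product of $n$ indecomposables, producing a preimage in $\mathcal{G}^{\ast}$. Identities are handled trivially and invertibles are identities by hypothesis.

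The main obstacle is injectivity of $\phi$, that is, uniqueness of the factorisation into indecomposables. The key step is the following: suppose $x_{1}\cdots x_{n} = y_{1}\cdots y_{m}$ with each $x_{i},y_{j}\in \mathcal{G}_{1}$. Then $x_{1}C\cap y_{1}C\neq \emptyset$, so by right rigidity we may assume $x_{1}C\subseteq y_{1}C$, whence $x_{1} = y_{1}z$ for some $z\in C$. Since $x_{1}$ is indecomposable, either $y_{1}$ or $z$ is invertible; as invertibles are identities and $y_{1}\in \mathcal{G}_{1}$ is not an identity, $z$ must be an identity, giving $x_{1}=y_{1}$. Left cancellation reduces to $x_{2}\cdots x_{n} = y_{2}\cdots y_{m}$, and iterating delivers $n=m$ and $x_{i}=y_{i}$ throughout (if $n<m$, the residual equation would make $\dom(x_{n})$ a product of non-identity indecomposables, contradicting Lemma~\ref{proplcc}(1) and triviality of $G(C)$). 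Thus $\phi$ is a bijective functor preserving identities, hence an isomorphism, so $C\cong \mathcal{G}^{\ast}$ is free.
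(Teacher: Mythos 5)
Your proof is correct and follows essentially the same route as the paper's: the forward direction by direct verification, and the converse by taking the (non-identity) indecomposables --- equivalently, generators of submaximal principal right ideals, unique since $G(C)$ is trivial --- as the edges of a graph, obtaining a factorisation of each arrow from the finiteness of chains of principal right ideals, and establishing uniqueness via right rigidity, indecomposability and triviality of invertibles. The only cosmetic differences are that you phrase existence as an induction on an unrefinable chain rather than iterated peeling, and uniqueness via indecomposability of $x_{1}$ rather than submaximality of $x_{1}C$, which are equivalent by Lemma~\ref{proplcc}.
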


It follows from Lemma 3.6 of \cite{LawsonSubshifts} that a left Rees category which is right cancellative is in fact a Rees category.

We shall now describe the structure of arbitrary left Rees categories in terms of free categories. One can view this as a generalisation of the connection between self-similar group actions and left Rees monoids.
Let $G$ be a groupoid with set of identities $G_{0}$ and let $C$ be a category with set of identities $C_{0}$.
We shall suppose that there is a bijection between $G_{0}$ and $C_{0}$ and, to simplify notation, we shall identify these two sets.
Denote by $G \ast C$ the set of pairs
$(g,x)$ such that $g^{-1}g = \ran (x)$. 
We shall picture such pairs as follows:
$$\spreaddiagramrows{2pc}
\spreaddiagramcolumns{2pc}
\diagram
&
&
\\
&\uto^{g}
&\lto^{x} 
\enddiagram$$
We suppose that there is a function 
$$G \ast C \rightarrow C \text{ denoted by } (g,x) \mapsto g \cdot x$$
which gives a left action of $G$ on $C$
and a function
$$G \ast C \rightarrow G \text{ denoted by } (g,x) \mapsto g|_{x}$$
which gives a right action of $C$ on $G$
such that
\begin{description}
\item[{\rm (C1)}] $\ran (g \cdot x) = gg^{-1}$.
\item[{\rm (C2)}] $\dom (g \cdot x) = g|_{x}(g|_{x})^{-1}$.
\item[{\rm (C3)}] $\dom (x) = (g|_{x})^{-1}g|_{x}$.
\end{description}
This information is summarised by the following diagram
$$\spreaddiagramrows{2pc}
\spreaddiagramcolumns{2pc}
\diagram
&
&\lto_{g \cdot x}
\\
&\uto^{g}
&\lto^{x} \uto_{g|_{x}}
\enddiagram$$
We also require that the following axioms be satisfied:
\begin{description}
\item[{\rm (SS1)}] $\ran (x) \cdot x = x$.

\item[{\rm (SS2)}] If $gh$ is defined and $h^{-1}h = \ran (x)$ then $(gh) \cdot x = g \cdot (h \cdot x)$.

\item[{\rm (SS3)}] $gg^{-1} = g \cdot g^{-1}g$.

\item[{\rm (SS4)}] $\ran (x)|_{x} = \dom (x)$.

\item[{\rm (SS5)}] $g|_{g^{-1}g} = g$.

\item[{\rm (SS6)}] If $xy$ is defined and $g^{-1}g = \ran (x)$ then $g|_{xy} = (g|_{x})|_{y}$.

\item[{\rm (SS7)}] If $gh$ is defined and $h^{-1}h = \ran (x)$ then $(gh)|_{x} = g|_{h \cdot x} h|_{x}$.

\item[{\rm (SS8)}] If $xy$ is defined and $g^{-1}g = \ran (x)$ then $g \cdot (xy) = (g \cdot x)(g|_{x} \cdot y)$.
\end{description}
If there are maps $g \cdot x$ and $g|_{x}$ satisfying (C1)--(C3) and (SS1)--(SS8) then we say that there is a {\em self-similar action of $G$ on $C$.}

Put 
$$C \bowtie G = \{(x,g) \in C \times G \colon \: \dom (x) = gg^{-1}\}.$$
We represent $(x,g)$ by the diagram
$$\spreaddiagramrows{2pc}
\spreaddiagramcolumns{2pc}
\diagram
&
&\lto_{x}
\\
&
&\uto_{g}
&
\enddiagram$$
Given elements $(x,g)$ and $(y,h)$ satisfying
$g^{-1}g = \ran (y)$
we then have the following diagram
$$\spreaddiagramrows{2pc}
\spreaddiagramcolumns{2pc}
\diagram
&
&\lto_{x}
&
\\
&
&\uto_{g}
&\lto_{y}
\\
&
&
&\uto_{h}
\enddiagram$$
Completing the square enables us to define a partial binary operation on $C \bowtie G$ by 
$$(x,g)(y,h) = (x(g \cdot y), g|_{y}h).$$

The following is now a straightforward reinterpretation of Theorem 4.2 of \cite{LawsonSubshifts}.

\begin{proposition}
\label{propzappa}
Let $G$ be a groupoid having a self-similar action on the category $C$.
\begin{enumerate}

\item $C \bowtie G$ is a category.

\item $C \bowtie G$ contains copies $C^{\prime}$ and $G^{\prime}$ of $C$ and $G$ respectively
such that each element of $C \bowtie G$ can be written as a product
of a unique element from $C^{\prime}$ followed by a unique element from $G^{\prime}$.

\item If $C$ has trivial invertible elements then the set of invertible elements of $C \bowtie G$ is $G'$.

\item If $C$ is left cancellative then so too is $C \bowtie G$.

\item If $C$ is left cancellative and right rigid then so too is $C \bowtie G$.

\item If $C$ is a left Rees category then so too is $C \bowtie G$.

\end{enumerate}
\end{proposition}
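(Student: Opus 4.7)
The plan is to handle the six parts in order, with most of the work going into (1), (5) and (6); the remaining items will then follow more or less formally from the decomposition in (2).

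For (1), I would first identify the identities of $C\bowtie G$: the pair $(e,e)$ for each $e\in C_0 = G_0$ is an identity, since for any $(x,g)$ with $\ran(x)=e$ we have $(e,e)(x,g) = (e(e\cdot x), e|_x g) = (x,g)$ using (SS1) and (SS4), and similarly on the right using (SS5) together with the observation that $\dom(x)\cdot y$ behaves trivially. Domains and codomains in $C\bowtie G$ are then given by $\dom(x,g)=g^{-1}g$ and $\ran(x,g)=\ran(x)$. For associativity, I would expand $((x,g)(y,h))(z,k)$ and $(x,g)((y,h)(z,k))$ and check equality component by component: the first components match using (SS8) applied to $g\cdot(y(h\cdot z))$, the second components match using (SS6) and (SS7) applied to $g|_{y(h\cdot z)} = (g|_y)|_{h\cdot z}$ combined with $(g|_y h)|_z = g|_y|_{h\cdot z} h|_z$. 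This is the most axiom-heavy step, but it is just a careful bookkeeping exercise with (SS1)--(SS8) and (C1)--(C3).

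For (2), define $C' = \{(x,\dom(x)) : x\in C\}$ and $G' = \{(\ran(g),g) : g\in G\}$. Using (SS1), (SS4), (SS5) and the fact that $gg^{-1}\cdot x = x$ and $gg^{-1}|_x = \dom(x)$ when things are defined, one checks that $C'$ and $G'$ are subcategories isomorphic respectively to $C$ and $G$. The factorisation $(x,g) = (x,\dom(x))(\ran(g),g)$ holds because $\dom(x) = gg^{-1} = \ran(g)$ by the definition of $C\bowtie G$, and $\dom(x)\cdot\ran(g) = \ran(g)$ together with $\dom(x)|_{\ran(g)} = g\cdot\dom(g) = g$-type identities (using the axioms applied to identities) give the product $(x,g)$; uniqueness is immediate from the form of the two factors. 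For (3), any invertible element $(x,g)$ of $C\bowtie G$ decomposes as $(x,\dom(x))(\ran(g),g)$ with $(\ran(g),g)$ already invertible, so $(x,\dom(x))$ must also be invertible, forcing $x$ to be an invertible element of $C$; if $C$ has only trivial invertibles, then $x$ is an identity and $(x,g)\in G'$.

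For (4), suppose $(x,g)(y,h) = (x,g)(y',h')$. Then $x(g\cdot y) = x(g\cdot y')$ and $g|_y h = g|_{y'} h'$. Left cancellation in $C$ gives $g\cdot y = g\cdot y'$, and applying $g^{-1}\cdot(-)$ using (SS2) and (SS3) gives $y = y'$; then the second equation and left cancellation in the groupoid $G$ give $h = h'$. For (5), I would use the natural length-like structure provided by $C$: one shows that $(x,g)(C\bowtie G) \cap (y,h)(C\bowtie G) \neq\emptyset$ forces $xC \cap yC \neq\emptyset$ in $C$, whence by right rigidity of $C$ one of the inclusions $xC\subseteq yC$ or $yC\subseteq xC$ holds, and then one lifts this inclusion to $C\bowtie G$ by using the action of $G$ to absorb the unit factors. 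The main obstacle here is verifying that the self-similar action respects the containment of principal right ideals; the key identity is that $(x,\dom x)(z,k) = (xz,k)$ whenever defined, which reduces the ideal comparison in $C\bowtie G$ to one in $C$ up to multiplication by an element of $G'$.

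Finally for (6), left cancellativity and right rigidity have already been established, so it remains to check the ascending chain condition on principal right ideals above a fixed $(x,g)$. By the reduction carried out in (5), the principal right ideals in $C\bowtie G$ containing $(x,g)(C\bowtie G)$ correspond bijectively (via the $C'$-component) to principal right ideals in $C$ containing $xC$, of which there are only finitely many by the hypothesis that $C$ is a left Rees category. The expected hard part of the whole proof is the axiom-chasing in (1) together with the bookkeeping in (5); everything else follows once the Zappa--Sz\'ep multiplication is shown to respect the relevant structure.
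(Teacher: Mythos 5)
Your proposal is correct and follows essentially the same route as the paper: the same identification of identities, the factorisation $(x,g)=(x,\dom(x))(\ran(g),g)$, and the reduction of principal right ideal comparisons in $C\bowtie G$ to comparisons of $xC$ in $C$ (the paper proves exactly your equivalence $(x,g)M\subseteq(y,h)M$ iff $xC\subseteq yC$ in part (6), and its explicit witness $(u,k)(k^{-1}\cdot w,(k|_{k^{-1}\cdot w})^{-1}g)$ is just your ``absorb the $G'$-unit'' step written out). The one point to make explicit when you write up (1) is that, the products being only partially defined, you must also check that each triple product exists exactly when the other does — this is where (SS2) and (SS8) are used, as in the paper's argument.
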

\begin{proof}
(1) Define $\dom (x,g) = (g^{-1}g,g^{-1}g)$ and $\ran (x,g) = (\ran (x), \ran (x))$.
The condition for the existence of $(x,g)(y,h)$ is that $\dom (x,g) = \ran (y,h)$.
Axioms (C1),(C2) and (C3) then guarantee the existence of $(x(g \cdot y), g|_{y}h)$
and we can see from the diagram that
$$\dom ((x,g)(y,h)) = \dom (y,h)
\text{ and }
\ran ((x,g)(y,h)) = \ran (x,g).$$
It remains to prove associativity.

Suppose first that 
$$[(x,g)(y,h)](z,k)$$
exists.
The product $(x,g)(y,h)$ exists and so we have the following diagram
$$\spreaddiagramrows{2pc}
\spreaddiagramcolumns{2pc}
\diagram
&
&\lto_{x}
&\lto_{g \cdot y}
\\
&
&\uto_{g}
&\uto_{g|_{y}} \lto_{y}
\\
&
&
&\uto_{h}
\enddiagram$$
similarly 
$[(x,g)(y,h)](z,k)$ exists
and so we have the following diagram
$$\spreaddiagramrows{2pc}
\spreaddiagramcolumns{2pc}
\diagram
&
&\lto_{x(g \cdot y)}
&\lto_{(g|_{y}h) \cdot z}
\\
&
&\uto_{g|_{y}h}
&\uto_{(g|_{y}h)|_{z}} \lto_{z}
\\
&
&
&\uto_{k}
\enddiagram$$
resulting in the product
$$(x(g \cdot y)[(g|_{y}h) \cdot z],(g|_{y}h)|_{z}k).$$
By assumption, 
$x(g \cdot y)[(g|_{y}h) \cdot z]$ exists and so
$(g \cdot y)[(g|_{y}h) \cdot z]$ is non-zero.
Premultiplying by $g^{-1}$ we find that $y(h \cdot z)$ exists
and we use (SS7) and (SS6) to show that
$$(g|_{y}h)|_{z}k = g|_{y(h \cdot z)}h|_{z}k.$$
By (SS2),
$$x(g \cdot y)[(g|_{y}h) \cdot z]
=
x(g \cdot y)(g|_{y} \cdot (h \cdot z)).$$
It now follows that
$$(y,h)(z,k) = (y(h \cdot z),h|_{z}k)$$
exists.
It also follows that 
$(x,g)[(y,h)(z,k)]$ exists
and is equal to 
$$[(x,g)(y,h)](z,k).$$

Next suppose that 
$$(x,g)[(y,h)(z,k)]$$
exists.
This multiplies out to give
$(x[g \cdot (y(h \cdot z))], g|_{y(h \cdot z)}h|_{z}k)$.
By (SS6) and (SS7) we get that
$$g|_{y(h \cdot z)}h|_{z}k
=
(g|_{y}h)|_{z}k,$$
and by (SS8) and (SS2) we get that
$x[g \cdot (y(h \cdot z))]
=
x(g \cdot y)[(g|_{y}h) \cdot z]$.
This completes the proof that 
$C \bowtie G$ is a category.

(2) Define $\iota_{C} \colon \: C \rightarrow C \bowtie G$ by $\iota_{C}(x) = (x,\dom (x))$.
Denote the image of $\iota_{C}$ by $C^{\prime}$.
Note that there exists $\iota_{C}(x)\iota_{C}(y)$ iff $\dom(x) = \ran(y)$ iff $\exists xy$.
In this case 
$$\iota_{C}(x)\iota_{C}(y) = (x,\dom(x))(y,\dom(y)) = (xy,\dom(xy)) = \iota_{C}(xy).$$
Thus the categories $C$ and $C^{\prime}$ are isomorphic.

Now define $\iota_{G} \colon \: G \rightarrow C \bowtie G$
by $\iota_{G}(g) = (gg^{-1},g)$ and denote the image by $G^{\prime}$.
Then once again the categories $G$ and $G^{\prime}$ are isomorphic.

Finally, if we now pick an arbitrary non-zero element $(x,g)$,
then we can write it as
$(x,g) = (x,\dom(x))(gg^{-1},g)$
using the fact that 
$gg^{-1}|_{gg^{-1}} = gg^{-1}$
and
$\dom (x) \cdot gg^{-1} = gg^{-1}$.

(3) Suppose that $C$ has trivial invertible elements.
We need to check that $(x,g)$ is invertible if and only if $x$ is an identity. Suppose $(x,g)$ is invertible. Let $(y,h)$ be its inverse.
Calculating $(x,g)(y,h)$ and $(y,h)(x,g)$ gives $y(h\cdot x) = r(y)$, $x(g\cdot y) = r(x)$, $g^{-1} = h|_{x}$ and $h^{-1} = g|_{y}$. 
To show that $x$ is invertible, we just need to show that $(g\cdot y)x=d(x)$ and we will have proved $x$ is invertible and thus by assumption an identity.
We have that $d(x) = g\cdot r(y) = g\cdot (y(h\cdot x)) = (g\cdot y)(g|_{y}h)\cdot x = (g\cdot y)x$.

Now suppose $x$ is an identity. Then $(x,g) = (gg^{-1},g) \in G'$ and since $G$ is a groupoid, we have $(x,g)$ is invertible.

(4) Suppose that $C$ is left cancellative.
We prove that $C \bowtie G$ is left cancellative.
Suppose that $(x,g)(y,h) = (x,g)(z,k)$.
Then $x(g \cdot y) = x(g \cdot z)$ and $g|_{y}h = g|_{z}k$.
By left cancellation in $C$ it follows that $g \cdot y = g \cdot z$
and by (SS1) we deduce that $y = z$.
Hence $h = k$.
We have therefore proved that $(y,h) = (z,k)$, as required.

(5) Suppose now that $C$ is left cancellative and right rigid.
By (4), we know that $C \bowtie G$ is left cancellative 
so it only remains to be proved that $C \bowtie G$ is right rigid.
Suppose that
$$(x,g)(y,h) = (u,k)(v,l)$$
From the definition of the product it follows that
$x(g \cdot y) = u(k \cdot v)$
and
$g|_{y}h = k|_{v}l$.
From the first equation we see that $xC \cap uC \neq \emptyset$.
Without loss of generality, suppose that $x = uw$.
Then by left cancellation $w(g \cdot y) = k \cdot v$.
Observe that $k^{-1} \cdot (k \cdot v)$ is defined
and so $k^{-1} \cdot (w(g \cdot y))$ is defined by (SS2).
Thus by (SS8), $k^{-1} \cdot w$ is defined.
It is now easy to check that
$$(x,g) = (u,k)(k^{-1} \cdot w, (k|_{k^{-1} \cdot w})^{-1}g).$$

(6) Let $C$ be a left Rees category and let $M = C\bowtie G$.
By (4) and (5), it remains to prove that every principal right ideal is only contained in finitely many distinct principal right ideals. 
We show that $(x,g)M\subseteq (y,h)M$ iff $xC \subseteq yC$, from which it will follow that $M$ is a left Rees category.
If $(x,g)M\subseteq (y,h)M$ then there exists $(z,k)\in M$ with $(x,g) = (y,h)(z,k)$. That is,
$$(x,g) = (y(h\cdot z), h|_{z}k)$$
and so $xC \subseteq yC$.
Now suppose that $x,y\in C$ are such that $xC\subseteq yC$. Then there exists $z\in C$ with $x=yz$.
Let $g,h\in G$ be arbitrary elements with $\dom(x) = gg^{-1}$ and $\dom(y) = hh^{-1}$.
It can easily be verified that
$$(h^{-1}\cdot z, (h|_{h^{-1}\cdot z})^{-1}g)\in M$$
and that
$$(x,g) = (y,h)(h^{-1}\cdot z,(h|_{h^{-1}\cdot z})^{-1}g).$$
\begin{comment}
Now we let us show that $xC = yC$ implies $(x,g)M = (y,h)M$, where $\dom(x) = gg^{-1}$ and $\dom(y) = hh^{-1}$.
If $xC=yC$ then there exists an invertible element $u\in G(C)$ with $x=yu$ and so for $g,h\in G$ with $\dom(x) = gg^{-1}$ and $\dom(y) = hh^{-1}$, we have
$$(x,g) = (y,h)(h^{-1}\cdot u,(h|_{h^{-1}\cdot u})^{-1}g)$$
and
$$(y,h) = (x,g)(g^{-1}\cdot (u^{-1}),(g|_{g^{-1}\cdot (u^{-1})})^{-1}h).$$
Thus $(x,g)M = (y,h)M$. It now follows that $M$ is a left Rees category.
\end{comment}
\end{proof}

We call $C \bowtie G$ the {\em Zappa-Sz\'ep product} of the category $C$ by the groupoid $G$ by analogy to the monoid situation.
It follows from Proposition \ref{propzappa} that the Zappa-Sz\'ep product of a free category by a groupoid is a left Rees category. 
In fact an arbitrary left Rees category is a Zappa-Sz\'ep product of a free category by a groupoid. 

\begin{proposition} 
Every left Rees category is isomorphic to a Zappa-Sz\'ep product of a free category by a groupoid.
\end{proposition}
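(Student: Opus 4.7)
The plan is to mirror the monoid-theoretic proof summarised in Section 2.2, working locally at each identity but tracking domains and ranges carefully. Let $C$ be a left Rees category and set $G = G(C)$, the groupoid of invertible elements, noting $G_0 = C_0$. By Lemma \ref{proplcc}, two submaximal principal right ideals $xC$ and $yC$ coincide iff $x = yg$ for some invertible $g$, so the relation $xC = yC$ is the orbit relation of the right action of $G$ on $C$. Using the axiom of choice, pick a set $X \subseteq C$ of representatives of the submaximal principal right ideals; these form a directed graph $\mathcal{G}$ with $\mathcal{G}_0 = C_0$ and $\mathcal{G}_1 = X$, inheriting $\dom$ and $\ran$ from $C$. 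Let $F$ be the subcategory of $C$ generated by $X$ together with the identities.

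First I would prove the unique normal form: every $c \in C$ can be written uniquely as $c = xg$ with $x \in F$ and $g \in G$. Existence is by induction using the finite ascending chain condition (LR3): if $c$ is not invertible then $cC$ is properly contained in a submaximal ideal $x_1C$, so $c = x_1 c_1$; iterate on $c_1$ and terminate in finitely many steps, reaching an invertible element (combining Lemma \ref{proplcc} parts 4 and 5 as in the monoid case and Proposition \ref{freetriv}). Uniqueness follows from right rigidity plus left cancellation: if $x_1 \ldots x_m g = y_1 \ldots y_n h$ then $x_1C \cap y_1C \neq \emptyset$ forces $x_1C = y_1C$ (both are submaximal), hence $x_1 = y_1$ by the transversal choice; cancel and iterate. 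The same argument shows $F$ is free on $\mathcal{G}$, exactly as in Proposition \ref{freetriv}.

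Next I would define the action and restriction. Given $g \in G$ and $x \in F$ with $g^{-1}g = \ran(x)$, the composite $gx$ lies in $C$, so by the normal form there are unique $y \in F$ and $h \in G$ with $gx = yh$; set $g \cdot x = y$ and $g|_x = h$. The compatibility conditions (C1)--(C3) are immediate from comparing domains and ranges of both sides of $gx = (g\cdot x)(g|_x)$. The axioms (SS1)--(SS8) are then verified one by one by manipulating the equation $gx = (g\cdot x)(g|_x)$ in $C$ and invoking left cancellation and uniqueness of the normal form; for example (SS8) comes from writing $(gh)x$ two ways, and (SS2), (SS4), (SS6), (SS7) all follow from applying the defining equation twice and comparing unique decompositions. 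This is exactly the categorical analogue of the argument sketched in Section 2.2, and goes through verbatim since all the required cancellations and decompositions are at the level of composable pairs.

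Finally, define $\Phi \colon F \bowtie G \to C$ by $\Phi(x,g) = xg$. The normal form result says $\Phi$ is a bijection between the underlying sets (preserving $C_0 = F_0 = G_0$), and a routine check shows it is a functor: if $\dom(x,g) = \ran(y,h)$, i.e.\ $g^{-1}g = \ran(y)$, then
\[
\Phi((x,g)(y,h)) = \Phi(x(g\cdot y),\, g|_y h) = x(g\cdot y)(g|_y)h = x(gy)h = \Phi(x,g)\Phi(y,h),
\]
using the defining equation $(g\cdot y)(g|_y) = gy$. The main obstacle I expect is the bookkeeping in the normal form argument: whereas in the monoid case one simply multiplies freely, here every intermediate product must have matching domain and range, and the induction producing $c = x_1 \cdots x_m g$ must ensure that each $x_{i+1}$ is chosen so that $\ran(x_{i+1}) = \dom(x_i)$ is forced by the transversal choice rather than by a separate selection step. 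Once one verifies that the submaximal ideal containing a given non-invertible element is unique (immediate from right rigidity), this obstacle dissolves and the rest of the proof is mechanical.
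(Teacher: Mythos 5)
Your proposal is correct and follows essentially the same route as the paper's proof: a transversal $X$ of generators of the submaximal principal right ideals, freeness of the subcategory it generates, the unique decomposition $c = xg$ obtained from (LR3) together with right rigidity and left cancellation, and then reading off the action and restriction from $gx = (g\cdot x)(g|_x)$ and checking (C1)--(C3), (SS1)--(SS8). The only cosmetic difference is that you spell out the final bijective functor $\Phi(x,g) = xg$, which the paper leaves implicit via its earlier Zappa-Sz\'ep product construction.
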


\begin{proof}
Let $M$ be a left Rees category. 
First, let $X$ be a transversal of the generators of the submaximal principal right ideals of $M$. 
We claim that $X^{*}$, the subcategory of $M$ consisting of all allowed products of elements of $X$, is free.
Suppose 
$$x_{1}\ldots x_{m} = y_{1}\ldots y_{n},$$
where $x_{i},y_{i}\in X$ and this product exists.
Then from the above $y_{1}\ldots y_{n}M \subseteq x_{1}M$. Thus $y_{1}M\cap x_{1}M \neq \emptyset$.
By assumption $y_{1}$ and $x_{1}$ are indecomposable and so $y_{1}M = x_{1}M$. On the other hand,
$X$ was defined to be a transversal and so $x_{1} = y_{1}$. By left cancellativity we thus have 
$$x_{2}\ldots x_{m} = y_{2}\ldots y_{n}.$$
Suppose $m < n$. Continue cancelling and we get $y_{m+1}\ldots y_{n} = e$ for some identity $e$.
But that would imply $eM \subseteq y_{m+1}M$, which cannot happen by Lemma \ref{proplcc}.
Thus $m = n$, and we have $x_{i} = y_{i}$ for each $i$. 

Let $\mathcal{G}$ be the graph with edges elements of $X$ and vertices identities and let $\mathcal{G}^{\ast}$ be the free category on $\mathcal{G}$.
We have shown that $\mathcal{G}^{\ast}$ and $X^{*}$ are isomorphic, so view $\mathcal{G}^{\ast}$ as the subcategory of $M$ containing products of elements of $X$.
Let $G = G(M)$ be the groupoid of invertible elements of $M$ and let $s\in M\setminus G$ be arbitrary.
Since the submaximal ideals of $M$ are generated by indecomposable elements it follows that $sM \subseteq x_{1}M$ for some $x_{1}\in X$.
If this is equality then $s = x_{1}g$ for some $g\in G$. Otherwise $s = x_{1}y_{1}$ for some $y_{1}\in M$. Now we repeat the same argument for $y_{1}$ to get $y_{1} = x_{2}y_{2}$ for some $x_{2}\in X$, $y_{2}\in M$. Continuing in this way we find $s = x_{1}\ldots x_{n}g$ for some $x_{1},\ldots,x_{n}\in X$ and $g\in G$, this process terminating since $s$ is only contained in finitely many principal right ideals. To see that this decomposition is unique, suppose $x_{1}\ldots x_{n}g = y_{1}\ldots y_{m}h$ where $x_{i},y_{j}$ are in $X$ and $g,h\in G$. It follows that $x_{1}M \cap y_{1}M \neq \emptyset$. Since $x_{1},y_{1}$ are indecomposable, we must have $x_{1} = y_{1}$. We then cancel on the left and continue in this manner to find that $m = n$, $x_{i} = y_{i}$ for each $i$ and $g = h$. Thus every element $s\in M$ can be written uniquely in the form $s = xg$ where $x\in \mathcal{G}^{\ast}$ and $g\in G$.

Now define, for $g\in G$, $x\in \mathcal{G}^{\ast}$ such that $\exists gx$, 
$$gx =: (g\cdot x)(g|_{x}).$$
By the above this is well-defined. We claim that this gives a self-similar action of $G$ on $\mathcal{G}^{\ast}$. 
We thus need to show it satisfies (C1) - (C3) and (SS1) - (SS8).

\begin{description}
\item[{\rm (C1)}] $\ran(g\cdot x) = \ran((g\cdot x)(g|_{x})) = \ran(gx) = \ran(g) = gg^{-1}$.

\item[{\rm (C2)}] $\dom(g\cdot x) = \ran(g|_{x}) = g|_{x}(g|_{x})^{-1}$.

\item[{\rm (C3)}] $\dom(x) = \dom(gx) = \dom((g\cdot x)(g|_{x})) = \dom(g|_{x}) = (g|_{x})^{-1}g|_{x}$.

\item[{\rm (SS1) and (SS4)}] $x\dom(x) = x = \ran(x)x = (\ran(x)\cdot x)(\ran(x)|_{x})$ giving $\ran(x)\cdot x = x$ and $\ran(x)|_{x} = \dom(x)$.

In a similar manner, using uniqueness of the decomposition,

\item[{\rm (SS2) and (SS7)}] $(gh)x = ((gh)\cdot x)(gh)|_{x}$ and 
$$g(hx) = g (h\cdot x)(h|_{x}) = g\cdot (h\cdot x) g|_{h\cdot x}h|_{x}.$$

\item[{\rm (SS3) and (SS5)}] $g = gg^{-1}g = g\cdot (g^{-1}g) g|_{g^{-1}g}$.

\item[{\rm (SS6) and (SS8)}] $g(xy) = g\cdot(xy)g|_{xy}$ and
$$(gx)y = (g\cdot x)(g|_{x})y = (g\cdot x)(g|_{x}\cdot y)((g|_{x})|_{y}).$$

\end{description}

\end{proof}

Let $M$ be a left Rees category which is the Zappa-Sz\'ep product of a free category $\mathcal{G}^{\ast}$ and a groupoid $G$. 
For $x\in \mathcal{G}^{\ast}$, let 
$$G^{x} = \left\{ g\in G| r(x) = g^{-1}g\right\}$$ 
and let
$${}^{x}G = \left\{ g\in G| d(x) = g^{-1}g\right\}.$$

We define the map $\rho_{x}:G^{x}\rightarrow {}^{x}G$ by $\rho_{x}(g) = g|_{x}$. A left Rees category is \emph{symmetric} if the maps $\rho_{x}:G^{x}\rightarrow {}^{x}G$ are bijections for each $x\in \mathcal{G}^{\ast}$.

Let us define for $x\in \mathcal{G}^{\ast}$ the \emph{stabiliser of} $x$, $G_x$, and the \emph{orbit of} $x$, $\Omega_{x}$,  as follows:
$$G_{x} = \left\{g\in G^{x} \quad | \quad g\cdot x = x \right\}$$
and
$$\Omega_{x} = \left\{y\in \mathcal{G}^{\ast} \quad | \quad \exists g\in G:g\cdot x = y \right\}.$$

It follows from the fact that $\ran(g) = \ran(g\cdot x)$ that $G_{x}$ is in fact a group. We define the map $\phi_{x}:G_{x}\rightarrow {}^{x}G$ by $\phi_{x}(g) = g|_{x}$, so that $\phi_{x}$ is the restriction of $\rho_{x}$ to the stabiliser of $x$.

Analogously to Lemma \ref{usefullemma1} of Chapter 2 it is easy to see that we have the following:

\begin{lemma} 
\label{usefulLRClem}
Let $(G,\mathcal{G}^{\ast})$ be a self-similar groupoid action.
\begin{description}
\item[{\rm (i)}] The map $\phi_{x}$ is a functor for each $x\in \mathcal{G}^{\ast}$.
\item[{\rm (ii)}] Let $y = g \cdot x$. 
Then $G_{y} = gG_{x}g^{-1}$
and
$$\phi_{y}(h) = g|_{x}\phi_{x}(g^{-1}hg) (g|_{x})^{-1}.$$
\item[{\rm (iii)}] If $\phi_{x}$ is injective then $\phi_{g \cdot x}$ is injective.
\item[{\rm (iv)}] $\phi_{x}$ is injective for all $x \in \mathcal{G}_{1}$ iff $\phi_{x}$ is injective for all $x \in \mathcal{G}^{\ast}$.
\item[{\rm (v)}] $\phi_{x}$ is surjective for all $x \in \mathcal{G}_{1}$ iff $\phi_{x}$ is surjective for all $x \in \mathcal{G}^{\ast}$.
\item[{\rm (vi)}] $\rho_{x}$ is injective for all $x \in \mathcal{G}_{1}$ iff $\rho_{x}$ is injective for all $x \in \mathcal{G}^{\ast}$.
\item[{\rm (vii)}] $\rho_{x}$ is surjective for all $x \in \mathcal{G}_{1}$ iff $\rho_{x}$ is surjective for all $x \in \mathcal{G}^{\ast}$.
\end{description}
\end{lemma}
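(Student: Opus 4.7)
The plan is to adapt the proof of Lemma \ref{usefullemma1} to the groupoid/category setting, the main additional bookkeeping being to track domains and ranges carefully so that every product invoked is actually composable. Throughout, given $x \in \mathcal{G}^{\ast}$ set $e = \ran(x)$ and $f = \dom(x)$; then $G^x$ is the set of $g \in G$ with $g^{-1}g = e$, and the elements of $G_x \subseteq G^x$ satisfy $gg^{-1} = e$ also (since $\ran(g\cdot x) = gg^{-1}$ must equal $\ran(x) = e$), so $G_x$ sits inside the local group $eGe$.

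For (i), I would verify that for composable $g, h \in G_x$ (they are automatically composable, lying in $eGe$) we have $(gh)|_x = g|_{h\cdot x}\,h|_x = g|_x\,h|_x$ by (SS7) together with $h \cdot x = x$, which is the functoriality statement. For (ii), I would first show by a standard orbit-stabiliser calculation using (SS2) that $k \in G_y$ iff $g^{-1}kg \in G_x$, giving $G_y = gG_xg^{-1}$. For the formula, writing $h = gg^{-1}hgg^{-1}$ and applying (SS7) twice while using (SS5) and Lemma \ref{inverserest}-style identities for $(g^{-1})|_{g\cdot x}$ yields $\phi_y(h) = g|_x\,\phi_x(g^{-1}hg)\,(g|_x)^{-1}$. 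Part (iii) is then immediate: since conjugation by $g$ is a bijection $G_x \to G_y$ and $\phi_y$ equals a conjugate of $\phi_x$, injectivity transfers.

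Parts (iv)--(vii) will all be proved by induction on the length of $x \in \mathcal{G}^{\ast}$, the base case being the hypothesis on $\mathcal{G}_1$ and the inductive step resting on (SS6), which says $g|_{zw} = (g|_z)|_w$. For (iv), writing $y = zw$ with $|z| = n$ and $w \in \mathcal{G}_1$, if $h, k \in G_y$ then comparing lengths under the action forces $h \cdot z = z = k \cdot z$ and $h|_z \cdot w = w = k|_z \cdot w$, so $h|_z, k|_z \in G_w$; then $\phi_y(h) = \phi_y(k)$ gives $(h|_z)|_w = (k|_z)|_w$, which by the base case implies $h|_z = k|_z$, whence $h = k$ by the inductive hypothesis applied to $z$. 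Parts (v), (vi), (vii) follow the same template: decompose $y = zw$, reduce the question about $\rho_y$ or $\phi_y$ to the corresponding question at $w$ (length one) and at $z$ (length $n$), and splice the two hypotheses together via (SS6).

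The main obstacle is purely bookkeeping: at each step one must check that the elements one wishes to multiply, restrict, or act upon satisfy the correct source/target compatibilities so that axioms (SS2), (SS6), (SS7), (SS8) actually apply. In particular, in (ii) the expressions $\phi_x(g^{-1}hg)$ and $g|_x \phi_x(g^{-1}hg)(g|_x)^{-1}$ only make sense once one has verified the relevant identities for $g^{-1}hg$ lie in the local group at $e$ and that $g|_x \in {}^xG$, and in (v)--(vii) the lifting arguments must produce elements whose domains match $\ran(w)$ or $\ran(z)$ as appropriate. No essentially new idea beyond the monoid case is needed; the whole proof is a routine but careful translation.
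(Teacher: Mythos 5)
Your proposal is correct and is exactly the route the paper intends: the paper gives no separate proof, stating only that the result follows "analogously to Lemma \ref{usefullemma1}" of Chapter 2, and your outline is precisely that translation of the monoid proof (functoriality via (SS7), the conjugation formula via the categorical analogue of Lemma \ref{inverserest}, and induction on length via (SS6)) with the extra source/target bookkeeping made explicit.
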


Let us define the \emph{length} of a non-identity element of $\mathcal{G}^{\ast}$ to be the number of elements of $\mathcal{G}_{1}$ in its unique decomposition, and say that an identity has length 0.

\begin{lemma}
The action of $G$ on $\mathcal{G}^{\ast}$ is length-preserving
\end{lemma}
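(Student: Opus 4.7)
The plan is to prove the lemma by induction on the length of $x$, reducing everything to the case $|x|=1$ which will require a short argument in the ambient left Rees category $M = \mathcal{G}^{\ast}\bowtie G$.

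The base case $|x|=0$ is immediate from axiom (SS3): if $x$ is an identity and $g\cdot x$ is defined then $\ran(x)=g^{-1}g$ forces $x=g^{-1}g$, and (SS3) gives $g\cdot x = gg^{-1}$, which is again an identity. For the inductive step, suppose the result holds for all elements of length at most $n$ and let $x\in\mathcal{G}^{\ast}$ have length $n+1$. Write $x=yz$ where $y\in\mathcal{G}^{\ast}$ has length $n$ and $z\in\mathcal{G}_{1}$; using the composability conditions $\dom(y)=\ran(z)$ and $\ran(y)=\ran(x)=g^{-1}g$, together with (C2)--(C3), one checks that both $g\cdot y$ and $g|_{y}\cdot z$ are defined. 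By axiom (SS8), $g\cdot x = (g\cdot y)(g|_{y}\cdot z)$, and the inductive hypothesis gives $|g\cdot y|=n$. So the whole inductive step collapses to the length-one claim: for every $h\in G$ and every $z\in\mathcal{G}_{1}$ with $h\cdot z$ defined, $h\cdot z$ is a single edge.

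To prove this length-one claim I would work inside $M=\mathcal{G}^{\ast}\bowtie G$, where by Proposition \ref{propzappa} we have $hz=(h\cdot z)(h|_{z})$ and the invertible elements are precisely $G'$. Since $h$ (viewed in $M$) is invertible, left multiplication by $h$ gives a bijection between principal right ideals with range $h^{-1}h$ and those with range $hh^{-1}$, and this bijection preserves strict inclusions. The element $z\in\mathcal{G}_{1}$ is indecomposable, so $zM$ is submaximal; its image $hzM$ is therefore also submaximal, so $hz$ is indecomposable in $M$. Because $h|_{z}$ is invertible, $(h\cdot z)M = hzM$ is still submaximal, and since $h\cdot z$ lies in the free subcategory $\mathcal{G}^{\ast}$ this forces $h\cdot z$ to be indecomposable in $\mathcal{G}^{\ast}$, i.e.\ an edge of $\mathcal{G}$. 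This yields $|h\cdot z|=1$, completing the induction and giving $|g\cdot x|=n+1=|x|$.

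The main obstacle is the length-one step: the other pieces are direct applications of the axioms, but here one must move from the self-similar action back into the Rees category $M$, invoke the correspondence between indecomposables of $\mathcal{G}^{\ast}$ and submaximal principal right ideals of $M$ (via Proposition \ref{propzappa} and Lemma \ref{proplcc}), and use invertibility of $h$ and $h|_{z}$ to transport submaximality. Once this is in place the inductive machinery is mechanical, and one might even bypass the induction entirely by observing that $|g\cdot x|$ equals the length of the chain of principal right ideals between $gxM$ and $\ran(gx)M$ in $M$, which matches the corresponding chain above $xM$ via left multiplication by the invertible element $g$.
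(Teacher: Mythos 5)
Your proof is correct, but it takes a genuinely different route from the paper's. The paper stays entirely inside the self-similar action: it inducts on length, first showing $l(g\cdot x)\geq l(x)$ by applying $g^{-1}$ and the inductive hypothesis (if the length dropped, acting by $g^{-1}$ would return $x$ with too small a length), and then writing $g\cdot x = yz$ with $l(y)=n-1$ and using (SS8) together with $g^{-1}$ to force $l(g\cdot x)=n$; the only structural input is that $G$ is a groupoid. You instead reduce to the single-edge case and transport it through the ideal theory of $M=\mathcal{G}^{\ast}\bowtie G$: edges correspond to submaximal principal right ideals, invertible elements of $M$ (which by Proposition \ref{propzappa} are exactly $G'$) preserve the poset of principal right ideals, and $(h\cdot z)M = hzM$ since $h|_{z}$ is invertible. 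This is legitimate here (the lemma is not used in Proposition \ref{propzappa} or Lemma \ref{proplcc}, so there is no circularity), and it buys a nice conceptual picture — the length of $w\in\mathcal{G}^{\ast}$ is the height of $wM$ in the poset of principal right ideals, which is manifestly invariant under left multiplication by invertibles, so as you note the induction can be dispensed with altogether. One step deserves a little more care than you give it: you pass from ``$z\in\mathcal{G}_{1}$ is indecomposable'' to ``$zM$ is submaximal'', but what Lemma \ref{proplcc} requires is indecomposability of $z$ in $M$, not merely in $\mathcal{G}^{\ast}$; if $z=(x g)(y h)$ in $M$ then the unique decomposition gives $x(g\cdot y)=z$ and one must still rule out a factorisation in which $g\cdot y$ is an identity while $y$ is not, which needs the small observation that $y=g^{-1}\cdot(g\cdot y)$ and that groupoid elements send identities to identities by (SS3). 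Alternatively, the equivalence $(x,g)M\subseteq (y,h)M \Leftrightarrow x\mathcal{G}^{\ast}\subseteq y\mathcal{G}^{\ast}$ established inside the proof of Proposition \ref{propzappa}(6) gives submaximality of $zM$ (and indeed your height-of-ideal-chain reformulation) immediately, so the gap is easily closed; with that patch your argument is complete. The trade-off is that the paper's proof is shorter and applies verbatim to any self-similar groupoid action without invoking the Zappa--Sz\'ep machinery, while yours exposes the order-theoretic reason the statement is true.
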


\begin{proof}
Consider an identity $e\in \mathcal{G}^{\ast}$. 
Then $g\cdot e$ exists iff $e=g^{-1} g$ and so $g\cdot e= g g^{-1}$ by (SS3), which is an identity. 
Suppose the claim is true for all $x\in \mathcal{G}^{\ast}$ with $l(x)< n$ for some $n\geq 1$. 
Let $x\in \mathcal{G}^{\ast}$ be such that $l(x) = n$. 
We see that if $g\in G^{x}$ then $l(g\cdot x) \geq n$ as otherwise $l(x) = l((g^{-1}g)\cdot x) < n$, a contradiction.
So suppose $g \cdot x = yz$ for some $y,z \in \mathcal{G}^{\ast}$ with $l(y) = n-1$. 
Then $g^{-1} \cdot (yz)$ exists and equals $x$. But $g^{-1}\cdot(yz)=(g^{-1}\cdot y)(g^{-1}|_{y}\cdot z)$. 
Thus $l(g^{-1}|_{y}\cdot z) = 1$ and so $l(g\cdot x) = n$.
\end{proof}

\begin{lemma}
\label{LRCcancphi}
A left Rees category $M$ is right cancellative if and only if we have that $\phi_{x}$ is injective for every $x\in \mathcal{G}^{\ast}$.
\end{lemma}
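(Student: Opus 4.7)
The plan is to mimic the monoid argument that occurs implicitly in Lemma~\ref{cancLRM}, but being careful about composability in the groupoid. Throughout I will use the Zappa--Sz\'ep description $M = \mathcal{G}^{\ast}\bowtie G$ and the fact (just established) that the action of $G$ on $\mathcal{G}^{\ast}$ is length-preserving.

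First I would handle the direction ``$\phi_{x}$ injective for every $x \Rightarrow M$ right cancellative''. Suppose $ac = bc$ and write $a = (x,g)$, $b = (y,h)$, $c = (z,k)$. Multiplying out gives the two equations
$$x(g\cdot z) = y(h\cdot z) \quad\text{in } \mathcal{G}^{\ast}, \qquad (g|_{z})k = (h|_{z})k \quad\text{in } G.$$
Right cancellation in the groupoid $G$ applied to the second equation gives $g|_{z} = h|_{z}$. For the first equation, length-preservation gives $|g\cdot z| = |z| = |h\cdot z|$, so $|x| = |y|$; since $\mathcal{G}^{\ast}$ is free and both sides are factorisations of the same element with prefixes of equal length, $x = y$ and hence $g\cdot z = h\cdot z$. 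Now $r(g) = \dom(x) = \dom(y) = r(h)$, so the element $m := h^{-1}g$ is defined, it lies in $G^{z}$ (because $m^{-1}m = g^{-1}g = r(z)$), and
$$m\cdot z = h^{-1}\cdot(g\cdot z) = h^{-1}\cdot(h\cdot z) = z,$$
so $m \in G_{z}$. Using (SS7) together with Lemma \ref{inverserest} suitably adapted to the groupoid setting,
$$\phi_{z}(m) = m|_{z} = (h^{-1})|_{g\cdot z}\,g|_{z} = (h|_{z})^{-1}\,g|_{z} = 1.$$
Injectivity of $\phi_{z}$ forces $m = 1$, i.e.\ $g = h$, and therefore $a = b$.

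For the converse, assume $M$ is right cancellative and let $g,h \in G_{x}$ with $\phi_{x}(g) = \phi_{x}(h)$, i.e.\ $g|_{x} = h|_{x}$. Identify $g,h$ with the elements $(gg^{-1},g), (hh^{-1},h) \in M$; since $g\cdot x = x = h\cdot x$, direct computation in $\mathcal{G}^{\ast}\bowtie G$ gives
$$(gg^{-1},g)(x,\dom(x)) = (x, g|_{x}) = (x, h|_{x}) = (hh^{-1},h)(x,\dom(x)).$$
Right cancellation of $(x,\dom(x))$ yields $(gg^{-1},g) = (hh^{-1},h)$, i.e.\ $g = h$. Hence $\phi_{x}$ is injective for every $x \in \mathcal{G}_{1}$, and by Lemma~\ref{usefulLRClem}(iv) for every $x \in \mathcal{G}^{\ast}$.

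The arguments above are essentially rewrites of the monoid proof, so no substantial new idea is required. The one point that needs genuine care, and which is really the only obstacle, is the bookkeeping of domains and ranges that ensures every composition written down (most critically $h^{-1}g$ and the partial restrictions $(h^{-1})|_{g\cdot z}$) is actually defined in the groupoid $G$; this is exactly where the axioms (C1)--(C3) and the identification of $G_{x}$ as a \emph{group} (not merely a subgroupoid) are used.
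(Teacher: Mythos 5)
Your proof is correct and follows essentially the same route as the paper's: expand the products in $\mathcal{G}^{\ast}\bowtie G$, use length-preservation and unique factorisation to match the category parts, and then apply injectivity of $\phi$ at a suitable stabiliser to cancel the groupoid parts, with the converse obtained by right-cancelling an element of the form $(x,\dom(x))$. The only (cosmetic) difference is that you work with $h^{-1}g\in G_{z}$ and compare $\phi_{z}$ against the identity via the inverse-restriction identity $(h|_{z})^{-1}=h^{-1}|_{h\cdot z}$, whereas the paper works with $gk^{-1}\in G_{t}$ for $t=g\cdot z$ and compares $\phi_{t}(gk^{-1})$ with $\phi_{t}(kk^{-1})$ using only (SS7).
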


\begin{proof}
($\Rightarrow$) Suppose $g|_{y}=h|_{y}$ for some $g,h\in G_{y}$. Then 
$$(x,g)(y,\ran(y)) = (xy, g|_{y}) = (xy, h|_{y}) = (x,h)(y,\ran(y)).$$
It then follows by right cancellativity that $g=h$.
\\
($\Leftarrow$) Suppose $(x,g)(y,h)=(z,k)(y,h)$. We want to show $x=z$ and $g=k$. Since
$$(x (g\cdot y), g|_{y} h) = (z (k\cdot y), k|_{y} h),$$
we must have $x (g\cdot y) = z (k\cdot y)$ and $g|_{y}h=k|_{y}h$. By the cancellativity of $G$, length-preservation and uniqueness, $x=z$, $g\cdot y = k\cdot y$ and $g|_{y}=k|_{y}$. Let $t = g\cdot y = k\cdot y $. We have
$$(gk^{-1})|_{t} = g|_{k^{-1}\cdot t}k^{-1}|_{t} = g|_{y}k^{-1}|_{t} = k|_{y}k^{-1}|_{t} = k|_{k^{-1}\cdot t}k^{-1}|_{t} = (kk^{-1})|_{t}.$$
Since $g k^{-1}\in G_{t}$ and $\phi_{t}$ is injective, we have $g k^{-1} = k k^{-1}$ and so $g = k$.
\end{proof}

\begin{proposition}
\label{locmonLRCLRM}
Let $M$ be a left Rees category and let $a\in M_{0}$ be an identity. Then the local monoid $aMa$ is a left Rees monoid.
\end{proposition}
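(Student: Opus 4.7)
The plan is to verify directly that $aMa$ satisfies the three defining conditions of a left Rees monoid: left cancellativity, right rigidity, and the ascending chain condition on principal right ideals. Since $a$ is an identity, $aMa$ is the local monoid at $a$, with identity $a$ and composition inherited from $M$.

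First I would check left cancellativity: given $x,y,z \in aMa$ with $xy = xz$, this is an equation in $M$, so left cancellativity of $M$ immediately yields $y = z$. Left cancellativity of $aMa$ is therefore inherited.

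The heart of the argument is to relate principal right ideals of $aMa$ to principal right ideals of $M$. My key claim is that for $x,y \in aMa$, we have $x(aMa) \subseteq y(aMa)$ if and only if $xM \subseteq yM$. The forward direction is trivial. For the reverse, suppose $xM \subseteq yM$, so that $x = yu$ for some $u \in M$. Then $\ran(u) = \dom(y) = a$ and $\dom(u) = \dom(x) = a$, hence $u \in aMa$, giving $x \in y(aMa)$; multiplying by any $v \in aMa$ with $\dom(v) = a$ yields $xv = y(uv) \in y(aMa)$, so $x(aMa) \subseteq y(aMa)$. With this claim, right rigidity of $aMa$ follows at once: if $x(aMa) \cap y(aMa) \neq \emptyset$ then $xM \cap yM \neq \emptyset$, so right rigidity of $M$ gives $xM \subseteq yM$ or $yM \subseteq xM$, and the claim transfers this to $aMa$.

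Similarly, the claim gives an order-preserving injection from the poset of principal right ideals of $aMa$ containing $x(aMa)$ into the poset of principal right ideals of $M$ containing $xM$. Since $M$ is a left Rees category, only finitely many principal right ideals of $M$ properly contain $xM$, so only finitely many principal right ideals of $aMa$ properly contain $x(aMa)$. The main thing to keep careful track of is that inclusions and equalities of principal right ideals are genuinely detected at the level of $M$; once this is established, no step presents a real obstacle, and the proof is a straightforward transfer of the three left Rees conditions from $M$ to $aMa$.
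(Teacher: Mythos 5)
Your proof is correct, but it takes a different route from the one in the thesis. The key to your argument is the observation that for $x,y\in aMa$ one has $x(aMa)\subseteq y(aMa)$ if and only if $xM\subseteq yM$: the reverse direction works because any $u\in M$ with $x=yu$ is forced by domain/range bookkeeping to satisfy $\ran(u)=\dom(y)=a$ and $\dom(u)=\dom(x)=a$, hence $u\in aMa$. This turns the statement into a purely formal transfer of (LR1)--(LR3) from $M$ to $aMa$, using nothing beyond the defining axioms of a left Rees category; in particular it gives a well-defined, order-reflecting injection $y(aMa)\mapsto yM$ on principal right ideals, from which right rigidity and the finite-containment condition follow at once. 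The thesis instead invokes the structure theory developed earlier in the chapter: it writes $M=\mathcal{G}^{\ast}G$, represents elements of $aMa$ in the normal form $xg$ with $x\in\mathcal{G}^{\ast}$ and $g\in G$, deduces from $xg\,aMa\cap yh\,aMa\neq\emptyset$ that one of $x,y$ is a prefix of the other, exhibits an explicit witness such as $g^{-1}th\in aMa$ for the inclusion of ideals, and gets finiteness from the fact that a route $x$ has only finitely many prefixes. Your approach buys generality and economy -- it does not presuppose the Zappa-Sz\'ep decomposition and would apply verbatim to any left cancellative right rigid small category satisfying the chain condition -- while the paper's approach buys concreteness, tying the ideal structure of $aMa$ explicitly to prefixes of routes, which is in the spirit of the computations done elsewhere in the chapter.
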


\begin{proof}
Let $M = \mathcal{G}^{\ast}G$ be a left Rees category and let $a\in M_{0}$.
A subcategory of a left cancellative category will again be left cancellative, so $aMa$ must be left cancellative.
Suppose $x,y\in \mathcal{G}^{\ast}$, $g,h\in G$ are such that $xg,yh\in aMa$ and $xgaMa\cap yhaMa \neq \emptyset$. Then there exist $z_{1},z_{2}\in \mathcal{G}^{\ast}$ and $u_{1},u_{2}\in G$ such that $xgz_{1}u_{1} = yhz_{2}u_{2}$. It therefore follows that there exists $t\in \mathcal{G}^{\ast}$ with $xt = y$ or $yt = x$. Suppose $xt = y$. Observe that $g^{-1}th\in aMa$ and so $xg g^{-1}th = yh$ in $aMa$. Thus $yhaMa \subseteq xgaMa$. In a similar way if $yt = x$ we find $xgaMa\subseteq yhaMa$. It therefore follows that $aMa$ is right rigid. 
Note that if $xg,yh\in aMa$ are such that $xgaMa \subset yhaMa$ then $y$ is a prefix of $x$. Since $x$ has finite length, there are only finitely many prefixes of $x$ and so there can only be finitely many principal right ideals containing $xgaMa$. Thus $aMa$ is a left Rees monoid.
\end{proof}

\section{Category HNN-extensions and Bass-Serre theory}

\begin{comment}
A \emph{category presentation} for a small category $C$ is written as follows
$$C = \langle \mathcal{G} | x_{i} = y_{i}, x_{i},y_{i}\in \mathcal{G}^{\ast}, i\in I \rangle\ ,$$
where $\mathcal{G}$ is a directed graph, $I$ is an index set, elements of $C$ are equivalence classes of elements of $\mathcal{G}^{\ast}$ and the relation $x_{i} = y_{i}$ tells us that every time we have a path $wx_{i}v$ in $\mathcal{G}$ then this is equivalent to the path $wy_{i}v$ and vice versa. We implicitly assume that in categories described in this way that identity elements behave as identity elements. 

In a similar manner a \emph{groupoid presentation} for a groupoid $G$ is written as follows 
$$G = \langle \mathcal{G} | x_{i} = y_{i}, x_{i},y_{i}\in \mathcal{G}^{\dagger}, i\in I \rangle\ $$
where $\mathcal{G}$ is a directed graph, $I$ is an index set, elements of $G$ are now equivalence classes of elements of $\mathcal{G}^{\dagger}$ and the relation $x_{i} = y_{i}$ tells us that every time we have an element $wx_{i}v$ in $\mathcal{G}^{\dagger}$ then this equivalent to $wy_{i}v$ and vice versa. We implicitly assume now that in groupoids described in this way that identity elements behave as identity elements and groupoid inverses behave as groupoid inverses. 
\end{comment}

In this section we will prove left Rees categories are precisely what we will call category HNN-extensions of groupoids. We will further see how one can interpret ideas from Bass-Serre theory in the context of Rees categories. 

Suppose $C$ is a category given by category presentation $C = \langle \mathcal{G} | \mathcal{R}(C) \rangle\ $,
where here we are denoting the relations of $C$ by $\mathcal{R}(C)$ and suppose there is an index set $I$, submonoids $H_{i}:i\in I$ of $C$ and functors $\alpha_{i}:H_{i}\rightarrow C$. Let $e_{i},f_{i}\in \mathcal{G}_{0}$ be such that $H_{i}\subseteq e_{i}Ce_{i}$ and $\alpha_{i}(H_{i}) \subseteq f_{i}Cf_{i}$. Define $\mathcal{H}$ to be the graph with $\mathcal{H}_{0} = \mathcal{G}_{0}$ and 
$$\mathcal{H}_{1} = \mathcal{G}_{1} \cup \left\{t_{i}|i\in I\right\}$$
where $\ran(t_{i}) = e_{i}$ and $\dom(t_{i}) = f_{i}$. We will say that $M$ is a \emph{category HNN-extension} of $C$ if $M$ is given by the category presentation:
$$M = \langle \mathcal{H} | \mathcal{R}(C), xt_{i} = t_{i}\alpha_{i}(x) \forall x\in H_{i}, i\in I \rangle\ .$$

\begin{comment}
Let $G$ be a groupoid, $H$ a subgroup of $G$ with identity $e\in G_{0}$ and let 
$$K = \left\{g\in G| \dom(g) = e\right\}.$$
Then a \emph{transversal} $T$ of $H$ is a subset of $K$ such that
$$K = \coprod_{g\in T}{gH}.$$
Each set $gH$ will have cardinality equal to the cardinality of $H$. Furthermore, the cardinality of $T$ is independent of the choice of representatives so we define $|G:H| = |T|$. If $|T|$ is finite then we say that $H$ is a \emph{finite index subgroup} of $G$. 

We can now state the main theorem of this section. The proof is almost identitical to the proofs of Theorems \ref{HNNLRM} and \ref{mainthm1}, and so we relegate it to appendix B.
\end{comment}

\begin{thm}
\label{HNNLRC}
Category HNN-extensions $M$ of groupoids $G$ such that each associated submonoid $H_{i}$ in the definition above is a subgroup of $G$ are precisely left Rees categories $M \cong \mathcal{G}^{\ast}\bowtie G$ for some graph $\mathcal{G}$. 
\end{thm}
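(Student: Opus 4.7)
The plan is to mimic the proof of Theorem \ref{mainthm1}, but being careful about domains and codomains throughout. The proof has two directions, and in both we will work with normal forms together with the Zappa--Sz\'ep description of left Rees categories from Proposition \ref{propzappa}.

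First I would show that a category HNN-extension of a groupoid with subgroup associates is a left Rees category. Given such an $M$ with presentation over graph $\mathcal{H}$, fix for each $i\in I$ a transversal $T_i$ of the left cosets of $H_i$ inside $\{g\in G:g^{-1}g=e_i\}$, with $e_i\in T_i$. Using the defining relations $ht_i=t_i\alpha_i(h)$, any element of $M$ can be pushed into the form $g_1t_{i_1}g_2t_{i_2}\cdots g_mt_{i_m}u$ with $g_k\in T_{i_k}$, $u\in G$, and all domains/ranges matching: at each step one decomposes an intervening group element $v\in G$ (living at the right vertex) uniquely as $v=g_kh_k$ with $g_k\in T_{i_k}$, $h_k\in H_{i_k}$, and slides $h_k$ across $t_{i_k}$ using $h_kt_{i_k}=t_{i_k}\alpha_i(h_k)$. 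Now let $\mathcal{G}$ be the graph with $\mathcal{G}_0=G_0$ and edges $\mathcal{G}_1=\{gt_i:g\in T_i,\ i\in I\}$, where the edge $gt_i$ has range $gg^{-1}$ and domain $f_i$. Define a self-similar action of $G$ on $\mathcal{G}^{\ast}$ by left-multiplying and rewriting: for $g\in G$ and $x=g't_i$ composable with $g$, write $gg'=g''h$ uniquely with $g''\in T_i$, $h\in H_i$, and set $g\cdot x=g''t_i$ and $g|_x=\alpha_i(h)$, extending to longer words via (SS8) and (SS6). One checks (C1)--(C3) and (SS1)--(SS8) exactly as in the proof sketch of Theorem \ref{mainthm1}, so that $\mathcal{G}^{\ast}\bowtie G$ is a left Rees category by Proposition \ref{propzappa}. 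The normal form above then gives an isomorphism between $\mathcal{G}^{\ast}\bowtie G$ and $M$, identifying $(gt_i,1)$ with $gt_i$ and $(e,g)$ with $g$.

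Conversely, suppose $M\cong\mathcal{G}^{\ast}\bowtie G$ is a left Rees category. I would pick a representative $x_i\in\mathcal{G}_1$ for each orbit of $G$ on $\mathcal{G}_1$, set $H_i=G_{x_i}$, define $\rho_i=\phi_{x_i}:H_i\to G$, and choose transversals $T_i$ of the left cosets of $H_i$ inside the set of $g\in G$ composable on the right with $x_i$. Each $x\in\mathcal{G}_1$ then has a unique expression $x=g\cdot x_i=gx_i(\rho_i(g))^{-1}$ with $g\in T_i$. Let
$$\Gamma=\langle\ \mathcal{H}\ |\ \mathcal{R}(G),\ ht_i=t_i\rho_i(h)\ \forall h\in H_i,\ i\in I\ \rangle,$$
with $\mathcal{H}_0=G_0$, $\ran(t_i)=\ran(x_i)$, $\dom(t_i)=\dom(x_i)$. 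By the first half of the proof each element of $\Gamma$ has a unique normal form in transversal-times-stable-letter form, and one defines $\iota:\Gamma\to M$ on generators by $\iota(g)=g$ for $g\in G$ and $\iota(t_i)=x_i$, extended multiplicatively. A direct computation shows $\iota$ respects the relations, and using the uniqueness of the decomposition $M=\mathcal{G}^{\ast}G$ together with the uniqueness of the transversal decompositions in $\Gamma$, $\iota$ is bijective on normal forms.

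The main obstacles I expect are essentially bookkeeping: first, verifying that for each intermediate coset decomposition in the reduction-to-normal-form algorithm the required products are actually composable in the groupoid $G$ (this is where the category setting differs genuinely from the monoid case, and where the roles of $e_i,f_i$ matter); second, checking that the self-similar action defined on the edge set $\{gt_i\}$ really satisfies (SS2) and (SS7), which boils down to the fact that for $g,h\in G$ with $gh$ defined, rewriting $(gh)g'=g''k$ gives the same $g''$ as rewriting $g(hg')=g(g''_0h_0)$ and then $gg''_0=g''h''$, combined with $\rho_i$ being a functor. Once these verifications are in place, the remainder of the argument — isomorphism on normal forms, invocation of Proposition \ref{propzappa} — proceeds as in the monoid case treated in Chapter 2.
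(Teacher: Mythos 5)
Your proposal is correct and follows essentially the same route as the paper's own proof: transversal normal forms obtained by sliding elements of $H_{i}$ across the stable letters, the graph with edge set $\left\{gt_{i} : g\in T_{i}\right\}$ carrying a self-similar action of $G$ defined by unique coset decomposition, comparison with the Zappa--Sz\'{e}p product $\mathcal{G}^{\ast}\bowtie G$ via Proposition \ref{propzappa}, and, conversely, orbit representatives and stabilisers yielding the category HNN-presentation. The one point where the paper is more careful than your phrase ``the normal form above then gives an isomorphism'' is injectivity of the comparison functor $\theta:\mathcal{G}^{\ast}\bowtie G\rightarrow M$: since uniqueness of the normal form in $M$ is a consequence of, not an input to, the isomorphism, the paper deduces injectivity from the fact that all relations of $M$ already hold in the Zappa--Sz\'{e}p product (a presentation argument), and you should make that step explicit rather than argue directly from normal forms.
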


\begin{proof}
Let $G$ be a groupoid, $H_{i}:i\in I$ subgroups of $G$, $\alpha_{i}:H_{i}\rightarrow G$ group homomorphisms and let
$M$ be the category HNN-extensions of the groupoid $G$ with associated submonoids $H_{i} = e_{i}H_{i}e_{i}$, stable letters $t_{i}:i\in I$ and let $f_{i}\in G_{0}$ be such that $f_{i} = f_{i}\alpha_{i}(H_{i})f_{i}$. We will now prove that $M$ is a left Rees category such that 
$$M \cong \mathcal{G}^{\ast}\bowtie G$$
for some graph $\mathcal{G}$.

For each $i\in I$, let $T_{i}$ be a transversal of left coset representatives of $H_{i}$. Note that for each $i$ an element $u\in G$ with $\dom(u) = e_{i}$ can be written uniquely in the form $u = gh$, where $g\in T_{i}$ and $h\in H_{i}$. We will assume that $e_{i}\in T_{i}$ for each $i$.

We claim that a normal form for elements $s\in M$ is
$$s = g_{1}t_{i_{1}}g_{2}t_{i_{2}}\cdots g_{m}t_{i_{m}}u$$
where $g_{k}\in T_{i_{k}}$ and $u\in G$.

An element $s\in M$ can definitely be written in the form 
$$s = v_{1}t_{i_{1}}v_{2}t_{i_{2}}\cdots v_{m}t_{i_{m}}w$$
with $v_{k},w\in G$. There will be a unique $g_{1}\in T_{i_{1}}$, $h_{1}\in H_{i_{1}}$ such that
$$v_{1} = g_{1}h_{1}.$$
So
$$s = g_{1}h_{1}t_{i_{1}}v_{2}t_{i_{2}}\cdots v_{m}t_{i_{m}}w.$$
We see that $h_{1}t_{i_{1}} = t_{i_{1}}\rho_{i_{1}}(h_{1})$ and thus
$$s = g_{1}t_{i_{1}}\rho_{i_{1}}(h_{1})v_{2}t_{i_{2}}\cdots v_{m}t_{i_{m}}w.$$
We can continue this process by writing
$$\rho_{i_{k}}(h_{k})v_{k+1} = g_{k+1}h_{k+1}$$
with $g_{k+1}\in T_{i_{k+1}}$, $h_{k+1}\in H_{i_{k+1}}$ and then noting
$$h_{k+1}t_{i_{k+1}} = t_{i_{k+1}}\rho_{i_{k+1}}(h_{k+1}).$$

So we see that we can write $s$ in the form 
$$s = g_{1}t_{i_{1}}g_{2}t_{i_{2}}\cdots g_{m}t_{i_{m}}u$$
where $g_{k}\in T_{i_{k}}$ and $u\in G$. We will see in due course that this is in fact a unique normal form.

Let $\mathcal{G}$ be the graph with $\mathcal{G}_{0} = G_{0} = M_{0}$ and
$$\mathcal{G}_{1} = \left\{gt_{i}|g\in T_{i}, i\in I\right\},$$
where the domain of the edge $gt_{i}$ will be $\dom(gt_{i})\in M_{0}$, similarly for ranges.

We will now consider the free category $\mathcal{G}^{\ast}$. Note that since we haven't yet shown that the normal forms above are unique normal forms, distinct elements of $\mathcal{G}^{\ast}$ might correspond to the same element of $M$. We will now define a self-similar action of $G$ on $\mathcal{G}^{\ast}$. 

Let $y = x_{1}\ldots x_{m}\in \mathcal{G}^{\ast}$ and $g\in G$. Each $x_{k}$ is of the form $x_{k} = u_{k} t_{i_{k}}$ where $u_{k} \in T_{i_{k}}$. Now there exist unique elements $g_{1}\in T_{i_{1}}$, $h_{1}\in H_{i_{1}}$ with $g u_{1} = g_{1} h_{1}$ in $G$. 
%Further there are unique elements $g_{2}\in T_{i_{2}}$, $h_{2}\in H_{i_{2}}$ with $\rho_{i_{1}}(h_{1}) u_{2} = g_{2} h_{2}$ in $G$. 
Then for each $1<k\leq m$ we will let $g_{k}\in T_{i_{k}}$, $h_{k}\in H_{i_{k}}$ be the unique elements with $\alpha_{i_{k-1}}(h_{k-1}) u_{k} = g_{k} h_{k}$ in $G$. Finally we will let $u = \alpha_{i_{m}}(h_{m})$. We thus define
$$g\cdot (x_{1}\ldots x_{m}) = y_{1}\ldots y_{m}$$
where $y_{k} = g_{k}t_{i_{k}}$ and
$$g|_{(x_{1}\ldots x_{m})} = u.$$  
We will define $g\cdot \dom(g) = \ran(g)$ and $g|_{\dom(g)} = g$ for $g\in G$.
We now check this describes a self-similar groupoid action:
\begin{description}
\item[{\rm (SS3), (SS5), (SS6) and (SS8)}] These are true by construction.
\item[{\rm (SS1) and (SS4)}] These follow from the fact that $e_{i}\in T_{i}$ and $\alpha_{i}(e_{i}) = f_{i}$ for each $i\in I$.
\item[{\rm (SS2) and (SS7)}] If $h u = g_{1}h_{1}$ and $(gh) u = g_{2}h_{2}$ for $g,h\in G$, $u,g_{1},g_{2}\in T_{i_{k}}$, $h_{1},h_{2}\in H_{i_{k}}$ then 
$$g g_{1} = ghu h_{1}^{-1} = g_{2}h_{2}h_{1}^{-1}$$
and since $\alpha_{i_{k}}$ is a functor we also have
$$\alpha_{i_{k}}(h_{2}) = \alpha_{i_{k}}(h_{2}h_{1}^{-1}) \alpha_{i_{k}}(h_{1}).$$
Thus $(gh) \cdot x = g \cdot (h \cdot x)$ and $(gh)|_{x} = g|_{h \cdot x} h|_{x}$ for all $g,h\in G$ and $x\in \mathcal{G}^{\ast}$ with $\dom(g)=\ran(h)$ and $\dom(h) = \ran(x)$.
\end{description}
Let $C = \mathcal{G}^{\ast} \bowtie G$ be the associated Zappa-Sz\'{e}p product. We define a map $\theta:C\rightarrow M$ by
$$\theta(x_{1}\ldots x_{m},g) = g_{1}t_{i_{1}}\ldots g_{m}t_{i_{m}} g$$
where $x_{k} = g_{k}t_{i_{k}}$ for each $1\leq k\leq m$. By the above work on normal forms for $M$, we see that $\theta$ is surjective. We now check that $\theta$ is a functor. Let $(x_{1}\ldots x_{m},v_{1}),(y_{1}\ldots y_{r},v_{2})\in C$ be arbitrary with $\dom(v_{1}) = \ran(y_{1})$. Suppose $x_{k} = u_{k}t_{i_{k}}$ and $y_{k} = g_{k}t_{j_{k}}$ for each $k$. Let $g_{1}^{\prime}\in T_{j_{1}}, h_{1}\in H_{j_{1}}$ be such that $v_{1} g_{1} = g_{1}^{\prime} h_{1}$, for each $1<k\leq r$ let $g_{k}^{\prime}\in T_{j_{k}}, h_{k}\in H_{j_{k}}$ be such that $\alpha_{j_{k-1}}(h_{k-1}) g_{k} = g_{k}^{\prime}h_{k}$, let $u = \alpha_{j_{r}}(h_{r})v_{2}$ and let $y_{k}^{\prime} = g_{k}^{\prime}t_{j_{k}} \in \mathcal{G}$ for $1\leq k\leq r$. Then
$$(x_{1}\ldots x_{m},v_{1})(y_{1}\ldots y_{r},v_{2}) = (x_{1}\ldots x_{m}y_{1}^{\prime}\ldots y_{r}^{\prime}, u)$$
and so
\begin{eqnarray*}
\theta((x_{1}\ldots x_{m},v_{1})(y_{1}\ldots y_{r},v_{2})) &=& \theta(x_{1}\ldots x_{m}y_{1}^{\prime}\ldots y_{r}^{\prime}, u) \\
&=& u_{1}t_{i_{1}}\ldots u_{m}t_{i_{m}}g_{1}^{\prime}t_{j_{1}}\ldots g_{r}^{\prime}t_{j_{r}}u\\
&=& u_{1}t_{i_{1}}\ldots u_{m}t_{i_{m}}v_{1}g_{1}t_{j_{1}}\ldots g_{r}t_{j_{r}} v_{2} \\
&=& \theta(x_{1}\ldots x_{m},v_{1})\theta(y_{1}\ldots y_{r},v_{2}).
\end{eqnarray*}
Thus $\theta$ is a functor.
Since $e_{i}\in T_{i}$ for each $i$ there exist $x_{i}\in \mathcal{G}_{1}$ with $x_{i} = t_{i}$. We see that $\theta(x_{i},\dom(x_{i})) = t_{i}$ for each $i$ and $\theta(\ran(g),g) = g$ for each $g\in G$. We know from the earlier theory that 
$$\mathcal{G}^{\ast} \cong \left\{(x,\dom(x)) | x\in \mathcal{G}^{\ast}\right\}\subseteq C$$
and
$$G \cong \left\{(\ran(g),g) | g\in G\right\}\subseteq C.$$
Let us denote the element $(x_{i},\dom(x_{i}))\in C$ by $y_{i}$, the image of $G$ in $C$ by $G^{\prime}$, the image of $H_{i}$ in $C$ by $H_{i}^{\prime}$ and let us denote by $\alpha_{i}^{\prime}$ the functor $\alpha_{i}^{\prime}:H_{i}^{\prime}\rightarrow G^{\prime}$ given by 
$\alpha_{i}^{\prime}(1,h) = (f_{i},\alpha_{i}(h))$. 
Then we see that $G^{\prime}\cup \left\{y_{i}:i\in I\right\}$ generates $C$. Further, for each $h\in H_{i}^{\prime}$ we have 
$hy_{i} = y_{i}\alpha_{i}^{\prime}(h)$. Let $\mathcal{H}$ be the graph with $\mathcal{H}_{0} = G_{0}^{\prime}$ and $\mathcal{H}_{1} = G_{1}^{\prime}\cup\left\{y_{i}:i\in I\right\}$. Then we see that $C$ is given by the category presentation
$$C\cong \langle \mathcal{H} \quad | \quad \mathcal{R}(G^{\prime}), \quad hy_{i} = y_{i}\alpha_{i}^{\prime}(h), i\in I, \quad \mathcal{S}\rangle,$$
where $\mathcal{R}(G^{\prime})$ denotes the relations of $G$ in terms of $G^{\prime}$ and $\mathcal{S}$ denotes whatever relations are needed so that $C$ really is given by this presentation. Since $\theta(\ran(g),g) = g$ for each $(\ran(g),g)\in G^{\prime}$, $\theta(y_{i}) = t_{i}$ for each $i$ and all the relations of $M$ hold in $C$ it follows from the fact that $\theta$ is an surjective functor that $\theta$ is in fact an isomorphism.

We have therefore shown that every category HNN-extension of a groupoid is a left Rees category. We will now show that every left Rees category is a category HNN-extension.

Let $M=\mathcal{G}^{*}G$ be a left Rees category. We will say two elements $x,y\in \mathcal{G}_{1}$ are in the same orbit under the action of $G$ if there exist elements $g,h\in G$ with $gx = yh$. This defines an equivalence relation on $\mathcal{G}_{1}$. Let $X$ be a subset of $\mathcal{G}_{1}$ such that $X$ contains precisely one element in each orbit of the action of $G$ on $\mathcal{G}_{1}$. 

Let us write 
$$X = \left\{t_{i}|i\in I \right\}$$
and let $X^{\ast}$ denote the set of all allowed products of elements of $X$ together with all the identity elements of $M$.

Define $\rho_{i}:=\rho_{t_{i}}$, the map which sends an element $g\in G$ with $\dom(g) = \ran(t_{i})$ to the element $g|_{t_{i}}\in G$. Let $H_{i}=G_{t_{i}}$ be the stabiliser of $t_{i}$ under the action of $G$ let $T_{i}$ be a transversal of $H_{i}$ and let $\mathcal{H}$ be the directed graph with $\mathcal{H}_{0} = \mathcal{G}_{0} = G_{0}$ and $\mathcal{H}_{1} = G_{1} \cup X$. 

Define $\Gamma$ by the following category presentation:

$$\Gamma=\langle \mathcal{H} | \mathcal{R}(G), \quad h t_{i} = t_{i} \rho_{i}(h) \quad \forall h\in H_{i}, i\in I \rangle,$$

where $\mathcal{R}(G)$ denotes the relations of $G$, so that $\Gamma$ is a category HNN-extension of $G$.

A few observations:
\begin{enumerate}
\item Each $x\in \mathcal{G}_{1}$ is given uniquely by $x = g\cdot t_{i}$ for some $i$, where $g\in T_{i}$. 
\item For $g\in T_{i}$ we have $g\cdot t_{i} = g t_{i}(g|_{x_{i}})^{-1} = g t_{i}(\rho_{i}(g))^{-1}$.
\item For each $i$ every element $u\in G$ with $\dom(u) = \ran(t_{i})$ can be written uniquely in the form $u=gh$, where $g\in T_{i}$ and $h\in H_{i}$.
\end{enumerate}

One can check in exactly the same way as for the first half of this theorem that every element of $\Gamma$ can be written in the form
$$g_{1}t_{i_{1}}(\rho_{i_{1}}(g_{1}))^{-1}g_{2}t_{i_{2}}(\rho_{i_{2}}(g_{2}))^{-1}\cdots g_{m}t_{i_{m}}(\rho_{i_{m}}(g_{m}))^{-1}u$$
where $g_{k}\in T_{i_{k}}$ and $u\in G$. 

Let us check that this is a unique normal form for elements of $\Gamma$. Note first that the relations of $\Gamma$ do not allow us to swap or remove $t_{i}$'s, so two equal elements of $\Gamma$ must have the same number of $t_{i}$'s and they must be in the same positions relative to each other.

Now suppose
$$g_{1}t_{i_{1}}(\rho_{i_{1}}(g_{1}))^{-1}\cdots g_{m}t_{i_{m}}(\rho_{i_{m}}(g_{m}))^{-1}u
= g_{1}^{\prime}t_{i_{1}}(\rho_{i_{1}}(g_{1}^{\prime}))^{-1}\cdots g_{m}^{\prime}t_{i_{m}}(\rho_{i_{m}}(g_{m}^{\prime}))^{-1}v$$
in $\Gamma$, where these elements are written in the above form. Then by the unique normal form for elements of $\Gamma$ we must have $g_{1} = g_{1}^{\prime}$. Thus by left cancellativity of $\Gamma$ we have
$$g_{2}t_{i_{2}}(\rho_{i_{2}}(g_{2}))^{-1}\cdots g_{m}t_{i_{m}}(\rho_{i_{m}}(g_{m}))^{-1}u 
=g_{2}^{\prime}t_{i_{2}}(\rho_{i_{2}}(g_{2}^{\prime}))^{-1}\cdots g_{m}^{\prime}t_{i_{m}}(\rho_{i_{m}}(g_{m}^{\prime}))^{-1}v.$$
Continuing in this way one see that $g_{k} = g_{k}^{\prime}$ for each $k$ and $u = v$.

Observe that the left Rees category $M$ can be given by category presentation as
$$M=\langle \mathcal{H} | \mathcal{R}(G), \quad h t_{i} = t_{i} \rho_{i}(h) \quad \forall h\in H_{i}, i\in I, \quad \mathcal{S} \rangle,$$
where $\mathcal{S}$ are any additional relations required to make this really a presentation for $M$. It is now easy to see that the map $f:\Gamma\rightarrow M$ defined on generators by $f(t_{i}) = t_{i}$ for $i\in I$ and $f(g) = g$ for $g\in G$ is an isomorphism of categories.

\end{proof}

If $G$ be a groupoid and $H$ a subgroup of $G$ then we will call a functor $\phi:H\rightarrow G$ a \emph{partial endomorphism} of $G$. Given a groupoid $G$ and partial endomorphisms $\phi_{i}:H_{i}\rightarrow G$ then Theorem \ref{HNNLRC} says that we can form a left Rees category $M(\phi_{i}:i\in I)$ as follows. For each $i\in I$ let $a_{i},b_{i}\in G_{0}$ be such that $H_{i} = a_{i}H_{i}a_{i}$ and $\phi_{i}(H_{i}) = b_{i}\phi_{i}(H_{i})b_{i}$. Define $\mathcal{H}$ to be the graph with $\mathcal{H}_{0} = G_{0}$ and $\mathcal{H}_{1} = G_{1}\cup \left\{t_{i}:i\in I\right\}$ where the edge $t_{i}$ has $\ran(t_{i}) = a_{i}$ and $\dom(t_{i}) = b_{i}$. Then $M(\phi_{i}:i\in I)$ will have category presentation
$$M(\phi_{i}:i\in I) = \langle \mathcal{H} | \mathcal{R}(G), \quad h t_{i} = t_{i} \rho_{i}(h) \quad \forall h\in H_{i}, i\in I \rangle.$$

By Lemma \ref{LRCcancphi} and Lemma \ref{usefulLRClem} (iii) and (iv) we see that a left Rees category $M(\phi_{i}:i\in I)$ is right cancellative (and so a Rees category) if and only if $\phi_{i}$ is injective for each $i\in I$.

We have the following theorem which describes in terms of partial endomorphisms when two left Rees categories are isomorphic.

\begin{theorem}
\label{LRCconjclass}
Let $G,G^{\prime}$ be groupoids, $H_{i}:i\in I$ subgroups of $G$, $H_{j}^{\prime}:j\in J$ subgroups of $G^{\prime}$ and suppose $\phi_{i}:H_{i}\rightarrow G$, $\phi_{j}^{\prime}:H_{j}^{\prime}\rightarrow G^{\prime}$ are partial endomorphisms for each $i\in I$, $j\in J$. Then the left Rees categories $M(\phi_{i}:i\in I)$ and $M(\phi_{j}^{\prime}:j\in J)$ are isomorphic if and only if there is an isomorphism $f:G\rightarrow G^{\prime}$, a bijection $\gamma:I\rightarrow J$ and elements $u_{i},v_{i}\in G^{\prime}$ with $u_{i}^{-1}f(H_{i})u_{i} = H_{\gamma(i)}^{\prime}$ and $v_{i}f(\phi_{i}(h))v_{i}^{-1} = \phi_{\gamma(i)}^{\prime}(u_{i}^{-1}f(h)u_{i})$ for every $i\in I$ and $h\in H_{i}$.
\end{theorem}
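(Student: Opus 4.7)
The plan is to generalise the arguments of Proposition \ref{virtendLRMiso} and the analogous proposition classifying HNN-extensions of a fixed group up to isomorphism, lifting them from the monoid/group setting to the category/groupoid setting, using the presentations furnished by Theorem \ref{HNNLRC}. Write
\[
M(\phi_i : i \in I) = \langle \mathcal{H} \mid \mathcal{R}(G),\ h t_i = t_i \phi_i(h)\ \forall h \in H_i,\ i \in I \rangle_{C}
\]
and
\[
M(\phi_j' : j \in J) = \langle \mathcal{H}' \mid \mathcal{R}(G'),\ h' r_j = r_j \phi_j'(h')\ \forall h' \in H_j',\ j \in J \rangle_{C},
\]
with stable letters $t_i$ (resp. $r_j$) satisfying $\ran(t_i)=a_i$, $\dom(t_i)=b_i$ and $\ran(r_j)=a_j'$, $\dom(r_j)=b_j'$.

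For the $(\Leftarrow)$ direction, suppose we are given $f$, $\gamma$ and elements $u_i, v_i \in G'$ satisfying the stated hypotheses. First I would observe that the two conditions force the composability data: $u_i^{-1} f(H_i) u_i = H_{\gamma(i)}' \subseteq a_{\gamma(i)}' G' a_{\gamma(i)}'$ forces $\dom(u_i)=a_{\gamma(i)}'$ and $\ran(u_i)=f(a_i)$, while $v_i f(\phi_i(H_i)) v_i^{-1} = \phi_{\gamma(i)}'(H_{\gamma(i)}') \subseteq b_{\gamma(i)}' G' b_{\gamma(i)}'$ forces $\ran(v_i)=b_{\gamma(i)}'$ and $\dom(v_i)=f(b_i)$. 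Hence the product $u_i r_{\gamma(i)} v_i$ is a well-defined element of $M(\phi_j':j\in J)$ with the same domain and range as $F(t_i)$ should have. Then I would define $F$ on the generating graph $\mathcal{H}$ by $F(g) = f(g)$ for $g \in G$ and $F(t_i) = u_i r_{\gamma(i)} v_i$, and verify that the defining relations of $M(\phi_i:i\in I)$ are respected: for $h \in H_i$,
\[
F(h)F(t_i) = f(h) u_i r_{\gamma(i)} v_i = u_i \bigl(u_i^{-1} f(h) u_i\bigr) r_{\gamma(i)} v_i = u_i r_{\gamma(i)} \phi_{\gamma(i)}'\!\bigl(u_i^{-1} f(h) u_i\bigr) v_i,
\]
and applying the hypothesis $\phi_{\gamma(i)}'(u_i^{-1} f(h) u_i) = v_i f(\phi_i(h)) v_i^{-1}$ rewrites this as $u_i r_{\gamma(i)} v_i f(\phi_i(h)) = F(t_i) F(\phi_i(h))$. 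So $F$ extends to a functor; its inverse is constructed symmetrically from $f^{-1}$, $\gamma^{-1}$ and the appropriately conjugated inverses of $u_i, v_i$, and bijectivity can be confirmed by comparing normal forms supplied by Theorem \ref{HNNLRC}.

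For the $(\Rightarrow)$ direction, suppose $F \colon M(\phi_i:i\in I) \to M(\phi_j':j\in J)$ is an isomorphism of categories. Since each $M(\phi_i:i\in I)$ is a Zappa-Sz\'ep product of a free category with the groupoid $G$, Proposition \ref{propzappa}(3) identifies $G$ with the groupoid of invertible elements of $M(\phi_i:i\in I)$, and similarly for $G'$. Hence $F$ restricts to a groupoid isomorphism $f := F|_G \colon G \to G'$. Next, by Lemma \ref{proplcc}, the indecomposable elements of $M(\phi_i:i\in I)$ are exactly the generators of the submaximal principal right ideals, and these fall into $G$-orbits (under left multiplication combined with right multiplication by units) indexed precisely by $I$, with the orbit of $t_i$ corresponding to $i$; the isomorphism $F$ carries indecomposable elements to indecomposable elements and respects the $G$-action via $f$, inducing a bijection $\gamma \colon I \to J$. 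Then $F(t_i)$ lies in the orbit of $r_{\gamma(i)}$, so $F(t_i) = u_i r_{\gamma(i)} v_i$ for unique $u_i, v_i \in G'$. Applying $F$ to the relation $h t_i = t_i \phi_i(h)$ and pre- and post-multiplying by $u_i^{-1}$ and $v_i^{-1}$ yields $(u_i^{-1} f(h) u_i) r_{\gamma(i)} = r_{\gamma(i)} (v_i f(\phi_i(h)) v_i^{-1})$; the uniqueness of normal forms in $M(\phi_j':j\in J)$ forces $u_i^{-1} f(h) u_i \in H_{\gamma(i)}'$ and $\phi_{\gamma(i)}'(u_i^{-1} f(h) u_i) = v_i f(\phi_i(h)) v_i^{-1}$. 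Applying the same argument to $F^{-1}$ upgrades the inclusion $u_i^{-1} f(H_i) u_i \subseteq H_{\gamma(i)}'$ to the required equality.

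The principal obstacle I anticipate is the bookkeeping in the groupoid $G'$: unlike the monoid case of Proposition \ref{virtendLRMiso}, the elements $u_i, v_i$ must lie in specific hom-sets dictated by the identities $a_i, b_i, a_{\gamma(i)}', b_{\gamma(i)}'$, and both directions must continually verify that the products appearing are defined. A secondary delicacy is the proof that $\gamma$ is well-defined, which requires identifying submaximal principal right ideals up to the action of the groupoid of units and showing that this $G$-orbit structure is an isomorphism invariant; this relies on Lemma \ref{proplcc} together with the fact that $F$ necessarily permutes invertible elements and so descends to a bijection of these orbits. Once these composability and orbit-indexing issues are resolved, the remaining computational verifications are direct analogues of those in Proposition \ref{virtendLRMiso}.
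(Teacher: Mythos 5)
Your proposal is correct and follows essentially the same route as the paper's proof: restrict the isomorphism to the groupoids of invertible elements, use the submaximal (two-sided) ideal/orbit structure to get $\gamma$ and write $F(t_i)=u_i r_{\gamma(i)} v_i$, then transport the defining relations and use normal-form uniqueness, exactly as in the categorical analogue of Proposition~\ref{virtendLRMiso}. The only divergence is in the $(\Leftarrow)$ bijectivity check, where you propose building the inverse functor symmetrically from $f^{-1}$, $\gamma^{-1}$ and the conjugated $u_i,v_i$ (a legitimate, if unverified, shortcut), whereas the paper proves surjectivity directly and injectivity by an explicit normal-form rewriting argument.
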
 

\begin{proof}
($\Rightarrow$) For each $i\in I$, $j\in J$ let $a_{i},b_{i}\in G$, $a_{j}^{\prime},b_{j}^{\prime}\in G^{\prime}$ be the identities with $H_{i} = a_{i}H_{i}a_{i}$, $\phi_{i}(H_{i}) = b_{i}\phi_{i}(H_{i})b_{i}$, $H_{j}^{\prime} = a_{j}^{\prime}H_{j}^{\prime}a_{j}^{\prime}$ and $\phi_{j}^{\prime}(H_{j}^{\prime}) = b_{j}^{\prime}\phi_{j}^{\prime}(H_{j}^{\prime})b_{j}^{\prime}$.
Let $\mathcal{G}$ and $\mathcal{H}$ be the graphs with $\mathcal{G}_{0} = G_{0}$ and $\mathcal{G}_{1} = G_{1}\cup \left\{t_{i}:i\in I\right\}$ where $a_{i} \stackrel{t_{i}}{\leftarrow} b_{i}$ and $\mathcal{H}_{0} = G_{0}^{\prime}$ and $\mathcal{H}_{1} = G_{1}^{\prime}\cup \left\{r_{j}:j\in J\right\}$ where $a_{j}^{\prime} \stackrel{r_{j}}{\leftarrow} b_{j}^{\prime}$.
We can write $M(\phi_{i}:i\in I)$ and $M(\phi_{j}^{\prime}:j\in J)$ in terms of category presentation as
$$M(\phi_{i}:i\in I) = \langle \mathcal{G} | \mathcal{R}(G), \quad h t_{i} = t_{i} \phi_{i}(h) \quad \forall h\in H_{i}, i\in I \rangle $$
and
$$M(\phi_{j}^{\prime}:j\in J) = \langle \mathcal{H} | \mathcal{R}(G), \quad h r_{j} = r_{j} \phi_{j}^{\prime}(h) \quad \forall h\in H_{j}^{\prime}, j\in J \rangle .$$

Suppose $f:M(\phi_{i}:i\in I)\rightarrow M(\phi_{j}^{\prime}:j\in J)$ is an isomorphism. Note that $f(G) = G^{\prime}$.
Each submaximal principal two-sided ideal of $M(\phi_{i}:i\in I)$ is generated by a $t_{i}$ and likewise for $M(\phi_{j}^{\prime}:j\in J)$. 
It follows that there is a bijection $\gamma:I\rightarrow J$ and elements $u_{i},v_{i}\in G^{\prime}$ for each $i\in I$ with $f(t_{i}) = u_{i}r_{\gamma(i)}v_{i}$.
Define maps $\alpha_{i}:a_{i}Ga_{i}\rightarrow a_{i}^{\prime}G^{\prime}a_{i}^{\prime}$, $\beta_{i}:b_{i}Gb_{i}\rightarrow b_{i}^{\prime}G^{\prime}b_{i}^{\prime}$ for each $i\in I$ by $\alpha_{i}(g) = u_{i}^{-1}f(g)u_{i}$ and $\beta_{i}(g) = v_{i}f(g)v_{i}^{-1}$.
It is clear that $\alpha_{i}$ and $\beta_{i}$ are isomorphisms for each $i\in I$ since local groups on the same connected component of a groupoid are isomorphic.

We now verify that $\alpha_{i}:H_{i}\rightarrow H_{\gamma(i)}^{\prime}$ and $\beta_{i}\phi_{i} = \phi_{\gamma(i)}^{\prime}\alpha_{i}$ for each $i\in I$. If $h\in H_{i}$ then
\begin{eqnarray*}
\alpha_{i}(h)r_{\gamma(i)} &=& u_{i}^{-1}f(h)u_{i}r_{\gamma(i)} = u_{i}^{-1}f(h)u_{i}r_{\gamma(i)}v_{i}v_{i}^{-1} = u_{i}^{-1}f(h)f(t_{i})v_{i}^{-1} \\
&=& u_{i}^{-1}f(ht_{i})v_{i}^{-1} = u_{i}^{-1}f(t_{i}\phi_{i}(h))v_{i}^{-1} = u_{i}^{-1}f(t_{i})f(\phi_{i}(h))v_{i}^{-1} \\
&=& r_{\gamma(i)}v_{i}f(\phi_{i}(h))v_{i}^{-1} = r_{\gamma(i)}\beta_{i}(\phi_{i}(h)).
\end{eqnarray*}
Thus $\alpha_{i}(H_{i})\subseteq H_{\gamma(i)}^{\prime}$ and $\beta_{i}\phi_{i} = \phi_{\gamma(i)}^{\prime}\alpha_{i}$.
Further, if $h\in H_{\gamma(i)}^{\prime}$ then
\begin{eqnarray*}
f(\alpha_{i}^{-1}(h)t_{i}) &=& f(f^{-1}(u_{i}hu_{i}^{-1})t_{i}) = u_{i}hu_{i}^{-1}u_{i}r_{\gamma(i)}v_{i} = u_{i}hr_{\gamma(i)}v_{i}
= u_{i}r_{\gamma(i)}\phi_{\gamma(i)}^{\prime}(h)v_{i}\\
&=& u_{i}r_{\gamma(i)}v_{i}v_{i}^{-1}\phi_{\gamma(i)}^{\prime}(h)v_{i} = f(t_{i}f^{-1}(v_{i}^{-1}\phi_{\gamma(i)}^{\prime}(h)v_{i})).
\end{eqnarray*}
Since $f$ is an isomorphism this therefore implies that $\alpha_{i}^{-1}(h)t_{i} = t_{i}f^{-1}(v_{i}^{-1}\phi_{\gamma(i)}^{\prime}(h)v_{i})$ and so 
$\alpha_{i}^{-1}(H_{\gamma(i)}^{\prime}) = H_{i}$. 
%Thus $\phi_{i}$ and $\phi_{\gamma(i)}^{\prime}$ are isomorphic for each $i\in I$.

($\Leftarrow$) For each $i\in I$, $j\in J$ let $a_{i},b_{i}\in G$, $a_{j}^{\prime},b_{j}^{\prime}\in G^{\prime}$ be the identities with $H_{i} = a_{i}H_{i}a_{i}$, $\phi_{i}(H_{i}) = b_{i}\phi_{i}(H_{i})b_{i}$, $H_{j}^{\prime} = a_{j}^{\prime}H_{j}^{\prime}a_{j}^{\prime}$ and $\phi_{j}^{\prime}(H_{j}^{\prime}) = b_{j}^{\prime}\phi_{j}^{\prime}(H_{j}^{\prime})b_{j}^{\prime}$.
Let $\mathcal{G}$ and $\mathcal{H}$ be the graphs with $\mathcal{G}_{0} = G_{0}$ and $\mathcal{G}_{1} = G_{1}\cup \left\{t_{i}:i\in I\right\}$ where $a_{i} \stackrel{t_{i}}{\leftarrow} b_{i}$ and $\mathcal{H}_{0} = G_{0}^{\prime}$ and $\mathcal{H}_{1} = G_{1}^{\prime}\cup \left\{r_{j}:j\in J\right\}$ where $a_{j}^{\prime} \stackrel{r_{j}}{\leftarrow} b_{j}^{\prime}$.
We can write $M(\phi_{i}:i\in I)$ and $M(\phi_{j}^{\prime}:j\in J)$ in terms of category presentation as
$$M(\phi_{i}:i\in I) = \langle \mathcal{G} | \mathcal{R}(G), \quad h t_{i} = t_{i} \phi_{i}(h) \quad \forall h\in H_{i}, i\in I \rangle $$
and
$$M(\phi_{j}^{\prime}:j\in J) = \langle \mathcal{H} | \mathcal{R}(G), \quad h r_{j} = r_{j} \phi_{j}^{\prime}(h) \quad \forall h\in H_{j}^{\prime}, j\in J \rangle .$$
Define $\bar{f}:M(\phi_{i}:i\in I)\rightarrow M(\phi_{j}^{\prime}:j\in J)$ on generators by $\bar{f}(g) = f(g)$ for each $g\in G$ and $\bar{f}(t_{i}) = u_{i}r_{\gamma(i)}v_{i}$ for each $i\in I$. Observe that by construction $\bar{f}(gh) = \bar{f}(g) \bar{f}(h)$ for each $g,h\in G$. 
We now check that for each $i\in I$ and $h\in H_{i}$ we have $\bar{f}(h)\bar{f}(t_{i}) = \bar{f}(t_{i})\bar{f}(\phi_{i}(h))$. Let $i\in I$ and $h\in H_{i}$. Then
\begin{eqnarray*}
\bar{f}(h)\bar{f}(t_{i}) &=& f(h)u_{i}r_{\gamma(i)}v_{i} = u_{i}u_{i}^{-1}f(h)u_{i}r_{\gamma(i)}v_{i} = u_{i}r_{\gamma(i)}\phi_{\gamma(i)}^{\prime}(u_{i}^{-1}f(h)u_{i})v_{i} \\
&=& u_{i}r_{\gamma(i)}v_{i}f(\phi_{i}(h))v_{i}^{-1}v_{i} = u_{i}r_{\gamma(i)}v_{i}f(\phi_{i}(h)) = \bar{f}(t_{i})\bar{f}(\phi_{i}(h))
\end{eqnarray*}
and so $\bar{f}$ is a functor. 
To see that $\bar{f}$ is surjective note that for each $j\in J$ there exists $i\in I$ with $\gamma(i) = j$, $f^{-1}(u_{i}^{-1})t_{i}f^{-1}(v_{i}^{-1}) \in M(\phi_{i}:i\in I)$ and $\bar{f}(f^{-1}(u_{i}^{-1})t_{i}f^{-1}(v_{i}^{-1})) = r_{j}$. 
We must finally verify that $\bar{f}$ is injective. Suppose $g_{1}t_{i_{1}}\ldots g_{m}t_{i_{m}}x_{1}, g_{1}^{\prime}t_{i_{1}}\ldots g_{m}^{\prime}t_{i_{m}}x_{2} \in M(\phi_{i}:i\in I)$ are such that
$$\bar{f}(g_{1}t_{i_{1}}\ldots g_{m}t_{i_{m}}x_{1}) = \bar{f}(g_{1}^{\prime}t_{i_{1}}\ldots g_{m}^{\prime}t_{i_{m}}x_{2}).$$
Note that we have assumed that the $t's$ in both expressions are the same since our relations don't allow us to swap them.
We therefore have
$$f(g_{1})u_{i_{1}}r_{\gamma(i_{1})}v_{i_{1}}\ldots f(g_{m})u_{i_{m}}r_{\gamma(i_{m})}v_{i_{m}}f(x_{1}) = f(g_{1}^{\prime})u_{i_{1}}r_{\gamma(i_{1})}v_{i_{1}}\ldots f(g_{m}^{\prime})u_{i_{m}}r_{\gamma(i_{m})}v_{i_{m}}f(x_{2}).$$
For each $i\in I$ let $T_{i}$ be a transversal of $H_{i}$ in $G$ and for each $j\in J$ let $T_{j}^{\prime}$ be a transversal of $H_{j}^{\prime}$ in $G^{\prime}$. 
Let $y_{1}\in T_{\gamma(i_{1})}^{\prime}$, $h_{1},h_{1}^{\prime}\in H_{\gamma(i_{1})}^{\prime}$ be such that
$$y_{1}h_{1} = f(g_{1})u_{i_{1}}$$
and
$$y_{1}h_{1}^{\prime} = f(g_{1}^{\prime})u_{i_{1}}.$$
For $k = 2,\ldots,m$ let $y_{k}\in T_{\gamma(i_{k})}^{\prime}$, $h_{k},h_{k}^{\prime}\in H_{\gamma(i_{k})}^{\prime}$ be such that
$$y_{k}h_{k} = \phi_{\gamma(i_{k-1})}^{\prime}(h_{k-1})v_{i_{k-1}}f(g_{k})u_{i_{k}}$$
and
$$y_{k}h_{k}^{\prime} = \phi_{\gamma(i_{k-1})}^{\prime}(h_{k-1}^{\prime})v_{i_{k-1}}f(g_{k}^{\prime})u_{i_{k}}.$$
Then by the uniqueness of normal forms we have
$$\phi_{\gamma(i_{m})}^{\prime}(h_{m})v_{i_{m}}f(x_{1}) = \phi_{\gamma(i_{m})}^{\prime}(h_{m}^{\prime})v_{i_{m}}f(x_{2}).$$
By assumption, $h_{m} = u_{i_{m}}^{-1}f(z_{m})u_{i_{m}}$ for some $z_{m}\in H_{i_{m}}$ and $h_{m}^{\prime} = u_{i_{m}}^{-1}f(z_{m}^{\prime})u_{i_{m}}$ for some $z_{m}^{\prime}\in H_{i_{m}}$. It follows that
$$\phi_{\gamma(i_{m})}^{\prime}(u_{i_{m}}^{-1}f(z_{m})u_{i_{m}})v_{i_{m}}f(x_{1}) = \phi_{\gamma(i_{m})}^{\prime}(u_{i_{m}}^{-1}f(z_{m}^{\prime})u_{i_{m}})v_{i_{m}}f(x_{2})$$
and so
$$v_{i_{m}}f(\phi_{i_{m}}(z_{m}))v_{i_{m}}^{-1}v_{i_{m}}f(x_{1}) = v_{i_{m}}f(\phi_{i_{m}}(z_{m}^{\prime}))v_{i_{m}}^{-1}v_{i_{m}}f(x_{2})$$
giving
$$f(\phi_{i_{m}}(z_{m}))f(x_{1}) = f(\phi_{i_{m}}(z_{m}^{\prime}))f(x_{2}).$$
Since $f$ is an isomorphism this implies that
$$\phi_{i_{m}}(z_{m})x_{1} = \phi_{i_{m}}(z_{m}^{\prime})x_{2}$$
and so
$$z_{m}t_{i_{m}}x_{1} = z_{m}^{\prime}t_{i_{m}}x_{2}.$$
Rewriting this in terms of $h_{m}$ and $h_{m}^{\prime}$ we have
$$f^{-1}(u_{i_{m}}h_{i_{m}}u_{i_{m}}^{-1})t_{i_{m}}x_{1} = f^{-1}(u_{i_{m}}h_{i_{m}}^{\prime}u_{i_{m}}^{-1})t_{i_{m}}x_{2}$$
and so
$$f^{-1}(u_{i_{m}}y_{m}^{-1}\phi_{\gamma(i_{m-1})}^{\prime}(h_{m-1})v_{i_{m-1}}f(g_{m}))t_{i_{m}}x_{1} = f^{-1}(u_{i_{m}}y_{m}^{-1}\phi_{\gamma(i_{m-1})}^{\prime}(h_{m-1}^{\prime})v_{i_{m-1}}f(g_{m}^{\prime}))t_{i_{m}}x_{2}.$$
Cancelling on the left gives
$$f^{-1}(\phi_{\gamma(i_{m-1})}^{\prime}(h_{m-1})v_{i_{m-1}})g_{m}t_{i_{m}}x_{1} = f^{-1}(\phi_{\gamma(i_{m-1})}^{\prime}(h_{m-1}^{\prime})v_{i_{m-1}})g_{m}^{\prime}t_{i_{m}}x_{2}.$$
Note that $h_{m-1} = u_{i_{m-1}}^{-1}f(z_{m-1})u_{i_{m-1}}$ for some $z_{m-1}\in H_{i_{m-1}}$ and $h_{m-1}^{\prime} = u_{i_{m-1}}^{-1}f(z_{m-1}^{\prime})u_{i_{m-1}}$ for some $z_{m-1}^{\prime}\in H_{i_{m-1}}$. Using this we have
$$f^{-1}(\phi_{\gamma(i_{m-1})}^{\prime}(u_{i_{m-1}}^{-1}f(z_{m-1})u_{i_{m-1}})v_{i_{m-1}})g_{m}t_{i_{m}}x_{1} = f^{-1}(\phi_{\gamma(i_{m-1})}^{\prime}(u_{i_{m-1}}^{-1}f(z_{m-1}^{\prime})u_{i_{m-1}})v_{i_{m-1}})g_{m}^{\prime}t_{i_{m}}x_{2}.$$
Thus 
$$f^{-1}(v_{i_{m-1}}f(\phi_{i_{m-1}}(z_{m-1}))v_{i_{m-1}}^{-1}v_{i_{m-1}})g_{m}t_{i_{m}}x_{1} = f^{-1}(v_{i_{m-1}}f(\phi_{i_{m-1}}(z_{m-1}^{\prime}))v_{i_{m-1}}^{-1}v_{i_{m-1}})g_{m}^{\prime}t_{i_{m}}x_{2}.$$
This gives
$$\phi_{i_{m-1}}(z_{m-1})g_{m}t_{i_{m}}x_{1} = \phi_{i_{m-1}}(z_{m-1}^{\prime})g_{m}^{\prime}t_{i_{m}}x_{2}$$
and so
$$z_{m-1}t_{i_{m-1}}g_{m}t_{i_{m}}x_{1} = z_{m-1}^{\prime}t_{i_{m-1}}g_{m}^{\prime}t_{i_{m}}x_{2}.$$
We then continue in this way to discover that
$$g_{1}t_{i_{1}}\ldots g_{m}t_{i_{m}}x_{1} = g_{1}^{\prime}t_{i_{1}}\ldots g_{m}^{\prime}t_{i_{m}}x_{2}$$
and so $\bar{f}$ is injective. 
\end{proof}

Let $\phi:H\rightarrow G$, $\phi^{\prime}:H^{\prime}\rightarrow G$ be partial endomorphisms of a groupoid $G$ and suppose that $a_{1},a_{2},b_{1},b_{2}\in G$ are identities such that $H = a_{1}Ha_{1}$, $\phi(H) = b_{1}\phi(H)b_{1}$, $H^{\prime} = a_{2}H^{\prime}a_{2}$ and $\phi^{\prime}(H^{\prime}) = b_{2}\phi^{\prime}(H^{\prime})b_{2}$. Then $\phi$ and $\phi^{\prime}$ will be said to be \emph{conjugate} if there exist 
$a_{1} \stackrel{u}{\leftarrow} a_{2}$, $b_{1} \stackrel{v}{\leftarrow} b_{2}$ in $G$ 
such that the maps $\alpha:a_{1}Ga_{1}\rightarrow a_{2}Ga_{2}$, $\beta:b_{1}Gb_{1}\rightarrow b_{2}Gb_{2}$ defined by $\alpha(g) = u^{-1}gu$, $\beta(g) = vgv^{-1}$ satisfy $\alpha(H) = H^{\prime}$ and $\beta\phi = \phi^{\prime}\alpha$.

\begin{corollary}
Let $G$ be a groupoid, $H_{i},H_{i}:i\in I$ subgroups of $G$ and let $\phi_{i}:H_{i}\rightarrow G$ and $\phi_{i}^{\prime}:H_{i}^{\prime}$ be partial endomorphisms for each $i\in I$. If $\phi_{i}$ is conjugate to $\phi_{i}^{\prime}$ for every $i\in I$ then the categories $M(\phi_{i}:i\in I)$ and $M(\phi_{i}^{\prime}:i\in I)$ are isomorphic.
\end{corollary}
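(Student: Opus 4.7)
The plan is to deduce this directly from the backward implication of Theorem \ref{LRCconjclass}. We take $G^{\prime} = G$, $J = I$, the identity isomorphism $f = \mathrm{id}_{G} \colon G \rightarrow G$, and the identity bijection $\gamma = \mathrm{id}_{I} \colon I \rightarrow I$. With these choices, the hypothesis of Theorem \ref{LRCconjclass} reduces to the requirement that for each $i \in I$ there exist elements $u_{i}, v_{i} \in G$ (with appropriate domains and ranges) such that $u_{i}^{-1} H_{i} u_{i} = H_{i}^{\prime}$ and $v_{i} \phi_{i}(h) v_{i}^{-1} = \phi_{i}^{\prime}(u_{i}^{-1} h u_{i})$ for every $h \in H_{i}$.

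First I would unpack the definition of conjugacy of partial endomorphisms given just before the statement: the assumption that $\phi_{i}$ is conjugate to $\phi_{i}^{\prime}$ supplies precisely elements $a_{1} \stackrel{u_{i}}{\leftarrow} a_{2}$ and $b_{1} \stackrel{v_{i}}{\leftarrow} b_{2}$ in $G$ (where the $a$'s and $b$'s are the identities fixing $H_{i}$, $H_{i}^{\prime}$, $\phi_{i}(H_{i})$, and $\phi_{i}^{\prime}(H_{i}^{\prime})$ respectively) such that the inner-type maps $\alpha_{i}(g) = u_{i}^{-1} g u_{i}$ and $\beta_{i}(g) = v_{i} g v_{i}^{-1}$ satisfy $\alpha_{i}(H_{i}) = H_{i}^{\prime}$ and $\beta_{i} \phi_{i} = \phi_{i}^{\prime} \alpha_{i}$.

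Next I would observe that these are exactly the relations demanded by Theorem \ref{LRCconjclass} after substituting $f = \mathrm{id}_{G}$: the equation $u_{i}^{-1} f(H_{i}) u_{i} = H_{\gamma(i)}^{\prime}$ becomes $u_{i}^{-1} H_{i} u_{i} = H_{i}^{\prime}$, and the equation $v_{i} f(\phi_{i}(h)) v_{i}^{-1} = \phi_{\gamma(i)}^{\prime}(u_{i}^{-1} f(h) u_{i})$ becomes $v_{i} \phi_{i}(h) v_{i}^{-1} = \phi_{i}^{\prime}(u_{i}^{-1} h u_{i})$, which is the identity $\beta_{i} \phi_{i} = \phi_{i}^{\prime} \alpha_{i}$ applied pointwise. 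Invoking Theorem \ref{LRCconjclass} then yields an isomorphism $M(\phi_{i} : i \in I) \cong M(\phi_{i}^{\prime} : i \in I)$.

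There is no substantive obstacle here; the only thing to be careful about is bookkeeping, namely confirming that the domains and codomains of the $u_{i}$ and $v_{i}$ supplied by the conjugacy hypothesis really match the data required by Theorem \ref{LRCconjclass} (so that the products $u_{i} r_{\gamma(i)} v_{i}$ used implicitly in building the isomorphism $\bar{f}$ in the proof of Theorem \ref{LRCconjclass} are composable). This is immediate from the identifications $a_{i}^{\prime} = a_{\gamma(i)}^{\prime}$ and $b_{i}^{\prime} = b_{\gamma(i)}^{\prime}$ forced by $\gamma = \mathrm{id}_{I}$.
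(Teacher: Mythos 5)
Your proposal is correct and matches the paper's intended route: the corollary is exactly the backward direction of Theorem \ref{LRCconjclass} specialized to $G^{\prime}=G$, $f=\mathrm{id}_{G}$ and $\gamma=\mathrm{id}_{I}$, and the conjugacy data $u_{i},v_{i}$ supply precisely the required relations $u_{i}^{-1}H_{i}u_{i}=H_{i}^{\prime}$ and $v_{i}\phi_{i}(h)v_{i}^{-1}=\phi_{i}^{\prime}(u_{i}^{-1}hu_{i})$. The paper's own argument builds the same isomorphism ($g\mapsto g$, $t_{i}\mapsto u_{i}r_{i}v_{i}$) directly, so there is no substantive difference.
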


Let $\phi:H\rightarrow G$, $\phi^{\prime}:H^{\prime}\rightarrow G^{\prime}$ be partial endomorphisms of groupoids $G$ and $G^{\prime}$ and suppose that $a_{1},b_{1}\in G$ and $a_{2},b_{2}\in G^{\prime}$ are identities such that $H = a_{1}Ha_{1}$, $\phi(H) = b_{1}\phi(H)b_{1}$, $H^{\prime} = a_{2}H^{\prime}a_{2}$ and $\phi^{\prime}(H^{\prime}) = b_{2}\phi^{\prime}(H^{\prime})b_{2}$. Then $\phi$ and $\phi^{\prime}$ will be said to be \emph{isomorphic} if there exist isomorphisms $\alpha:a_{1}Ga_{1}\rightarrow a_{2}G^{\prime}a_{2}$, $\beta:b_{1}Gb_{1}\rightarrow b_{2}G^{\prime}b_{2}$ with $\alpha(H) = H^{\prime}$ and $\beta\phi = \phi^{\prime}\alpha$.

\begin{corollary}
\label{LRCisovirtiso}
Let $G,G^{\prime}$ be groupoids, $H_{i}:i\in I$ subgroups of $G$, $H_{j}^{\prime}:j\in J$ subgroups of $G^{\prime}$, $\phi_{i}:H_{i}\rightarrow G$, $\phi_{j}^{\prime}:H_{j}^{\prime}\rightarrow G^{\prime}$ partial endomorphisms for each $i\in I$, $j\in J$ and suppose that 
$M(\phi_{i}:i\in I)$ and $M(\phi_{j}^{\prime}:j\in J)$ are isomorphic left Rees categories. Then there is a bijection $\gamma:I\rightarrow J$ such that the partial endomorphisms $\phi_{i}$ and $\phi_{\gamma(i)}^{\prime}$ are isomorphic for each $i\in I$. 
\end{corollary}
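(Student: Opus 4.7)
The plan is to obtain this corollary as a direct unpacking of the data produced by Theorem \ref{LRCconjclass}. Let me apply that theorem first to the given isomorphism $M(\phi_i : i\in I) \cong M(\phi_j^{\prime} : j\in J)$ to secure a groupoid isomorphism $f:G\rightarrow G^{\prime}$, a bijection $\gamma:I\rightarrow J$, and elements $u_i,v_i \in G^{\prime}$ for each $i\in I$ such that $u_i^{-1}f(H_i)u_i = H_{\gamma(i)}^{\prime}$ and $v_i f(\phi_i(h)) v_i^{-1} = \phi_{\gamma(i)}^{\prime}(u_i^{-1}f(h)u_i)$ for every $h\in H_i$.

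Next, for each $i\in I$ let $a_i,b_i\in G_0$ be the identities with $H_i = a_i H_i a_i$ and $\phi_i(H_i) = b_i \phi_i(H_i) b_i$, and similarly let $a_{\gamma(i)}^{\prime}, b_{\gamma(i)}^{\prime}\in G^{\prime}_0$ play the analogous role for $H_{\gamma(i)}^{\prime}$ and $\phi_{\gamma(i)}^{\prime}(H_{\gamma(i)}^{\prime})$. For the products $u_i^{-1}f(g)u_i$ and $v_i f(g) v_i^{-1}$ to be defined and to land in the correct local groups we need $\ran(u_i) = f(a_i)$, $\dom(u_i) = a_{\gamma(i)}^{\prime}$, $\ran(v_i) = b_{\gamma(i)}^{\prime}$, $\dom(v_i) = f(b_i)$; I would verify briefly that these domain/range conditions are forced by the fact that the relations in Theorem \ref{LRCconjclass} make sense in $G^{\prime}$. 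I then define
$$\alpha_i\colon a_i G a_i \rightarrow a_{\gamma(i)}^{\prime} G^{\prime} a_{\gamma(i)}^{\prime}, \qquad \alpha_i(g) = u_i^{-1}f(g)u_i,$$
$$\beta_i\colon b_i G b_i \rightarrow b_{\gamma(i)}^{\prime} G^{\prime} b_{\gamma(i)}^{\prime}, \qquad \beta_i(g) = v_i f(g) v_i^{-1}.$$
Each of these is a composite of the group isomorphism induced by $f$ on the relevant local group with conjugation by an arrow of $G^{\prime}$; both operations are group isomorphisms, hence so are $\alpha_i$ and $\beta_i$.

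Finally I would read off the two required conditions from the hypotheses supplied by the theorem: the first, $u_i^{-1}f(H_i)u_i = H_{\gamma(i)}^{\prime}$, is precisely $\alpha_i(H_i) = H_{\gamma(i)}^{\prime}$, and the second rearranges immediately to $\beta_i\phi_i(h) = \phi_{\gamma(i)}^{\prime}\alpha_i(h)$ for every $h\in H_i$, i.e.\ $\beta_i\phi_i = \phi_{\gamma(i)}^{\prime}\alpha_i$. These are exactly the two properties in the definition of isomorphic partial endomorphisms preceding the corollary, so $\phi_i$ and $\phi_{\gamma(i)}^{\prime}$ are isomorphic for every $i\in I$.

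There is no real obstacle here; the corollary is a bookkeeping consequence of the main theorem. The only point requiring a little care is checking that the arrows $u_i, v_i$ connect the identities in $G^{\prime}$ in the correct way so that $\alpha_i$ and $\beta_i$ have the stated domains and codomains, but this is forced by the fact that the equations of Theorem \ref{LRCconjclass} must typecheck inside the groupoid $G^{\prime}$.
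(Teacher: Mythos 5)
Your proposal is correct and matches the paper's intent: the corollary is left unproved there precisely because it is read off from the forward direction of Theorem \ref{LRCconjclass}, whose proof already constructs the very maps $\alpha_i(g) = u_i^{-1}f(g)u_i$ and $\beta_i(g) = v_if(g)v_i^{-1}$ on the local groups and verifies $\alpha_i(H_i) = H_{\gamma(i)}^{\prime}$ and $\beta_i\phi_i = \phi_{\gamma(i)}^{\prime}\alpha_i$. Your additional check that the domains and ranges of $u_i$ and $v_i$ are forced by the equation $f(t_i) = u_ir_{\gamma(i)}v_i$ typechecking in $G^{\prime}$ is exactly the right bookkeeping.
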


Using Proposition \ref{univgroupoid} we see that the groupoid of fractions of a Rees category is a groupoid HNN-extension and in addition by comparing the normal forms of elements of a Rees category and Proposition \ref{normalformgroupoidHNN} it is clear that a Rees category embeds in its groupoid of fractions. Since the fundamental groupoid of a graph of groups is a groupoid HNN-extension of a totally disconnected groupoid, every fundamental groupoid of a graph groups is the groupoid of fractions of a Rees category with totally disconnected groupoid of invertible elements, and so there is an underlying self-similar groupoid action.

To explore this connection further we will require the notion of a \emph{diagram of partial homomorphisms} which we now define. A \emph{diagram of partial homomorphisms} $\mathcal{G}_{G}$ consists of
\begin{itemize}
\item A (not necessarily connected) graph $\mathcal{G}$.
\item A group $G_{a}$ for each vertex $a\in \mathcal{G}_{0}$.
\item A subgroup $G_{t}\leq G_{\ran(t)}$ for each edge $t\in \mathcal{G}_{0}$.
\item A homomorphism $\phi_{t}:G_{t}\rightarrow G_{\dom(t)}$ for each edge $t\in \mathcal{G}_{1}$.
\end{itemize}
In other words, a diagram of partial homomorphisms is just a graph of groups without an involution on the underlying graph and such that the maps $\phi_{t}$ are not necessarily injective.

We will say two diagrams of partial homomorphisms $\mathcal{G}_{G}$ and $\mathcal{G}^{\prime}_{G}$ with underlying graphs $\mathcal{G}$ and $\mathcal{G}^{\prime}$ are \emph{equivalent} if
\begin{itemize}
\item There are bijections $\gamma_{0}:\mathcal{G}_{0}\rightarrow \mathcal{G}_{0}^{\prime}$ and $\gamma_{1}:\mathcal{G}_{1}\rightarrow \mathcal{G}_{1}^{\prime}$ with
$\gamma_{0}(\dom(t)) = \dom(\gamma_{1}(t))$ and $\gamma_{0}(\ran(t)) = \ran(\gamma_{1}(t))$ for each $t\in \mathcal{G}_{1}$.
\item For each $a\in \mathcal{G}_{0}$ there is an isomorphism $f_{a}:G_{a}\rightarrow G_{\gamma_{0}(a)}$.
\item For each $t\in \mathcal{G}_{1}$ there are elements $u_{t}\in G_{\ran(\gamma_{1}(t))}$, $v_{t}\in G_{\dom(\gamma_{1}(t))}$ with $$u_{t}^{-1}f_{\ran(t)}(G_{t})u_{t} = G_{\gamma_{1}(t)}$$
and 
$$v_{t}f_{\dom(t)}(\phi_{t}(h))v_{t}^{-1} = \phi_{\gamma_{1}(t)}(u_{t}^{-1}f_{\ran(t)}(h)u_{t})$$
for every $h\in G_{t}$.
\end{itemize}

A \emph{route} in $\mathcal{G}_{G}$ consists of a sequence $g_{1}t_{1}g_{2}t_{2}\cdots g_{m}t_{m}g_{m+1}$ where $t_{k}\in \mathcal{G}_{1}$ for each $k$, $g_{k}\in G_{\ran(t_{k})}$ for $k=1,\ldots,m$ and $g_{k+1}\in G_{\dom(t_{k})}$ for $k = 1,\ldots,m$. We allow for the case $m = 0$, i.e. routes of the form $g\in G_{a}$ for some $a\in\mathcal{G}_{0}$. We write $\dom(g_{1}t_{1}g_{2}t_{2}\cdots g_{m}t_{m}g_{m+1}) = \dom(t_{m})$ and $\ran(g_{1}t_{1}g_{2}t_{2}\cdots g_{m}t_{m}g_{m+1}) = \ran(t_{1})$. For $g\in G_{a}$ viewed as a route we write $\dom(g) = \ran(g) = a$. Let $\sim$ be the equivalence relation on routes in $\mathcal{G}_{G}$ generated by $phtq \sim pt\phi_{t}(h)q$, where $p,q$ are routes and $h\in G_{t}$.

Given a diagram of partial homomorphisms $\mathcal{G}_{G}$, we define its \emph{fundamental category} $\mathcal{C}(\mathcal{G}_{G})$ to be the category whose arrows correspond to equivalence classes of $\sim$. Composition of arrows is simply concatenation of composable paths multiplying group elments at each end. 

Let $\mathcal{G}_{G}$ be a diagram of partial homomorphisms, let $G$ be the groupoid which is the disjoint union of all the vertex groups of $\mathcal{G}_{G}$ and let $\mathcal{H}$ be the graph with $\mathcal{H}_{0} = G_{0}$ and $\mathcal{H}_{1} = G_{1}\cup \left\{t:t\in \mathcal{G}_{1}\right\}$. We can then write the fundamental category of $\mathcal{G}_{G}$ in terms of a category presentation as
$$\mathcal{C}(\mathcal{G}_{G}) \cong \langle \mathcal{H}|\mathcal{R}(G), ht = t\phi_{t}(h) \forall h\in G_{t}, t\in \mathcal{G}_{1}\rangle .$$
It then follows that $\mathcal{C}(\mathcal{G}_{G})$ is a left Rees category with totally disconnected groupoid of invertible elements. On the other hand, if we have a left Rees category with totally disconnected groupoid of invertible elements we can just reverse this process to get a fundamental category of a diagram of partial homomorphisms. Thus,

\begin{proposition}
\label{fundcatLRC}
Fundamental categories of diagrams of partial homomorphisms are precisely left Rees categories with totally disconnected groupoids of invertible elements.
\end{proposition}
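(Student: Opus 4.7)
(Plan)
The claim is a biconditional, and both directions will be handled by translating between a diagram of partial homomorphisms $\mathcal{G}_G$ and the category HNN-extension data $(G, H_i, \phi_i, t_i)$ of Theorem \ref{HNNLRC}, then reading off the conclusion from that theorem. The discussion immediately preceding the proposition essentially lays out the dictionary, so the task is to turn that dictionary into a verification.

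First I would prove the forward direction. Given $\mathcal{G}_G$, form $G := \coprod_{a \in \mathcal{G}_0} G_a$, viewed as a groupoid whose identities are indexed by $\mathcal{G}_0$; by construction $G$ is totally disconnected. Build the graph $\mathcal{H}$ with $\mathcal{H}_0 = G_0$ and $\mathcal{H}_1 = G_1 \cup \{t : t \in \mathcal{G}_1\}$, where the stable letter $t$ inherits the same domain and range as the edge $t$ of $\mathcal{G}$. Writing down the obvious category presentation
\[
\mathcal{C}(\mathcal{G}_G) \cong \langle \mathcal{H} \mid \mathcal{R}(G),\ ht = t\phi_t(h)\ \forall h\in G_t,\ t\in \mathcal{G}_1 \rangle,
\]
one checks that the equivalence classes of routes in $\mathcal{G}_G$ under $\sim$ correspond bijectively, with composition preserved, to the elements of the category given by this presentation: each generating relation $phtq \sim pt\phi_t(h)q$ is exactly one of the defining relations $ht = t\phi_t(h)$. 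Hence $\mathcal{C}(\mathcal{G}_G)$ is a category HNN-extension of the totally disconnected groupoid $G$ whose associated submonoids are the subgroups $G_t \leq G$, so Theorem \ref{HNNLRC} produces a left Rees category structure $\mathcal{C}(\mathcal{G}_G) \cong \mathcal{G}^{\ast} \bowtie G$ for some graph $\mathcal{G}$. By part (3) of Proposition \ref{propzappa}, its groupoid of invertible elements is (the copy of) $G$, which is totally disconnected.

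For the converse, let $M$ be a left Rees category with totally disconnected groupoid of invertible elements $G$. Apply the second half of Theorem \ref{HNNLRC} to present $M$ as a category HNN-extension: choose a transversal of orbits of $G$ on $\mathcal{G}_1$ to obtain stable letters $t_i$ ($i \in I$), together with stabiliser subgroups $H_i = G_{t_i}$ and partial endomorphisms $\rho_i : H_i \to G$, so that
\[
M \cong \langle \mathcal{H} \mid \mathcal{R}(G),\ ht_i = t_i \rho_i(h)\ \forall h \in H_i,\ i \in I \rangle.
\]
Because $G$ is totally disconnected, each $H_i$ lies in a single local group $a_i G a_i$ and each image $\rho_i(H_i)$ lies in a single local group $b_i G b_i$. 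Now construct a diagram of partial homomorphisms $\mathcal{G}_G$: let the underlying graph $\mathcal{G}$ have vertex set $G_0$ and one edge $t_i$ from $b_i$ to $a_i$ for each $i \in I$; let the vertex group at $a \in G_0$ be the local group $aGa$; let $G_{t_i} = H_i$ and $\phi_{t_i} = \rho_i$. Running the forward direction on $\mathcal{G}_G$ produces exactly the same category presentation as above, so $\mathcal{C}(\mathcal{G}_G) \cong M$.

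The only substantive thing to check in either direction is that the presentation really does describe $\mathcal{C}(\mathcal{G}_G)$; I expect this to be the main (if minor) obstacle, since one has to argue that the route relation $\sim$ and the congruence generated by the listed defining relations agree on routes. This is essentially bookkeeping: both are generated by the same local moves $h t \leftrightarrow t \phi_t(h)$ together with the composition rules of the vertex groups, and all the side conditions on domains and ranges translate directly between the two languages. Everything else is an immediate appeal to Theorem \ref{HNNLRC} and Proposition \ref{propzappa}.
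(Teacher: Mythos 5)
Your proposal is correct and follows essentially the same route as the paper: the paper also reads the fundamental category off as the category presentation $\langle \mathcal{H}\mid \mathcal{R}(G),\ ht = t\phi_t(h)\rangle$, identifies this as a category HNN-extension of the totally disconnected groupoid of vertex groups, invokes Theorem \ref{HNNLRC}, and obtains the converse by reversing the construction. The bookkeeping you flag (matching the route relation $\sim$ with the congruence generated by the defining relations) is exactly what the paper leaves implicit, so nothing further is needed.
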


Combining Proposition \ref{fundcatLRC} and Theorem \ref{LRCconjclass} we have

\begin{proposition}
Two diagrams of partial homomorphisms are equivalent if and only if their fundamental categories are isomorphic.
\end{proposition}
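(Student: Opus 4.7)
The plan is to deduce this from the two preceding results, namely Proposition \ref{fundcatLRC}, which identifies fundamental categories of diagrams of partial homomorphisms with left Rees categories of the form $M(\phi_t : t\in \mathcal{G}_1)$ over a totally disconnected groupoid $G = \coprod_{a\in \mathcal{G}_0} G_a$, and Theorem \ref{LRCconjclass}, which classifies such categories up to isomorphism. Thus equivalence of $\mathcal{G}_G$ and $\mathcal{G}'_G$ and isomorphism of $\mathcal{C}(\mathcal{G}_G)$ and $\mathcal{C}(\mathcal{G}'_G)$ should be shown to correspond to exactly the same package of data, once one translates between the two vocabularies.

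For the forward direction, suppose $(\gamma_0,\gamma_1,\{f_a\},\{u_t\},\{v_t\})$ is an equivalence of diagrams. Define $f \colon G\to G'$ to be the disjoint union of the isomorphisms $f_a \colon G_a\to G'_{\gamma_0(a)}$; this is manifestly a groupoid isomorphism because $G$ and $G'$ are totally disconnected. Take $\gamma = \gamma_1 \colon \mathcal{G}_1\to \mathcal{G}'_1$, together with the prescribed $u_t, v_t$. The conditions $\gamma_0(\ran(t))=\ran(\gamma_1(t))$ and $\gamma_0(\dom(t))=\dom(\gamma_1(t))$ ensure that $u_t\in G'_{\ran(\gamma_1(t))}$ and $v_t\in G'_{\dom(\gamma_1(t))}$ live at the correct identities for the products in Theorem \ref{LRCconjclass} to be defined, and the two equations required there are exactly the equations in the definition of equivalence.

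For the converse, suppose $\mathcal{C}(\mathcal{G}_G)\cong \mathcal{C}(\mathcal{G}'_G)$. Theorem \ref{LRCconjclass} yields an isomorphism $f\colon G\to G'$, a bijection $\gamma\colon \mathcal{G}_1\to \mathcal{G}'_1$, and elements $u_t,v_t\in G'$ such that $u_t^{-1}f(G_t)u_t = G'_{\gamma(t)}$ and $v_t f(\phi_t(h))v_t^{-1} = \phi'_{\gamma(t)}(u_t^{-1}f(h)u_t)$. Since $G$ and $G'$ are totally disconnected, $f$ restricts to a bijection $\gamma_0\colon \mathcal{G}_0\to\mathcal{G}'_0$ of identities, and to isomorphisms $f_a:=f|_{G_a}\colon G_a\to G'_{\gamma_0(a)}$. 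The first equation $u_t^{-1}f(G_t)u_t = G'_{\gamma(t)}$ forces $u_t$ to be an arrow from $\ran(\gamma(t))$ to $f(\ran(t))$ in $G'$; total disconnectedness then forces $f(\ran(t)) = \ran(\gamma(t)) = \ran(\gamma_1(t))$, where we set $\gamma_1:=\gamma$, and similarly for domains. Setting $\gamma_1=\gamma$ gives an equivalence of diagrams.

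The only real content, beyond bookkeeping, is the observation in the converse direction that total disconnectedness of $G$ and $G'$ forces the identity‑level behaviour of $f$ to respect the graph structure, so that $\gamma$ on edges automatically packages with a vertex bijection $\gamma_0$ compatible with $\dom$ and $\ran$. This is where I would expect any friction, but it is essentially immediate from the fact that an arrow in a totally disconnected groupoid has equal domain and codomain.
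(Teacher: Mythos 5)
Your proof is correct and follows exactly the route the paper intends: the paper offers no written argument beyond "combining Proposition \ref{fundcatLRC} and Theorem \ref{LRCconjclass}", and your translation of the equivalence data into the hypotheses of Theorem \ref{LRCconjclass} (including the observation that total disconnectedness forces $f(\ran(t))=\ran(\gamma_1(t))$ and $f(\dom(t))=\dom(\gamma_1(t))$ in the converse) is precisely the bookkeeping that deduction requires.
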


If $\mathcal{G}_{G}$ is a diagram of partial homomorphisms we will denote by $T_{t}$ a transversal of the left cosets of $G_{t}$ in $G_{\ran(t)}$ for each edge $t$. We then see that an arbitrary element $s$ of $\mathcal{C}(\mathcal{G}_{G})$ can be written uniquely in the form
$$s = g_{1}t_{1}\cdots g_{m}t_{m}u$$
where $\dom(t_{k}) = \ran(t_{k+1})$ for each $k=1,\ldots,m-1$, $g_{k}\in T_{t_{k}}$ for each $k=1,\ldots,m$ and $u\in G_{\dom(t_{m})}$ is arbitrary.

Given a diagram of partial homomorphisms $\mathcal{G}_{G}$ and a vertex $a\in \mathcal{G}_{0}$ we define the \emph{fundamental monoid of $\mathcal{G}_{G}$ at $a$} to be $M(\mathcal{G}_{G},a) = a\mathcal{C}(\mathcal{G}_{G})a$, the local monoid at $a$ of $\mathcal{C}(\mathcal{G}_{G})$. By Proposition \ref{locmonLRCLRM} $M(\mathcal{G}_{G},a)$ will be a left Rees monoid with group of units $G_{a}$.

Let $\mathcal{G}_{G}$ be a diagram of partial homomorphisms, let $a$ be a vertex of $\mathcal{G}_{G}$ and let $P_{a}$ denote the set of routes in $\mathcal{G}_{G}$ with range $a$. For $p,q\in P_{a}$ we will write $p \approx q$ if $\dom(p) = \dom(q)$ and $p \sim qg$ for some $g\in G_{\dom(p)}$. 
This defines an equivalence relation on $P_{a}$.
We will denote the $\approx$-equivalence class containing the route $p$ by $[p]$.
An arbirtrary element of $P_{a}/\approx$ can then be written uniquely in the form
$$[x] = [g_{1}t_{1}\cdots g_{m}t_{m}]$$
where $\ran(t_{1}) = a$, $\dom(t_{k}) = \ran(t_{k+1})$ for each $k=1,\ldots,m-1$ and $g_{k}\in T_{t_{k}}$ for each $k=1,\ldots,m$.
We now define the Bass-Serre tree $T$ with respect to the vertex $a$ as follows. 
The vertices of $T$ are $\approx$-equivalence classes of routes in $P_{a}$. 
Two vertices $[x],[y]\in T_{0}$ are connected by an edge $s\in T_{1}$ if there are $g\in G_{a}$ and $t\in \mathcal{G}_{1}$ such that 
$$y \approx xgt.$$
Here $\dom(s) = y$ and $\ran(s) = x$.
In other words there is an edge connecting $[g_{1}t_{1}\cdots g_{m}t_{m}]$ and $[g_{1}t_{1}\cdots g_{m}t_{m}g_{m+1}t_{m+1}]$ where $g_{k}\in T_{t_{k}}$ for each $k$, and every edge arises in this way.
It therefore follows that $T$ is a tree. 
We will now define an action of $M(\mathcal{G}_{G},a)$ on $T_{0}$ by
$$p\cdot [x] = [px].$$
This will then naturally extend to an action of $M(\mathcal{G}_{G},a)$ on $T$.

\begin{comment}
Let us now consider these ideas from the point of view of groupoid HNN-extensions. By definition two paths $p,q$ in $\mathcal{G}_{G}$ are $\sim$-related if they correspond to the same elements of $\Gamma(\mathcal{G}_{G})$. So let $\Gamma$ be an arbitrary groupoid HNN-extension of a totally disconnected groupoid $G$, let $a\in \Gamma_{0} = G_{0}$ and let 
$$P_{a} = \left\{g\in \Gamma| \ran(g) = a\right\}.$$
For $p,q\in P_{a}$, we define $p \approx q$ if $p = qg$ for some $g\in G$. This defines an equivalence relation on $P_{a}$ and we denote the $\approx$-equivalence class containing $p$ by $[p]$. We now define an undirected tree $T$ with respect to the identity $a$ as follows. The vertices of $T$ will correspond to $\approx$-equivalence classes of elements of $P_{a}$. Two vertices $[p],[q]\in T_{0}$ are connected by an edge if 
$$q = pgt_{i}^{\epsilon}$$
for some $g\in G$, $i\in I$, $\epsilon\in \left\{-1,1\right\}$. We then have an action of $a\Gamma a$ on $T_{0}$ given by 
$$g \cdot [p] = [gp]$$
which naturally extends to an action of $a\Gamma a$ on $T$. It should be clear that our requirement that $G$ was totally disconnected does not appear in most of the above ideas, so that all of the above works for an arbitrary groupoid $\Gamma$. The only non-trivial statement is that $T$ is a tree. NEED TO CHECK THIS.
\end{comment}

Let us rewrite this in our earlier notation for left Rees categories. Suppose $M = \mathcal{G}^{\ast}G$ is a left Rees category and $a\in M_{0}$ is an identity. We will define $T$ to be the tree with vertices
$$T_{0} = \left\{x\in \mathcal{G}^{\ast}|\ran(x) = a\right\}$$
and two vertices $x,y\in T_{0}$ will be connected by an edge $s\in T_{1}$ with $\dom(s) = y$ and $\ran(s) = x$ if $y = xz$ for some $z\in \mathcal{G}^{\ast}$. We then have an action of $aMa$ on $T$ given on vertices by
$$(xg)\cdot y = x(g\cdot y)$$
and then extended to the edge $s$ with $\ran(s) = y$ and $\dom(s) = yz$ by defining the edge $(xg)\cdot s$ to be the one connecting $x(g\cdot y)$ and $x(g\cdot y)(g|_{y}\cdot z)$.
If $\mathcal{G}_{G}$ has a single vertex, then then this action just described will essentially be the action of a left Rees monoid $M = X^{\ast}G$ on the tree $X^{\ast}$ given by
$$(xg)\cdot y = x(g\cdot y).$$

As a final remark to this section, we note that all of the results of Section 2.4 should transfer to the categorical setting without any problems, so that a Zappa-Sz\'{e}p product of a free category and a groupoid can only be extended to a Zappa-Sz\'{e}p product of a free groupoid and a groupoid if it is symmetric, the groupoid of fractions of a symmetric Rees category is isomorphic to the Zappa-Sz\'{e}p product of a free groupoid and a groupoid and every Rees category with finite groupoid of invertible elements is isomorphic to a symmetric Rees category.

\section{Path automorphism groupoids}

In this section we will define the path automorphism groupoid of a graph. This is a generalisation of the notion of the automorphism group of a regular rooted tree. Throughout $\mathcal{G}$ will denote an arbitrary directed graph. In addition, in both this section and the following section we will use the word \emph{path} to mean what we earlier called a \emph{route}, since all routes are paths in the previous sense.

For each $e\in \mathcal{G}_{0}$, let $l(e)$ be the length of the longest path $p$ with $\ran(p) = e$. If there are no paths $p$ (aside from the empty path) with $\ran(p) = e$ then we will say $l(e) = 0$ and if there are paths of arbitrary length then we say $l(e)= \infty$. 

For example, let $\mathcal{G}$ be the following graph:

\begin{center}
\setlength{\unitlength}{0.75mm}
\begin{picture}(60,60)

\put(20,40){$e$}

\put(51,41){\vector(-1,0){27}}
\put(37,43){$x$}

\put(54,40){$f$}

\put(10,10){\circle{17}}
\put(10,18.5){\vector(1,0){1}}
\put(9,21){$t$}

\put(20,10){$g$}

\put(51,11){\vector(-1,0){27}}
\put(37,13){$z$}

\put(54,10){$h$}

\put(55,15){\vector(0,1){22}}
\put(57,25){$y$}

\end{picture}
\end{center}

% vertices $e,f,g,h$ and 4 edges $x,y,z,t$ with $\ran(x) = e$, $\dom(x) = \ran(y) = f$, $\dom(y) = \dom (z) = h$ and $\ran(z) = \dom(t) = \ran(t) = g$. 
Here we have $l(e) = 2$, $l(f) = 1$, $l(g) =\infty$ and $l(h) = 0$.
If $M = \mathcal{G}^{*}G$ is a left Rees category and if $e,f\in \mathcal{G}_{0}$ are such that $g\cdot e = f$ for some $g\in G$, then $l(e) = l(f)$ (here we are again identifying $\mathcal{G}_{0}$ and $\mathcal{G}^{\ast}_{0}$). 

We say a graph $\mathcal{G}$ satisfies the \emph{infinite path condition} (IPC) if $l(e) =\infty$ for every $e\in \mathcal{G}_{0}$.
Let $\mathcal{G}$ be a directed graph satisfying (IPC). For each $e\in \mathcal{G}_{0}$, let $P_{e}$ be the set of infinite paths $p$ in $\mathcal{G}$ with $\ran(p) = e$, $P_{e}^{\ast}$ be the set of finite paths $p$ in $\mathcal{G}$ with $\ran(p) = e$ and let $F_{e,f}$ be the set of bijective maps $g:P_{e}\cup P_{e}^{\ast}\rightarrow P_{f} \cup P_{f}^{\ast}$ satisfying the following conditons:
\begin{itemize}
\item If $p\in P_{e}^{\ast}$ then $l(p) = l(g(p))$
\item If $r\in P_{e}^{\ast}$ is a subpath of $p\in P_{e}$, then $g(p) = g(r)q$ for some infinite path $q$ in $\mathcal{G}$. In other words, if $p,q\in P_{e}$ are of the form $p = r \tilde{p}$, $q = r \tilde{q}$ where $r\in P_{e}^{\ast}$ with $|r| = n$, then $g(p) = s p^{\prime}$, $g(q) = s q^{\prime}$ where $|s| = n$. 
\end{itemize}
We will call such maps $g:P_{e}\cup P_{e}^{\ast}\rightarrow P_{f} \cup P_{f}^{\ast}$ \emph{path automorphisms}. Note that in general in a graph satisfying (IPC) often $F_{e,f}$ will be empty. Let 
$$\mathscr{G} = \bigcup_{e,f\in\mathcal{G}_{0}}{F_{e,f}}.$$
Then we can give $\mathscr{G}$ the structure of a groupoid by composing path automorphisms whose domains and ranges match up and we call this the \emph{path automorphism groupoid} of $\mathcal{G}$. 
When $\mathcal{G}$ has a single vertex and edge set $X$, then $\mathscr{G}$ will be the automorphism group of $X^{*}$, where we view $X^{\ast}$ as a regular rooted tree.

\begin{proposition}
\label{PAGSS}
Let $\mathcal{G}$ be a graph satisfying (IPC) and $\mathcal{G}^{\ast}$ the free category on $\mathcal{G}$. Then the path automorphism groupoid $\mathscr{G}$ of $\mathcal{G}_{0}$ has a natural faithful self-similar action on $\mathcal{G}^{\ast}$. 
\end{proposition}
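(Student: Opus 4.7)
The plan is to define the action and restriction explicitly, verify that they satisfy the axioms (C1)--(C3) and (SS1)--(SS8), and then use the fact that an infinite path is determined by its finite prefixes to conclude faithfulness.

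First I would define the action. Given $g\in F_{e,f}\subseteq\mathscr{G}$ and $x\in \mathcal{G}^{\ast}$, declare $g\cdot x$ to be defined precisely when $\ran(x)=e$ (i.e.\ $g^{-1}g=\ran(x)$), and in this case set $g\cdot x := g(x)$, which by definition of a path automorphism is an element of $P_{f}^{\ast}$ of the same length as $x$. This immediately gives (C1). To define the restriction, I would use the prefix-preservation axiom: given $x\in P_{e}^{\ast}$, for any finite or infinite path $y$ with $\ran(y)=\dom(x)$, the composite $xy$ lies in $P_{e}\cup P_{e}^{\ast}$ and satisfies $g(xy)=g(x)z$ for a unique $z$ of the same length as $y$, with $\ran(z)=\dom(g(x))$. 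Set $(g|_{x})(y):=z$. The prefix-preservation and length-preservation properties of $g$ guarantee that $g|_{x}$ is itself a path automorphism in $F_{\dom(x),\dom(g\cdot x)}$, and bijectivity of $g|_{x}$ follows from the bijectivity of $g$ by restricting to paths with prefix $x$. This gives (C2) and (C3).

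Next I would verify the eight self-similarity axioms. Axioms (SS1), (SS4), (SS5) follow immediately from unpacking definitions on identity paths and using that a path automorphism fixes lengths (so it sends an identity $e$ to $e$ when $e$ is fixed by the underlying bijection of identities). Axioms (SS2) and (SS7) are the statement that the operation $(g,h)\mapsto gh$ in $\mathscr{G}$ is composition of path automorphisms, which I would check by applying both sides to an arbitrary path and comparing prefixes. The key identities (SS6) and (SS8) are the ones that really use the prefix-preservation property: given $xy$ in $\mathcal{G}^{\ast}$ with $\ran(x)=e$, I would compute $g(xyz)$ for arbitrary $z$ of appropriate range by peeling off prefixes in two ways, giving $g(xy)=g(x)(g|_{x}\cdot y)$ and then $g|_{xy}(z)=(g|_{x})|_{y}(z)$ by uniqueness of the suffix. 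Axiom (SS3) is immediate from the definition of $g\cdot g^{-1}g$ on the identity at the source vertex.

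For faithfulness, suppose $g\in\mathscr{G}$ acts trivially on $\mathcal{G}^{\ast}$, so $g\cdot x=x$ for every finite path $x$ on which $g$ is defined. By definition $g(x)=x$ for every $x\in P_{e}^{\ast}$, where $e$ is the source of $g$. Now let $p\in P_{e}$ be an infinite path and let $p_{n}$ denote its prefix of length $n$. Then $g(p)$ is an infinite path whose length-$n$ prefix is $g(p_{n})=p_{n}$ by prefix-preservation and the assumption, so $g(p)$ and $p$ agree on every finite prefix; since an infinite path is uniquely determined by its finite prefixes, $g(p)=p$. Hence $g$ is the identity element of $\mathscr{G}$ at $e$.

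The main obstacle I would expect is purely bookkeeping: carefully tracking domains and ranges in the categorical axioms (C1)--(C3) and ensuring that the restriction $g|_{x}$ really lies in some $F_{a,b}$ (i.e.\ is a bona fide path automorphism of the subgraphs rooted at $\dom(x)$ and $\dom(g\cdot x)$). Once one checks that the prefix-and-length conditions are inherited by the suffix map $y\mapsto z$, the self-similarity axioms reduce to straightforward applications of associativity of concatenation in $\mathcal{G}^{\ast}$ combined with the two defining properties of path automorphisms.
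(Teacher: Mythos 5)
Your proposal is correct and follows essentially the same route as the paper: define $g\cdot x = g(x)$ and define $g|_{x}$ by the factorisation $g(xq) = g(x)\,g|_{x}(q)$, then verify (C1)--(C3) and (SS1)--(SS8) by peeling off prefixes and comparing suffixes. The only difference is that you spell out faithfulness via the fact that an infinite path is determined by its finite prefixes, a point the paper leaves implicit; this is a welcome but minor addition rather than a different argument.
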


\begin{proof}
Firstly, identify the idenities of $\mathscr{G}$ and $\mathcal{G}^{\ast}$. Let $x\in \mathcal{G}^{\ast}$, let $e=\ran(x)$, let $f\in \mathcal{G}_{0}$ and let $g\in F_{e,f}$. Define $g\cdot x$ to be $g(x)$ and define $g|_{x}\in F_{\dom(x),\dom(g(x))}$ to be the map which satisfies the following: for every $q\in P_{d(x)}$,
$$g(xq) = g(x)g|_{x}(q).$$
We need to check this satisfies the axioms for a self-similar action. Firstly, $\dom(g) = \ran(x)$, so this is all well-defined.  
We thus need to show it satisfies (C1) - (C3) and (SS1) - (SS8).

\begin{description}

\item[{\rm (C1) - (C3)}] These follow from how we have defined $g\cdot x$ and $g|_{x}$.

\item[{\rm (SS8)}] This follows from the definition of the restriction.

\item[{\rm (SS1), (SS3), (SS4) and (SS5)}] These are all clear.

\item[{\rm (SS2)}] This follows from the definition of composition of functions.

\item[{\rm (SS6)}] We will prove this by computing $g\cdot (xyz)$ in 2 different ways:
$$g\cdot (xyz) = (g\cdot (xy))(g|_{xy}\cdot z)$$
and
$$g\cdot (xyz) = (g\cdot x)(g|_{x}\cdot (yz)) = (g\cdot x)(g|_{x}\cdot y)((g|_{x})|_{y}\cdot z).$$
By using $(g\cdot (xy)) = (g\cdot x)(g|_{x}\cdot y)$ and cancelling we get the desired result.

\item[{\rm (SS7)}] We will prove this by computing $(gh)\cdot (xy)$ in 2 different ways:
$$(gh)\cdot (xy) = ((gh)\cdot x)((gh)|_{x}\cdot y)$$
and
$$(gh)\cdot (xy) = g\cdot (h\cdot(xy)) = g\cdot ((h\cdot x)(h|_{x}\cdot y)) = ((gh)\cdot x)((g|_{h\cdot x}h|_{x})\cdot y).$$
Cancelling gives the desired result.
\end{description}
\end{proof}

It follows from Proposition \ref{PAGSS} that if $G$ is a groupoid acting faithfully and self-similarly on $\mathcal{G}^{\ast}$ for a graph $\mathcal{G}$ satisfying the infinite path condition then $G$ is a subgroupoid of $\mathscr{G}$.

Let $\mathcal{G}$ be a directed graph. An \emph{automorphism} $g$ of $\mathcal{G}$ consists of bijective maps $\mathcal{G}_{0}\rightarrow \mathcal{G}_{0}$ and $\mathcal{G}_{1}\rightarrow \mathcal{G}_{1}$ such that $\dom(g(x)) = g(\dom(x))$ and $\ran(g(x)) = g(\ran(x))$ for each edge $x\in \mathcal{G}_{1}$. The set of all automorphisms forms a group under composition, which we will denote by $\Aut(\mathcal{G})$. If $\mathcal{G}$ satisfies (IPC) then every element of $\Aut(\mathcal{G})$ can be extended to a path automorphism. Let us denote the set of such path automorphisms by $G$. We see that $G$ is a subgroupoid of $\mathscr{G}$ which is closed under restriction and thus acts self-similarly on $\mathcal{G}^{\ast}$. 

\section{Wreath products}

In this section we will define wreath products for groupoids. This definition is not equivalent to that of Houghton \cite{Houghton}. Essentially his definition generalises to groupoids that of functions from a set $X$ to a group $G$, whereas ours generalises to groupoids the notion of the $X$th direct power of the group $G$. Throughout this section all graphs will be finite and will be assumed to satisfy (IPC). We also suppose that $F_{e,f}$ is non-empty for all $e,f\in \mathcal{G}_{0}$. 

Let $\mathcal{G}$ be a graph, let $e\in \mathcal{G}_{0}$ and let $E_{e}$ be the set of edges $x\in\mathcal{G}_{1}$ with $\ran(x) = e$. Let $e,f\in \mathcal{G}_{0}$ be such that $|E_{e}|=|E_{f}|$. Then a bijection $E_{e}\rightarrow E_{f}$ will be called an \emph{edge bijection}. Let $B(\mathcal{G})$ be the groupoid of all edge bijections where the product is composition whenever it is defined.

Let $\mathcal{G}$ be a graph, let $H$ be a subgroupoid of $B(\mathcal{G})$ and let $G$ be a groupoid such that we can identify $G_{0} = H_{0} = \mathcal{G}_{0}$. For each $e$ fix an order on $E_{e}$. Then the \emph{permutational wreath product of} $G$ and $H$, denoted $H\wr G$, is defined to be the set of elements
$$\sigma(g_{x_{1}},\ldots,g_{x_{n}}),$$
where $\sigma\in H$, $x_{1},\ldots,x_{n}$ are all the edges in $\mathcal{G}$ such that $\ran(x_{i}) = \dom(\sigma)$, and for all $i$, $g_{x_{i}}\in G$, $\dom(g_{x_{i}}) = \dom(x_{i})$ and $\ran(g_{x_{i}}) = \dom(\sigma(x_{i}))$. 
We define a product between two elements $\sigma(g_{x_{1}},\ldots,g_{x_{n}})$ and $\tau(h_{y_{1}},\ldots,h_{y_{m}})$ iff $\sigma\tau$ is defined (in which case $n=m$). The product is defined as follows:
$$\sigma(g_{x_{1}},\ldots,g_{x_{n}})\tau(h_{y_{1}},\ldots,h_{y_{n}}) = \sigma\tau(g_{\tau(y_{1})}h_{y_{1}},\ldots,g_{\tau(y_{n})}h_{y_{n}}).$$

\begin{lemma}
With $G,H$ as in the previous definition, $H\wr G$ is a groupoid.
\end{lemma}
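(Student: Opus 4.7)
The plan is to verify each of the groupoid axioms in turn: that $H \wr G$ has a well-defined partial composition with matching domain/range, that this composition is associative, that there is an identity at each object, and that every element has a two-sided inverse. First I would declare that $\dom(\sigma(g_{x_1},\ldots,g_{x_n})) = \dom(\sigma)$ and $\ran(\sigma(g_{x_1},\ldots,g_{x_n})) = \ran(\sigma)$ in $\mathcal{G}_0 = H_0 = G_0$, so that composability in $H \wr G$ agrees precisely with composability of the $H$-components, matching the stipulation ``the product is defined iff $\sigma\tau$ is defined''.

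Next I would check that the product formula $\sigma\tau(g_{\tau(y_1)}h_{y_1},\ldots,g_{\tau(y_n)}h_{y_n})$ genuinely lies in $H \wr G$. The $y_i$ run over edges with $\ran(y_i) = \dom(\tau) = \dom(\sigma\tau)$, which is the correct index set for an element whose $H$-part is $\sigma\tau$. For each $i$, $\tau(y_i)$ is an edge with $\ran(\tau(y_i)) = \ran(\tau) = \dom(\sigma)$, so $g_{\tau(y_i)}$ is a legal component of $\sigma(g_{x_1},\ldots,g_{x_n})$; moreover $\dom(g_{\tau(y_i)}) = \dom(\tau(y_i))$ and $\ran(h_{y_i}) = \dom(\tau(y_i))$, so the product $g_{\tau(y_i)}h_{y_i}$ is composable in $G$ and has domain $\dom(y_i) = \dom(\sigma\tau(y_i))$ and range $\dom(\sigma\tau(y_i))$, exactly as required.

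Associativity then reduces, by expanding $[\sigma(g)\tau(h)]\rho(k)$ and $\sigma(g)[\tau(h)\rho(k)]$ using the formula twice, to two facts: associativity of $H$ (giving $(\sigma\tau)\rho = \sigma(\tau\rho)$ as the $H$-part on both sides) and associativity of $G$ together with the identity $\tau\rho(z_i) = \tau(\rho(z_i))$ (giving $g_{\tau\rho(z_i)}(h_{\rho(z_i)}k_{z_i}) = (g_{\tau(\rho(z_i))}h_{\rho(z_i)})k_{z_i}$ componentwise). For identities, at each $e\in\mathcal{G}_0$ I take $1_e := \mathbf{1}_e^H(1_{\dom(x_1)},\ldots,1_{\dom(x_n)})$, with $\mathbf{1}_e^H$ the local identity of $H$ at $e$ and $x_1,\ldots,x_n$ the edges with range $e$; one checks directly that $1_{\ran(\sigma)}\cdot\sigma(g_{x_1},\ldots,g_{x_n}) = \sigma(g_{x_1},\ldots,g_{x_n}) = \sigma(g_{x_1},\ldots,g_{x_n})\cdot 1_{\dom(\sigma)}$ using that identities in $H$ fix edges.

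Finally, for inverses, I claim that $\sigma(g_{x_1},\ldots,g_{x_n})^{-1} = \sigma^{-1}(g_{\sigma^{-1}(y_1)}^{-1},\ldots,g_{\sigma^{-1}(y_n)}^{-1})$, where $y_1,\ldots,y_n$ range over the edges with $\ran(y_i) = \ran(\sigma) = \dom(\sigma^{-1})$, and $\sigma^{-1}$ denotes the inverse of $\sigma$ in the groupoid $H$. Applying the product formula, the composition in one order yields $\sigma\sigma^{-1}$ in the $H$-component (which is the $H$-identity at $\ran(\sigma)$) and $g_{\sigma^{-1}(y_i)}\cdot g_{\sigma^{-1}(y_i)}^{-1}$ in each $G$-component (which is an identity in $G$); the other order is verified symmetrically, so the element above is indeed a two-sided inverse. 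The main obstacle throughout is purely bookkeeping: keeping the index sets $\{x_i\}$ and $\{y_i\}$, the permutation effect of $\sigma$ and $\tau$, and the domain/range conditions in $G$ synchronized at every step; once this bookkeeping is set up cleanly, no step requires more than associativity or the groupoid axioms for $G$ and $H$ separately.
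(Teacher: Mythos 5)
Your proof is correct and follows essentially the same route as the paper's: compute domains and ranges, exhibit the componentwise inverse $\sigma^{-1}$ with entries $g_{\sigma^{-1}(y_i)}^{-1}$, and reduce associativity to associativity in $H$ and in $G$ via $g_{\tau\pi(z_i)}(h_{\pi(z_i)}k_{z_i}) = (g_{\tau\pi(z_i)}h_{\pi(z_i)})k_{z_i}$, with your explicit well-definedness check and explicit identity elements only making precise what the paper leaves implicit. One incidental equality should be struck: the product $g_{\tau(y_i)}h_{y_i}$ has domain $\dom(y_i)$ and range $\dom(\sigma\tau(y_i))$, which is exactly what is required, but these two objects need not coincide, so the asserted ``$\dom(y_i) = \dom(\sigma\tau(y_i))$'' is false in general (and unnecessary).
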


\begin{proof}
We have that
$$\dom(\sigma(g_{x_{1}},\ldots,g_{x_{n}})) = \sigma^{-1}\sigma(\dom(x_{1}),\ldots,\dom(x_{n}))$$
and
$$\ran(\sigma(g_{x_{1}},\ldots,g_{x_{n}})) = \sigma\sigma^{-1}(\dom(\sigma(x_{1})),\ldots,\dom(\sigma(x_{n}))),$$
noting that for each $i$ we have $\sigma^{-1}\sigma = \ran(x_{i})$ and $\sigma\sigma^{-1} = \ran(\sigma(x_{i}))$.
It is easy to see that 
$$(\sigma(g_{x_{1}},\ldots,g_{x_{n}}))^{-1} = \sigma^{-1}(h_{y_{1}},\ldots,h_{y_{n}}),$$
where $h_{\sigma(x_{i})} = g_{x_{i}}^{-1}$.
The difficult thing to see is that this multiplication is associative. So let $\sigma(g_{x_{1}},\ldots,g_{x_{n}})$, $\tau(h_{y_{1}},\ldots,h_{y_{n}})$ and $\pi(k_{z_{1}},\ldots,k_{z_{n}})$ be such that $\sigma\tau\pi$ exists. Then
$$\sigma(g_{x_{1}},\ldots,g_{x_{n}})\tau(h_{y_{1}},\ldots,h_{y_{n}}) = \sigma\tau(g_{\tau(y_{1})}h_{y_{1}},\ldots,g_{\tau(y_{n})}h_{y_{n}})
= \sigma\tau(u_{y_{1}},\ldots,u_{y_{n}}),$$
so 
$$(\sigma(g_{x_{1}},\ldots,g_{x_{n}})\tau(h_{y_{1}},\ldots,h_{y_{n}}))\pi(k_{z_{1}},\ldots,k_{z_{n}}) 
= \sigma\tau\pi(u_{\pi(z_{1})}k_{z_{1}},\ldots,u_{\pi(z_{n})}k_{z_{n}}).$$
On the other hand,
$$\tau(h_{y_{1}},\ldots,h_{y_{n}})\pi(k_{z_{1}},\ldots,k_{z_{n}}) = \tau\pi(h_{\pi(z_{1})}k_{z_{1}},\ldots,h_{\pi(z_{n})}k_{z_{n}})
= \tau\pi(v_{z_{1}},\ldots,v_{z_{n}}),$$
so
$$\sigma(g_{x_{1}},\ldots,g_{x_{n}})(\tau(h_{y_{1}},\ldots,h_{y_{n}})\pi(k_{z_{1}},\ldots,k_{z_{n}})) 
= \sigma\tau\pi(g_{\tau\pi(z_{1})}v_{z_{1}},\ldots,g_{\tau\pi(z_{n})}v_{z_{n}}).$$
Now
$$g_{\tau\pi(z_{i})}v_{z_{i}} = g_{\tau\pi(z_{i})}(h_{\pi(z_{i})}k_{z_{i}}) = (g_{\tau\pi(z_{i})}h_{\pi(z_{i})})k_{z_{i}} = u_{\pi(z_{i})}k_{z_{i}}$$
and so we are done.
\end{proof}

We can now prove a result analogous to Proposition 1.4.2 of \cite{NekrashevychBook}.

\begin{proposition}
Let $\mathcal{G}$ be a finite directed graph satisfying (IPC), $H = B(\mathcal{G})$ and $G = \mathscr{G}$. Then there is a bijective functor
$$\psi:G\rightarrow H\wr G.$$
\end{proposition}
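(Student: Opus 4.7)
The plan is to imitate the classical group-theoretic wreath recursion of Nekrashevych (Proposition 1.4.2 of \cite{NekrashevychBook}), interpreting the first level of a path automorphism as an edge bijection and the higher levels as restrictions. Explicitly, for a path automorphism $g \in G$ with $\dom(g) = e$ and $\ran(g) = f$, let $\sigma_g : E_e \to E_f$ be the map obtained by restricting $g$ to length-one paths; by the definition of path automorphism this is a well-defined edge bijection, hence an element of $H$ with $\dom(\sigma_g) = e$ and $\ran(\sigma_g) = f$. If $x_1, \ldots, x_n$ are the edges with $\ran(x_i) = e$ (in the chosen order), set
\[
\psi(g) \;=\; \sigma_g(g|_{x_1}, \ldots, g|_{x_n}).
\]
Each restriction $g|_{x_i}$ lies in $F_{\dom(x_i), \dom(g \cdot x_i)} = F_{\dom(x_i), \dom(\sigma_g(x_i))}$, so this really is an element of $H \wr G$ in the sense of the definition.

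Next I would verify that $\psi$ is a functor. On objects it is the identity via the identification $G_0 = H_0 = \mathcal{G}_0$, and it sends the identity path automorphism at $e$ to the identity edge bijection at $e$ tupled with identities on each $\dom(x_i)$. For composability, note that $gh$ is defined in $G$ precisely when $\dom(g) = \ran(h)$, which is exactly the condition for $\sigma_g \sigma_h$ to be defined in $H$, which in turn is the condition for $\psi(g)\psi(h)$ to be defined in $H \wr G$. The multiplicativity $\psi(gh) = \psi(g)\psi(h)$ reduces to two identities: on the first level, $\sigma_{gh} = \sigma_g \sigma_h$, which is immediate from $(gh) \cdot x = g \cdot (h \cdot x)$ applied to an edge $x$; and on the restrictions, $(gh)|_{x_i} = g|_{h \cdot x_i}\, h|_{x_i}$, which is precisely axiom (SS7). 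Matching this up against the definition of the product in $H \wr G$, where the tuple entries of the product are $g_{\tau(y_i)} h_{y_i}$, yields functoriality.

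For bijectivity, I would construct an inverse $\Psi$ explicitly. Given $\sigma(g_{x_1}, \ldots, g_{x_n}) \in H \wr G$ with $\dom(\sigma) = e$ and $\ran(\sigma) = f$, define a map $\Psi(\sigma(g_{x_1}, \ldots, g_{x_n})) : P_e \cup P_e^\ast \to P_f \cup P_f^\ast$ by
\[
\Psi(\sigma(g_{x_1}, \ldots, g_{x_n}))(x_i p) \;=\; \sigma(x_i)\,(g_{x_i} \cdot p)
\]
for any (finite or infinite) path $p$ with $\ran(p) = \dom(x_i)$, and by $\Psi(\sigma(g_{x_1}, \ldots, g_{x_n}))(e) = f$ on the empty path. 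Because each $g_{x_i}$ is already a path automorphism, the prefix-preserving condition for $\Psi(\cdot)$ follows from that of the $g_{x_i}$'s, and length preservation on finite paths is automatic. Injectivity of $\Psi(\sigma(g_{x_1}, \ldots, g_{x_n}))$ follows because $\sigma$ is a bijection of edges and each $g_{x_i}$ is a bijection on paths through $\dom(x_i)$; surjectivity uses $\sigma$ being surjective onto $E_f$. One then checks that $\Psi \circ \psi = \mathrm{id}_G$ directly from the definition of restriction via $g(x_i p) = (g \cdot x_i)(g|_{x_i} \cdot p)$, and $\psi \circ \Psi = \mathrm{id}_{H \wr G}$ by reading off the first-level edge bijection and the restrictions of $\Psi(\sigma(g_{x_1}, \ldots, g_{x_n}))$.

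The routine but slightly fiddly part will be the associativity/bookkeeping in checking $\psi(gh) = \psi(g)\psi(h)$, because the indexing of the tuple entries in $H \wr G$ is done in the order $x_1, \ldots, x_n$ for $\ran = \dom(\sigma)$, and one must be careful that the expression $g_{\tau(y_i)} h_{y_i}$ in the product definition matches $g|_{h \cdot y_i} h|_{y_i}$ with $\tau = \sigma_h$. I expect this indexing verification to be the main obstacle, but it is entirely mechanical once (SS7) is in hand; no genuinely new idea is required.
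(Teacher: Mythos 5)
Your proposal is correct and follows essentially the same route as the paper: the same wreath-recursion map $\psi(g)=\sigma(g|_{x_{1}},\ldots,g|_{x_{n}})$, functoriality via (SS7), and bijectivity via the recursion $g(x_{i}p)=\sigma(x_{i})(g_{x_{i}}\cdot p)$. The only cosmetic difference is that you package injectivity and surjectivity as an explicit two-sided inverse $\Psi$, whereas the paper derives injectivity from faithfulness of the action on $\mathcal{G}^{\ast}$ and constructs the preimage directly, which amounts to the same argument.
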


\begin{proof}
Define $\psi:G\rightarrow H\wr G$ by 
$$\psi(g) = \sigma(g|_{x_{1}},\ldots,g|_{x_{n}}),$$
where $\left\{x_{i}\right\}$ are the edges with $\ran(x_{i}) = \dom(g)$, $\sigma$ describes the action of $g$ on the edges $x_{i}$ and $g|_{x_{i}}$ is just the restriction of $g$ by $x_{i}$. We have that $\sigma\in B(\mathcal{G})$, $\dom(g|_{x_{i}}) = \dom(x_{i})$ and $\ran(g|_{x_{i}}) = \dom(\sigma(x_{i}))$. Thus $\psi(x)\in H\wr G$.

Let us prove first that $\psi$ is a functor. Let $g,h\in G$ be such that $gh$ exists. Then
\begin{eqnarray*}
\psi(g)\psi(h) &=& \sigma(g|_{x_{1}},\ldots,g|_{x_{n}})\tau(h|_{y_{1}},\ldots,h|_{y_{n}}) 
= \sigma\tau(g|_{\tau(y_{1})}h|_{y_{1}},\ldots,g|_{\tau(y_{n})}h|_{y_{n}})\\
&=& \sigma\tau((gh)|_{y_{1}},\ldots,(gh)|_{y_{n}}) = \psi(gh).
\end{eqnarray*}

Now suppose $\psi(g)=\psi(h)$. Then $\sigma = \tau$ and for each $i$ we have $g|_{x_{i}} = h|_{y_{i}}$. But this means the actions of $g$ and $h$ are equivalent, and so $g = h$ in $G$, since the action of $G$ on $\mathcal{G}^{\ast}$ is faithful.

Finally, let $\sigma(g_{1},\ldots,g_{n})\in H\wr G$, $e = \dom(\sigma)$ and $f = \ran(\sigma)$. Since $\sigma(g_{1},\ldots,g_{n})\in H\wr G$, there are $n$ edges $x_{1},\ldots,x_{n}\in\mathcal{G}_{1}$ with $\ran(x_{i}) = e$ and such that $\dom(x_{i}) = \dom(g_{i})$ and $\dom(\sigma(x_{i})) = \ran(g_{i})$. Define $g$ to be the unique element of $F_{v,w}$ satisfying 
$$g(x_{i}p) = \sigma(x_{i})g_{i}(p).$$ 
for $p\in P_{\dom(x_{i})}$. Then 
$$\psi(g) = \sigma(g_{1},\ldots,g_{n}).$$ 
\end{proof}

Thus if $G$ is a groupoid acting self-similarly on $\mathcal{G}^{\ast}$ then there is a functor
$$\psi:G\rightarrow B(\mathcal{G})\wr \mathscr{G}.$$
On the other hand, given a groupoid $G$ with finitely many identities, any functor $\psi:G\rightarrow B(\mathcal{G})\wr \mathscr{G}$, where $\mathcal{G}$ is a finite graph satisfying (IPC) and with $F(e,f)$ non-zero for all $e,f\in \mathcal{G}_{0}$ such that $\psi$ is surjective on identities gives rise to a self-similar action of $G$ on $\mathcal{G}^{\ast}$.

\section{Automaton groupoids}

%Mark: This section will need a bit of work (I'll do this over Christmas).
We can generalise the notion of an automaton group as defined in Section 2.7. The following definition describes a typed-automaton in the sense of \cite{BarrWells}, but which also has an output function. 

A \emph{finite-state generalised invertible automaton} $\mathcal{A}=(A,X,f,\lambda,\pi)$ will consist of 
\begin{itemize}
\item a finite set $A$ whose elements are called \emph{states};
\item a finite set $X$ called the \emph{alphabet};
\item a subset $I_{a}\subseteq X$ for each $a\in A$ called the \emph{input alphabet of $a$};
\item a subset $P_{a}\subseteq X$ for each $a\in A$ called the \emph{output alphabet of $a$};
\item a bijection $\lambda_{a}:I_{a}\rightarrow P_{a}$ for each $a\in A$;
\item a map $\pi_{a}:I_{a}\rightarrow A$ for each $a\in A$
\end{itemize}
satisfying the following:
\begin{enumerate}
\item for each $a\in A$ there exists $b\in A$ such that $I_{a}=P_{b}$;
\item for each $a\in A$ there exists $b\in A$ such that $P_{a}=I_{b}$;
\item for every $a,b\in A$ either $I_{a}=I_{b}$ or $I_{a}\cap I_{b} = \emptyset$;
\item for every $a,b\in A$ either $P_{a}=P_{b}$ or $P_{a}\cap P_{b} = \emptyset$;
\item for each $x\in X$, if $a = \pi_{b}(x)$ and $c = \pi_{d}(x)$, then $I_{a} = I_{c}$ - and so we define $T_{x}:= I_{a}$;
\item if $x\in I_{a}$ then $P_{\pi_{a}(x)} = T_{\lambda_{a}(x)}$.
% \item if $x\in I_{a}$ then $T_{\lambda_{a}(x)} = I_{\pi_{a}(x)}$.
% $I_{a} = P_{b}$ then $T_{\pi_{b}(x)} = P_{\lambda_{b}(x)}$ for each $x\in I_{b}$.
\end{enumerate}

Axioms 1-4 say that the input and output sets partition $X$, and both do so in the same way. Axiom 5 will allows us to define a multiplication on the alphabet and axiom 6 will allow us to construct a self-similar action. We can describe these automata by Moore diagrams, in an analogous fashion to Section 2.7.

Suppose we have partitioned $X$ into $n$ subsets $X_{i}$, so that 
$$X = \bigcup_{i=1}^{n}{X_{i}},$$
where each $X_{i}=I_{a}$ for some $a$. 
Let us now create two graphs, $\mathcal{G}$ and $\mathcal{H}$. Both $\mathcal{G}$ and $\mathcal{H}$ will have as their vertex sets $\mathcal{G}_{0} = \mathcal{H}_{0} =\left\{e_{1},\ldots,e_{n}\right\}$. $\mathcal{G}$ will have as its edge set $\mathcal{G}_{1} = X$ and $\mathcal{H}$ will have as its edge set $\mathcal{H}_{1} = A$. Edges will connect vertices as follows in $\mathcal{G}$:
\begin{itemize}
\item $\dom(x) = e_{i}$ iff $X_{i} = T_{x}$  
\item $\ran(x) = e_{i}$ iff $x\in X_{i}$
\end{itemize}
Edges will connect vertices as follows in $\mathcal{H}$:
\begin{itemize}
\item $\dom(a) = e_{i}$ iff $X_{i} = I_{a}$  
\item $\ran(a) = e_{i}$ iff $X_{i} = P_{a}$
\end{itemize}

We can define a partial action of $\mathcal{G}_{1}$ on $\mathcal{H}_{1}$ by $a|_{x} = \pi_{a}(x)$ for $\ran(x) = \dom(a)$ and an action of $\mathcal{H}_{1}$ on $\mathcal{G}_{1}$ by $a\cdot x = \lambda_{a}(x)$ for $\ran(x) = \dom(a)$. Let $\mathcal{H}^{\dagger}$ be the free groupoid on $\mathcal{H}$ and $\mathcal{G}^{\ast}$ the free category on $\mathcal{G}$. In a similar manner to Section 2.7 we can extend the actions of $\mathcal{G}_{1}$ on $\mathcal{H}_{1}$ and $\mathcal{H}_{1}$ on $\mathcal{G}_{1}$ in a unique way to actions of $\mathcal{G}^{\ast}$ on $\mathcal{H}^{\dagger}$ and $\mathcal{H}^{\dagger}$ on $\mathcal{G}^{\ast}$ by using axioms (SS1)-(SS8) and requiring that $a\cdot(a^{-1}\cdot x) = x$ and $a^{-1}|_{x} = (a|_{a^{-1}\cdot x})^{-1}$. This then gives a self-similar groupoid action of $\mathcal{H}^{\dagger}$ on $\mathcal{G}^{\ast}$. 

If $g,h\in \mathcal{H}^{\dagger}$ are such that $g^{-1}g = h^{-1}h$ and $gg^{-1} = hh^{-1}$ then we will write $g\sim h$ if $g\cdot x = h\cdot x$ for all $x\in \mathcal{G}^{\ast}$ with $\ran(x) = g^{-1}g$. This defines a congruence on $\mathcal{H}^{\dagger}$. If $a\in A$ is such that $I_{a} = P_{a} = \emptyset$ then we say $a\sim a^{-1}a = aa^{-1}$. We then define $G$ by
$$G = \mathcal{H}^{\dagger}/\sim$$
and call $G$ the \emph{automaton groupoid} of $\mathcal{A}$. We see from its construction that $G$ will act faithfully on $\mathcal{G}^{\ast}$ in a self-similar manner.

\begin{comment}
Let 
$$K(\mathcal{A}) = \bigcap_{x\in \mathcal{G}_{1}^{\ast}}{G_{x}},$$
where $G_{x}$ is the stabiliser of $x\in \mathcal{G}_{1}^{\ast}$. $K = K(\mathcal{A})$ is a normal subgroupoid of $\mathcal{H}^{\dagger}$. Let $G = \mathcal{H}^{\dagger}/K$ be the factor groupoid. 

\begin{thm}
Let $G$ and $\mathcal{G}^{\ast}$ be as above. Then the Zappa-Sz\'ep product of $G$ and $\mathcal{G}^{\ast}$ is a fundamental left Rees category.
\end{thm}

\begin{proof}
We see that $G$ is a subgroupoid of the path automorphism groupoid of the underlying graph of $G_{1}^{\ast}$ by the fact that the input strings are the same length as output strings (showing it is length-preserving) and by defintion of the action of states on strings, it satisfies the second condition. The self-similarity of the action then follows from the fact that $G$ is closed under restriction.
\end{proof}

\end{comment}

Note that if $M$ is a fundamental left Rees category generated by a finite generalised invertible automaton then $M$ is finite if, and only if, $\mathcal{A}$ has no cycles. On the other hand, given a finite fundamental left Rees category with a single sink, we can construct such an automaton which generates it.

\begin{example}
The following example is analogous to the dyadic adding machine, except that in addition to adding two dyadic integers, it turns $0$'s and $1$'s into $x$'s and $y$'s. Here is the Moore diagram:

\begin{center}
\setlength{\unitlength}{0.75mm}
\begin{picture}(60,60)

\put(9,30){\circle{17}}
\put(9,38.5){\vector(1,0){1}}
\put(4,41){$(1,x)$}

\put(20,30){$a$}
\put(21,31){\circle{7}}

\put(24.5,31){\vector(1,0){27}}
\put(32,33){$(0,y)$}

\put(54,30){$b$}
\put(55,31){\circle{7}}

\put(55,43){\circle{17}}
\put(55,51.5){\vector(1,0){1}}
\put(50,53.5){$(1,y)$}

\put(55,19){\circle{17}}
\put(55,10.5){\vector(1,0){1}}
\put(50,6){$(0,x)$}

\end{picture}
\end{center}

\begin{center}
\setlength{\unitlength}{0.75mm}
\begin{picture}(60,60)

\put(9,30){\circle{17}}
\put(9,38.5){\vector(1,0){1}}
\put(4,41){$(x,1)$}

\put(20,30){$c$}
\put(21,31){\circle{7}}

\put(24.5,31){\vector(1,0){27}}
\put(32,33){$(y,0)$}

\put(54,30){$d$}
\put(55,31){\circle{7}}

\put(55,43){\circle{17}}
\put(55,51.5){\vector(1,0){1}}
\put(50,53.5){$(y,1)$}

\put(55,19){\circle{17}}
\put(55,10.5){\vector(1,0){1}}
\put(50,6){$(x,0)$}

\end{picture}
\end{center}
\begin{center}
Figure 9: Moore diagram of analogue of dyadic adding machine
\end{center}

Here $X=\left\{0,1,x,y\right\}$ and $A = \left\{a,b,c,d\right\}$. Let $X_{1} = \left\{0,1\right\}$ and $X_{2} = \left\{x,y\right\}$ (so that $X = X_{1}\cup X_{2})$. Then 
$$X_{1} = I_{a} = I_{b} = P_{c} = P_{d} = T_{x} = T_{y}$$
and 
$$X_{2} = P_{a} = P_{b} = I_{c}  = I_{d} = T_{0} = T_{1}.$$
Let $V = \left\{e_{1}, e_{2}\right\}$. The associated graphs $\mathcal{G}$ and $\mathcal{H}$ will be as follows:

\begin{center}
\setlength{\unitlength}{0.75mm}
\begin{picture}(30,30)

\put(-30,20){$\mathcal{G}$}

\put(-20,10){$e_{1}$}

\put(-15,13.5){\vector(1,0){27}}
\put(-15,16){\vector(1,0){27}}
\put(-6,19){$x,y$}

\put(13,8.5){\vector(-1,0){27}}
\put(13,6){\vector(-1,0){27}}
\put(-2,1){$0,1$}

\put(14,10){$e_{2}$}

\put(25,20){$\mathcal{H}$}

\put(35,10){$e_{1}$}

\put(40,13.5){\vector(1,0){27}}
\put(40,16){\vector(1,0){27}}
\put(49,19){$a,b$}

\put(68,8.5){\vector(-1,0){27}}
\put(68,6){\vector(-1,0){27}}
\put(53,1){$c,d$}

\put(69,10){$e_{2}$}

\end{picture}
\end{center}
% Both graph will have vertex set $V$. $\mathcal{G}$ will have edge set $X$ and $\mathcal{H}$ will have edge set $A$. On $\mathcal{G}$, $x$ and $y$ will go from $e_{1}$ to $e_{2}$ and $0$ and $1$ will go from $e_{2}$ to $e_{1}$. On $\mathcal{H}$, $a$ and $b$ will go from $e_{1}$ to $e_{2}$ and $c$ and $d$ will go from $e_{2}$ to $e_{1}$. 

Now 
$$(bd)\cdot (xw) = x (bd)|_{x}\cdot w = x (bd)\cdot w$$
and
$$(bd)\cdot (yw) = y (bd)|_{y}\cdot w = y (bd)\cdot w.$$
Thus $bd \sim e_{2}$. In a similar way we have $db \sim e_{1}$ and so $b^{-1} \sim d$. Now
$$(ac)\cdot (xw) = x (ac)|_{x}\cdot w = x (ac)\cdot w$$
and
$$(ac)\cdot (yw) = y (ac)|_{y}\cdot w = y (bd)\cdot w = yw.$$
Thus $ac \sim e_{2}$, by symmetry $ca \sim e_{1}$ and $a^{-1} \sim c$. So the groupoid $G = \mathcal{H}^{\dagger}/\sim$ will have 4 non-identity elements.

We can view the element $a$ as adding $1$ to a dyadic integer, where we have identified $0$'s and $x$'s, and $1$'s and $y$'s.
%We will have that $K(\mathcal{A})$ will be the subgroupoid generated by $ac$, $bd$, $ca$ and $db$.

\begin{comment}
\subsection{Different example}

A more interesting example. Here is the Moore diagram:

\vspace{10 mm}

\begin{center}
\setlength{\unitlength}{0.75mm}
\begin{picture}(130,50)

\put(9,30){\circle{17}}
\put(9,38.5){\vector(1,0){1}}
\put(4,41){(2,2)}

\put(11,13){\circle{17}}
\put(11,4.5){\vector(1,0){1}}
\put(6,0){(3,3)}

\put(17,21.5){$c$}
\put(18,22.5){\circle{7}}

\put(54,32){$b$}
\put(55,33){\circle{7}}

\put(55,45){\circle{17}}
\put(55,53.5){\vector(1,0){1}}
\put(50,55.5){$(0,1)$}

\put(54,10){$a$}
\put(55,11){\circle{7}}

\put(51.5,32){\vector(-4,-1){31}}
\put(31,32){$(1,0)$}

\put(51.5,12){\vector(-4,1){31}}
\put(32,10){$(1,3)$}

\put(55,14.5){\vector(0,0){15}}
\put(56,21){$(0,2)$}

\end{picture}
\end{center}

\vspace{3 mm}

Here $X=\left\{0,1,2,3\right\}$, $X_{1} = \left\{0,1\right\}$ and $X_{2} = \left\{2,3\right\}$. We have
$$T_{0} = I_{a} = I_{b} = P_{b} = X_{1}$$
and

$$T_{1} = T_{2} = T_{3} = I_{c} = P_{a} = P_{c} = X_{1}$$

\subsection{Some Finite Left Rees Categories}

\begin{ex}
\end{comment}
\end{example}
\begin{example}
Consider the automaton described by the Moore diagram in Figure 10.

\begin{center}
\setlength{\unitlength}{0.75mm}
\begin{picture}(60,60)

\put(-11,40){$g_{1}$}
\put(-9,41){\circle{8}}

\put(-5.5,41){\vector(1,0){27}}
\put(2,43){$(x_{1},x_{3})$}

\put(23,40){$g_{2}$}
\put(25,41){\circle{8}}

\put(-11,8){$g_{3}$}
\put(-9,9){\circle{8}}

\put(-5.5,9){\vector(1,0){27}}
\put(2,11){$(x_{4},x_{2})$}

\put(23,8){$g_{4}$}
\put(25,9){\circle{8}}

\put(56,12){\vector(-1,0){27}}
\put(35,14){$(x_{5},x_{6})$}

\put(56,6){\vector(-1,0){27}}
\put(35,2){$(x_{6},x_{5})$}

\put(57,8){$g_{5}$}
\put(59,9){\circle{8}}

\put(-9,37.5){\vector(0,-1){25}}
\put(-27,25){$(x_{3},x_{1})$}

\put(25,37.5){\vector(0,-1){25}}
\put(7,25){$(x_{2},x_{4})$}

\end{picture}
\end{center}
\begin{center}
Figure 10: Moore diagram of automaton
\end{center}

Let $X_{1} = \left\{x_{1},x_{3}\right\}$, $X_{2} = \left\{x_{2}\right\}$, $X_{3} = \left\{x_{4}\right\}$, $X_{4} = \left\{x_{5},x_{6}\right\}$ and $X_{5} = \emptyset$. Then
$$X_{1} = I_{g_{1}} = P_{g_{1}}, \quad X_{2} = I_{g_{2}} = P_{g_{3}} = T_{x_{1}}, \quad X_{3} = I_{g_{3}} = P_{g_{2}} = T_{x_{3}},$$
$$X_{4} = I_{g_{5}} = P_{g_{5}}, \quad X_{5} = T_{x_{2}} = T_{x_{4}} = T_{x_{5}} = T_{x_{6}}.$$

In this case we have the following graphs:

\begin{comment}
In this case $\mathcal{G}$ is the graph with 5 vertices $e_{1},\ldots,e_{5}$ and 6 edges $x_{1},\ldots,x_{6}$ such that 
$$e_{1} = \ran(x_{1}) = \ran(x_{3}), \quad e_{2} = \dom(x_{3}) = \ran(x_{4}), \quad e_{3} = \dom(x_{1}) = \ran(x_{2}),$$
$$e_{4} = \ran(x_{5}) = \ran(x_{6}), \quad e_{5} = \dom(x_{2}) = \dom(x_{4}) = \dom(x_{5}) = \dom(x_{6})$$
and $\mathcal{H}$ is the graph with 5 vertices $e_{1},\ldots,e_{5}$ and 5 edges $g_{1},\ldots,g_{5}$ such that 
$$e_{1} = g_{1}g_{1}^{-1} = g_{1}^{-1}g_{1}, \quad e_{2} = g_{2}^{-1}g_{2} = g_{3}g_{3}^{-1}, \quad e_{3} = g_{2}g_{2}^{-1} = g_{3}^{-1}g_{3},$$
$$e_{4} = g_{4}^{-1}g_{4} = g_{4}g_{4}^{-1}, \quad e_{5} = g_{5}^{-1}g_{5} = g_{5}g_{5}^{-1}.$$
\end{comment}

\begin{center}
\setlength{\unitlength}{0.75mm}
\begin{picture}(60,60)

\put(-5,50){$\mathcal{G}$}

\put(5,40){$e_{1}$}

\put(37,41){\vector(-1,0){27}}
\put(22,43){$x_{3}$}

\put(39,40){$e_{2}$}

\put(6,15){\vector(0,1){22}}
\put(8,25){$x_{1}$}

\put(5,10){$e_{3}$}

\put(37,11){\vector(-1,0){27}}
\put(22,13){$x_{2}$}

\put(39,10){$e_{5}$}

\put(40,15){\vector(0,1){22}}
\put(42,25){$x_{4}$}

\put(73,10){$e_{4}$}

\put(44,13){\vector(1,0){27}}
\put(56,15){$x_{5}$}

\put(44,8){\vector(1,0){27}}
\put(56,4){$x_{6}$}

\end{picture}
\end{center}

\begin{center}
\setlength{\unitlength}{0.75mm}
\begin{picture}(60,60)

\put(-10,50){$\mathcal{H}$}

\put(10,40){\circle{17}}
\put(10,48.5){\vector(1,0){1}}
\put(9,51){$g_{1}$}

\put(20,40){$e_{1}$}

\put(44,40){$e_{2}$}

\put(10,10){\circle{17}}
\put(10,18.5){\vector(1,0){1}}
\put(9,21){$g_{4}$}

\put(20,10){$e_{4}$}

\put(44,10){$e_{3}$}

\put(44,15){\vector(0,1){22}}
\put(47,37){\vector(0,-1){22}}
\put(48,25){$g_{2}$}
\put(39,25){$g_{3}$}

\put(70,40){\circle{17}}
\put(70,48.5){\vector(1,0){1}}
\put(69,51){$g_{5}$}

\put(80,40){$e_{5}$}

\end{picture}
\end{center}

Note that the free category $\mathcal{G}^{\ast}$ on the graph $\mathcal{G}$ is finite. Now we assume $g_{4} \sim e_{4}$ since $I_{g_{4}} = P_{g_{4}} = \emptyset$. Now
$$(g_{3}g_{2})\cdot (x_{2}w) = x_{2}(g_{3}g_{2})|_{x_{2}}\cdot w = x_{2} (g_{4}^{2})\cdot (w) = x_{2}w.$$
Thus $g_{3}g_{2}\sim e_{3}$. By symmetry $g_{2}g_{3}\sim e_{2}$ and so $g_{2}^{-1} = g_{3}$. We have
$$g_{1}^{2}\cdot (x_{1}w) = x_{1}(g_{1}^{2})|_{x_{1}}\cdot w = x_{1} (g_{3}g_{2})\cdot w = x_{1}w.$$
Similarly $g_{1}^{2}\cdot x_{3}w = x_{3}w$. Thus $g_{1}^{2} = e_{1}$. Finally,
$$g_{5}^{2}\cdot (x_{5}w) = x_{5}(g_{5}^{2})|_{x_{5}}\cdot w = x_{5} (g_{4}^{2})\cdot (w) = x_{5}w$$
and $g_{5}^{2}\cdot (x_{6}w) = x_{6}w$. Thus $g_{5}^{2} = e_{5}$. It therefore follows that $G = \mathcal{H}^{\dagger}/\sim$ will be finite and consequently $M = \mathcal{G}^{\ast}\bowtie G$ will be finite. This makes sense since $\mathcal{A}$ is finite and acyclic.

\end{example}

\section{Graph iterated function systems}

In Section 2.5 it was shown that many fractals defined by iterated function systems have a left Rees monoid as their monoid of similarity transformations. In this section we suggest how this might be generalised to graph iterated function systems by considering the example of the Von Koch snowflake. The von Koch snowflake can be regarded as 3 von Koch curves attached to each other in a triangle, giving the following fractal:
\begin{comment}

\subsection{Von Koch curve}

Consider the monoid $M$ of similarities of one of the sides $F$ of the von Koch snowflake (Figure 4).  Let $L_{1}$, $L_{2}$, $R_{1}$ and $R_{2}$ be the maps which map $F$, respectively, to the far left, the left diagonal, the right diagonal and the far right of itself and $\sigma$ be reflection in the verticle axis. We have the following relations
$$\sigma L_{1} = R_{2} \sigma \quad \quad \sigma L_{2} = R_{1} \sigma$$
$$\sigma R_{1} = L_{2} \sigma \quad \quad \sigma R_{2} = L_{1} \sigma.$$
Then the monoid generated by $L_{1}$, $L_{2}$, $R_{1}$ and $R_{2}$ is free, the group of units $G=\langle \sigma\rangle\cong C_{2}$ and $M=\langle L_{1}, L_{2}, R_{1},R_{2},\sigma\rangle$, and it is not hard to see that $M$ is a Rees monoid. $M$ is given by the following monoid presentation

$$M=\langle \sigma,t,r|\sigma^{2} =1\rangle .$$

\begin{center}
\includegraphics[angle=90,width=60mm]{VonKochCurve.pdf}
\end{center}
\begin{center}
Figure 4
\end{center}

\begin{center}
\includegraphics[scale=0.3]{fern.png}
\end{center}
\begin{center}
Figure n
\end{center}

\subsection{von Koch snowflake}
\end{comment}

\begin{center}
\includegraphics[scale=0.5]{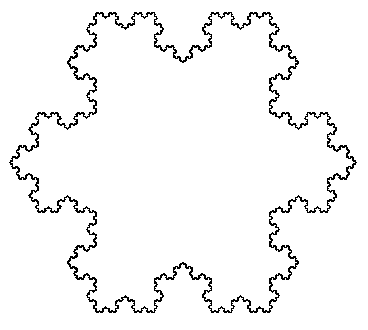}
\end{center}
\begin{center}
Figure 11: von Koch snowflake (source \cite{VKSS})
\end{center}

One possible way to construct the von Koch snowflake is as the attractor of a graph iterated function system. Let us describe each von Koch curve $C_1$, $C_2$ and $C_3$ by iterated function systems. $C_1$ is the attractor with maps $L_{1},R_{1}$, $C_2$ with maps $L_{2},R_{2}$ and $C_3$ with maps $L_{3},R_{3}$. Then consider a graph $Y$ with 3 vertices $e_{1},e_{2},e_{3}$, and maps $L_{i}$ and $R_{i}$ represented as edges from vertex $i$ to itself. Let $C$ be the free category of $Y$ and let $G$ be the groupoid with 3 objects, and 3 non-identity maps $\sigma_{1}$, $\sigma_{2}$ and $\sigma_{3}$ each from the $k$th object to iself, such that $\sigma_{k}^{2}=id_{k}$. Then we have the same self-similar action of $G$ on $C$ as with the von Koch curve above, giving rise to a Rees category $M$. 

\section{Algebras and representation theory} 

Here we generalise the ideas of Section 2.8. Let $K$ be a field and let $M$ be a category. Assume $M_{0}$ is finite. We can form the category algebra $KM$ as follows. An element $v$ of $KM$ is a finite sum 
$$v = \sum_{i=1}^{n}\alpha_{i}x_{i},$$
where $\alpha_{i}\in K$ and $x_{i}\in M$. We define addition $+$, convolution $\circ$ and scalar multiplication as follows:
$$\sum_{i=1}^{n}\alpha_{i}x_{i} + \sum_{i=1}^{m}\beta_{i}y_{i} = \sum_{i=1}^{n+m}\alpha_{i}x_{i},$$
where for $n+1\leq i\leq n+m$, $\alpha_{i} = \beta_{i-n}$ and $x_{i} = y_{i-n}$,
$$\sum_{i=1}^{n}\alpha_{i}x_{i} \circ \sum_{i=1}^{m}\beta_{i}y_{i} = \sum_{i=1}^{n}\sum_{j=1}^{m} \alpha_{i}\beta_{j}x_{i}y_{j},$$
where $x_{i}y_{j}$ is the product in $M$ (set it equal to $0$ if it does not exist) and
$$\lambda \sum_{i=1}^{n}\alpha_{i}x_{i} = \sum_{i=1}^{n}\lambda\alpha_{i}x_{i},$$
where for all of the above $\lambda, \alpha_{i}, \beta_{i}\in K$ and $x_{i},y_{i}\in M$. 

Let 
$$e = \sum_{e_{i}\in M_{0}}{e_{i}}.$$
The above gives $KM$ the structure of a unital $K$-algebra with unit $e$. 

\begin{comment}
Let $N$ be the subcategory of $M$ consisting of all elements $x\in M$ with $\dom(x) = \ran(x)$. Then in the same way as above $KN$ becomes a category algebra.

We can make $KN$ a bialgebra by specifying the comultiplication $\Delta$ on the elements $x\in N$ to be
$$\Delta(x) = x\otimes x$$
and counit $\epsilon$ to be $\epsilon(x) = \dom(x)$. If $N$ is a groupoid, we can make $KN$ into a Hopf algebroid by defining the antipode $S(g) = g^{-1}$. 

\end{comment}

\begin{comment}
Now suppose $M = \mathcal{G}^{*}\bowtie G$ is a left Rees category. Let $e_{1}$ be the unit of $K\mathcal{G}^{*}$ and $e_{2}$ be the unit of $KG$.
Then $KM$ is isomorphic to the \emph{bicrossed product algebra} $K\mathcal{G}^{*}\bowtie KG$ with unit $e_{1}\otimes e_{2}$, multiplication on generators given by
$$(x\otimes g)(y\otimes h) = x(g\cdot y)\otimes (g|_{y})h.$$
\end{comment}
Observe that if $M = \mathcal{G}^{*} \bowtie G$ is a left Rees category then $KM$ will be finite dimensional over $K$ if and only if $G$, $\mathcal{G}$ are finite and $\mathcal{G}$ is acyclic.
% then $\mathcal{K}M$ is finite dimensional over $K$. In fact it is clear that $KM$ will be finite dimensional iff $G$, $\mathcal{G}$ are finite and %$\mathcal{G}$ is acyclic.
Note that in representation theory the algebra $K\mathcal{G}^{\ast}$ is often called a \emph{quiver algebra}.

\begin{comment}

For $N$ as defined above, we have comultiplication (for $\exists xg$ and $\exists g\cdot x$) given by
$$\Delta(x\otimes g) = (x\otimes g)\otimes (x\otimes g)$$
and counit $\epsilon(x\otimes g) = \ran(x\otimes g)$.

If $M$ is a symmetric Rees category, then sometimes we can form $K\Gamma = K\mathcal{G}^{\dagger}\bowtie KG$, as above with antipode 
$$S(x\otimes g) = (g^{-1}\cdot x^{-1})\otimes(a^{-1}|_{x^{-1}}).$$

\subsection{Proving Things Work}

In the previous section, a number of assertions were made. Here we check all the necessary details. 

\end{comment}

Let $\mathcal{G}$ be a finite directed graph. A ($K$-linear) \emph{representation} $R$ of $\mathcal{G}$ is defined by the following data:
\begin{enumerate}
\item To each vertex $e\in \mathcal{G}_{0}$ is associated a $K$-vector space $R_{e}$.
\item To each arrow $\alpha:e\rightarrow f$ in $\mathcal{G}_{1}$ is associated a $K$-linear map $\phi_{\alpha}:R_{e} \rightarrow R_{f}$.
\end{enumerate} 

Now there is an abelian category well-studied in the representation theory of associative algebras whose objects are all $K$-linear representations of some specified directed graph (see Chapter 3 of \cite{Assem}). The morphisms in this category motivated the following theory.

We call a representation \emph{finite dimensional} if each of the vertex vector spaces are finite dimensional over $K$. We will assume from now on that all our representations are finite dimensional.

Observe that in the above definition it is possible that in a representation of a directed graph some of the vector spaces assigned to vertices are isomorphic.

Let $\mathcal{G}$ be a finite directed graph and let $R$ be a representation of $\mathcal{G}$. A \emph{collection of inner morphisms} $G$ for $R$ consists of a collection $G$ of invertible linear maps $g$ between vector spaces appearing in the representation satisfying the following properties:
\begin{enumerate}
\item For every $e\in \mathcal{G}_{0}$ there exists an identity map $g:R_{e}\rightarrow R_{e}$ in $G$.
\item For all $e_{1},e_{2},e_{3}\in \mathcal{G}_{0}$, for every $g:R_{e_{1}}\rightarrow R_{e_{2}}$ in $G$ and every arrow map $\phi_{\alpha}:R_{e_{2}}\rightarrow R_{e_{3}}$, there exists $e_{4}\in \mathcal{G}_{0}$, a unique arrow map $\phi_{\beta}:R_{e_{1}}\rightarrow R_{e_{4}}$ and a unique map $h:R_{e_{4}}\rightarrow R_{e_{3}}$ in $G$ such that $h\phi_{\alpha} = \phi_{\beta} g$.
\end{enumerate}

Note that for a particular representation there might be infinitely many such collections or there might be none.

Let $\mathcal{G}$ be a finite graph, let $R$ be a representation of $\mathcal{G}$ and let $G$ be a collection of inner morphisms for $R$. If for every vertex $e\in\mathcal{G}_{0}$ there is an edge $\alpha\in\mathcal{G}_{1}$ with $\dom(\alpha) = e$ then $G$ has the structure of a groupoid whose arrows are the elements of $G$ and composition of arrows is just composition of linear maps.

\begin{proposition}
Let $\mathcal{G}$, $R$ and $G$ be as in the preceding paragraph. Then there is a natural self-similar action of $G$ on $\mathcal{G}^{\ast}$.
\end{proposition}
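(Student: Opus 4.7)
The plan is to use condition (2) in the definition of a collection of inner morphisms to define the action and restriction on single edges, extend inductively to arbitrary paths in $\mathcal{G}^{\ast}$ by forcing axioms (SS6) and (SS8) to hold, and then verify the remaining self-similarity axioms by exploiting the uniqueness assertion in condition (2).

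First I would establish the base case on edges. For each $g \in G$ and each edge $\alpha \in \mathcal{G}_1$ whose arrow map $\phi_\alpha$ is composable with $g$ in the appropriate order, condition (2) produces a unique edge $\beta$ (with specified new target vertex $e_4$) and a unique element $h \in G$ fitting into the commutative square determined by $\phi_\alpha$, $\phi_\beta$, $g$, and $h$; I would set $g \cdot \alpha := \beta$ and $g|_\alpha := h$. For each identity $e \in \mathcal{G}_0$ and each $g \in G$ with matching endpoint, I define $g \cdot e$ and $g|_e$ by declaring the identity morphism at $R_e$ (which lies in $G$ by part (1) of the definition) to act trivially on everything based at $e$; the uniqueness clause of condition (2) shows this is forced. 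This step already secures the identity-type axioms (SS1), (SS3), (SS4), and (SS5), as well as the endpoint-matching axioms (C1)--(C3), since the groupoid domains and codomains of $h$ are dictated by condition (2).

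Next I would extend to arbitrary paths $x \in \mathcal{G}^{\ast}$ by recursion on length, setting
\[
g \cdot (\alpha y) := (g \cdot \alpha)(g|_\alpha \cdot y), \qquad g|_{\alpha y} := (g|_\alpha)|_y
\]
whenever $\alpha$ is an edge and $\alpha y$ is a composable path of $\mathcal{G}^{\ast}$. This is a definition by recursion on the edge structure of $x$, and by construction it makes axioms (SS6) and (SS8) hold automatically. The substantive step is then the verification of axioms (SS2) and (SS7), which concern compatibility with composition in the groupoid $G$. For a single edge $\alpha$, the quantities $(gh) \cdot \alpha$ and $(gh)|_\alpha$ are determined by one commutative square produced by applying condition (2) to $gh$ and $\phi_\alpha$; on the other hand, $g \cdot (h \cdot \alpha)$ and $g|_{h \cdot \alpha} \cdot h|_\alpha$ are obtained by first applying condition (2) to $h$ and $\phi_\alpha$ and then to $g$ and $\phi_{h \cdot \alpha}$, giving two stacked squares. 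Pasting them vertically yields a single commutative square of exactly the shape to which the uniqueness clause of condition (2) applies, forcing the two expressions for the action and the two expressions for the restriction to agree. Extension from edges to arbitrary paths is then a routine induction on the length of $x$, reducing everything to (SS6), (SS8), and the already-established edge case.

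The main obstacle will be carefully tracking the pasting of squares and the direction conventions so that the uniqueness clause of condition (2) applies cleanly in the proof of (SS2) and (SS7); in particular one must check that the composite square produced by pasting really is of the form $h_1 \phi_{\beta_1} = \phi_\alpha g_1$ to which uniqueness applies, rather than some variant. Once this diagrammatic bookkeeping is in place, length-preservation is automatic and all eight self-similarity axioms together with (C1)--(C3) follow, giving the desired self-similar action of the groupoid $G$ on the free category $\mathcal{G}^{\ast}$.
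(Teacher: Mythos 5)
Your proposal is correct and takes essentially the same route as the paper: there too the action and restriction are defined on edges by the unique factorisation $gx=(g\cdot x)(g|_{x})$ supplied by condition (2), and the remaining axioms are obtained from uniqueness of such factorisations via a categorical version of Theorem \ref{sufconlrm}. Your recursive extension to paths and the pasting-of-squares verification of (SS2) and (SS7) amount to spelling out exactly that cited unique-factorisation argument.
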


\begin{proof}
For $x\in \mathcal{G}_{1}$, $g\in G$ with $\ran(x) = \dom(g)$ define $g\cdot x$ and $g|_{x}$ to be the unique elements of, respectively, $G$ and $\mathcal{G}_{1}$ satisfying the equation
$$gx = (g\cdot x)(g|_{x}).$$
One considers the identity elements of $\mathcal{G}^{\ast}$ to be the identity maps on the vertex vector spaces and paths in $\mathcal{G}^{\ast}$ to be the composition of linear maps. It then follows by a categorical version of Theorem \ref{sufconlrm} that this extends to a self-similar action of $G$ on $\mathcal{G}^{\ast}$ satisfying axioms (C1)-(C3) and (SS1)-(SS8).
\end{proof} 

Morally, the self-similarity in the last result follows from the associativity of matrix multiplication.

Let $C$ be a small category with $C_{0}$ finite. A (finite dimensional $K$-linear) \emph{representation} $R$ of $C$ consists of:
\begin{enumerate}
\item For each idenitity $e\in C_{0}$ there is associated a (finite dimensional) $K$-vector space $R_{e}$ and corresponding identity morphism $\phi_{e}:R_{e}\rightarrow R_{e}$
\item For each each arrow $x:e\rightarrow f$ in $\mathcal{C}$ is associated a $K$-linear map $\phi_{x}:R_{e} \rightarrow R_{f}$
\end{enumerate}  
such that $xy = z$ in $C$ implies $\phi_{x}\phi_{y} = \phi_{z}$.

Note that representations of free categories and directed graphs are effectively the same.

Now suppose that we have a self-similar action of a groupoid $G$ on a free category $\mathcal{G}^{\ast}$, and $M$ is the associated left Rees category. Suppose that $R$ is a representation of $M$. Then this gives rise to a representation $S$ of the quiver $\mathcal{G}$ with $G$ a collection of inner morphisms for $S$.

\begin{comment}

\section{Background semigroup theory}

\begin{defin}
An inverse semigroup $S$ is said to be \emph{$E$-unitary} if $e\leq s$ where $e\in E(S)$ then $s\in E(S)$. An inverse semigroup $S$ with zero is said to be \emph{$E^{\ast}$-unitary} if $0\neq e\leq s$ where $e\in E(S)$ then $s\in E(S)$.
\end{defin}

\begin{defin}
An inverse semigroup $S$ is said to be \emph{combinatorial} if $\mathcal{H}$ is trivial.
\end{defin}

\begin{defin}
An inverse semigroup $S$ with zero is said to be \emph{0-bisimple} if it contains exactly 2 $\mathcal{D}$-classes.
\end{defin}

\begin{defin}
An $X$-monoid is a $0$-bisimple inverse monoid in which the idempotents form a tree with base $X$.
\end{defin}

\begin{defin}
An \emph{$X^{\ast}$-monoid} is an $X$-monoid whose non-zero idempotents form a tree isomorphic to $X^{\ast}$ under the partial order.
\end{defin}

\begin{defin}
An inverse subsemigroup $T$ of a semigroup $S$ is said to be \emph{wide} if $E(S)\subseteq T$.
\end{defin}

\begin{defin}
A wide inverse subsemigroup $T$ of a semigroup $S$ is said to be \emph{normal} if $sts^{-1}\in T$ for all $s\in S$ and $t\in T$.
\end{defin}

\begin{defin}
Let $S$ be a semigroup. Then a subsemigroup $U$ is said to be a \emph{right denominator set} if $U$ satisfies the following two conditions:
\begin{enumerate}
\item $su = t u$ or $us = ut$ implies $s = t$ for $s,t\in S$, $u \in U$.
\item $sU\cap uS\neq \emptyset$ for all $s\in S$, $u\in U$.
\end{enumerate}
\end{defin}

\end{comment}

\newpage
\section{Associated inverse semigroup}

In this section we will see how the work of the past two chapters connects to the work of Nivat and Perrot, and how it relates to ideas in the following chapter. We will also use the results of this section for calculations in Section 4.9.

Given a Leech category $C$, of which left Rees categories are an example, there is a general way of forming an inverse semigroup $S(C)$, which we will call the \emph{associated inverse semigroup}. We will briefly describe this construction (see \cite{JonesLawsonGraph}, \cite{LawsonCatInv}, \cite{Law0EUnitary} and \cite{LawsonOrdered} for more details). Let $C$ be a Leech category, let $G(C)$ denote the groupoid of invertible elements of $C$ and let  
$$U = \left\{(x,y)\in C\times C | \dom(x) = \dom(y) \right\}.$$
Define $(x,y)\sim (z,w)$ in $U$ if there is an isomorphism $g\in G(C)$ with $(x,y) = (zg,wg)$. This is an equivalence relation and so we let 
$$S(C) = U/\sim \bigcup \left\{0\right\}.$$
We denote the equivalence class containing $(x,y)$ by $[x,y]$.
We define a multiplication for elements $[x,y],[z,w]\in S(C)$ as follows. If there are elements $u,v\in C$ with $yu = zv$ we define $[x,y][z,w] = [xu,wv]$. Otherwise the product is defined to be $0$. It turns out that $S(C)$ is an inverse semigroup with $0$. The inverse of an element $[x,y]$ is $[y,x]$ and idempotents are of the form $[x,x]$.

We have the following, proved in \cite{JonesLawsonGraph}:
\begin{lemma}
\label{asssemfacts}
\begin{enumerate}
\item $[x, y]\,\mathcal{L}\,[z, w]$ if and only if $y = wg$ for some isomorphism $g\in G(C)$.
\item $[x, y]\,\mathcal{R}\,[z, w]$ if and only if $x = zg$ for some isomorphism $g\in G(C)$
\item $[x, y]\,\mathcal{D}\,[z, w]$ if and only if $\dom(x)$ and $\dom(z)$ are isomorphic.
\item $[x, y]\,\mathcal{J}\,[z, w]$ if and only if the identities $\dom(x)$ and $\dom(z)$ are strongly connected.
\item $S(C)$ is $E^{\ast}$-unitary if and only if the Leech category $C$ is right cancellative.
\end{enumerate}
\end{lemma}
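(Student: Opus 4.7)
The plan is to work through the five items by exploiting the standard characterisations of Green's relations in an inverse semigroup together with the explicit description of multiplication, inverses, and idempotents in $S(C)$. Recall that in any inverse semigroup $s \,\mathcal{L}\, t$ iff $s^{-1}s = t^{-1}t$ and $s\,\mathcal{R}\,t$ iff $ss^{-1} = tt^{-1}$, that $\mathcal{D} = \mathcal{L}\circ\mathcal{R}$, and that the inverse of $[x,y]$ is $[y,x]$ with idempotents of the form $[x,x]$.

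For (i), I would first compute $[x,y]^{-1}[x,y] = [y,x][x,y]$: the composability condition $xu = xv$ in the product formula is satisfied by $u = v = \dom(x)$, yielding $[y,y]$. Similarly $[z,w]^{-1}[z,w] = [w,w]$. Hence $[x,y]\,\mathcal{L}\,[z,w]$ iff $[y,y] = [w,w]$, i.e.\ $(y,y)\sim (w,w)$, which by definition of $\sim$ is equivalent to $y = wg$ for some $g \in G(C)$. Part (ii) is entirely symmetric, using $[x,y][x,y]^{-1} = [x,x]$. For (iii), one direction is explicit: if $h\colon \dom(x) \to \dom(z)$ is invertible, consider $r = [zh, y]$; it lies in $S(C)$ because $\dom(zh) = \dom(y)$, and by (i)--(ii) we have $[x,y]\,\mathcal{L}\,r\,\mathcal{R}\,[z,w]$, so $[x,y]\,\mathcal{D}\,[z,w]$. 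Conversely, given $[x,y]\,\mathcal{L}\,[a,b]\,\mathcal{R}\,[z,w]$, parts (i)--(ii) supply invertibles $g_1,g_2$ with $y = bg_1$ and $a = zg_2$; tracking domains gives $\dom(x) = \dom(g_1)$, $\dom(b) = \ran(g_1)$, and since $\dom(a) = \dom(b)$ and $\dom(z) = \ran(g_2)$, we chain $\dom(x) \cong \dom(b) = \dom(a) \cong \dom(z)$.

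For (iv), I would use that $s\,\mathcal{J}\,t$ iff $s = \alpha t \beta$ and $t = \gamma s \delta$ for some $\alpha,\beta,\gamma,\delta \in S(C)^{1}$. Unpacking a product like $[a,b][z,w][c,d]$ via the multiplication rule shows that it is nonzero precisely when one can find arrows making the middle compositions match, and in particular requires arrows between the relevant domains in both directions. Conversely, if $\dom(x)$ and $\dom(z)$ are strongly connected, pick $p \in \dom(x)C\dom(z)$ and $q \in \dom(z)C\dom(x)$; then elements of the form $[x, zq]$ and $[z, xp]$ together with appropriate idempotents witness the two containments of principal two-sided ideals. The main obstacle here is keeping careful track of the domain/range conditions so that the two-sided pushing-around genuinely lands in $S(C)$ rather than producing a zero product — writing out the composability squares diagrammatically should make this routine.

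Finally, for (v), the natural partial order on $S(C)$ gives $[a,a] \leq [x,y]$ iff $[x,y][a,a] = [a,a]$. Suppose first that $C$ is right cancellative and $0 \neq [a,a] \leq [x,y]$. Computing $[x,y][a,a]$ (with matching elements $u,v$ satisfying $yu = av$) and equating to $[a,a]$ yields $xu = ag$ and $av = ag$ for some invertible $g$; left cancellativity gives $v = g$, and then right cancellativity applied to $xu = yu$ (obtained after substitution) forces $x = y$, making $[x,y]$ idempotent. Conversely, suppose $S(C)$ is $E^{*}$-unitary and $xu = yu$ with $\dom(x) = \dom(y)$. Then $[xu,xu]$ is a nonzero idempotent below $[x,y]$ (directly checkable from the product formula), so $[x,y]$ is idempotent, whence $[x,y] = [y,x]$ gives an invertible $g$ with $x = yg$, $y = xg$; substituting back into $xu = yu$ and using left cancellativity forces $g = \dom(x)$, so $x = y$. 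The delicate step — and, I expect, the main obstacle overall — is verifying right cancellativity from $E^{*}$-unitarity: one must be sure that the candidate idempotent $[xu,xu]$ really does lie under $[x,y]$ in the natural partial order, which amounts to a careful but entirely mechanical check with the multiplication formula.
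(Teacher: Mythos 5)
The thesis itself does not prove this lemma---it is quoted from Jones and Lawson \cite{JonesLawsonGraph}---so there is no in-paper argument to compare yours against; I can only assess the proposal on its own terms. Parts (1)--(3) are correct and are the standard computations: $[y,x][x,y]=[y,y]$ and $[x,y][y,x]=[x,x]$ (the relevant pullback is the pair of identities, by left cancellativity), and your intermediate element $[zh,y]$ in (3) does exactly what is needed. The forward half of (5) is also right: from $[a,a]=[x,y][a,a]=[xu,av]$ with $yu=av$ you get $xu=ag=av=yu$, and right cancellativity gives $x=y$.

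Two places need repair. In (4), your forward direction asserts that a nonzero product $[a,b][z,w][c,d]$ ``requires arrows between the relevant domains in both directions''; it does not. Unpacking one such product only produces an arrow one way (a composite of the pullback legs, running between $\dom(x)$, up to isomorphism, and $\dom(z)$); the arrow in the opposite direction must come from the second containment $[z,w]\in S^{1}[x,y]S^{1}$, so the forward argument has to use both halves of the $\mathcal{J}$-relation, one arrow from each. Likewise in your converse the right-hand factors are not idempotents: you need elements such as $[wq,y]$ and $[yp,w]$, since $[x,zq][z,w]=[x,wq]$ and then $[x,wq][wq,y]=[x,y]$. More seriously, in the converse of (5) the step ``$gu=u$, so left cancellativity forces $g=\dom(x)$'' is invalid---cancelling $u$ on the right is precisely the right cancellativity you are trying to establish. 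Fortunately the detour is unnecessary: once $E^{\ast}$-unitarity makes $[x,y]$ idempotent, $[x,y]=[x,y]^{-1}[x,y]=[y,y]$, so $(x,y)=(yg,yg)$ for some isomorphism $g$; left-cancelling $y$ in $y=yg$ gives $g=\dom(y)$, hence $x=y$ directly. With these two repairs your outline becomes a complete proof.
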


If $M$ is a left Rees monoid then it follows by Lemma \ref{asssemfacts} (1) that the $\mathcal{L}$-class of $[1,1]$ in $S(M)$ is isomorphic to $M$. This is how Nivat and Perrot came across self-similar group actions - they were studying a particular class of inverse semigroups for which this turns out to be the case. 

It follows from Lemma \ref{asssemfacts} (3) that $S(C)$ is $0$-bisimple if and only if $C$ is equivalent to a monoid. If $C$ is a free monoid $X^{\ast}$ with $|X| = 1$ then $S(C)$ is the \emph{bicyclic monoid} plus a $0$ adjoined and if $C$ is a free monoid $X^{\ast}$ with $|X| > 1$ then $S(C)$ is the \emph{polycyclic monoid} $P_{X}$. If $C$ is a free category $\mathcal{G}^{\ast}$ then $S(C) = P_{\mathcal{G}}$ is a \emph{graph inverse semigroup}. We see that both polycyclic monoids and graph inverse semigroups are $E^{\ast}$-unitary since free categories are right cancellative. This means that they are inverse $\wedge$-semigroups; that is, $s\wedge t$ exists for all $s,t\in S(C)$. Inverse $\wedge$-semigroups $S$ have a distributive completion, which we denote by $D(S)$, which means they are in particular orthogonally complete and so Rees categories give natural examples to which we can apply the theory of the following chapter. If $C$ is a left Rees monoid, then the semigroups $S(C)$ give rise to the Cuntz-Pimsner algebras of \cite{NekrashevychC} in much the same way as polycyclic monoids give rise to Cuntz algebras. 

If $M = \mathcal{G}^{\ast}G$ is a left Rees category then because of the unique decomposition of elements of left Rees categories we can write an arbitrary element of $S(M)$ in the form $[xg,y]$, where $x,y\in \mathcal{G}^{\ast}$ and $g\in G$. We now consider the natural partial order for $S(M)$.

\begin{lemma}
\label{orderasssem}
Let $M$ be a left Rees category and $S(M)$ be its associated inverse semigroup. Then $[xg,y]\leq [zh,w]$ in $S(M)$ if and only if there is a $v\in \mathcal{G}^{\ast}$ with $y = wv$, $x = z(h\cdot v)$ and $g = h|_{v}$.
\end{lemma}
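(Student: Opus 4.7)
The plan is to exploit the standard characterization of the natural partial order: $s\leq t$ in an inverse semigroup if and only if $s = ts^{-1}s$. First I would compute the idempotent $[xg,y]^{-1}[xg,y] = [y,xg][xg,y]$: by the multiplication rule in $S(M)$ we need $u_1,u_2\in M$ with $xg\cdot u_1 = xg\cdot u_2$, and taking $u_1 = u_2 = \dom(xg) = \dom(y)$ yields the idempotent $[y,y]$. Hence $[xg,y]\leq [zh,w]$ if and only if $[xg,y] = [zh,w][y,y]$.

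Next I would unpack $[zh,w][y,y]$. This product is nonzero precisely when there exist $u,v'\in M$ with $wu = yv'$, in which case it equals $[zh\cdot u,\, y\cdot v']$. Setting this equal to $[xg,y]$ in $S(M)$ means there is an isomorphism $\phi\in G(M)$ with $xg = (zh\cdot u)\phi$ and $y = (y\cdot v')\phi$. Left cancellation in $M$ applied to the second equation forces $v'\phi = \dom(y)$, so $v'$ is invertible with $v' = \phi^{-1}$. Setting $v := u(v')^{-1}\in M$, the relation $wu = yv'$ becomes $y = wv$, and the relation on $xg$ becomes $xg = zhv$. To see $v\in\mathcal{G}^{\ast}$, write $v = sk$ with $s\in\mathcal{G}^{\ast}$ and $k\in G$; then $y = (ws)k$ and uniqueness of the decomposition $M = \mathcal{G}^{\ast}G$, together with $y\in\mathcal{G}^{\ast}$, forces $k$ to be an identity and $y = ws$. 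Finally, expanding $xg = zhv$ via the self-similar action gives $xg = z(h\cdot v)(h|_v)$, and one more appeal to the unique Zappa--Sz\'ep decomposition yields $x = z(h\cdot v)$ and $g = h|_v$, as required.

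For the converse I would simply verify the product. Given $v\in\mathcal{G}^{\ast}$ with $y = wv$, $x = z(h\cdot v)$, and $g = h|_v$, I would take $u = v$ and $v' = \dom(y)$ in $[zh,w][y,y]$; this is legitimate since $wu = wv = y = y\cdot\dom(y) = yv'$. The product then equals $[zh\cdot v,\,y] = [zhv,\,y] = [z(h\cdot v)(h|_v),\,y] = [xg,y]$, establishing $[xg,y]\leq [zh,w]$. The only nontrivial step is the forward direction's argument that the auxiliary element $u(v')^{-1}$ assembled from the witnesses $u,v',\phi$ actually lies in $\mathcal{G}^{\ast}$ rather than merely in $M$; this is precisely where the ambient left Rees category structure is used, via uniqueness of the decomposition $M = \mathcal{G}^{\ast}G$, and it is the main (mild) obstacle in the proof.
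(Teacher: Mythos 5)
Your proof is correct and follows essentially the same route as the paper's: both reduce $[xg,y]\leq[zh,w]$ to the equality $[xg,y]=[zh,w][y,y]$, unpack the product, and extract $y=wv$, $x=z(h\cdot v)$, $g=h|_{v}$ from left cancellativity and the uniqueness of the $\mathcal{G}^{\ast}G$ decomposition, with the converse verified by the same direct computation. The only (harmless) variation is that where the paper argues via prefix comparability of $w$ and $y$ in the free category, you deduce that the auxiliary factor $v'$ is invertible and then place $v$ in $\mathcal{G}^{\ast}$ by the unique Zappa--Sz\'ep decomposition.
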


\begin{proof} 
Let $[xg,y]\leq [zh,w]$ in $S(M)$. Then 
$$[xg,y] = [zh,w][y,y].$$
First suppose that $y$ is a prefix of $w$. Then $w = yv$ for some $v\in \mathcal{G}^{\ast}$ and so
$$[xg,y] = [zh,yv][y,y] = [zh,yv] = [zh,w].$$
Thus $w$ must be a prefix of $y$, so $y = wv$ for some $v\in \mathcal{G}^{\ast}$. Now 
$$[xg,y] = [zh,w][wv,wv] = [zhw,wv] = [z(h\cdot w)h|_{w},y]$$
and so $x = z(h\cdot w)$ and $g = h|_{w}$. On the other hand,
$$[zh,w][wv,wv] = [zhv,wv] = [z(h\cdot v)h|_{v},wv]$$
and so $[z(h\cdot v)h|_{v},wv] \leq [zh,w]$.
\end{proof}

The following curious result may be deduced from Lemma 1.7 of \cite{JonesLawsonGraph}.

\begin{lemma}
\label{curlemasssem}
Let $M$ be a Rees category and let $S(M)$ be its associated inverse semigroup. If $s,t\in S(M)$ are such that $s\wedge t \neq 0$ then $s\leq t$ or $t\leq s$.
\end{lemma}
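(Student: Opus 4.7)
The plan is to pick any nonzero common lower bound of $s$ and $t$ and, by unpacking the natural partial order, translate that into a direct relationship between representatives of $s$ and $t$. Writing $s=[a,b]$ and $t=[c,d]$, I first choose a nonzero $u=[p,q]$ with $u\le s$ and $u\le t$. Unpacking the characterisation of the natural partial order given in Lemma \ref{orderasssem} --- or, equivalently, unpacking the identity $u=s[q,q]=t[q,q]$ via the pullback computation available in any right rigid Leech category --- gives me, after swapping $s$ and $t$ if one of the inequalities is already an equality, arrows $w_{1},w_{2}\in M$ with $p=aw_{1}=cw_{2}$ and $q=bw_{1}=dw_{2}$.

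From $aw_{1}=cw_{2}$ we have $aM\cap cM\neq\emptyset$, so right rigidity forces one of $aM,cM$ to contain the other. After swapping $s$ and $t$ if necessary I will assume $a=ca'$ for some $a'\in M$. Left-cancelling $c$ in $ca'w_{1}=cw_{2}$ yields $w_{2}=a'w_{1}$, so the second equation becomes $bw_{1}=da'w_{1}$. Right-cancelling $w_{1}$ then yields $b=da'$. Combined with $a=ca'$, this shows $[a,b]=[ca',da']\le[c,d]$, so $s\le t$.

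The only delicate point, if there is one, is locating where right cancellativity (as opposed to mere left cancellativity) is being used: given only the left Rees hypothesis I could still produce $w_{2}=a'w_{1}$, but I would then be stuck with $bw_{1}=da'w_{1}$ and no way to eliminate the $w_{1}$ on the right. This single step is the sole place where the full Rees hypothesis is essential, which also explains why the lemma is stated for Rees, rather than left Rees, categories. Every other move is bookkeeping with the $\sim$-classes defining $S(M)$.
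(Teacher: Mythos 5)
Your proof is correct, but it takes a genuinely different route from the paper's. The paper works entirely in the Zappa--Sz\'{e}p normal form: it writes the two elements as $[z_{1}h_{1},w_{1}]$, $[z_{2}h_{2},w_{2}]$ with $z_{i},w_{i}\in \mathcal{G}^{\ast}$ and $h_{i}\in G$, uses Lemma \ref{orderasssem} to unpack the common lower bound, compares the \emph{second} components via unique factorisation in the free category ($w_{1}$ a prefix of $w_{2}$ or vice versa), then invokes uniqueness of the decomposition, a length argument and the action/restriction identities before using right cancellativity to get $h_{1}|_{r}=h_{2}$, finishing with an explicit product computation. You bypass the normal form altogether: from the common lower bound you extract $p=aw_{1}=cw_{2}$, $q=bw_{1}=dw_{2}$, compare the \emph{first} components using right rigidity, left-cancel to get $w_{2}=a'w_{1}$, and right-cancel to get $b=da'$. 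What each buys: the paper's argument sits naturally inside the normal-form machinery it has already built (and reuses Lemma \ref{orderasssem} verbatim), while yours is shorter, uses only the axioms of a cancellative right rigid (Leech) category, and so actually proves the statement without the chain condition on principal right ideals; and you correctly identify that right cancellativity enters at exactly the analogous point in both arguments. Two small tidy-ups: the parenthetical about swapping $s$ and $t$ ``if one of the inequalities is already an equality'' is unnecessary (the unpacking is uniform, with $w_{i}$ invertible in the equality case), and the final claim $[ca',da']\leq[c,d]$ merits the one-line verification $[c,d][da',da']=[ca',da']$, using that the pullback of $(d,da')$ is $(a',\dom(a'))$ by left cancellation --- routine, and at the level of detail the paper itself allows.
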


\begin{proof}
Now suppose $[z_{1}h_{1},w_{1}],[z_{2}h_{2},w_{2}]\in S$ are such that $[z_{1}h_{1},w_{1}]\wedge [z_{2}h_{2},w_{2}]\neq 0$. Then there exists $[xg,y]\in S$ with 
$$[xg,y]\leq [z_{1}h_{1},w_{1}],[z_{2}h_{2},w_{2}].$$
Thus Lemma \ref{orderasssem} tells us there exist $u,v\in \mathcal{G}^{\ast}$ with 
$$y = w_{1}u = w_{2}v, \quad x = z_{1}(h_{1}\cdot u) = z_{2}(h_{2}\cdot v)$$
and 
$$g = h_{1}|_{u} = h_{2}|_{v}.$$
We must have either $w_{1}$ is a prefix of $w_{2}$ or $w_{2}$ is a prefix of $w_{1}$. Suppose without loss of generality that $w_{1}$ is a prefix of $w_{2}$. Then there is $r\in \mathcal{G}^{\ast}$ with $w_{2} = w_{1}r$. It then follows that $u = rv$. Now 
$$h_{1}\cdot u = (h_{1}\cdot r)(h_{1}|_{r}\cdot v)$$
and
$$h_{1}|_{u} = (h_{1}|_{r})|_{v}.$$
By the uniqueness of the decomposition of elements of $M$ and length considerations we must have $z_{2} = z_{1}(h_{1}\cdot r)$ and $h_{2}\cdot v = h_{1}|_{r}\cdot v$. Thus
$$h_{1}|_{r}v = (h_{1}|_{r}\cdot v)h_{1}|_{u} = (h_{2}\cdot v)h_{2}|_{v} = h_{2}v$$
and so by right cancellativity $h_{1}|_{r} = h_{2}$. Now
$$[z_{1}h_{1},w_{1}][w_{2},w_{2}] = [z_{1}h_{1}r,w_{2}] = [z_{1}(h_{1}\cdot r)h_{1}|_{r}, w_{2}] = [z_{2}h_{2},w_{2}]$$
and so $[z_{2}h_{2},w_{2}] \leq [z_{1}h_{1},w_{1}]$. If $w_{2}$ had been a prefix of $w_{1}$ then an identical argument would have shown that $[z_{1}h_{1},w_{1}] \leq [z_{2}h_{2},w_{2}]$. Thus the claim is proved.
\end{proof}

We know from the above that the associated inverse semigroups of Rees monoids are $E^{\ast}$-unitary. In fact, they are strongly $E^{\ast}$-unitary.

\begin{lemma}
\label{strongrees}
Let $M = X^{\ast}G$ be a Rees monoid, let $S(M)$ be its associated inverse semigroup and let $U(M)$ be the universal group of $M$. Then there is an idempotent pure partial homomorphism 
$$\theta: S(M)\rightarrow U(M)$$
given by 
$$\theta([xg,y]) = xgy^{-1}.$$
\end{lemma}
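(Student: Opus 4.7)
My plan has three stages: verify that $\theta$ is well-defined on equivalence classes, check the partial homomorphism property, and finally establish idempotent purity. The key external facts I will use are that a Rees monoid embeds in its group of fractions (established earlier in Section 2.3) and the explicit description of $\sim$ used to define $S(M)$.

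First I would show $\theta$ is well-defined. If $[xg,y] = [zh,w]$, then by definition of $\sim$ there is an invertible element $k \in G$ with $(xg,y) = (zhk, wk)$. Hence in $U(M)$,
$$xg \cdot y^{-1} = zhk \cdot (wk)^{-1} = zhk k^{-1} w^{-1} = zh w^{-1},$$
so $\theta$ is independent of the representative chosen.

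Next I would verify the partial homomorphism property. Suppose $[xg,y] \cdot [zh,w] \neq 0$ in $S(M)$. By the multiplication rule, this means there exist $u,v \in M$ with $yu = zhv$, in which case $[xg,y][zh,w] = [xgu,wv]$. In $U(M)$ the relation $yu = zhv$ yields $y^{-1}zh = uv^{-1}$, and therefore
$$\theta([xg,y])\,\theta([zh,w]) = xgy^{-1}zhw^{-1} = xg(uv^{-1})w^{-1} = xgu(wv)^{-1} = \theta([xgu,wv]),$$
as required. (One also has to note $\theta$ is defined on $S(M)\setminus\{0\}$ and undefined at $0$, which is exactly what a partial homomorphism to a group means.)

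Finally, for idempotent purity, recall from my earlier observation that the idempotents of $S(M)$ are precisely the classes $[a,a]$; indeed if $[a,b] = [a,b]^2 = [au,bv]$ with $bu = av$, left cancellativity forces $u=v=k^{-1}$ for the witnessing invertible $k$, and then $b = a$. So suppose $\theta([xg,y]) = xgy^{-1}$ is an identity of $U(M)$, i.e.\ equals $1$ in the group $U(M)$. Then $xg = y$ holds in $U(M)$; since $M$ embeds in $U(M)$ (Rees monoids embed in their groups of fractions), this equality holds already in $M$, so $[xg,y] = [y,y]$ is an idempotent. The main thing to be careful about is the embedding step: without the cancellativity assumptions built into the definition of a Rees monoid, the equality $xg = y$ in $U(M)$ would not descend to $M$, so this is where the hypothesis that $M$ is Rees (rather than merely left Rees) is essential. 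Everything else is a routine check against the explicit description of $\sim$ and of multiplication in $S(M)$.
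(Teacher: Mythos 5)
Your proof is correct and follows essentially the same route as the paper's: well-definedness via invariance under right multiplication by units, the homomorphism check carried out inside $U(M)$, and idempotent purity via the embedding of the Rees monoid into its group of fractions together with uniqueness of the decomposition. The only cosmetic difference is that you verify the homomorphism property in one unified computation from $yu = zhv$ (giving $y^{-1}zh = uv^{-1}$), where the paper splits into the two cases $z = yu$ and $y = zu$; the underlying argument is the same.
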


\begin{proof}
We can describe elements of $U(M)$ as products of elements of $X$, $G$ and their inverses. We know from the above theory that $M$ actually embeds in $U(M)$ if $M$ is a Rees monoid.
Firstly, 
$$\theta([xgh,yh]) = xghh^{-1}y^{-1} = xgy^{-1} = \theta([xg,y])$$
and so this map is well-defined. 
Let $[xg,y],[zh,w]\in S(M)$ be such that $[xg,y][zh,w]\neq 0$. First suppose $z = yu$ for some $u\in X^{\ast}$. Then
$$\theta([xg,y][zh,w]) = \theta([xguh,w]) = xguhw^{-1} = xgy^{-1}yuhw^{-1} = \theta([xg,y])\theta([zh,w]).$$
Now suppose $y = zu$. Then
\begin{eqnarray*}
\theta([xg,y][zh,w]) & = & \theta([xg,zu][z,wh^{-1}]) = \theta([xg,wh^{-1}u]) = xgu^{-1}hw^{-1}\\
& = & xgu^{-1}z^{-1}zhw^{-1} = \theta([xg,y])\theta([zh,w]).
\end{eqnarray*}
To see it is idempotent pure, note that $\theta([xg,y]) = 1$ implies $xgy^{-1} = 1$ and so $xg = y$. Since the decomposition of elements of $M$ is unique and the homomorphism from $M$ to $U(M)$ is injective, we must have $x = y$ and $g = 1$. Thus $[xg,y]$ is an idempotent.
\end{proof} 

Let $F\subseteq \mathbb{R}^{n}$ be a fractal-like structure satisfying the conditions of Theorem \ref{mainthm2} and let $M$ be the monoid of similarity transformations of $F$ which we know from earlier is a Rees monoid. Then $U(M)$ is a subgroup of the affine group of $\mathbb{R}^{n}$. Lemma \ref{strongrees} tells us we can view elements of $S(M)$ as restrictions of affine transformations to certain subsets of $F$.

Now suppose $M$ is an arbitrary left Rees category. Consider the subset $T(M)$ of $S(M)$ given by
$$T(M) = \left\{0\neq [xg,y]\in S(M) | |x| = |y|\right\} \bigcup \left\{0\right\}.$$
It is easy to check that $T(M)$ is in fact a normal inverse subsemigroup of $S(M)$, which we call the \emph{gauge inverse subsemigroup}. When $S(M)$ is the polycylic monoid this subsemigroup plays an important r\^{o}le in its representation theory \cite{JonesLawsonReps}. When $M$ is the monoid of similarity transformations of a fractal $F$ then $T(M)$ corresponds to the elements of $S(M)$ which are restrictions of Euclidean transformations.

\chapter{$K$-Theory of Inverse Semigroups}

\section{Outline of chapter} 

The aim of this chapter is to define a functor $K$ from the category of orthogonally complete inverse semigroups and orthogonal join preserving maps to the category of abelian groups in analogy with algebraic $K$-theory. In Section 4.2, we give an abstract definition of $K(S)$ for a particular class of inverse semigroups, which we call \emph{K-inverse semigroups}. This is motivated by the definition in terms of idempotents for regular rings and $C^{\ast}$-algebras. We will see in Section 4.3 that this definition does not depend on the inverse semigroup structure and so can in fact be defined for the underlying groupoid. 
Motivated by the module approach to $K_{0}$-groups in algebraic $K$-theory in Section 4.4 we give a definition of a module for an orthogonally complete inverse semigroup, and use this to associate a group $K(S)$ to orthogonally complete inverse semigroups such that for $K$-inverse semigroups this definition agrees with the one of Section 4.2.  
In Section 4.5, we define the $K$-group in terms of idempotent matrices in analogy with algebraic $K$-theory. In particular, we show that the definitions of Section 4.4 and 4.5 are equivalent. 
We will see in Section 4.6 that $K$ is actually a functor from the category of orthogonally complete inverse semigroups and orthogonal join preserving maps to the category of abelian groups.
In Section 4.7 it will be shown that more can be said about $K(S)$ for commutative inverse semigroups.
We will extend the ideas of states and traces of $C^{\ast}$-algebras to the situation of inverse semigroups in Section 4.8 and we will see that, analogously to the case of $C^{\ast}$-algebras, traces extend to homomorphisms on the $K$-groups.
In Section 4.9, we compute the K-group for a number of examples. 

\section{$K$-Inverse semigroups}

Throughout this section let $S$ be an orthogonally complete inverse semigroup. It is well-known (c.f. \cite{LawsonBook}) that two idempotents $e,f\in E(S)$ are $\mathcal{D}$-related if and only if there exists an $s\in S$ with $e \stackrel{s}{\rightarrow} f$. An equivalent statement is that idempotents $e,f\in E(S)$ are $\mathcal{D}$-related if and only if there exist $s,t\in S$ with $st = e$ and $ts = f$. Thus we will replace the concept of similarity from algebraic $K$-theory with the $\mathcal{D}$-relation for inverse semigroups.

\begin{comment}

\begin{defin}
Let $e,f\in E(S)$. Then $e\sim f$ if there exist $x,y\in S$ with $xy = e$ and $yx = f$.
\end{defin}

\begin{lemma}
Let $e,f\in E(S)$ with $e\sim f$. Then there exist $x,y\in S$ with $xy = e$, $yx = f$, $x = exf$, $y = fye$.
\end{lemma}

\begin{proof}
Since $e\sim f$, there exist $s,t\in S$ with $st = e$ and $ts = f$. Let $x = esf$ and $y = fte$. Then
$$xy = (esf)(fte) = esfte = stststst = e$$
and
$$yx = (fte)(esf) = ftesf = tstststs = f.$$
\end{proof}

\begin{lemma}
$\sim$ is an equivalence relation.
\end{lemma}

\begin{proof}
Let $e = xy$, $f = yx = zw$, $g = wz$, with $x = exf$, $y = fye$, $z = fzg$, $w = gwf$. Then
$$(xz)(wy) = xfy = xy = e$$ 
and
$$(wy)(xz) = wfz = wz = g.$$
\end{proof}

\begin{lemma}
Two elements $s,t\in S$ are orthogonal if and only if $\dm(s)\wedge \dm(t) = 0$ and $\rn(s)\wedge \rn(t) = 0$.
\end{lemma}

\begin{proof}
Observe that $st^{-1} = 0$ iff $s^{-1}st^{-1}t = 0$ and $s^{-1}t = 0$ iff $ss^{-1}tt^{-1} = 0$.
\end{proof}

\end{comment}

\begin{lemma}
\label{orthjoinksem}
Let $e_{1},e_{2},f_{1},f_{2}\in E(S)$ be idempotents such that $e_{1}\perp e_{2}$, $f_{1}\perp f_{2}$, $e_{1}\, \mathcal{D}\, f_{1}$ and $e_{2}\, \mathcal{D}\, f_{2}$. Then 
$$e_{1}\vee e_{2}\, \mathcal{D}\, f_{1}\vee f_{2}.$$
\end{lemma}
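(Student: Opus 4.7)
The plan is to exhibit a single arrow $s \in S$ with $s^{-1}s = e_1 \vee e_2$ and $ss^{-1} = f_1 \vee f_2$, since for idempotents the relation $e\,\mathcal{D}\,f$ is equivalent to the existence of some $s\in S$ with $\dom(s)=e$ and $\ran(s)=f$. The natural candidate is of course the orthogonal join of witnesses for the two given $\mathcal{D}$-relations.

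First I would use $e_i\,\mathcal{D}\,f_i$ to choose $s_1,s_2\in S$ with $s_i^{-1}s_i = e_i$ and $s_i s_i^{-1}=f_i$ for $i=1,2$. Next I would verify that $s_1\perp s_2$. By the equivalent characterisation of orthogonality recalled in the introduction, $s\perp t$ iff $\dom(s)\wedge\dom(t)=0$ and $\ran(s)\wedge\ran(t)=0$; since for idempotents the meet in $E(S)$ coincides with the product, the hypotheses $e_1\perp e_2$ and $f_1\perp f_2$ give exactly $\dom(s_1)\wedge\dom(s_2) = e_1 e_2 = 0$ and $\ran(s_1)\wedge\ran(s_2) = f_1 f_2 = 0$, so indeed $s_1\perp s_2$.

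By orthogonal completeness of $S$ the join $s := s_1\vee s_2$ exists, and the remainder is a direct computation. Using that inversion distributes over orthogonal joins (a standard property of inverse semigroups, since the natural partial order is preserved by inversion) and that multiplication distributes over finite orthogonal joins (built into the definition of orthogonal completeness), I would write
\[
s^{-1}s = (s_1^{-1}\vee s_2^{-1})(s_1\vee s_2) = s_1^{-1}s_1 \vee s_1^{-1}s_2 \vee s_2^{-1}s_1 \vee s_2^{-1}s_2.
\]
The cross terms $s_1^{-1}s_2$ and $s_2^{-1}s_1$ vanish because $s_1\perp s_2$, leaving $s^{-1}s = e_1\vee e_2$. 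The dual calculation gives $ss^{-1} = f_1\vee f_2$, establishing $e_1\vee e_2\,\mathcal{D}\,f_1\vee f_2$.

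There is no serious obstacle: once the correct characterisation of orthogonality in terms of meets of domain and range idempotents is invoked, everything reduces to the distributivity axioms packaged in the definition of an orthogonally complete inverse semigroup. The only point that requires a moment's care is the observation that for idempotents orthogonality in the sense of $st^{-1}=s^{-1}t=0$ coincides with the meet $e\wedge f=ef$ being $0$, but this is immediate from $e=e^{-1}=e^2$.
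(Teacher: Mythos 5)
Your proof is correct and follows essentially the same route as the paper: take witnessing elements for the two $\mathcal{D}$-relations, note they are orthogonal because their domain and range idempotents are, and check that their orthogonal join has domain $e_1\vee e_2$ and range $f_1\vee f_2$. The only cosmetic difference is that you derive $\dom(s_1\vee s_2)=\dom(s_1)\vee\dom(s_2)$ (and dually) by expanding with distributivity, whereas the paper quotes these identities directly.
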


\begin{proof}
Let $s,t\in S$ be such that $e_{1} \stackrel{s}{\rightarrow} f_{1}$ and $e_{2} \stackrel{t}{\rightarrow} f_{2}$. Since $\dom(s)\wedge\dom(t) = 0$ and $\ran(s)\wedge \ran(t) = 0$ it follows that $s$ and $t$ are orthogonal, and so there exists $s\vee t$. Now 
$$\dm(s\vee t) = \dm(s)\vee \dm(t) = e_{1}\vee e_{2}$$
and 
$$\rn(s\vee t) = \rn(s)\vee \rn(t) = f_{1}\vee f_{2}.$$
Thus $e_{1}\vee e_{2}\, \mathcal{D}\, f_{1}\vee f_{2}$.
\begin{comment}
$y_{i} = e_{i}$, $y_{i}x_{i} = f_{i}$, $x_{i} = e_{i}x_{i}f_{i}$, $y_{i} = f_{i}y_{i}e_{i}$. B
We have $x_{i}x_{j}^{-1} = y_{i}y_{j}^{-1} = 0$ for $i\neq j$. Thus $x_{1},x_{2}$ are compatible and $y_{1},y_{2}$ are compatible, and so there exist $x_{1}\vee x_{2}$, $y_{1}\vee y_{2}$. Then
$$(x_{1}\vee x_{2})(y_{1}\vee y_{2}) = (x_{1}y_{1}\vee x_{1}y_{2})\vee (x_{2}y_{1}\vee x_{2}y_{2}) = (e_{1} \vee 0)\vee(0\vee e_{2}) = e_{1}\vee e_{2}$$
and
$$(y_{1}\vee y_{2})(x_{1}\vee x_{2}) = (y_{1}x_{1}\vee y_{1}x_{2})\vee (y_{2}x_{1}\vee y_{2}x_{2}) = (f_{1} \vee 0)\vee(0\vee f_{2}) = f_{1}\vee f_{2}.$$
\end{comment}
\end{proof}

We will say an inverse semigroup with zero is \emph{orthogonally separating} if for any pair of idempotents $e$ and $f$ there are idempotents $e^{\prime}$ and $f^{\prime}$ such that $e^{\prime}\, \mathcal{D}\, e$, $f^{\prime}\, \mathcal{D}\, f$ and $e^{\prime}\perp f^{\prime}$. A \emph{$K$-inverse semigroup} will be an orthogonally complete orthogonally separating inverse semigroup. The previous lemma tells us that we can define a binary operation on the $\mathcal{D}$ classes of such a semigroup.

So let $S$ be a $K$-inverse semigroup and denote by $[e]$ the $\mathcal{D}$-class of the idempotent $e$ in $E(S)$. Let $A(S) = E(S)/\mathcal{D}$ and define an operation $+$ on $A(S)$ by
$$[e] + [f] = [e^{\prime}\vee f^{\prime}]$$
where $e^{\prime},f^{\prime}\in E(S)$ are such that $e^{\prime}\, \mathcal{D}\, e$, $f^{\prime}\, \mathcal{D}\, f$ and $e^{\prime}\perp f^{\prime}$.
We see from Lemma \ref{orthjoinksem} this operation is well-defined. We in fact have the following:

\begin{lemma}
\label{AScommmon}
$(A(S),+)$ is a commutative monoid.
\end{lemma}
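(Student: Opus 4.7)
The plan is to verify in turn: well-definedness of $+$, commutativity, associativity, and existence of an identity. The well-definedness is essentially immediate: if $e',e''\in [e]$ and $f',f''\in [f]$ are two choices of orthogonal representatives with $e'\perp f'$ and $e''\perp f''$, then Lemma \ref{orthjoinksem} applied to the pairs $(e',f')$ and $(e'',f'')$ yields $e'\vee f'\,\mathcal{D}\,e''\vee f''$. Commutativity is automatic from the commutativity of $\vee$ in $E(S)$, and the identity will be $[0]$: given any $[e]$, the representatives $0$ and $e$ are automatically orthogonal, so $[0]+[e]=[0\vee e]=[e]$.

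The substantive step is associativity, and the key lemma I would prove first is that \emph{any} finite family of idempotents admits a pairwise orthogonal system of representatives of their $\mathcal{D}$-classes. I would prove this by induction on $n$. For $n=2$ it is the orthogonally separating hypothesis. For the inductive step, assume we have pairwise orthogonal $e_1',\ldots,e_{n-1}'$ with $e_i'\,\mathcal{D}\,e_i$, form $h=\bigvee_{i<n}e_i'$, and apply orthogonal separation to the pair $(h,e_n)$ to obtain $h'\,\mathcal{D}\,h$ and $e_n''\,\mathcal{D}\,e_n$ with $h'\perp e_n''$. Choose $s\in S$ with $s^{-1}s=h$, $ss^{-1}=h'$, and set $e_i''=s e_i' s^{-1}$. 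A short calculation using $e_i'\leq h=s^{-1}s$ shows $s e_i'$ witnesses $e_i''\,\mathcal{D}\,e_i'$; that the $e_i''$ are pairwise orthogonal follows from $e_i'e_j'=0$ together with $e_i'he_j'=e_i'e_j'$; distributivity of multiplication over orthogonal joins gives $\bigvee_{i<n}e_i''=shs^{-1}=h'$; and since each $e_i''\leq h'\perp e_n''$, we obtain $e_i''\perp e_n''$ for $i<n$.

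Granted this lemma, associativity is now easy. Given $[e],[f],[g]$, pick pairwise orthogonal representatives $e'',f'',g''$. Then $e''\vee f''$ is defined and orthogonal to $g''$, so $[e]+[f]=[e''\vee f'']$ and $([e]+[f])+[g]=[(e''\vee f'')\vee g'']$; symmetrically, $[e]+([f]+[g])=[e''\vee(f''\vee g'')]$. Both sides agree by the associativity of the join operation in the meet semilattice $E(S)$.

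The main obstacle is the inductive construction of pairwise orthogonal representatives, specifically verifying that conjugating $e_1',\ldots,e_{n-1}'$ by $s$ simultaneously preserves each $\mathcal{D}$-class, preserves pairwise orthogonality, and delivers a decomposition of $h'$; this relies crucially on the distributivity of multiplication over finite orthogonal joins, which is built into the definition of orthogonal completeness. Everything else is routine manipulation of the natural partial order and the $\mathcal{D}$-relation.
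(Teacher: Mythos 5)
Your argument is correct, but it is not the route the paper takes. For associativity the paper works directly with the two expressions $([e]+[f])+[g]=[h\vee g']$ and $[e]+([f]+[g])=[e''\vee h']$ and manufactures a single witness: from the elements $s_{1},s_{2},t_{1},t_{2},u_{1},u_{2},w_{1},w_{2}$ realising the various $\mathcal{D}$-relations it forms $x_{1}=s_{2}s_{1}w_{1}$, $x_{2}=w_{2}t_{2}t_{1}w_{1}$, $x_{3}=w_{2}u_{2}u_{1}$, checks these are pairwise orthogonal, and computes $xx^{-1}=e''\vee h'$ and $x^{-1}x=h\vee g'$ for $x=x_{1}\vee x_{2}\vee x_{3}$. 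You instead prove the stronger auxiliary statement that any finite family of idempotents admits pairwise orthogonal $\mathcal{D}$-representatives, by induction with the conjugation $e_{i}''=se_{i}'s^{-1}$ along an element $s$ with $s^{-1}s=h$, $ss^{-1}=h'$; your verifications (that $se_{i}'$ witnesses $e_{i}'\,\mathcal{D}\,e_{i}''$, that orthogonality is preserved since $e_{i}'he_{j}'=e_{i}'e_{j}'=0$, and that distributivity gives $\bigvee_{i<n}e_{i}''=shs^{-1}=h'$, whence $e_{i}''\perp e_{n}''$) are all sound, and associativity then reduces to the fact that partial joins of pairwise orthogonal idempotents, when they exist, are associative in the least-upper-bound sense. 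Your route buys a reusable simultaneous-orthogonalisation lemma and the clean formula $[e_{1}]+\cdots+[e_{n}]=[e_{1}''\vee\cdots\vee e_{n}'']$ for iterated sums; the paper's route avoids the induction and, because it never conjugates by a semigroup element, transfers almost verbatim to the $K$-groupoid setting of the next section (Lemma 4.3.2, where conjugation is unavailable and restrictions $(e'|w_{1})$ play the role of your products). Note also that your appeal to ``associativity of the join operation in the meet semilattice $E(S)$'' should be phrased as associativity of the partial join of orthogonal elements where it exists, since general joins need not exist in $E(S)$; this is a wording point only, as the argument you give is the correct one.
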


\begin{proof}
Commutativity follows from the commutativity of the join operation on $S$ and the identity element is easily seen to be $[0]$. Thus we just need to check that $+$ is associative.
Let $e,f,g\in E(S)$ be arbitrary. We want to show that
$$([e]+[f])+[g] = [e] + ([f]+[g])$$
in $A(S)$. 

Suppose that $e^{\prime}, e^{\prime\prime}, f^{\prime}, f^{\prime\prime}, g^{\prime}, g^{\prime\prime}, h,h^{\prime}\in E(S)$ are idempotents such that
$$e^{\prime} \stackrel{s_{1}}{\rightarrow} e \stackrel{s_{2}}{\rightarrow} e^{\prime\prime}, \quad f^{\prime} \stackrel{t_{1}}{\rightarrow} f \stackrel{t_{2}}{\rightarrow} f^{\prime\prime},$$
$$g^{\prime} \stackrel{u_{1}}{\rightarrow} g \stackrel{u_{2}}{\rightarrow} g^{\prime\prime}, \quad h \stackrel{w_{1}}{\rightarrow} (e^{\prime}\vee f^{\prime}), \quad (f^{\prime\prime}\vee g^{\prime\prime}) \stackrel{w_{2}}{\rightarrow} h^{\prime}$$
and
\begin{comment}
$$\xymatrix@1{e^{\prime} \ar[r]^{s_{1}} & e \ar[r]^{s_{2}} & e^{\prime\prime}},$$
$$\xymatrix@1{f^{\prime} \ar[r]^{t_{1}} & f \ar[r]^{t_{2}} & f^{\prime\prime}},$$
$$\xymatrix@1{g^{\prime} \ar[r]^{u_{1}} & g \ar[r]^{u_{2}} & g^{\prime\prime}},$$
$$\xymatrix@1{h \ar[r]^{w_{1}} & (e^{\prime}\vee f^{\prime})},$$
$$\xymatrix@1{(f^{\prime\prime}\vee g^{\prime\prime}) \ar[r]^{w_{2}} & h^{\prime}}$$
and
\end{comment}
$$e^{\prime}f^{\prime} = hg^{\prime} = f^{\prime\prime}g^{\prime\prime} = h^{\prime}e^{\prime\prime} = 0$$
for some $s_{1},s_{2},t_{1},t_{2},u_{1}u_{2},w_{1},w_{2}\in S$.

We then have
$$([e]+[f])+[g] = [h\vee g^{\prime}]$$
and
$$[e] + ([f]+[g]) = [e^{\prime\prime}\vee h^{\prime}].$$

So our task is to show that
$$(h\vee g^{\prime})\, \mathcal{D}\, (e^{\prime\prime}\vee h^{\prime}).$$

Let $x_{1} = s_{2}s_{1}w_{1}$, $x_{2} = w_{2}t_{2}t_{1}w_{1}$ and $x_{3} = w_{2}u_{2}u_{1}$. Then
$$x_{1}x_{2}^{-1} = s_{2}s_{1}e^{\prime}(e^{\prime}\vee f^{\prime})f^{\prime}t_{1}^{-1}t_{2}^{-1}w_{2}^{-1} = 0,$$
$$x_{1}^{-1}x_{2} = w_{1}^{-1}s_{1}^{-1}s_{2}^{-1}e^{\prime\prime}h^{\prime}w_{2}t_{2}t_{1}w_{1} = 0,$$
$$x_{1}x_{3}^{-1} = s_{2}s_{1}w_{1}hg^{\prime}u_{1}^{-1}u_{2}^{-1}w_{2}^{-1} = 0,$$
$$x_{1}^{-1}x_{3} = w_{1}^{-1}s_{1}^{-1}s_{2}^{-1}e^{\prime\prime}h^{\prime}w_{2}u_{2}u_{1} = 0,$$
$$x_{2}x_{3}^{-1} = w_{2}t_{2}t_{1}w_{1}hg^{\prime}u_{1}^{-1}u_{2}^{-1}w_{2}^{-1} = 0$$
and
$$x_{2}^{-1}x_{3} = w_{1}^{-1}t_{1}^{-1}t_{2}^{-1}f^{\prime\prime}(f^{\prime\prime}\vee g^{\prime\prime})g^{\prime\prime}u_{2}u_{1} = 0.$$
Thus we may form the orthogonal join $x = x_{1}\vee x_{2} \vee x_{3}$. We see that
\begin{eqnarray*}
xx^{-1} & = & (x_{1}x_{1}^{-1}) \vee (x_{2}x_{2}^{-1}) \vee (x_{3}x_{3}^{-1}) \\
& = & (s_{2}s_{1}w_{1}w_{1}^{-1}s_{1}^{-1}s_{2}^{-1}) \vee (w_{2}t_{2}t_{1}w_{1}w_{1}^{-1}t_{1}^{-1}t_{2}^{-1}w_{2}^{-1}) \vee 
(w_{2}u_{2}u_{1}u_{1}^{-1}u_{2}^{-1}w_{2}^{-1})\\
& = & (s_{2}s_{1}e^{\prime}(e^{\prime}\vee f^{\prime})e^{\prime}s_{1}^{-1}s_{2}^{-1}) \vee (w_{2}t_{2}t_{1}f^{\prime}(e^{\prime}\vee f^{\prime})f^{\prime}t_{1}^{-1}t_{2}^{-1}w_{2}^{-1}) \vee 
(w_{2}u_{2}g u_{2}^{-1}w_{2}^{-1})\\
& = & e^{\prime\prime} \vee (w_{2}f^{\prime\prime}w_{2}^{-1}) \vee (w_{2}g^{\prime\prime}w_{2}^{-1})\\
& = & e^{\prime\prime} \vee (w_{2}(f^{\prime\prime}\vee g^{\prime\prime})w_{2}^{-1}) = e^{\prime\prime}\vee h^{\prime}
\end{eqnarray*}
Similarly, $x^{-1}x = h\vee g^{\prime}$. Thus $(h\vee g^{\prime})\, \mathcal{D}\, (e^{\prime\prime}\vee h^{\prime})$.
\end{proof}

\begin{comment}

Let $S$ be an orthogonally complete inverse semigroup with $0$ such that for all $e,f\in E(S)$ there exist $e^{\prime}, f^{\prime}\in E(S)$ with $e\mathcal{D} e^{\prime}$, $f\mathcal{D} f^{\prime}$ and $e^{\prime}f^{\prime} = 0$. Then define
$$e + f = e^{\prime}\vee f^{\prime}.$$
The above shows this operation is well-defined. 

\begin{lemma}
For $S$ appropriate, $e,f,g\in E(S)$, we have
$$([e]+[f])+[g] = [e]+([f]+[g]).$$
\end{lemma}

This operation is clearly commutative and so define $K_{0}(S)$ in the obvious manner for this class of semigroups.

\end{comment}

For $S$ a K-inverse semigroup we define
$$K(S) = \mathcal{G}(A(S)),$$
where $\mathcal{G}(M)$ is the Grothendieck group of $M$, as defined in Section 1.4.

As an example let $S = I_{f}(\mathbb{N})$ be the symmetric inverse monoid on $\mathbb{N}$ with finite support. Then $S$ is a K-inverse semigroup. Further for $e,f\in E(S)$ we have $e\,\mathcal{D}\, f$ if and only if $|\supp(e)| = |\supp(f)|$. In addition if $e,f\in E(S)$ are such that $ef = 0$ then $|\supp(e\vee f)| = |\supp(e)| + |\supp(f)|$. We therefore have:

$$K(S) \cong \mathbb{Z}.$$

\section{$K$-Groupoids}

In this section it will be demonstrated that we do not require the full inverse semigroup structure of a $K$-inverse semigroup in defining $K(S)$ by showing that we can work through all the arguments above for the underlying groupoid.

If $G$ is a groupoid, we will denote by $\dom(x) = x^{-1}x$ and $\ran(x) = xx^{-1}$ for $x\in G$. We will say two identities $e,f\in G_{0}$ are $\mathcal{D}$-related if they are in the same connected component of $G$.

An \emph{ordered groupoid} $(G,\leq)$ is a groupoid equipped with a partial order $\leq$ satisfying the following four axioms:
\begin{enumerate}
\item If $x\leq y$ then $x^{-1}\leq y^{-1}$.
\item If $x\leq y$, $x^{\prime}\leq y^{\prime}$ and the products $xx^{\prime}$ and $yy^{\prime}$ are defined then $xx^{\prime}\leq yy^{\prime}$.
\item If $e\in G_{0}$ is such that $e\leq \dom(x)$ then there exists a unique element $(x|e)\in G$ such that $(x|e)\leq x$ and $\dom(x|e) = e$.
\item If $e\in G_{0}$ is such that $e\leq \ran(x)$ then there exists a unique element $(e|x)\in G$ such that $(e|x)\leq x$ and $\ran(e|x) = e$.
\end{enumerate}

An ordered groupoid is said to be \emph{inductive} if the partially ordered set of identities forms a meet-semilattice.
An \emph{ordered groupoid with zero} is an ordered groupoid $G$ with a distinguished identity $0\in G_{0}$ such that $0\le e$ for all $e\in G_{0}$ and such that for all $x\in G$, $\dom(x)\neq 0$ and $\ran(x)\neq 0$. 
\begin{comment}
An ordered groupoid is called a \emph{meet ordered groupoid} if $\exists x\wedge y$ for all $x,y\in G$ (so, in particular it is an inductive groupoid).
\end{comment}
We will say two elements $x,y\in G$ are \emph{orthogonal} and write $x\perp y$ if $\dom(x)\wedge \dom(y) = 0$ and $\ran(x)\wedge \ran(y) = 0$.
It is clear that $x\perp y$ implies $x^{-1}\perp y^{-1}$.
An inductive groupoid $G$ with zero is \emph{orthogonally complete} if joins of orthogonal elements always exist and multiplication distributes over orthogonal joins when the multiplication is defined.

We want our groupoid $G$ to satisfy three further conditions:
\begin{enumerate}
\item If $e\perp f$, $e,f\in G_{0}$, and $x\in G$ is such that $\dom(x) = e\vee f$ then $\ran(x|e)\perp \ran(x|f)$ and $\ran(x|e)\vee \ran(x|f) = \ran(x)$ and if $e\perp f$, $e,f\in G_{0}$, and $y\in G$ is such that $\ran(y) = e\vee f$ then $\dom(e|y)\perp \dom(f|y)$ and $\dom(e|y)\vee \dom(f|y) = \dom(y)$. 
\item For every $e,f\in G_{0}$ there exist $e^{\prime},f^{\prime}\in G_{0}$ with $e^{\prime}\perp f^{\prime}$, $e\mathcal{D} e^{\prime}$ and $f\mathcal{D} f^{\prime}$.
\item If $x\perp y$ then $\dom(x)\vee \dom(y) \stackrel{x\vee y}{\longrightarrow} \ran(x)\vee \ran(y)$.
\end{enumerate}

We will define a $K$-\emph{groupoid} to be an orthogonally complete inductive groupoid with $0$ satisfying conditions (1), (2) and (3). If $S$ is a $K$-inverse semigroup, then the associated ordered groupoid by endowing $S$ with the restricted product is a $K$-groupoid. This is the motivating example. Throughout what follows $G$ will be a $K$-groupoid.

\begin{comment}
The following lemma is probably well-known.

\begin{lemma}
Let $x,y\in G$. Then $x \perp y$ iff $x^{-1}\perp y^{-1}$.
\end{lemma}

\begin{proof}
This follows from the fact that $\dom(x) = \ran(x^{-1})$.
\end{proof}
\end{comment}

\begin{lemma}
\label{kgroupoidlemma1}
Let $e_{1},e_{2},f_{1},f_{2}\in G_{0}$ be such that $e_{1}\,\mathcal{D}\, e_{2}$, $f_{1}\,\mathcal{D}\, f_{2}$, $e_{1}\perp f_{1}$ and $e_{2}\perp f_{2}$. Then by the assumptions on $G$ there exist $e_{1}\vee f_{1}$ and $e_{2}\vee f_{2}$, and
$$(e_{1}\vee f_{1})\, \mathcal{D}\, (e_{2}\vee f_{2}).$$
\end{lemma}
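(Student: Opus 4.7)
The plan is to mimic the proof of Lemma \ref{orthjoinksem} but working purely inside the ordered groupoid. First I would exploit the $\mathcal{D}$-relations to produce connecting arrows: since $e_{1}\,\mathcal{D}\,e_{2}$ means the two identities lie in the same connected component of $G$, there exists $s\in G$ with $e_{1}\stackrel{s}{\rightarrow} e_{2}$, and similarly a $t\in G$ with $f_{1}\stackrel{t}{\rightarrow} f_{2}$. The existence of $e_{1}\vee f_{1}$ and $e_{2}\vee f_{2}$ is immediate from orthogonal completeness applied to the identities, since $e_{1}\perp f_{1}$ and $e_{2}\perp f_{2}$ are hypotheses.

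Next I would verify that the arrows $s$ and $t$ themselves are orthogonal. By definition of orthogonality in $G$, we need $\dom(s)\wedge \dom(t) = 0$ and $\ran(s)\wedge \ran(t) = 0$, but these read $e_{1}\wedge f_{1} = 0$ and $e_{2}\wedge f_{2} = 0$, which are exactly the two hypotheses $e_{1}\perp f_{1}$ and $e_{2}\perp f_{2}$. So $s\perp t$ and therefore by orthogonal completeness of $G$ the join $s\vee t$ exists in $G$.

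Finally I would apply condition (3) in the definition of a $K$-groupoid, which tells us that for orthogonal elements $s\perp t$ the join $s\vee t$ has domain $\dom(s)\vee\dom(t)$ and range $\ran(s)\vee\ran(t)$. Substituting gives
$$e_{1}\vee f_{1}\;\stackrel{s\vee t}{\longrightarrow}\;e_{2}\vee f_{2},$$
so $e_{1}\vee f_{1}$ and $e_{2}\vee f_{2}$ lie in the same connected component of $G$, i.e.\ are $\mathcal{D}$-related. There is no real obstacle here; the only subtlety is that one must remember to invoke axiom (3) rather than derive the behaviour of the endpoints of $s\vee t$ from scratch, because in a general ordered groupoid orthogonal joins of arrows need not have joins of the endpoints as their domain and range — this is precisely what the extra axiom in the definition of a $K$-groupoid was designed to guarantee so that Lemma \ref{orthjoinksem} transfers to the groupoid setting.
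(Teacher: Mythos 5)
Your proof is correct and follows the same route as the paper's: produce connecting arrows $s,t$ from the $\mathcal{D}$-relations, observe $s\perp t$ because their domains and ranges are exactly the orthogonal pairs $e_{1},f_{1}$ and $e_{2},f_{2}$, and then invoke condition (3) to identify the domain and range of $s\vee t$ as $e_{1}\vee f_{1}$ and $e_{2}\vee f_{2}$. Your closing remark about why axiom (3) is needed is exactly the right point.
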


\begin{proof}
Since $e_{1}\,\mathcal{D}\, e_{2}$ and $f_{1}\, \mathcal{D}\, f_{2}$, there exist $x,y\in G$ with $e_{1} \stackrel{x}{\rightarrow} e_{2}$ and $f_{1} \stackrel{y}{\rightarrow} f_{2}$. Since $e_{1}\perp f_{1}$ and $e_{2}\perp f_{2}$, $x\perp y$. Then by condition (3), $e_{1}\vee f_{1} \stackrel{x\vee y}{\longrightarrow} e_{2}\vee f_{2}$.
\end{proof}

Let $A(G) = G_{0}/\mathcal{D}$ and define $[e] + [f]$ to be $[e^{\prime} \vee f^{\prime}]$ for $e^{\prime}\, \mathcal{D}\, e$ and $f^{\prime}\, \mathcal{D}\, f$. This is a well-defined binary operation by Lemma \ref{kgroupoidlemma1} and condition (2).

\begin{lemma}
\label{AGcommmon}
(A(G),+) is a commutative monoid.
\end{lemma}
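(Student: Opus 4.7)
The plan is to mimic the proof of Lemma \ref{AScommmon} step for step, replacing each multiplication by an idempotent with the appropriate ordered-groupoid restriction $(x|e)$ or $(e|x)$ provided by axioms (3) and (4) of an ordered groupoid, and using the three extra conditions imposed on a $K$-groupoid exactly where Lemma \ref{AScommmon} uses the distributivity of multiplication over orthogonal joins. Commutativity of $+$ is immediate from the commutativity of $\vee$, and $[0]$ is a two-sided identity because for any $e\in G_{0}$ one has $e\perp 0$ with $e\vee 0 = e$, so no separating representative is ever needed on the $0$-side. All the work therefore lies in associativity.

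For associativity, given $[e],[f],[g]\in A(G)$ I would use condition (2) twice to choose representatives $e',e'',f',f'',g',g''\in G_{0}$ together with witnessing arrows
$$e' \stackrel{s_{1}}{\rightarrow} e \stackrel{s_{2}}{\rightarrow} e'',\qquad f' \stackrel{t_{1}}{\rightarrow} f \stackrel{t_{2}}{\rightarrow} f'',\qquad g' \stackrel{u_{1}}{\rightarrow} g \stackrel{u_{2}}{\rightarrow} g''$$
with $e'\perp f'$ and $f''\perp g''$, and then (again by condition~(2)) further representatives $h,h'\in G_{0}$ together with $h \stackrel{w_{1}}{\rightarrow} e'\vee f'$ and $f''\vee g'' \stackrel{w_{2}}{\rightarrow} h'$ satisfying $h\perp g'$ and $e''\perp h'$. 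By definition
$$([e]+[f])+[g] = [h\vee g'],\qquad [e]+([f]+[g]) = [e''\vee h'],$$
so the task is to produce an arrow $x$ with $\dom(x)=h\vee g'$ and $\ran(x)=e''\vee h'$.

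The construction of $x$ is the groupoid analogue of the element $x_{1}\vee x_{2}\vee x_{3}$ appearing in the proof of Lemma \ref{AScommmon}. Using axiom (4) of an ordered groupoid together with $e',f'\leq e'\vee f' = \ran(w_{1})$, I split $w_{1}$ into its restrictions $(e'|w_{1})$ and $(f'|w_{1})$; condition~(1) guarantees that their domains are orthogonal and join to $h$. Dually I split $w_{2}$ along $f''\perp g''$. Setting
$$x_{1} = s_{2}\,s_{1}\,(e'|w_{1}),\qquad x_{2} = w_{2}|_{f''}\cdot t_{2}\,t_{1}\,(f'|w_{1}),\qquad x_{3} = w_{2}|_{g''}\cdot u_{2}\,u_{1}$$
(where $w_{2}|_{f''}$ and $w_{2}|_{g''}$ denote the appropriate range-restrictions of $w_{2}$, and compositions are read right-to-left) yields three arrows whose pairwise domains and ranges are orthogonal, so by condition~(3) their orthogonal join $x = x_{1}\vee x_{2}\vee x_{3}$ exists. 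Condition~(1) applied to the decompositions $h = \dom(e'|w_{1})\vee\dom(f'|w_{1})$ and $h' = \ran(w_{2}|_{f''})\vee \ran(w_{2}|_{g''})$, together with the identities $s_{k}s_{k}^{-1}, t_{k}t_{k}^{-1}, u_{k}u_{k}^{-1}$ landing at the expected idempotents, then gives $\dom(x) = h\vee g'$ and $\ran(x) = e''\vee h'$, which is the required equality in $A(G)$.

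The main obstacle is purely bookkeeping: verifying pairwise orthogonality of $x_{1},x_{2},x_{3}$ and computing the domain and range of their join. Whereas in the inverse semigroup setting this is done by direct products of the form $x_{i}x_{j}^{-1}=0$ that collapse by idempotent absorption, here each such computation must be translated into a check that a particular meet of identities is $0$, and each absorption step must be replaced by an explicit invocation of condition~(1) to pass the splittings of $w_{1}$ and $w_{2}$ through the composable arrows $s_{k},t_{k},u_{k}$. Once this translation is set up carefully, no new ideas beyond those in Lemma \ref{AScommmon} are required.
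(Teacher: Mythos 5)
Your proposal is correct and follows essentially the same route as the paper: the same choice of representatives $e',e'',f',f'',g',g'',h,h'$, the same three arrows $s_{2}s_{1}(e'|w_{1})$, $(w_{2}|f'')t_{2}t_{1}(f'|w_{1})$, $(w_{2}|g'')u_{2}u_{1}$, and the same appeals to conditions (1)--(3) to form their orthogonal join and read off its domain $h\vee g'$ and range $e''\vee h'$. The only quibble is terminological: the pieces $w_{2}|_{f''}$, $w_{2}|_{g''}$ are restrictions of $w_{2}$ along the decomposition of its \emph{domain} $f''\vee g''$ (axiom (3)), not ``range-restrictions''.
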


\begin{proof}
Firsly, as above, we see that $+$ is commutative since $\vee$ is commutative and $0$ will be the identity of $A(G)$ (note that by assumption $0$ is not in the same connected component as any other element). So it remains to prove that $+$ is associative. Suppose that $e^{\prime}, e^{\prime\prime}, f^{\prime}, f^{\prime\prime}, g^{\prime}, g^{\prime\prime}, h,h^{\prime}\in G_{0}$ are identities such that
$$e^{\prime} \stackrel{s_{1}}{\rightarrow} e \stackrel{s_{2}}{\rightarrow} e^{\prime\prime}, \quad f^{\prime} \stackrel{t_{1}}{\rightarrow} f \stackrel{t_{2}}{\rightarrow} f^{\prime\prime},$$
$$g^{\prime} \stackrel{u_{1}}{\rightarrow} g \stackrel{u_{2}}{\rightarrow} g^{\prime\prime}, \quad h \stackrel{w_{1}}{\rightarrow} (e^{\prime}\vee f^{\prime}), \quad (f^{\prime\prime}\vee g^{\prime\prime}) \stackrel{w_{2}}{\rightarrow} h^{\prime},$$
$$e^{\prime}\perp f^{\prime}, \quad h\perp g^{\prime}, \quad f^{\prime\prime}\perp g^{\prime\prime}, \quad h^{\prime}\perp e^{\prime\prime}.$$
\begin{comment}
Suppose $e^{\prime}\mathcal{D} e\mathcal{D} e^{\prime\prime}$, $f^{\prime}\mathcal{D} f\mathcal{D} f^{\prime\prime}$, $g^{\prime}\mathcal{D} e\mathcal{D} g^{\prime\prime}$, $e^{\prime}\perp f^{\prime}$, $f^{\prime\prime}\perp g^{\prime\prime}$, $h\mathcal{D} (e^{\prime}\vee f^{\prime})$, $(f^{\prime\prime}\vee g^{\prime\prime}) \mathcal{D} h^{\prime}$, $h\perp g^{\prime}$ and $h^{\prime} \perp e^{\prime\prime}$. We need to check
$$(h\vee g^{\prime}) \mathcal{D} (h^{\prime} \vee e^{\prime\prime}).$$
Let $s_{1},s_{2},t_{1},t_{2},u_{1},u_{2},w_{1},w_{2}\in G$ be such that $\dom(s_{1}) = e^{\prime}$, $\ran(s_{1}) = \dom(s_{2}) = e$, $\ran(s_{2}) = e^{\prime\prime}$, $\dom(t_{1}) = f^{\prime}$, $\ran(t_{1}) = \dom(t_{2}) = f$, $\ran(t_{2}) = f^{\prime\prime}$, $\dom(u_{1}) = g^{\prime}$, $\ran(u_{1}) = \dom(u_{2}) = g$, $\ran(u_{2}) = g^{\prime\prime}$, $\dom(w_{1}) = h$, $\ran(w_{1}) = e^{\prime}\vee f^{\prime}$, $\dom(w_{2}) = f^{\prime\prime}\vee g^{\prime\prime}$ and $\ran(w_{2}) = h^{\prime}$ so that these are the elements connecting up the identities as described above.
\end{comment}
Let $x = s_{2}s_{1}(e^{\prime}|w_{1})$, $y = (w_{2}|f^{\prime\prime})t_{2}t_{1}(f^{\prime}|w_{1})$ and $z = (w_{2}|g^{\prime\prime})u_{2}u_{1}$. These elements are well-defined because all the domains and ranges match up. We have $\dom(x) = \dom(e^{\prime}|w_{1})$, $\ran(x) = e^{\prime\prime}$, $\dom(y) = \dom(f^{\prime}|w_{1})$, $\ran(y) = \ran(w_{2}|f^{\prime\prime})$, $\dom(z) = g^{\prime}$ and $\ran(z) = \ran(w_{2}|g^{\prime\prime})$. By condition (1), $\dom(e^{\prime}|w_{1}) \perp \dom(f^{\prime}|w_{1})$ and $\ran(w_{2}|f^{\prime\prime}) \perp \ran(w_{2}|g^{\prime\prime})$. Further, $\dom(e^{\prime}|w_{1}) \vee \dom(f^{\prime}|w_{1}) = \dom(w_{1}) = h$, $h\perp g^{\prime}$ and so $\dom(x)$, $\dom(y)$ and $\dom(z)$ are all mutually orthogonal. Similarly, $\ran(x)$, $\ran(y)$ and $\ran(z)$ are mutually orthogonal. Thus $\exists x\vee y\vee z$. Further $\dom(x)\vee\dom(y)\vee\dom(z) = h\vee g^{\prime}$ and $\ran(x)\vee\ran(y)\vee\ran(z) = e^{\prime\prime}\vee h^{\prime}$. Hence $(h\vee g^{\prime})\, \mathcal{D}\, (e^{\prime\prime}\vee h^{\prime})$.   
\end{proof}

We then define $K(G) = \mathcal{G}(A(G))$, as in Section 4.2. Now suppose $S$ is a $K$-inverse semigroup and $G(S)$ is the underlying $K$-groupoid obtained by restricting the multiplication in $S$. Then by construction $\mathcal{D}$ is the same in both $G(S)$ and $S$ and the order is the same (thus the same elements are orthogonal and joins of orthogonal elements are the same in both $S$ and $G(S)$). It therefore follows that
$$K(G(S)) \cong K(S).$$

In fact we could even have deduced Lemma \ref{AScommmon} from Lemma \ref{AGcommmon}.

\begin{comment}
\subsection{Morita Equivalence}

\begin{defin}
We say that an ordered groupoid $G$ is an \emph{enlargement} of an ordered groupoid $H$ if $H$ is a full subcategory of $G$, $H$ is an order ideal and if every identity of $G$ is isomorphic to an identity of $H$.
\end{defin}

Idea: Maybe we can use the theory developed in the previous section to relate Morita equivalence of K-inverse semigroups to their $K_{0}$-groups, so that we have a result of the form:

\begin{conjecture}
Let $S$ and $T$ be K-inverse semigroups. Then $K_{0}(S)\cong K_{0}(T)$ iff $S$ and $T$ are Morita equivalent.
\end{conjecture} 

\begin{lemma}
Let $G$ be an enlargement of a K-groupoid $H$. Then $G$ is a K-groupoid.
\end{lemma}

\begin{lemma}
Let $G$ be an enlargement of a K-groupoid $H$. Then $K_{0}(G)\cong K_{0}(H)$.
\end{lemma}

\begin{corollary}
Let $S$ and $T$ be K-inverse semigroups. Then $K_{0}(S)\cong K_{0}(T)$ iff $S$ and $T$ are Morita equivalent.
\end{corollary} 
\end{comment}

\section{Modules over inverse semigroups}

In this section, we define the concept of \emph{module} for an orthogonally complete inverse semigroup. We will use this to define a $K$-group for arbitrary orthogonally complete inverse semigroups in such a way that if the semigroup is a $K$-inverse semigroup this definition will agree with that of Section 4.2.

Let $S$ be a fixed orthogonally complete inverse semigroup. We shall only be dealing with unitary right actions of $S$ \cite{Talwar}; that is, actions $X\times S\rightarrow X$ such that $X\cdot S = X$. Furthermore, rather than arbitrary actions, we shall work with (right) \'{e}tale actions (c.f. \cite{LawsonSteinberg}, \cite{Steinberg}), whose definition we now recall.

An action $X\times S\rightarrow X$ is said to be a \emph{(right) \'{e}tale action} if there is also a function $p:X\rightarrow E(S)$ such that the following two axioms hold:

\begin{description}
\item[{\rm (E1)}] $x\cdot p(x) = x$.
\item[{\rm (E2)}] $p(x\cdot s) = s^{-1}p(x)s$.
\end{description}

We refer to the \emph{\'{e}tale set} $(X,p)$. On such a set, we may define a partial order $\leq$ as follows: $x\leq y$ if and only if $x = y \cdot p(x)$. If $(X,p)$ and $(Y,q)$ are \'{e}tale sets, then a \emph{morphism} is a function $\alpha:X\rightarrow Y$ such that 

\begin{description}
\item[{\rm (EM1)}] $\alpha(x\cdot s) = \alpha(x)\cdot s$.
\item[{\rm (EM2)}] $p(x) = q(\alpha(x))$.
\end{description}

Since we are working with inverse semigroups \emph{with zero}, we shall actually only consider a special class of \'{e}tale sets. An \'{e}tale set $(X,p)$ is called \emph{pointed} if there is a distinguished element $0_{X}\in X$, called a \emph{zero}, such that the following axioms hold:

\begin{description}
\item[{\rm (P1)}] $p(0_{X}) = 0$ and if $p(x) = 0$ then $x = 0_{X}$.
\item[{\rm (P2)}] $0_{X}\cdot s = 0_{X}$ for all $s\in S$.
\item[{\rm (P3)}] $x \cdot 0 = 0_{X}$ for all $x\in X$.
\end{description}

Since $0_{X} = x\cdot 0 = x\cdot p(0_{X})$, we have $0_{X}\leq x$ for all $x\in X$. Thus the distinguished element $0_{X}$ in $X$ is actually the minimum element of the poset $X$. Usually we shall write $0$ instead of $0_{X}$. A \emph{pointed morphism} of pointed \'{e}tale sets is an \'{e}tale morphism which preserves the minimum elements of the \'{e}tale sets (PM). We denote the category of right pointed \'{e}tale $S$-sets and their pointed morphisms by $\Etale_{S}$.

Let $(X,p)$ be a pointed \'{e}tale set and $x,y\in X$. Define $x\perp y$ if $p(x)p(y) = 0$ and say that $x$ and $y$ are \emph{orthogonal}. We will say elements $x,y\in X$ are \emph{strongly orthogonal} if $x\perp y$, $\exists x\vee y$ and $p(x)\vee p(y) = p(x\vee y)$.

A pointed set $(X,p)$ is a \emph{(right) premodule} if it satisfies the following axioms:

\begin{description}
\item[{\rm (PRM1)}] If $x,y\in X$ are strongly orthogonal then for all $s\in S$ we have $x\cdot s$ and $y\cdot s$ are strongly orthogonal and 
$(x\vee y)\cdot s = (x\cdot s)\vee (y\cdot s)$.
\item[{\rm (PRM2)}] If $s,t\in S$ are orthogonal then $x\cdot s$ and $x\cdot t$ are strongly orthogonal for all $x\in X$.
\end{description}

A \emph{premodule morphism} of premodules is a pointed morphism $f:X\rightarrow Y$ such that if $x,y\in X$ are strongly orthogonal, then $f(x),f(y)\in Y$ are strongly orthogonal and $f(x\vee y) = f(x)\vee f(y)$ (PRMM). We will denote the category of right premodules and their premodule morphisms by $\Premod_{S}$.

\begin{proposition}
\label{rightidealpremod}
Let $I$ be a right ideal of $S$, define an action $I\times S\rightarrow I$ by $s\cdot t = st$ and define $p:I\rightarrow S$ by $p(s) = s^{-1}s$. Then $(I,p)$ is a premodule.
\end{proposition}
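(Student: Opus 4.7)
The plan is to verify the axioms directly. First I would check the pointed étale set axioms. With the $0$ of $S$ serving as zero element of $I$ (since $I$ is a right ideal, $s\cdot 0 = 0 \in I$ and $I$ is forced to contain $0$), axioms (E1) and (E2) reduce to the identities $s\cdot s^{-1}s = s$ and $(st)^{-1}(st) = t^{-1}(s^{-1}s)t$, while (P1)--(P3) are immediate from $\dom(s)=0\Rightarrow s = s\dom(s) = 0$ and the absorbing property of $0$. Observe also that the étale order on $I$ coincides with the natural partial order inherited from $S$, since $s = t\cdot p(s) = ts^{-1}s$ is exactly the definition of $s\leq t$.

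The heart of the proof is axiom (PRM1). Suppose $s_1,s_2\in I$ are strongly orthogonal, i.e.\ $\dom(s_1)\wedge\dom(s_2)=0$, the join $s_1\vee s_2$ exists in $I$, and $\dom(s_1)\vee\dom(s_2) = \dom(s_1\vee s_2)$. The key observation I would prove first is that such a pair is actually \emph{orthogonal in $S$}: the existence of $s_1\vee s_2$ forces $s_1,s_2$ to be compatible, and writing $u=s_1\vee s_2$ we have $s_i = u\dom(s_i)$, so
\[
s_1^{-1}s_2 \;=\; \dom(s_1)u^{-1}u\dom(s_2) \;=\; \dom(s_1)\dom(u)\dom(s_2) \;=\; 0.
\]
Hence $s_1\perp s_2$ in the inverse-semigroup sense, and orthogonal-join distributivity in the orthogonally complete $S$ gives $(s_1\vee s_2)t = s_1t\vee s_2t$. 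Then $p(s_1t)p(s_2t) = t^{-1}\dom(s_1)\dom(s_2)t = 0$ and $p(s_1t)\vee p(s_2t) = t^{-1}(\dom(s_1)\vee\dom(s_2))t = p((s_1\vee s_2)t)$, again by distributivity of multiplication by $t^{\pm 1}$ over the orthogonal join of idempotents. This gives strong orthogonality of $s_1t,s_2t$ together with the required distributive identity.

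For (PRM2), given $s,t\in S$ with $s\perp t$ and $r\in I$, I would compute $p(rs)p(rt) = s^{-1}\dom(r)(st^{-1})\dom(r)t = 0$, and then use that $s\vee t$ exists in $S$ to define $rs\vee rt := r(s\vee t)\in I$. The only non-routine check is that $p(r(s\vee t)) = p(rs)\vee p(rt)$. Writing $e = \dom(r)$ and expanding with $(s\vee t)^{-1} = s^{-1}\vee t^{-1}$, one gets four cross-terms $s^{-1}es,\; s^{-1}et,\; t^{-1}es,\; t^{-1}et$; the mixed terms vanish because $s^{-1}et = (s^{-1}t)(t^{-1}et) = 0$ and symmetrically $t^{-1}es = 0$, yielding $(s\vee t)^{-1}e(s\vee t) = s^{-1}es \vee t^{-1}et = p(rs)\vee p(rt)$ as required.

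The main obstacle I anticipate is the translation between the two notions of orthogonality in play: the étale notion $p(x)p(y)=0$ captures only the equality $s_1s_2^{-1}=0$, whereas distributivity in $S$ is phrased in terms of full orthogonality ($s_1^{-1}s_2 = 0$ as well). The short compatibility argument above bridges this gap, but without it none of the subsequent distributive manipulations are legal, so it is the step that really has to be written out with care.
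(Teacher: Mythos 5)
Your proposal is correct and follows essentially the same route as the paper's proof: verify the pointed étale axioms, observe that the étale order on $I$ coincides with the natural partial order of $S$, show that strong orthogonality in $I$ forces genuine orthogonality in $S$ (via $s_i = (s_1\vee s_2)\cdot p(s_i)$ together with $p(s_1)p(s_2)=0$), and then invoke orthogonal completeness and the right-ideal property for (PRM1) and (PRM2). The step you flag as the crux — bridging $p$-orthogonality and full orthogonality in $S$ — is precisely the paper's central observation, so nothing further is needed.
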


\begin{proof}
To see that $(I,p)$ is an \'{e}tale set note that
$$s\cdot p(s) = ss^{-1}s = s$$
and 
$$p(s\cdot t) = (st)^{-1}(st) = t^{-1}s^{-1}st = t^{-1}p(s) t.$$
It is pointed since $I$ necessarily contains $0$.
Now we have to be a little cautious as there are potentially two partial orders on elements of $I$: the order in $I$ viewed as an \'{e}tale set and the natural partial order of the semigroup $S$. Fortunately, these two orders coincide since $p(s) = s^{-1}s$ and so $s = t\cdot p(s)$ iff $s = ts^{-1}s$. Consequently, we are able to write $s\leq t$ without there being any ambiguity.
We will now show that if $s,t$ are strongly orthogonal elements of $I$ then $s\perp t$ in $S$.
Let $s, t$ be strongly orthogonal elements of $I$. Then $0 = p(s)p(t) = s^{-1}st^{-1}t$ and so premultiplying by $s$ and postmultiplying by $t^{-1}$ we have $st^{-1} = 0$. Let $u = s\vee t$ be the join of $s$ and $t$ in $I$ (which since the orders coincide will be the join in S). We then have $u\cdot p(s) = us^{-1}s = s$ and $u\cdot p(t) = ut^{-1}t = t$. Since $s^{-1}s\perp t^{-1}t$ in $S$, we must have $us^{-1}s\perp ut^{-1}t$ in $S$ and thus $s\perp t$ in $S$. Let us now check the axioms for a premodule.
\begin{description}
\item[{\rm (PRM1)}] If $s,t\in I$ are strongly orthogonal then since $s\perp t$ in $S$ we must have $su$ and $tu$ are orthogonal in $S$ for all $u\in S$, and therefore also orthogonal in $I$. Further since $S$ is orthogonally complete there exists $su\vee tu$ in $S$ and
$$su\vee tu = (s\vee t) u.$$
Since $I$ is a right ideal and $s\vee t \in I$ by assumption then $(s\vee t)u\in I$. Thus $su\vee tu\in I$ and $p(su\vee tu) = p(su)\vee p(tu)$.
\item[{\rm (PRM2)}] If $s,t\in S$ are orthogonal and $u\in I$ then $us$ and $ut$ are orthogonal in $S$, and $us,ut,u(s\vee t)\in I$. In addition
$$u(s\vee t) = us \vee ut$$
and $p(us)\vee p(ut) = p(us\vee ut)$. Thus $us$ and $ut$ are strongly orthogonal in $I$.
\end{description}
\end{proof} 

If $a\in S$ then we can consider the principal right ideal $aS$ generated by $a$ and this will be a premodule. In this case if $s\perp t$ in $S$ and $s = as$, $t = at$ then 
$$s\vee t = as \vee at = a(s\vee t)\in aS$$
and so $s,t$ are strongly orthogonal in $aS$. Since $aS = aa^{-1}S$ we will mainly be considering principal right ideals generated by idempotents.

The following lemma will be used often:

\begin{lemma}
\label{usefuljoinlem}
Let $X$ be a premodule and $x,y,z\in X$ be such that $p(x)p(y) = 0$, $p(z) = p(x)\vee p(y)$ and $z\geq x,y$. Then $z = x\vee y$
\end{lemma}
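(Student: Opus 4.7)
The plan is to use the premodule axiom (PRM2) to upgrade the hypothesis $p(x)p(y)=0$ into strong orthogonality of $x$ and $y$ inside $X$, then exploit the comparison with the given upper bound $z$.

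First I would observe that the idempotents $p(x)$ and $p(y)$ are orthogonal in $S$, since $p(x)\cdot p(y)^{-1}=p(x)p(y)=0$ and symmetrically $p(x)^{-1}\cdot p(y)=0$. Applying (PRM2) to the element $z$ with these two orthogonal idempotents, the elements $z\cdot p(x)$ and $z\cdot p(y)$ are strongly orthogonal in $X$. But the hypothesis $z\geq x,y$ means, by definition of the \'{e}tale order, that $x=z\cdot p(x)$ and $y=z\cdot p(y)$. Hence $x$ and $y$ are themselves strongly orthogonal in $X$, so in particular $x\vee y$ exists in $X$ and satisfies $p(x\vee y)=p(x)\vee p(y)$.

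Next I would compare $x\vee y$ with $z$. By hypothesis $p(z)=p(x)\vee p(y)$, so combining with the previous step we obtain $p(x\vee y)=p(z)$. Since $z$ is an upper bound of the pair $\{x,y\}$ in the poset $(X,\leq)$ and $x\vee y$ is the least such upper bound, $x\vee y\leq z$, which by definition of the order means $x\vee y = z\cdot p(x\vee y) = z\cdot p(z)$. A final appeal to axiom (E1) gives $z\cdot p(z)=z$, so $z=x\vee y$, as required.

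The one point that requires any care is the initial step: the hypothesis only gives orthogonality of $x$ and $y$ in the weak sense, and in a general premodule this is not enough for the join $x\vee y$ to exist. The whole argument hinges on producing the join from (PRM2) by re-expressing $x$ and $y$ as restrictions of the given upper bound $z$ along orthogonal idempotents, so that strong orthogonality is inherited from $S$; once that is in place, the rest is a short computation with the order.
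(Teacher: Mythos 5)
Your proof is correct and is essentially the paper's own argument: apply (PRM2) to $z$ and the orthogonal idempotents $p(x),p(y)$, identify $x=z\cdot p(x)$ and $y=z\cdot p(y)$ to get strong orthogonality, and then conclude from $p(x\vee y)=p(z)$ and $x\vee y\leq z$ that $z=z\cdot p(z)=z\cdot p(x\vee y)=x\vee y$. No changes needed.
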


\begin{proof}
Since $X$ is a premodule and $p(x)p(y) = 0$, (PRM2) implies that $x = z\cdot p(x)$ and $y = z\cdot p(y)$ are strongly orthogonal, and so there exists $x\vee y$ with $p(x\vee y) = p(x)\vee p(y) = p(z)$. Furthermore, since $x,y\leq z$ it follows that $x\vee y\leq z$. Thus,
$$z = z \cdot p(z) = z\cdot p(x\vee y) = x\vee y.$$
\end{proof}

A pointed set $(X,p)$ is called a \emph{(right) module} if it satisfies the following axioms:

\begin{description}
\item[{\rm (M1)}] If $x\perp y$ then $\exists x\vee y$ and $p(x\vee y) = p(x)\vee p(y)$.
\item[{\rm (M2)}] If $x\perp y$ then $(x\vee y)\cdot s = x\cdot s \vee y\cdot s$.
\end{description}

Observe that (M2) makes sense, because

$$p(x\cdot s)p(y\cdot s) = s^{-1}p(x)ss^{-1}p(y)s = s^{-1}p(x)p(y)s = 0$$
if $x\perp y$. Note that in general for a module $(X,p)$ we cannot simplify $x\cdot s\vee x\cdot t$ to $x\cdot(s\vee t)$ as $s$ and $t$ need not be orthogonal. 
We do however have the following lemma:

\begin{lemma}
If $s$ and $t$ are orthogonal and $(X,p)$ is a module, then 
$$x\cdot s \vee x\cdot t = x\cdot(s\vee t)$$
for all $x\in X$.
\end{lemma}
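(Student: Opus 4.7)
The plan is to produce $x\cdot(s\vee t)$ as the orthogonal join $x\cdot s\vee x\cdot t$ by verifying the hypotheses of Lemma \ref{usefuljoinlem} with $z = x\cdot(s\vee t)$. So the first thing to check is that $x\cdot s$ and $x\cdot t$ are orthogonal in $X$, i.e.\ that $p(x\cdot s)\,p(x\cdot t)=0$. Using (E2) this reduces to $s^{-1}p(x)s\cdot t^{-1}p(x)t$, and because $s\perp t$ in $S$ we have $st^{-1}=0$, which kills the middle and gives $0$. Hence (M1) produces $x\cdot s\vee x\cdot t$ with $p(x\cdot s\vee x\cdot t)=p(x\cdot s)\vee p(x\cdot t)$.

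Next I would compute $p(x\cdot(s\vee t))$ and show it equals this same join. Since $s\perp t$, the orthogonal completeness of $S$ gives $(s\vee t)^{-1}=s^{-1}\vee t^{-1}$, and distributivity of multiplication over orthogonal joins gives
\[
(s\vee t)^{-1}p(x)(s\vee t)=s^{-1}p(x)s\vee s^{-1}p(x)t\vee t^{-1}p(x)s\vee t^{-1}p(x)t.
\]
The two cross terms vanish (each is bounded above by $s^{-1}t$ or $t^{-1}s$, which are $0$), leaving exactly $p(x\cdot s)\vee p(x\cdot t)$.

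The remaining step is to show $x\cdot s\le x\cdot(s\vee t)$ and symmetrically for $t$, in the \'etale order on $X$. This means checking that $(x\cdot(s\vee t))\cdot p(x\cdot s)=x\cdot s$. The key computational identity is $(s\vee t)\,s^{-1}p(x)s=p(x)s$: expand $(s\vee t)s^{-1}=ss^{-1}\vee ts^{-1}=ss^{-1}$ (using $ts^{-1}=0$), then commute the idempotents $ss^{-1}$ and $p(x)$. Applying this to $x$ via (E1) yields $x\cdot s$, as required. With both inequalities in hand, Lemma \ref{usefuljoinlem} applies and gives $x\cdot(s\vee t)=x\cdot s\vee x\cdot t$.

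I do not expect a serious obstacle: the proof is essentially a careful bookkeeping of the inverse-semigroup identities $(s\vee t)^{-1}=s^{-1}\vee t^{-1}$, $st^{-1}=s^{-1}t=0$, distributivity of multiplication over orthogonal joins in $S$, and the fact that idempotents commute. The only mildly delicate point is confirming $x\cdot s\le x\cdot(s\vee t)$ in the partial order of $X$ (which is defined in terms of $p$ rather than directly from $\le$ in $S$), but the simplification $(s\vee t)s^{-1}=ss^{-1}$ reduces this to a one-line check using (E1).
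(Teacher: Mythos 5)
Your proposal is correct and takes essentially the same route as the paper's proof: the orthogonality check $p(x\cdot s)p(x\cdot t)=s^{-1}p(x)st^{-1}p(x)t=0$, the inequality $x\cdot s\le x\cdot(s\vee t)$ via the identity $x\cdot s=x\cdot(p(x)s)=x\cdot((s\vee t)s^{-1}p(x)s)=(x\cdot(s\vee t))\cdot p(x\cdot s)$, and the computation $p(x\cdot(s\vee t))=p(x\cdot s)\vee p(x\cdot t)$. The only cosmetic difference is that you close by citing Lemma \ref{usefuljoinlem}, which is stated for premodules, so you should either note that every module satisfies the premodule axioms or simply conclude directly, as the paper does, with $x\cdot s\vee x\cdot t=(x\cdot(s\vee t))\cdot p(x\cdot s\vee x\cdot t)=(x\cdot(s\vee t))\cdot p(x\cdot(s\vee t))=x\cdot(s\vee t)$.
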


\begin{proof}
We have $p(x\cdot s)p(x\cdot t) = s^{-1}p(x)st^{-1}p(x)t = 0$ since $s$ and $t$ are orthogonal and thus there exists $x\cdot s \vee x\cdot t$. Further
$$x\cdot s = x \cdot (p(x)s) = x\cdot ((s\vee t)s^{-1}p(x)s) = (x\cdot(s\vee t))\cdot p(x\cdot s)$$
and therefore $(x\cdot s)\vee (x\cdot t) \leq x\cdot(s\vee t)$. Now 
$$p(x\cdot (s\vee t)) = (s\vee t)^{-1} p(x)(s\vee t) = (s^{-1}p(x)s)\vee(t^{-1}p(x)t) = p(x\cdot s) \vee p(x\cdot t).$$
We then have
$$x\cdot s \vee x\cdot t = (x\cdot (s\vee t))\cdot p(x\cdot s\vee x \cdot t) = (x\cdot (s\vee t))\cdot p(x\cdot (s\vee t)) = x\cdot(s\vee t).$$
\end{proof}

Let $(X,p)$ and $(Y,q)$ be modules. A \emph{module morphism} is a pointed morphism $\alpha:X\rightarrow Y$ such that if $x\perp y$ then $\alpha(x\vee y) = \alpha(x)\vee \alpha(y)$ (MM). Observe that this is well-defined because $q(\alpha(x))q(\alpha(y)) = p(x)p(y) = 0$ if $x\perp y$.
We denote the category of right modules of $S$ together with their module morphisms by $\Mod_{S}$. If $(X,p)$ is a module then $Y\subseteq X$ is called a \emph{submodule} of $X$ if $y\in Y$ implies $y\cdot s\in Y$ for all $s\in S$ and if $u,v\in Y$ with $u\perp v$ then $u\vee v\in Y$.

\begin{lemma}
Let $(X,p)$ and $(Y,q)$ be modules. Then the image $\im(\theta)$ of a module morphism $\theta: X\rightarrow Y$ is a submodule of $Y$.
\end{lemma}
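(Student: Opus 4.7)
The plan is to verify the two closure properties defining a submodule directly from the axioms of a pointed \'etale morphism (EM1, EM2) and the module morphism axiom (MM). Let $\theta \colon X \to Y$ be the module morphism and write $\im(\theta) = \{\theta(x) : x \in X\} \subseteq Y$.

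First I would show closure under the right action. Given $y \in \im(\theta)$, pick $x \in X$ with $y = \theta(x)$; then for any $s \in S$, axiom (EM1) gives $y \cdot s = \theta(x) \cdot s = \theta(x \cdot s)$, which is again in $\im(\theta)$. This step is immediate.

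Next I would show closure under joins of orthogonal pairs. Suppose $u, v \in \im(\theta)$ with $u \perp v$, and choose $x_1, x_2 \in X$ with $\theta(x_1) = u$, $\theta(x_2) = v$. Using (EM2), we have $q(u) = p(x_1)$ and $q(v) = p(x_2)$, so $p(x_1)p(x_2) = q(u)q(v) = 0$, meaning $x_1 \perp x_2$ in $X$. Because $(X,p)$ is a module, axiom (M1) supplies $x_1 \vee x_2 \in X$, and then (MM) yields
\[
\theta(x_1 \vee x_2) = \theta(x_1) \vee \theta(x_2) = u \vee v,
\]
so $u \vee v \in \im(\theta)$.

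There is no real obstacle here: the proof is entirely a matter of unpacking definitions. The only thing to take care about is that the orthogonality condition $u \perp v$ in $Y$ transfers back to $x_1 \perp x_2$ in $X$, which is exactly what (EM2) guarantees, so that (MM) may legitimately be applied.
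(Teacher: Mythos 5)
Your proof is correct and follows essentially the same route as the paper: pull the orthogonality in $Y$ back to $X$ via the $p$-preserving axiom (EM2), form the join in $X$, and push it forward with (MM), with closure under the action being immediate from (EM1). No changes needed.
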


\begin{proof}
Suppose $\theta(x)\perp \theta(y)$. Then $q(\theta(x))q(\theta(y)) = 0$. But $q(\theta(x)) = p(x)$ and $q(\theta(y)) = p(y)$. Thus $p(x)p(y) = 0$ and so $x\perp y$. It follows that $x\vee y$ exists and since $\theta$ is a module morphism we have that $\theta(x\vee y) = \theta(x)\vee \theta(y)$. Thus the image of $\theta$ is closed under orthogonal joins. It is immediate that the image of $\theta$ is closed under the action of $S$.
\end{proof}

Let $(X,p)$ be a module. We define a \emph{congruence} on $X$ to be an equivalence relation $\rho$ such that the following conditions hold:

\begin{description}
\item[{\rm (C1)}] $x\,\rho\, y$ implies that $x\cdot s\,\rho\, y\cdot s$.
\item[{\rm (C2)}] $x\,\rho\, y$ implies that $p(x) = p(y)$.
\item[{\rm (C3)}] $x_{1}\perp x_{2}$, $y_{1}\perp y_{2}$ and $x_{i}\,\rho\, y_{i}$ implies that $x_{1}\vee x_{2}\,\rho\, y_{1}\vee y_{2}$.
\end{description}

We will now prove some facts about congruences which we will use later.

\begin{lemma}
Let $\theta:(X,p)\rightarrow (Y,q)$ be a module homomorphism. Define the kernel of $\theta$ by
$$\ker(\theta) = \left\{(x,y)\in X\times X|\theta(x) = \theta(y)\right\}.$$
Then $\ker(\theta)$ is a congruence.
\end{lemma}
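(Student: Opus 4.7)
The plan is a direct verification of the three congruence axioms (C1), (C2), (C3), each of which should fall out immediately from one of the three defining properties of a module morphism. Since $\ker(\theta)$ is defined as the equality relation pulled back along $\theta$, it is automatically an equivalence relation, so no work is needed there — I only need to check that it is compatible with the module structure.

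For (C1), I would suppose $(x,y)\in\ker(\theta)$, i.e.\ $\theta(x)=\theta(y)$, and let $s\in S$. By axiom (EM1) for pointed \'{e}tale morphisms (which module morphisms satisfy), $\theta(x\cdot s) = \theta(x)\cdot s = \theta(y)\cdot s = \theta(y\cdot s)$, so $(x\cdot s, y\cdot s)\in\ker(\theta)$. For (C2), applying axiom (EM2) twice gives $p(x) = q(\theta(x)) = q(\theta(y)) = p(y)$.

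For (C3), suppose $x_{1}\perp x_{2}$, $y_{1}\perp y_{2}$ and $\theta(x_{i})=\theta(y_{i})$ for $i=1,2$. The joins $x_{1}\vee x_{2}$ and $y_{1}\vee y_{2}$ exist by (M1). Applying axiom (MM) for module morphisms to each side and using $\theta(x_{i})=\theta(y_{i})$, I get
\[
\theta(x_{1}\vee x_{2}) = \theta(x_{1})\vee\theta(x_{2}) = \theta(y_{1})\vee\theta(y_{2}) = \theta(y_{1}\vee y_{2}),
\]
so $(x_{1}\vee x_{2}, y_{1}\vee y_{2})\in\ker(\theta)$. Note that the joins on the right are well-defined because $q(\theta(x_{1}))q(\theta(x_{2})) = p(x_{1})p(x_{2}) = 0$ by (C2)-type reasoning via (EM2), and similarly for the $y_{i}$'s. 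There is no real obstacle here — the statement is essentially tautological once one unpacks the definitions; the only thing worth being careful about is confirming that the requisite joins exist in $Y$ so that the equation in (C3) is meaningful, which is precisely what (M1) supplies.
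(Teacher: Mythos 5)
Your proof is correct and follows essentially the same route as the paper's: (C1) from (EM1), (C2) from (EM2), and (C3) from (MM), with the equivalence-relation part being automatic. Your added remark that the joins $\theta(x_1)\vee\theta(x_2)$ exist because $q(\theta(x_1))q(\theta(x_2))=p(x_1)p(x_2)=0$ is exactly the well-definedness observation the paper builds into its definition of module morphism, so nothing is missing.
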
 

\begin{proof}
$\ker(\theta)$ is clearly an equivalence relation. Let us check the congruence axioms:
\begin{description}
\item[{\rm (C1)}] $\theta(x) = \theta(y)$ implies $\theta(x\cdot s) = \theta(y\cdot s)$.
\item[{\rm (C2)}] $\theta(x) = \theta(y)$ implies $p(x) = q(\theta(x)) = q(\theta(y)) = p(y)$.
\item[{\rm (C3)}] $x_{1}\perp x_{2}$, $y_{1}\perp y_{2}$ and $\theta(x_{i}) = \theta(y_{i})$ implies 
$$\theta(x_{1}\vee x_{2}) = \theta(x_{1})\vee \theta(x_{2}) = \theta(y_{1})\vee \theta(y_{2}) = \theta(y_{1}\vee y_{2}).$$
\end{description}
\end{proof}

\begin{lemma}
Let $\rho$ be a congruence on a module $(X,p)$. Then $X/\rho$ can naturally be endowed with the structure of a module.
\end{lemma}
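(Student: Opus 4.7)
The plan is to define the data of an action, a pointing map, and a distinguished zero on $X/\rho$ in the obvious way, and then verify the axioms (E1), (E2), (P1)-(P3), (M1), (M2) in turn. Concretely, I would write $[x]$ for the $\rho$-class of $x\in X$, define $[x]\cdot s = [x\cdot s]$, define $\bar{p}\colon X/\rho \to E(S)$ by $\bar{p}([x]) = p(x)$, and take $[0_X]$ as the distinguished element of $X/\rho$. Axiom (C1) of a congruence makes the action well-defined, axiom (C2) makes $\bar{p}$ well-defined, and the fact that $x\,\rho\, y$ implies $p(x)=p(y)$ by (C2), together with (P1) for $X$, ensures that $[x]=[0_X]$ precisely when $\bar{p}([x])=0$.

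Next I would verify the \'etale axioms and the pointed axioms essentially by pushing the corresponding identities in $X$ through the quotient: $[x]\cdot \bar{p}([x]) = [x\cdot p(x)] = [x]$ gives (E1), $\bar{p}([x]\cdot s) = p(x\cdot s) = s^{-1}p(x)s = s^{-1}\bar{p}([x])s$ gives (E2), and (P2), (P3) follow from $0_X\cdot s = 0_X$ and $x\cdot 0 = 0_X$ respectively. None of these steps requires (C3).

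The heart of the argument, and the only genuinely delicate part, is the verification of (M1). Suppose $[x]\perp[y]$, i.e.\ $\bar{p}([x])\bar{p}([y])=0$; then $p(x)p(y)=0$, so $x\perp y$ in $X$ and $x\vee y$ exists by (M1) for $X$. The natural definition is $[x]\vee[y] := [x\vee y]$, and the main obstacle is showing that this is well-defined: if $x\,\rho\, x'$ and $y\,\rho\, y'$ then by (C2) we have $p(x')=p(x)$ and $p(y')=p(y)$, so $x'\perp y'$ and $x'\vee y'$ exists in $X$, and then (C3) applied to the pairs $(x,x')$ and $(y,y')$ yields $x\vee y\,\rho\, x'\vee y'$. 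Once well-definedness is in hand, $\bar{p}([x]\vee[y]) = p(x\vee y) = p(x)\vee p(y) = \bar{p}([x])\vee \bar{p}([y])$ by (M1) for $X$, completing (M1) for the quotient.

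Finally, (M2) is a direct computation on representatives using (M2) for $X$ and the definition of the action on the quotient:
\[
([x]\vee[y])\cdot s = [(x\vee y)\cdot s] = [x\cdot s \vee y\cdot s] = [x\cdot s]\vee[y\cdot s] = [x]\cdot s \vee [y]\cdot s,
\]
where the last equality uses well-definedness of the join on $X/\rho$ together with the fact that $x\cdot s \perp y\cdot s$, which follows from $x\perp y$ as already observed in the discussion preceding (M2). This completes the verification that $(X/\rho,\bar{p})$ is a module, and by construction the quotient map $X\to X/\rho$ is then a module morphism.
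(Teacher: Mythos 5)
Your proposal is correct and follows essentially the same route as the paper's own proof: define $[x]\cdot s=[x\cdot s]$ and $\bar p([x])=p(x)$, use (C1) and (C2) for well-definedness, push (E1), (E2), (P1)--(P3) through the quotient, and use (C2) together with (C3) to handle (M1), with (M2) by the same direct computation. The only difference is that you spell out the well-definedness of the join $[x]\vee[y]=[x\vee y]$ explicitly, which the paper leaves implicit in the phrase ``the axiom follows by (C2) and (C3).''
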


\begin{proof}
Denote the equivalence class of an element $x\in X$ by $[x]$. Let $x\,\rho\, y$. Then $x\cdot s\,\rho\, y\cdot s$ and $p(x) = p(y)$, thus the action $[x]\cdot s = [x\cdot s]$ and map $p([x]) = p(x)$ are well-defined. Checking the axioms, we have
\begin{description}
\item[{\rm (E1)}] $[x]\cdot p([x]) = [x\cdot p(x)] = [x]$. 
\item[{\rm (E2)}] $p([x]\cdot s) = p([x\cdot s]) = p(x\cdot s) = s^{-1}p(x)s = s^{-1}p([x])s$.
\item[{\rm (P1) - (P3)}] are clear since $0_{X}$ will always be in an equivalence class on its own.
\item[{\rm (M1)}] If $[x]\perp [y]$, then $x\perp y$ and so the axiom follows by (C2) and (C3).
\item[{\rm (M2)}] If $[x]\perp [y]$, then 
$$([x]\vee [y])\cdot s = [x\vee y]\cdot s = [(x\vee y)\cdot s] = [x\cdot s \vee y\cdot s] = [x\cdot s] \vee [y\cdot s].$$
\end{description}
\end{proof}

\begin{lemma}
\label{projectioncong}
Let $\rho$ be a congruence on a module $(X,p)$ and denote the equivalence class of an element $x\in X$ by $[x]$. Then the map $\pi:X\rightarrow X/\rho$ defined by $\pi(x) = [x]$ is a module morphism.
\end{lemma}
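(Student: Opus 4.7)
The plan is to verify directly that $\pi$ satisfies each of the defining axioms of a module morphism, namely (EM1), (EM2), (PM), and (MM). Each verification is essentially an unpacking of the definitions used to endow $X/\rho$ with its module structure in the preceding lemma, so no deep argument is required; the proof is really a bookkeeping exercise. I expect no genuine obstacle — the content is that the quotient structure was defined precisely so that this projection becomes a morphism.

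First I would check (EM1), $\pi(x \cdot s) = \pi(x) \cdot s$: by definition $\pi(x \cdot s) = [x \cdot s]$ and the action on $X/\rho$ was defined by $[x] \cdot s = [x \cdot s]$, so both sides are equal. Next, (EM2), $p(\pi(x)) = p(x)$: this is immediate from the definition $p([x]) = p(x)$ adopted in the construction of $X/\rho$. For (PM), I would note that $\pi(0_X) = [0_X]$, and since $p([0_X]) = p(0_X) = 0$, axiom (P1) for $X/\rho$ forces $[0_X]$ to be the zero element of $X/\rho$.

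Finally, for (MM), suppose $x \perp y$ in $X$, so $p(x)p(y) = 0$. By (M1) applied in $X$, the join $x \vee y$ exists, and $p(x \vee y) = p(x) \vee p(y)$. Passing to $X/\rho$, we have $p([x])p([y]) = p(x)p(y) = 0$, so $[x] \perp [y]$ in $X/\rho$ and the join $[x] \vee [y]$ exists by (M1) for $X/\rho$. By the construction of orthogonal joins in the quotient, $[x] \vee [y] = [x \vee y]$, which is precisely $\pi(x \vee y) = \pi(x) \vee \pi(y)$. This completes the verification that $\pi$ is a module morphism.
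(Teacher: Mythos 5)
Your proof is correct and follows essentially the same route as the paper: a direct axiom-by-axiom verification of (EM1), (EM2), (PM), and (MM), each unwinding the definitions of the action, the map $p$, and the orthogonal joins on $X/\rho$ established in the preceding lemma. No issues.
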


\begin{proof}
We check the axioms:
\begin{description}
\item[{\rm (EM1)}] $\pi(x\cdot s) = [x\cdot s] = [x]\cdot s = \pi(x)\cdot s$.
\item[{\rm (EM2)}] $p(x) = p([x]) = p(\pi(x))$.
\item[{\rm (PM)}] $\pi(0) = [0]$ and $\pi(x) = [0]$ implies $x = 0$.
\item[{\rm (MM)}] Suppose $p(x)p(y) = 0$. Then $p([x])p([y]) = 0$ and 
$$\pi(x\vee y) = [x\vee y] = [x]\vee [y].$$
\end{description}
\end{proof}

\begin{lemma}
\label{FITmods}
Let $\theta:(X,p)\rightarrow (Y,q)$ be a module homomorphism. Then $\im(\theta)$ and $X/\ker(\theta)$ are isomorphic as modules.
\end{lemma}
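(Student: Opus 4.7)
The plan is to follow the standard First Isomorphism Theorem template, exhibiting the isomorphism $\bar{\theta}: X/\ker(\theta) \rightarrow \im(\theta)$ defined by $\bar{\theta}([x]) = \theta(x)$, where $[x]$ denotes the $\ker(\theta)$-equivalence class of $x$. Well-definedness is immediate from the definition of $\ker(\theta)$: if $[x] = [y]$ then $(x,y) \in \ker(\theta)$, so $\theta(x) = \theta(y)$.

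Next I would verify that $\bar{\theta}$ is a module morphism. Axiom (EM1) follows from $\bar{\theta}([x] \cdot s) = \bar{\theta}([x \cdot s]) = \theta(x \cdot s) = \theta(x) \cdot s = \bar{\theta}([x]) \cdot s$, using that $\theta$ satisfies (EM1). Axiom (EM2) follows from $q(\bar{\theta}([x])) = q(\theta(x)) = p(x) = p([x])$, using the definition of $p$ on $X/\ker(\theta)$ from the previous lemmas and (EM2) for $\theta$. Axiom (PM) follows from $\bar{\theta}([0]) = \theta(0) = 0$. For (MM), if $[x] \perp [y]$ in $X/\ker(\theta)$ then $p(x) p(y) = p([x]) p([y]) = 0$ so $x \perp y$, hence $\bar{\theta}([x] \vee [y]) = \bar{\theta}([x \vee y]) = \theta(x \vee y) = \theta(x) \vee \theta(y) = \bar{\theta}([x]) \vee \bar{\theta}([y])$.

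Surjectivity onto $\im(\theta)$ is immediate from the definition. For injectivity, $\bar{\theta}([x]) = \bar{\theta}([y])$ means $\theta(x) = \theta(y)$, so $(x,y) \in \ker(\theta)$, giving $[x] = [y]$. Thus $\bar{\theta}$ is a bijective module morphism onto $\im(\theta)$ (which is a submodule of $Y$ by the earlier lemma, so the codomain makes sense as a module).

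The only step requiring some care is checking that the set-theoretic inverse $\bar{\theta}^{-1}: \im(\theta) \rightarrow X/\ker(\theta)$ is also a module morphism, since for modules over inverse semigroups this is not automatic from bijectivity; one must check that it preserves orthogonal joins. If $\theta(x) \perp \theta(y)$ in $\im(\theta)$, then $p(x) p(y) = q(\theta(x)) q(\theta(y)) = 0$, so $x \perp y$ in $X$, hence $[x] \perp [y]$ and $[x \vee y] = [x] \vee [y]$; so $\bar{\theta}^{-1}(\theta(x) \vee \theta(y)) = \bar{\theta}^{-1}(\theta(x \vee y)) = [x \vee y] = \bar{\theta}^{-1}(\theta(x)) \vee \bar{\theta}^{-1}(\theta(y))$. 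The other axioms (EM1), (EM2), (PM) for $\bar{\theta}^{-1}$ are similarly routine. I do not anticipate any serious obstacle; the argument is essentially bookkeeping against the module axioms, and the main thing to be careful about is that ``strongly orthogonal'' considerations do not enter (since we are working with modules rather than premodules, orthogonality alone guarantees the join exists with the right $p$-value).
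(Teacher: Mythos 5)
Your proposal is correct and follows essentially the same route as the paper: both define $[x]\mapsto\theta(x)$ on $X/\ker(\theta)$, check well-definedness, and verify the module-morphism axioms (EM1), (EM2), (PM), (MM). Your additional care in checking bijectivity explicitly and that the set-theoretic inverse preserves orthogonal joins is a reasonable (indeed slightly more thorough) bit of bookkeeping that the paper leaves implicit, but it does not change the argument.
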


\begin{proof}
Define 
$$\alpha: X/\ker(\theta) \rightarrow \im(\theta)$$
by $\alpha([x]) = \theta(x)$. By construction, if $(x,y)\in \ker(\theta)$ then $\theta(x) = \theta(y)$ and so $\alpha$ is a well-defined map. Let us check that it is a module morphism:
\begin{description}
\item[{\rm (EM1)}] $\alpha([x]\cdot s) = \alpha([x\cdot s]) = \theta(x\cdot s) = \theta(x)\cdot s$.
\item[{\rm (EM2)}] $p([x]) = p(x) = q(\theta(x))$.
\item[{\rm (PM)}] $\theta(0) = 0$ and $\theta(x) = 0$ implies $x = 0$.
\item[{\rm (MM)}] Suppose $p(x)p(y) = 0$. Then $p([x])p([y]) = 0$ and 
$$\alpha([x\vee y]) = \theta(x\vee y) = \theta(x)\vee \theta(y) = \alpha([x])\vee \alpha([y]).$$
\end{description}
\end{proof}

\begin{lemma}
Let $\rho$ and $\sigma$ be congruences on a module $(X,p)$. Then their intersection $\rho\cap \sigma$ is a congruence.
\end{lemma}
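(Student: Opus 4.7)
The plan is to verify that $\rho \cap \sigma$, viewed as a subset of $X \times X$, satisfies each of the defining properties of a congruence in turn. Since each property is of the form ``this implication holds,'' and since membership in $\rho \cap \sigma$ is equivalent to membership in both $\rho$ and $\sigma$, each verification should reduce to applying the corresponding property for $\rho$ and $\sigma$ separately and then combining. No new modular structure needs to be built; we merely need to confirm that the intersection inherits everything.

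First I would note the standard fact that the intersection of two equivalence relations on a set is again an equivalence relation (reflexivity, symmetry and transitivity each pass to the intersection trivially), so $\rho \cap \sigma$ is at least an equivalence relation on $X$. Next I would verify (C1): suppose $(x,y) \in \rho \cap \sigma$ and let $s \in S$; then $(x,y) \in \rho$ gives $(x\cdot s, y\cdot s) \in \rho$ and $(x,y) \in \sigma$ gives $(x\cdot s, y\cdot s) \in \sigma$, so $(x\cdot s, y\cdot s) \in \rho \cap \sigma$. For (C2) the argument is even shorter: $(x,y) \in \rho \cap \sigma$ implies $(x,y) \in \rho$ which gives $p(x) = p(y)$ directly.

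The only mildly substantial step is (C3). Here I would take $x_1 \perp x_2$, $y_1 \perp y_2$ in $X$ with $(x_i, y_i) \in \rho \cap \sigma$ for $i = 1,2$. Since $(x_i, y_i) \in \rho$ and $\rho$ is a congruence, we get $(x_1 \vee x_2, y_1 \vee y_2) \in \rho$; since $(x_i, y_i) \in \sigma$ and $\sigma$ is a congruence, we analogously get $(x_1 \vee x_2, y_1 \vee y_2) \in \sigma$. Intersecting yields $(x_1 \vee x_2, y_1 \vee y_2) \in \rho \cap \sigma$, as required.

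There is really no serious obstacle here; the result is entirely formal, driven by the observation that each congruence axiom is a universally quantified implication closed under conjunction. The only point worth flagging is that in (C3) one must be careful to note that the joins $x_1 \vee x_2$ and $y_1 \vee y_2$ exist in $X$ purely because $X$ is a module (so axiom (M1) supplies them from $x_1 \perp x_2$ and $y_1 \perp y_2$), independently of $\rho$ and $\sigma$, so there is no concern that the intersection fails to ``see'' the relevant joins.
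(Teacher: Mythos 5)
Your proof is correct and follows essentially the same route as the paper: check that the intersection is an equivalence relation and then verify (C1)--(C3) by applying each axiom to $\rho$ and $\sigma$ separately and intersecting. The extra remark that the joins in (C3) exist by (M1) independently of the congruences is a nice touch but not needed beyond what the paper already does.
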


\begin{proof}
$\rho\cap \sigma$ is clearly an equivalence relation. We now check the congruence axioms:
\begin{description}
\item[{\rm (C1)}] Suppose $x\, (\rho\cap\sigma)\, y$. Then $x\,\rho\, y$ and $x\,\sigma\, y$. So $x\cdot s\,\rho\, y\cdot s$ and $x\cdot s\,\sigma\, y\cdot s$. 
\item[{\rm (C2)}] $x\,\rho\, y$ and $x\,\sigma\, y$ implies $p(x) = p(y)$.
\item[{\rm (C3)}] $x_{1}\perp x_{2}$, $y_{1}\perp y_{2}$ and $x_{i}\,(\rho\cap\sigma)\, y_{i}$ implies that $x_{1}\vee x_{2}\,\rho\, y_{1}\vee y_{2}$ and $x_{1}\vee x_{2}\,\sigma\, y_{1}\vee y_{2}$.
\end{description}
\end{proof}

\begin{lemma}
Let $(X,p)$ be a module and let $\rho_{\max}$ be the equivalence relation defined on $X$ by $x\,\rho_{\max}\, y$ if $p(x) = p(y)$. Then $\rho_{\max}$ is a congruence. Furthermore, $\rho_{\max}$ is the largest congruence defined on $X$.
\end{lemma}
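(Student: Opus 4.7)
The plan is to verify the three congruence axioms for $\rho_{\max}$ in turn and then to observe that the maximality assertion is essentially just a restatement of axiom (C2). Neither half of the argument is expected to present any real difficulty; the whole thing is a direct unwinding of definitions, so the goal of the proof plan is mainly to record what to check and in what order rather than to identify an obstacle.

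First I would show $\rho_{\max}$ is a congruence. Reflexivity, symmetry and transitivity are immediate from the corresponding properties of equality on $E(S)$. For (C1), suppose $x\,\rho_{\max}\, y$, so $p(x)=p(y)$; applying axiom (E2) of an \'etale set gives $p(x\cdot s)=s^{-1}p(x)s=s^{-1}p(y)s=p(y\cdot s)$, hence $x\cdot s\,\rho_{\max}\, y\cdot s$. Axiom (C2) is built into the definition of $\rho_{\max}$. For (C3), assume $x_{1}\perp x_{2}$, $y_{1}\perp y_{2}$ and $x_{i}\,\rho_{\max}\, y_{i}$; then since $X$ is a module, axiom (M1) gives
\[
p(x_{1}\vee x_{2}) \;=\; p(x_{1})\vee p(x_{2}) \;=\; p(y_{1})\vee p(y_{2}) \;=\; p(y_{1}\vee y_{2}),
\]
so $x_{1}\vee x_{2}\,\rho_{\max}\, y_{1}\vee y_{2}$.

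For the maximality claim, let $\rho$ be any congruence on $X$ and suppose $x\,\rho\, y$. Then axiom (C2) for $\rho$ says exactly that $p(x)=p(y)$, which is the defining condition of $\rho_{\max}$, so $x\,\rho_{\max}\, y$. Hence $\rho\subseteq\rho_{\max}$, completing the proof. Since every step is a one-line consequence of the module axioms or the definition of a congruence, I do not anticipate any serious obstacle here.
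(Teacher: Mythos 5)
Your proof is correct and follows essentially the same route as the paper: checking (C1) via axiom (E2), (C2) by definition, (C3) via (M1), and deducing maximality directly from axiom (C2) for an arbitrary congruence. No gaps; you merely spell out the one-line implications the paper states without comment.
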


\begin{proof}
$\rho_{\max}$ is clearly an equivalence relation. We now check the congruence axioms:
\begin{description}
\item[{\rm (C1)}] $p(x) = p(y)$ implies $p(x\cdot s) = p(y\cdot s)$.
\item[{\rm (C2)}] $x\,\rho_{\max}\, y$ implies by definition that $p(x) = p(y)$.
\item[{\rm (C3)}] $x_{1}\perp x_{2}$, $y_{1}\perp y_{2}$ and $p(x_{i}) = p(y_{i})$ implies that 
$$p(x_{1}\vee x_{2}) = p(x_{1})\vee p(x_{2}) = p(y_{1})\vee p(y_{2}) = p(y_{1}\vee y_{2}).$$
\end{description}
It is the largest congruence on $X$ by (C2).
\end{proof} 

Let $(X,p)$ be a module. We will call $X/\rho_{\max}$ the \emph{submodule of $E(S)$ generated by $(X,p)$}. By the above, we see that $X/\rho_{\max}$ is the smallest submodule of $(X,p)$.

A finitely generated order ideal $A$ of a premodule $X$ is said to be \emph{orthogonal} if there exist $x_{1},\ldots,x_{m}\in A$ such that the $x_{i}$'s are pairwise orthogonal and $A = \left\{x_{1},\ldots,x_{m}\right\}^{\downarrow}$; that is, all elements $x\in X$ with $x\leq x_{i}$ for some $i$. Let $\overline{X}$ be the set of all finitely generated orthogonal order ideals of the premodule $X$. We will denote by $x^{\downarrow} = \left\{x\right\}^{\downarrow}$.

\begin{comment}
\begin{remark}
We would have liked it to be the case that if $X$ were a module then $X$ would be isomorphic to $\overline{X}$ and that if $ef = 0$, $e,f\in E(S)$, then $\overline{eS}\bigoplus\overline{fS}$ would be isomorphic to $\overline{(e\vee f)S}$. Unfortunately this is not the case so we will define an equivalence relation on $\overline{X}$ so that these two properties are true.
\end{remark}
\end{comment}

Let $X$ be a premodule. Let $\equiv$ be the smallest equivalence relation on $\overline{X}$ such that if $x_{1}$ and $x_{2}$ are strongly orthogonal then $\left\{x_{1},x_{2},x_{3},\ldots,x_{n}\right\}^{\downarrow}\equiv \left\{(x_{1}\vee x_{2}),x_{3},\ldots,x_{n}\right\}^{\downarrow}$ and let 
$$X^{\sharp} = \overline{X}/\equiv.$$

If $A = \left\{x_{1},\ldots,x_{m}\right\}^{\downarrow}$ is an element of $X^{\sharp}$ (with the $x_{i}$'s pairwise orthogonal), then define
$$p^{\sharp}(A) = \bigvee_{i=1}^{m}{p(x_{i})}$$
and
$$A\cdot s = \left\{x_{1}\cdot s,\ldots,x_{m}\cdot s\right\}^{\downarrow}.$$

Then $p^{\sharp}(A)$ is well-defined and $A\cdot s\in X^{\sharp}$ since $p(x_{i}\cdot s)p(x_{j}\cdot s) = 0$ for $i\neq j$ and if $x\leq y$ and $p(x)p(y)=0$ then $x=0$.

\begin{lemma}
Let $(X,p)$ be a premodule. The above gives $(X^{\sharp},p^{\sharp})$ the structure of a module.
\end{lemma}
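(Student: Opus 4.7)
The plan is to verify, in order, (i) that $p^{\sharp}$ and the right $S$-action descend to the quotient $X^{\sharp}=\overline{X}/\mathord{\equiv}$; (ii) the \'{e}tale and pointed axioms (E1)--(E2) and (P1)--(P3); and (iii) the module axioms (M1)--(M2). The zero of $X^{\sharp}$ will be the class of $\{0_{X}\}^{\downarrow}$.

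First I would handle well-definedness. It suffices to check invariance under a single generating move $\{x_{1},x_{2},x_{3},\dots,x_{m}\}^{\downarrow}\mapsto \{x_{1}\vee x_{2},x_{3},\dots,x_{m}\}^{\downarrow}$, where $x_{1}$ and $x_{2}$ are strongly orthogonal. Invariance of $p^{\sharp}$ is immediate from the defining property $p(x_{1}\vee x_{2})=p(x_{1})\vee p(x_{2})$ of strong orthogonality. Invariance of the action follows from (PRM1), which gives that $x_{1}\cdot s,x_{2}\cdot s$ are strongly orthogonal and $(x_{1}\vee x_{2})\cdot s=x_{1}\cdot s\vee x_{2}\cdot s$; membership of $A\cdot s$ in $\overline{X}$ uses that $p(x_{i}\cdot s)=s^{-1}p(x_{i})s$ remain pairwise orthogonal.

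Next I would verify the \'{e}tale and pointed axioms on representatives $A=\{x_{1},\dots,x_{m}\}^{\downarrow}$ with the $x_{i}$ pairwise orthogonal. For (E1), pairwise orthogonality of the $p(x_{i})$ gives $x_{i}\cdot p(x_{j})=0$ for $i\ne j$ by (PRM2), whereas (PRM2) also yields $x_{i}\cdot\bigvee_{j}p(x_{j})=x_{i}$ since the joinands $p(x_{i})$ are pairwise orthogonal in $S$; hence $A\cdot p^{\sharp}(A)=A$. Axiom (E2) is the direct computation $p^{\sharp}(A\cdot s)=\bigvee_{i}s^{-1}p(x_{i})s=s^{-1}p^{\sharp}(A)s$. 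The pointed axioms are routine once (P1) is established, which is immediate from the definition of $p^{\sharp}$ and the fact that any $x\in X$ with $p(x)=0$ satisfies $x=x\cdot p(x)=x\cdot 0 =0_{X}$.

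Finally I would define the join on $X^{\sharp}$. Given representatives $A=\{x_{1},\dots,x_{m}\}^{\downarrow}$ and $B=\{y_{1},\dots,y_{n}\}^{\downarrow}$ with $p^{\sharp}(A)p^{\sharp}(B)=0$, pairwise orthogonality of all $x_{i},y_{j}$ follows since $p(x_{i})p(y_{j})\le p^{\sharp}(A)p^{\sharp}(B)=0$, so $A\vee B:=\{x_{1},\dots,x_{m},y_{1},\dots,y_{n}\}^{\downarrow}$ lies in $\overline{X}$. Then (M1), the equality $p^{\sharp}(A\vee B)=p^{\sharp}(A)\vee p^{\sharp}(B)$, is immediate; (M2) is $(A\vee B)\cdot s=\{x_{1}\cdot s,\dots,y_{n}\cdot s\}^{\downarrow}=A\cdot s\vee B\cdot s$, which is legitimate since the $p(x_{i}\cdot s)$ and $p(y_{j}\cdot s)$ are still pairwise orthogonal. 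The main obstacle is showing that $A\vee B$ is independent of the representatives chosen for $A$ and $B$: I would reduce this to checking that applying a single generating move inside $A$ (and symmetrically inside $B$) to form $A'\vee B$ yields a representative that is $\equiv$-related to $A\vee B$, which is true because the same generating move is available on the union, the joined pair being strongly orthogonal in $A'$ and the added $y_{j}$'s being orthogonal to it. A straightforward induction on the length of a sequence of generating moves then completes the argument.
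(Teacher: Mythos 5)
There is a genuine gap in your treatment of (M1), and it is precisely where the substantive content of the lemma lies. The join $A\vee B$ is not something you are free to \emph{define}: the partial order on $X^{\sharp}$ is already determined by the \'{e}tale structure ($A\leq B$ iff $A = B\cdot p^{\sharp}(A)$), and axiom (M1) asserts that the \emph{least upper bound} of two orthogonal elements exists in that order and satisfies $p^{\sharp}(A\vee B)=p^{\sharp}(A)\vee p^{\sharp}(B)$. Writing $C=\left\{x_{1},\ldots,x_{m},y_{1},\ldots,y_{n}\right\}^{\downarrow}$ and noting that $p^{\sharp}(C)$ has the right value only exhibits a candidate. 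You still need (a) that $C$ is an upper bound, i.e. $A=C\cdot p^{\sharp}(A)$ and $B=C\cdot p^{\sharp}(B)$ (easy: $y_{j}\cdot p^{\sharp}(A)=0_{X}$ by (P1) since $p(y_{j})p^{\sharp}(A)=0$), and, crucially, (b) that $C$ is the \emph{least} upper bound. Point (b) is not immediate. The paper obtains it by first verifying the premodule axioms (PRM1)--(PRM2) for $(X^{\sharp},p^{\sharp})$ --- which is the bulk of its proof and is entirely absent from your proposal --- and then applying Lemma \ref{usefuljoinlem} with $z=C$, which forces $C=A\vee B$. (Note that Lemma \ref{usefuljoinlem} is a statement about premodules, so it cannot be invoked for $X^{\sharp}$ until the premodule axioms are in place.) Alternatively, one can argue directly that any upper bound $D=\left\{z_{1},\ldots,z_{k}\right\}^{\downarrow}$ of $A$ and $B$ satisfies $C\leq D$, but that again requires decomposing the generators $z_{i}$ via Lemma \ref{usefuljoinlem} applied in $X$ --- exactly the kind of argument the paper carries out inside its (PRM1)/(PRM2) verifications. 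Your closing discussion of independence of representatives addresses a different and easier point (compatibility of the union construction with $\equiv$), not this one; as written, the sentence ``(M1) is immediate'' conceals the real work.

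The rest of your outline is essentially sound: well-definedness of $p^{\sharp}$ and of the action under the generating move (via (PRM1)), the \'{e}tale and pointed axioms, and (M2) \emph{granted} the identification of the join with the class of the union. One small slip: the identity $x_{i}\cdot\bigvee_{j}p(x_{j})=x_{i}$ does not come from (PRM2); it follows from the action axiom together with distributivity of multiplication over orthogonal joins in $S$, since $x_{i}\cdot\bigvee_{j}p(x_{j})=(x_{i}\cdot p(x_{i}))\cdot\bigvee_{j}p(x_{j})=x_{i}\cdot\bigl(p(x_{i})\bigvee_{j}p(x_{j})\bigr)=x_{i}\cdot p(x_{i})=x_{i}$.
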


\begin{proof}
Let $A = \left\{x_{1},\ldots,x_{m}\right\}^{\downarrow}$ and $B = \left\{y_{1},\ldots,y_{n}\right\}^{\downarrow}$, where the generators are pairwise orthogonal. First, we check it is a pointed set:
\begin{description}
\item[{\rm (E1)}] $A\cdot p^{\sharp}(A) = \left\{x_{1}\cdot p^{\sharp}(A),\ldots,x_{m}\cdot p^{\sharp}(A)\right\}^{\downarrow} = \left\{x_{1},\ldots,x_{m}\right\}^{\downarrow} = A$.
\item[{\rm (E2)}] $p^{\sharp}(A\cdot s) = \vee_{i=1}^{m}{p(x_{i}\cdot s)} = s^{-1}p^{\sharp}(A)s$.
\item[{\rm (P1) - (P3)}] follow from the fact that $p^{\sharp}(A) = 0$ iff $A = 0^{\downarrow}$.
\end{description}
Thus $(X^{\sharp},p^{\sharp})$ is a pointed \'{e}tale set. Let us now show $X^{\sharp}$ is a premodule:
\begin{description}
\item[{\rm (PRM1)}] Let $A = \left\{x_{1},\ldots,x_{m}\right\}^{\downarrow},B = \left\{y_{1},\ldots,y_{n}\right\}^{\downarrow}\in X^{\sharp}$ be strongly orthogonal. Since $p^{\sharp}(A)p^{\sharp}(B) = 0$ we have that $p(x_{i})p(y_{j}) = 0$ for all $i,j$. Let 
$$C = \left\{x_{1},\ldots,x_{m},y_{1},\ldots,y_{n}\right\}^{\downarrow}.$$

Then $A = C\cdot p^{\sharp}(A)$ and $B = C\cdot p^{\sharp}(B)$. So $A,B\leq C$ and therefore $A\vee B\leq C$. Further 
$$p^{\sharp}(C) = p^{\sharp}(A)\vee p^{\sharp}(B) = p^{\sharp}(A\vee B).$$
Thus, 
$$A\vee B = C\cdot p^{\sharp}(A\vee B) = C\cdot p^{\sharp}(C) = C.$$ 

Now suppose $s\in S$. Then $p^{\sharp}(C\cdot s) = p^{\sharp}(A\cdot s)\vee p^{\sharp}(B\cdot s)$ and $A\cdot s, B\cdot s\leq C\cdot s$. So $A\cdot s$, $B\cdot s$ are bounded above. Let $D = \left\{z_{1},\ldots,z_{k}\right\}^{\downarrow}\geq A\cdot s, B\cdot s$ with $p^{\sharp}(D) = p^{\sharp}(C\cdot s)$. Then for each $x_{i}$, $x_{i}\cdot s\leq z_{j}$ for some $z_{j}$ and similarly for the $y_{i}$'s. Suppose $z_{i}\geq x_{i_{1}}\cdot s, \ldots x_{i_{r_{1}}}\cdot s, y_{j_{1}}\cdot s, \ldots y_{j_{r_{2}}}\cdot s$ and 
$$p(z_{i}) = \vee_{k=1}^{r_{1}}{p(x_{i_{k}}\cdot s)}\bigvee \vee_{k=1}^{r_{2}}{p(y_{j_{k}}\cdot s)}.$$
Then, since $X$ is a premodule, Lemma \ref{usefuljoinlem} tells us that
$$z_{i} = \vee_{k=1}^{r_{1}}{(x_{i_{k}}\cdot s)}\bigvee \vee_{k=1}^{r_{2}}{(y_{j_{k}}\cdot s)}$$
and so $x_{i_{1}}\cdot s, \ldots x_{i_{r_{1}}}\cdot s, y_{j_{1}}\cdot s, \ldots y_{j_{r_{2}}}\cdot s$ are strongly orthogonal. 
Thus $D \equiv C\cdot s$. We therefore see that $A\cdot s,B\cdot s$ are strongly orthogonal and 
$$(A\vee B)\cdot s = (A\cdot s) \vee (B\cdot s).$$

\item[{\rm (PRM2)}] Let $A = \left\{x_{1},\ldots,x_{m}\right\}^{\downarrow}\in X^{\sharp}$ be arbitrary, let $s,t\in S$ be orthogonal and let $u =s\vee t$. Then
$$p^{\sharp}(A\cdot s)p^{\sharp}(A\cdot t) = s^{-1}p^{\sharp}(A)st^{-1}p^{\sharp}(A)t = 0$$
and so $A\cdot s\perp A\cdot t$. Now
$$(A\cdot u)\cdot p^{\sharp}(A\cdot s) = A\cdot (us^{-1}p^{\sharp}(A)s) = A\cdot (p^{\sharp}(A)s) = A\cdot s$$
giving $A\cdot s \leq A\cdot u$. In a similar manner we obtain $A\cdot t\leq A\cdot u$. Further,
\begin{eqnarray*}
p^{\sharp}(A\cdot u) &=& u^{-1}p^{\sharp}(A)u = (s^{-1}\vee t^{-1})p^{\sharp}(A)(s\vee t) \\
&=& s^{-1}p^{\sharp}(A)s \vee t^{-1}p^{\sharp}(A)t = p^{\sharp}(A\cdot s)\vee p^{\sharp}(A\cdot t).
\end{eqnarray*}
Let $B = \left\{y_{1},\ldots,y_{n}\right\}^{\downarrow}\in X^{\sharp}$ be such that $B\geq A\cdot s, A\cdot t$ and $p^{\sharp}(B) = p^{\sharp}(A\cdot u)$. Then for each $x_{i}$ we have $x_{i}\cdot s\leq y_{j}$ for some $j$ and $x_{i}\cdot t\leq y_{k}$ for some $k$. Suppose 
$$y_{k}\geq x_{i_{1}}\cdot s,\ldots,x_{i_{r_{1}}}\cdot s,x_{j_{1}}\cdot t,\ldots,x_{j_{r_{2}}}\cdot t$$
and
$$p(y_{k}) = \vee_{l=1}^{r_{1}}p(x_{i_{l}}\cdot s)\bigvee \vee_{l=1}^{r_{2}}p(x_{j_{l}}\cdot t).$$
Then as above we have
$$y_{k} = \vee_{l=1}^{r_{1}}(x_{i_{l}}\cdot s)\bigvee \vee_{l=1}^{r_{2}}(x_{j_{l}}\cdot t)$$
and so $x_{i_{1}}\cdot s,\ldots,x_{i_{r_{1}}}\cdot s,x_{j_{1}}\cdot t,\ldots,x_{j_{r_{2}}}\cdot t$ are strongly orthogonal. Thus $B\equiv A\cdot u$ and so 
$$A\cdot u = (A\cdot s)\vee (A\cdot t),$$
yielding that $A\cdot s$ and $A\cdot t$ are strongly orthogonal.
\end{description}

Thus $X^{\sharp}$ is a premodule. Let us now show $X^{\sharp}$ is a module. Suppose $A = \left\{x_{1},\ldots,x_{m}\right\}^{\downarrow},B = \left\{y_{1},\ldots,y_{n}\right\}^{\downarrow}\in X^{\sharp}$ are such that $p^{\sharp}(A)p^{\sharp}(B) = 0$. Then $p(x_{i})p(y_{j}) = 0$ for all $i,j$. Let

$$C = \left\{x_{1},\ldots,x_{m},y_{1},\ldots,y_{n}\right\}^{\downarrow}.$$

Then $A = C\cdot p^{\sharp}(A)$ and $B = C\cdot p^{\sharp}(B)$. So $A,B\leq C$. Further $p^{\sharp}(C) = p^{\sharp}(A)\vee p^{\sharp}(B)$. Lemma \ref{usefuljoinlem} then tells us that $C = A\vee B$. We have $(A\vee B)\cdot s = A\cdot s \vee B\cdot s$. Thus $(X^{\sharp},p^{\sharp})$ is a module.
\end{proof}

We can think of $\equiv$ in a slightly different way. If $A = \left\{x_{1},\ldots,x_{m}\right\}^{\downarrow},B = \left\{y_{1},\ldots,y_{n}\right\}^{\downarrow}$ are finitely
generated orthogonal order ideals then $A\equiv B$ if and only if for each $x_{i}$ there exist $b_{i1},\ldots,b_{ik_{i}}\in B$ strongly orthogonal with $x_{i} = \vee_{j=1}^{k_{i}} b_{ij}$ and for each $y_{i}$ there exist $a_{i1},\ldots,a_{ik_{i}}\in A$ strongly orthogonal with $y_{i} = \vee_{j=1}^{k_{i}} a_{ij}$.

\begin{lemma}
If $X$ is a module then $X$ is isomorphic to $X^{\sharp}$.
\end{lemma}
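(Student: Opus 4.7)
The plan is to exhibit an explicit isomorphism. I will define $\iota:X\rightarrow X^{\sharp}$ on an element $x\in X$ by $\iota(x)=\{x\}^{\downarrow}$, which is trivially a finitely generated orthogonal order ideal, and define a candidate inverse $\phi:X^{\sharp}\rightarrow X$ by sending (the $\equiv$-class of) $\{x_{1},\ldots,x_{m}\}^{\downarrow}$ to the orthogonal join $x_{1}\vee\cdots\vee x_{m}$, which exists precisely because $X$ is a module (so axiom (M1) gives orthogonal joins). The crucial observation that makes everything work is that in a module, orthogonality already implies strong orthogonality: if $x\perp y$ then by (M1), $x\vee y$ exists and $p(x\vee y)=p(x)\vee p(y)$.

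First I would verify that $\phi$ is well defined. There are two layers of ambiguity. An element of $\overline{X}$ may admit several pairwise orthogonal generating sets $\{x_{i}\}$ and $\{z_{j}\}$, and I would check that $\bigvee x_{i}=\bigvee z_{j}$ by noting that each $x_{i}$ is below some $z_{j}$ and vice versa, so both joins agree by antisymmetry. Then I would check that the assignment descends to $X^{\sharp}$: the generating relation for $\equiv$ replaces the pair $x_{1},x_{2}$ (strongly orthogonal) by $x_{1}\vee x_{2}$, and the two sides have the same join in $X$ by associativity of $\vee$.

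Next I would check that $\iota$ and $\phi$ are module morphisms, the only nontrivial clause being (MM) for $\iota$. Given $x\perp y$ in $X$, on the one hand $\iota(x\vee y)=\{x\vee y\}^{\downarrow}$; on the other hand $p^{\sharp}(\iota(x))p^{\sharp}(\iota(y))=p(x)p(y)=0$, so $\iota(x)$ and $\iota(y)$ are orthogonal in $X^{\sharp}$, and the proof that $X^{\sharp}$ is a module shows that their join is $\{x,y\}^{\downarrow}$; since $x,y$ are strongly orthogonal in $X$, the defining relation of $\equiv$ gives $\{x,y\}^{\downarrow}\equiv\{x\vee y\}^{\downarrow}$, so the two sides coincide in $X^{\sharp}$. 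The corresponding check for $\phi$ is immediate from the recipe $\phi(A\vee B)=\bigvee(\text{generators of }A\cup B)=\phi(A)\vee\phi(B)$.

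Finally I would verify mutual inverseness. The composite $\phi\circ\iota$ sends $x\mapsto\{x\}^{\downarrow}\mapsto x$, so equals the identity. For $\iota\circ\phi$, given $A=\{x_{1},\ldots,x_{m}\}^{\downarrow}\in X^{\sharp}$ we obtain $\iota(\phi(A))=\{x_{1}\vee\cdots\vee x_{m}\}^{\downarrow}$, and iterating the defining relation of $\equiv$ gives
\[
\{x_{1},x_{2},\ldots,x_{m}\}^{\downarrow}\equiv\{x_{1}\vee x_{2},x_{3},\ldots,x_{m}\}^{\downarrow}\equiv\cdots\equiv\{x_{1}\vee\cdots\vee x_{m}\}^{\downarrow},
\]
so $\iota\phi(A)=A$ in $X^{\sharp}$. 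The main (mild) obstacle is bookkeeping in the well-definedness of $\phi$ and in clause (MM) for $\iota$; the conceptual content is simply that being a module supplies the joins that $\equiv$ is designed to collapse.
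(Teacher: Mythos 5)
Your proof is correct and follows essentially the same route as the paper: the paper also sends $x\mapsto x^{\downarrow}$ and observes that, because orthogonal elements of a module are automatically strongly orthogonal, every class in $X^{\sharp}$ collapses to a principal one and distinct principal ideals stay distinct. Your explicit inverse $\phi$ (the join-of-generators map) is just a more detailed way of packaging the paper's surjectivity and injectivity claims, and in fact supplies the cleanest justification for the paper's assertion that $x^{\downarrow}\not\equiv y^{\downarrow}$ when $x\neq y$.
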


\begin{proof}
Firstly, every element of $X^{\sharp}$ is of the form $x^{\downarrow}$ for some $x\in X$ since all orthogonal joins satisfy the required properties. On the other hand $x^{\downarrow}$ is not equivalent to $y^{\downarrow}$ for $x\neq y$. 
Define $g:X\rightarrow X^{\sharp}$ by $g(x) = x^{\downarrow}$. It is now easy to see this is a bijective module morphism.
\end{proof}

Let $\alpha: X\rightarrow Y$ be a premodule morphism and define $\alpha^{\sharp}:X^{\sharp}\rightarrow Y^{\sharp}$ as follows. Let $A = \left\{x_{1},\ldots,x_{m}\right\}^{\downarrow}\in X^{\sharp}$. Define
$$\alpha^{\sharp}(A) = \left\{\alpha(x_{1}),\ldots,\alpha(x_{m})\right\}^{\downarrow}.$$

\begin{lemma}
\label{alphasharp}
With the above definition $\alpha^{\sharp}$ is a module morphism and if $\alpha$ is surjective then $\alpha^{\sharp}$ is surjective.
\end{lemma}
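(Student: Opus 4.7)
The plan is to verify in turn (i) well-definedness of $\alpha^{\sharp}$, (ii) the four module-morphism axioms, and (iii) surjectivity, relying throughout on the fact that $\alpha$ is a premodule morphism (in particular, a pointed \'etale morphism that preserves strongly orthogonal joins) and on the characterisation of $\equiv$ noted in the paragraph preceding the lemma statement.

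First I would check that $\alpha^{\sharp}(A) \in \overline{Y}$ whenever $A = \{x_{1},\ldots,x_{m}\}^{\downarrow} \in \overline{X}$. Since $A$ is orthogonal the generators satisfy $p(x_{i})p(x_{j}) = 0$ for $i \neq j$; applying (EM2) for $\alpha$ gives $q(\alpha(x_{i}))q(\alpha(x_{j})) = p(x_{i})p(x_{j}) = 0$, so the $\alpha(x_{i})$'s are pairwise orthogonal and hence generate an orthogonal order ideal of $Y$. Next I would verify that $\alpha^{\sharp}$ respects $\equiv$. It suffices to look at one of the generating relations: if $x_{1}, x_{2}$ are strongly orthogonal in $X$ then by (PRMM) we have $\alpha(x_{1}) \perp \alpha(x_{2})$ in $Y$ with $\alpha(x_{1} \vee x_{2}) = \alpha(x_{1}) \vee \alpha(x_{2})$ and, in fact, strongly orthogonal. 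Hence
\[
\{\alpha(x_{1}),\alpha(x_{2}),\alpha(x_{3}),\ldots,\alpha(x_{n})\}^{\downarrow} \equiv \{\alpha(x_{1})\vee\alpha(x_{2}),\alpha(x_{3}),\ldots,\alpha(x_{n})\}^{\downarrow},
\]
and this passes through the equivalence closure so that $\alpha^{\sharp}$ descends to a well-defined map on $X^{\sharp}$.

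For the morphism axioms I would check each in turn, writing $A = \{x_{1},\ldots,x_{m}\}^{\downarrow}$ throughout. Axiom (EM1) follows from $\alpha(x_{i}\cdot s) = \alpha(x_{i})\cdot s$, giving $\alpha^{\sharp}(A\cdot s) = \{\alpha(x_{1})\cdot s,\ldots,\alpha(x_{m})\cdot s\}^{\downarrow} = \alpha^{\sharp}(A)\cdot s$. Axiom (EM2) is immediate: $q^{\sharp}(\alpha^{\sharp}(A)) = \bigvee q(\alpha(x_{i})) = \bigvee p(x_{i}) = p^{\sharp}(A)$. Axiom (PM) reduces to the fact that $\alpha$ itself is pointed: $\alpha^{\sharp}(\{0\}^{\downarrow}) = \{0\}^{\downarrow}$, and if $\alpha^{\sharp}(A) = \{0\}^{\downarrow}$ then $p^{\sharp}(A) = q^{\sharp}(\alpha^{\sharp}(A)) = 0$, forcing $A = \{0\}^{\downarrow}$. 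For (MM), suppose $A \perp B$ in $X^{\sharp}$. By the argument used in the proof that $X^{\sharp}$ is a module, $A \vee B$ is simply the ideal generated by the combined generators of $A$ and $B$; applying $\alpha^{\sharp}$ to this union produces exactly the ideal generated by the combined generators of $\alpha^{\sharp}(A)$ and $\alpha^{\sharp}(B)$, which is $\alpha^{\sharp}(A) \vee \alpha^{\sharp}(B)$ by the same module construction inside $Y^{\sharp}$.

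Finally, for surjectivity, suppose $\alpha$ is surjective and let $B = \{y_{1},\ldots,y_{n}\}^{\downarrow} \in Y^{\sharp}$ with the $y_{i}$'s pairwise orthogonal. For each $i$ choose $x_{i} \in X$ with $\alpha(x_{i}) = y_{i}$. The key observation is that (EM2) gives $p(x_{i}) = q(y_{i})$, so that $p(x_{i})p(x_{j}) = q(y_{i})q(y_{j}) = 0$ for $i \neq j$; hence $A := \{x_{1},\ldots,x_{n}\}^{\downarrow}$ is a finitely generated orthogonal order ideal of $X$ and $\alpha^{\sharp}(A) = B$. The only mildly subtle point throughout is this last step, where one has to observe that orthogonality of the chosen preimages comes for free from the pointed \'etale axiom (EM2); otherwise nothing beyond routine bookkeeping is required.
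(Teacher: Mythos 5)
Your proof is correct and follows essentially the same route as the paper: well-definedness via (EM2) and (PRMM) applied to the generating relation of $\equiv$, a direct check of (EM1), (EM2), (PM) and (MM), and surjectivity by lifting the generators of an orthogonal order ideal. The only difference is that you spell out the surjectivity step (orthogonality of the chosen preimages via (EM2)), which the paper leaves as immediate.
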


\begin{proof}
Let $\alpha:(X,p)\rightarrow (Y,q)$.
Firstly $\alpha^{\sharp}$ is well-defined since if $A = \left\{x_{1},\ldots,x_{m}\right\}^{\downarrow}$ with the $x_{i}$'s pairwise orthogonal, then $q(\alpha(x_{i}))q(\alpha(x_{j})) = p(x_{i})p(x_{j}) = 0$ for $i\neq j$, so $\alpha^{\sharp}(A)\in Y^{\sharp}$ and if $x_{1},x_{2}$ are strongly orthogonal then
$$\alpha^{\sharp}(\left\{x_{1},x_{2}\right\}^{\downarrow}) = \left\{\alpha(x_{1}),\alpha(x_{2})\right\}^{\downarrow} = \left\{\alpha(x_{1})\vee \alpha(x_{2})\right\}^{\downarrow} $$
$$= \left\{\alpha(x_{1}\vee x_{2})\right\}^{\downarrow} = \alpha^{\sharp}(\left\{x_{1}\vee x_{2}\right\}^{\downarrow}).$$

Let $A = \left\{x_{1},\ldots,x_{m}\right\}^{\downarrow}$ with the $x_{i}$'s pairwise orthogonal. We check the axioms for a module morphism:

\begin{description}
\item[{\rm (EM1)}] $\alpha^{\sharp}(A\cdot s) = \alpha^{\sharp}(A)\cdot s$.
\item[{\rm (EM2)}] $q^{\sharp}(\alpha^{\sharp}(A)) = \vee_{i=1}^{m}{q(\alpha(x_{i}))} = \vee_{i=1}^{m}{p(x_{i})} = p^{\sharp}(A)$.
\item[{\rm (PM)}] $\alpha^{\sharp}(0_{X^{\sharp}}) = \left\{\alpha(0_{X})\right\}^{\downarrow} = \left\{0_{Y}\right\}^{\downarrow} = 0_{Y^{\sharp}}$.
\item[{\rm (MM)}] Suppose $A\perp B$ with $A = \left\{x_{1},\ldots,x_{m}\right\}^{\downarrow}$, $B = \left\{y_{1},\ldots,y_{n}\right\}^{\downarrow}$. Then
$$A\vee B = \left\{x_{1},\ldots,x_{m},y_{1},\ldots,y_{n}\right\}^{\downarrow}.$$
So 
\begin{eqnarray*}
\alpha^{\sharp}(A\vee B) &=& \left\{\alpha(x_{1}),\ldots,\alpha(x_{m}),\alpha(y_{1}),\ldots,\alpha(y_{n})\right\}^{\downarrow} \\
&=& \left\{\alpha(x_{1}),\ldots,\alpha(x_{m})\right\}^{\downarrow}\vee\left\{\alpha(y_{1}),\ldots,\alpha(y_{n})\right\}^{\downarrow} = \alpha^{\sharp}(A) \vee \alpha^{\sharp}(B).
\end{eqnarray*}
\end{description}
Thus $\alpha^{\sharp}$ is a module morphism. The second part of the lemma follows immediately.
\end{proof}

We have therefore defined a functor $R$ from $\Premod_{S}$ to $\Mod_{S}$ given by $R(X) = X^{\sharp}$ and $R(\alpha) = \alpha^{\sharp}$.

\begin{proposition}
\label{Radjointforget}
The functor $R$ is left adjoint to the forgetful functor. 
\end{proposition}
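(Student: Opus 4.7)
The plan is to verify the adjunction by producing a unit $\eta_X \colon X \to U(X^\sharp)$ and checking the universal property: for every premodule morphism $\alpha \colon X \to U(Y)$ into a module $Y$, there is a unique module morphism $\tilde\alpha \colon X^\sharp \to Y$ with $U(\tilde\alpha) \circ \eta_X = \alpha$. The obvious candidate for the unit is $\eta_X(x) = x^{\downarrow}$, and this was essentially already checked to be a premodule morphism in the proof of the previous lemma (it is a pointed \'{e}tale morphism, and strongly orthogonal pairs $x_1,x_2$ are sent to strongly orthogonal singletons $x_1^\downarrow,x_2^\downarrow$ whose join is $\{x_1,x_2\}^\downarrow = (x_1 \vee x_2)^\downarrow$ by the very definition of $\equiv$).

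Next I would define, for a premodule morphism $\alpha \colon X \to U(Y)$, the map
$$\tilde\alpha\bigl(\{x_1,\dots,x_m\}^{\downarrow}\bigr) \;=\; \alpha(x_1) \vee \cdots \vee \alpha(x_m),$$
where the generators on the left are pairwise orthogonal. This join exists in $Y$ because $Y$ is a module and (EM2) together with pairwise orthogonality of the $x_i$ gives $q(\alpha(x_i))\,q(\alpha(x_j)) = p(x_i)p(x_j) = 0$. Well-definedness on $\equiv$-classes is the main technical step: it suffices to observe that if $x_1,x_2$ are strongly orthogonal in $X$ then $\alpha(x_1),\alpha(x_2)$ are strongly orthogonal in $Y$ (since $\alpha$ is a premodule morphism), so $\alpha(x_1) \vee \alpha(x_2) = \alpha(x_1 \vee x_2)$, which is exactly the relation defining $\equiv$. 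The relation $U(\tilde\alpha) \circ \eta_X = \alpha$ is then immediate since $\tilde\alpha(x^\downarrow) = \alpha(x)$.

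The remaining verifications are routine but must be carried out in order: (i) $\tilde\alpha$ is a module morphism — axioms (EM1), (EM2), (PM) follow element-wise from the corresponding properties of $\alpha$, while (MM) follows from the description $\{x_1,\dots,x_m\}^\downarrow \vee \{y_1,\dots,y_n\}^\downarrow = \{x_1,\dots,x_m,y_1,\dots,y_n\}^\downarrow$ together with associativity and commutativity of $\vee$ in $Y$; (ii) uniqueness of $\tilde\alpha$ — any module morphism $\beta \colon X^\sharp \to Y$ extending $\alpha$ must satisfy $\beta(x^\downarrow) = \alpha(x)$ on singletons, and since an arbitrary element $\{x_1,\dots,x_m\}^\downarrow = x_1^\downarrow \vee \cdots \vee x_m^\downarrow$ is an orthogonal join of singletons in $X^\sharp$, the module morphism axiom (MM) forces $\beta = \tilde\alpha$; (iii) naturality of the bijection $\alpha \mapsto \tilde\alpha$ in both variables, which is a diagram chase using the formula for $\tilde\alpha$ on generators.

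The main obstacle I expect is keeping the well-definedness argument honest: one must verify that the map $\{x_1,\dots,x_m\}^\downarrow \mapsto \bigvee \alpha(x_i)$ is constant on $\equiv$-classes, and since $\equiv$ is defined as the smallest equivalence relation generated by a single type of move, it suffices to check invariance under that one move, which is precisely what (PRMM) for $\alpha$ supplies. Once that is pinned down, everything else is formal manipulation of orthogonal joins.
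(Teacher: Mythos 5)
Your proposal is correct and follows essentially the same route as the paper: the unit is $\iota(x)=x^{\downarrow}$, the induced map is defined by $\{x_1,\dots,x_m\}^{\downarrow}\mapsto\bigvee_i\alpha(x_i)$, well-definedness on $\equiv$-classes comes from (PRMM), and uniqueness follows by evaluating on singletons and using (MM) on orthogonal joins, exactly as in the paper's reflection argument.
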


\begin{proof}
First for a premodule $X$ we show that the map $\iota:X\rightarrow X^{\sharp}$ given by $\iota(x) = x^{\downarrow}$ is a premodule morphism:
\begin{description}
\item[{\rm (EM1)}] $\iota(x\cdot s) = (x\cdot s)^{\downarrow} = x^{\downarrow}\cdot s = \iota(x)\cdot s$.
\item[{\rm (EM2)}] $p^{\sharp}(\iota(x)) = p^{\sharp}(x^{\downarrow}) = p(x)$.
\item[{\rm (PM)}] $\iota(0_{X}) = O_{X}^{\downarrow} = 0_{X^{\sharp}}$.
\item[{\rm (PRMM)}] This is clear.
\end{description}

Now suppose $X$ is a premodule and let $\theta:X\rightarrow Y$ be a premodule morphism to the module $(Y,q)$. Define $\psi:X^{\sharp}\rightarrow Y$ by 
$$\psi(\left\{x_{1},\ldots,x_{m}\right\}^{\downarrow}) = \bigvee_{i=1}^{m}{\theta(x_{i})}.$$

Firstly, this is well-defined since the $\theta(x_{i})$'s are pairwise orthogonal. Let us now prove that $\psi$ is a module morphism. It is an \'{e}tale morphism since
\begin{eqnarray*}
\psi(\left\{x_{1},\ldots,x_{m}\right\}^{\downarrow}\cdot s) &=& \psi(\left\{x_{1}\cdot s,\ldots,x_{m}\cdot s\right\}^{\downarrow}) = \vee_{i=1}^{m}{\theta(x_{i}\cdot s)}\\
&=& (\vee_{i=1}^{m}{\theta(x_{i})})\cdot s = \psi(\left\{x_{1},\ldots,x_{m}\right\}^{\downarrow})\cdot s
\end{eqnarray*}
and
\begin{eqnarray*}
q(\psi(\left\{x_{1},\ldots,x_{m}\right\}^{\downarrow})) &=& q(\vee_{i=1}^{m}{\theta(x_{i})}) = \vee_{i=1}^{m}{q(\theta(x_{i}))} \\
&=& \vee_{i=1}^{m}{p(x_{i})} = p^{\sharp}(\left\{x_{1},\ldots,x_{m}\right\}^{\downarrow}).
\end{eqnarray*}
It is pointed since $\psi(0_{X^{\sharp}}) = \theta(0_{X}) = 0_{Y}$. Finally let $A = \left\{x_{1},\ldots,x_{m}\right\}^{\downarrow}$, $B = \left\{y_{1},\ldots,y_{n}\right\}^{\downarrow}$ be orthogonal. Then
$$\psi(A\vee B) = (\vee_{i=1}^{m}{\theta(x_{i})})\bigvee (\vee_{j=1}^{n}{\theta(y_{j})}) = \psi(A)\vee \psi(B).$$

We claim that $(X^{\sharp},\iota)$ is a reflection of $X$ along the forgetful functor $F:\Mod_{S}\rightarrow \Premod_{S}$, and that $\psi$ will be the unique map such that $\psi\iota = \theta$ for $\theta:X\rightarrow Y$ a premodule morphism to a module.

Let $x\in X$. Then $\psi(\iota(x)) = \psi(x^{\downarrow}) = \theta(x)$ and so $\psi\iota = \theta$.

Let $X$ be a premodule, $Y$ a module and let $\theta:X\rightarrow Y$ be a premodule morphism. Suppose that $\pi:X^{\sharp}\rightarrow Y$ is a module morphism with $\pi\iota = \theta$. We claim that $\pi = \psi$.

Let $x\in X$. Then $\pi(x^{\downarrow}) = \pi\iota(x) = \theta(x)$. Now suppose $x,y\in X$ with $x\perp y$. Then $x^{\downarrow}\perp y^{\downarrow}$ so that $x^{\downarrow}\vee y^{\downarrow} = \left\{x,y\right\}^{\downarrow}\in X^{\sharp}$. Thus
$$\pi(\left\{x,y\right\}^{\downarrow}) = \pi(x^{\downarrow}) \vee \pi(y^{\downarrow}) = \theta(x)\vee \theta(y) = \psi(\left\{x,y\right\}^{\downarrow}).$$
It therefore follows by induction that $\pi = \psi$. Thus $(X^{\sharp},\iota)$ is a reflection of $X$ along the forgetful functor.
% $F:\Mod_{S}\rightarrow \Premod_{S}$. Let $R:\Premod_{S}\rightarrow \Mod_{S}$ be given by $R(X) = X^{\sharp}$, $R(\alpha) = \alpha^{\sharp}$. 
Define a natural transformation 
$$\eta:1_{\Premod_{S}} \rightarrow F\circ R$$
by $\eta_{X}(x) = x^{\downarrow}$ for $X$ a premodule and $x\in X$.
This is a natural transformation since if $\theta:X\rightarrow Y$ is a premodule morphism, then 
$$(F\circ R)(\theta)(\eta_{X}(x)) = (F\circ R)(\theta)(x^{\downarrow}) = \left\{\theta(x)\right\}^{\downarrow} = \eta_{Y}(1_{\Premod_{S}}(\theta)(x)).$$
\end{proof}

Let $(X,p)$ be a premodule and let 
$$xS = \left\{x\cdot s| s\in S\right\}.$$
Then $(xS,p)$ naturally inherits the structure of a pointed \'{e}tale set. In fact:

\begin{lemma}
Let $(X,p)$ be a premodule. Then $(xS,p)$ is a premodule. 
\end{lemma}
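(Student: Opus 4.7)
The plan is to show that strong orthogonality within $xS$ is actually \emph{equivalent} to the simpler condition $p(u)p(v)=0$, and that in this case the join computed in $xS$ coincides with the join in $X$. Once this is in place, both premodule axioms for $xS$ follow mechanically from the corresponding axioms in $X$.

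First I would establish the following key observation. Suppose $u = x\cdot s$ and $v = x\cdot t$ lie in $xS$ with $p(u)p(v)=0$. Set $s' = sp(u)$ and $t' = tp(v)$; then $s'(t')^{-1} = sp(u)p(v)t^{-1} = 0$ and similarly $(s')^{-1}t' = 0$, so $s' \perp t'$ in $S$. Note that $u = u\cdot p(u) = x\cdot s'$ and $v = x\cdot t'$. By (PRM2) applied in $X$ to the orthogonal pair $s',t'$, the elements $u = x\cdot s'$ and $v = x\cdot t'$ are strongly orthogonal in $X$. Moreover, since $x\cdot(s'\vee t')$ dominates both $u$ and $v$ and has $p$-value $p(u)\vee p(v) = p(u\vee v)$, it equals $u\vee v$; hence $u\vee v = x\cdot(s'\vee t') \in xS$. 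Conversely, if $u,v\in xS$ admit a join $w\in xS$ with $p(w)=p(u)\vee p(v)$, then $w$ agrees with the join of $u,v$ in $X$ by the usual argument ($w' := u\vee_X v$ satisfies $w'\le w$ and $p(w')=p(w)$, so $w' = w\cdot p(w')=w$). Therefore strong orthogonality in $xS$ is exactly the condition $p(u)p(v)=0$, and joins of such pairs are computed inside $X$.

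Next I would verify (PRM1) in $xS$. Given $u,v\in xS$ strongly orthogonal in $xS$ and any $r\in S$, (PRM1) in $X$ gives that $u\cdot r$ and $v\cdot r$ are strongly orthogonal in $X$ with $(u\vee v)\cdot r = u\cdot r \vee v\cdot r$. Since $p(u\cdot r)p(v\cdot r) = r^{-1}p(u)p(v)r = 0$ and $u\cdot r, v\cdot r\in xS$, the previous paragraph shows $u\cdot r$ and $v\cdot r$ are strongly orthogonal in $xS$ as well, with $(u\vee v)\cdot r = u\cdot r\vee v\cdot r$ inside $xS$. For (PRM2), let $u\in xS$ and suppose $s,t\in S$ are orthogonal. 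Then $u\cdot s, u\cdot t\in xS$ and $p(u\cdot s)p(u\cdot t) = s^{-1}p(u)st^{-1}p(u)t = 0$ since $st^{-1}=0$, so the observation above yields strong orthogonality of $u\cdot s$ and $u\cdot t$ in $xS$.

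I do not expect any serious obstacle; the only subtle point is confirming that joins computed in the subset $xS$ agree with those in $X$, which is handled cleanly by combining (PRM2) of $X$ with the $p$-value comparison above.
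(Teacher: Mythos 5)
There is a genuine gap, and it lies in your key observation. The computation $(s')^{-1}t'=0$ is not ``similar'' to the first one: with $s'=sp(u)$ and $t'=tp(v)$ you do get $s'(t')^{-1}=sp(u)p(v)t^{-1}=0$, but $(s')^{-1}t'=p(u)s^{-1}tp(v)=s^{-1}p(x)t$, and this need not vanish. Writing $a=p(x)s$ and $b=p(x)t$, the hypothesis $p(u)p(v)=0$ says exactly $a^{-1}ab^{-1}b=0$, i.e.\ $ab^{-1}=0$, whereas $(s')^{-1}t'=a^{-1}b=0$ is the other condition $aa^{-1}bb^{-1}=0$, which does not follow. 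Concretely, take $S=I_{2}$, let $X=S$ be the right ideal premodule with $p(s)=s^{-1}s$, let $x=e$ be the identity on $\{1\}$, $s=e$ and $t$ the partial bijection $2\mapsto 1$. Then $u=x\cdot s=e$ and $v=x\cdot t=t$ lie in $xS$ with $p(u)p(v)=0$, yet $u$ and $v$ have no common upper bound at all (an upper bound would have to send both $1$ and $2$ to $1$), so they are not strongly orthogonal in $xS$ or in $X$. Hence strong orthogonality in $xS$ is strictly stronger than $p(u)p(v)=0$; indeed, if your equivalence held, $xS$ would already satisfy (M1) and be a module, which is false in general and is precisely why the paper later needs the $(\;)^{\sharp}$ completion of $xS$.

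One half of your observation is correct and useful: if $u,v\in xS$ are strongly orthogonal in $xS$, then by Lemma \ref{usefuljoinlem} their join in $xS$ is also their join in $X$, so they are strongly orthogonal in $X$; together with the fact that $u\vee v\in xS$, this lets (PRM1) go through (the join $(u\vee v)\cdot r$ lies in $xS$, and Lemma \ref{usefuljoinlem} identifies it as the join of $u\cdot r$ and $v\cdot r$ there). But your verification of (PRM2) rests entirely on the false converse and does not survive. The repair is the argument the paper uses: for orthogonal $s,t\in S$ and $y\in xS$, note that $s^{-1}t=0$ forces $s^{-1}p(y)t=0$, so $p(y\cdot(s\vee t))=p(y\cdot s)\vee p(y\cdot t)$, and $y\cdot(s\vee t)\geq y\cdot s,\,y\cdot t$; Lemma \ref{usefuljoinlem} then shows $y\cdot(s\vee t)$ is the join, and since it lies in $xS$ one gets strong orthogonality in $xS$. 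The analogous use of Lemma \ref{usefuljoinlem}, applied to $x\cdot(wr)$ where $u\vee v=x\cdot w$, is how the paper handles (PRM1).
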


\begin{proof}
Suppose that $x\cdot s, x\cdot t$ are strongly orthogonal in $xS$ with $xs\vee xt = xu$ for some $u\in S$ (note that $x\cdot s$ and $x\cdot t$ might be strongly orthogonal in $X$ without being strongly orthogonal in $xS$). Let $v\in S$. Then 
$$p(xsv)p(xtv) = v^{-1}p(xs)vv^{-1}p(xt)v = 0$$
and 
\begin{eqnarray*}
p(xuv) &=& v^{-1}p(xu)v = v^{-1}(p(xs)\vee p(xt))v \\
&=& (v^{-1}p(xs)v)\vee (v^{-1}p(xt)v) = p(xsv)\vee p(xtv).
\end{eqnarray*}
Further, $xuv\geq xsv, xtv$. Thus by Lemma \ref{usefuljoinlem}, and the fact that $X$ is a premodule, $xsv\vee xtv = xuv$ in $X$ and thus also in $xS$. Now suppose $s,t\in S$ are orthogonal. Then $x\cdot s, x\cdot t$ are strongly orthogonal in $X$ with $xs\vee xt = x(s\vee t)\in xS$. Thus $xS$ is a premodule.
\end{proof}

We will therefore call $xS$ the \emph{cyclic premodule generated by the element} $x\in X$, where $X$ is a premodule.

\begin{lemma}
\label{cyclicplem}
Let $(xS,p)$ be a cyclic premodule. Then the map $\theta:p(x)S \rightarrow xS$ given by $\theta(s) = xs$ is a surjective premodule morphism.
\end{lemma}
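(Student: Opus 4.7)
The plan is to observe first that, by Proposition \ref{rightidealpremod}, the right ideal $p(x)S$ is itself a premodule with structure map $q(s) = s^{-1}s$, so the statement is really a comparison of two premodule structures. The map $\theta$ is well-defined since $x\cdot s \in xS$ by construction, and surjectivity is immediate: any element $y\in xS$ has the form $y = x\cdot t$ for some $t\in S$, but then $y = x\cdot (p(x)t)$ and $p(x)t \in p(x)S$, so $y = \theta(p(x)t)$. The key preliminary observation is that every $s\in p(x)S$ satisfies $s = p(x)s$ (since $s = p(x)s'$ for some $s'$ gives $p(x)s = p(x)^{2}s' = p(x)s' = s$); this will be used to identify $q$ with $p\circ\theta$.

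I would then verify the \'etale axioms (EM1) and (EM2). (EM1) is the associativity $\theta(st) = x\cdot(st) = (x\cdot s)\cdot t = \theta(s)\cdot t$. For (EM2), using the preliminary observation, $p(\theta(s)) = p(x\cdot s) = s^{-1}p(x)s = s^{-1}p(x)p(x)s = (p(x)s)^{-1}(p(x)s) = s^{-1}s = q(s)$. The pointed axiom (PM) is immediate because $\theta(0) = x\cdot 0 = 0_{X}$, which lies in $xS$.

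The substantive step is (PRMM). Suppose $s_{1}, s_{2}\in p(x)S$ are strongly orthogonal in $p(x)S$. Then $q(s_{1})q(s_{2}) = s_{1}^{-1}s_{1}s_{2}^{-1}s_{2} = 0$; pre- and post-multiplying by $s_{1}$ and $s_{2}^{-1}$ gives $s_{1}s_{2}^{-1} = 0$, and similarly $s_{1}^{-1}s_{2} = 0$, so $s_{1}$ and $s_{2}$ are orthogonal in $S$. Now apply axiom (PRM2) to the premodule $X$ with the element $x$ and the orthogonal pair $s_{1}, s_{2}$: this yields that $x\cdot s_{1}$ and $x\cdot s_{2}$ are strongly orthogonal in $X$, and moreover, as noted in the construction of the cyclic premodule, $(x\cdot s_{1})\vee (x\cdot s_{2}) = x\cdot (s_{1}\vee s_{2}) \in xS$, with the $p$-values joining correctly. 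Hence $\theta(s_{1})$ and $\theta(s_{2})$ are strongly orthogonal in $xS$, and
\[
\theta(s_{1}\vee s_{2}) = x\cdot(s_{1}\vee s_{2}) = (x\cdot s_{1})\vee (x\cdot s_{2}) = \theta(s_{1})\vee \theta(s_{2}),
\]
which completes the verification.

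The only place where care is genuinely required is this last step, because strong orthogonality lives inside three different premodules ($p(x)S$, $xS$, and $X$), and one has to move between them: one uses that strong orthogonality in a principal right ideal $p(x)S$ collapses to ordinary orthogonality in $S$ (an observation already used inside Proposition \ref{rightidealpremod}), then invokes (PRM2) of the ambient premodule $X$ to produce the desired join, and finally checks that the resulting join actually lies in the subpremodule $xS$ — which it does automatically since $x\cdot(s_{1}\vee s_{2})\in xS$. Beyond this bookkeeping, no further difficulty arises.
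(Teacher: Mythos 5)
Your proof follows essentially the same route as the paper's: verify (EM1), (EM2), (PM), obtain surjectivity from $\theta(p(x)s)=x\cdot s$, and reduce (PRMM) to the fact that strongly orthogonal elements of $p(x)S$ are orthogonal in $S$, after which the premodule structure of $xS$ (i.e.\ the statement that orthogonal $s,t\in S$ give strongly orthogonal $x\cdot s, x\cdot t$ with $x\cdot(s\vee t)=xs\vee xt$) finishes the argument.

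The one step that does not work as written is your inline justification of that reduction. From $q(s_1)q(s_2)=s_1^{-1}s_1s_2^{-1}s_2=0$ you correctly deduce $s_1s_2^{-1}=0$, but the claim ``similarly $s_1^{-1}s_2=0$'' is not symmetric: orthogonality of the domain idempotents $s_1^{-1}s_1$ and $s_2^{-1}s_2$ says nothing about the range idempotents $s_1s_1^{-1}$ and $s_2s_2^{-1}$, and in a symmetric inverse monoid two partial bijections with disjoint domains but overlapping images satisfy $s_1^{-1}s_1s_2^{-1}s_2=0$ while $s_1^{-1}s_2\neq 0$ (such a pair can even lie in $p(x)S$). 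What rescues the step is the remaining part of the hypothesis of strong orthogonality, namely the existence of the join: writing $u=s_1\vee s_2$ in $p(x)S$ (where the \'{e}tale order coincides with the natural partial order), one has $s_i=u\,q(s_i)$, and hence $s_1^{-1}s_2=q(s_1)u^{-1}u\,q(s_2)=u^{-1}u\,q(s_1)q(s_2)=0$. This is precisely the computation carried out inside the proof of Proposition \ref{rightidealpremod}, which you cite in your closing paragraph; invoking it (or reproducing the join computation) in place of the ``similarly'' closes the gap, and the rest of your verification is fine.
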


\begin{proof}
Let $q:p(x)S\rightarrow E(S)$ be given by $q(s) = s^{-1}s$. We prove first that $\theta$ is a pointed morphism:

\begin{description}
\item[{\rm (EM1)}] $\theta(s\cdot t) = \theta(st) = x\cdot (st) =  (x\cdot s)\cdot t = \theta(s)\cdot t$.
\item[{\rm (EM2)}] $p(\theta(s)) = p(x\cdot s) = s^{-1}p(x)s = s^{-1}s = q(s)$.
\item[{\rm (PM)}] $\theta(p(x)\cdot 0) = x\cdot 0 = 0$.
\end{description}

Next we prove surjectivity. Let $x\cdot s\in xS$. Then $\theta(p(x)s) = x\cdot s$.

Let us now check that $\theta$ is a premodule morphism. Let $s = p(x)s$, $t = p(x)t$ be strongly orthogonal in $p(x)S$. Then $s,t$ are orthogonal in $S$ and so $x\cdot s$ and $x\cdot t$ are strongly orthogonal in $xS$ with $x\cdot(s\vee t) = xs\vee xt$ and so $\theta(s\vee t) = \theta(s)\vee \theta(t)$.
\end{proof}

\begin{lemma}
\label{sharpmod}
Let $(X,p)$ be a module and let $x\in X$. Define $f_{x}:(xS)^{\sharp}\rightarrow X$ by 
$$f_{x}(\left\{xs_{1},\ldots,xs_{m}\right\}^{\downarrow}) = \bigvee_{i=1}^{m}{xs_{i}}.$$
Then $f_{x}$ is a (well-defined) module morphism. 
\end{lemma}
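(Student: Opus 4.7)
The plan is to prove Lemma \ref{sharpmod} in two main stages: first establishing that $f_x$ is well-defined as a function from $(xS)^\sharp$ to $X$, and then verifying each of the axioms (EM1), (EM2), (PM) and (MM) in turn. The definition of $f_x$ depends on a choice of generating set for the order ideal and also on the particular $\equiv$-class representative, so both of these potential ambiguities need to be addressed before the axioms can even be stated.

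For well-definedness, I would proceed in two substeps. First, given a representative $\{xs_1,\ldots,xs_m\}^{\downarrow}$ of an element of $(xS)^\sharp$, the generators are pairwise orthogonal in $xS$, so $p(xs_i)p(xs_j) = 0$ for $i\neq j$. Since $X$ is a module, axiom (M1) applied iteratively guarantees that $\bigvee_{i=1}^m xs_i$ exists in $X$ and has $p$-value $\bigvee p(xs_i)$. Second, I need invariance under $\equiv$. Since $\equiv$ is generated by the elementary move replacing $\{x_1,x_2,x_3,\ldots\}^{\downarrow}$ with $\{x_1\vee x_2, x_3,\ldots\}^{\downarrow}$ when $x_1,x_2$ are strongly orthogonal in $xS$, it suffices to check that this elementary move preserves the value of $f_x$. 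This reduces to the observation that when $x_1,x_2 \in xS$ are strongly orthogonal in $xS$, their join $x_1 \vee x_2$ computed in $xS$ coincides with their join in $X$ (the latter existing by (M1) applied to the ambient module), after which the identity $x_1 \vee x_2 \vee x_3 \vee \cdots = (x_1 \vee x_2) \vee x_3 \vee \cdots$ is just associativity of the orthogonal join in $X$.

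Once well-definedness is in hand, the axioms are largely routine. For (EM1), I compute
$$f_x(A\cdot s) = \bigvee_{i=1}^m (xs_i\cdot s) = \Bigl(\bigvee_{i=1}^m xs_i\Bigr)\cdot s = f_x(A)\cdot s,$$
where the middle step uses (M2) iterated over the pairwise-orthogonal join. For (EM2), $p(f_x(A)) = p(\bigvee xs_i) = \bigvee p(xs_i) = p^{\sharp}(A)$ using the $p$-value formula from (M1). The pointed axiom (PM) is immediate since $f_x(\{0\}^{\downarrow}) = 0$. For (MM), if $A$ and $B$ are orthogonal in $(xS)^\sharp$, represented by $\{xs_1,\ldots,xs_m\}^{\downarrow}$ and $\{xt_1,\ldots,xt_n\}^{\downarrow}$, then $A\vee B$ is represented by $\{xs_1,\ldots,xs_m,xt_1,\ldots,xt_n\}^{\downarrow}$ with the combined system still pairwise orthogonal, so $f_x(A\vee B) = \bigvee xs_i \vee \bigvee xt_j = f_x(A) \vee f_x(B)$ by associativity once more.

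The main obstacle I anticipate is the well-definedness with respect to $\equiv$, because there are two potentially distinct notions of strong orthogonality and of join at play — one internal to the premodule $xS$ and the other in the ambient module $X$. The crux is to verify cleanly that these agree for orthogonal pairs in $xS$: strong orthogonality in $xS$ requires not only $p(x_1)p(x_2)=0$ but also existence of the join within $xS$ together with the $p$-value formula, whereas (M1) freely provides the join in $X$ from orthogonality alone. Pinning down that the $xS$-join equals the $X$-join whenever the former exists (which follows since the partial orders coincide and both satisfy the same universal property for least upper bounds) is the one technical point; after that, the rest of the argument is a straightforward unwinding of definitions.
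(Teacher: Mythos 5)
Your proposal is correct and follows essentially the same route as the paper's proof: check that the orthogonal join exists in $X$ and is invariant under $\equiv$, then verify (EM1), (EM2), (PM) and (MM) by direct computation, the paper merely asserting the $\equiv$-invariance that you spell out. One small caveat on your crux point: the identification of the join formed inside $xS$ with the join in $X$ is not quite a consequence of "the same universal property" (a least upper bound in the subposet $xS$ need not a priori be one in $X$), but it does follow immediately from Lemma \ref{usefuljoinlem}, since strong orthogonality in $xS$ supplies exactly the hypotheses $p(x_1)p(x_2)=0$, $p(x_1\vee x_2)=p(x_1)\vee p(x_2)$ and $x_1\vee x_2\geq x_1,x_2$.
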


\begin{proof}
It is well-defined since $xs_{1},\ldots,xs_{m}$ are orthogonal and $X$ is a module, so the join exists, and if $A,B\in \overline{xS}$ with $A\equiv B$, then $f_{x}(A) = f_{x}(B)$. It is an \'{e}tale morphism since
$$f_{x}(\left\{xs_{1},\ldots,xs_{m}\right\}^{\downarrow}\cdot t) = \vee_{i=1}^{m}{xs_{i}t} = (\vee_{i=1}^{m}{xs_{i}})\cdot t = f_{x}(\left\{xs_{1},\ldots,xs_{m}\right\}^{\downarrow})\cdot t$$
and
$$p(f_{x}(\left\{xs_{1},\ldots,xs_{m}\right\}^{\downarrow})) = p(\vee_{i=1}^{m}{xs_{i}}) = \vee_{i=1}^{m}{p(xs_{i})} = p^{\sharp}(\left\{xs_{1},\ldots,xs_{m}\right\}^{\downarrow}).$$
It is obviously pointed. Let us now check that it is a module morphism. Let $A = \left\{xs_{1},\ldots,xs_{m}\right\}^{\downarrow}, B = \left\{xt_{1},\ldots,xt_{n}\right\}^{\downarrow}\in (xS)^{\sharp}$ be orthogonal. Then
$$f_{x}(A\vee B) = f_{x}(\left\{xs_{1},\ldots,xs_{m},xt_{1},\ldots,xt_{n}\right\}^{\downarrow}) = (\vee_{i=1}^{m}{xs_{i}})\bigvee (\vee_{i=1}^{n}{xt_{i}}) = f_{x}(A)\vee f_{x}(B).$$
\end{proof}

\begin{comment}
Now suppose that $(X,p)$ be a module such that $X = xS$ for some $x\in X$. We say that $X = xS$ is a \emph{cyclic module} generated $x$. 
\end{comment}

\begin{comment}
\begin{lemma}
Let $(X,p)$ be a module. Then the map $f:\overline{X}\rightarrow X$ given by
$$f(\left\{x_{1},\ldots,x_{m}\right\}^{\downarrow}) = \vee_{i=1}^{m}{x_{i}}$$
is a surjective module morphism.
\end{lemma}

\begin{proof}
(EM1): $f(\left\{x_{1},\ldots,x_{m}\right\}^{\downarrow}\cdot s) = f(\left\{x_{1}\cdot s,\ldots,x_{m}\cdot s\right\}^{\downarrow}) = \vee_{i=1}^{m}{x_{i}\cdot s} = (\vee_{i=1}^{m}{x_{i}})\cdot s = f(\left\{x_{1},\ldots,x_{m}\right\}^{\downarrow})\cdot s$.

(EM2): $p(f(\left\{x_{1},\ldots,x_{m}\right\}^{\downarrow})) = p(\vee_{i=1}^{m}{x_{i}\cdot s}) = \vee_{i=1}^{m}{p(x_{i})\cdot s} = \overline{p}(\left\{x_{1},\ldots,x_{m}\right\}^{\downarrow})$.

(PM): This is clear

(MM): Let $\left\{x_{1},\ldots,x_{m}\right\}^{\downarrow}\perp \left\{y_{1},\ldots,y_{n}\right\}^{\downarrow}$. Then
$$f(\left\{x_{1},\ldots,x_{m}\right\}^{\downarrow}\vee \left\{y_{1},\ldots,y_{n}\right\}^{\downarrow}) = (\vee_{i=1}^{m}{f(x_{i})\cdot s}) \vee (\vee_{j=1}^{n}{f(y_{j})\cdot s}) $$
$$= f(\left\{x_{1},\ldots,x_{m}\right\}^{\downarrow})\vee f(\left\{y_{1},\ldots,y_{n}\right\}^{\downarrow}).$$

Surjective: Let $x\in X$. Then $f(x^{\downarrow}) = x$.
\end{proof}
\end{comment}

Observe that $\Mod_{S}$ is a concrete category and so we will denote the underlying set of a module $X$ by $[X]$ if we want to view it as an object in $\Set$. It is clear that every injective module morphism will be monic. As for modules over rings, it turns out the converse is also true.

\begin{lemma}
In $\Mod_{S}$ every monomorphism is injective.
\end{lemma}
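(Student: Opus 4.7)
The plan is to use the standard trick for proving monomorphism implies injective in an algebraic category: given a monomorphism $\alpha\colon X\to Y$ and a putative pair of elements collapsed by $\alpha$, build two parallel module morphisms from some cyclic test module that agree after postcomposition with $\alpha$, and force them to be equal.

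So let $\alpha\colon (X,p)\to (Y,q)$ be a monic module morphism and suppose $\alpha(x_{1})=\alpha(x_{2})$. First, by (EM2), $p(x_{1})=q(\alpha(x_{1}))=q(\alpha(x_{2}))=p(x_{2})$; call this common idempotent $e$. The test module I would use is the principal right ideal $eS$ endowed with $p(s)=s^{-1}s$ and the multiplication action. By Proposition \ref{rightidealpremod} this is a premodule; I would then verify that it is actually a module in the sense of axioms (M1)--(M2), using that $S$ is orthogonally complete and that $e s\vee e t=e(s\vee t)\in eS$ whenever $s\perp t$.

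Next, for $i=1,2$, define $\beta_{i}\colon eS\to X$ by $\beta_{i}(s)=x_{i}\cdot s$. The verifications that $\beta_{i}$ is an \'etale pointed morphism are immediate from the action axioms (E1), (E2), together with $p(x_{i})=e$. The module-morphism condition (MM) requires that orthogonal joins in $eS$ be preserved; since $s\perp t$ in $eS$ precisely means $s$ and $t$ are orthogonal in $S$, this follows from the lemma immediately after (M2), which says $x_{i}\cdot(s\vee t)=x_{i}\cdot s\vee x_{i}\cdot t$ whenever $s,t\in S$ are orthogonal. Hence $\beta_{1},\beta_{2}$ are parallel morphisms in $\Mod_{S}$.

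Finally, for any $s\in eS$,
\[
(\alpha\beta_{1})(s)=\alpha(x_{1})\cdot s=\alpha(x_{2})\cdot s=(\alpha\beta_{2})(s),
\]
so $\alpha\beta_{1}=\alpha\beta_{2}$. Since $\alpha$ is monic this gives $\beta_{1}=\beta_{2}$, and evaluating at $e\in eS$ yields $x_{1}=\beta_{1}(e)=\beta_{2}(e)=x_{2}$. The only step that requires any real care is confirming that $eS$ really does satisfy (M1) and (M2) and that $\beta_{i}$ respects orthogonal joins; everything else is bookkeeping with the definitions.
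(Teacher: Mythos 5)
There is a genuine gap: your test object $eS$ is in general only a \emph{premodule}, not an object of $\Mod_{S}$, so you cannot apply monicity of $\alpha$ to your maps $\beta_{1},\beta_{2}$. The problem is your claim that ``$s\perp t$ in $eS$ precisely means $s$ and $t$ are orthogonal in $S$.'' Orthogonality in the \'{e}tale set $eS$ means only $p(s)p(t)=s^{-1}st^{-1}t=0$, i.e.\ the \emph{domain} idempotents are disjoint; orthogonality in $S$ also requires $ss^{-1}tt^{-1}=0$, and orthogonal completeness only guarantees joins for the latter. Hence axiom (M1) can fail for $eS$: in the symmetric inverse monoid $I_{2}$ (orthogonally complete), take $e=1$, $s\colon 1\mapsto 1$ and $t\colon 2\mapsto 1$; then $s^{-1}st^{-1}t=0$ but $s\vee t$ does not exist, so $eS$ is not a module. (The paper's remark at the start of Section 4.7, that $eS$ \emph{is} a module when $S$ is commutative, is exactly the signal that this is not automatic in general.) For the same reason your appeal to the lemma after (M2) to verify (MM) for $\beta_{i}$ is unjustified: that lemma needs $s,t$ orthogonal in $S$, not merely domain-disjoint.

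The repair is precisely the route the paper takes: replace $eS$ by its module reflection $(eS)^{\sharp}$. Lemma \ref{cyclicplem} gives surjective premodule morphisms $p(x)S\rightarrow xS$ and $p(x)S\rightarrow yS$, Lemma \ref{alphasharp} turns these into module morphisms $\beta^{\sharp},\gamma^{\sharp}$ out of $(p(x)S)^{\sharp}$, and Lemma \ref{sharpmod} supplies $f_{x}\colon (xS)^{\sharp}\rightarrow X$ and $f_{y}\colon (yS)^{\sharp}\rightarrow X$; one then checks $\alpha f_{x}\beta^{\sharp}=\alpha f_{y}\gamma^{\sharp}$ on generators $\left\{s_{1},\ldots,s_{m}\right\}^{\downarrow}$, cancels $\alpha$, and evaluates at $p(x)^{\downarrow}$ to get $x=y$. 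Your overall strategy (probe with a cyclic object over $e=p(x)$ and evaluate at $e$) is the same in spirit, but without passing to the sharp construction the parallel pair does not live in $\Mod_{S}$ and the argument does not go through.
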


\begin{proof}
Let $(X,p)$ and $(Y,q)$ be modules and let $\alpha:X\rightarrow Y$ be a monomorphism. Suppose that $\alpha(x) = \alpha(y)$ where $x,y\in X$. Observe that $p(x) = p(y)$. By Lemmas \ref{cyclicplem}, \ref{alphasharp} and \ref{sharpmod} there are surjective module morphisms $\beta^{\sharp}:(p(x)S)^{\sharp}\rightarrow (xS)^{\sharp}$ and $\gamma^{\sharp}:(p(x)S)^{\sharp}\rightarrow (yS)^{\sharp}$, and module morphisms $f_{x}:(xS)^{\sharp}\rightarrow X$ and $f_{y}:(yS)^{\sharp}\rightarrow X$. We have that
\begin{eqnarray*}
(\alpha f_{x} \beta^{\sharp})(\left\{s_{1},\ldots,s_{m}\right\}^{\downarrow}) &=& 
(\alpha f_{x})(\left\{x\cdot s_{1},\ldots,x\cdot s_{m}\right\}^{\downarrow}) = \alpha(\vee_{i=1}^{m}{x\cdot s_{i}}) \\
&=& \vee_{i=1}^{m}{\alpha(x)\cdot s_{i}} = \vee_{i=1}^{m}{\alpha(y)\cdot s_{i}} = (\alpha f_{y} \gamma^{\sharp})(\left\{s_{1},\ldots,s_{m}\right\}^{\downarrow}).
\end{eqnarray*}
Thus $\alpha f_{x} \beta^{\sharp} = \alpha f_{y} \gamma^{\sharp}$. 
Since $\alpha$ is monic, $f_{x} \beta^{\sharp} = f_{y} \gamma^{\sharp}$. But 
$$(f_{x} \beta^{\sharp})(p(x)^{\downarrow}) = f_{x}(x^{\downarrow}) = x$$
and
$$(f_{y} \gamma^{\sharp})(p(x)^{\downarrow}) = f_{y}(y^{\downarrow}) = y.$$
Thus $x = y$ and so $\alpha$ is injective.
\end{proof}

The one element set $\left\{z\right\}$ is a module when we define $z\cdot s = z$ for all $s\in S$ and $p(z) = 0$. This is an initial object in $\Mod_{S}$ but not a terminal object because of condition (EM2). 

\begin{comment}
\begin{lemma}
\label{freemod}
The one element module is the only free object in $\Mod_{S}$.
\end{lemma}

\begin{proof}
Suppose $(F,p)$ is a free module over $S$. This means that there is a set $I$ and map $\sigma:I\rightarrow [F]$ such that for any module $(X,q)$ and map $f:I\rightarrow [X]$ there is a unique morphism $g:F\rightarrow X$ in $\Mod_{S}$ such that $g\sigma = f$ in $\Set$. 
Suppose first that $I$ is a non-empty set and let $\sigma:I\rightarrow [F]$ be the associated map. If $0_{F}\in \im(\sigma)$ then if $f:I\rightarrow [X]$ is any function with $0_{X}\notin \im(f)$ then it is clear that there is no morphism $g$ completing the triangle in $\Set$ since $g(0_{F})\notin \im(f)$. On the other hand if $0_{F}$ is not in $\im(\sigma)$, we can pick an $(X,p)$ and $f:I\rightarrow [X]$ with $0_{X}\in\im(f)$ and so we again meet a contradiction. Thus $I$ must be empty and $\sigma:I\rightarrow [F]$ is the empty map in $\Set$. Let $(\left\{z\right\},q)$ be the one-element module in $\Mod_{S}$ and let $f:\emptyset\rightarrow \left\{z\right\}$ be the empty map. Then the only module with a morphism to the one element module is the one element module, so $(F,p)$ must be $(\left\{z\right\},q)$. Consequently, $(\left\{z\right\},q)$ is the only free module in $\Mod_{S}$ with map $\sigma:\emptyset\rightarrow \left\{z\right\}$.
\end{proof}
\end{comment}

We will now define a coproduct in $\Mod_{S}$. Let $(X,p)$, $(Y,q)$ be modules. Define $X\bigoplus Y$ to be the subset of $X\times Y$ consisting of all those pairs $(x,y)$ such that $p(x)q(y) = 0$. If $(x,y)\in X\bigoplus Y$ then define $(p\oplus q)(x,y) = p(x)\vee q(y)$. This makes sense since $p(x)q(y) = 0$ and so the orthogonal join $p(x)\vee q(y)$ exists. We define an action $X\bigoplus Y \times S \rightarrow X\bigoplus Y$ by $(x,y)\cdot s = (x\cdot s, y\cdot s)$. This is well-defined since $p(x\cdot s)q(y\cdot s) = s^{-1}p(x)ss^{-1}q(y)s = s^{-1}p(x)q(y)s = 0$ for $(x,y)\in X\bigoplus Y$.

\begin{lemma}
$(X\bigoplus Y, p\oplus q)$ is a module.
\end{lemma}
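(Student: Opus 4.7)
The plan is to verify the five pointed \'etale axioms and the two module axioms (M1) and (M2) for the proposed structure, leveraging the fact that the cross condition $p(x)q(y)=0$ built into the definition of $X\bigoplus Y$ turns every computation into a pair of orthogonal-join manipulations that already hold in $X$ and in $Y$. The one element that requires a little thought is the join in (M1), because one must confirm that the coordinate-wise candidate is genuinely the least upper bound under the \'etale order and not merely \emph{an} upper bound.

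First I would verify (E1), (E2) and the pointed axioms. The key preliminary observation is that for any $(x,y)\in X\bigoplus Y$ the idempotents $p(x)$ and $q(y)$ are orthogonal in $S$, so by the earlier lemma asserting $x\cdot s\vee x\cdot t = x\cdot(s\vee t)$ when $s\perp t$, one may write $x\cdot(p(x)\vee q(y))=x\cdot p(x)\vee x\cdot q(y)$. Since $p(x\cdot q(y))=q(y)p(x)q(y)=0$, axiom (P1) forces $x\cdot q(y)=0_{X}$, giving $x\cdot(p(x)\vee q(y))=x$; symmetrically for $y$. This yields (E1). Axiom (E2) is a direct inverse-semigroup computation using (E2) for $X$ and $Y$ together with distributivity $s^{-1}(p(x)\vee q(y))s=s^{-1}p(x)s\vee s^{-1}q(y)s$. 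The zero is $(0_{X},0_{Y})$ and (P1)--(P3) follow immediately from the corresponding properties of $X$ and $Y$.

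Next I would unpack orthogonality in $X\bigoplus Y$ and construct the join. If $(x_{1},y_{1})\perp(x_{2},y_{2})$, then $(p(x_{1})\vee q(y_{1}))(p(x_{2})\vee q(y_{2}))=0$, which, on expanding, forces all four products
$p(x_{1})p(x_{2}),\ p(x_{1})q(y_{2}),\ q(y_{1})p(x_{2}),\ q(y_{1})q(y_{2})$
to vanish. In particular $x_{1}\perp x_{2}$ in $X$ and $y_{1}\perp y_{2}$ in $Y$, so the joins $x_{1}\vee x_{2}$ and $y_{1}\vee y_{2}$ exist by (M1) for $X$ and $Y$. Moreover $p(x_{1}\vee x_{2})q(y_{1}\vee y_{2})=(p(x_{1})\vee p(x_{2}))(q(y_{1})\vee q(y_{2}))=0$ by the same four vanishing products, so $(x_{1}\vee x_{2},\,y_{1}\vee y_{2})\in X\bigoplus Y$ and $(p\oplus q)(x_{1}\vee x_{2},y_{1}\vee y_{2})=(p\oplus q)(x_{1},y_{1})\vee(p\oplus q)(x_{2},y_{2})$.

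The main obstacle is showing this candidate really is the join in the \'etale order on $X\bigoplus Y$. To see that it is an upper bound, I would compute
\[
(x_{1}\vee x_{2},y_{1}\vee y_{2})\cdot(p(x_{1})\vee q(y_{1}))
=\bigl((x_{1}\vee x_{2})\cdot p(x_{1})\vee(x_{1}\vee x_{2})\cdot q(y_{1}),\ \cdots\bigr),
\]
and using that $x_{2}\cdot p(x_{1})$, $x_{1}\cdot q(y_{1})$ and $x_{2}\cdot q(y_{1})$ all have $p$-value zero (hence are $0_{X}$) to collapse the first coordinate to $x_{1}$, and symmetrically the second to $y_{1}$; this gives $(x_{1},y_{1})\leq(x_{1}\vee x_{2},y_{1}\vee y_{2})$, and similarly with indices swapped. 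For minimality, suppose $(z,w)\geq(x_{i},y_{i})$ for $i=1,2$ with $(p\oplus q)(z,w)=p(x_{1})\vee p(x_{2})\vee q(y_{1})\vee q(y_{2})$; projecting onto each factor and applying Lemma \ref{usefuljoinlem} in $X$ and in $Y$ (whose hypotheses are supplied by the four vanishing cross products) forces $z=x_{1}\vee x_{2}$ and $w=y_{1}\vee y_{2}$. This establishes (M1). Finally, (M2) is immediate since the action is coordinate-wise and (M2) already holds in both $X$ and $Y$:
\[
((x_{1},y_{1})\vee(x_{2},y_{2}))\cdot s=(x_{1}\vee x_{2},y_{1}\vee y_{2})\cdot s=(x_{1}\cdot s\vee x_{2}\cdot s,\,y_{1}\cdot s\vee y_{2}\cdot s)=(x_{1},y_{1})\cdot s\vee(x_{2},y_{2})\cdot s.
\]
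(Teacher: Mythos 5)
Your verification of the \'etale and pointed axioms, your unpacking of orthogonality in $X\bigoplus Y$ into the four vanishing cross products $p(x_{1})p(x_{2})=p(x_{1})q(y_{2})=q(y_{1})p(x_{2})=q(y_{1})q(y_{2})=0$, and your check that $(x_{1}\vee x_{2},y_{1}\vee y_{2})$ is an upper bound are all correct and in line with the paper's computations. The problem is the leastness part of (M1). You only consider upper bounds $(z,w)$ whose value under $p\oplus q$ is exactly $e=p(x_{1})\vee p(x_{2})\vee q(y_{1})\vee q(y_{2})$, and you conclude that any such upper bound coincides with the candidate. That gives uniqueness of an upper bound at the level $e$, but (M1) requires the join, i.e.\ the least upper bound in the order on $X\bigoplus Y$: you must show the candidate lies below \emph{every} upper bound $(u,v)$, whose $(p\oplus q)$-value may be strictly larger than $e$. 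As written, your final sentence claiming (M1) does not follow from what precedes it.

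The gap can be closed in two ways. The paper's route is direct: for an arbitrary upper bound $(u,v)$ one has $u\cdot(p(x_{1})\vee q(y_{1}))=x_{1}$ and $u\cdot(p(x_{2})\vee q(y_{2}))=x_{2}$, and since the idempotents $p(x_{i})\vee q(y_{i})$ are orthogonal, the lemma on distributing the action over orthogonal joins in $S$ gives $u\cdot e=x_{1}\vee x_{2}$ and likewise $v\cdot e=y_{1}\vee y_{2}$, so the candidate equals $(u,v)\cdot e\leq(u,v)$. Alternatively you can keep your uniqueness step and add the missing bridge: from $(x_{i},y_{i})\leq(u,v)$ one gets $p(x_{i})\vee q(y_{i})\leq(p\oplus q)(u,v)$, hence $e\leq(p\oplus q)(u,v)$; then $(u,v)\cdot e$ is again an upper bound with $(p\oplus q)$-value $e$, so by your step it equals the candidate, and the candidate is $\leq(u,v)$. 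Two smaller points are also glossed over in your minimality step: passing from $(x_{i},y_{i})\leq(z,w)$ to $x_{i}\leq z$ needs the computation $x_{i}=z\cdot(p(x_{i})\vee q(y_{i}))\cdot p(x_{i})=z\cdot p(x_{i})$ using $q(y_{i})p(x_{i})=0$, and the hypothesis $p(z)=p(x_{1})\vee p(x_{2})$ required for Lemma \ref{usefuljoinlem} follows from $p(z)\vee q(w)=e$ together with $p(z)q(w)=0$ and $p(x_{i})\leq p(z)$, $q(y_{i})\leq q(w)$, not merely from the four cross products.
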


\begin{proof}
\begin{description}
\item[{\rm (E1)}] $(x,y)\cdot (p\oplus q)(x,y) = (x,y)\cdot (p(x)\vee q(y)) = (x,y)$.
\item[{\rm (E2)}] $(p\oplus q)(x\cdot s,y\cdot s) = p(x\cdot s)\vee q(y\cdot s) = s^{-1}(p(x)\vee q(y))s = s^{-1}(p\oplus q)(x,y)s$.
\item[{\rm (P1) - (P3)}] These are clear since $p(x)\vee q(y)\geq p(x),q(y)$ (here $(0,0)$ is the zero).
\item[{\rm (M1)}] If $(p\oplus q)(x,y)(p\oplus q)(w,z) = 0$ then $p(x)p(w) = 0$ and $q(y)q(z) = 0$. Thus there exists $(x\vee w, y\vee z)$. Further
$$(x\vee w, y\vee z)\cdot (p\oplus q)(x,y) = (x,y)$$
and
$$(x\vee w, y\vee z)\cdot (p\oplus q)(w,z) = (w,z)$$
so $(x,y),(w,z)\leq (x\vee w, y\vee z)$. Now suppose that $(u,v)\in X\bigoplus Y$ is such that $(x,y),(w,z)\leq (u,v)$. Then $u\cdot (p(x)\vee q(y)) = x$, $v\cdot (p(x)\vee q(y)) = y$, $u\cdot (p(w)\vee q(z)) = w$ and $v\cdot (p(w)\vee q(z)) = z$. Thus
$$u\cdot((p\oplus q)(x\vee w, y\vee z)) = u\cdot (p(x)\vee p(w) \vee q(y)\vee q(z)) = x\vee w.$$
Similarly $v \cdot((p\oplus q)(x\vee w, y\vee z)) = y\vee z$. So $(x\vee w, y\vee z) = (x,y)\vee (w,z)$ and 
$$(p\oplus q)(x\vee w, y\vee z) = p(x)\vee p(w) \vee q(y)\vee q(z).$$
\item[{\rm (M2)}] For $(x,y)\perp (w,z)$ we have 
$$(x\vee w, y\vee z)\cdot s = ((x\vee w)\cdot s, (y\vee z)\cdot s) = (x\cdot s, y\cdot s)\vee (w\cdot s, z\cdot s).$$
\end{description}
\end{proof}

Let $(X,p),(Y,q),(Z,r)$ be modules and suppose that $f:X\rightarrow Z$ and $g:Y\rightarrow Z$ are module morphisms. Then we can define a map
$$f\oplus g:X\bigoplus Y\rightarrow Z$$
by $(f\oplus g)(x,y) = f(x)\vee g(y)$. Note that this makes sense since $r(f(x))r(g(y)) = p(x)q(y) = 0$. In fact:

\begin{lemma}
\label{mapsum}
With $X,Y,Z,f,g$ as above, $f\oplus g$ is a module morphism.
\end{lemma}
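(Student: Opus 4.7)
The plan is to verify the four axioms (EM1), (EM2), (PM), (MM) in turn. All four reductions are essentially just unfolding definitions and exploiting that $f$ and $g$ are themselves module morphisms, together with the already-established description of the operations on $X\bigoplus Y$ in terms of those on $X$ and $Y$.

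First I would establish (EM2) because it guarantees everything else is well-typed: from the fact that $f$ and $g$ satisfy (EM2) we get $r(f(x)\vee g(y)) = r(f(x))\vee r(g(y)) = p(x)\vee q(y) = (p\oplus q)(x,y)$, so $f\oplus g$ is a morphism of pointed \'etale sets; (PM) is immediate since $f(0)\vee g(0) = 0$. For (EM1), using (EM1) for $f$ and $g$, plus axiom (M2) applied in $Z$ (which is legitimate because $f(x)\perp g(y)$ in $Z$, as noted just before the lemma), I would compute
$$(f\oplus g)((x,y)\cdot s) = f(x\cdot s)\vee g(y\cdot s) = (f(x)\cdot s)\vee(g(y)\cdot s) = (f(x)\vee g(y))\cdot s = (f\oplus g)(x,y)\cdot s.$$

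The only step with a little substance is (MM). Here I would unpack the orthogonality hypothesis: if $(x,y)\perp(w,z)$ in $X\bigoplus Y$, then $(p(x)\vee q(y))(p(w)\vee q(z)) = 0$, which forces each of the four pairwise products $p(x)p(w)$, $p(x)q(z)$, $q(y)p(w)$, $q(y)q(z)$ to vanish; in particular $x\perp w$ in $X$ and $y\perp z$ in $Y$. From the proof of (M1) for $X\bigoplus Y$ already given, $(x,y)\vee(w,z) = (x\vee w, y\vee z)$. Then, using (MM) for $f$ and $g$ together with commutativity of the join operation,
$$(f\oplus g)((x,y)\vee(w,z)) = f(x\vee w)\vee g(y\vee z) = (f(x)\vee g(y))\vee(f(w)\vee g(z)) = (f\oplus g)(x,y)\vee(f\oplus g)(w,z).$$
The main (and really only) obstacle is bookkeeping: one must remain careful that every join written down is between orthogonal elements, so I would double-check at each stage that the relevant $p$- or $r$-values multiply to $0$, which the computation above confirms.
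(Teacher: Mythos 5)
Your proof is correct and follows essentially the same route as the paper: verify (EM1), (EM2), (PM), (MM) directly, using the already-established formula $(x,y)\vee(w,z)=(x\vee w,y\vee z)$ and the morphism properties of $f$ and $g$. If anything, you are slightly more explicit than the paper in justifying the (EM1) step via (M2) in $Z$ and in checking that all four cross-products $p(x)p(w)$, $p(x)q(z)$, $q(y)p(w)$, $q(y)q(z)$ vanish, which is exactly the bookkeeping needed to rearrange the joins in (MM).
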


\begin{proof}
It is an \'{e}tale morphism since
$$(f\oplus g)(x\cdot s, y\cdot s) = f(x\cdot s)\vee g(y\cdot s) = (f\oplus g)(x,y)\cdot s$$
and
$$r((f\oplus g)(x,y)) = r(f(x)\vee g(y)) = r(f(x)) \vee r(g(y)) = p(x)\vee q(y) = (p\oplus q)(x,y).$$
It is pointed since $(f\oplus g)(0,0) = p(0)\vee q(0) = 0$ and for $x\neq 0, y\neq 0$, we have $(f\oplus g)(x,y) \neq 0$. 
Finally, to check that it is a module morphism, suppose $(p\oplus q)(x,y)(p\oplus q)(w,z) = 0$. Then by the above $(x,y)\vee (w,z) = (x\vee w,y\vee z)$ and
\begin{eqnarray*}
(f\oplus g)(x\vee w,y\vee z) &=& f(x\vee w)\vee g(y\vee z) = f(x)\vee f(w)\vee g(y)\vee g(z) \\
&=& (f\oplus g)(x,y)\vee (f\oplus g)(w,z).
\end{eqnarray*}
\end{proof}

Define $\iota_{1}:X\rightarrow X\bigoplus Y$ by $\iota_{1}(x) = (x,0)$ and $\iota_{2}:Y\rightarrow X\bigoplus Y$ by $\iota_{2}(y) = (0,y)$. It is easy to see that $\iota_{1}$ and $\iota_{2}$ are module morphisms.

\begin{lemma}
$(X\bigoplus Y, \iota_{1}, \iota_{2})$ is a coproduct in $\Mod_{S}$.
\end{lemma}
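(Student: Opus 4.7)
The plan is to take $h = f \oplus g$ (as constructed in Lemma \ref{mapsum}) as the mediating morphism and verify the universal property directly; the verification reduces essentially to showing that every element of $X \oplus Y$ decomposes canonically as an orthogonal join of an image of $\iota_1$ and an image of $\iota_2$.

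First I would check commutativity of the defining triangles. For $x \in X$, one computes $(f\oplus g)(\iota_1(x)) = (f\oplus g)(x,0) = f(x)\vee g(0) = f(x)\vee 0 = f(x)$, and symmetrically $(f\oplus g)\iota_2 = g$. This only uses the fact that module morphisms preserve $0$ and that $0$ is the neutral element for orthogonal joins.

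Next I would establish the key decomposition lemma: for any $(x,y) \in X\oplus Y$, the elements $\iota_1(x) = (x,0)$ and $\iota_2(y) = (0,y)$ are orthogonal in $X\oplus Y$ (since $(p\oplus q)(x,0)(p\oplus q)(0,y) = p(x)q(y) = 0$ by the very definition of $X \oplus Y$), and that $\iota_1(x) \vee \iota_2(y) = (x,y)$. The latter follows from the explicit description of joins in $X\oplus Y$ given inside the proof that $X\oplus Y$ is a module: the join is computed coordinatewise. With this decomposition in hand, uniqueness is forced: if $h': X\oplus Y \to Z$ is any module morphism satisfying $h'\iota_1 = f$ and $h'\iota_2 = g$, then
\[
h'(x,y) \;=\; h'(\iota_1(x)\vee \iota_2(y)) \;=\; h'(\iota_1(x))\vee h'(\iota_2(y)) \;=\; f(x)\vee g(y) \;=\; (f\oplus g)(x,y),
\]
so $h' = f\oplus g$.

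The bulk of the work — that $f\oplus g$ really is a module morphism — is already done in Lemma \ref{mapsum}, so no additional heavy lifting is required. The only potentially delicate point is the orthogonal decomposition $(x,y) = (x,0)\vee (0,y)$, but this is immediate from the coordinatewise behaviour of joins in $X\oplus Y$ together with the observation that $(p\oplus q)(x,0)\vee (p\oplus q)(0,y) = p(x)\vee q(y) = (p\oplus q)(x,y)$, which lets us invoke Lemma \ref{usefuljoinlem} (or simply the construction in the proof of M1 for $X\oplus Y$) to identify the join. I do not expect any serious obstacle; the argument is essentially a verification, and the main subtlety is purely notational in keeping track of the defining constraint $p(x)q(y)=0$ for elements of $X\oplus Y$.
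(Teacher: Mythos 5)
Your argument is correct and coincides with the paper's own proof: both take $f\oplus g$ from Lemma \ref{mapsum} as the mediating morphism, check the two triangles, and force uniqueness from the orthogonal decomposition $(x,y)=(x,0)\vee(0,y)$ together with the fact that any competing morphism preserves orthogonal joins. Your extra justification of the decomposition via the coordinatewise description of joins (or Lemma \ref{usefuljoinlem}) is a harmless refinement of a step the paper simply asserts.
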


\begin{proof}
We need to show that if $\alpha:X\rightarrow Z$, $\beta:Y\rightarrow Z$ are module morphisms to a module $(Z,r)$ then there exists a unique module morphism $\gamma:X\bigoplus Y\rightarrow Z$ with $\alpha = \gamma\iota_{1}$ and $\beta = \gamma\iota_{2}$. We claim $\gamma = \alpha\oplus \beta$.

Firstly, 
$$(\alpha\oplus\beta)(\iota_{1}(x)) = (\alpha\oplus\beta)(x,0) = \alpha(x)$$
and
$$(\alpha\oplus\beta)(\iota_{2}(y)) = (\alpha\oplus\beta)(0,y) = \beta(y).$$

Now suppose $\delta:X\bigoplus Y\rightarrow Z$ is a module morphism with $\alpha = \delta\iota_{1}$ and $\beta = \delta\iota_{2}$. Then $\delta(\iota_{1}(x)) = \alpha(x)$ and so $\delta(x,0) = \alpha(x)$. Similarly $\delta(0,y) = \beta(y)$. For $p(x)q(y) = 0$ we have $(x,0)\perp (0,y)$ and $(x,0)\vee (0,y) = (x,y)$. Thus
$$\delta(x,y) = \delta(x,0) \vee \delta(0,y) = \alpha(x) \vee \beta(y) = (\alpha\oplus\beta)(x,y).$$
\end{proof}

We may define coproducts of an arbitrary set of modules $\left\{X_{i}:i\in I\right\}$ by considering those elements of the direct product $\times_{i\in I}{X_{i}}$ which have only a finite number of non-zero elements.

\begin{lemma}
\label{pushoutslem}
Let $(X,p)$, $(Y_{1},q_{1})$, $(Y_{2},q_{2})$ be modules, $f_{1}:X\rightarrow Y_{1}$ and $f_{2}:X\rightarrow Y_{2}$ module morphisms and suppose that $\ker(f_{1}) = \ker(f_{2})$. Then there exists a pushout of $f_{1}$ and $f_{2}$.
\end{lemma}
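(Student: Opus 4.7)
The plan is to build the pushout concretely as a quotient of the coproduct $Y_{1}\bigoplus Y_{2}$ by the smallest congruence that collapses the two images of $X$. First, I would take the coproduct with its canonical insertions $\iota_{1}:Y_{1}\to Y_{1}\bigoplus Y_{2}$ and $\iota_{2}:Y_{2}\to Y_{1}\bigoplus Y_{2}$, and form the set of pairs
$$R = \{(\iota_{1}(f_{1}(x)),\iota_{2}(f_{2}(x))) : x\in X\}.$$
By (EM2) we have $q_{i}(f_{i}(x)) = p(x)$, so $p\oplus q$ takes the same value $p(x)$ on both coordinates of every pair in $R$; this shows $R\subseteq \rho_{\max}$, so the family of congruences on $Y_{1}\bigoplus Y_{2}$ containing $R$ is non-empty. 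Intersecting this family, and using the fact recorded earlier that the meet of a family of congruences is again a congruence, yields a smallest congruence $\sigma$ on $Y_{1}\bigoplus Y_{2}$ containing $R$. Set $P = (Y_{1}\bigoplus Y_{2})/\sigma$, let $\pi$ be the projection of Lemma \ref{projectioncong}, and define $g_{i} = \pi\circ\iota_{i}$. Every pair in $R$ lies in $\sigma$, so $g_{1}f_{1} = g_{2}f_{2}$ by construction.

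For the universal property, suppose $Q$ is a module and $h_{i}:Y_{i}\to Q$ are module morphisms satisfying $h_{1}f_{1} = h_{2}f_{2}$. Lemma \ref{mapsum} provides a module morphism $h_{1}\oplus h_{2}:Y_{1}\bigoplus Y_{2}\to Q$, and the equality $h_{1}f_{1} = h_{2}f_{2}$ is precisely the statement that every pair of $R$ lies in $\ker(h_{1}\oplus h_{2})$. Since kernels of module morphisms are congruences and $\sigma$ was defined as the smallest congruence containing $R$, we obtain $\sigma\subseteq \ker(h_{1}\oplus h_{2})$. Applying Lemma \ref{FITmods} to $h_{1}\oplus h_{2}$ and using the universal property implicit in Lemma \ref{projectioncong} delivers a unique factorisation $\phi:P\to Q$ with $\phi\pi = h_{1}\oplus h_{2}$, and in particular $\phi g_{i} = h_{i}$, as required.

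I expect the main obstacle to lie in the compatibility of the generated congruence $\sigma$ with axioms (M1) and (M2): we must be sure that $\sigma$ is small enough that $P$ has a rich enough order and join structure to admit orthogonal joins with the predicted $p\oplus q$-values. This is exactly where the hypothesis $\ker(f_{1}) = \ker(f_{2})$ earns its keep. Via Lemma \ref{FITmods}, it supplies a canonical bijection $\im(f_{1})\leftrightarrow \im(f_{2})$ sending $f_{1}(x)\mapsto f_{2}(x)$, so the identifications imposed by $R$ are internally consistent, and one can describe $\sigma$ explicitly as the congruence generated by this bijection together with closure under (C1)--(C3). With this explicit description in hand, the checks that $\sigma$ respects $p\oplus q$ and behaves correctly on orthogonal joins become direct, and the pushout property follows without further obstruction.
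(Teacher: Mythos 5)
Your construction is essentially correct, but it is a genuinely different route from the paper's. The paper writes down the congruence $\sigma$ on $Y_{1}\bigoplus Y_{2}$ in closed form (two pairs are related when they differ by exchanging $f_{1}(x_{1}),f_{2}(x_{2})$ against $f_{1}(x_{2}),f_{2}(x_{1})$ across an orthogonal remainder) and then verifies by hand that this relation is already transitive, satisfies (C1)--(C3), and has the universal property, with the map out of the quotient given explicitly by $g([y_{1},y_{2}])=g_{1}(y_{1})\vee g_{2}(y_{2})$. You instead take the congruence generated by the pairs $(\iota_{1}f_{1}(x),\iota_{2}f_{2}(x))$: since these pairs lie in $\rho_{\max}$ and intersections of congruences are congruences, the smallest such $\sigma$ exists, the quotient is a module with a morphism projection, and the universal property falls out of the containment $\sigma\subseteq\ker(h_{1}\oplus h_{2})$. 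This is shorter and more abstract; indeed it needs no hypothesis on kernels at all, which is consistent with the paper's own later remark (after the coequaliser lemma) that $\Mod_{S}$ is cocomplete. What the paper's explicit construction buys is precisely the one-step description of $\sigma$, which is what gets used in the very next lemma (epimorphisms are surjective, via the ``oddness'' argument); the equal-kernels hypothesis earns its keep in making that explicit description transitive, not in securing existence. So your closing paragraph misdiagnoses the role of $\ker(f_{1})=\ker(f_{2})$: there is no issue with (M1)/(M2), because quotients by arbitrary congruences are always modules, and your argument in fact never uses the hypothesis.

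Two small patches are needed to make your write-up airtight. First, Lemma \ref{FITmods} is the first isomorphism theorem and does not literally give the factorisation you want; you should instead argue directly that if $\sigma\subseteq\ker(h_{1}\oplus h_{2})$ then $[y]\mapsto(h_{1}\oplus h_{2})(y)$ is a well-defined module morphism on the quotient (the checks are the same as in the proof of Lemma \ref{FITmods}). Second, the pushout's universal property asks for uniqueness of $\phi$ subject only to $\phi g_{i}=h_{i}$, which is weaker than $\phi\pi=h_{1}\oplus h_{2}$; this follows once you note that every element of the quotient has the form $\pi(y_{1},y_{2})=g_{1}(y_{1})\vee g_{2}(y_{2})$, an orthogonal join, so any module morphism agreeing with $\phi$ on the images of $g_{1}$ and $g_{2}$ agrees with it everywhere.
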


\begin{proof}
Let $(X,p)$, $(Y_{1},q_{1})$, $(Y_{2},q_{2})$ be modules, $f_{1}:X\rightarrow Y_{1}$ and $f_{2}:X\rightarrow Y_{2}$ module morphisms with $\ker(f_{1}) = \ker(f_{2})$. Define a binary relation $\sigma$ on $Y_{1}\bigoplus Y_{2}$ by 
$$(a_{1},b_{1})\,\sigma\, (a_{2},b_{2})$$
if there exist $x_{1},x_{2}\in X$, $y_{1}\in Y_{1}$, $y_{2}\in Y_{2}$ with $f_{i}(x_{j})\perp y_{i}$ for $i,j = 1,2$,
$$(a_{1},b_{1}) = (y_{1}\vee f_{1}(x_{1}), y_{2}\vee f_{2}(x_{2}))$$
and
$$(a_{2},b_{2}) = (y_{1}\vee f_{1}(x_{2}), y_{2}\vee f_{2}(x_{1})).$$
We prove that $\sigma$ is a congruence. It is clear that $\sigma$ is reflective and symmetric. Let us check transitivity. 
Suppose $(a_{1},b_{1}),(a_{2},b_{2}),(a_{3},b_{3})\in Y_{1}\bigoplus Y_{2}$ are elements with $(a_{1},b_{1})\,\sigma\, (a_{2},b_{2})$ and $(a_{2},b_{2})\,\sigma\, (a_{3},b_{3})$. Let $x_{1},x_{2},x_{3},x_{4}\in X$, $y_{1},z_{1}\in Y_{1}$, $y_{2},z_{2}\in Y_{2}$ be such that $f_{i}(x_{j})\perp y_{i}$ for $i,j = 1,2$, $f_{i}(x_{j})\perp z_{i}$ for $i = 1,2$, $j=3,4$ and 
$$(a_{1},b_{1}) = (y_{1}\vee f_{1}(x_{1}), y_{2}\vee f_{2}(x_{2})),$$
$$(a_{2},b_{2}) = (y_{1}\vee f_{1}(x_{2}), y_{2}\vee f_{2}(x_{1})) = (z_{1}\vee f_{1}(x_{3}), z_{2}\vee f_{2}(x_{4}))$$
and
$$(a_{3},b_{3}) = (z_{1}\vee f_{1}(x_{4}), z_{2}\vee f_{2}(x_{3})),$$
so that $y_{1}\vee f_{1}(x_{2}) = z_{1}\vee f_{1}(x_{3})$ and $y_{2}\vee f_{2}(x_{1}) = z_{2}\vee f_{2}(x_{4})$.
Define 
$$u_{1} = y_{1}\cdot q_{1}(z_{1}) \vee f_{1}(x_{1}\cdot p(x_{4})),$$
$$u_{2} = f_{2}(x_{2}\cdot p(x_{3})) \vee y_{2}\cdot q_{2}(z_{2}),$$
$$v_{1} = x_{2}\cdot q_{1}(z_{1}) \vee x_{4}\cdot q_{2}(y_{2})$$
and
$$v_{2} = x_{3}\cdot q_{1}(y_{1}) \vee x_{1}\cdot q_{2}(z_{2}),$$
where each of the joins is the join of two orthogonal elements of modules.
Then $u_{1}\in Y_{1}$, $u_{2}\in Y_{2}$, $v_{1},v_{2}\in X$ are such that $f_{i}(v_{j})\perp u_{i}$ for $i,j = 1,2$,
$$(a_{1},b_{1}) = (u_{1}\vee f_{1}(v_{2}), u_{2}\vee f_{2}(v_{1}))$$
and
$$(a_{3},b_{3}) = (u_{1}\vee f_{1}(v_{1}), u_{2}\vee f_{2}(v_{2})).$$
Thus $(a_{1},b_{1})\, \sigma\, (a_{3},b_{3})$ and so $\sigma$ is transitive.
It is clear that axioms (C1) and (C2) for a congruence hold. Let us check (C3). Suppose $(a_{1},b_{1}),(a_{2},b_{2}),(c_{1},d_{1}),(c_{2},d_{2})\in Y_{1}\bigoplus Y_{2}$ are elements with $(a_{i},b_{i})\perp (c_{i},d_{i})$, $i=1,2$, $(a_{1},b_{1})\,\sigma\, (a_{2},b_{2})$ and $(c_{1},d_{1})\,\sigma\,(c_{2},d_{2})$. Let $x_{1},x_{2},x_{3},x_{4}\in X$, $y_{1},z_{1}\in Y_{1}$, $y_{2},z_{2}\in Y_{2}$ be such that $f_{i}(x_{j})\perp y_{i}$ for $i,j = 1,2$, $f_{i}(x_{j})\perp z_{i}$ for $i = 1,2$, $j=3,4$ and
$$(a_{1},b_{1}) = (y_{1}\vee f_{1}(x_{1}), y_{2}\vee f_{2}(x_{2})),$$
$$(a_{2},b_{2}) = (y_{1}\vee f_{1}(x_{2}), y_{2}\vee f_{2}(x_{1})),$$
$$(c_{1},d_{1}) = (z_{1}\vee f_{1}(x_{3}), z_{2}\vee f_{2}(x_{4}))$$
and
$$(c_{2},d_{2}) = (z_{1}\vee f_{1}(x_{4}), z_{2}\vee f_{2}(x_{3})).$$
Let $u_{1} = y_{1}\vee z_{1}$, $u_{2} = y_{2}\vee z_{2}$, $v_{1} = x_{1}\vee x_{3}$, $v_{2} = x_{2}\vee x_{4}$. Then $u_{1}\in Y_{1}$, $u_{2}\in Y_{2}$, $v_{1},v_{2}\in X$ are such that $f_{i}(v_{j})\perp u_{i}$ for $i,j = 1,2$,
$$(a_{1}\vee c_{1},b_{1}\vee d_{1}) = (u_{1}\vee f_{1}(v_{1}), u_{2}\vee f_{2}(v_{2}))$$
and
$$(a_{2}\vee c_{2},b_{2}\vee d_{2}) = (u_{1}\vee f_{1}(v_{2}), u_{2}\vee f_{2}(v_{1})).$$
Thus $\sigma$ is a congruence. Define $Z = (Y_{1}\bigoplus Y_{2})/\sigma$, denote elements by $[y_{1},y_{2}]$, define $k_{1}:Y_{1}\rightarrow Z$ by $k_{1}(y) = [y,0]$ and $k_{2}:Y_{2}\rightarrow Z$ by $k_{2}(y) = [0,y]$. It follows from Lemma \ref{projectioncong} that $k_{1},k_{2}$ are module morphisms. We claim $(Z,k_{1},k_{2})$ is the pushout of $X$. Firstly, 
$$k_{1}(f_{1}(x)) = [f_{1}(x),0] = [0,f_{2}(x)] = k_{2}(f_{2}(x)).$$
Now suppose $(Z^{\prime},r)$ is another module and  $g_{1}:Y_{1}\rightarrow Z^{\prime}$, $g_{2}:Y_{2}\rightarrow Z^{\prime}$ are module morphisms with $g_{1}f_{1} = g_{2}f_{2}$. 
Define $g: Z\rightarrow Z^{\prime}$ by $g([y_{1},y_{2}]) = g_{1}(y_{1})\vee g_{2}(y_{2})$. Let us verify that $g$ is well-defined. 
Suppose $(a_{1},b_{1})\,\sigma\, (a_{2},b_{2})$ and $x_{1},x_{2}\in X$, $y_{1}\in Y_{1}$, $y_{2}\in Y_{2}$ are such that $f_{i}(x_{j})\perp y_{i}$ for $i,j = 1,2$,
$$(a_{1},b_{1}) = (y_{1}\vee f_{1}(x_{1}), y_{2}\vee f_{2}(x_{2}))$$
and
$$(a_{2},b_{2}) = (y_{1}\vee f_{1}(x_{2}), y_{2}\vee f_{2}(x_{1})).$$
Then
\begin{eqnarray*}
g([a_{1},b_{1}]) &=& g_{1}(y_{1})\vee g_{1}(f_{1}(x_{1}))\vee g_{2}(y_{2})\vee g_{2}(f_{2}(x_{2})) \\
&=& g_{1}(y_{1})\vee g_{2}(f_{2}(x_{1}))\vee g_{2}(y_{2})\vee g_{1}(f_{1}(x_{2})) = g([a_{2},b_{2}]).
\end{eqnarray*}
We see that $g$ is an \'{e}tale morphism since 
$$g([y_{1},y_{2}]\cdot s) = g([y_{1}\cdot s,y_{2}\cdot s]) = g_{1}(y_{1}\cdot s)\vee g_{2}(y_{2}\cdot s) = (g_{1}(y_{1})\vee g_{2}(y_{2}))\cdot s$$
and
$$(q_{1}\oplus q_{2})(y_{1},y_{2}) = q_{1}(y_{1})\vee q_{2}(y_{2}) = r(g(y_{1}))\vee r(g(y_{2})).$$
It is obviously pointed and it is a module morphism by construction.
Furthermore, it is readily verified that $gk_{1} = g_{1}$ and $gk_{2} = g_{2}$. Uniqueness follows from the fact that if $h:Z\rightarrow Z^{\prime}$ is such that $hk_{1} = g_{1}$ and $hk_{2} = g_{2}$ then 
$$h([y_{1},y_{2}]) = h([y_{1},0]) \vee h([0,y_{2}]) = g_{1}(y_{1}) \vee g_{2}(y_{2}) = g([y_{1},y_{2}]).$$
\end{proof}

\begin{lemma}
In $\Mod_{S}$ every epimorphism is a surjection.
\end{lemma}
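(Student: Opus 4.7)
The plan is to mimic the classical ring-theoretic argument that epi equals surjection by using the pushout of $\alpha$ with itself. Let $\alpha \colon X \to Y$ be an epimorphism in $\Mod_{S}$. Since trivially $\ker(\alpha) = \ker(\alpha)$, Lemma \ref{pushoutslem} applies, producing a pushout $(Z, k_{1}, k_{2})$ of the pair $(\alpha, \alpha)$, where $Z = (Y \oplus Y)/\sigma$ and $k_{1}(y) = [y,0]$, $k_{2}(y) = [0,y]$. The pushout property gives $k_{1}\alpha = k_{2}\alpha$, and because $\alpha$ is epi we immediately conclude $k_{1} = k_{2}$. In particular $[y,0] = [0,y]$ in $Z$ for every $y \in Y$, i.e.\ $(y,0)\,\sigma\,(0,y)$.

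Next I will unpack the definition of $\sigma$ given in the proof of Lemma \ref{pushoutslem} at this pair. Transitivity of $\sigma$ was verified directly there, so one step of the defining relation suffices: there exist $x_{1}, x_{2} \in X$, $y_{1} \in Y$ and $y_{2} \in Y$ with $\alpha(x_{j}) \perp y_{i}$ for $i,j \in \{1,2\}$, and
\[
(y,0) = (y_{1} \vee \alpha(x_{1}),\, y_{2} \vee \alpha(x_{2})), \qquad
(0,y) = (y_{1} \vee \alpha(x_{2}),\, y_{2} \vee \alpha(x_{1})).
\]
Reading off the second coordinate of the first equality forces $y_{2} = 0$ and $\alpha(x_{2}) = 0$; reading off the first coordinate of the second equality forces $y_{1} = 0$ (so also $\alpha(x_{2}) = 0$ again). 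Substituting back, the first coordinate of the first equality yields $y = \alpha(x_{1})$, placing $y$ in the image of $\alpha$. Since $y$ was arbitrary, $\alpha$ is surjective.

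There is nothing delicate to overcome here beyond being careful with the two-coordinate decomposition of elements of $Y \oplus Y$ and with the observation that orthogonal joins $0 = y_{i} \vee \alpha(x_{j})$ force both summands to be zero (this is immediate from axiom (M1) together with $p(0)$ being the minimum idempotent). Thus the only structural input required is the existence of the self-pushout via Lemma \ref{pushoutslem}; everything else is a direct unpacking of the congruence $\sigma$ produced in that construction.
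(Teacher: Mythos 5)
Your proof is correct, and it rests on the same construction as the paper's — form the self-pushout $(Z,k_{1},k_{2})$ of $\alpha$ with itself via Lemma \ref{pushoutslem}, use $k_{1}\alpha = k_{2}\alpha$ and the epi hypothesis to conclude $k_{1}=k_{2}$, hence $(y,0)\,\sigma\,(0,y)$ for every $y\in Y$ — but the endgame is genuinely different. The paper introduces the invariant of \emph{odd} pairs (first coordinate in $\im(\alpha)$, second not), shows that $\sigma$ preserves oddness, and derives a contradiction: if some $y$ lay outside the image, then $(0,y)$ would be odd and $(y,0)$ not, so they could not be $\sigma$-related and $k_{1}\neq k_{2}$. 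You instead unpack the defining condition of $\sigma$ directly at the pair $((y,0),(0,y))$. This is legitimate because the proof of Lemma \ref{pushoutslem} verifies that the one-step relation is itself reflexive, symmetric and transitive — no closure is taken — so a single witness $(x_{1},x_{2},y_{1},y_{2})$ must exist; then the coordinate equalities $0=y_{2}\vee\alpha(x_{2})$ and $0=y_{1}\vee\alpha(x_{2})$ force $y_{1}=y_{2}=0$ and $\alpha(x_{2})=0$ (an orthogonal join equal to the minimum element has both terms zero: $p$ of the join is $p(y_{2})\vee p(\alpha(x_{2}))=0$, and $p^{-1}(0)=\{0\}$ by (P1)), whence $y=\alpha(x_{1})$. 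Your version thus produces an explicit preimage and avoids the argument by contradiction, at the mild cost of leaning on the precise one-step description of $\sigma$; the paper's oddness argument is slightly more robust, since it would survive even if the congruence had only been presented as the closure of a generating relation.
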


\begin{proof}
Let $(X,p)$, $(Y,q)$ be modules, let $\theta:X\rightarrow Y$ be a module epimorphism and let $Z$ be the categorical cokernel of $\theta$, i.e. the pushout of $\theta$ with itself as described in Lemma \ref{pushoutslem}. Explicitly, $Z = (Y\bigoplus Y)/\sigma$ where $(x,y)\,\sigma\, (u,v)$ if and only if there exist $x_{1},y_{1}\in Y$, $x_{2},y_{2}\in \im(\theta)$ with $x_{1}\perp x_{2}$, $x_{1}\perp y_{2}$, $y_{1}\perp x_{2}$, $y_{1}\perp y_{2}$, $(x,y) = (x_{1}\vee x_{2},y_{1}\vee y_{2})$ and $(u,v) = (x_{1}\vee y_{2}, y_{1}\vee x_{2})$.

Now assume $\theta$ is not surjective. We will reach a contradiction. Recall $k_{1},k_{2}:Y\rightarrow Z$ are given by $k_{1}(y) = [(y,0)]$, $k_{2}(y) = [(0,y)]$ and observe that $k_{1}(\theta(x)) = k_{2}(\theta(x))$.

We say that a pair $(x,y)\in Y\oplus Y$ is \emph{odd} if $x$ belongs to the image of $\theta$ and $y$ does not. We claim that if $(x,y)\,\sigma\, (u,v)$ then $(x,y)$ is odd if and only if $(u,v)$ is odd. Suppose that $(x,y)$ is odd. Let $(x,y) = (x_{1}\vee x_{2}, y_{1}\vee y_{2})$ and $(u,v) = (x_{1}\vee y_{2},y_{1}\vee x_{2})$, where $x_{2},y_{2}\in \im(\theta)$. Now if $x$ is in the image of $\theta$ then so too are both $x_{1}$ and $x_{2}$ since the image of a module morphism is an order ideal. By assumption, $y_{2}$ is in the image of $\theta$ and so $u$ is in the image of $\theta$. If $v$ were in the image of $\theta$ then so too would $y_{1}$ and $x_{2}$. But this would imply that $y$ was in the image. It follows that $(u,v)$ is odd. The reverse direction follows by symmetry.

\begin{comment}
If follows that $\stackrel{\ast}{\leftrightarrow}$ preserves oddness and so $\equiv$ preserves oddness.
\end{comment}

Let $y$ be an element of $Y$ that is not in the image of $\theta$. Then $(0,y)$ is odd and $(y,0)$ is not. If follows that $(0,y)$ and $(y,0)$ are not $\sigma$-related. We have therefore proved that $k_{1}\neq k_{2}$, a contradiction.
\end{proof}

Let $I$ be a set and let
$$F_{I} = (I \times (S\setminus \left\{0\right\})) \cup \left\{0\right\}.$$
Define $(i,s)\cdot t = (i,st)$ if $st \neq 0$, and $0$ otherwise. Also define $0 \cdot s = 0$ for all $s\in S$. Let $p:F_{I}\rightarrow E(S)$ be defined by $p(i,s) = s^{-1}s$ and $p(0) = 0$. Then this gives $F_{I}$ the structure of a premodule via Proposition \ref{rightidealpremod}. We will say a module $X$ is \emph{free} with respect to a set $I$ if there is a premodule morphism $\sigma: F_{I}\rightarrow X$ such that for any premodule morphism $f:F_{I}\rightarrow Y$ where $Y$ is a module there is a unique module morphism $g:X \rightarrow Y$ such that $g\sigma = f$ and such that if a module $X^{\prime}$ together with a map $\sigma^{\prime}:F_{I}\rightarrow X^{\prime}$ also satisfies these conditions then $X \cong X^{\prime}/\rho$ for some congruence $\rho$.

\begin{lemma}
Let $I$ be a non-empty set. The module 
$$\bigoplus_{i\in I}{S^{\sharp}}$$
is the unique (up to isomorphism) free module with respect to the set $I$.
\end{lemma}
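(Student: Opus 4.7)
The plan is to combine three ingredients already established: the fact (Proposition \ref{rightidealpremod}) that every right ideal of $S$ (in particular $S$ itself) is naturally a premodule; the reflection adjunction of Proposition \ref{Radjointforget}, which turns premodule morphisms from $S$ to a module $Y$ into module morphisms from $S^{\sharp}$ to $Y$; and the coproduct structure on $\Mod_S$ (the $I$-indexed version being the natural extension of Lemma \ref{mapsum}, obtained by taking elements of $\prod_{i\in I} S^{\sharp}$ with only finitely many non-zero coordinates satisfying pairwise orthogonality of the $p^{\sharp}$-images).

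First I would identify $F_I$ with the premodule coproduct of copies of $S$ indexed by $I$. Concretely, for each $i\in I$ the map $\kappa_i\colon S\to F_I$ defined by $\kappa_i(s)=(i,s)$ for $s\neq 0$ and $\kappa_i(0)=0$ is a premodule morphism (the premodule axioms for $F_I$ are inherited coordinate-wise from those of $S$), and any premodule morphism $f\colon F_I\to Y$ is determined by the family $f_i=f\circ\kappa_i\colon S\to Y$. Composing $\kappa_i$ with the unit $\eta_S\colon S\to S^{\sharp}$ and the coproduct injection $\iota_i\colon S^{\sharp}\to\bigoplus_{i\in I}S^{\sharp}$ yields the required morphism $\sigma\colon F_I\to\bigoplus_{i\in I}S^{\sharp}$, namely $\sigma((i,s))=\iota_i(\{s\}^{\downarrow})$ and $\sigma(0)=0$.

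For the universal property, given a module $Y$ and a premodule morphism $f\colon F_I\to Y$, each $f_i\colon S\to Y$ factors uniquely as $f_i=\tilde f_i\circ\eta_S$ for some module morphism $\tilde f_i\colon S^{\sharp}\to Y$, by Proposition \ref{Radjointforget}. The coproduct property then produces a unique module morphism $g\colon\bigoplus_{i\in I}S^{\sharp}\to Y$ with $g\iota_i=\tilde f_i$ for every $i$; a direct check shows $g\sigma=f$. Uniqueness of $g$ follows because any module morphism agreeing with $f$ on $\sigma(F_I)$ is forced, via the adjunction, to agree with $\tilde f_i$ on each $\iota_i(S^{\sharp})$, and hence on the whole coproduct by the coproduct's own universal property. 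Uniqueness of the free module up to isomorphism is then the standard argument: if $(X,\sigma)$ and $(X',\sigma')$ both satisfy the defining property, two applications of the property produce mutually inverse module morphisms $X\to X'$ and $X'\to X$, so $X\cong X'\cong X'/\Delta$ where $\Delta$ is the trivial congruence.

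The main obstacle is bookkeeping rather than conceptual: one must set up the arbitrary-index coproduct in $\Mod_S$ (the text only gives the binary case explicitly) and verify it behaves well with orthogonal joins, and one must check that $F_I$ really is a premodule (axioms (PRM1) and (PRM2) require that the passage from $S$ to the disjoint-union-style premodule $F_I$ preserves strong orthogonality and orthogonal join, which it does because distinct $i,j\in I$ give elements that are already orthogonal and because joins are computed coordinate-wise). Once these two housekeeping steps are done, the universal property is formal from the adjunction.
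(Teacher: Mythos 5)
Your proof is correct, but it reaches the conclusion by a different route from the paper's. The paper verifies the factorization property by direct computation: it takes the same $\sigma(i,s)=(0,\ldots,s^{\downarrow},\ldots,0)$ and, given a premodule morphism $f\colon F_I\to Y$, simply defines $g$ on a tuple $(\{s_{1,1},\ldots,s_{1,m_1}\}^{\downarrow},\ldots)$ as the orthogonal join $\bigvee_{k}\bigvee_{i}f(k,s_{k,i})$, checking by hand that this is a module morphism with $g\sigma=f$; you instead assemble $g$ from the reflection of Proposition \ref{Radjointforget} applied to each copy of $S$ together with the $I$-indexed coproduct property, which is cleaner conceptually but leans on the arbitrary-index coproduct's universal property, which the paper only sketches beyond the binary case (as you note, the extension is routine, since every element of the sum is a finite orthogonal join of $\iota_i$-images). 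The more substantive divergence is in the quotient clause of the definition of freeness: the paper takes any $(X',\sigma')$ with the factorization property, obtains $g\colon X'\to X$ with $g\sigma'=\sigma$, proves $g$ surjective directly (each element of $\bigoplus_{i\in I}S^{\sharp}$ is a finite orthogonal join of elements $\sigma(k,s)$, and the corresponding $\sigma'(k,s)$ are orthogonal in $X'$ because $\sigma'$ preserves $p$), and concludes $X\cong X'/\ker(g)$ by Lemma \ref{FITmods}; you run the standard mutually-inverse-morphisms argument to get $X\cong X'$ and take $\rho$ to be the diagonal congruence. Your version is shorter, but it uses the uniqueness half of $X'$'s factorization property, whereas the paper's surjectivity argument needs only the existence of factorizations through $\sigma'$, so it delivers the quotient statement under a weaker hypothesis on $X'$.

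One small correction to an aside of yours: elements $(i,s)$ and $(j,t)$ of $F_I$ with $i\neq j$ are not automatically orthogonal, since nothing forces $s^{-1}s\,t^{-1}t=0$; what makes (PRM1) and (PRM2) for $F_I$ reduce to the corresponding facts for $S$ is that such elements have no common upper bound in $F_I$, hence are never strongly orthogonal, so every strongly orthogonal pair lies in a single index. This does not affect your argument, and in any case the premodule structure on $F_I$ is part of the paper's setup preceding the lemma.
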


\begin{proof}
Let $X = \bigoplus_{i\in I}{S^{\sharp}}$.
Define $\sigma: F_{I}\rightarrow X$ by 
$$\sigma(i,s) = (0,\ldots,0,s^{\downarrow},0,\ldots)$$
where $s$ is in the $i$th position of $X$.
This will be a premodule morphism essentially for the same reason as $\iota$ is in Proposition \ref{Radjointforget}.
Now suppose $Y$ is a module and $f:F_{I}\rightarrow Y$ is a premodule morphism.
Define $g:X\rightarrow Y$ by
$$g(\left\{s_{1,1},\ldots,s_{1,m_{1}}\right\}^{\downarrow}, \left\{s_{2,1},\ldots,s_{2,m_{2}}\right\}^{\downarrow}, \ldots) =
\bigvee_{k\in I}{\bigvee_{i=1}^{m_{k}}{f(k,s_{k,i})}}.$$
It is easy to see that $g$ will be a module morphism and that $g\sigma = f$.
Suppose that $X^{\prime}$ is a module such that $\sigma^{\prime}:F_{I}\rightarrow X^{\prime}$ also satisfies the above condition. Then there exists a module morphism $g:X^{\prime} \rightarrow X$ such that $g\sigma^{\prime} = \sigma$. Let 
$$x = (\left\{s_{1,1},\ldots,s_{1,m_{1}}\right\}^{\downarrow}, \left\{s_{2,1},\ldots,s_{2,m_{2}}\right\}^{\downarrow}, \ldots)\in X$$
be arbitrary. Then
$$x = g\big(\bigvee_{k\in I}{\bigvee_{i=1}^{m_{k}}{\sigma^{\prime}(k,s_{k,i})}}\big).$$
Thus $g$ is surjective and so by Lemma \ref{FITmods} we have $X\cong X^{\prime}/\ker(g)$. It is easy to see that $X$ will then be unique up to isomorphism.
\end{proof}

\begin{lemma}
\label{modcoeq}
$\Mod_{S}$ has all coequalisers.
\end{lemma}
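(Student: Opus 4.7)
The plan is to construct the coequaliser directly as a quotient of $Y$ by a suitable congruence, exploiting the machinery already developed in the chapter. Given two module morphisms $f, g : (X,p) \to (Y,q)$, I would first introduce the binary relation
$$R = \{(f(x), g(x)) : x \in X\} \cup \{(g(x), f(x)) : x \in X\} \cup \Delta_{Y}$$
on $Y$, and then take $\rho$ to be the intersection of all congruences on $Y$ containing $R$. This intersection is itself a congruence by the earlier lemma on intersections, and the collection of congruences containing $R$ is non-empty because $\rho_{\max}$ contains $R$: indeed, by axiom (EM2) applied to $f$ and $g$, we have $q(f(x)) = p(x) = q(g(x))$ for every $x \in X$. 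Hence the smallest congruence $\rho$ containing $R$ exists.

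Next I would put $C = Y/\rho$ with the canonical projection $\pi : Y \to C$. By the quotient lemma, $C$ inherits the structure of a module, and by Lemma \ref{projectioncong}, $\pi$ is a module morphism. By construction $\pi f = \pi g$, so we have a candidate fork $X \rightrightarrows Y \xrightarrow{\pi} C$. To verify the universal property, suppose $h : Y \to Z$ is a module morphism with $hf = hg$. Then $\ker(h)$ is a congruence on $Y$ (by the kernel lemma) which contains $R$, hence contains $\rho$. Therefore the set-theoretic map $\bar{h} : C \to Z$ defined by $\bar{h}([y]) = h(y)$ is well defined. I would then check in order that $\bar{h}$ satisfies (EM1), (EM2), (PM) and (MM): each follows routinely from the corresponding property of $h$ together with the definition of the module structure on $C$, the only non-obvious point being (PM), where one uses axiom (P1) together with (EM2) to deduce that $\bar{h}([y]) = 0$ forces $p([y]) = p(y) = 0$ and hence $y = 0$. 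Uniqueness of $\bar{h}$ is immediate from the surjectivity of $\pi$.

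The potential obstacle, and the point I would treat most carefully, is the step that produces $\rho$ and the verification that $\bar{h}$ preserves orthogonal joins. Forming $\rho$ as the intersection of congruences is clean, but one must genuinely use the presence of $\rho_{\max}$ as an upper bound to guarantee that at least one congruence contains $R$. After that, the subtle axiom is (MM) for $\bar{h}$: it requires that whenever $[x] \perp [y]$ in $C$, one has $x \perp y$ in $Y$, which in turn needs $p([y]) = p(y)$ — that is, axiom (C2) for $\rho$ — together with condition (C3) so that $[x \vee y] = [x] \vee [y]$ in $C$. Both of these are built into the very definition of congruence, so the verification is short once the right congruence has been isolated. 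With these pieces in place, $(C, \pi)$ is the coequaliser of $f$ and $g$ in $\Mod_{S}$.
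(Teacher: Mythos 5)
Your proof is correct, but it takes a genuinely different route from the paper. You construct the coequaliser abstractly, as the quotient of $Y$ by the \emph{smallest} congruence containing $\{(f(x),g(x))\}$, obtained as an intersection of congruences; the key observations that make this work are exactly the ones you flag, namely that the family of congruences containing $R$ is non-empty because $\rho_{\max}$ contains $R$ (via (EM2), $q(f(x))=p(x)=q(g(x))$), and that the universal property then falls out of $\ker(h)$ being a congruence containing $R$, hence containing $\rho$. One small point of care: the paper's intersection lemma is stated only for two congruences, whereas you need arbitrary intersections; the same componentwise check of (C1)--(C3) and of being an equivalence relation goes through verbatim, but you should say so rather than cite the binary lemma as if it covered this. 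The paper instead builds the congruence \emph{explicitly}: it defines $a\leftrightarrow b$ when $a=f_{1}(x_{1})\vee f_{2}(x_{2})\vee y$ and $b=f_{1}(x_{2})\vee f_{2}(x_{1})\vee y$ with the appropriate orthogonality conditions, takes the transitive closure $\sigma$, verifies directly that $\sigma$ satisfies (C3), and then checks that any $g$ with $gf_{1}=gf_{2}$ is constant on $\sigma$-classes. Your approach buys brevity and avoids the fiddly verification that the transitive closure of $\leftrightarrow$ is a congruence; the paper's approach buys a concrete description of exactly which elements of $Y$ get identified in the coequaliser, which is more informative (and mirrors the explicit congruence used in the pushout construction of Lemma \ref{pushoutslem}), at the cost of a longer computation. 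Your verification of the module-morphism axioms for $\bar{h}$, including the observation that (MM) reduces to (C2) and (C3) for the quotient, is sound.
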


\begin{proof}
Let $f_{1},f_{2}:(X,p)\rightarrow (Y,q)$ be two module morphisms. We will say $a\leftrightarrow b$ in $Y$ if there exist $x_{1},x_{2}\in X$ and $y\in Y$ such that 
$$p(x_{1})p(x_{2}) = p(x_{1})q(y) = p(x_{2})q(y) = 0,$$
$$a = f_{1}(x_{1})\vee f_{2}(x_{2})\vee y$$
and
$$b = f_{1}(x_{2})\vee f_{2}(x_{1})\vee y.$$
Note that this implies that $q(a) = q(b)$. Let $\sigma$ be the transitive closure of $\leftrightarrow$. We now show that $\sigma$ is a congruence on $Y$. It is easy to see that (C1) and (C2) hold, so we just check (C3). The key observation is that if $a \leftrightarrow b$, $c\leftrightarrow d$, $a\perp c$ and $b\perp d$ then $a\perp d$ and $c\perp b$. Suppose $x_{1},x_{2},x_{3},x_{4}\in X$ and $y_{1}y_{2}\in Y$ are such that 
$$p(x_{1})p(x_{2}) = p(x_{1})q(y_{1}) = p(x_{2})q(y_{1}) = 0,$$
$$p(x_{3})p(x_{4}) = p(x_{3})q(y_{2}) = p(x_{4})q(y_{2}) = 0,$$
$$a = f_{1}(x_{1})\vee f_{2}(x_{2})\vee y_{1},$$
$$b = f_{1}(x_{2})\vee f_{2}(x_{1})\vee y_{1},$$
$$c = f_{1}(x_{3})\vee f_{2}(x_{4})\vee y_{2}$$
and
$$d = f_{1}(x_{4})\vee f_{2}(x_{3})\vee y_{2}.$$
Then
$$a\vee c = (f_{1}(x_{1})\vee f_{2}(x_{2})\vee y_{1})\vee (f_{1}(x_{3})\vee f_{2}(x_{4})\vee y_{2}) = f_{1}(x_{1}\vee x_{3}) \vee f_{2}(x_{2}\vee x_{4})\vee (y_{1}\vee y_{2})$$
and
$$b\vee d = (f_{1}(x_{2})\vee f_{2}(x_{1})\vee y_{1})\vee (f_{1}(x_{4})\vee f_{2}(x_{3})\vee y_{2}) = f_{2}(x_{1}\vee x_{3}) \vee f_{1}(x_{2}\vee x_{4})\vee (y_{1}\vee y_{2}),$$
so that $a\vee c\leftrightarrow b\vee d$. It is then easy to see that the transitive closure, $\sigma$, of $\leftrightarrow$ will be a congruence. Let $K = Y / \sigma$ and let $k:Y\rightarrow K$ be the projection map, which we know by the preceding theory is a module morphism. Then by construction $kf_{1} = kf_{2}$. Now suppose $g:(Y,q)\rightarrow (Z,r)$ is a module morphism such that $gf_{1} = gf_{2}$. Let $a,b\in Y$ be $\leftrightarrow$-related and suppose that $x_{1},x_{2}\in X$ and $y\in Y$ are such that 
$$p(x_{1})p(x_{2}) = p(x_{1})q(y) = p(x_{2})q(y) = 0,$$
$$a = f_{1}(x_{1})\vee f_{2}(x_{2})\vee y$$
and
$$b = f_{1}(x_{2})\vee f_{2}(x_{1})\vee y.$$
Then
$$g(a) = g(f_{1}(x_{1}))\vee g(f_{2}(x_{2}))\vee g(y) = g(f_{2}(x_{1}))\vee g(f_{1}(x_{2}))\vee g(y) = g(b).$$
More generally, if $a\,\sigma\, b$ then $g(a) = g(b)$. Since $k$ is surjective, for each $c\in K$, $k^{-1}(c)$ is non-empty. We therefore define $g^{\prime}:K\rightarrow Z$ by 
$$g^{\prime}(k(a)) = g(a).$$
The preceding remarks tell us that this map is well-defined. It is easy to check that $g^{\prime}$ is a pointed \'{e}tale morphism. It is in fact a module morphism since if $k(a)\perp k(b)$ then 
\begin{eqnarray*}
g^{\prime}(k(a)\vee k(b)) = g^{\prime}(k(a\vee b)) = g(a\vee b) = g(a)\vee g(b) = g^{\prime}(k(a))\vee g^{\prime}(k(b)).
\end{eqnarray*}
We have $g^{\prime}k = g$ by construction, and this is the unique map satisfying these properties. Thus $(K,k)$ is the coequaliser of $f_{1},f_{2}$.
\end{proof}

It follows from Lemma \ref{modcoeq} and the fact that $\Mod_{S}$ allows arbitrary coproducts that $\Mod_{S}$ is cocomplete (and so in fact all pushouts exist, not just those of Lemma \ref{pushoutslem}). On the other hand, it is not complete as it is not possible to define a product on modules because of axiom (EM2) for module morphisms. Furthermore, not all pullbacks exist. For example, if $(X,p)$ and $(Y,q)$ are such that $|X|,|Y|>1$ then the maps $\iota_{1}:X\rightarrow X\bigoplus Y$ and $\iota_{2}:Y\rightarrow X\bigoplus Y$ will not have a pullback. We do have the following consolatory lemma:

\begin{lemma}
$\Mod_{S}$ has all equalisers.
\end{lemma}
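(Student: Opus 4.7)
The plan is to construct equalisers in the most natural way possible, namely as set-theoretic equalisers equipped with the inherited module structure, then verify that the universal property follows from a routine argument.

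Given two module morphisms $f,g:(X,p)\rightarrow(Y,q)$, I would define
$$E = \{x\in X : f(x)=g(x)\}$$
and let $\iota:E\hookrightarrow X$ be the set-theoretic inclusion. The first step is to show that $E$ is a submodule of $X$. Clearly $0\in E$, and if $x\in E$ and $s\in S$ then $f(x\cdot s)=f(x)\cdot s=g(x)\cdot s=g(x\cdot s)$, so $E$ is closed under the action. If $x_{1},x_{2}\in E$ with $x_{1}\perp x_{2}$, then since $X$ is a module the join $x_{1}\vee x_{2}$ exists in $X$, and both $f$ and $g$ being module morphisms (axiom (MM)) gives $f(x_{1}\vee x_{2})=f(x_{1})\vee f(x_{2})=g(x_{1})\vee g(x_{2})=g(x_{1}\vee x_{2})$, so $x_{1}\vee x_{2}\in E$. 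Hence $E$ inherits from $X$ the action, the map $p$, and the orthogonal joins, making $(E,p|_{E})$ a module and $\iota$ a module morphism by construction. By definition $f\iota=g\iota$.

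For the universal property, suppose $(Z,r)$ is a module and $h:Z\rightarrow X$ is a module morphism with $fh=gh$. Then for every $z\in Z$ we have $f(h(z))=g(h(z))$, so $h(z)\in E$. Thus $h$ factors set-theoretically through $\iota$ as $h=\iota\bar{h}$ where $\bar{h}:Z\rightarrow E$ is just $h$ with restricted codomain. Since the action, $p$-values, and orthogonal joins on $E$ are the same as in $X$, the map $\bar{h}$ automatically inherits all the module morphism axioms (EM1), (EM2), (PM) and (MM) from $h$. Uniqueness of $\bar{h}$ is immediate from the injectivity of $\iota$.

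There is no real obstacle here since, unlike the situation for products and general pullbacks (which fail because of the compatibility condition (EM2) tying ranges of module morphisms to the map $p$), the equaliser lives inside a single module $X$, so the potentially awkward condition $p(x)=q(f(x))=q(g(x))$ is already guaranteed for every $x\in E$. The only point that deserves attention is the verification that $E$ is closed under orthogonal joins, and this follows immediately from axiom (MM) applied to both $f$ and $g$.
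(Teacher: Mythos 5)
Your proposal is correct and follows essentially the same route as the paper: take the set-theoretic equaliser $K=\{x\in X\mid f(x)=g(x)\}$, note it is closed under the action and under orthogonal joins (via (MM) for both $f$ and $g$), and check the universal property by restricting the codomain of any $h$ with $fh=gh$. Nothing further is needed.
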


\begin{proof}
Let $(X,p)$, $(Y,q)$ be modules, $f,g:X\rightarrow Y$ be module morphisms and let
$$K = \left\{x\in X|f(x) = g(x)\right\}.$$
Then $K$ has the structure of a module since $x_{1},x_{2}\in K$ with $x_{1}\perp x_{2}$ implies $x_{1}\vee x_{2}\in K$. It inherits the map $q:X\rightarrow E(S)$ from $X$. Define $\iota:K\rightarrow X$ to be the embedding map. This is readily seen to be a module monomorphism. Suppose $(Z,r)$ is a module and $h:Z\rightarrow X$ is a module morphism with $fh = gh$. Then this implies $\im(h) \subseteq K$ and so there is a module morphism $h^{\prime}:Z\rightarrow K$ with $h = \iota h^{\prime}$ and this morphism is unique by construction. Thus $(K,\iota)$ is the equaliser of $(f,g)$.
\end{proof}

A module $P$ is said to be \emph{projective} if for every module morphism $\pi:P\rightarrow Y$ and module epimorphism $\alpha:X\rightarrow Y$ there exists a module morphism $\beta:P\rightarrow X$ such that $\alpha\beta = \pi$. 
%It is clear from Lemma \ref{freemod} that we cannot hope to study projective modules as direct summands of free modules.

\begin{lemma}
Let $P_{1}, P_{2}$ be projective modules. Then $P_{1}\bigoplus P_{2}$ is projective.
\end{lemma}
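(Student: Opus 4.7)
The plan is to exploit the universal property of the coproduct $P_1\bigoplus P_2$ together with the projectivity of each $P_i$ separately. Concretely, given any module morphism $\pi: P_1\bigoplus P_2\rightarrow Y$ and any module epimorphism $\alpha: X\rightarrow Y$, I need to construct a lift $\beta: P_1\bigoplus P_2\rightarrow X$ such that $\alpha\beta=\pi$.

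First, I would compose $\pi$ with the canonical injections $\iota_1: P_1\rightarrow P_1\bigoplus P_2$ and $\iota_2: P_2\rightarrow P_1\bigoplus P_2$ (which we have already verified to be module morphisms) to obtain module morphisms $\pi\iota_1: P_1\rightarrow Y$ and $\pi\iota_2: P_2\rightarrow Y$. Applying projectivity of $P_1$ to the diagram formed by $\pi\iota_1$ and $\alpha$ produces a module morphism $\beta_1: P_1\rightarrow X$ with $\alpha\beta_1=\pi\iota_1$, and similarly projectivity of $P_2$ yields $\beta_2: P_2\rightarrow X$ with $\alpha\beta_2=\pi\iota_2$.

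Next, I would invoke the universal property of $P_1\bigoplus P_2$ as a coproduct in $\Mod_{S}$ (equivalently, use the construction $\beta_1\oplus\beta_2$ of Lemma \ref{mapsum}) to obtain the unique module morphism $\beta: P_1\bigoplus P_2\rightarrow X$ satisfying $\beta\iota_1=\beta_1$ and $\beta\iota_2=\beta_2$. It then remains to verify $\alpha\beta=\pi$, which follows from uniqueness in the coproduct: both $\alpha\beta$ and $\pi$ are module morphisms $P_1\bigoplus P_2\rightarrow Y$ whose precompositions with $\iota_1$ and $\iota_2$ coincide (yielding $\pi\iota_1$ and $\pi\iota_2$ respectively), so they must be equal.

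There is no real obstacle here; the argument is the standard diagrammatic one and goes through verbatim once the coproduct in $\Mod_{S}$ and its universal property are in place, both of which have already been established earlier in the chapter. The only minor point worth double-checking is that the map $\beta_1\oplus\beta_2$ is indeed a module morphism into $X$ (not merely into some coproduct), but this is precisely the content of Lemma \ref{mapsum}.
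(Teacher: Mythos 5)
Your proposal is correct and matches the paper's proof in all essentials: compose $\pi$ with the injections, lift each $\pi\iota_{i}$ through $\alpha$ using projectivity of $P_{i}$, and glue the lifts via $\beta_{1}\oplus\beta_{2}$ as in Lemma \ref{mapsum}. The only cosmetic difference is that the paper checks $\alpha(\beta_{1}\oplus\beta_{2})=\pi$ by a direct computation on pairs $(x,y)$ rather than by appealing to uniqueness in the coproduct's universal property.
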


\begin{proof}
Let $\pi:P_{1}\bigoplus P_{2}\rightarrow Y$ be a module morphism and $\alpha:X\rightarrow Y$ a module epimorphism. Define $\iota_{1}:P_{1}\rightarrow P_{1}\bigoplus P_{2}$ by $\iota_{1}(x) = (x,0)$ and define $\iota_{2}:P_{2}\rightarrow P_{1}\bigoplus P_{2}$ by $\iota_{2}(y) = (0,y)$. Then $\pi\iota_{1}:P_{1}\rightarrow Y$ and $\pi\iota_{2}:P_{2}\rightarrow Y$ are module morphisms and so there are maps $\beta_{1}:P_{1}\rightarrow X$ and $\beta_{2}:P_{2}\rightarrow X$ such that $\pi\iota_{1} = \alpha\beta_{1}$ and $\pi\iota_{2} = \alpha\beta_{2}$. Let $\gamma = \beta_{1}\oplus\beta_{2}:P_{1}\bigoplus P_{2}\rightarrow X$ so that $\gamma$ is given by $\gamma(x,y) = \beta_{1}(x)\vee \beta_{2}(y)$. We know from Lemma \ref{mapsum} that $\gamma$ is a module morphism. Further
$$(\alpha\gamma)(x,y) = \alpha(\beta_{1}(x)\vee \beta_{2}(y)) = \alpha(\beta_{1}(x))\vee\alpha(\beta_{2}(y)) = (\pi\iota_{1}(x))\vee (\pi\iota_{2}(y)) $$
$$= \pi(x,0)\vee \pi(0,y) = \pi(x,y).$$
\end{proof}

We can extend the previous lemma: if $P_{1},\ldots,P_{n}$ are projective modules then 
$$\bigoplus_{i=1}^{n}{P_{i}}$$
is projective.
The converse is also true:

\begin{lemma}
Let $P = P_{1}\bigoplus P_{2}$ be a projective module. Then $P_{1}$ and $P_{2}$ are projective.
\end{lemma}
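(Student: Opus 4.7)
The plan is to adapt the standard argument that direct summands of projective modules are projective, with the key adjustment being to work around the absence of honest projection morphisms $P_1\bigoplus P_2\to P_1$ in $\Mod_{S}$. Given $\pi_{1}\colon P_{1}\to Y$ and a module epimorphism $\alpha\colon X\to Y$, the goal is to produce a lift $\beta_{1}\colon P_{1}\to X$ with $\alpha\beta_{1}=\pi_{1}$; doing so for $P_{1}$ establishes projectivity of $P_{1}$, and the argument for $P_{2}$ is identical.

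The first step is to manufacture a problem that the projectivity of $P_{1}\bigoplus P_{2}$ can solve. I would form the two auxiliary module morphisms
$$\tilde{\pi}\colon P_{1}\bigoplus P_{2}\to Y\bigoplus P_{2},\qquad \tilde{\pi}(x,y)=(\pi_{1}(x),y),$$
$$\tilde{\alpha}\colon X\bigoplus P_{2}\to Y\bigoplus P_{2},\qquad \tilde{\alpha}(a,b)=(\alpha(a),b).$$
One checks that $\tilde{\pi}$ is well defined since $q(\pi_{1}(x))p_{2}(y)=p_{1}(x)p_{2}(y)=0$, and that (EM2) and preservation of orthogonal joins hold componentwise using the properties of $\pi_{1}$ and of the orthogonal direct sum. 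The map $\tilde{\alpha}$ is a module morphism for the same reasons, and it is surjective: given $(y,b)\in Y\bigoplus P_{2}$, pick $a\in X$ with $\alpha(a)=y$ using surjectivity of $\alpha$; then $p_{X}(a)p_{2}(b)=q(\alpha(a))p_{2}(b)=q(y)p_{2}(b)=0$, so $(a,b)\in X\bigoplus P_{2}$ and $\tilde{\alpha}(a,b)=(y,b)$. Since epimorphisms in $\Mod_{S}$ coincide with surjections, $\tilde{\alpha}$ is an epimorphism.

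The second step invokes projectivity of $P_{1}\bigoplus P_{2}$ to obtain $\beta\colon P_{1}\bigoplus P_{2}\to X\bigoplus P_{2}$ with $\tilde{\alpha}\beta=\tilde{\pi}$. Writing $\beta(x,0)=(a(x),b(x))$, the equation $\tilde{\alpha}\beta(x,0)=\tilde{\pi}(x,0)=(\pi_{1}(x),0)$ forces $\alpha(a(x))=\pi_{1}(x)$ and, crucially, $b(x)=0$. Thus $\beta\circ\iota_{1}$ factors through the submodule $\iota_{1}(X)=\{(a,0):a\in X\}\subseteq X\bigoplus P_{2}$, which is closed under the action and under orthogonal joins.

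The final step is to observe that $\iota_{1}\colon X\to\iota_{1}(X)$ is an isomorphism of modules (it is a bijective module morphism whose set-theoretic inverse $(a,0)\mapsto a$ trivially satisfies (EM1), (EM2), (PM) and preserves orthogonal joins), so composing with $\iota_{1}^{-1}$ yields a module morphism $\beta_{1}\colon P_{1}\to X$ defined by $\beta_{1}(x)=a(x)$, which satisfies $\alpha\beta_{1}=\pi_{1}$ by construction. I expect the only subtle point to be the check that $\tilde{\alpha}$ really is surjective; the temptation is to view $X\bigoplus P_{2}$ and $Y\bigoplus P_{2}$ as sharing the same second component "for free," but one must verify that the compatibility condition $p(\cdot)p_{2}(\cdot)=0$ transfers correctly across $\alpha$, which is exactly where (EM2) for $\alpha$ is used.
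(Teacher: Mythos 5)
Your proposal is correct and follows essentially the same route as the paper: lift the problem to $P_{1}\bigoplus P_{2}$ via the auxiliary morphisms $(x,y)\mapsto(\pi_{1}(x),y)$ and $(a,b)\mapsto(\alpha(a),b)$, apply projectivity of the sum, observe that the second component of the lift vanishes on elements of the form $(x,0)$, and read off the desired lift $\beta_{1}$. The only difference is that you spell out the surjectivity of $\tilde{\alpha}$ and the module-morphism property of $\beta_{1}$ (via the submodule $\{(a,0):a\in X\}$), details the paper leaves as "easy to check."
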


\begin{proof}
We will prove $P_{1}$ is projective. The proof for $P_{2}$ is similar. Let $\pi:P_{1}\rightarrow Y$ be a module morphism and let $\alpha:X\rightarrow Y$ be a module epimorphism. Define $\pi^{\prime}:P_{1}\bigoplus P_{2}\rightarrow Y\bigoplus P_{2}$ by 
$$\pi^{\prime}(p_{1},p_{2}) = (\pi(p_{1}), p_{2})$$
and define $\alpha^{\prime}:X\bigoplus P_{2}\rightarrow Y\bigoplus P_{2}$ by
$$\alpha^{\prime}(x,p_{2}) = (\alpha(x),p_{2}).$$
It is easy to check that $\pi^{\prime}$ is a module morphism and $\alpha^{\prime}$ is a module epimorphism.
There is thus a module morphism $\beta^{\prime}:P_{1}\bigoplus P_{2}\rightarrow X\bigoplus P_{2}$ such that $\alpha^{\prime}\beta^{\prime} = \pi^{\prime}$. Denote by 
$$\beta^{\prime}(p_{1},p_{2}) = (\beta^{\prime}_{1}(p_{1},p_{2}),\beta^{\prime}_{2}(p_{1},p_{2})).$$
We thus have
\begin{eqnarray*}
(\pi(p_{1}),0) &=& \pi^{\prime}(p_{1},0) = \alpha^{\prime}(\beta^{\prime}(p_{1},0)) = \alpha^{\prime}(\beta^{\prime}_{1}(p_{1},0),\beta^{\prime}_{2}(p_{1},0)) \\
&=& (\alpha(\beta^{\prime}_{1}(p_{1},0)),\beta^{\prime}_{2}(p_{1},0)).
\end{eqnarray*}
It follows that $\beta^{\prime}_{2}(p_{1},0) = 0$. So define $\beta:P_{1}\rightarrow X$ by
$$\beta(p_{1}) = \beta^{\prime}_{1}(p_{1},0).$$
By the above this is a module morphism. Further
$$\alpha(\beta(p_{1})) = \alpha(\beta^{\prime}_{1}(p_{1},0)) = \pi(p_{1})$$
and so $\alpha\beta = \pi$.
\end{proof}

\begin{lemma}
$(eS)^{\sharp}$ is a projective module for each idempotent $e\in E(S)$.
\end{lemma}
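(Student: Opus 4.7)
The plan is to reduce projectivity of $(eS)^{\sharp}$ to a lifting problem at the level of premodules, using the adjunction of Proposition \ref{Radjointforget}. Given a module morphism $\pi: (eS)^{\sharp} \to Y$ and a module epimorphism $\alpha: X \to Y$, I form the premodule morphism $\tilde{\pi} = \pi\iota: eS \to Y$, where $\iota: eS \to (eS)^{\sharp}$ is the unit $s \mapsto s^{\downarrow}$. My goal is then to construct a premodule morphism $\tilde{\beta}: eS \to X$ with $\alpha\tilde{\beta} = \tilde{\pi}$; applying the functor $R$ and using the adjunction will produce a module morphism $\beta: (eS)^{\sharp} \to X$ with $\beta\iota = \tilde{\beta}$, and then $\alpha\beta$ and $\pi$ both correspond to $\tilde{\pi}$ under the adjunction, forcing $\alpha\beta = \pi$ by uniqueness.

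To build $\tilde{\beta}$, I first use the earlier result that every epimorphism in $\Mod_S$ is surjective to pick an element $x \in X$ with $\alpha(x) = \tilde{\pi}(e)$. By axiom (EM2) applied to $\alpha$ and to the premodule morphism $\tilde{\pi}$, we have $p_X(x) = p_Y(\tilde{\pi}(e)) = e$. Now Lemma \ref{cyclicplem} (with $x$ in place of its $x$) gives a surjective premodule morphism $\theta: p_X(x)S = eS \to xS$, $es \mapsto x\cdot s$, so I define
$$\tilde{\beta}(es) = x\cdot s,$$
viewed as an element of $X$ via the inclusion $xS \hookrightarrow X$. The identity $\alpha\tilde{\beta}(es) = \alpha(x)\cdot s = \tilde{\pi}(e)\cdot s = \tilde{\pi}(es)$ is then immediate from (EM1).

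The one genuinely delicate point is verifying that the inclusion $xS \hookrightarrow X$ is itself a premodule morphism, so that the composite $\tilde{\beta} = (\text{incl})\circ\theta$ really is a premodule morphism. I must check that if $xs_1, xs_2 \in xS$ are strongly orthogonal in the premodule $xS$ with join $xu$ there, then $xs_1\vee xs_2 = xu$ also holds in the module $X$. This follows because $p_X(xs_1)p_X(xs_2) = 0$ (so the join $xs_1\vee xs_2$ exists in $X$ with $p_X(xs_1\vee xs_2) = p_X(xs_1)\vee p_X(xs_2)$) and because both $xu$ and the $X$-join are elements dominating $xs_1, xs_2$ whose $p_X$-value is $p_X(xs_1)\vee p_X(xs_2)$; Lemma \ref{usefuljoinlem} then forces them to coincide.

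The main obstacle is really just bookkeeping between the three partial orders involved (in $eS$, in $(eS)^{\sharp}$, and in $X$); the adjunction of Proposition \ref{Radjointforget} packages all of this cleanly and reduces the whole argument to choosing a single preimage $x$ of $\tilde{\pi}(e)$. No further case analysis on finitely generated orthogonal order ideals is needed, since the extension from $\tilde{\beta}$ to $\beta$ is handled entirely by the universal property of $R$.
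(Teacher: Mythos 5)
Your proof is correct, and the lift you produce is in fact the same map as in the paper's own proof: unwinding the reflection, $\beta(\left\{s_{1},\ldots,s_{m}\right\}^{\downarrow}) = \bigvee_{i=1}^{m} x\cdot s_{i}$ where $x$ is a chosen preimage of $\pi(e^{\downarrow})$, which is exactly the paper's $\beta$ with $y = x$. The difference lies in how the verifications are organised. The paper defines $\beta$ directly on $(eS)^{\sharp}$ and checks by hand that it is well defined with respect to $\equiv$ (merging strongly orthogonal generators), that it is a module morphism, and that $\alpha\beta = \pi$. You work one level down: you build the premodule morphism $\tilde{\beta}(es) = x\cdot s$ on $eS$ --- and your check, via Lemma \ref{usefuljoinlem}, that the inclusion $xS\hookrightarrow X$ is a premodule morphism (i.e.\ that a join formed inside the cyclic premodule $xS$ is still the join in $X$) is precisely the delicate point and is handled correctly --- and then let Proposition \ref{Radjointforget} perform the extension to $(eS)^{\sharp}$ and supply $\alpha\beta = \pi$ through the uniqueness clause of the reflection, rather than by computation. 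What your route buys is that the only genuinely new verification happens at the level of $eS$, with all bookkeeping about finitely generated orthogonal order ideals absorbed into the adjunction; the cost is reliance on Proposition \ref{Radjointforget}, Lemma \ref{cyclicplem} and the surjectivity of epimorphisms, whereas the paper's direct computation is self-contained. Both arguments are valid; yours is the more structural phrasing of the same underlying construction.
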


\begin{proof}
Let $\pi:(eS)^{\sharp}\rightarrow Y$ be a module morphism, $\alpha:X\rightarrow Y$ a module epimorphism and $y\in \alpha^{-1}(\pi(e^{\downarrow}))$ be a fixed element. Define $\beta:(eS)^{\sharp}\rightarrow X$ by
$$\beta(\left\{s_{1},\ldots,s_{m}\right\}^{\downarrow}) = \bigvee_{i=1}^{m}{y\cdot s_{i}}.$$
If $s_{1},s_{2}\in S$ are orthogonal in $S$ then
$$\beta(\left\{s_{1},\ldots,s_{m}\right\}^{\downarrow}) = \bigvee_{i=1}^{m}{y\cdot s_{i}} = y\cdot(s_{1}\vee s_{2})\bigvee (\vee_{i=3}^{m}{y\cdot s_{i}}) = \beta(\left\{s_{1}\vee s_{2},s_{3}\ldots,s_{m}\right\}^{\downarrow}).$$
It follows that $\beta$ is well-defined. It is easy to see that it is a pointed \'{e}tale morphism. If $\left\{s_{1},\ldots,s_{m}\right\}^{\downarrow}, \left\{t_{1},\ldots,t_{n}\right\}^{\downarrow}\in (eS)^{\sharp}$ are orthogonal then
\begin{eqnarray*}
\beta(\left\{s_{1},\ldots,s_{m}\right\}^{\downarrow}\vee \left\{t_{1},\ldots,t_{n}\right\}^{\downarrow}) &=& \beta(\left\{s_{1},\ldots,s_{m},t_{1},\ldots,t_{n}\right\}^{\downarrow})\\
&=& (\vee_{i=1}^{m}{y\cdot s_{i}})\bigvee (\vee_{i=1}^{n}{y\cdot t_{i}})\\
&=& \beta(\left\{s_{1},\ldots,s_{m}\right\}^{\downarrow}) \vee \beta(\left\{t_{1},\ldots,t_{n}\right\}^{\downarrow}).
\end{eqnarray*}
Thus $\beta$ is a module morphism.
Further
\begin{eqnarray*}
(\alpha\beta)(\left\{s_{1},\ldots,s_{m}\right\}^{\downarrow}) &=& \alpha(\vee_{i=1}^{m}{y\cdot s_{i}}) = \vee_{i=1}^{m}{\alpha(y)\cdot s_{i}} = \vee_{i=1}^{m}{\pi(e^{\downarrow})\cdot s_{i}}\\ 
&=& \vee_{i=1}^{m}{\pi(s_{i}^{\downarrow})} = \pi(\vee_{i=1}^{m}{s_{i}^{\downarrow}}) = \pi(\left\{s_{1},\ldots,s_{m}\right\}^{\downarrow}).
\end{eqnarray*}
\end{proof}

The following will be used shortly:

\begin{lemma}
\label{premodisomod}
If $X$ and $Y$ are isomorphic as premodules then $X^{\sharp}$ and $Y^{\sharp}$ are isomorphic as modules.
\end{lemma}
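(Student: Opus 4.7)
The plan is to prove this as a direct consequence of functoriality of the $\sharp$-construction. The main observation is that the assignment $X \mapsto X^{\sharp}$, $\alpha \mapsto \alpha^{\sharp}$, which already appears in the excerpt, is in fact a functor $R \colon \Premod_{S} \rightarrow \Mod_{S}$. Once that is established, the lemma follows immediately from the general categorical fact that functors preserve isomorphisms.

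Concretely, I would first verify the two functoriality identities on the $\sharp$-construction by unwinding the explicit formula for $\alpha^{\sharp}$ given in Lemma \ref{alphasharp}. For composable premodule morphisms $\alpha \colon X \rightarrow Y$ and $\beta \colon Y \rightarrow Z$, and for $A = \{x_{1},\ldots,x_{m}\}^{\downarrow} \in X^{\sharp}$, both $(\beta \alpha)^{\sharp}(A)$ and $\beta^{\sharp}(\alpha^{\sharp}(A))$ equal $\{\beta(\alpha(x_{1})),\ldots,\beta(\alpha(x_{m}))\}^{\downarrow}$, so $(\beta\alpha)^{\sharp} = \beta^{\sharp} \alpha^{\sharp}$. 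Similarly $(\mathrm{id}_{X})^{\sharp}$ sends $A$ to itself, so $(\mathrm{id}_{X})^{\sharp} = \mathrm{id}_{X^{\sharp}}$. This is essentially a one-line check but it needs to be recorded.

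Given functoriality, if $\alpha \colon X \rightarrow Y$ is a premodule isomorphism with inverse $\beta \colon Y \rightarrow X$, then
$$\alpha^{\sharp} \beta^{\sharp} = (\alpha \beta)^{\sharp} = (\mathrm{id}_{Y})^{\sharp} = \mathrm{id}_{Y^{\sharp}}, \qquad \beta^{\sharp} \alpha^{\sharp} = (\beta \alpha)^{\sharp} = (\mathrm{id}_{X})^{\sharp} = \mathrm{id}_{X^{\sharp}}.$$
Hence $\alpha^{\sharp}$ is an isomorphism in $\Mod_{S}$ with inverse $\beta^{\sharp}$, which gives $X^{\sharp} \cong Y^{\sharp}$ as modules. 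There is no real obstacle here: Lemma \ref{alphasharp} already provides that each $\alpha^{\sharp}$ is a module morphism, and the formula for $\alpha^{\sharp}$ is plainly compatible with composition and identities, so the only work is the bookkeeping verification above.
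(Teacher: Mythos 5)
Your functoriality argument is fine as far as it goes, but it quietly relocates all of the actual content of the lemma into the hypothesis, and under the reading of the hypothesis that the paper actually uses this leaves a genuine gap. The paper's own proof (and the way the lemma is invoked right afterwards, for $eS$ and $fS$) takes ``$X$ and $Y$ are isomorphic as premodules'' to mean only that there is a \emph{bijective} premodule morphism $\alpha\colon X\rightarrow Y$. Your argument needs $\beta^{\sharp}$, hence it needs the set-theoretic inverse $\beta=\alpha^{-1}$ to be a premodule morphism. The \'{e}tale and pointed conditions (EM1), (EM2), (PM) do transfer automatically to $\beta$, but (PRMM) does not: there is no formal reason why $\beta$ should carry strongly orthogonal pairs of $Y$ to strongly orthogonal pairs of $X$ and preserve their joins, and without that $\beta^{\sharp}$ is simply not defined, so the computation $\alpha^{\sharp}\beta^{\sharp}=(\alpha\beta)^{\sharp}$ never gets started. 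Verifying this reflection of strong orthogonality is exactly where the work lies: given strongly orthogonal $u,v\in Y$, one sets $x_{1}=\beta(u)$, $x_{2}=\beta(v)$, $z=\beta(u\vee v)$, checks $p(x_{1})p(x_{2})=0$ and $p(z)=p(x_{1})\vee p(x_{2})$ using (EM2), uses injectivity of $\alpha$ to see $x_{1},x_{2}\leq z$, and then applies Lemma \ref{usefuljoinlem} to conclude that $x_{1},x_{2}$ are strongly orthogonal with $z=x_{1}\vee x_{2}$. This is, in slightly different clothing, the argument the paper gives: there the bijective morphism $\theta$ is shown (via surjectivity, injectivity and Lemma \ref{usefuljoinlem}) to reflect strong orthogonality, which is then used to prove directly that the surjection $\theta^{\sharp}$ of Lemma \ref{alphasharp} is injective on reduced representatives.

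So either state explicitly that you are assuming an isomorphism in the category $\Premod_{S}$ (a two-sided inverse that is itself a premodule morphism), in which case your proof is complete but proves a formally weaker statement than the paper uses, or insert the paragraph above showing that the inverse of a bijective premodule morphism automatically satisfies (PRMM). With that step added, your functorial packaging ($R$ preserves isomorphisms, with the easy checks $(\beta\alpha)^{\sharp}=\beta^{\sharp}\alpha^{\sharp}$ and $(\mathrm{id}_{X})^{\sharp}=\mathrm{id}_{X^{\sharp}}$, well-definedness coming from Lemma \ref{alphasharp}) is a perfectly good, and arguably cleaner, way to finish.
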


\begin{proof}
Suppose $\theta:(X,p)\rightarrow (Y,q)$ is a bijective premodule morphism. Then $\theta^{\sharp}:X^{\sharp}\rightarrow Y^{\sharp}$ is a surjective module morphism by Lemma \ref{alphasharp}. 
We will now prove that if $\left\{x_{1},\ldots,x_{m}\right\}^{\downarrow}\in X^{\sharp}$ is such that none of the $x_{i}$'s are strongly orthogonal to each other then $\left\{\theta(x_{1}),\ldots,\theta(x_{m})\right\}^{\downarrow}\in X^{\sharp}$ is such that none of the $\theta(x_{i})$'s are strongly orthogonal to each other. Suppose on the contrary that $x_{1}\perp x_{2}\in X$ are such that they are not strongly orthogonal but such that $\theta(x_{1})$ and $\theta(x_{2})$ are strongly orthogonal. Since $\theta(x_{1})$ and $\theta(x_{2})$ are strongly orthogonal and $\theta$ is surjective there is a $z\in X$ with 
$\theta(z) = \theta(x_{1})\vee \theta(x_{2})$
and 
$p(z) = p(x_{1})\vee p(x_{2})$.
Let $y_{1} = z\cdot p(x_{1})$ and $y_{2} = z\cdot p(x_{2})$. Then $p(y_{1}) = p(x_{1})$ and $p(y_{2}) = p(x_{2})$. Thus by Lemma \ref{usefuljoinlem}, $z = y_{1}\vee y_{2}$ and $y_{1},y_{2}$ are strongly orthogonal. We then have
$$\theta(y_{1}) = \theta(z\cdot p(y_{1})) = \theta(z)\cdot p(x_{1}) = \theta(x_{1}).$$
Similarly, $\theta(y_{2}) = \theta(x_{2})$. Since $\theta$ is injective we must have $y_{1} = x_{1}$ and $y_{2} = x_{2}$, which implies $x_{1}$ and $x_{2}$ are strongly orthogonal, contradicting our original assumption.
Now suppose 
$$\theta^{\sharp}(\left\{x_{1},\ldots,x_{m}\right\}^{\downarrow}) = \theta^{\sharp}(\left\{y_{1},\ldots,y_{n}\right\}^{\downarrow})$$
with none of the $x_{i}$'s strongly orthogonal to each other and none of the $y_{i}$'s strongly orthogonal to each other.
Then we have 
$$\left\{\theta(x_{1}),\ldots,\theta(x_{m})\right\}^{\downarrow} \equiv \left\{\theta(y_{1}),\ldots,\theta(y_{n})\right\}^{\downarrow},$$
with none of the $\theta(x_{i})$'s strongly orthogonal to each other and none of the $\theta(y_{i})$'s strongly orthogonal to each other.
Thus $n = m$ and by the injectivity of $\theta$, 
$$\left\{x_{1},\ldots,x_{m}\right\}^{\downarrow} = \left\{y_{1},\ldots,y_{n}\right\}^{\downarrow}$$
and so $\theta^{\sharp}$ is injective.
\end{proof}

\begin{lemma}
Let $e,f\in E(S)$ be idempotents in $S$. Then $eS$ and $fS$ are isomorphic as premodules if and only if $e\,\mathcal{D}\, f$.
\end{lemma}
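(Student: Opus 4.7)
The plan is to prove both directions by exhibiting, respectively, a premodule isomorphism induced by a witness of the $\mathcal{D}$-relation and, conversely, by extracting such a witness from an abstract isomorphism.

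For the reverse direction, suppose $e\,\mathcal{D}\,f$ and choose $s\in S$ with $s^{-1}s=e$ and $ss^{-1}=f$. I would define $\theta:eS\rightarrow fS$ by $\theta(x)=sx$. This lands in $fS$ because $f\cdot sx=ss^{-1}sx=sx$, and its inverse is $\theta^{-1}(y)=s^{-1}y$ by the symmetric calculation $\theta(s^{-1}y)=ss^{-1}y=fy=y$ for $y\in fS$ and $\theta^{-1}(\theta(x))=s^{-1}sx=ex=x$ for $x\in eS$. The axioms are routine: (EM1) is immediate from associativity, (EM2) follows from $p(sx)=x^{-1}s^{-1}sx=x^{-1}ex=x^{-1}x=p(x)$, and (PM) holds because $sx=0$ forces $x=s^{-1}sx=0$. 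Preservation of strong orthogonal joins (PRMM) is a direct consequence of the orthogonal completeness of $S$ and distributivity of multiplication over orthogonal joins: if $x,y\in eS$ are strongly orthogonal then $sx\perp sy$ in $S$, their join is $s(x\vee y)$, and $p(sx)\vee p(sy)=p(x)\vee p(y)=p(x\vee y)=p(s(x\vee y))$, which is what strong orthogonality in $fS$ requires.

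For the forward direction, let $\theta:eS\rightarrow fS$ be a premodule isomorphism and set $s=\theta(e)\in fS$. From (EM2) and $p(e)=e$ I immediately get $s^{-1}s=p(s)=p(\theta(e))=p(e)=e$. Next, every element of $eS$ has the form $e\cdot u=eu$ for $u\in S$, so (EM1) yields $\theta(eu)=\theta(e)\cdot u=su$; that is, $\theta$ is just left multiplication by $s$. To identify $ss^{-1}$ with $f$ I use surjectivity at the idempotent $f\in fS$: there exists $x_0\in eS$ with $sx_0=f$, and then $x_0=ex_0=s^{-1}sx_0=s^{-1}f$, whence $f=sx_0=ss^{-1}f$. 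Since $s\in fS$ we already have $ss^{-1}\leq f$, so $ss^{-1}f=ss^{-1}$, giving $ss^{-1}=f$. Thus $s$ witnesses $e\,\mathcal{D}\,f$.

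The only delicate point is checking that $\theta$ really is left multiplication by $s=\theta(e)$ in the forward direction, which rests entirely on (EM1) applied to the decomposition $eu=e\cdot u$ of an arbitrary element of $eS$; once this is in hand the rest is a short computation using $s^{-1}s=e$ and surjectivity. There is no serious obstacle, since the premodule structure on $eS$ and $fS$ has been tailored (via Proposition \ref{rightidealpremod}) so that the natural partial orders and orthogonality notions inherited from $S$ coincide with the abstract ones, making the transfer between the semigroup and the premodule seamless.
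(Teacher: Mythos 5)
Your proof is correct and follows essentially the same route as the paper: left multiplication by a witness $s$ of the $\mathcal{D}$-relation gives the isomorphism in one direction, and in the other direction $s=\theta(e)$ is extracted, with $s^{-1}s=e$ from (EM2) and $ss^{-1}=f$ obtained from surjectivity (the paper computes the same fact via the inverse morphism $\beta(f)$, a cosmetic difference). Your direct verification of the premodule axioms in the easy direction replaces the paper's appeal to its cyclic-premodule lemma, but the substance is identical.
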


\begin{proof}
($\Leftarrow$) Suppose $e = s^{-1}s$ and $f = ss^{-1}$. Define $\theta: eS\rightarrow fS$ by $\theta(t) = st$. This a well-defined surjective premodule morphism by Lemma \ref{cyclicplem} since $sS$ is isomorphic to $fS$. Further, if $\theta(t) = \theta(u)$ then $st = su$ which implies $s^{-1}st = s^{-1}su$ and so $t = u$. Thus $\theta$ is also injective.

($\Rightarrow$) Suppose $\alpha:(eS,p)\rightarrow (fS,q)$ is a bijective premodule morphism with inverse $\beta$ and suppose $\alpha(e) = s = fse$
and $\beta(f) = t = etf.$
Then $s^{-1}s = q(s) = p(e) = e$ and $t^{-1}t = f$. Further
$$f = \alpha(t) = \alpha(e)\cdot t = st$$
and so 
$$s^{-1} = s^{-1}f = s^{-1}st = et = t.$$
Thus $f = ss^{-1}$ and $e\,\mathcal{D}\, f$.  
\end{proof}

\begin{lemma}
\label{isoidemmod}
Let $e,f\in E(S)$ be idempotents in $S$. Then $(eS)^{\sharp}$ and $(fS)^{\sharp}$ are isomorphic as modules if and only if $e\,\mathcal{D}\, f$.
\end{lemma}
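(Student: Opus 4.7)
My plan is as follows. The $(\Leftarrow)$ direction is immediate from combining the preceding lemma, which yields a premodule isomorphism $eS\cong fS$ whenever $e\,\mathcal{D}\,f$, with Lemma~\ref{premodisomod}. So the substantive work lies in the $(\Rightarrow)$ direction: given a module isomorphism $\theta\colon (eS)^{\sharp}\to(fS)^{\sharp}$, I want to extract an $s\in S$ with $s^{-1}s=e$ and $ss^{-1}=f$.

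First, I would analyse $B:=\theta(e^{\downarrow})$. Writing $B=\{y_1,\ldots,y_m\}^{\downarrow}$ with the $y_i\in fS$ pairwise orthogonal, preservation of $p^{\sharp}$ forces $\bigvee y_i^{-1}y_i=e$, so $s:=\bigvee y_i\in S$ exists with $s^{-1}s=e$, $ss^{-1}\leq f$ and $s\in fS$. The main technical step is to verify that the $y_i$ are actually \emph{strongly} orthogonal in the premodule $fS$: orthogonality in $S$ yields $p(y_i)p(y_j)=y_i^{-1}(y_iy_j^{-1})y_j=0$, and orthogonal completeness of $S$ supplies the joins with matching $p$-values. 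This collapses $\{y_1,\ldots,y_m\}^{\downarrow}$ to $\{s\}^{\downarrow}$ under $\equiv$, so $\theta(e^{\downarrow})=s^{\downarrow}$.

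Because $\theta$ is equivariant and $x^{\downarrow}=e^{\downarrow}\cdot x$ for each $x\in eS$, the formula propagates to $\theta(x^{\downarrow})=(sx)^{\downarrow}$ throughout $eS$. Applying the same analysis to $\theta^{-1}$ produces $t\in eS$ with $t^{-1}t=f$, $tt^{-1}\leq e$, and $\theta^{-1}(y^{\downarrow})=(ty)^{\downarrow}$ for every $y\in fS$. Composing gives $(tsx)^{\downarrow}=x^{\downarrow}$ in $(eS)^{\sharp}$ for every $x\in eS$; here I would invoke the short observation that $\{u\}^{\downarrow}\equiv\{v\}^{\downarrow}$ in any $X^{\sharp}$ forces $u=v$ (each generator must decompose as a join of strongly orthogonal elements below the other, giving $u\leq v\leq u$). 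Specialising to $x=e$ yields $ts=e$, and symmetrically $st=f$.

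To finish, $sts=s(ts)=se=s$ and $tst=t(st)=tf=t$, using the absorption identities $se=s$ and $tf=t$ which hold because $s^{-1}s=e$ and $t^{-1}t=f$. Hence $t$ is a semigroup inverse of $s$, and by uniqueness of inverses in an inverse semigroup $t=s^{-1}$, whence $ss^{-1}=st=f$ and $s^{-1}s=ts=e$, proving $e\,\mathcal{D}\,f$. The main obstacle I anticipate is the strong-orthogonality verification in the second paragraph, since $\equiv$ on $\overline{X}$ is defined via strong (rather than plain) orthogonality; orthogonal completeness of $S$ should repeatedly let us upgrade orthogonality of elements of $S$ to strong orthogonality in the premodules $eS$ and $fS$.
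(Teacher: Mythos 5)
Your $(\Leftarrow)$ direction and your endgame (propagating $\theta(e^{\downarrow})=s^{\downarrow}$ by equivariance, doing the same for $\theta^{-1}$, composing and using uniqueness of inverses) are fine and match the paper in spirit. The genuine gap is the step you yourself flag as the "main technical step": you assert that $s:=\bigvee_i y_i$ exists in $S$ and that the $y_i$ are strongly orthogonal in $fS$, but your justification begs the question. Membership of $\{y_1,\ldots,y_m\}^{\downarrow}$ in $(fS)^{\sharp}$ only gives $p(y_i)p(y_j)=y_i^{-1}y_iy_j^{-1}y_j=0$, equivalently $y_iy_j^{-1}=0$; orthogonality in $S$ (which is what orthogonal completeness needs before it will hand you the join, and which by the argument in Proposition \ref{rightidealpremod} is exactly what strong orthogonality in $fS$ amounts to) additionally requires $y_i^{-1}y_j=0$, i.e.\ $y_iy_i^{-1}y_jy_j^{-1}=0$. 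Nothing forces this for a general element of $(fS)^{\sharp}$ --- e.g.\ in a symmetric inverse monoid two partial bijections with disjoint domains but overlapping ranges give a perfectly good class that does not collapse to a singleton --- so the range-orthogonality of the generators of $\theta(e^{\downarrow})$ has to be extracted from the hypothesis that $\theta$ is an isomorphism, and that is where all the work lies.

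The paper closes this gap by using $\phi=\theta^{-1}$ simultaneously: writing $\theta(e^{\downarrow})=\{s_1,\ldots,s_m\}^{\downarrow}$ and $\phi(f^{\downarrow})=\{u_1,\ldots,u_n\}^{\downarrow}$, one gets $f=\bigvee_{i,j}s_ju_i$ and $e=\bigvee_{i,j}u_is_j$ with all these products idempotent (they lie beneath the idempotents $f$ and $e$), together with $e=\bigvee_i s_i^{-1}s_i$ and $f=\bigvee_i u_i^{-1}u_i$ from preservation of $p^{\sharp}$; a chain of manipulations with these identities (postmultiplying $f$ by $u_i^{-1}$, using $s_ju_i=u_i^{-1}s_j^{-1}$, etc.) yields $s_js_j^{-1}s_ks_k^{-1}=0$ for $j\neq k$, whence the $s_j$ (and the $u_i$) really are orthogonal in $S$ and both image classes collapse to singletons $s^{\downarrow}$, $u^{\downarrow}$. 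From there $f^{\downarrow}=\theta(u^{\downarrow})=s^{\downarrow}\cdot u$ gives $f=su$ and $u=s^{-1}$, which is your conclusion. So your outline is recoverable, but only after supplying this computation (or an equivalent one); as written, the appeal to "orthogonality in $S$" and orthogonal completeness assumes precisely what must be proved.
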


\begin{proof}
($\Leftarrow$) $e\,\mathcal{D}\, f$ implies $eS$ and $fS$ are isomorphic as premodules and so by Lemma \ref{premodisomod} $(eS)^{\sharp}$ and $(fS)^{\sharp}$ are isomorphic as modules.

($\Rightarrow$) Suppose $\theta: (eS)^{\sharp}\rightarrow (fS)^{\sharp}$ and $\phi: (fS)^{\sharp}\rightarrow (eS)^{\sharp}$ are mutually inverse module isomorphisms and suppose $\theta(e^{\downarrow}) = \left\{s_{1},\ldots,s_{m}\right\}^{\downarrow}\in (fS)^{\sharp}$ and $\phi(f^{\downarrow}) = \left\{u_{1},\ldots,u_{n}\right\}^{\downarrow}\in (eS)^{\sharp}$. Then
$$f^{\downarrow} = \bigvee_{i=1}^{n}\theta(u_{i}^{\downarrow}) = \bigvee_{i=1}^{n}\theta(e^{\downarrow}) u_{i} = \bigvee_{i=1}^{n}\bigvee_{j=1}^{m}(s_{j}u_{i})^{\downarrow}.$$
Thus 
$$f = \bigvee_{i=1}^{n}\bigvee_{j=1}^{m}(s_{j}u_{i}),$$
with $s_{j}u_{i}$, $s_{r}u_{k}$ strongly orthogonal for all $i,j,r,k$. It then follows that $s_{j}u_{i} \perp s_{r}u_{k}$ in $S$ for all $i,j,r,k$. Similarly,
$$e = \bigvee_{i=1}^{n}\bigvee_{j=1}^{m}(u_{i}s_{j}),$$
with $u_{i}s_{j} \perp u_{k} s_{r}$ in $S$ for all $i,j,r,k$. Furthermore, $e,f\in E(S)$ implies that $u_{i}s_{j}, s_{j}u_{i}\in E(S)$.
Since $\theta$ and $\phi$ are module morphisms, we must also have
$$e = \bigvee_{i=1}^{m}{s_{i}^{-1}s_{i}}$$
and 
$$f = \bigvee_{i=1}^{n}{u_{i}^{-1}u_{i}}.$$
So postmultiplying $f$ by $u_{i}^{-1}$ gives
$$u_{i}^{-1} = fu_{i}^{-1} = \bigvee_{j=1}^{m}(s_{j}u_{i}u_{i}^{-1}).$$
Since $s_{j}u_{i}\in E(S)$, $u_{i}^{-1}s_{j}^{-1} = s_{j}u_{i}$. Thus
$$u_{i}^{-1} = \bigvee_{j=1}^{m}(u_{i}^{-1}s_{j}^{-1}u_{i}^{-1}).$$
So
$$u_{i}u_{i}^{-1} = \bigvee_{j=1}^{m}(u_{i}u_{i}^{-1}s_{j}^{-1}u_{i}^{-1}) = \bigvee_{j=1}^{m}(s_{j}^{-1}u_{i}^{-1}u_{i}u_{i}^{-1}),$$
giving
$s_{j}u_{i}u_{i}^{-1} = s_{j}s_{j}^{-1}u_{i}^{-1}$
and so
$s_{j}u_{i} = s_{j}s_{j}^{-1}u_{i}^{-1}u_{i}$
for all $i,j$. Thus
$$\bigvee_{i=1}^{n}{s_{j}u_{i}} = s_{j}s_{j}^{-1}.$$
Since $\vee_{i=1}^{n}{s_{j}u_{i}}$ is orthogonal in $S$ to $\vee_{i=1}^{n}{s_{k}u_{i}}$ for $k\neq j$, we have
$$s_{j}s_{j}^{-1}s_{k}s_{k}^{-1} = 0$$
for $j \neq k$. Thus $s_{j}\perp s_{k}$ and $u_{j}\perp u_{k}$ in $S$ for $j\neq k$ and so for some $s\in fS$ and $u\in eS$ we have
$\theta(e^{\downarrow}) = s^{\downarrow}$
and
$\phi(f^{\downarrow}) = u^{\downarrow}$.
We also have $e = s^{-1}s$ and $f = u^{-1}u$.
Now 
$$f^{\downarrow} = \theta(u^{\downarrow}) = \theta(e^{\downarrow})\cdot u = s^{\downarrow}\cdot u = (su)^{\downarrow}.$$
Thus $f = su$ and so $s^{-1} = s^{-1}f = s^{-1}su = eu = u$, giving $e\,\mathcal{D}\, f$.
\end{proof}

\begin{lemma}
\label{perpidemmod}
Let $e,f\in E(S)$ be such that $ef = 0$. Then $(eS)^{\sharp}\bigoplus (fS)^{\sharp}$ is isomorphic to $((e\vee f)S)^{\sharp}$.
\end{lemma}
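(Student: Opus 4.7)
(Sketch of plan.) The strategy is to construct a module isomorphism $\theta \colon (eS)^{\sharp}\bigoplus (fS)^{\sharp}\rightarrow ((e\vee f)S)^{\sharp}$ together with an explicit inverse $\psi$ and verify the two composites are identities. First I would define $\theta$ via the coproduct property. Since $eS$ and $fS$ embed as sub-premodules of $(e\vee f)S$ (because $es=(e\vee f)es$ and $ft=(e\vee f)ft$), we obtain inclusion premodule morphisms $eS\hookrightarrow (e\vee f)S$ and $fS\hookrightarrow (e\vee f)S$; passing to modules via the functor $R$ and Lemma \ref{alphasharp} gives module morphisms $\iota_{e}\colon(eS)^{\sharp}\rightarrow((e\vee f)S)^{\sharp}$ and $\iota_{f}\colon(fS)^{\sharp}\rightarrow((e\vee f)S)^{\sharp}$. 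By the universal property of the coproduct (together with Lemma \ref{mapsum}) these assemble into $\theta=\iota_{e}\oplus\iota_{f}$, which on representatives sends
$$(\{es_{1},\ldots,es_{m}\}^{\downarrow},\{ft_{1},\ldots,ft_{n}\}^{\downarrow})\longmapsto\{es_{1},\ldots,es_{m},ft_{1},\ldots,ft_{n}\}^{\downarrow}.$$
To see this is well-defined in $((e\vee f)S)^{\sharp}$ note that the condition $p^{\sharp}(A)q^{\sharp}(B)=0$ defining the direct sum yields $p(es_{i})p(ft_{j})=0$; premultiplying by $es_{i}$ and postmultiplying by $(ft_{j})^{-1}$, and using $ef=fe=0$, gives $es_{i}\perp ft_{j}$ in $S$, so the listed generators are pairwise orthogonal as required.

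Next I would construct the inverse. Define a map $\eta\colon(e\vee f)S\rightarrow(eS)^{\sharp}\bigoplus(fS)^{\sharp}$ by $\eta(u)=((eu)^{\downarrow},(fu)^{\downarrow})$. This is well-defined into the direct sum because $p(eu)p(fu)=u^{-1}eu\cdot u^{-1}fu=u^{-1}(eu u^{-1}f)u=u^{-1}(uu^{-1}ef)u=0$ using that idempotents commute and $ef=0$. The routine verifications show $\eta$ is a premodule morphism: it is pointed and \'etale since $\eta(u\cdot s)=\eta(u)\cdot s$ and $(p^{\sharp}\oplus p^{\sharp})(\eta(u))=u^{-1}eu\vee u^{-1}fu=u^{-1}(e\vee f)u=p(u)$; and if $u_{1},u_{2}\in(e\vee f)S$ are strongly orthogonal then $eu_{1}\perp eu_{2}$ and $fu_{1}\perp fu_{2}$ in $S$ (hence strongly orthogonal in the respective cyclic premodules), so $\eta$ preserves strongly orthogonal joins. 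Invoking the adjunction of Proposition \ref{Radjointforget} extends $\eta$ uniquely to a module morphism $\psi\colon((e\vee f)S)^{\sharp}\rightarrow(eS)^{\sharp}\bigoplus(fS)^{\sharp}$ with the explicit formula
$$\psi(\{u_{1},\ldots,u_{k}\}^{\downarrow})=(\{eu_{1},\ldots,eu_{k}\}^{\downarrow},\{fu_{1},\ldots,fu_{k}\}^{\downarrow}).$$

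Finally I would check $\theta\psi$ and $\psi\theta$ are identities. For $\psi\theta$, applied to a representative $(\{es_{i}\}^{\downarrow},\{ft_{j}\}^{\downarrow})$, one has $e\cdot es_{i}=es_{i}$, $f\cdot es_{i}=fes_{i}=0$, and symmetrically for the $ft_{j}$, so the composite returns the original pair. For $\theta\psi$, applied to $\{u_{1},\ldots,u_{k}\}^{\downarrow}$ with $u_{i}=(e\vee f)u_{i}$, one obtains $\{eu_{1},\ldots,eu_{k},fu_{1},\ldots,fu_{k}\}^{\downarrow}$; since $eu_{i}\perp fu_{i}$ in $S$ with join $u_{i}$ in $(e\vee f)S$ and matching $p$-values, successive applications of the defining move of $\equiv$ (combining $eu_{i}$ with $fu_{i}$) return $\{u_{1},\ldots,u_{k}\}^{\downarrow}$. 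I expect the main technical obstacle to be in the last step, namely verifying carefully that the intermediate multisets appearing while reducing $\theta\psi$ to the identity are genuinely pairwise orthogonal (so that each rewriting step is a legitimate use of $\equiv$), which follows by combining orthogonality within each of $A$ and $B$ with the orthogonality of the two images established in the construction of $\theta$.
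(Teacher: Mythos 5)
Your proof is correct and takes essentially the same route as the paper: the paper defines exactly the same map (concatenating the two generating sets into $\left\{es_{1},\ldots,es_{m},ft_{1},\ldots,ft_{n}\right\}^{\downarrow}$) and establishes surjectivity via the same preimage formula $(\left\{es_{i}\right\}^{\downarrow},\left\{fs_{i}\right\}^{\downarrow})$. The only difference is that where the paper simply asserts injectivity, you construct the explicit inverse $\psi$ through the adjunction of Proposition \ref{Radjointforget}, which makes that step fully explicit.
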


\begin{proof}
Define a map $h:(eS)^{\sharp}\bigoplus (fS)^{\sharp}\rightarrow ((e\vee f)S)^{\sharp}$ by
$$h(\left\{s_{1},\ldots,s_{m}\right\}^{\downarrow},\left\{t_{1},\ldots,t_{n}\right\}^{\downarrow}) 
= \left\{s_{1},\ldots,s_{m}, t_{1},\ldots,t_{n}\right\}^{\downarrow}.$$
Then since $s_{i} = es_{i}$ and $t_{i} = ft_{i}$, using the orthogonality conditions we see this is a valid element of $((e\vee f)S)^{\sharp}$.
It is easy to see that $h$ is an injective module morphism.
Let us check that $h$ is surjective. Let $\left\{s_{1},\ldots,s_{m}\right\}^{\downarrow}\in ((e\vee f)S)^{\sharp}$. Then 
$$h(\left\{es_{1},\ldots,es_{m}\right\}^{\downarrow},\left\{fs_{1},\ldots,fs_{m}\right\}^{\downarrow}) = \left\{s_{1},\ldots,s_{m}\right\}^{\downarrow}.$$
\begin{comment}
Let $x = \left\{s_{1},\ldots,s_{m}\right\}^{\downarrow},\left\{t_{1},\ldots,t_{n}\right\}^{\downarrow}, y = \left\{u_{1},\ldots,u_{k}\right\}^{\downarrow},\left\{v_{1},\ldots,v_{l}\right\}^{\downarrow}\in (eS)^{\sharp}\bigoplus (fS)^{\sharp}$ be such that $h(x) = h(y)$. Then stuff.
\end{comment}
\end{proof}

We will now prove a couple of related results which we will use later. For $X = \bigoplus_{i=1}^{m}{(e_{i}S)^{\sharp}}$, denote by 
$$\mathbf{e_{i}} = (0,\ldots,0,e_{i}^{\downarrow},0,\ldots,0)$$
where the $e_{i}$ is in the $i$th position.

\begin{lemma}
\label{projmod1}
Let 
$$\theta:\bigoplus_{i=1}^{m}{(e_{i}S)^{\sharp}}\rightarrow \bigoplus_{i=1}^{n}{(f_{i}S)^{\sharp}}$$
be a module isomorphism. Then for each $i$, there exist $a_{ik}\in f_{k}S$ with
$$\theta(\mathbf{e_{i}}) = (a_{1i}^{\downarrow},\ldots,a_{ni}^{\downarrow}).$$
\end{lemma}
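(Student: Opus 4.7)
The plan is to show that each of the modules $(f_{k}S)^{\sharp}$ is in fact just (isomorphic to) $f_{k}S$ itself, so that every element of $(f_{k}S)^{\sharp}$ is a principal down-set $a^{\downarrow}$ for a unique $a\in f_{k}S$. Once this is in hand, writing $\theta(\mathbf{e_{i}})$ coordinate-by-coordinate as a tuple in $\bigoplus_{k=1}^{n}(f_{k}S)^{\sharp}$ immediately gives the conclusion.

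The key step is to verify that $f_{k}S$, equipped with the premodule structure $p(s)=s^{-1}s$ constructed in Proposition \ref{rightidealpremod}, actually satisfies the module axioms (M1) and (M2). For (M1), suppose $x,y\in f_{k}S$ satisfy $p(x)p(y)=x^{-1}xy^{-1}y=0$; premultiplying by $x$ and postmultiplying by $y^{-1}$ gives $xy^{-1}=0$, and symmetrically $x^{-1}y=0$, so $x$ and $y$ are orthogonal in $S$. Orthogonal completeness of $S$ then produces $x\vee y\in S$, and distributivity of multiplication over orthogonal joins gives $f_{k}(x\vee y)=f_{k}x\vee f_{k}y=x\vee y$, so $x\vee y\in f_{k}S$. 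The same orthogonality assumption yields $(x\vee y)^{-1}(x\vee y)=x^{-1}x\vee y^{-1}y$, verifying $p(x\vee y)=p(x)\vee p(y)$. Axiom (M2) is immediate from the distributivity of multiplication over orthogonal joins.

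With $f_{k}S$ known to be a module, the earlier lemma stating that $X\cong X^{\sharp}$ whenever $X$ is a module applies, so the map $f_{k}S\to (f_{k}S)^{\sharp}$ sending $a\mapsto a^{\downarrow}$ is a module isomorphism. Consequently every element of $(f_{k}S)^{\sharp}$ has the form $a^{\downarrow}$ for a unique $a\in f_{k}S$. Writing $\theta(\mathbf{e_{i}})=(A_{1},\ldots,A_{n})$ with $A_{k}\in (f_{k}S)^{\sharp}$, we set $a_{ki}$ to be the unique element of $f_{k}S$ with $A_{k}=a_{ki}^{\downarrow}$, giving the required expression $\theta(\mathbf{e_{i}})=(a_{1i}^{\downarrow},\ldots,a_{ni}^{\downarrow})$.

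There is no real obstacle: the only point that requires care is spotting that the $\sharp$-construction is degenerate on principal right ideals of $S$, because orthogonal completeness of $S$ ensures that every pair of $S$-orthogonal elements of $f_{k}S$ has a join that is again in $f_{k}S$. The isomorphism $\theta$ itself plays no real role beyond producing the tuple $(A_{1},\ldots,A_{n})$; the content of the lemma is structural, about the shape of elements in the codomain.
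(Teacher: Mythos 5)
Your proof breaks down at the step ``and symmetrically $x^{-1}y=0$''. From $p(x)p(y)=x^{-1}xy^{-1}y=0$ you can indeed conclude $xy^{-1}=0$ (sandwich the relation between $x$ and $y^{-1}$), but $x^{-1}y=0$ is equivalent to $xx^{-1}\,yy^{-1}=0$, i.e.\ to orthogonality of the \emph{ranges}, and this does not follow: for $x,y\in f_{k}S$ both ranges lie under $f_{k}$ and may well overlap. Concretely, in a symmetric inverse monoid take $f_{k}$ to be the partial identity on $\{1\}$, $x$ the partial identity on $\{1\}$ and $y\colon 2\mapsto 1$. Then $x,y\in f_{k}S$ and $p(x)p(y)=0$, but $x^{-1}y\neq 0$, the pair is not compatible, and $x\vee y$ does not exist. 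So $f_{k}S$ fails axiom (M1), it is only a premodule, and $\{x,y\}^{\downarrow}$ is an element of $(f_{k}S)^{\sharp}$ that is not equivalent to any principal down-set $a^{\downarrow}$. Hence your central claim that $(f_{k}S)^{\sharp}\cong f_{k}S$ is false in general; as noted in Section 4.7, this degeneration happens precisely in the commutative setting, where $p$-orthogonality does imply orthogonality in $S$.

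Because of this, the lemma is not a purely ``structural'' statement about the codomain, and the isomorphism $\theta$ cannot be discarded: a priori each coordinate of $\theta(\mathbf{e_{i}})$ is a general orthogonal order ideal $\{a_{ji1},\ldots,a_{jir_{ij}}\}^{\downarrow}$, and the whole content of the lemma is that these generators are forced to be genuinely orthogonal in $S$ (so that their join exists in $f_{j}S$ and the coordinate is principal). The paper's proof achieves this by playing $\theta$ against $\phi=\theta^{-1}$: expanding $\mathbf{e_{i}}=\phi(\theta(\mathbf{e_{i}}))$ yields identities such as $\bigvee_{u}b_{iku}a_{kij}a_{kij}^{-1}=a_{kij}^{-1}$, from which one deduces that the products $b_{iku}a_{kij}$ are idempotents and then that $a_{kij}^{-1}a_{kiv}=0$ for $j\neq v$. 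Some argument of this kind, using both $\theta$ and $\theta^{-1}$, is unavoidable; your proposal skips it entirely.
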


\begin{proof}
Let $\phi = \theta^{-1}$. Suppose
$$\theta(\mathbf{e_{i}}) = (\left\{a_{1i1},\ldots,a_{1ir_{i1}}\right\}^{\downarrow},\ldots,\left\{a_{ni1},\ldots,a_{nir_{in}}\right\}^{\downarrow})$$
and
$$\phi(\mathbf{f_{i}}) = (\left\{b_{1i1},\ldots,b_{1is_{i1}}\right\}^{\downarrow},\ldots,\left\{b_{mi1},\ldots,b_{mis_{im}}\right\}^{\downarrow}).$$
We will prove for all $i,j$ that $a_{ji1},\ldots,a_{jir_{ij}}$ are orthogonal elements of $S$ and the lemma will then follow. By constuction, $a_{jiu}a_{jiv}^{-1} = 0$ for $u\neq v$. Thus we just need to prove $a_{jiv}^{-1}a_{jiu} = 0$ for $u\neq v$. 
\begin{comment}
Firstly, by the $p$-preserving property of the module map, we must have
$$f_{i} = \bigvee_{k=1}^{m}\bigvee_{j=1}^{s_{im}}{b_{kij}^{-1}b_{kij}}.$$
\end{comment}
Since $\theta$ is a bijection, we have
\begin{eqnarray*}
\mathbf{e_{i}}^{\downarrow} & = & \phi(\left\{a_{1i1},\ldots,a_{1ir_{i1}}\right\}^{\downarrow},\ldots,\left\{a_{ni1},\ldots,a_{nir_{in}}\right\}^{\downarrow}) \\
  & = & \phi(\vee_{j=1}^{r_{i1}}{f_{1}^{\downarrow}\cdot a_{1ij}},\ldots,\vee_{j=1}^{r_{in}}{f_{n}^{\downarrow}\cdot a_{nij}})\\
  & = & \bigvee_{k=1}^{n}\bigvee_{j=1}^{r_{ik}}{\phi(\mathbf{f_{k}})\cdot a_{kij}} \\
  & = & \bigvee_{k=1}^{n}\bigvee_{j=1}^{r_{ik}}{(\left\{b_{1k1},\ldots,b_{1ks_{k1}}\right\}^{\downarrow},\ldots,\left\{b_{mk1},\ldots,b_{mks_{km}}\right\}^{\downarrow})\cdot a_{kij}}
\end{eqnarray*}
We therefore have
$$\bigvee_{k=1}^{n}\bigvee_{j=1}^{r_{ik}}\bigvee_{u=1}^{s_{ki}}{b_{iku}a_{kij}} = e_{i}$$
and
$$\bigvee_{k=1}^{n}\bigvee_{j=1}^{r_{ik}}\bigvee_{u=1}^{s_{kv}}{b_{vku}a_{kij}} = 0$$
for $v\neq i$. Since $a_{kij}a_{viw}^{-1} = 0$ unless $k = v$ and $j = w$, postmultiplying the first equation by $a_{kij}^{-1}$ gives
$$\bigvee_{u=1}^{s_{ki}}{b_{iku}a_{kij}a_{kij}^{-1}} = a_{kij}^{-1}.$$
It then follows that $b_{iku}a_{kij}\in E(S)$ for all $i,j,k,u$ and so $b_{iku}a_{kij} = a_{kij}^{-1}b_{iku}^{-1}$. Similarly, applying this argument for $f_{i}$ instead gives $a_{ikv}b_{kiu}\in E(S)$ for all $i,k,u,v$. Thus, for $j\neq v$, we have
\begin{eqnarray*}
a_{kij}^{-1}a_{kiv} & = & \bigvee_{u=1}^{s_{ki}}{b_{iku}a_{kij}a_{kij}^{-1}a_{kiv}}
 =  \bigvee_{u=1}^{s_{ki}}{a_{kij}^{-1}b_{iku}^{-1}a_{kij}^{-1}a_{kiv}}\\
& = & \bigvee_{u=1}^{s_{ki}}{a_{kij}^{-1}a_{kij}b_{iku}a_{kiv}}
 =  \bigvee_{u=1}^{s_{ki}}{b_{iku}a_{kiv}a_{kij}^{-1}a_{kij}} = 0.
\end{eqnarray*}
\end{proof}

Since $0\in S$, we can assume in such calculations that $m = n$, by letting some of the $e_{i}$'s be equal to $0$. 

\begin{lemma}
\label{projmod2}
Let 
$$\theta:\bigoplus_{i=1}^{m}{(e_{i}S)^{\sharp}}\rightarrow \bigoplus_{i=1}^{m}{(f_{i}S)^{\sharp}}$$
be a module isomorphism with
$$\theta(\mathbf{e_{i}}) = (a_{1i}^{\downarrow},\ldots,a_{mi}^{\downarrow})$$
and 
$$\theta^{-1}(\mathbf{f_{i}}) = (b_{1i}^{\downarrow},\ldots,b_{mi}^{\downarrow}),$$
Then 
\begin{enumerate}
\item For all $i$ and for $j\neq k$, we have $a_{ij}^{-1}a_{ik} = 0$.
\item For all $i,j$, we have $a_{ij} = b_{ji}^{-1}$.
\end{enumerate}
\end{lemma}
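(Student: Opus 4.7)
The plan is to exploit both $\theta^{-1}\theta = \mathrm{id}$ and $\theta\theta^{-1} = \mathrm{id}$, applied to the generators $\mathbf{e_{j}}$ and $\mathbf{f_{j}}$, to derive the necessary identities. First I observe that since $\theta(\mathbf{e_{j}}) = (a_{1j}^{\downarrow}, \ldots, a_{mj}^{\downarrow})$ is a valid element of the direct sum, the domain idempotents $a_{sj}^{-1}a_{sj}$ for varying $s$ are pairwise orthogonal, and likewise the $b_{si}^{-1}b_{si}$ for varying $s$. Decomposing $\theta(\mathbf{e_{j}}) = \bigvee_{s} \mathbf{f_{s}}\cdot a_{sj}$ as an orthogonal join in the direct sum and applying $\theta^{-1}$, which preserves orthogonal joins and the action, yields
$$\mathbf{e_{j}} = \bigvee_{s} \bigl((b_{1s}a_{sj})^{\downarrow}, \ldots, (b_{ms}a_{sj})^{\downarrow}\bigr).$$
Reading off the $l$-th component and using that an element of $(e_{l}S)^{\sharp}$ represented by an orthogonal join and equivalent to $e_{l}^{\downarrow}$ (respectively to $0$) forces each summand to be an idempotent below $e_{l}$ with total join $e_{l}$ (respectively forces every summand to vanish), one obtains $e_{j} = \bigvee_{s} b_{js}a_{sj}$ with each $b_{js}a_{sj} \le e_{j}$, together with $b_{ls}a_{sj} = 0$ whenever $l \ne j$. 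The symmetric calculation with $\theta\theta^{-1}$ applied to $\mathbf{f_{j}}$ yields $f_{j} = \bigvee_{s} a_{js}b_{sj}$ with each $a_{js}b_{sj} \le f_{j}$, and $a_{ls}b_{sj} = 0$ for $l \ne j$.

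To prove (2), note that $a_{ij}b_{ji}$ and $b_{ji}a_{ij}$ are both idempotents, so $a_{ij}b_{ji} = b_{ji}^{-1}a_{ij}^{-1}$ and $b_{ji}a_{ij} = a_{ij}^{-1}b_{ji}^{-1}$. Writing $a_{ij} = f_{i} a_{ij} = \bigvee_{s} a_{is}(b_{si}a_{ij})$ and invoking $b_{si}a_{ij} = 0$ for $s \ne j$, only the $s = j$ term survives, giving $a_{ij} = a_{ij}(b_{ji}a_{ij}) = a_{ij}a_{ij}^{-1}b_{ji}^{-1}$. Symmetrically, $b_{ji} = e_{j} b_{ji}$ collapses to $b_{ji} = b_{ji}(a_{ij}b_{ji}) = b_{ji}b_{ji}^{-1}a_{ij}^{-1}$, whose inverse is $b_{ji}^{-1} = a_{ij}b_{ji}b_{ji}^{-1}$. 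Substituting this latter expression for $b_{ji}^{-1}$ into the formula for $a_{ij}$ and using $a_{ij}a_{ij}^{-1}a_{ij} = a_{ij}$ yields $a_{ij} = a_{ij}b_{ji}b_{ji}^{-1} = b_{ji}^{-1}$, proving (2).

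Claim (1) follows quickly from (2) together with the orthogonality of the components of $\theta^{-1}(\mathbf{f_{i}})$ in the direct sum. For $j \ne k$ the relation $b_{ji}^{-1}b_{ji}\cdot b_{ki}^{-1}b_{ki} = 0$ holds; left-multiplying by $b_{ji}$ and absorbing via $b_{ji}b_{ji}^{-1}b_{ji} = b_{ji}$ gives $b_{ji}b_{ki}^{-1}b_{ki} = 0$, and right-multiplying by $b_{ki}^{-1}$ and absorbing via $b_{ki}^{-1}b_{ki}b_{ki}^{-1} = b_{ki}^{-1}$ gives $b_{ji}b_{ki}^{-1} = 0$. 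Using (2) to substitute $b_{ji} = a_{ij}^{-1}$ and $b_{ki}^{-1} = a_{ik}$ then yields $a_{ij}^{-1}a_{ik} = 0$. The main obstacle is organisational: the four-indexed families of identities produced by $\theta^{-1}\theta = \mathrm{id}$ and $\theta\theta^{-1} = \mathrm{id}$ require careful bookkeeping, and one must distinguish carefully between range- and domain-orthogonality conditions at each step.
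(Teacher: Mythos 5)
Your proof is correct and takes essentially the same route as the paper: both apply $\theta^{-1}\theta = \mathrm{id}$ and $\theta\theta^{-1} = \mathrm{id}$ to the generators $\mathbf{e_{j}}$, $\mathbf{f_{i}}$ to extract the facts that $a_{ij}b_{ji}$ and $b_{ji}a_{ij}$ are idempotents while the cross-terms $b_{ls}a_{sj}$ ($l\neq j$) and $a_{ls}b_{sj}$ ($l\neq j$) vanish, and then obtain $a_{ij}=b_{ji}^{-1}$ from the two inequalities $a_{ij}\leq b_{ji}^{-1}$ and $b_{ji}^{-1}\leq a_{ij}$. The only organisational difference is that you prove (2) first and deduce (1) from it together with $b_{ji}b_{ki}^{-1}=0$, whereas the paper proves (1) directly by a short computation with the same identities; both arguments are valid.
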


\begin{proof}
Firstly, we can assume that $a_{ik} = f_{i}a_{ik}e_{k}$ and $b_{ik} = e_{i}b_{ik}f_{k}$.
We have, 
\begin{eqnarray*}
\mathbf{f_{i}} & = & \theta(b_{1i}^{\downarrow},\ldots,b_{mi}^{\downarrow}) =  \bigvee_{k=1}^{m}\theta(\mathbf{e_{k}})\cdot b_{ki} \\
& = & (\vee_{k=1}^{m}a_{1k}b_{ki},\ldots,\vee_{k=1}^{m}a_{mk}b_{ki}).
\end{eqnarray*}
So $\vee_{k=1}^{m}a_{jk}b_{ki} = 0$ for $i\neq j$ and $\vee_{k=1}^{m}a_{ik}b_{ki} = f_{i}$. Postmultiplying by $b_{ki}^{-1}b_{ki}$ gives
$$a_{ik}b_{ki} = b_{ki}^{-1}b_{ki}.$$
A similar argument gives 
$$b_{ik}a_{ki} = a_{ki}^{-1}a_{ki}$$
and for $i\neq j$
$$b_{jk}a_{ki} = 0.$$
Let us now prove the claims:
\begin{enumerate}
\item Using the fact that $b_{ik}a_{ki} = a_{ki}^{-1}b_{ik}^{-1}$, $a_{ik}b_{ki} = b_{ki}^{-1}a_{ik}^{-1}$ and $a_{ki}^{-1} = b_{ik}a_{ki}a_{ki}^{-1}$, we have, for all $i$ and for $j\neq k$,
$$a_{ij}^{-1}a_{ik} = b_{ji}a_{ij}a_{ij}^{-1}a_{ik} = a_{ij}^{-1}b_{ji}^{-1}a_{ij}^{-1}a_{ik} = a_{ij}^{-1}a_{ij}b_{ji}a_{ik} = b_{ji}a_{ik}a_{ij}^{-1}a_{ij} = 0.$$
\item For all $i,j$, we have $b_{ij}a_{ji}a_{ji}^{-1} = a_{ji}^{-1}$ and so $a_{ji}^{-1}\leq b_{ij}$. On the other hand, $a_{ji}b_{ij}b_{ij}^{-1} = b_{ij}^{-1}$, giving $b_{ij}^{-1}\leq a_{ji}$ and therefore $b_{ij}\leq a_{ji}^{-1}$. Thus $b_{ij} = a_{ji}^{-1}$.
\end{enumerate}
\end{proof}

We denote the full subcategory of $\Mod_{S}$ consisting of all projective modules isomorphic to $\bigoplus_{i=1}^{n}{(e_{i}S)^{\sharp}}$ for some idempotents $e_{i}\in E(S)$ by $\Proj_{S}$.
By definition, $\bigoplus$ gives $(\Proj_{S},\bigoplus)$ the structure of a commutative monoid, where the identity is the one element module.
\begin{comment}
We denote the full subcategory of $\Mod_{S}$ consisting of all modules of the form
$$\bigoplus_{i=1}^{n}{(e_{i}S)^{\sharp}}$$
where the $e_{i}$ are idempotents in $S$ by $\IdemProj_{S}$. 
\end{comment}
For $S$ an arbitrary orthogonally complete inverse semigroup we will define $K(S)$ to be the Grothendieck group of $(\Proj_{S},\bigoplus)$. It is clear by Lemmas \ref{isoidemmod} and \ref{perpidemmod} that if $S$ is a $K$-inverse semigroup, this definition agrees with our earlier definition of a $K$-group.

\begin{remark}
It may in fact be possible to rework quite a bit of this section for the more general setting of orthogonally complete ordered groupoids. Here is a suggestion for one possible approach. Let $G$ be an orthogonally complete ordered groupoid. We will say $X$ is an \emph{\'{e}tale set} if there is a map $p:X\rightarrow G_{0}$ and a partially defined function $X\times G\rightarrow X$, denoted $(x,g)\mapsto x\cdot g$ such that
\begin{itemize}
\item For each $x\in X$ we have $\exists x\cdot p(x)$ and $x\cdot p(x) = x$.
\item For $x\in X$, $e\in G_{0}$, $\exists x\cdot e$ iff $e \wedge p(x) \neq 0$ in which case $x \cdot e = x \cdot (e\wedge p(x))$.
\item For $x\in X$, $g\in G$, $\exists x\cdot g$ iff $\exists x\cdot \ran(g)$ in which case $x\cdot g = x \cdot (\ran(g) \wedge p(x)|g)$.
\item For $x\in X$, $g,h\in G$, if $\exists gh$ and $\exists x\cdot g$ then $\exists x\cdot (gh)$ and $x\cdot (gh) = (x\cdot g)\cdot h$.
\item For $x\in X$, $g\in G$ with $\exists x\cdot g$ we have $p(x\cdot g) = (\ran(g) \wedge p(x)|g)^{-1}(\ran(g) \wedge p(x)|g)$.
\end{itemize}
One then defines pointed sets, premodules and modules analogously to the case of inverse semigroups. 
\end{remark}

\section{Matrices over inverse semigroups}

In the previous section we described how to define the $K$-group of an arbitrary orthogonally complete inverse semigroup using certain finitely generated projective modules. In this section we shall show that there is another way of calculating the same group but this time using matrices over inverse semigroups. We shall generalise the rook matrices of Solomon \cite{solomon}.

Throughout this section let $S$ be an orthogonally complete inverse semigroup. An $m\times n$ matrix $A$ with entries in $S$ is said to be a \emph{rook matrix} if it satisfies the following conditions:

(RM1): If $a$ and $b$ lie in the same row of $A$ then $a^{-1}b = 0$.
  
(RM2): If $a$ and $b$ lie in the same column of $A$ then $ab^{-1} = 0$.

We denote the set of all finite-dimensional rook matrices over $S$ by $R(S)$. In what follows a matrix denoted $A$ will have $i,j$th entry given by $a_{ij}$. Let $A$ be an $m\times n$ rook matrix and $B$ an $n\times p$ rook matrix. The $m\times p$ matrix $C = AB$ has entries
$$c_{ij} = \vee_{k=1}^{n}{a_{ik}b_{kj}}.$$
That this join is well-defined is guaranteed by axioms (RM1) and (RM2).

We use the term \emph{semigroupoid} to mean a structure that is the same as a category but does not necessarily have identities.

\begin{lemma}
$R(S)$ is a semigroupoid.
\end{lemma}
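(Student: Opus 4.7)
The plan is to treat $R(S)$ as having objects the positive integers, with arrows from $n$ to $m$ being the $m\times n$ rook matrices over $S$, and composition given by the matrix product $AB$ defined in the excerpt. Two things need to be checked: that the product of two rook matrices of compatible dimensions really is a rook matrix (so the composition is well-defined), and that this product is associative.

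For well-definedness of the product, let $A$ be $m\times n$ and $B$ be $n\times p$. First, one must verify that each join $c_{ij}=\bigvee_{k=1}^{n}a_{ik}b_{kj}$ actually exists; this follows because for $k\neq k'$ the elements $a_{ik}b_{kj}$ and $a_{ik'}b_{k'j}$ are orthogonal in $S$: indeed $(a_{ik}b_{kj})(a_{ik'}b_{k'j})^{-1}=a_{ik}(b_{kj}b_{k'j}^{-1})a_{ik'}^{-1}=0$ by (RM2) applied to the $j$-th column of $B$, and similarly $(a_{ik}b_{kj})^{-1}(a_{ik'}b_{k'j})=0$ by (RM1) applied to the $i$-th row of $A$. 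Because $S$ is orthogonally complete, the join exists.

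Next, I would check (RM1) and (RM2) for $C=AB$. Using that taking inverses and multiplying both distribute over orthogonal joins, one computes for $j\neq l$
\[
c_{ij}^{-1}c_{il}=\bigvee_{k,r}b_{kj}^{-1}a_{ik}^{-1}a_{ir}b_{rl}.
\]
When $k\neq r$, the factor $a_{ik}^{-1}a_{ir}$ vanishes by (RM1) for $A$; when $k=r$, the summand is $b_{kj}^{-1}(a_{ik}^{-1}a_{ik})b_{kl}$, and the key lemma is that inserting an idempotent into a zero product keeps it zero: if $uv=0$ and $e\in E(S)$, then $uev=0$, because $\ran(ev)=evv^{-1}e=vv^{-1}e\leq \ran(v)$ (idempotents commute), so $\dom(u)\ran(ev)\leq\dom(u)\ran(v)=0$. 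Since $b_{kj}^{-1}b_{kl}=0$ by (RM1) for $B$, the summand vanishes. This gives (RM1) for $C$, and an entirely symmetric calculation, using (RM2) for $B$ on the ``$k\neq r$'' terms and (RM2) for $A$ on the ``$k=r$'' terms, together with the dual form of the lemma ($uv=0\Rightarrow uev=0$ by a domain/range argument on the other side), yields (RM2).

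Associativity is then a routine consequence of associativity in $S$ together with full distributivity of multiplication over orthogonal joins: for $A,B,C$ of compatible dimensions both $(AB)C$ and $A(BC)$ have $(i,l)$-entry $\bigvee_{j,k}a_{ij}b_{jk}c_{kl}$, where the iterated joins collapse to a single join over all pairs because all the summands involved are pairwise orthogonal (this can be shown by the same orthogonality bookkeeping as in the first step, applied twice). The main obstacle is really the idempotent-insertion lemma and the orthogonality bookkeeping needed to guarantee that the various joins exist and that inversion and multiplication commute with them; once these are in hand, the semigroupoid axioms follow without incident.
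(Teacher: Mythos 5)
Your proposal is correct and follows essentially the same route as the paper: closure of the rook-matrix condition under the product via the orthogonality axioms, with the diagonal terms killed by commuting an idempotent into a zero product, and associativity via distributivity of multiplication over orthogonal joins. The only cosmetic difference is that you isolate the idempotent-insertion step ($uv=0$ and $e\in E(S)$ imply $uev=0$) as a small lemma, where the paper performs the same manipulation inline by inserting $b_{li}b_{li}^{-1}$ (respectively $a_{jl}a_{jl}^{-1}$) using commutation of idempotents.
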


\begin{proof}
We need to show this operation when defined returns a rook monoid and is associative. First we show that for $A$ an $m\times n$ rook matrix, $B$ an $n\times p$ rook matrix the product $C = AB$ is an $m\times p$ rook matrix. That is, we need to show that for all allowable $i,j,k$ with $i\neq j$, we have $c_{ki}^{-1}c_{kj} = 0$ and $c_{ik}c_{jk}^{-1} = 0$.

One easily verifies using standard properties of orthogonal joins (e.g. see \cite{LawOrthCompPoly}) that for all allowable $i,j$ we have
$$c_{ij}^{-1} = \bigvee_{k=1}^{n}{b_{kj}^{-1}a_{ik}^{-1}}.$$

Thus, for $i\neq j$, we have
$$c_{ki}^{-1}c_{kj} = \left(\bigvee_{l=1}^{n}{b_{li}^{-1}a_{kl}^{-1}}\right)\left(\bigvee_{l=1}^{n}{a_{kl}b_{lj}}\right) 
= \bigvee_{l=1}^{n}{b_{li}^{-1}a_{kl}^{-1}a_{kl}b_{lj}} = \bigvee_{l=1}^{n}{b_{li}^{-1}a_{kl}^{-1}a_{kl}b_{li}b_{li}^{-1}b_{lj}} = 0$$
and
$$c_{ik}c_{jk}^{-1} = \left(\bigvee_{l=1}^{n}{a_{il}b_{lk}}\right)\left(\bigvee_{l=1}^{n}{b_{lk}^{-1}a_{jl}^{-1}}\right) = \bigvee_{l=1}^{n}{a_{il}b_{lk}b_{lk}^{-1}a_{jl}^{-1}} = \bigvee_{l=1}^{n}{a_{il}a_{jl}^{-1}a_{jl}b_{lk}b_{lk}^{-1}a_{jl}^{-1}} = 0.$$

\begin{comment}
$$\pi_{ij} = \bigvee_{k=1}^{\infty}{b_{kj}^{-1}a_{ik}^{-1}}.$$
It can be shown that $\pi_{ij}$ is well-defined in the same manner as above.
Then, using the orthogonality conditions on $A$ and $B$, we have
$$\pi_{ij}c_{ij}\pi_{ij} = \left(\bigvee_{k=1}^{\infty}{b_{kj}^{-1}a_{ik}^{-1}}\right)\left(\bigvee_{k=1}^{\infty}{a_{ik}b_{kj}}\right)\left(\bigvee_{k=1}^{\infty}{b_{kj}^{-1}a_{ik}^{-1}}\right) = \bigvee_{k=1}^{\infty}{b_{kj}^{-1}a_{ik}^{-1}} = \pi_{ij}$$
and   
$$c_{ij}\pi_{ij}c_{ij} = \left(\bigvee_{k=1}^{\infty}{a_{ik}b_{kj}}\right)\left(\bigvee_{k=1}^{\infty}{b_{kj}^{-1}a_{ik}^{-1}}\right)\left(\bigvee_{k=1}^{\infty}{a_{ik}b_{kj}}\right) = \bigvee_{k=1}^{\infty}{a_{ik}b_{kj}} = c_{ij}.$$
Thus
$$c_{ij}^{-1} = \pi_{ij} = \bigvee_{k=1}^{\infty}{b_{kj}^{-1}a_{ik}^{-1}}.$$

So, noting that $S$ is a monoid, we have
$$c_{ki}^{-1}c_{kj} = \left(\bigvee_{l=1}^{n}{b_{li}^{-1}a_{kl}^{-1}}\right)\left(\bigvee_{l=1}^{n}{a_{kl}b_{lj}}\right) 
= \bigvee_{l=1}^{n}{b_{li}^{-1}a_{kl}^{-1}a_{kl}b_{lj}} \leq \bigvee_{l=1}^{\infty}{b_{li}^{-1}b_{lj}} = 0$$
and
$$c_{ik}c_{jk}^{-1} = \left(\bigvee_{l=1}^{\infty}{a_{il}b_{lk}}\right)\left(\bigvee_{l=1}^{\infty}{b_{lk}^{-1}a_{jl}^{-1}}\right) = \bigvee_{l=1}^{\infty}{a_{il}b_{lk}b_{lk}^{-1}a_{jl}^{-1}} \leq \bigvee_{l=1}^{\infty}{a_{il}a_{jl}^{-1}} = 0.$$
Thus this operation is well-defined.
\end{comment}

Now let us prove that $R(S)$ is associative. We want to show (when the dimensions match up appropriately) 
$$(A\cdot B)\cdot C = A\cdot (B\cdot C).$$
Let $M = A\cdot B$, $P = B\cdot C$, $N = M\cdot C$, $Q = A\cdot P$. Then
$$n_{ij} = \bigvee_{k}{m_{ik}c_{kj}} = \bigvee_{k}{\left(\left(\bigvee_{r}{a_{ir}b_{rk}}\right)c_{kj}\right)} = \bigvee_{k}{\bigvee_{r}{a_{ir}b_{rk}c_{kj}}}$$
and
$$q_{ij} = \bigvee_{k}{a_{ik}p_{kj}} = \bigvee_{k}{\left(a_{ik}\left(\bigvee_{r}{b_{kr}c_{rj}}\right)\right)} = \bigvee_{k}{\bigvee_{r}{a_{ik}b_{kr}c_{rj}}}.$$
Thus $n_{ij} = q_{ij}$ and so $(A\cdot B)\cdot C = A\cdot (B\cdot C)$.
\end{proof}

Observe that if $S$ were chosen to be the two element Boolean algebra then rook matrices over $S$ are essentially the same as the rook matrices of Solomon \cite{solomon}.

\begin{lemma}
The idempotents of $R(S)$ are square matrices whose diagonal entries are idempotents in $E(S)$ and whose off-diagonal entries are $0$. 
\end{lemma}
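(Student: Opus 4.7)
The plan is to characterise the idempotents of $R(S)$ by directly unpacking the equation $A \cdot A = A$ together with the two rook-matrix axioms. First I would note that the product $A \cdot A$ is only defined when $A$ is square, so every idempotent in $R(S)$ is automatically an $n \times n$ matrix. The ``if'' direction is routine: when $A$ has idempotents $a_{ii} \in E(S)$ on the diagonal and zeros elsewhere, the formula $(A^2)_{ij} = \vee_{k} a_{ik}a_{kj}$ collapses to $a_{ii}^{2} = a_{ii}$ on the diagonal and $0$ off it, as required.

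For the converse, the first move is to reformulate the rook-matrix axioms. Using the identity $xy = 0 \Leftrightarrow \dom(x)\ran(y) = 0$ valid in any inverse semigroup with zero, (RM1) is seen to be equivalent to the statement that entries in the same row of $A$ have pairwise orthogonal ranges, and (RM2) to the statement that entries in the same column have pairwise orthogonal domains. This reformulation is the main technical ingredient.

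Assuming $A^{2} = A$, so that $a_{ij} = \vee_{k} a_{ik}a_{kj}$ is an orthogonal join, I would fix $i \neq j$ and show that every summand is zero. Since $a_{ik}a_{kj} \leq a_{ij}$ in the natural partial order, both $\ran(a_{ik}a_{kj}) \leq \ran(a_{ij})$ and $\dom(a_{ik}a_{kj}) \leq \dom(a_{ij})$. For $k \neq j$, one also has $\ran(a_{ik}a_{kj}) \leq \ran(a_{ik})$, and the reformulated (RM1) gives $\ran(a_{ik})\ran(a_{ij}) = 0$; hence $\ran(a_{ik}a_{kj}) = 0$, so $a_{ik}a_{kj} = 0$. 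For the remaining term $k = j$, a symmetric argument using $\dom(a_{ij}a_{jj}) \leq \dom(a_{jj})$ together with the reformulated (RM2) on column $j$ yields $\dom(a_{ij})\dom(a_{jj}) = 0$, and so $a_{ij}a_{jj} = 0$. Thus all summands vanish and $a_{ij} = 0$.

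Finally, applying exactly the same vanishing argument to the off-diagonal summands in $a_{ii} = \vee_{k} a_{ik}a_{ki}$ kills every term with $k \neq i$, leaving only $a_{ii}^{2}$; hence $a_{ii} = a_{ii}^{2}$ and every diagonal entry is an idempotent of $S$. The main conceptual obstacle is spotting the reformulation of (RM1) and (RM2) in terms of orthogonality of ranges and domains; once that is in hand, the ``sandwich'' argument (bound each $\ran(a_{ik}a_{kj})$ above by two orthogonal idempotents) makes both directions of the lemma transparent, and no further structure on $S$ beyond the semilattice of idempotents is needed.
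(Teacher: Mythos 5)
Your proof is correct and takes essentially the same route as the paper: expand $A^{2}=A$ entrywise, use (RM1) to annihilate the cross terms $a_{ik}a_{kj}$ with $k\neq j$, and use (RM2) to dispose of the remaining term, which forces the diagonal entries to be idempotent and the off-diagonal entries to vanish. The only difference is cosmetic — the paper kills the cross terms via the direct computation $e_{ij}=e_{ij}e_{ij}^{-1}e_{ij}$, obtaining $e_{ij}=e_{ij}e_{jj}$ first, whereas you repackage (RM1)/(RM2) as orthogonality of ranges and domains and bound each summand by the join $a_{ij}$; both arguments use the two rook axioms in exactly the same roles.
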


\begin{proof}
Let $E$ be an $n\times n$ rook matrix with $E^{2} = E$. We have for all $i,j$:
$$e_{ij} = \bigvee_{k=1}^{n}{e_{ik}e_{kj}}.$$
So 
$$e_{ij} = e_{ij}e_{ij}^{-1}e_{ij} = \bigvee_{k=1}^{n}{e_{ij}e_{ij}^{-1}e_{ik}e_{kj}} = e_{ij}e_{ij}^{-1}e_{ij}e_{jj} = e_{ij}e_{jj}.$$
Thus for all $i,j$ we have $e_{jj}\in E(S)$. %and $e_{ij}^{-1}e_{ij}\leq e_{jj}$.
\begin{comment}
Furthermore,
$$e_{ij}e_{ij}^{-1} =  \bigvee_{k=1}^{n}{e_{ik}e_{kj}e_{ij}^{-1}} = e_{ii}e_{ij}e_{ij}^{-1}$$
and so for all $i,j$ we have $e_{ij}e_{ij}^{-1}\leq e_{ii}$.
\end{comment}
For $i\neq j$, we have
$$e_{ij} = e_{ij}e_{jj} = e_{ij}e_{jj}^{-1} = 0.$$
\end{proof}

\begin{lemma}
The $n\times n$ idempotent matrices of $R(S)$ commute and their product is again idempotent.
\end{lemma}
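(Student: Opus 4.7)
The plan is to invoke the preceding lemma to write any $n\times n$ idempotent matrix of $R(S)$ as a diagonal matrix whose diagonal entries lie in $E(S)$. Once we are in this diagonal form the computation collapses entirely onto the diagonal, where everything is controlled by the well-known fact that idempotents of an inverse semigroup commute.

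More concretely, let $E, F$ be two $n\times n$ idempotent matrices of $R(S)$. By the preceding lemma, $e_{ij} = f_{ij} = 0$ for $i\neq j$, while $e_{ii},f_{ii}\in E(S)$ for each $i$. For the product $EF$, I would compute
$$(EF)_{ij} = \bigvee_{k=1}^{n}{e_{ik}f_{kj}}.$$
Because $e_{ik}$ vanishes unless $k=i$ and $f_{kj}$ vanishes unless $k=j$, the only non-zero summand occurs when $i=j=k$, giving $(EF)_{ii} = e_{ii}f_{ii}$ and $(EF)_{ij} = 0$ for $i\neq j$. The identical computation produces $(FE)_{ii} = f_{ii}e_{ii}$ and $(FE)_{ij} = 0$ for $i\neq j$. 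Since the idempotents of the inverse semigroup $S$ form a commutative semilattice, $e_{ii}f_{ii} = f_{ii}e_{ii}$ for each $i$, whence $EF = FE$.

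To see that $EF$ is again idempotent I would observe that $EF$ is itself a diagonal matrix with diagonal entries $e_{ii}f_{ii}\in E(S)$ (products of commuting idempotents are idempotent), and zeros elsewhere; the previous lemma then identifies it as an idempotent of $R(S)$. Alternatively, and perhaps more cleanly for the reader, one can simply note
$$(EF)(EF) = E(FE)F = E(EF)F = E^{2}F^{2} = EF,$$
using the commutativity established in the previous paragraph together with $E^{2}=E$ and $F^{2}=F$. There is no genuine obstacle here; the only thing one has to check carefully is that the diagonal form from the previous lemma genuinely forces all the off-diagonal joins in $(EF)_{ij}$ to vanish, which is immediate from the two-index collision argument above.
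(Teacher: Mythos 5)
Your proof is correct and follows essentially the same route as the paper: both use the preceding lemma to reduce $E$ and $F$ to diagonal matrices with idempotent entries, compute $(EF)_{ij}$ and $(FE)_{ij}$ entrywise to see only the diagonal terms $e_{ii}f_{ii}$ and $f_{ii}e_{ii}$ survive, and then appeal to commutativity of idempotents in $S$. Your closing remark that $EF$ is idempotent (either via the diagonal form or the identity $(EF)(EF)=E(FE)F=EF$) is a slightly more explicit version of what the paper leaves implicit, but it is the same argument.
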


\begin{proof}
Let $E,F\in E(R(S))$ be $n\times n$ idempotent matrices, $G = EF$ and $H = FE$. Then for $i\neq j$ we have
$$g_{ij} = \bigvee_{k=1}^{n}{e_{ik}f_{kj}} = 0$$
and
$$g_{ii} = \bigvee_{k=1}^{n}{e_{ik}f_{ki}} = e_{ii}f_{ii}.$$
On the other hand,
$$h_{ij} = \bigvee_{k=1}^{n}{f_{ik}e_{kj}} = 0$$
and
$$h_{ii} = \bigvee_{k=1}^{n}{f_{ik}e_{ki}} = f_{ii}e_{ii}.$$
\end{proof}

\begin{lemma}
$R(S)$ is an inverse semigroupoid. In particular, for an $m\times n$ matrix $A\in R(S)$, letting $B$ denote the $n\times m$ matrix with $b_{ij} = a_{ji}^{-1}$ for all $i,j$, we have $B = A^{-1}$.
\end{lemma}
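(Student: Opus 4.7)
The plan is to establish three things in sequence: closure of $R(S)$ under the candidate inverse operation $A\mapsto B$, verification of the two inverse identities $ABA = A$ and $BAB = B$, and uniqueness of the inverse via the commutativity of idempotents proved in the previous lemma. Each step is essentially an entry-by-entry bookkeeping argument, and the main obstacle is simply keeping the row/column axioms (RM1) and (RM2) straight as one swaps between $A$ and $B$.

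First I would check that $B$ lies in $R(S)$: a pair of entries in a row of $B$ corresponds, under the transpose-and-invert operation, to a pair of entries in a column of $A$, so (RM1) for $B$ is (RM2) for $A$, and symmetrically. Next I would compute the product $AB$ and observe that the off-diagonal entries all vanish by (RM2) on $A$, while the diagonal entries are orthogonal joins $\bigvee_{k} a_{ik}a_{ik}^{-1}$ of idempotents; thus $AB$ is a diagonal idempotent matrix. Postmultiplying by $A$ reduces the identity $ABA=A$ to the coordinate-wise claim $(\bigvee_{k}a_{ik}a_{ik}^{-1})a_{ij}=a_{ij}$, which holds because only the $k=j$ summand survives, by (RM1) on row $i$ of $A$. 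The dual identity $BAB=B$ follows by swapping the roles of (RM1) and (RM2).

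The final step, uniqueness, is a direct translation of the classical inverse-semigroup argument. If $C\in R(S)$ also satisfies $ACA=A$ and $CAC=C$, then $AB$, $AC$ are $m\times m$ idempotents and $BA$, $CA$ are $n\times n$ idempotents, so the previous lemma supplies the commutativity that powers the usual manipulation $B = (BA)(CA)B = (CA)(BA)B = CAB$, and symmetrically $C=BAC$; one more substitution, using commutativity of $AB$ with $AC$, collapses these two identities to $B=C$. The dimensional constraints force $B$ and $C$ to share the shape $n\times m$, so all intermediate compositions are defined in the semigroupoid and the argument transfers cleanly from the semigroup case. Conceptually nothing new is required beyond the earlier lemma on commuting idempotents; the work lies entirely in verifying the closure and the two inverse identities.
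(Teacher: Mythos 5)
Your proposal is correct and follows essentially the same route as the paper: closure of the candidate inverse under (RM1)/(RM2), the entry-wise verification that $AB$ is diagonal idempotent so that $ABA=A$ and dually $BAB=B$, and uniqueness via the previous lemma on commuting idempotent matrices (the paper packages this last step as a single chain $C=CAC=\cdots=BAB=B$, but it is the same argument). No gaps.
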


\begin{proof}
First we need to check that $B$ is a rook matrix. We have for $i\neq j$, 
$$b_{ki}^{-1}b_{kj} = a_{ik}a_{jk}^{-1} = 0$$ 
and
$$b_{ik}b_{jk}^{-1} = a_{ki}^{-1}a_{kj} = 0.$$
We want to show $ABA = A$ and $BAB = B$. Let $M = AB$, $N = ABA$, $P = BAB$. For $i \neq j$,
$$m_{ij} = \bigvee_{k}{a_{ik}b_{kj}} = \bigvee_{k}{a_{ik}a_{jk}^{-1}} = 0$$
and
$$m_{ii} = \bigvee_{k}{a_{ik}b_{ki}} = \bigvee_{k}{a_{ik}a_{ik}^{-1}}.$$
So, for all $i,j$, we have
$$n_{ij} = \bigvee_{k}{m_{ik}a_{kj}} = m_{ii}a_{ij} = \bigvee_{k}{a_{ik}a_{ik}^{-1}a_{ij}} = a_{ij}$$
and
$$p_{ij} = \bigvee_{k}{b_{ik}m_{kj}} = b_{ij}m_{jj} = \bigvee_{k}{a_{ji}^{-1}a_{jk}a_{jk}^{-1}} = a_{ji}^{-1} = b_{ij}.$$
It is easy to see that a regular semigroupoid whose idempotents commute is an inverse semigroupoid. (Explicitly, suppose $C$ is another inverse for $A$. Then
$$C = CAC = CABAC = CACAB = CAB = CABAB$$
$$ = BACAB = BAB = B.)$$
\end{proof}

If $n$ is a finite non-zero natural number, define $M_{n}(S)$ to be the inverse semigroup of all $n\times n$ rook matrices over $S$. 
\begin{comment}

Let $M_{\infty}(S)$ denote the set of infinite matrices over $S$ with only finitely many non-zero entries such that if $A = (a_{ij})\in M_{\infty}(S)$, then for all $i,j,k$ with $i\neq j$ we have $a_{ki}^{-1}a_{kj} = 0$ and $a_{ik}a_{jk}^{-1} = 0$.

\begin{thm}
For $A,B\in M_{\infty}(S)$, define $C = A\cdot B$ to be matrix with entries
$$c_{ij} = \bigvee_{k=1}^{\infty}{a_{ik}b_{kj}}.$$
Then $M_{\infty}(S)$ forms a semigroup under this operation.
\end{thm}

\begin{proof}

\underline{Well-defined}: First, we need to show for all $i,j,k,l$ that
$$\exists a_{ik}b_{kj}\vee a_{il}b_{lj}.$$
To see this, note that
$$(a_{ik}b_{kj})^{-1}(a_{il}b_{lj}) = b_{kj}^{-1}a_{ik}^{-1}a_{il}b_{lj} = 0$$
and
$$(a_{ik}b_{kj})(a_{il}b_{lj})^{-1} = a_{ik}b_{kj}b_{lj}^{-1}a_{il}^{-1} = 0,$$
so by the orthogonal completeness of $S$, $\exists a_{ik}b_{kj}\vee a_{il}b_{lj}$.
Next we need to show that $C\in M_{\infty}(S)$. Clearly it will have finitely many non-zero entries, so we just have to show that for all $i,j,k$ with $i\neq j$ that
$$
\end{proof}

From now on we will simply write $AB$ instead of $A\cdot B$.

On the other hand, it is clear that any matrix of the form described in the previous lemma will be idempotent.

\begin{corollary}
$M_{\infty}(S)$ is an inverse semigroup.
\end{corollary}

\begin{proof}
$M_{\infty}(S)$ is regular and its idempotents commute and is thus inverse.
\end{proof}

\end{comment}
For $n\times n$ rook matrices $A,B\in M_{n}(S)$ we will denote the natural partial order by $\leq$.

\begin{lemma}
For $A,B\in M_{n}(S)$, we have $A\leq B$ if and only if $a_{ij}\leq b_{ij}$ for all $i,j$.
\end{lemma}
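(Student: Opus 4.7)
The natural partial order on the inverse semigroup $M_{n}(S)$ is characterised by $A \leq B$ iff $A = BA^{-1}A$. My plan is to compute this product explicitly using the rook-matrix axioms and to read off the entrywise characterisation from the result. The entire argument hinges on identifying $A^{-1}A$ as a diagonal matrix, which collapses the matrix product to an entrywise multiplication by an idempotent.

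First I would compute $A^{-1}A$. Using the previous lemma, $(A^{-1})_{kl} = a_{lk}^{-1}$, so
$$(A^{-1}A)_{kj} = \bigvee_{l=1}^{n} a_{lk}^{-1}a_{lj}.$$
For $k \neq j$ axiom (RM1) applied to row $l$ of $A$ gives $a_{lk}^{-1}a_{lj}=0$, so $(A^{-1}A)_{kj}=0$. For $k=j$ we obtain the idempotent $d_{j} := \bigvee_{l=1}^{n} a_{lj}^{-1}a_{lj}$. Hence $A^{-1}A=\operatorname{diag}(d_{1},\ldots,d_{n})$, and therefore
$$(BA^{-1}A)_{ij} = \bigvee_{k=1}^{n} b_{ik}(A^{-1}A)_{kj} = b_{ij}d_{j}.$$
So $A\leq B$ is equivalent to the system of equations $a_{ij}=b_{ij}d_{j}$ for all $i,j$.

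For the forward implication, if $a_{ij}=b_{ij}d_{j}$ then since $d_{j}$ is idempotent and idempotents commute in $S$, we have $b_{ij}d_{j}\leq b_{ij}$ in the natural order of $S$, so $a_{ij}\leq b_{ij}$.

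For the converse, suppose $a_{ij}\leq b_{ij}$ for all $i,j$. I need to show $a_{ij} = b_{ij}d_{j}$; distributing joins over multiplication gives
$$b_{ij}d_{j} = \bigvee_{k=1}^{n} b_{ij}\,a_{kj}^{-1}a_{kj}.$$
The term with $k=i$ equals $b_{ij}a_{ij}^{-1}a_{ij}=a_{ij}$ by the standard characterisation of the natural partial order. The main obstacle — and it is a small one — is to show that every other term vanishes. For $k\neq i$, use $a_{kj}\leq b_{kj}$ to write $a_{kj}^{-1}=b_{kj}^{-1}a_{kj}a_{kj}^{-1}$; then applying axiom (RM2) to column $j$ of $B$ (with $i\neq k$) yields
$$b_{ij}a_{kj}^{-1} = b_{ij}b_{kj}^{-1}a_{kj}a_{kj}^{-1}=0,$$
so $b_{ij}a_{kj}^{-1}a_{kj}=0$. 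Hence $b_{ij}d_{j}=a_{ij}$, completing the proof.
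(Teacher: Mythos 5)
Your proof is correct and follows essentially the same route as the paper's: compute $A^{-1}A$ as the diagonal matrix of idempotents $d_{j}=\bigvee_{l}a_{lj}^{-1}a_{lj}$, expand $(BA^{-1}A)_{ij}=b_{ij}d_{j}$, and kill the cross terms $b_{ij}a_{kj}^{-1}a_{kj}$ ($k\neq i$) via (RM2) applied to column $j$ of $B$. The only cosmetic difference is in the forward direction, where you invoke the standard fact that $b_{ij}d_{j}\leq b_{ij}$ for an idempotent $d_{j}$, whereas the paper grinds out $a_{ij}=b_{ij}a_{ij}^{-1}a_{ij}$ directly; both are fine.
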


\begin{proof}
$A\leq B$ means $A = BA^{-1}A$. Let $C = A^{-1}$, $D = CA$ and $E = BD$. Then from the above, we have $d_{ij} = 0$ for $i\neq j$ and
$$d_{ii} = \bigvee_{k=1}^{n}{a_{ki}^{-1}a_{ki}}.$$
So
$$e_{ij} = \bigvee_{k=1}^{n}{b_{ik}d_{kj}} = b_{ij}d_{jj} = \bigvee_{k=1}^{n}{b_{ij}a_{kj}^{-1}a_{kj}}.$$
So, if $A = BA^{-1}A$ then
$$a_{ij} = a_{ij}a_{ij}^{-1}a_{ij} = e_{ij}a_{ij}^{-1}a_{ij} = \bigvee_{k=1}^{n}{b_{ij}a_{kj}^{-1}a_{kj}a_{ij}^{-1}a_{ij}} = b_{ij}a_{ij}^{-1}a_{ij}a_{ij}^{-1}a_{ij} = b_{ij}a_{ij}^{-1}a_{ij}.$$
Suppose now that $a_{ij} = b_{ij}a_{ij}^{-1}a_{ij}$ for all $i,j$ and let $C,D,E$ be as above. Then
\begin{eqnarray*}
e_{ij} &=& \bigvee_{k=1}^{n}{b_{ij}a_{kj}^{-1}a_{kj}} = \bigvee_{k=1}^{n}{b_{ij}a_{kj}^{-1}a_{kj}b_{kj}^{-1}b_{kj}a_{kj}^{-1}a_{kj}} = \bigvee_{k=1}^{n}{b_{ij}b_{kj}^{-1}b_{kj}a_{kj}^{-1}a_{kj}a_{kj}^{-1}a_{kj}} \\
&=& b_{ij}b_{ij}^{-1}b_{ij}a_{ij}^{-1}a_{ij}a_{ij}^{-1}a_{ij} = b_{ij}a_{ij}^{-1}a_{ij} = a_{ij}.
\end{eqnarray*}
\end{proof}

\begin{lemma}
If $A,B\in M_{n}(S)$ are orthogonal, then their join exists. Furthermore, letting $C = A\vee B$, we have
$$c_{ij} = a_{ij}\vee b_{ij}$$
for all $i,j$.
\end{lemma}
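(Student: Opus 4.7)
The plan is to unpack the definition of orthogonality in $M_n(S)$ entrywise, show that the entrywise join is well-defined and a rook matrix, then verify it is the least upper bound using the characterisation of $\leq$ in $M_n(S)$ from the previous lemma.

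First I would translate orthogonality in $M_n(S)$ into entrywise conditions on $S$. Writing out the $(i,j)$-entries of $AB^{-1}$ and $A^{-1}B$ using $(B^{-1})_{kj} = b_{jk}^{-1}$ and $(A^{-1})_{ik} = a_{ki}^{-1}$, the condition $AB^{-1} = 0$ gives $\bigvee_{k} a_{ik} b_{jk}^{-1} = 0$, i.e.\ $a_{ik} b_{jk}^{-1} = 0$ for all $i,j,k$, and $A^{-1}B = 0$ gives $a_{ki}^{-1} b_{kj} = 0$ for all $i,j,k$. Specialising to $j = i$ in the first and $k = i$ (after reindexing) in the second yields $a_{ij} b_{ij}^{-1} = 0$ and $a_{ij}^{-1} b_{ij} = 0$, so $a_{ij} \perp b_{ij}$ in $S$. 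Since $S$ is orthogonally complete, $c_{ij} := a_{ij} \vee b_{ij}$ exists.

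Next I would check that $C$ is a rook matrix. For $i \neq j$ and any row index $k$, distributivity of multiplication over orthogonal joins gives
\[
c_{ki}^{-1} c_{kj} = (a_{ki}^{-1} \vee b_{ki}^{-1})(a_{kj} \vee b_{kj}) = a_{ki}^{-1}a_{kj} \vee a_{ki}^{-1}b_{kj} \vee b_{ki}^{-1}a_{kj} \vee b_{ki}^{-1}b_{kj}.
\]
The first and last terms vanish by (RM1) applied to $A$ and $B$, while the cross terms vanish by the orthogonality conditions extracted above. An analogous computation handles (RM2), so $C \in M_n(S)$.

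Finally I would show $C = A \vee B$ using the characterisation from the previous lemma, that $X \leq Y$ in $M_n(S)$ iff $x_{ij} \leq y_{ij}$ for all $i,j$. Since $a_{ij}, b_{ij} \leq a_{ij} \vee b_{ij} = c_{ij}$ we have $A, B \leq C$, so $C$ is an upper bound. If $D \in M_n(S)$ also satisfies $A, B \leq D$, then $a_{ij}, b_{ij} \leq d_{ij}$ for every $(i,j)$, whence $c_{ij} = a_{ij} \vee b_{ij} \leq d_{ij}$ by the universal property of the join in $S$, so $C \leq D$. The only step where one could plausibly slip is the rook matrix verification, but this reduces to a bookkeeping exercise once the four-term expansion and the orthogonality-via-entries translation are in hand; there is no real obstacle.
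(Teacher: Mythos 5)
Your proof is correct and follows essentially the same route as the paper: extract the entrywise orthogonality relations from $AB^{-1} = A^{-1}B = 0$, form the entrywise join matrix, verify the rook conditions by distributing over orthogonal joins, and conclude via the entrywise characterisation of $\leq$ from the previous lemma. The only difference is that you spell out the least-upper-bound verification which the paper leaves as "clear".
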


\begin{proof}
If $A,B\in M_{n}(S)$ are orthogonal, then
$$\bigvee_{k=1}^{n}{a_{ki}^{-1}b_{kj}} = 0 = \bigvee_{k=1}^{n}{a_{ik}b_{jk}^{-1}}.$$
Thus for all $i,j,k$, we have $a_{ki}^{-1}b_{kj} = a_{ik}b_{jk}^{-1} = 0$ and so for all $i,j$, $\exists a_{ij}\vee b_{ij}$ and $\exists a_{ij}^{-1}\vee b_{ij}^{-1}$. Let $C$ be the matrix with entries $c_{ij} = a_{ij}\vee b_{ij}$. We need to show that $C\in M_{n}(S)$. It will then be clear by the previous lemma that $C = A\vee B$. So we will therefore verify that for all $i,j,k$ with $i\neq j$ we have $c_{ki}^{-1}c_{kj} = 0$ and $c_{ik}c_{jk}^{-1} = 0$. First note that $(a_{ij}\vee b_{ij})^{-1} = a_{ij}^{-1}\vee b_{ij}^{-1}$.
\begin{comment}
We have
$$(a_{ij}^{-1}\vee b_{ij}^{-1})(a_{ij}\vee b_{ij})(a_{ij}^{-1}\vee b_{ij}^{-1}) = a_{ij}^{-1}\vee b_{ij}^{-1}$$
and
$$(a_{ij}\vee b_{ij})(a_{ij}^{-1}\vee b_{ij}^{-1})(a_{ij}\vee b_{ij}) = a_{ij}\vee b_{ij}.$$
\end{comment}
So
$$c_{ki}^{-1}c_{kj} = (a_{ki}^{-1}\vee b_{ki}^{-1})(a_{kj}\vee b_{kj}) = 0$$
and
$$c_{ik}c_{jk}^{-1} = (a_{ik}\vee b_{ik})(a_{jk}^{-1}\vee b_{jk}^{-1}) = 0.$$
\end{proof}

\begin{lemma}
Let $A,B \in M_{n}(S)$ be orthogonal. Then for all $D\in M_{n}(S)$ we have
$$D(A\vee B) = DA \vee DB.$$
\end{lemma}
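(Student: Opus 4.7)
The strategy is to verify directly that $DA \vee DB$ exists as a join in $M_n(S)$ and then to compare it with $D(A \vee B)$ entry by entry, exploiting distributivity of multiplication over orthogonal joins in the underlying inverse semigroup $S$.

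First I would show that $DA$ and $DB$ are orthogonal in $M_n(S)$. Since $A \perp B$ we have $AB^{-1} = 0$ and $A^{-1}B = 0$, hence $(DA)(DB)^{-1} = DAB^{-1}D^{-1} = 0$ immediately. For $(DA)^{-1}(DB) = A^{-1}(D^{-1}D)B$, note that $D^{-1}D$ is a diagonal rook matrix with idempotent entries $e_k = \bigvee_r d_{rk}^{-1}d_{rk}$, so the $(i,j)$ entry of $(DA)^{-1}(DB)$ is $\bigvee_k a_{ki}^{-1}e_k b_{kj}$. The key point in $S$ is that $s^{-1}et = 0$ whenever $s^{-1}t = 0$ and $e$ is an idempotent, which follows from the identity $et = t(t^{-1}et)$, giving $s^{-1}et = (s^{-1}t)(t^{-1}et) = 0$. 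Applied to each term via $a_{ki}^{-1}b_{kj} = 0$, this shows $(DA)^{-1}(DB) = 0$, and so $DA \perp DB$.

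Next I compute both sides entrywise. By the previous lemma,
$$(DA \vee DB)_{ij} = (DA)_{ij} \vee (DB)_{ij} = \Bigl(\bigvee_k d_{ik}a_{kj}\Bigr) \vee \Bigl(\bigvee_k d_{ik}b_{kj}\Bigr).$$
On the other hand, since $A \perp B$ in $M_n(S)$ forces $a_{kj} \perp b_{kj}$ in $S$ (via the entrywise consequences $a_{kj}b_{kj}^{-1} = 0$ and $a_{kj}^{-1}b_{kj} = 0$ extracted from $AB^{-1} = A^{-1}B = 0$), distributivity of multiplication over orthogonal joins in $S$ yields $d_{ik}(a_{kj} \vee b_{kj}) = d_{ik}a_{kj} \vee d_{ik}b_{kj}$, so
$$(D(A\vee B))_{ij} = \bigvee_k d_{ik}(a_{kj}\vee b_{kj}) = \bigvee_k \bigl(d_{ik}a_{kj} \vee d_{ik}b_{kj}\bigr).$$

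The remaining step is to justify rewriting this last expression as $\bigl(\bigvee_k d_{ik}a_{kj}\bigr) \vee \bigl(\bigvee_k d_{ik}b_{kj}\bigr)$. For this I would check that the whole family $\{d_{ik}a_{kj}, d_{ik}b_{kj}\}_{k=1}^n$ is pairwise orthogonal in $S$. For $k \neq k'$, the rook conditions on $D$ give $d_{ik}^{-1}d_{ik'} = 0$, while those on $A$ and $B$ give $a_{kj}a_{k'j}^{-1} = 0$ and $b_{kj}b_{k'j}^{-1} = 0$; combined with $a_{kj}b_{k'j}^{-1} = 0$ and $a_{kj}^{-1}b_{k'j} = 0$ coming from $A \perp B$, this yields that all of $d_{ik}a_{kj}, d_{ik'}a_{k'j}, d_{ik}b_{kj}, d_{ik'}b_{k'j}$ are mutually orthogonal. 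For $k = k'$ the mixed orthogonality of $d_{ik}a_{kj}$ and $d_{ik}b_{kj}$ is exactly what was established in step one (via the idempotent-moving trick). Orthogonal completeness of $S$ then guarantees that every finite subjoin exists and that associativity and commutativity of orthogonal joins lets us reorganise the double join, completing the proof.

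The main obstacle is purely bookkeeping: confirming that the various row, column, and cross orthogonalities are exactly what is needed to make every intermediate join well-defined and the interchange of joins legitimate. Once these are in place, distributivity of multiplication over orthogonal joins in $S$ does all the real work.
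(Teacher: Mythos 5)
Your proof is correct and follows essentially the same route as the paper: first establish $DA \perp DB$, then compare entries using the entrywise formula for orthogonal joins and distributivity in $S$. The only cosmetic difference is that the paper verifies $(DA)^{-1}(DB)=0$ by the matrix-level identity $A^{-1}D^{-1}DB = A^{-1}D^{-1}D(AA^{-1}B)=0$ (commuting the idempotents $AA^{-1}$ and $D^{-1}D$), while you perform the same idempotent-insertion trick entrywise and additionally spell out the join-reorganisation bookkeeping that the paper leaves implicit.
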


\begin{proof}
Let $A,B,D\in M_{n}(S)$ with $A$ orthogonal to $B$, let $C = A\vee B$ be as above, and let $E = DA$, $F = DB$. First we must check that $\exists E\vee F$. To this end, let $G = EF^{-1}$ and $H = E^{-1}F$. Orthogonality of $A$ and $B$ gives $G = 0$ and
$$H = A^{-1}D^{-1}DB = A^{-1}AA^{-1}D^{-1}DB = A^{-1}D^{-1}DAA^{-1}B = 0.$$
\begin{comment}
$$h_{ij} = \bigvee_{k=1}^{n}{e_{ki}^{-1}f_{kj}} = 
\bigvee_{k=1}^{n}{\bigvee_{r=1}^{n}{\bigvee_{s=1}^{n}{\left(a_{ri}^{-1}d_{kr}^{-1}d_{ks}b_{sj}\right)}}} = 
\bigvee_{k=1}^{n}{\bigvee_{r=1}^{n}{\left(a_{ri}^{-1}d_{kr}^{-1}d_{kr}b_{rj}\right)}} $$
$$\leq \bigvee_{k=1}^{\infty}{a_{ri}^{-1}b_{rj}} = 0.$$
\end{comment}
Thus $\exists E\vee F$. Let $M = E\vee F$ and $N = DC$. Then
$$m_{ij} = e_{ij}\vee f_{ij} = \left(\bigvee_{k=1}^{n}{d_{ik}a_{kj}}\right)\vee\left(\bigvee_{k=1}^{n}{d_{ik}b_{kj}}\right) =
\bigvee_{k=1}^{n}{d_{ik}(a_{kj}\vee b_{kj})} $$
$$= \bigvee_{k=1}^{n}{d_{ik}c_{kj}} = n_{ij}.$$
\end{proof}

Combining the previous three lemmas we have

\begin{theorem}
$M_{n}(S)$ is orthogonally complete for each $n\in \mathbb{N}$.
\end{theorem}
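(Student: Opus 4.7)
The plan is to verify the two defining conditions of orthogonal completeness for $M_n(S)$: (i) every pair of orthogonal elements has a join, and (ii) multiplication distributes over finite orthogonal joins on \emph{both} sides. The second of the three preceding lemmas already supplies (i), showing moreover that the join of orthogonal rook matrices is computed entry-wise. The third preceding lemma supplies left distributivity, $D(A\vee B) = DA \vee DB$ for $A \perp B$, so the only missing ingredient is right distributivity, $(A\vee B)D = AD \vee BD$.

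First I would dispatch the easy bookkeeping: invoke the join lemma to conclude that if $A_1,\ldots,A_k \in M_n(S)$ are pairwise orthogonal then by induction (using $(A_1 \vee \cdots \vee A_{k-1}) \perp A_k$, which follows from the entry-wise description of the order and of the join) the join $A_1 \vee \cdots \vee A_k$ exists in $M_n(S)$. Thus condition (i) holds for arbitrary finite orthogonal families. Then the left-distributive lemma propagates by induction to any finite orthogonal join, giving $D(A_1\vee\cdots\vee A_k) = DA_1 \vee \cdots \vee DA_k$.

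For right distributivity my preferred route is to exploit the involution on the inverse semigroupoid $M_n(S)$ established in the earlier lemma. Since $(XY)^{-1}=Y^{-1}X^{-1}$ in any inverse semigroupoid, and since a routine check shows that for orthogonal $X,Y \in M_n(S)$ one has $X^{-1} \perp Y^{-1}$ (because orthogonality of rook matrices, like orthogonality in any inverse semigroup with zero, is self-dual under inversion) and $(X\vee Y)^{-1} = X^{-1} \vee Y^{-1}$ (verifiable entry-wise using the join lemma), one computes
$$(A\vee B)D = \bigl(\,D^{-1}(A\vee B)^{-1}\bigr)^{-1} = \bigl(D^{-1}(A^{-1}\vee B^{-1})\bigr)^{-1} = (D^{-1}A^{-1} \vee D^{-1}B^{-1})^{-1} = AD \vee BD,$$
where the middle equality is the left-distributive lemma applied to the orthogonal pair $A^{-1}, B^{-1}$. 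An equally good alternative is simply to mirror the proof of the left-distributive lemma: verify that $AD$ and $BD$ are orthogonal by a direct entry-wise calculation using $A \perp B$, and then compare the $(i,j)$-entries of $(A\vee B)D$ and $AD \vee BD$.

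There is no real obstacle here; the theorem is a formal consolidation of the three preceding lemmas together with the standard fact that inversion in an inverse semigroupoid swaps the two sides of multiplication. The only point at which one must pause is to notice that orthogonal completeness is a two-sided condition while only left distributivity was proved explicitly, and hence to supply the (short) right-sided argument above.
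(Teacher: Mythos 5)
Your proposal is correct and follows essentially the same route as the paper, which proves the theorem simply by combining the three preceding lemmas (entry-wise order, entry-wise orthogonal joins, and left distributivity). Your explicit right-distributivity argument via $(A\vee B)D = \bigl(D^{-1}(A^{-1}\vee B^{-1})\bigr)^{-1}$ fills in a step the paper leaves implicit (it is the standard fact that inversion in an inverse semigroup(oid) is an order-preserving anti-isomorphism), but it is not a different method.
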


We now define what we mean by $M_{\omega}(S)$. Its elements are $\mathbb{N}\times \mathbb{N}$ matrices whose entries are elements of $S$, such that these matrices are rook matrices in that they satisfy conditions (RM1) and (RM2), and there are only finitely many non-zero entries.

It is clear that by replacing $n$ by $\infty$ in the previous lemmas we have the following

\begin{thm}
$M_{\omega}(S)$ is an orthogonally complete inverse semigroup.
\end{thm}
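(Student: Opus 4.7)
The plan is to essentially transcribe the proofs given for $M_n(S)$, exploiting the finite-support hypothesis so that every join, product, and sum appearing in the argument reduces to a finite one and nothing new has to be verified about convergence or existence.

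First I would check that $M_\omega(S)$ is closed under the formula $c_{ij} = \bigvee_k a_{ik}b_{kj}$. Since $A$ has only finitely many non-zero rows and $B$ only finitely many non-zero columns, for each fixed $i,j$ only finitely many terms $a_{ik}b_{kj}$ are non-zero, so the join is a finite orthogonal join (orthogonality of the summands in $k$ follows from (RM1) for $A$ and (RM2) for $B$ exactly as in the finite case). The product $C = AB$ again has finite support, since $c_{ij} \neq 0$ forces both $a_{ik} \neq 0$ and $b_{kj} \neq 0$ for some $k$, and there are only finitely many such $(i,j)$. The verification that $C$ satisfies (RM1) and (RM2), and that the multiplication is associative, is identical word-for-word to the finite-dimensional case, since the proofs only manipulated finite joins indexed by row/column length.

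Next I would establish that $M_\omega(S)$ is regular and that its idempotents commute. The natural candidate for an inverse of $A$ is the matrix $B$ with $b_{ij} = a_{ji}^{-1}$; this has the same (finite) support as $A$ transposed, and the identities $ABA = A$, $BAB = B$ follow from exactly the same finite-join manipulations used before. The structure theorem for idempotents (diagonal with idempotent entries) is derived by pure equation-chasing that never uses the dimension, so it applies here; and since diagonal matrices with entries from the commutative semilattice $E(S)$ commute, $M_\omega(S)$ is inverse.

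Finally, orthogonal completeness breaks into three steps, each paralleling a lemma already proved: (i) if $A \perp B$ in $M_\omega(S)$ then the entrywise join $C$ with $c_{ij} = a_{ij} \vee b_{ij}$ is well-defined, has finite support (its support is contained in the union of those of $A$ and $B$), satisfies (RM1) and (RM2), and is the join in the natural partial order; (ii) the partial order on $M_\omega(S)$ is characterized entrywise, by the same calculation; (iii) left multiplication distributes over orthogonal joins, again by the same manipulation of finite orthogonal joins. I expect no serious obstacle: the one place one might worry is distributivity, where the finite-case proof needed the join of $E = DA$ and $F = DB$ to exist, but this is established by the same identity $E^{-1}F = A^{-1}D^{-1}DB = A^{-1}AA^{-1}D^{-1}DB = A^{-1}D^{-1}DAA^{-1}B = 0$, which is a finite computation. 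Thus the ``replacing $n$ by $\infty$'' slogan is justified precisely because the finite-support restriction turns every infinite join into a finite one.
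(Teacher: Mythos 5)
Your proposal is correct and is exactly the route the paper takes: the paper's own proof consists of the observation that the finite-support condition lets one replace $n$ by $\infty$ in the lemmas proved for $M_{n}(S)$, which is precisely what you verify in detail. Your elaboration (finiteness of each join, closure of the support under product and join, and the orthogonality computation needed for distributivity) fills in the "it is clear" of the paper without departing from its argument.
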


Let us now determine the form of Green's $\mathcal{D}$-relation on the set of idempotents of $M_{\omega}(S)$. 
For $\mathbf{e} = (e_{1},\ldots,e_{n})$, where $e_{i}\in E(S)$ for each $i$, we will denote by $\Delta(\mathbf{e})$ the matrix $E\in M_{\omega}(S)$ with entries $e_{ii} = e_{i}$ for $i = 1,\ldots,n$ and $0$ everywhere else.

\begin{lemma}
Let $\mathbf{e} = (e_{1},\ldots,e_{n})$, $\mathbf{f} = (e_{2},e_{1},e_{3},\ldots,e_{n})$, where $n\geq 2$. Then
$$\Delta(\mathbf{e})\,\mathcal{D}\,\Delta(\mathbf{f}).$$
\end{lemma}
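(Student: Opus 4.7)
The plan is to exhibit an explicit rook matrix $X \in M_\omega(S)$ witnessing the $\mathcal{D}$-relation, namely one satisfying $XX^{-1} = \Delta(\mathbf{e})$ and $X^{-1}X = \Delta(\mathbf{f})$. The natural candidate, by analogy with the permutation matrices that realise transpositions in the classical rook monoid, is the matrix obtained from $\Delta(\mathbf{e})$ by swapping the entries in positions $(1,1)$ and $(2,2)$ off the diagonal. Concretely, I would define $X$ to be the matrix whose only non-zero entries are
$$x_{12} = e_1, \qquad x_{21} = e_2, \qquad x_{ii} = e_i \text{ for } 3 \leq i \leq n.$$

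First I would check that $X$ really lies in $M_\omega(S)$; that is, that it satisfies the rook axioms (RM1) and (RM2). This is immediate by inspection, since each row and each column contains at most one non-zero entry. The inverse $X^{-1}$ therefore has $(X^{-1})_{ij} = x_{ji}^{-1}$, which means $(X^{-1})_{12} = e_2$, $(X^{-1})_{21} = e_1$, and $(X^{-1})_{ii} = e_i$ for $3 \leq i \leq n$, with all other entries zero.

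Next I would compute $XX^{-1}$ and $X^{-1}X$ entry by entry. For the diagonal entries of $XX^{-1}$ one finds $(XX^{-1})_{11} = x_{12}(X^{-1})_{21} = e_1 e_1 = e_1$ and $(XX^{-1})_{22} = x_{21}(X^{-1})_{12} = e_2 e_2 = e_2$, while $(XX^{-1})_{ii} = e_i$ for $i \geq 3$. All off-diagonal entries vanish because the single non-zero term in each relevant join is killed by a factor of $0$. Symmetrically, $(X^{-1}X)_{11} = (X^{-1})_{12} x_{21} = e_2$ and $(X^{-1}X)_{22} = (X^{-1})_{21} x_{12} = e_1$, with $(X^{-1}X)_{ii} = e_i$ for $i \geq 3$. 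Hence $XX^{-1} = \Delta(\mathbf{e})$ and $X^{-1}X = \Delta(\mathbf{f})$, giving the required $\mathcal{D}$-relation.

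There is no real obstacle here: the work is routine verification that the obvious candidate matrix behaves as expected. The only thing to be slightly careful about is confirming that $X$ satisfies the rook axioms and that the products compute as claimed using the join formula for matrix multiplication, but in this case each relevant join has at most one non-zero summand, so no distributivity issues arise.
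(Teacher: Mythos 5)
Your proof is correct and uses exactly the same witness as the paper: the matrix with $e_1$ in position $(1,2)$, $e_2$ in position $(2,1)$, and $e_i$ in position $(i,i)$ for $i\geq 3$, whose products with its inverse are verified entrywise (using that idempotents are self-inverse) to give $\Delta(\mathbf{e})$ and $\Delta(\mathbf{f})$. The paper simply states this as an easy calculation, so your verification fills in the same routine details.
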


\begin{proof}
Let $A\in M_{\omega}(S)$ be the matrix with entries $a_{12} = e_{1}$, $a_{21} = e_{2}$, $a_{ii} = e_{i}$ for $i = 3,\ldots,n$ and $0$ everywhere else. An easy calculation shows that $AA^{-1} = \Delta(\mathbf{e})$ and $A^{-1}A = \Delta(\mathbf{f})$.
\end{proof}

The fact that we swapped the first two diagonal entries of the matrix was unimportant. Thus we can \emph{slide} entries in the diagonal and remain in the same $\mathcal{D}$-class. In particular, this tells us that $M_{\omega}(S)$ is orthogonally separating and is therefore a K-inverse semigroup.

\begin{lemma}
Let $\mathbf{e} = (e_{1},e_{2},\ldots,e_{n})$, $\mathbf{f} = (e_{1}\vee e_{2},e_{3},\ldots,e_{n})$ where $e_{1}\perp e_{2}$. Then
$$\Delta(\mathbf{e})\, \mathcal{D}\, \Delta(\mathbf{f}).$$
\end{lemma}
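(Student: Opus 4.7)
The plan is to exhibit an element $A\in M_\omega(S)$ realising the $\mathcal{D}$-equivalence, by using a shift together with the orthogonal join $e_1\vee e_2$ implicit in the first column. Since $\mathcal{D}$-equivalence of idempotents in an inverse semigroup with zero is witnessed by a single element $A$ with $AA^{-1}=\Delta(\mathbf{e})$ and $A^{-1}A=\Delta(\mathbf{f})$, the whole argument reduces to writing down $A$ and computing both products using the formula established earlier in this section.

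I would define $A\in M_\omega(S)$ by
\[ a_{11}=e_1,\quad a_{21}=e_2,\quad a_{i,i-1}=e_i\ \text{for } 3\le i\le n, \]
and $a_{ij}=0$ otherwise. First I would check that $A$ is indeed a rook matrix: every row is a single idempotent so condition (RM1) is automatic, while the only column with two non-zero entries is column~$1$, where $a_{11}a_{21}^{-1}=e_1e_2=0$ by the hypothesis $e_1\perp e_2$, verifying (RM2). Hence $A^{-1}$ is the matrix whose non-zero entries are $a^{-1}_{11}=e_1$, $a^{-1}_{12}=e_2$ and $a^{-1}_{i-1,i}=e_i$ for $3\le i\le n$.

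Next I would compute the two products using $(BC)_{ij}=\bigvee_k b_{ik}c_{kj}$. For $AA^{-1}$ the only non-zero contributions to $(i,j)$ come from the unique $k$ for which both $a_{ik}$ and $a^{-1}_{kj}$ are non-zero; this forces $i=j$ and yields $a_{11}a^{-1}_{11}=e_1$, $a_{21}a^{-1}_{12}=e_2$, and $a_{i,i-1}a^{-1}_{i-1,i}=e_i$ for $i\ge 3$, so $AA^{-1}=\Delta(\mathbf{e})$. For $A^{-1}A$, the $(1,1)$-entry is $a^{-1}_{11}a_{11}\vee a^{-1}_{12}a_{21}=e_1\vee e_2$ (this is the one place where the orthogonal join is used, and it is precisely where the two columns of $A$'s first column are collapsed), while $(i-1,i-1)$-entries for $3\le i\le n$ give $a^{-1}_{i-1,i}a_{i,i-1}=e_i$; all other entries vanish. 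Thus $A^{-1}A=\Delta(\mathbf{f})$, as required.

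There is no real obstacle here: the content is just to notice that concentrating $e_1$ and $e_2$ in the same column of $A$ (rather than the same row) is what produces their join in $A^{-1}A$ while keeping them separated in $AA^{-1}$, and that orthogonality $e_1\perp e_2$ is exactly what is needed for $A$ to be a rook matrix in the first place.
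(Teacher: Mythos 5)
Your proof is correct and follows essentially the same route as the paper: exhibit a single rook matrix $A$ whose first column carries both $e_1$ and $e_2$ (using $e_1\perp e_2$ for (RM2)) and verify $AA^{-1}=\Delta(\mathbf{e})$, $A^{-1}A=\Delta(\mathbf{f})$. Your only deviation is placing $e_3,\ldots,e_n$ on the subdiagonal rather than the diagonal, which if anything is a slight improvement, since it makes $A^{-1}A$ equal $\Delta(\mathbf{f})$ exactly rather than up to a slide of diagonal entries.
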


\begin{proof}
Let $A\in M_{\omega}(S)$ be the matrix with entries $a_{11} = e_{1}$, $a_{21} = e_{2}$, $a_{ii} = e_{i}$ for $i = 3,\ldots,n$ and $0$ everywhere else. An easy calculation shows that $AA^{-1} = \Delta(\mathbf{e})$ and $A^{-1}A = \Delta(\mathbf{f})$.
\end{proof}

Thus, we can also \emph{combine} and \emph{split} orthogonal joins.

\begin{lemma}
Let $\mathbf{e} = (e_{1},e_{2},\ldots,e_{n})$, $\mathbf{f} = (f_{1},e_{2},\ldots,e_{n})$ where $e_{1}\,\mathcal{D}\,f_{1}$. Then
$$\Delta(\mathbf{e})\, \mathcal{D}\, \Delta(\mathbf{f}).$$
\end{lemma}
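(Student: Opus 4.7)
The plan is to mimic the construction used in the two preceding lemmas, where an explicit witness matrix was exhibited whose left and right idempotents gave the two diagonal matrices being compared. Since $e_{1}\,\mathcal{D}\,f_{1}$ in $S$, by the standard characterisation of Green's $\mathcal{D}$-relation on idempotents in an inverse semigroup there exists $s\in S$ with $ss^{-1}=f_{1}$ and $s^{-1}s=e_{1}$. The natural candidate is then the matrix $A\in M_{\omega}(S)$ defined by $a_{11}=s$, $a_{ii}=e_{i}$ for $2\leq i\leq n$, and all other entries zero.

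First I would verify that $A$ is genuinely a rook matrix: every row and every column of $A$ has at most one non-zero entry, so conditions (RM1) and (RM2) hold vacuously. Then using the formula $(A^{-1})_{ij}=a_{ji}^{-1}$ established in the inverse-semigroupoid lemma, $A^{-1}$ is the matrix with $(A^{-1})_{11}=s^{-1}$, $(A^{-1})_{ii}=e_{i}$ for $2\leq i\leq n$, and zeros elsewhere.

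Next I would compute the two products using the rook-matrix multiplication formula. For each $k$, $(AA^{-1})_{kk}=\bigvee_{l}a_{kl}a_{kl}^{-1}$, which collapses to $ss^{-1}=f_{1}$ when $k=1$ and to $e_{k}e_{k}=e_{k}$ for $2\leq k\leq n$; the off-diagonal entries vanish because in each row only a single entry of $A$ is non-zero. Hence $AA^{-1}=\Delta(\mathbf{f})$. The analogous calculation with $A^{-1}A$ yields $s^{-1}s=e_{1}$ in position $(1,1)$ and $e_{k}$ in position $(k,k)$ for $k\geq 2$, giving $A^{-1}A=\Delta(\mathbf{e})$. Therefore $\Delta(\mathbf{e})\,\mathcal{D}\,\Delta(\mathbf{f})$.

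There is no real obstacle here; the content of the lemma is simply that a $\mathcal{D}$-equivalence $e_{1}\,\mathcal{D}\,f_{1}$ in $S$ lifts to a $\mathcal{D}$-equivalence in $M_{\omega}(S)$ by embedding the witnessing element $s$ into the $(1,1)$-slot of an otherwise diagonal matrix. Together with the two preceding lemmas (swapping diagonal positions and combining/splitting orthogonal joins on the diagonal), this result completes the description of $\mathcal{D}$ on the idempotents $\Delta(\mathbf{e})$ of $M_{\omega}(S)$ purely in terms of the $\mathcal{D}$-classes of the diagonal entries, which is what will be needed to identify $K(M_{\omega}(S))$ with $K(S)$ in the subsequent comparison with the module-theoretic definition.
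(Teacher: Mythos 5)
Your proposal is correct and follows essentially the same argument as the paper: exhibiting the matrix with the $\mathcal{D}$-witness $s$ in the $(1,1)$-slot and $e_{2},\ldots,e_{n}$ on the diagonal, then checking that $AA^{-1}$ and $A^{-1}A$ are the two diagonal idempotent matrices (the paper merely orients the witness the other way, taking $a$ with $aa^{-1}=e_{1}$ and $a^{-1}a=f_{1}$). No gaps.
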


\begin{proof}
Suppose $a\in S$ is such that $aa^{-1} = e_{1}$ and $a^{-1}a = f_{1}$. 
Let $A\in M_{\omega}(S)$ be the matrix with entries $a_{11} = a$, $a_{ii} = e_{i}$ for $i = 2,\ldots,n$ and $0$ everywhere else. 
An easy calculation shows that $AA^{-1} = \Delta(\mathbf{e})$ and $A^{-1}A = \Delta(\mathbf{f})$.
\end{proof}

This tells us that we can \emph{swap} entries for $\mathcal{D}$-related elements. In fact, these three types of moves completely describe the $\mathcal{D}$-classes of $E(M_{\omega}(S))$.

\begin{lemma}
\label{dclassmatrix}
Let $E, F\in M_{\omega}(S)$ be idempotent matrices in the same $\mathcal{D}$-class. Then one can go from $E$ to $F$ in a finite number of slide, combining, splitting and swap moves.
\end{lemma}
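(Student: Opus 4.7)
The plan is to exploit the witness $A \in M_\omega(S)$ for $E \,\mathcal{D}\, F$ and extract from its entries a chain of $\mathcal{D}$-equivalences in $S$ that can be realized by the four kinds of moves. Say $E = \Delta(e_1,\ldots,e_n)$ and $F = \Delta(f_1,\ldots,f_m)$, and let $A$ satisfy $AA^{-1} = E$ and $A^{-1}A = F$. For each pair $(i,j)$ with $a_{ij} \neq 0$ set $p_{ij} = a_{ij}a_{ij}^{-1}$ and $q_{ij} = a_{ij}^{-1}a_{ij}$. The element $a_{ij}$ then witnesses $p_{ij} \,\mathcal{D}\, q_{ij}$ in $S$.

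First I would verify the orthogonality relations that make the subsequent moves legal. Writing out $E = AA^{-1}$ gives $e_i = \bigvee_{j} p_{ij}$, and the rook matrix axiom (RM1) forces $a_{ij}^{-1}a_{ik} = 0$ for $j \neq k$, which immediately yields $p_{ij}p_{ik} = a_{ij}a_{ij}^{-1}a_{ik}a_{ik}^{-1} = 0$. Dually, $F = A^{-1}A$ gives $f_j = \bigvee_{i} q_{ij}$ with (RM2) producing $q_{ij}q_{kj} = 0$ for $i \neq k$. So in each row of the expression $e_i = \bigvee_j p_{ij}$ we have a genuine orthogonal join of idempotents, and similarly for the columns of $F$.

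Next, I would build an explicit finite sequence of moves from $E$ to $F$. Starting from $E = \Delta(e_1,\ldots,e_n)$, apply the splitting lemma (used in reverse, which is legitimate since combining and splitting are mutually inverse $\mathcal{D}$-equivalences of the same matrices) to each diagonal entry $e_i$, turning it into the block $(p_{i1}, p_{i2}, \ldots)$ listing the nonzero $p_{ij}$'s. The result is a diagonal matrix $E'$ whose nonzero diagonal entries are precisely the $p_{ij}$'s in some order (say lexicographically by $(i,j)$). Now use the swap move at each position to replace $p_{ij}$ by $q_{ij}$, using $a_{ij}$ as the $\mathcal{D}$-witness; this produces a diagonal matrix $F'$ whose nonzero entries are the $q_{ij}$'s. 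Apply slide moves to reorder the $q_{ij}$'s so that those sharing a common column index $j$ become consecutive, grouped in blocks ordered by $j$. Finally apply combining moves within each block to merge $\bigvee_i q_{ij}$ into the single entry $f_j$, producing $F$ exactly.

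The main obstacle, I expect, is verifying precisely that the splitting step is justified, since the preceding lemma is phrased only for the combining direction; but as both moves correspond to the same $\mathcal{D}$-equivalence realized by the same off-diagonal matrix, this is a symmetry observation rather than a new argument. A secondary care point is that $A$ may have an infinite-looking index set but only finitely many nonzero entries, so the whole procedure involves only finitely many moves, as required. Once these bookkeeping issues are dispatched, the composition of splits, swaps, slides, and combines transforms $E$ into $F$ in the desired finite sequence.
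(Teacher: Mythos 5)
Your proposal is correct and follows essentially the same route as the paper: split each $e_{ii}=\bigvee_k a_{ik}a_{ik}^{-1}$ into its orthogonal pieces, swap each $a_{ik}a_{ik}^{-1}$ for $a_{ik}^{-1}a_{ik}$ via the witness $a_{ik}$, then slide and recombine the resulting entries into the $f_{jj}=\bigvee_k a_{kj}^{-1}a_{kj}$, with finiteness coming from $A$ having only finitely many non-zero entries. Your explicit check of the orthogonality via (RM1)/(RM2) and the remark that splitting is just combining read backwards are harmless elaborations of what the paper takes for granted.
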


\begin{proof}
Suppose $E = AA^{-1}$, $F = A^{-1}A$ for some $A\in M_{\omega}(S)$. Then 
$$e_{ii} = \bigvee_{k=1}^{\infty}{a_{ik}a_{ik}^{-1}}$$
and
$$f_{ii} = \bigvee_{k=1}^{\infty}{a_{ki}^{-1}a_{ki}}.$$
Firstly, since $A$ only has finitely many non-zero entries, these joins are over a finite number of orthogonal elements. So, we can split the joins and slide the entries along the diagonal in $E$, so that each diagonal entry is now of the form $a_{ik}a_{ik}^{-1}$ for some $i,k$. Then we can replace each $a_{ik}a_{ik}^{-1}$ with $a_{ik}^{-1}a_{ik}$ by performing a swap move. Finally, joining enough orthogonal elements together will then give $F$.
\end{proof}

\begin{comment}
For 
$$\mathbf{e} = (e_{1},e_{2},\ldots,e_{m}),$$
let $\Delta(\mathbf{e}) = G\in M_{\omega}(S)$ be the matrix with entries $g_{ii} = e_{i}$ for $i = 1,\ldots,m$ and $0$ everywhere else and let
$$\mathbf{e_{i}} = (0,\ldots,0,e_{i}^{\downarrow},0\ldots,0) \in \bigoplus_{i=1}^{m}{(e_{i}S)^{\sharp}},$$
where the $e_{i}$ is in the $i$th position.
\end{comment}

Let $S$ be an orthogonally complete inverse semigroup and let
$$\theta:\bigoplus_{i=1}^{m}{(e_{i}S)^{\sharp}}\rightarrow \bigoplus_{i=1}^{m}{(f_{i}S)^{\sharp}}$$
be a module isomorphism with $e_{i},f_{i}\in E(S)$ for each $i$. Then we know by Lemmas \ref{projmod1} and \ref{projmod2} that there exist $a_{ij}\in f_{i}Se_{j}$ with $a_{ij}a_{kj}^{-1} = 0$ for $i\neq k$, $a_{ij}^{-1}a_{ik} = 0$ for $j\neq k$,
$$\theta(\mathbf{e_{j}}) = (a_{1j}^{\downarrow},\ldots,a_{mj}^{\downarrow})$$
and
$$\theta^{-1}(\mathbf{f_{i}}) = ((a_{i1}^{-1})^{\downarrow},\ldots,(a_{im}^{-1})^{\downarrow}).$$

Thus the matrix $A$ with entries $a_{ij}$ (and $0$'s everywhere else) is an element of $M_{\omega}(S)$, $\Delta(\mathbf{e}) = A^{-1}A$ and $\Delta(\mathbf{f}) = AA^{-1}$. In fact, the converse is also true:

\begin{lemma}
Let $\mathbf{e} = (e_{1},\ldots,e_{m})$, $\mathbf{f} = (f_{1},\ldots,f_{m})$ and let $A\in M_{\omega}(S)$ be such that $\Delta(\mathbf{e}) = A^{-1}A$ and $\Delta(\mathbf{f}) = AA^{-1}$. Then the map 
$$\theta:\bigoplus_{i=1}^{m}(e_{i}S)^{\sharp}\rightarrow \bigoplus_{i=1}^{m}(f_{i}S)^{\sharp}$$
given on generators by
$$\theta(0,\ldots,0,e_{i}^{\downarrow},0,\ldots,0) = (a_{1i}^{\downarrow},\ldots,a_{mi}^{\downarrow})$$
is a module isomorphism.
\end{lemma}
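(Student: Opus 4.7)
The plan is to extend $\theta$ from the generators $\mathbf{e_i}$ to a module morphism on the whole coproduct, then construct an explicit two-sided inverse out of the matrix $A^{-1}$, and use the matrix identities $A^{-1}A=\Delta(\mathbf{e})$ and $AA^{-1}=\Delta(\mathbf{f})$ to check the inverse relation.

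First I would verify that the proposed image $(a_{1i}^{\downarrow},\ldots,a_{mi}^{\downarrow})$ really lies in $\bigoplus_{j=1}^{m}(f_jS)^{\sharp}$. Two things need checking: that each entry $a_{ji}$ belongs to $f_jS$, and that the $p^{\sharp}$-values of distinct entries are orthogonal. For the first, using the $(j,j)$-entry of $AA^{-1}=\Delta(\mathbf{f})$ together with the rook condition $a_{jk}^{-1}a_{ji}=0$ for $k\neq i$ gives $f_j a_{ji}=\bigvee_k a_{jk}a_{jk}^{-1}a_{ji}=a_{ji}a_{ji}^{-1}a_{ji}=a_{ji}$. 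For the second, the rook condition $a_{ji}a_{ki}^{-1}=0$ for $j\neq k$ yields $(a_{ji}^{-1}a_{ji})(a_{ki}^{-1}a_{ki})=a_{ji}^{-1}(a_{ji}a_{ki}^{-1})a_{ki}=0$.

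Next I would extend $\theta$ to all of $\bigoplus_{i=1}^{m}(e_iS)^{\sharp}$ by the formula
$$\theta\bigl(\{s_{1,1},\ldots,s_{1,n_1}\}^{\downarrow},\ldots,\{s_{m,1},\ldots,s_{m,n_m}\}^{\downarrow}\bigr) = \bigvee_{i=1}^{m}\bigvee_{k=1}^{n_i}\theta(\mathbf{e_i})\cdot s_{i,k},$$
where the joins are taken inside $\bigoplus_{j=1}^{m}(f_jS)^{\sharp}$. These joins exist because for fixed $i$ the elements $\theta(\mathbf{e_i})\cdot s_{i,k}$ are pairwise orthogonal (by the rook conditions on the $i$-th column of $A$ together with $s_{i,k}s_{i,l}^{-1}=0$ for $k\neq l$), and the contributions from different $i$'s lie in separate coordinates. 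Well-definedness with respect to the relation $\equiv$ defining $(e_iS)^{\sharp}$ follows because if $s_{i,k}=s'\vee s''$ with $s'\perp s''$ then by (M2) the join inside $(f_jS)^{\sharp}$ splits accordingly. Checking axioms (EM1), (EM2), (PM), (MM) is then routine, following the patterns in Proposition \ref{Radjointforget} and Lemma \ref{mapsum}.

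Finally I would introduce the candidate inverse $\phi:\bigoplus_{i=1}^{m}(f_iS)^{\sharp}\rightarrow \bigoplus_{i=1}^{m}(e_iS)^{\sharp}$ defined on generators by $\phi(\mathbf{f_i})=((a_{i1}^{-1})^{\downarrow},\ldots,(a_{im}^{-1})^{\downarrow})$; the same arguments show $\phi$ is a well-defined module morphism once we note that $A^{-1}$ satisfies the analogous matrix identities. The key computation is then
$$\phi\theta(\mathbf{e_j}) = \bigvee_{k=1}^{m}\phi(\mathbf{f_k})\cdot a_{kj} = \Bigl(\bigl(\bigvee_{k}a_{k1}^{-1}a_{kj}\bigr)^{\downarrow},\ldots,\bigl(\bigvee_{k}a_{km}^{-1}a_{kj}\bigr)^{\downarrow}\Bigr),$$
and the $l$-th entry here is precisely the $(l,j)$-entry of $A^{-1}A=\Delta(\mathbf{e})$, which is $0$ for $l\neq j$ and $e_j$ for $l=j$; hence $\phi\theta(\mathbf{e_j})=\mathbf{e_j}$. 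The symmetric calculation using $AA^{-1}=\Delta(\mathbf{f})$ gives $\theta\phi(\mathbf{f_i})=\mathbf{f_i}$, so $\theta$ and $\phi$ are mutually inverse on generators and hence on the whole module.

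The main obstacle will be the bookkeeping around the equivalence relation $\equiv$ in the sharp construction, and in particular making sure that the formula defining $\theta$ does not depend on how we choose to decompose an element of $(e_iS)^{\sharp}$ into an orthogonal join of cyclic generators; once this is in hand the rest of the argument reduces to straightforwardly reading off the two matrix identities entry-by-entry.
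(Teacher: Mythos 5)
Your argument is correct, and the core computations are the same as the paper's (reading off the entries of $AA^{-1}=\Delta(\mathbf{f})$ and $A^{-1}A=\Delta(\mathbf{e})$, and using the rook conditions on rows and columns of $A$), but you organize the bijectivity differently. The paper checks that $\theta$ is a module morphism, then proves surjectivity by showing $(\,(a_{i1}^{-1})^{\downarrow},\ldots,(a_{im}^{-1})^{\downarrow})$ lies in $\bigoplus_k(e_kS)^{\sharp}$ and maps onto $\mathbf{f_i}$ (exactly your $\theta\phi=\mathrm{id}$ computation), and then proves injectivity by a separate direct argument: given $\theta(x)=\theta(y)$ it premultiplies the coordinate equations by $a_{ki}^{-1}$ and joins over $k$, using $\bigvee_k a_{ki}^{-1}a_{ki}=e_i$, to recover $x=y$. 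You instead package the entries of $A^{-1}$ into an explicit module morphism $\phi$ and verify both composites are the identity on the generators $\mathbf{e_j}$ and $\mathbf{f_i}$; since every element of the coproduct is a finite orthogonal join of translates $\mathbf{e_i}\cdot s$ and module morphisms preserve the action and orthogonal joins, this gives bijectivity without the paper's injectivity computation. The price is that you must check $\phi$ is itself well defined (which you correctly get from the symmetric identities for $A^{-1}$) and that the extension of $\theta$ to arbitrary elements is well defined with respect to $\equiv$; note two small imprecisions there: the fact you need is the lemma that $x\cdot s\vee x\cdot t=x\cdot(s\vee t)$ for orthogonal $s,t$, not axiom (M2), and the joins $\bigvee_{i,k}\theta(\mathbf{e_i})\cdot s_{i,k}$ for different $i$ are orthogonal not because the images lie in separate coordinates (they are spread over all coordinates, since $\theta(\mathbf{e_i})\cdot s_{i,k}=((a_{1i}s_{i,k})^{\downarrow},\ldots,(a_{mi}s_{i,k})^{\downarrow})$), but because the domain element lies in the coproduct, so the idempotents $s_{i,k}^{-1}s_{i,k}$ attached to distinct coordinates are already orthogonal. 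Neither point is a genuine gap.
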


\begin{proof}
First, since $A\in M_{\omega}(S)$ we must have $a_{ki}a_{li}^{-1} = 0$ for all $i,k,l$ with $k\neq l$. Thus 
$$(a_{1i}^{\downarrow},\ldots,a_{mi}^{\downarrow}) \in \bigoplus_{i=1}^{m}(f_{i}S)^{\sharp}$$
for all $i = 1,\ldots,m$.
To see that $\theta$ is a module morphism, note that 
$$q(a_{1i}^{\downarrow},\ldots,a_{mi}^{\downarrow}) = \bigvee_{k=1}^{m}{a_{ki}^{-1}a_{ki}} = e_{i}.$$
Let us now check that $\theta$ is surjective. We claim that for all $i = 1,\ldots, m$ we have
$$(a_{i1}^{-1\downarrow},\ldots,a_{im}^{-1\downarrow})\in \bigoplus_{k=1}^{m}(e_{k}S)^{\sharp}.$$
Firstly, 
$$e_{k}a_{ik}^{-1} = \left(\bigvee_{i=1}^{m}{a_{ik}^{-1}a_{ik}}\right)a_{ik}^{-1} = a_{ik}^{-1}a_{ik}a_{ik}^{-1} = a_{ik}^{-1}.$$
Secondly, $a_{ik}a_{ik}^{-1}a_{il}a_{il}^{-1} = 0$ if $k\neq l$. Now
\begin{eqnarray*}
\theta(a_{i1}^{-1\downarrow},\ldots,a_{im}^{-1\downarrow}) & = & \bigvee_{k=1}^{m}{((a_{1k}a_{ik}^{-1})^{\downarrow},\ldots,(a_{mk}a_{ik}^{-1})^{\downarrow})} \\
& = & \left(0,\ldots,0,\left(\bigvee_{k=1}^{m}{a_{ik}a_{ik}^{-1}}\right)^{\downarrow},0,\ldots,0\right)\\
& = & (0,\ldots,0,f_{i}^{\downarrow},0,\ldots,0).
\end{eqnarray*}
Thus $\theta$ is surjective.
Finally, let us check that $\theta$ is injective. Let 
$$x = (\left\{x_{11},\ldots,x_{1r_{1}}\right\}^{\downarrow},\ldots,\left\{x_{m1},\ldots,x_{mr_{m}}\right\}^{\downarrow}) \in \bigoplus_{i=1}^{m}(e_{i}S)^{\sharp}$$
and
$$y = (\left\{y_{11},\ldots,y_{1s_{1}}\right\}^{\downarrow},\ldots,\left\{y_{m1},\ldots,y_{ms_{m}}\right\}^{\downarrow}) \in \bigoplus_{i=1}^{m}(e_{i}S)^{\sharp}$$
be such that $\theta(x) = \theta(y)$. Then for all $k = 1,\ldots,m$ we have
$$\bigvee_{i=1}^{m}\bigvee_{t=1}^{r_{i}}{(a_{ki}x_{it})^{\downarrow}} = \bigvee_{i=1}^{m}\bigvee_{t=1}^{s_{i}}{(a_{ki}y_{it})^{\downarrow}}$$
in $(f_{k}S)^{\sharp}$. So premultiplying both sides of the equation by $a_{ki}^{-1}$ gives
$$\bigvee_{t=1}^{r_{i}}{(a_{ki}^{-1}a_{ki}x_{it})^{\downarrow}} = \bigvee_{t=1}^{s_{i}}{(a_{ki}^{-1}a_{ki}y_{it})^{\downarrow}}$$
in $(e_{i}S)^{\sharp}$.
Taking the join over all $k$ gives
$$\bigvee_{t=1}^{r_{i}}{x_{it}^{\downarrow}} 
= \bigvee_{k=1}^{m}{\bigvee_{t=1}^{r_{i}}{(a_{ki}^{-1}a_{ki}x_{it})^{\downarrow}}} 
= \bigvee_{k=1}^{m}{\bigvee_{t=1}^{s_{i}}{(a_{ki}^{-1}a_{ki}y_{it})^{\downarrow}}}
= \bigvee_{t=1}^{s_{i}}{y_{it}^{\downarrow}}.$$
Thus $x = y$ and so $\theta$ is injective.
\end{proof}

It therefore follows that the objects of $\Proj_{S}$ are in one-one correspondence with the $\mathcal{D}$-classes of idempotents of $M_{\omega}(S)$. Thus, we have proved:
\begin{comment}

\begin{lemma}
Let $E\in E(M_{\infty}(S))$, let $n\in \mathbb{N}$ be such that $e_{ii} = 0$ for all $i\geq n+1$ and let $F\in M_{\infty}(S)$ be such that for $n+1 \leq i\leq 2n$, $f_{ii} = e_{kk}$ where $k = i-n$ and $f_{ij} = 0$ for all other entries. Then $E\mathcal{D} F$. 
\end{lemma}

\begin{proof}
Let $S\in E(M_{\infty}(S))$ be the matrix with entries $s_{k,i}$ for $i = n+1,\ldots, 2n$, $k = n-i$, and zero everywhere else. Then $SS^{-1} = E$ and $S^{-1}S = F$.
\end{proof}

Thus if $S$ is an orthogonally complete inverse monoid with $0$ then $M_{\infty}(S)$ is a K-inverse semigroup.

The following can easily be proved
\end{comment}

\begin{thm}
Let $S$ be an orthogonally complete inverse semigroup. Then 
$$K(S) \cong K(M_{\omega}(S)).$$
\end{thm}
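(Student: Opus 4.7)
The plan is to exhibit a monoid isomorphism between the iso-classes of objects of $\Proj_S$ (under $\bigoplus$) and $A(M_\omega(S)) = E(M_\omega(S))/\mathcal{D}$ (under $+$), and then appeal to functoriality of the Grothendieck group construction.

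First, I would make explicit the bijection foreshadowed by the discussion immediately preceding the theorem. Since every idempotent of $M_\omega(S)$ is diagonal, each $E \in E(M_\omega(S))$ has the form $\Delta(\mathbf{e})$ for some finite tuple $\mathbf{e} = (e_1,\ldots,e_m)$ of idempotents in $S$. Define a map $\Phi$ from iso-classes of $\Proj_S$ to $A(M_\omega(S))$ by
$$\Phi\left(\Big[\bigoplus_{i=1}^m (e_i S)^\sharp\Big]\right) = [\Delta(e_1,\ldots,e_m)].$$
The paragraph just before the theorem shows that isomorphisms $\bigoplus_i (e_i S)^\sharp \to \bigoplus_i (f_i S)^\sharp$ correspond to matrices $A \in M_\omega(S)$ with $A^{-1}A = \Delta(\mathbf{e})$ and $AA^{-1} = \Delta(\mathbf{f})$. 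Combined with Lemma \ref{dclassmatrix}, which says that slide, combine/split and swap moves generate the entire $\mathcal{D}$-relation on idempotent matrices, this shows $\Phi$ is both well-defined and injective. Surjectivity is immediate since every idempotent matrix is of the form $\Delta(\mathbf{e})$.

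Next I would check that $\Phi$ is a monoid homomorphism. For iso-classes of $X = \bigoplus_{i=1}^m (e_i S)^\sharp$ and $Y = \bigoplus_{j=1}^n (f_j S)^\sharp$, the direct sum $X \oplus Y$ is iso to $\bigoplus_{k=1}^{m+n} (g_k S)^\sharp$ with $(g_1,\ldots,g_{m+n}) = (e_1,\ldots,e_m,f_1,\ldots,f_n)$. On the other side, using the sliding move one can replace $\Delta(\mathbf{f})$ by the $\mathcal{D}$-equivalent matrix $\Delta(\mathbf{f})'$ supported on rows/columns $m+1,\ldots,m+n$; then $\Delta(\mathbf{e})$ and $\Delta(\mathbf{f})'$ are orthogonal in $M_\omega(S)$ (their supports are disjoint), and their orthogonal join is exactly $\Delta(e_1,\ldots,e_m,f_1,\ldots,f_n)$. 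Hence
$$[\Delta(\mathbf{e})] + [\Delta(\mathbf{f})] = [\Delta(e_1,\ldots,e_m,f_1,\ldots,f_n)] = \Phi([X \oplus Y]),$$
so $\Phi$ is additive, and clearly $\Phi$ sends the one-element module (the identity of $\Proj_S$) to $[0]$.

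Finally, the Grothendieck group construction is functorial on commutative monoids, so the monoid isomorphism $\Phi$ induces a group isomorphism
$$K(S) = \mathcal{G}(\Proj_S/{\cong}) \;\cong\; \mathcal{G}(A(M_\omega(S))) = K(M_\omega(S)),$$
where the second equality uses that $M_\omega(S)$ is a $K$-inverse semigroup (noted just after Lemma \ref{dclassmatrix}) so its $K$-group is computed via the recipe of Section 4.2.

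The main obstacle I anticipate is not conceptual but bookkeeping: one must be careful that the rewriting of $\Delta(\mathbf{f})$ into an orthogonal representative genuinely preserves the $\mathcal{D}$-class and that $\Phi$ is insensitive to the order of the summands (which it is, precisely because of the swap/slide moves of Lemma \ref{dclassmatrix}). Once these symmetries are recorded, everything else is straightforward unwinding of definitions.
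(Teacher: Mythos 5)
Your proposal is correct and follows essentially the same route as the paper: the paper's proof consists precisely of the correspondence (via the lemmas on $\Delta(\mathbf{e})$, the matrix realisation of module isomorphisms, and Lemma \ref{dclassmatrix}) between iso-classes of objects of $\Proj_{S}$ and $\mathcal{D}$-classes of idempotents in $M_{\omega}(S)$, followed by passing to Grothendieck groups. You have merely written out explicitly the additivity and well-definedness checks that the paper leaves implicit, so there is nothing to correct.
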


Lemma \ref{dclassmatrix} tells us how to give $A(S)$ in terms of a semigroup presentation. Let $X = \left\{A_{e}|e\in E(S)\right\}$ and let $\mathcal{R}$ be the set of relations given by:
\begin{enumerate}
\item $A_{e}A_{f} = A_{f}A_{e}$ for all $e,f\in E(S)$.
\item $A_{e} = A_{f}$ if $e\, \mathcal{D} \, f$.
\item $A_{e}A_{f} = A_{e\vee f}$ if $ef = 0$.
\end{enumerate}

Then $A(S)$ has the following semigroup presentation:
$$A(S) = \langle X \quad | \quad \mathcal{R} \rangle .$$

\section{Functorial properties of $M_{\omega}$ and $K$}

A homomorphism  $\phi:S\rightarrow T$ between orthogonally complete inverse semigroups is said to be \emph{orthogonal join preserving} if $s\perp t$ implies $\phi(s\vee t) = \phi(s)\vee \phi(t)$ for all $s,t\in S$ (both $s\vee t$ and $\phi(s)\vee \phi(t)$ exist since $S$ and $T$ are orthogonally complete and if $s\perp t$ in $S$, then $\phi(s)\perp \phi(t)$ in $T$). We will always assume $\phi(0) = 0$ for any homomorphism $\phi$.

Let $\phi:S\rightarrow T$ be an orthogonal join preserving homomorphism between two orthogonally complete inverse semigroups. Define $\phi^{\ast}:M_{\omega}(S)\rightarrow M_{\omega}(T)$ by $\phi^{\ast}(A) = B$, where $b_{ij} = \phi(a_{ij})$ for all $i,j$. 

\begin{lemma}
\label{phiasthom}
$\phi^{\ast}$ is a well-defined homomorphism. In addition, $\phi^{\ast}$ is orthogonal join preserving.
\end{lemma}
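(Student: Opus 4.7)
The plan is to verify three things in turn: that $\phi^{\ast}(A)$ actually lies in $M_{\omega}(T)$ for every $A\in M_{\omega}(S)$, that $\phi^{\ast}$ respects the convolution product, and finally that it respects orthogonal joins. All three will be straightforward entry-wise computations that lean on the facts that $\phi$ is a homomorphism with $\phi(0)=0$ and that $\phi$ preserves orthogonal joins.

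First I would check well-definedness. Since $A$ has only finitely many non-zero entries and $\phi(0)=0$, the matrix $\phi^{\ast}(A)$ also has only finitely many non-zero entries. To verify (RM1), if $a_{ki}$ and $a_{kj}$ lie in the same row of $A$ with $i\neq j$, then $a_{ki}^{-1}a_{kj}=0$ in $S$, so
$$\phi(a_{ki})^{-1}\phi(a_{kj}) = \phi(a_{ki}^{-1}a_{kj}) = \phi(0) = 0,$$
using that $\phi$ is a homomorphism and hence commutes with inverses. (RM2) follows symmetrically.

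Next I would check that $\phi^{\ast}$ is a semigroup homomorphism. Given $A,B\in M_{\omega}(S)$ whose product is defined, the $(i,j)$ entry of $AB$ is the orthogonal join $\bigvee_{k}a_{ik}b_{kj}$, and the elements $\{a_{ik}b_{kj}\}_{k}$ are pairwise orthogonal by (RM1) and (RM2) applied to $A$ and $B$ (this is exactly the computation establishing associativity of rook matrix multiplication earlier). Because $\phi$ preserves orthogonal joins and is multiplicative,
$$\phi^{\ast}(AB)_{ij} = \phi\Bigl(\bigvee_{k}a_{ik}b_{kj}\Bigr) = \bigvee_{k}\phi(a_{ik})\phi(b_{kj}) = (\phi^{\ast}(A)\phi^{\ast}(B))_{ij},$$
which gives $\phi^{\ast}(AB)=\phi^{\ast}(A)\phi^{\ast}(B)$.

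Finally I would show that $\phi^{\ast}$ preserves orthogonal joins. Suppose $A\perp B$ in $M_{\omega}(S)$. From the earlier characterisation of joins in $M_{\omega}(S)$, the entries of $A$ and $B$ satisfy $a_{ki}^{-1}b_{kj}=0$ and $a_{ik}b_{jk}^{-1}=0$ for all $i,j,k$, and in particular $a_{ij}\perp b_{ij}$ entry-wise, with $(A\vee B)_{ij}=a_{ij}\vee b_{ij}$. Applying $\phi$ gives $\phi(a_{ij})\perp\phi(b_{ij})$, so $\phi^{\ast}(A)\perp\phi^{\ast}(B)$ in $M_{\omega}(T)$, and
$$\phi^{\ast}(A\vee B)_{ij} = \phi(a_{ij}\vee b_{ij}) = \phi(a_{ij})\vee\phi(b_{ij}) = (\phi^{\ast}(A)\vee\phi^{\ast}(B))_{ij},$$
as required. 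There is no real obstacle here; the only thing one must be careful about is invoking the orthogonal join preservation of $\phi$ at precisely the two places where a join actually appears (inside the convolution sum and inside the entrywise join), and checking that the orthogonality hypotheses needed to legitimise these joins are supplied by the rook matrix axioms and by the assumption $A\perp B$ respectively.
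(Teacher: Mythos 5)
Your proof is correct and follows essentially the same route as the paper: entry-wise verification that $\phi^{\ast}(A)$ satisfies the rook-matrix conditions, then the same entry-wise computations (using that $\phi$ is multiplicative and preserves orthogonal joins) for the product and for orthogonal joins. You simply spell out the well-definedness check in more detail than the paper, which just remarks that the orthogonality conditions are preserved.
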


\begin{proof}
Firstly, we see for $A\in M_{\omega}(S)$ that $\phi^{\ast}(A)$ will satisfy the same orthogonally conditions as for $A$, so $\phi^{\ast}(A)\in M_{\omega}(T)$.
Let $A,B\in M_{\omega}(S)$, $C = \phi^{\ast}(AB)$ and $D = \phi^{\ast}(A)\phi^{\ast}(B)$. Then for all $i,j$ we have
$$c_{ij} = \phi\left(\bigvee_{k=1}^{\infty}{a_{ik}b_{kj}}\right) = \bigvee_{k=1}^{\infty}{\phi(a_{ik})\phi(b_{kj})} = d_{ij}$$
and so $\phi^{\ast}(AB) = \phi^{\ast}(A)\phi^{\ast}(B)$.
Now let us show that $\phi^{\ast}$ preserves orthogonal joins. Let $A\perp B$, $C = \phi(A\vee B)$ and $D = \phi(A)\vee \phi(B)$ ($D$ exists by an earlier remark). Then
$$c_{ij} = \phi(a_{ij}\vee b_{ij}) = \phi(a_{ij})\vee \phi(b_{ij}) = d_{ij}.$$
\end{proof}

If $\phi$ is injective then $\phi^{\ast}$ must also be injective. Suppose $\phi$ is surjective. Then $\phi^{\ast}$ will be surjective if and only if $\phi^{-1}(0) = 0$.

\begin{comment}
\begin{lemma}
Let $\phi:S\rightarrow T$ be an orthogonal join preserving homomorphism of orthogonally complete inverse monoids such that $\phi(e)\mathcal{D}_{T} \phi(f)$ if and only if $e\mathcal{D}_{S} f$ for all $e,f\in S$. Then for all idempotent matrices $E,F\in E(M_{\infty}(S))$ we have $\phi^{\ast}(E)$ and $\phi^{\ast}(F)$ are $\mathcal{D}$-related in $M_{\infty}(T)$ if and only if $E$ and $F$ are $\mathcal{D}$-related in $M_{\infty}(S)$.
\end{lemma}

\begin{proof}
One direction is clear. Suppose $\phi^{\ast}(E)$ and $\phi^{\ast}(F)$ are $\mathcal{D}$-related in $M_{\infty}(T)$. Then there exists $B\in M_{\infty}(T)$ with $BB^{-1} = E$ and $B^{-1}B = F$. Explicitly, for each $i$, we have
$$e_{ii} = \bigvee_{k=1}^{\infty}{b_{ik}b_{ik}^{-1}}$$
and
$$f_{ii} = \bigvee_{k=1}^{\infty}{b_{ki}^{-1}b_{ki}}.$$
MORE WORK NEEDED.
\end{proof}
\end{comment}

\begin{lemma}
\label{kinvhom}
Let $S,T$ be $K$-inverse semigroups with $\phi:S\rightarrow T$ an orthogonal join preserving homomorphism.
Then there is a homomorphism $\overline{\phi}:K(S)\rightarrow K(T)$. If $\phi$ is surjective then $\overline{\phi}$ is surjective.
\end{lemma}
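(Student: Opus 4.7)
The plan is to construct a map $A(\phi):A(S)\to A(T)$ at the level of $\mathcal{D}$-classes of idempotents, verify that it is a well-defined commutative monoid homomorphism, and then invoke the universal property of the Grothendieck group to obtain $\overline{\phi}:K(S)\to K(T)$. Specifically, I would define $A(\phi)([e]) = [\phi(e)]$ for each $e\in E(S)$.

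First I would check well-definedness. If $e\,\mathcal{D}\,f$ in $S$, pick $s\in S$ with $ss^{-1}=e$ and $s^{-1}s=f$; applying $\phi$ gives $\phi(s)\phi(s)^{-1}=\phi(e)$ and $\phi(s)^{-1}\phi(s)=\phi(f)$, so $\phi(e)\,\mathcal{D}\,\phi(f)$ in $T$ and $[\phi(e)]=[\phi(f)]$. Next I would verify that $A(\phi)$ is a monoid homomorphism. Given $e,f\in E(S)$, choose $e',f'\in E(S)$ with $e'\,\mathcal{D}\,e$, $f'\,\mathcal{D}\,f$ and $e'\perp f'$ (possible because $S$ is a $K$-inverse semigroup), so that $[e]+[f]=[e'\vee f']$. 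Since $\phi$ is a homomorphism, $\phi(e')\phi(f')=\phi(e'f')=\phi(0)=0$, so $\phi(e')\perp\phi(f')$; and since $\phi$ preserves orthogonal joins, $\phi(e'\vee f')=\phi(e')\vee\phi(f')$. Combining with the previous paragraph, $A(\phi)([e]+[f])=[\phi(e'\vee f')]=[\phi(e')\vee\phi(f')]=[\phi(e)]+[\phi(f)]$. It is immediate that $A(\phi)([0])=[0]$, so $A(\phi)$ is a commutative monoid homomorphism. Composing with the canonical map $A(T)\to \mathcal{G}(A(T))=K(T)$ and applying the universal property of the Grothendieck group to $A(S)$ yields the required group homomorphism $\overline{\phi}:K(S)\to K(T)$.

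For the surjectivity statement, suppose $\phi$ is surjective. Given any $e\in E(T)$, pick $s\in S$ with $\phi(s)=e$; then $s^{-1}s\in E(S)$ and $\phi(s^{-1}s)=\phi(s)^{-1}\phi(s)=e^{-1}e=e$. Hence every idempotent of $T$ lies in the image of $\phi|_{E(S)}$, so $A(\phi)$ is surjective onto $A(T)$. Since the Grothendieck group functor preserves surjections (every element of $K(T)$ is a difference of classes coming from $A(T)$), $\overline{\phi}$ is surjective.

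There is no serious obstacle here; the content of the argument is just checking that the three natural compatibility properties (respecting $\mathcal{D}$, respecting orthogonal joins, hitting every idempotent class under a surjection) follow directly from $\phi$ being an orthogonal join preserving semigroup homomorphism. The only point that needs any care is to notice that preimages of idempotents along $\phi$ need not themselves be idempotent, which is handled by passing from $s$ to $s^{-1}s$.
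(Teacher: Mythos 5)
Your proof is correct and follows essentially the same route as the paper: define $[e]\mapsto[\phi(e)]$ on $A(S)$, check well-definedness via $\mathcal{D}$-witnesses and additivity via orthogonal separation plus join preservation, then lift through the Grothendieck group functor. Your extra observation that a preimage of an idempotent need not be idempotent (remedied by passing to $s^{-1}s$) fills in a detail the paper leaves as "clear" in the surjectivity step.
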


\begin{proof}
Let $e,f\in E(S)$ with $ef = 0$. Then $\phi(e\vee f) = \phi(e)\vee \phi(f)$. 

Define $\phi^{\dagger}:A(S)\rightarrow A(T)$ by $\phi^{\dagger}([e]) = [\phi(e)]$. 
If $e,f\in E(S)$ with $e\,\mathcal{D}\, f$ then $\phi(e)\,\mathcal{D}\, \phi(f)$ and so $\phi^{\dagger}$ is well-defined. 
Further for $e,f\in E(S)$, we have 
\begin{eqnarray*}
\phi^{\dagger}([e]+[f]) &=& \phi^{\dagger}([e^{\prime}]+[f^{\prime}]) = \phi^{\dagger}([e^{\prime}\vee f^{\prime}]) = [\phi(e^{\prime}\vee f^{\prime})] = [\phi(e^{\prime})\vee \phi(f^{\prime})]\\
&=& [\phi(e^{\prime})] + [\phi(f^{\prime})] = [\phi(e)] + [\phi(f)] = \phi^{\dagger}([e]) + \phi^{\dagger}([f]),
\end{eqnarray*}
where $e^{\prime}f^{\prime} = 0$, $e\,\mathcal{D}\, e^{\prime}$ and $f\,\mathcal{D}\, f^{\prime}$.
If $\phi$ is surjective then it is clear that $\phi^{\dagger}$ is surjective. Standard theory (c.f. \cite{Rosenberg}) then tells us that we can lift $\phi^{\dagger}:A(S)\rightarrow A(T)$ to a homomorphism $\overline{\phi}:K(S) = \mathcal{G}(A(S))\rightarrow K(T) = \mathcal{G}(A(T))$ and that if $\phi^{\dagger}$ is surjective then $\overline{\phi}$ will be surjective.
\end{proof} 

Combining Lemmas \ref{phiasthom} and \ref{kinvhom} we have

\begin{thm}
Let $S,T$ be orthogonally complete inverse monoids with $\phi:S\rightarrow T$ an orthogonal join preserving homomorphism.
Then there is a homomorphism $K(S)\rightarrow K(T)$. If $\phi$ is surjective and $\phi^{-1}(0) = 0$ then this homomorphism is surjective.
\end{thm}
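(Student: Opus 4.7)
The plan is to assemble the result directly from the two preceding lemmas, using the definition from Section~4.5 that $K(S) = K(M_{\omega}(S))$ for an arbitrary orthogonally complete inverse semigroup $S$. This is legitimate because it was established earlier that $M_{\omega}(S)$ is always a $K$-inverse semigroup (it is orthogonally complete, and the slide/combine/split moves witness that any two idempotent matrices can be made orthogonal within their $\mathcal{D}$-classes), so the group $K(M_{\omega}(S))$ is defined via the Grothendieck construction of Section~4.2.

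First I would apply Lemma~\ref{phiasthom} to the given orthogonal-join-preserving homomorphism $\phi \colon S \to T$ to obtain an orthogonal-join-preserving homomorphism
$$\phi^{\ast} \colon M_{\omega}(S) \longrightarrow M_{\omega}(T).$$
Since both $M_{\omega}(S)$ and $M_{\omega}(T)$ are $K$-inverse semigroups, Lemma~\ref{kinvhom} applies to $\phi^{\ast}$ and yields a group homomorphism
$$\overline{\phi^{\ast}} \colon K(M_{\omega}(S)) \longrightarrow K(M_{\omega}(T)),$$
which, in view of the identification $K(R) \cong K(M_{\omega}(R))$, is exactly the desired homomorphism $K(S) \to K(T)$.

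For the surjectivity clause, I would invoke the observation made immediately after Lemma~\ref{phiasthom}: if $\phi$ is surjective and $\phi^{-1}(0) = \{0\}$, then $\phi^{\ast}$ is surjective. The only slightly non-trivial verification here is that, given a rook matrix $B \in M_{\omega}(T)$, one can lift each non-zero entry $b_{ij}$ to a preimage $a_{ij} \in S$ (with $a_{ij} = 0$ whenever $b_{ij} = 0$), and the resulting matrix $A$ still lies in $M_{\omega}(S)$ — the rook conditions $a_{ki}^{-1}a_{kj} = 0$ and $a_{ik}a_{jk}^{-1} = 0$ for $i \neq j$ follow because their images under $\phi$ vanish in $T$ and $\phi^{-1}(0) = \{0\}$, and finite support is preserved for the same reason. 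Once $\phi^{\ast}$ is surjective, Lemma~\ref{kinvhom} gives surjectivity of $\overline{\phi^{\ast}}$.

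There is no real obstacle here: the work was done in Lemmas~\ref{phiasthom} and~\ref{kinvhom}, and the theorem is essentially a packaging statement. The only point deserving attention is confirming that the hypothesis $\phi^{-1}(0) = \{0\}$ is what is needed to lift matrices entry-wise while preserving the rook conditions and finite support; this is precisely the role it played in the remark after Lemma~\ref{phiasthom}, so no new argument is required.
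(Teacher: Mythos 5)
Your proposal is correct and follows essentially the same route as the paper: compose Lemma \ref{phiasthom} with Lemma \ref{kinvhom} via the identification $K(S)\cong K(M_{\omega}(S))$, using the remark after Lemma \ref{phiasthom} that surjectivity of $\phi$ together with $\phi^{-1}(0)=0$ gives surjectivity of $\phi^{\ast}$. Your entry-wise lifting check for the rook conditions is a welcome explicit justification of that remark, but no new argument beyond the paper's is involved.
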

\begin{comment}
\begin{proof}
We have that $\phi^{\ast}:M_{\omega}(S)\rightarrow M_{\omega}(T)$ is an orthogonal join preserving homomorphism. Thus $\overline{\phi^{\ast}}:K(M_{\omega}(S))\rightarrow K(M_{\omega}(T))$ is a homomorphism. 
Further, if $\phi$ surjective then $\overline{\phi^{\ast}}$ is surjective by the above.
\end{proof}
\end{comment}

\begin{comment}
\begin{lemma}
Let $S,T$ be orthogonally complete inverse semigroups. Then we have
\begin{enumerate}
\item $E(S\times T) = E(S)\times E(T)$.
\item $S\times T$ is inverse and $(s,t)^{-1} = (s^{-1},t^{-1})$.
\item $(s_{1},t_{1})\leq (s_{2},t_{2})$ iff $s_{1}\leq s_{2}$ and $t_{1}\leq t_{2}$, where $s_{1},s_{2}\in S$ and $t_{1},t_{2}\in T$.
\item $S\times T$ is orthogonally complete
\item $(e_{1},f_{1})\mathcal{D}(e_{2},f_{2})$ iff $e_{1}\mathcal{D} e_{2}$ and $f_{1}\mathcal{D} f_{2}$, where $e_{1},e_{2}\in E(S)$ and $f_{1},f_{2}\in E(T)$.
\item $(e_{1},f_{1})\perp(e_{2},f_{2})$ iff $e_{1}\perp e_{2}$ and $f_{1}\perp f_{2}$, where $e_{1},e_{2}\in E(S)$ and $f_{1},f_{2}\in E(T)$.
\item $(e,0)\perp (0,f)$ and $(e,0)\vee (0,f) = (e,f)$.
\end{enumerate}
\end{lemma}

\begin{proof}
\begin{enumerate}
\item Easy to check
\item Idempotents commute and easy calculation
\item Easy to check
\item Follows easily from orthogonal completeness of $S$ and $T$, and (3)
\item Easy calculation
\item Obvious
\item Simple calculation
\end{enumerate}
\end{proof}
\end{comment}

If $S,T$ are inverse semigroups then their cartesian product $S\times T$ will also be an inverse semigroup. It is easy to see that if $S$ and $T$ are both orthogonally complete then $S\times T$ will be orthogonally complete. $S\times T$ will satisfy the following properties:
\begin{itemize}
\item $E(S\times T) = E(S)\times E(T)$.
\item $(s,t)^{-1} = (s^{-1},t^{-1})$ for $s\in S$, $t\in T$.
\item $(s_{1},t_{1})\leq (s_{2},t_{2})$ if and only if $s_{1}\leq s_{2}$ and $t_{1}\leq t_{2}$, where $s_{1},s_{2}\in S$ and $t_{1},t_{2}\in T$.
\item $(e_{1},f_{1})\,\mathcal{D}\,(e_{2},f_{2})$ if and only if $e_{1}\,\mathcal{D}\, e_{2}$ and $f_{1}\,\mathcal{D}\, f_{2}$, where $e_{1},e_{2}\in E(S)$ and $f_{1},f_{2}\in E(T)$.
\item $(e_{1},f_{1})\perp(e_{2},f_{2})$ if and only if $e_{1}\perp e_{2}$ and $f_{1}\perp f_{2}$, where $e_{1},e_{2}\in E(S)$ and $f_{1},f_{2}\in E(T)$.
\item If $s_{1}\perp s_{2}\in S$ and $t_{1}\perp t_{2}\in T$ then 
$$(s_{1},t_{1})\vee(s_{2},t_{2}) = (s_{1}\vee s_{2},t_{1}\vee t_{2}).$$
\end{itemize}

\begin{lemma}
\label{cartmoniso}
For $S,T$ be orthogonally complete inverse semigroups, we have
$$\mathcal{A}(M_{\omega}(S\times T)) \cong \mathcal{A}(M_{\omega}(S))\times \mathcal{A}(M_{\omega}(T)).$$
\end{lemma}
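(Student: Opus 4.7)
The plan is to construct the isomorphism explicitly via componentwise projections. Define $\pi_S\colon M_\omega(S\times T)\to M_\omega(S)$ to send a matrix $A$ with entries $a_{ij}=(s_{ij},t_{ij})$ to the matrix with entries $s_{ij}$, and define $\pi_T$ analogously. Since products, inverses, orthogonality and joins in $S\times T$ are all componentwise, it is routine to check that $\pi_S$ preserves matrix multiplication and orthogonal joins and sends $0$ to $0$, and similarly for $\pi_T$; hence both are orthogonal join preserving homomorphisms between $K$-inverse semigroups. Applying Lemma \ref{kinvhom} yields induced monoid homomorphisms $\pi_S^{\dagger}$ and $\pi_T^{\dagger}$ at the level of $\mathcal{A}(-)$, and combining them gives the candidate
$$\Phi=(\pi_S^{\dagger},\pi_T^{\dagger})\colon \mathcal{A}(M_\omega(S\times T))\to \mathcal{A}(M_\omega(S))\times \mathcal{A}(M_\omega(T)).$$

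I would then prove surjectivity by exhibiting a standard form for idempotent representatives. Every idempotent of $M_\omega(S\times T)$ is diagonal with entries $(e_i,f_i)\in E(S)\times E(T)$; since $(e_i,0)\perp(0,f_i)$ in $S\times T$ with $(e_i,0)\vee(0,f_i)=(e_i,f_i)$, iterated split and slide moves in $M_\omega(S\times T)$ show any such idempotent is $\mathcal{D}$-equivalent to a diagonal matrix of the shape
$$G=\Delta\bigl((e_1,0),\ldots,(e_n,0),(0,f_1),\ldots,(0,f_n)\bigr).$$
Given a pair $([E'],[F'])$ with $E'$ having diagonal $e_1,\ldots,e_m$ and $F'$ having diagonal $f_1,\ldots,f_n$, the matrix $G$ built from these data satisfies $\pi_S(G)\,\mathcal{D}\,E'$ (via slide moves absorbing the zero diagonal entries) and similarly $\pi_T(G)\,\mathcal{D}\,F'$, giving $\Phi([G])=([E'],[F'])$.

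The main obstacle is injectivity, which is where the real combinatorial work lies. After replacing $E$ and $F$ by their standard forms $G_E$ and $G_F$ (which are $\mathcal{D}$-equivalent to the originals), the hypothesis $\Phi([E])=\Phi([F])$ gives, by Lemma \ref{dclassmatrix}, finite sequences of slide, combine, split and swap moves from $\pi_S(G_E)$ to $\pi_S(G_F)$ in $M_\omega(S)$ and from $\pi_T(G_E)$ to $\pi_T(G_F)$ in $M_\omega(T)$. The key claim is that each such move lifts to a corresponding $\mathcal{D}$-move in $M_\omega(S\times T)$ performed on the $(e_i,0)$-block (respectively the $(0,f_j)$-block) of the standard form, leaving the other block undisturbed: slide, combine and split lift because orthogonality in $S\times T$ is componentwise, and a swap witnessed by $a\in S$ with $aa^{-1}=e$ and $a^{-1}a=e'$ lifts to a swap witnessed by $(a,0)\in S\times T$, which satisfies $(a,0)(a,0)^{-1}=(e,0)$ and $(a,0)^{-1}(a,0)=(e',0)$. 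Performing the $S$-lifts on $G_E$ and then the $T$-lifts produces a $\mathcal{D}$-chain from $G_E$ to $G_F$ in $M_\omega(S\times T)$, so $[E]=[F]$ and $\Phi$ is injective, completing the proof.
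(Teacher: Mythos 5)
Your proposal is correct and follows essentially the same route as the paper: the map you build from the componentwise projections is exactly the paper's bijection $\theta([\Delta((a_1,b_1),\ldots,(a_m,b_m))]) = ([\Delta(a_1,\ldots,a_m)],[\Delta(b_1,\ldots,b_m)])$, and both arguments hinge on the decomposition $(a_i,b_i)=(a_i,0)\vee(0,b_i)$ to separate the $S$- and $T$-blocks. The only difference is one of emphasis: you obtain well-definedness and additivity from Lemma \ref{kinvhom} and spell out surjectivity and injectivity by lifting slide/combine/split/swap moves, whereas the paper asserts bijectivity from the block decomposition and verifies additivity directly.
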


\begin{proof}
Let $\Delta(\mathbf{e})\in E(M_{\omega}(S\times T))$ be an idempotent matrix with $\mathbf{e} = (e_{1},\ldots,e_{m})$. Then $e_{i}$ will be of the form $e_{i} = (a_{i},b_{i})$, where $a_{i}\in E(S)$ and $b_{i}\in E(T)$ are idempotents. Observe that for each $i$ we have
$$e_{i} = (a_{i},b_{i}) = (a_{i},0)\vee (0,b_{i}),$$
where this is the join of two orthogonal elements. Thus $\Delta(\mathbf{e})\,\mathcal{D}\,\Delta(\mathbf{f})$ where
$$\mathbf{f} = ((a_{1},0),\ldots,(a_{m},0),(0,b_{1}),\ldots,(0,b_{m})).$$
It follows that there is a bijection 
$$\theta:\mathcal{A}(M_{\omega}(S\times T))\rightarrow \mathcal{A}(M_{\omega}(S))\times \mathcal{A}(M_{\omega}(T))$$
given by
$$\theta([\Delta((a_{1},b_{1}),\ldots,(a_{m},b_{m}))]) = ([\Delta(a_{1},\ldots,a_{m})],[\Delta(b_{1},\ldots,b_{m})]).$$
If $a_{i},c_{i}\in S$, $b_{i},d_{i}\in T$ are such that $a_{i}\perp c_{i}$ and $b_{i}\perp d_{i}$ for each $i$ then
\begin{eqnarray*}
&\theta([\Delta((a_{1},b_{1}),\ldots,(a_{m},b_{m}))]+[\Delta((c_{1},d_{1}),\ldots,(c_{m},d_{m}))])\\
= &\theta([\Delta((a_{1},b_{1}),\ldots,(a_{m},b_{m}))\vee \Delta((c_{1},d_{1}),\ldots,(c_{m},d_{m}))])\\ 
= &\theta([\Delta((a_{1}\vee c_{1},b_{1}\vee d_{1}),\ldots,(a_{m}\vee c_{m},b_{m}\vee d_{m}))])\\
= &([\Delta(a_{1}\vee c_{1},\ldots,a_{m}\vee c_{m})],[\Delta(b_{1}\vee d_{1},\ldots,b_{m}\vee d_{m})])\\
= &([\Delta(a_{1},\ldots,a_{m})],[\Delta(b_{1},\ldots,b_{m})]) + ([\Delta(c_{1},\ldots,c_{m})],[\Delta(d_{1},\ldots,d_{m})])\\
= &\theta([\Delta((a_{1},b_{1}),\ldots,(a_{m},b_{m}))]) + \theta([\Delta((c_{1},d_{1}),\ldots,(c_{m},d_{m}))]).
\end{eqnarray*}
Thus $\theta$ is an isomorphism.
\end{proof}

\begin{lemma}
\label{cartgrotiso}
Let $S,T$ be commutative monoids. Then $\mathcal{G}(S\times T)\cong \mathcal{G}(S)\times \mathcal{G}(T)$.
\end{lemma}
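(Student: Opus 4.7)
The plan is to verify the isomorphism by checking that $\mathcal{G}(S)\times\mathcal{G}(T)$, together with a natural monoid homomorphism $\phi:S\times T\to\mathcal{G}(S)\times\mathcal{G}(T)$, satisfies the universal property that characterises $\mathcal{G}(S\times T)$. Explicitly, let $\phi_S:S\to\mathcal{G}(S)$ and $\phi_T:T\to\mathcal{G}(T)$ be the canonical homomorphisms, and define $\phi(s,t)=(\phi_S(s),\phi_T(t))$. This is visibly a monoid homomorphism into an abelian group. By the universal property of the Grothendieck group, there is then a unique group homomorphism $\Phi:\mathcal{G}(S\times T)\to\mathcal{G}(S)\times\mathcal{G}(T)$ with $\Phi\circ\iota=\phi$, where $\iota:S\times T\to\mathcal{G}(S\times T)$ is the canonical map; the claim is that $\Phi$ is an isomorphism, which amounts to showing that the pair $(\mathcal{G}(S)\times\mathcal{G}(T),\phi)$ itself has the universal property.

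For the universal property, I would take an arbitrary abelian group $H$ and monoid homomorphism $\psi:S\times T\to H$ and construct a unique group homomorphism $\theta:\mathcal{G}(S)\times\mathcal{G}(T)\to H$ with $\theta\circ\phi=\psi$. First, restrict $\psi$ to obtain monoid homomorphisms $\psi_S:S\to H$ and $\psi_T:T\to H$ defined by $\psi_S(s)=\psi(s,1_T)$ and $\psi_T(t)=\psi(1_S,t)$. The universal property of $\mathcal{G}(S)$ and $\mathcal{G}(T)$ then yields unique group homomorphisms $\theta_S:\mathcal{G}(S)\to H$ and $\theta_T:\mathcal{G}(T)\to H$ lifting $\psi_S$ and $\psi_T$. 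Because $H$ is abelian, the formula $\theta(a,b)=\theta_S(a)+\theta_T(b)$ defines a group homomorphism $\mathcal{G}(S)\times\mathcal{G}(T)\to H$; and $\theta(\phi(s,t))=\psi_S(s)+\psi_T(t)=\psi(s,1_T)+\psi(1_S,t)=\psi(s,t)$, using that $(s,t)=(s,1_T)(1_S,t)$ in $S\times T$ and $\psi$ is a monoid homomorphism. Uniqueness of $\theta$ is forced because any such $\theta$ must, when restricted along the two coordinate inclusions $\mathcal{G}(S)\hookrightarrow\mathcal{G}(S)\times\mathcal{G}(T)$ and $\mathcal{G}(T)\hookrightarrow\mathcal{G}(S)\times\mathcal{G}(T)$, lift $\psi_S$ and $\psi_T$ and hence coincide with $\theta_S$ and $\theta_T$ by the uniqueness clauses of their universal properties.

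There is essentially no serious obstacle here: the proof is a direct diagram chase using the universal property twice and exploiting the commutativity of $H$ to split $\theta$ into a sum of contributions from the two factors. The one subtlety worth being explicit about is the decomposition $(s,t)=(s,1_T)(1_S,t)$, which is precisely what allows $\psi$ to be reconstructed from its two restrictions; this is the only place where the monoid (as opposed to merely semigroup) hypothesis is used in an essential way. Once this verification is complete, the isomorphism $\mathcal{G}(S\times T)\cong\mathcal{G}(S)\times\mathcal{G}(T)$ follows from the uniqueness up to isomorphism of objects satisfying a universal property, and combining with Lemma \ref{cartmoniso} will then give the direct-sum decomposition of $K$-groups needed in the applications that follow.
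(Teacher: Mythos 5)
Your proof is correct and follows essentially the same route as the paper: both verify that $\mathcal{G}(S)\times\mathcal{G}(T)$ with the map $(s,t)\mapsto(\phi_S(s),\phi_T(t))$ satisfies the universal property of the Grothendieck group of $S\times T$, splitting a homomorphism $\psi$ into its two coordinate restrictions, summing the lifted maps (using commutativity of the target), and deducing uniqueness from the uniqueness clauses for the two factors. The only difference is notational (the paper writes the monoid identities additively as $0$).
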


\begin{proof}
Let $\phi_{1}:S\rightarrow \mathcal{G}(S)$ and $\phi_{2}:T\rightarrow \mathcal{G}(T)$ be the universal maps and let $\phi:S\times T\rightarrow \mathcal{G}(S)\times \mathcal{G}(T)$ be given by $\phi(s,t) = (\phi_{1}(s),\phi_{2}(t))$. 
Let $\theta:S\times T\rightarrow G$ be a monoid homomorphism to a commutative group $G$. 
Thus $\theta_{1}:S\rightarrow G$ and $\theta_{2}:T\rightarrow G$ given by $\theta_{1}(s) = \theta(s,0)$ and $\theta_{2}(t) = \theta(0,t)$ are homomorphisms. 
There are therefore unique maps $\pi_{1}:\mathcal{G}(S)\rightarrow G$ and $\pi_{2}:\mathcal{G}(T)\rightarrow G$ such that $\pi_{i}\phi_{i} = \theta_{i}$ for $i = 1,2$. 
Let $\pi:\mathcal{G}(S)\times \mathcal{G}(T)\rightarrow G$ be given by $\pi(s,t) = \pi_{1}(s) + \pi_{2}(t)$. 
It is easy to check $\pi$ is a homomorphism and $\pi\phi = \theta$.
On the other hand, suppose $\sigma:\mathcal{G}(S)\times \mathcal{G}(T)\rightarrow G$ is a homomorphism with $\sigma\phi = \theta$. 
By the uniqueness of the maps $\pi_{1}$ and $\pi_{2}$, we must have $\sigma(g,0) = \pi_{1}(g)$ and $\sigma(0,h) = \pi_{2}(h)$. 
Thus $\sigma(g,h) = \pi_{1}(g)+\pi_{2}(h) = \pi(g,h)$. 
\end{proof}

Combining Lemmas \ref{cartmoniso} and \ref{cartgrotiso} we see that

\begin{thm}
Let $S,T$ be orthogonally complete inverse semigroups. Then
$$K(S\times T) \cong K(S) \times K(T).$$ 
\end{thm}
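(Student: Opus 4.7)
The plan is to chain together the two preceding lemmas, since the theorem is essentially a corollary of them. Unwinding the definitions, $K(S\times T) = \mathcal{G}(\mathcal{A}(M_{\omega}(S\times T)))$ by the identification $K(R) \cong K(M_{\omega}(R))$ established at the end of Section 4.5, where $\mathcal{A}(M_{\omega}(R))$ denotes the commutative monoid of $\mathcal{D}$-classes of idempotents under the orthogonal join operation. My goal is to show that this group is isomorphic to $\mathcal{G}(\mathcal{A}(M_{\omega}(S))) \times \mathcal{G}(\mathcal{A}(M_{\omega}(T)))$, which by the same identification equals $K(S)\times K(T)$.

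First I would apply Lemma \ref{cartmoniso}, which already gives an isomorphism of commutative monoids
$$\mathcal{A}(M_{\omega}(S\times T)) \cong \mathcal{A}(M_{\omega}(S)) \times \mathcal{A}(M_{\omega}(T)).$$
Next, since the Grothendieck group construction $\mathcal{G}$ is functorial on commutative monoids (its universal property immediately lifts a monoid isomorphism to a group isomorphism), applying $\mathcal{G}$ to both sides yields
$$\mathcal{G}(\mathcal{A}(M_{\omega}(S\times T))) \cong \mathcal{G}(\mathcal{A}(M_{\omega}(S))\times \mathcal{A}(M_{\omega}(T))).$$
Finally, invoking Lemma \ref{cartgrotiso} on the right-hand side gives
$$\mathcal{G}(\mathcal{A}(M_{\omega}(S))\times \mathcal{A}(M_{\omega}(T)))\cong \mathcal{G}(\mathcal{A}(M_{\omega}(S)))\times \mathcal{G}(\mathcal{A}(M_{\omega}(T))) = K(S)\times K(T),$$
completing the argument.

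The proof faces no real obstacle: all the substantive work has already been done in Lemmas \ref{cartmoniso} and \ref{cartgrotiso}. The only verification point is the functoriality of $\mathcal{G}$ on monoid isomorphisms, which is automatic from the universal property (a monoid isomorphism $\phi\colon M_1 \to M_2$ composed with the canonical $M_2 \to \mathcal{G}(M_2)$ factors uniquely through $\mathcal{G}(M_1)$, and symmetrically for $\phi^{-1}$, producing mutually inverse group homomorphisms). Accordingly, the write-up should be quite brief, consisting essentially of the displayed chain of isomorphisms with references to the two lemmas.
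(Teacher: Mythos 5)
Your argument is correct and is exactly the paper's proof: the theorem is stated there as an immediate consequence of combining Lemma \ref{cartmoniso} with Lemma \ref{cartgrotiso}, using the identification $K(S)\cong K(M_{\omega}(S))$ and the (routine) fact that the Grothendieck construction carries monoid isomorphisms to group isomorphisms. No further comment is needed.
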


\section{Commutative inverse semigroups}

It turns out one can say more about commutative orthogonally complete inverse semigroups. Suppose $S$ is such a semigroup. Then $s\perp t$ is equivalent to $s^{-1}st^{-1}t = 0$. It therefore follows that $eS$ is in fact a module for each $e\in E(S)$. 
\begin{comment}
Let $(X,p)$, $(Y,q)$ be modules. Define 
$$X\otimes Y = (X\times Y)/\sim$$
where $\sim$ is the equivalence relation defined by $(x\cdot s,y) \sim (x,y\cdot s)$. Define
$$[x,y]\cdot s = [x,y\cdot s] = [x\cdot s, y]$$
and
$$(p\otimes q)([x,y]) = p(x)q(y).$$

\begin{lemma}
$X\otimes Y$ is a module.
\end{lemma}

\begin{proof}
It is clear that $X\otimes Y$ is a pointed \'{e}tale set. One can easily check that if $[x,y]\perp [u,w]$ with $p(x)=p(y)$ and $p(u)=p(w)$, then $[x,y]\vee [u,w] = [x\vee u, y\vee w]$ and that $[x,y]\cdot s\vee [u,w]\cdot s = [x\vee u, y\vee w]\cdot s$.
\end{proof}

\begin{lemma}
Let $(X,p)$, $(Y,q)$ and $(Z,r)$ be modules. Then
$$X\otimes (Y\oplus Z)\cong (X\otimes Y)\oplus (X\otimes Z).$$
\end{lemma}

\begin{proof}
Define $\phi:X\otimes (Y\oplus Z)\rightarrow (X\otimes Y)\oplus (X\otimes Z)$ by 
$$\phi([x,(y,z)]) = ([x,y],[x,z]).$$
Then $\phi$ is a module isomorphism.
\end{proof}

\begin{lemma}
Let $e,f\in E(S)$ be idempotents in $S$. Then
$$eS\otimes fS \cong efS.$$
\end{lemma}

\begin{proof}
Define $\phi:eS\otimes fS \rightarrow efS$ by $\phi([s,t]) = st$.
\end{proof}
\end{comment}
Let us now consider matrices over such semigroups. For any idempotents $e,f\in E(S)$ we have $e\,\mathcal{D}\, f$ if and only if $e = f$. Thus when considering the $\mathcal{D}$-classes of the idempotents of $M_{\omega}(S)$, we can only slide along the diagonal or combine / split up orthogonal joins, but not swap $\mathcal{D}$-related elements of $S$. It follows that for all idempotent rook matrices $E,F\in E(M_{\omega}(S))$ we have $E\,\mathcal{D}\, F$ in $M_{\omega}(S)$ if and only if $E\,\mathcal{D}\, F$ in $M_{\omega}(E(S))$. Taking joins is independent of the non-idempotent elements and so we have just argued for the following:

\begin{thm}
\label{kidemcomm}
Let $S$ be a commutative orthogonally complete inverse semigroup. Then 
$$K(S) \cong K(E(S)).$$
\end{thm}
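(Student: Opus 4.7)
The plan is to leverage the semigroup presentation of $\mathcal{A}(S)$ described at the end of Section 4.5 together with the fact that the $\mathcal{D}$-relation on $E(S)$ collapses when $S$ is commutative. First I would record this collapse explicitly: if $e,f\in E(S)$ are $\mathcal{D}$-related then there is $s\in S$ with $ss^{-1}=e$ and $s^{-1}s=f$, but in a commutative inverse semigroup $ss^{-1}=s^{-1}s$, so $e=f$. In particular, on $E(S)$ viewed as an orthogonally complete inverse semigroup in its own right, the $\mathcal{D}$-relation is also trivial, so the two inverse semigroups $S$ and $E(S)$ share the same Green's $\mathcal{D}$-structure on idempotents.

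Next I would compare $M_{\omega}(S)$ and $M_{\omega}(E(S))$ at the level of idempotent matrices. Every idempotent of $M_{\omega}(S)$ is a diagonal matrix whose entries lie in $E(S)$, so the sets $E(M_{\omega}(S))$ and $E(M_{\omega}(E(S)))$ are literally equal. The join of two orthogonal idempotent matrices is computed entrywise and again depends only on the join structure of $E(S)$, so the two monoid operations coming from orthogonal joins of idempotent matrices coincide as well. What could in principle differ is the $\mathcal{D}$-relation on these idempotent matrices, since $M_{\omega}(S)$ has many more non-idempotent arrows connecting idempotents than $M_{\omega}(E(S))$ does.

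Here Lemma \ref{dclassmatrix} is the crucial input: it tells us that two idempotent matrices are $\mathcal{D}$-related in $M_{\omega}(S)$ if and only if one can be obtained from the other by a finite sequence of slide, combine/split, and swap moves, and the swap move replaces a diagonal entry $e$ by any $f$ with $e\,\mathcal{D}\,f$. Under our hypothesis the swap move is forced to be the identity, so only slides and combine/split moves remain; exactly the same analysis applies in $M_{\omega}(E(S))$, where swaps are again trivial because idempotents of a semilattice are $\mathcal{D}$-related only to themselves. Consequently the equivalence relations defining $\mathcal{A}(M_{\omega}(S))$ and $\mathcal{A}(M_{\omega}(E(S)))$ coincide, and the resulting commutative monoids are equal; equivalently, the defining presentation of $\mathcal{A}(S)$ given at the end of Section 4.5 reduces, in the commutative case, to exactly the presentation of $\mathcal{A}(E(S))$.

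Passing to Grothendieck groups then yields $K(M_{\omega}(S))\cong K(M_{\omega}(E(S)))$, and an application of the matrix description theorem of Section 4.5 on both sides gives $K(S)\cong K(E(S))$. The only mild technical point, and thus the part that warrants the most care, is making the identification of the two presentations formal: namely verifying that the map $[e]\mapsto [e]$ from $\mathcal{A}(E(S))$ to $\mathcal{A}(S)$ is well-defined, injective, surjective, and monoid-preserving. Well-definedness and the monoid structure follow because combine, split, and slide are the defining relations on both sides; surjectivity follows because every idempotent $\mathcal{D}$-class in $M_{\omega}(S)$ contains a diagonal matrix whose entries are idempotents of $S$; and injectivity is precisely the observation that swaps trivialise in the commutative setting. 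This is the only place where commutativity is used nontrivially, so the argument stands or falls with this step.
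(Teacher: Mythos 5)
Your proposal is correct and follows essentially the same route as the paper: both arguments rest on the observation that commutativity trivialises the $\mathcal{D}$-relation on idempotents (since $ss^{-1}=s^{-1}s$), so that by Lemma \ref{dclassmatrix} only slide and combine/split moves survive, making the $\mathcal{D}$-classes of idempotent matrices in $M_{\omega}(S)$ and $M_{\omega}(E(S))$ coincide while the join structure is unchanged. Your write-up is in fact somewhat more explicit than the paper's about well-definedness, injectivity and surjectivity of the identification of the two monoids, but there is no substantive difference in method.
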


Let $S$ be a commutative orthogonally complete inverse semigroup. We can define a tensor / Kronecker product on $R(S)$. Let $A$ be an $n\times m$ rook matrix and let $B$ be a $p\times q$ rook matrix. Define $A\otimes B$ to be the $np\times mq$ rook matrix
$$A\otimes B = \begin{pmatrix}
  a_{11}B & a_{12}B & \cdots & a_{1m}B\\
  a_{21}B & a_{22}B & \cdots & a_{2m}B\\
  \vdots & \vdots & \ddots & \vdots \\
  a_{n1}B & a_{n2}B & \cdots & a_{nm}B 
 \end{pmatrix}.$$

It is easy to see that the tensor product of matrices over commutative orthogonally complete inverse semigroups satisfies the following properties:

\begin{lemma}
\label{proptens}
Let $A,B,C,D\in R(S)$ be finite dimensional rook matrices. Then
\begin{enumerate}
\item $(A\otimes B)\otimes C = A\otimes (B\otimes C)$.
\item If there exist $AC$ and $BD$ then $(A\otimes B)(C\otimes D) = (AC)\otimes (BD)$.
\item $(A\otimes B)^{-1} = A^{-1}\otimes B^{-1}$.
\end{enumerate}
\end{lemma}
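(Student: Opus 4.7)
The plan is to prove all three items by direct entry-wise calculation, leaning on two features of the setting: the commutativity of $S$, which lets us freely permute factors like $a_{i_1 j_1} b_{i_2 j_2}$; and the distributivity of multiplication over orthogonal joins in $M_{\omega}(S)$ established earlier in this chapter. Before doing (1), (2), (3), I would first verify the unstated but needed fact that $A \otimes B$ really lies in $R(S)$. Two entries in the same row of $A \otimes B$ have the form $a_{i_1 j_1} b_{i_2 j_2}$ and $a_{i_1 k_1} b_{i_2 k_2}$ with $(j_1,j_2) \neq (k_1,k_2)$; using commutativity, their ``inverse then product'' is $(a_{i_1 j_1}^{-1} a_{i_1 k_1})(b_{i_2 j_2}^{-1} b_{i_2 k_2})$, and either $j_1 \neq k_1$ so the first factor is zero, or $j_2 \neq k_2$ so the second is zero. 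The column condition (RM2) is symmetric. This also shows $(A \otimes B)^{-1}$ in the sense of transposing-and-inverting-entries equals $A^{-1} \otimes B^{-1}$ at the level of index data, which will be used in (3).

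For (1), index the rows of $(A \otimes B)\otimes C$ by triples $(i_1,i_2,i_3)$ read in the natural lexicographic way, and similarly for columns. The $((i_1,i_2,i_3),(j_1,j_2,j_3))$-entry of both $(A\otimes B)\otimes C$ and $A\otimes (B\otimes C)$ equals $a_{i_1 j_1} b_{i_2 j_2} c_{i_3 j_3}$, by the definition of $\otimes$ applied twice; the two bracketings agree by associativity of multiplication in $S$. So (1) reduces to checking the two sides have the same dimensions and the same index conventions, which is immediate.

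The main content is in (2), and this is where I expect the only real work. With $A$ an $m\times n$ matrix, $B$ a $p \times q$ matrix, $C$ an $n \times r$ matrix, $D$ a $q \times s$ matrix, the $((i_1,i_2),(j_1,j_2))$-entry of $(A \otimes B)(C \otimes D)$ is
$$\bigvee_{k_1=1}^{n} \bigvee_{k_2 = 1}^{q} (a_{i_1 k_1} b_{i_2 k_2})(c_{k_1 j_1} d_{k_2 j_2}).$$
Here commutativity of $S$ lets me rewrite each summand as $(a_{i_1 k_1} c_{k_1 j_1})(b_{i_2 k_2} d_{k_2 j_2})$, and then distributivity of multiplication over orthogonal joins (a standard property of orthogonally complete inverse semigroups, used repeatedly when defining the product in $R(S)$) lets me factor the double join as
$$\Bigl(\bigvee_{k_1=1}^{n} a_{i_1 k_1} c_{k_1 j_1}\Bigr)\Bigl(\bigvee_{k_2 = 1}^{q} b_{i_2 k_2} d_{k_2 j_2}\Bigr),$$
which is $(AC)_{i_1 j_1}(BD)_{i_2 j_2}$, i.e.\ the corresponding entry of $(AC)\otimes(BD)$. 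The only subtle point is checking that the orthogonal joins really do factor; this follows from the rook matrix conditions on the pairs $A,C$ and $B,D$ together with the fact (used implicitly throughout the chapter) that one may distribute products over pairwise-orthogonal joins in an orthogonally complete inverse semigroup.

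Part (3) is then a short consequence of (2) and the uniqueness of inverses in an inverse semigroup. Setting $E = A^{-1}\otimes B^{-1}$ (which is a rook matrix by the opening observation), item (2) gives $(A \otimes B) E (A \otimes B) = (AA^{-1}A)\otimes(BB^{-1}B) = A \otimes B$ and similarly $E (A\otimes B) E = E$, so $E$ is the unique inverse of $A \otimes B$ in the inverse semigroupoid $R(S)$.
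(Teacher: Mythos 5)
Your proof is correct and follows essentially the same route as the paper, which treats these identities as routine entry-wise verifications in analogy with matrices over (commutative) rings, using commutativity of $S$ together with distributivity over orthogonal joins for item (2) and deducing (3) from (2) via uniqueness of inverses. Your additional check that $A\otimes B$ is itself a rook matrix is a worthwhile detail the paper leaves implicit.
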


\begin{comment}
\begin{proof}
\begin{enumerate}
\item This follows in the same way as for matrices over rings (see reference, to be included). Note that the associativity of $\otimes$ does not depend on the commutativity of $S$.
\item Exactly the same proof for matrices over commutative rings (see reference, to be included), this time depending on the commutativity of $S$.
\item Follows from part (2). Again depends on the commutativity of $S$. 
\end{enumerate}
\end{proof}
\end{comment}

We now deduce the following:

\begin{lemma}
Let $E_{1},E_{2},F_{1},F_{2}\in R(S)$ be idempotent finite dimensional rook matrices with $E_{1}\,\mathcal{D}\, F_{1}$ and $E_{2}\,\mathcal{D}\, F_{2}$. Then
$$E_{1}\otimes E_{2}\,\mathcal{D}\, F_{1}\otimes F_{2}.$$
\end{lemma}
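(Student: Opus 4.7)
The plan is to exhibit an explicit witness for the $\mathcal{D}$-relation between $E_{1}\otimes E_{2}$ and $F_{1}\otimes F_{2}$, namely the tensor product of the witnesses for the given $\mathcal{D}$-relations. Since $E_{1}\,\mathcal{D}\, F_{1}$ in the inverse semigroupoid $R(S)$, there exists $A_{1}\in R(S)$ with $A_{1}A_{1}^{-1}=E_{1}$ and $A_{1}^{-1}A_{1}=F_{1}$; similarly pick $A_{2}$ with $A_{2}A_{2}^{-1}=E_{2}$ and $A_{2}^{-1}A_{2}=F_{2}$. The natural candidate is then $A_{1}\otimes A_{2}$.

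The verification is a direct computation using the three properties collected in Lemma \ref{proptens}. By part (3), $(A_{1}\otimes A_{2})^{-1}=A_{1}^{-1}\otimes A_{2}^{-1}$, and by part (2) (noting that the products $A_{1}A_{1}^{-1}$ and $A_{2}A_{2}^{-1}$ are defined, as are $A_{1}^{-1}A_{1}$ and $A_{2}^{-1}A_{2}$), we compute
\[
(A_{1}\otimes A_{2})(A_{1}\otimes A_{2})^{-1} = (A_{1}\otimes A_{2})(A_{1}^{-1}\otimes A_{2}^{-1}) = (A_{1}A_{1}^{-1})\otimes (A_{2}A_{2}^{-1}) = E_{1}\otimes E_{2},
\]
and symmetrically $(A_{1}\otimes A_{2})^{-1}(A_{1}\otimes A_{2})=F_{1}\otimes F_{2}$. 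This shows $E_{1}\otimes E_{2}\,\mathcal{D}\, F_{1}\otimes F_{2}$.

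There is essentially no obstacle here, since all the substantive work is packaged into Lemma \ref{proptens}. The only minor subtlety worth flagging is that $R(S)$ is only an inverse semigroupoid rather than an inverse semigroup, so one must check the compositions are defined; but the dimensions match by construction (if $A_{i}$ is $n_{i}\times m_{i}$, then $A_{1}\otimes A_{2}$ is $n_{1}n_{2}\times m_{1}m_{2}$ and $(A_{1}\otimes A_{2})^{-1}$ is $m_{1}m_{2}\times n_{1}n_{2}$, so both products exist), and the resulting idempotents indeed sit in the correct local monoids so that the $\mathcal{D}$-relation is meaningful.
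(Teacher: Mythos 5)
Your proof is correct and follows the paper's own argument exactly: take witnesses $A_{1},A_{2}$ for the two $\mathcal{D}$-relations, form $A_{1}\otimes A_{2}$, and apply parts (2) and (3) of the lemma on tensor products to compute $(A_{1}\otimes A_{2})(A_{1}\otimes A_{2})^{-1}=E_{1}\otimes E_{2}$ and $(A_{1}\otimes A_{2})^{-1}(A_{1}\otimes A_{2})=F_{1}\otimes F_{2}$. The remark about checking that the products are defined in the semigroupoid $R(S)$ is a harmless extra precaution the paper leaves implicit.
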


\begin{proof}
Suppose $A_{1},A_{2}\in R(S)$ are such that $A_{i}A_{i}^{-1} = E_{i}$ and $A_{i}^{-1}A_{i} = F_{i}$ and let $B = A_{1}\otimes A_{2}$. Then by Lemma \ref{proptens} (2) and (3) we have $BB^{-1} = E_{1}\otimes E_{2}$ and $B^{-1}B = F_{1}\otimes F_{2}$.
\end{proof}

We can therefore define $E\otimes F$ up to $\mathcal{D}$-class for two idempotent matrices $E,F\in M_{\omega}(S)$ by sliding entries around. 

\begin{lemma}
Let $E,F\in M_{\omega}$ be idempotent matrices. Then
$$E\otimes F\, \mathcal{D}\, F\otimes E.$$
\end{lemma}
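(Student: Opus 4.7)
The plan is to exploit the fact that in a commutative setting the entries appearing on the diagonals of $E \otimes F$ and $F \otimes E$ form the same multiset, and then to invoke the ``sliding'' technology from Section 4.5 to move them into the same configuration.

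First I would reduce to finite-dimensional representatives. Since elements of $M_\omega(S)$ have only finitely many non-zero entries, and the $\mathcal{D}$-class of an idempotent matrix is unchanged when we extend by zero rows and columns, there is no loss in picking an $m$ so that $E$ has diagonal $(e_1,\ldots,e_m)$ and an $n$ so that $F$ has diagonal $(f_1,\ldots,f_n)$ (all off-diagonal entries being zero, by the earlier characterisation of idempotent rook matrices). From the definition of the Kronecker product, $E \otimes F$ is then the $mn \times mn$ diagonal idempotent matrix whose diagonal entries, read in lexicographic order on index pairs $(i,j)$, are $e_i f_j$; similarly $F \otimes E$ is the diagonal idempotent matrix whose diagonal entries, read in lexicographic order on $(j,i)$, are $f_j e_i$. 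Because $S$ is commutative, $e_i f_j = f_j e_i$, so the two diagonals list exactly the same multiset of elements of $E(S)$, differing only by the ``perfect shuffle'' permutation that exchanges the two index orderings.

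Next I would appeal to the three lemmas in Section 4.5 governing $\mathcal{D}$-classes of diagonal idempotent matrices: any two adjacent diagonal entries can be swapped without leaving the $\mathcal{D}$-class. Iterating this sliding move realises any permutation of the diagonal, and in particular the perfect shuffle that converts the $(i,j)$-ordering into the $(j,i)$-ordering. Concretely, I would write down the rook matrix $P \in M_\omega(S)$ whose single non-zero entry in row $\sigma(i,j)$ is $e_i f_j$ placed in column $(i,j)$, where $\sigma$ is this shuffle permutation; a direct calculation, identical in form to the proofs of the slide/split/swap lemmas, verifies $P P^{-1} = F \otimes E$ and $P^{-1} P = E \otimes F$.

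The only mild obstacle is bookkeeping: one has to confirm that $P$ really is a rook matrix (conditions (RM1) and (RM2)) and that the products reproduce the two Kronecker matrices correctly under the chosen indexing. This is straightforward once the shuffle is written down, because all entries are idempotents, commutativity makes $e_i f_j = f_j e_i$, and orthogonality of distinct rows and columns of $P$ follows from the orthogonality of the $e_i$'s and $f_j$'s among themselves, combined with the fact that $P$ has at most one non-zero entry in each row and column by construction. Concluding, $E \otimes F \,\mathcal{D}\, F \otimes E$, as required.
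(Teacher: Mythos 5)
Your proof is correct and follows essentially the same route as the paper: the paper's one-line proof appeals to the discussion before Theorem \ref{kidemcomm}, which is precisely your observation that over a commutative $S$ the diagonals of $E\otimes F$ and $F\otimes E$ are the same multiset of idempotents (since $e_if_j=f_je_i$), so sliding diagonal entries — equivalently, conjugating by the explicit shuffle rook matrix $P$ you describe — exhibits the $\mathcal{D}$-relation. Your write-up merely makes the sliding/permutation step explicit, which is a fair expansion of the paper's terse argument.
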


\begin{proof}
Use the preamble to Theorem \ref{kidemcomm}.
\end{proof}

Thus $(A(S),+,\otimes)$ is a commutative semiring. In fact, it easy to see that if $S$ were required to be the Boolean completion of a $0$-bisimple inverse semigroup instead of being commutive then $(A(S),+,\otimes)$ might be a semiring. If $S$ has an identity, then $(A(S),\otimes)$ becomes a semiring with identity. It follows that $K(S)$ can sometimes inherit the structure of a ring from $A(S)$.

\section{States and traces}

In this section we will define \emph{states} and \emph{traces} for orthogonally complete inverse monoids by analogy to the definitions in $C^{\ast}$-algebra theory (for states, see \cite{Landsman} \S 2.8 and for traces, see \cite{KellendonkPutnam} \S 7). 
%We will hopefully later be able to generalise this for orthogonally complete inverse semigroups.

We will define a \emph{state} on an orthogonally complete inverse monoid $S$ to be a map $\tau: S \rightarrow \mathbb{C}$ that is
\begin{enumerate}
\item \emph{Positive}: $\tau(e)$ is a non-negative real number for all idempotents $e\in E(S)$
\item \emph{Normalised}: $\tau(1) = 1$
\item \emph{Linear}: If $s,t\in S$  are orthogonal elements of $S$ then $\tau(s\vee t) = \tau(s) + \tau(t)$.
\end{enumerate}
A \emph{trace} will be a state $\tau: S \rightarrow \mathbb{C}$ such that $\tau(st) = \tau(ts)$ for all $s,t\in S$.

It is of course possible that a given semigroup $S$ may have no states or traces which can be defined on it. For example, the Cuntz monoid $C_{n}$ will only have traces defined on it if $n = 1$.
\begin{comment}
(Mark: a quick check of the definition shows that the states of a Cuntz monoid are in one-one correspondence with the states of the corresponding Cuntz $C^{\ast}$-algebra)
\end{comment}

Note that the linearity condition on states implies that $\tau(0) = 0$ and that for any trace $\tau:S\rightarrow \mathbb{C}$ if $e\,\mathcal{D}\, f$ are idempotents then $\tau(e) = \tau(f)$ since $\tau(ss^{-1}) = \tau(s^{-1}s)$. We now use this to connect the idea of traces with our notion of a $K$-group for $S$.

\begin{lemma}
Let $S$ be an orthogonally complete inverse monoid and let $\tau:S\rightarrow \mathbb{C}$ be a trace on $S$. Then there is an induced group homomorphism
$$\tilde{\tau}:K(S)\rightarrow \mathbb{R}.$$
\end{lemma}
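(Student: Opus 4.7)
The plan is to build $\tilde{\tau}$ in stages: first extend $\tau$ to a function on the idempotents of $M_\omega(S)$, then show that it factors through the $\mathcal{D}$-relation to give a monoid homomorphism $A(S)\to(\mathbb{R}_{\geq 0},+)$, and finally invoke the universal property of the Grothendieck group to obtain $\tilde{\tau}:K(S)\to\mathbb{R}$.

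First I would define $\hat{\tau}:E(M_\omega(S))\to\mathbb{R}_{\geq 0}$ by
$$\hat{\tau}(\Delta(e_1,\ldots,e_n))=\sum_{i=1}^{n}\tau(e_i),$$
which makes sense since every idempotent of $M_\omega(S)$ is diagonal with only finitely many non-zero entries, and since $\tau$ is positive on $E(S)$. Next I would verify that $\hat{\tau}$ descends to a well-defined function on $A(S)=E(M_\omega(S))/\mathcal{D}$. By Lemma \ref{dclassmatrix}, any two $\mathcal{D}$-related idempotent matrices can be connected by a finite sequence of slide, combine/split, and swap moves, so it suffices to check invariance under each move: slides permute the diagonal entries and so preserve the sum; a combine/split move replaces $e_1,e_2$ (with $e_1\perp e_2$) by $e_1\vee e_2$, and linearity of $\tau$ gives $\tau(e_1\vee e_2)=\tau(e_1)+\tau(e_2)$; a swap replaces some $e_i=ss^{-1}$ by $f_i=s^{-1}s$ for some $s\in S$, and the trace property applied to $u=s$, $v=s^{-1}$ gives $\tau(ss^{-1})=\tau(s^{-1}s)$.

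With well-definedness in hand, I would check that the induced map $\tilde{\tau}:A(S)\to\mathbb{R}_{\geq 0}$ is a monoid homomorphism. This is essentially immediate from the definition of $+$ on $A(S)$: given $[E],[F]\in A(S)$, pick orthogonal representatives $E'\mathcal{D} E$, $F'\mathcal{D} F$ with $E'\perp F'$; then $E'\vee F'$ is again diagonal and its diagonal entries are precisely the orthogonal joins of the (disjointly supported) diagonal entries of $E'$ and $F'$, so linearity of $\tau$ gives $\hat{\tau}(E'\vee F')=\hat{\tau}(E')+\hat{\tau}(F')$, and $\tilde{\tau}([E]+[F])=\tilde{\tau}([E])+\tilde{\tau}([F])$. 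The zero of $A(S)$, namely $[0]$, maps to $\tau(0)=0$.

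Finally, since $(\mathbb{R},+)$ is an abelian group and $\tilde{\tau}:A(S)\to\mathbb{R}_{\geq 0}\subset\mathbb{R}$ is a homomorphism of commutative monoids, the universal property of the Grothendieck group (recalled in Section 1.4) produces a unique group homomorphism $\tilde{\tau}:K(S)=\mathcal{G}(A(S))\to\mathbb{R}$ extending it, which is what was wanted. The only step requiring genuine thought is the well-definedness on $\mathcal{D}$-classes of matrices, and the main obstacle there is recognising that the trace condition $\tau(uv)=\tau(vu)$ on $S$ is precisely what handles the swap moves in Lemma \ref{dclassmatrix}; once that is noticed, the remaining moves are handled by linearity and the whole construction is forced.
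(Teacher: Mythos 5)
Your proposal is correct and follows essentially the same route as the paper: define the map on idempotent matrices by summing $\tau$ over diagonal entries, check invariance on $\mathcal{D}$-classes (the paper compresses your slide/combine/swap analysis into the remark that $\mathcal{D}$-related idempotents of $S$ have equal trace), and lift through the Grothendieck group. Your unpacking of the well-definedness via Lemma \ref{dclassmatrix} is exactly the intended argument, just made explicit.
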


\begin{proof}
Define $\overline{\tau}:A(M_{\omega}(S))\rightarrow\mathbb{R}$ by 
$$\overline{\tau}([E]) = \sum_{i=1}^{\infty}(\tau(e_{ii})).$$
We check that $\tau$ is well-defined by noting the fact that $\mathcal{D}$-related idempotents of $S$ are sent to the same number. It is easy to see that $\overline{\tau}$ is a monoid homomorphism and so this induces a group homomorphism $\tilde{\tau}:K(S)\rightarrow \mathbb{R}$.
\end{proof}

\section{Examples}

We will now calculate $K(S)$ for a number of examples.

\subsection{Symmetric inverse monoids}

We saw earlier (Section 4.2) that if $S$ were the set of bijections on the natural numbers with finite support that $K(S) \cong \mathbb{Z}$.

\par Now let $S = I_{n}$ be the symmetric inverse monoid on a set of size $n$, where $n<\infty$. Then $S$ is an orthogonally complete inverse semigroup. Again for $e,f\in E(S)$ we have $e\,\mathcal{D}\, f$ if and only if $|\supp(e)| = |\supp(f)|$ and if $e,f\in E(S)$ are such that $ef = 0$ then $|\supp(e\vee f)| = |\supp(e)| + |\supp(f)|$. We therefore again have:

$$K(S) \cong \mathbb{Z}.$$

\subsection{Groups with adjoined zero}

Let $G$ be a group and let $S$ be $G$ with a $0$ adjoined. Then $S$ is an inverse $\wedge$-semigroup (in fact it is $E^{\ast}$-unitary) and it is orthogonally complete. We see that $E(S) = \left\{0,1\right\}$ and so $K(S) \cong \mathbb{Z}$. 

\subsection{Boolean algebras}

Suppose $S$ is an arbitrary (possibly infinite) unital Boolean algebra, viewed as an inverse semigroup by defining $ab = a\wedge b$, $B(S)$ is the associated Boolean space and for each element $a\in S$ denote by $\mathcal{V}_{a}$ the set of ultrafilters of $S$ containing $a$. 

We have the following facts which follow from results in \cite{LawsonLongNCSD} and \cite{LawsonLenz}, but we prove here for completeness.

\begin{lemma}
\label{Boollem}
\begin{enumerate}
\item For all $a,b\in S$ we have $\mathcal{V}_{a\vee b} = \mathcal{V}_{a}\cup \mathcal{V}_{b}$.
\item For all $a,b\in S$ we have $\mathcal{V}_{a\wedge b} = \mathcal{V}_{a}\cap \mathcal{V}_{b}$.
\item $\mathcal{V}_{a} = \mathcal{V}_{b}$ implies $a = b$.
\end{enumerate}
\end{lemma}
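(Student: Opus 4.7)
The plan is to tackle each part in turn, using only the standard facts about ultrafilters in a Boolean algebra: a filter is upward closed and closed under finite meets, an ultrafilter is a proper filter with the property that for every $a \in S$ either $a \in F$ or $\neg a \in F$ (and if both then $0 \in F$, which is forbidden), and every non-zero element is contained in some ultrafilter by a Zorn's lemma argument.

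For part (1), I would first handle the containment $\mathcal{V}_{a}\cup \mathcal{V}_{b}\subseteq \mathcal{V}_{a\vee b}$ by upward closure: since $a\leq a\vee b$, any ultrafilter $F$ containing $a$ must also contain $a\vee b$, and symmetrically for $b$. For the reverse containment, suppose $F\in \mathcal{V}_{a\vee b}$ but $a\notin F$ and $b\notin F$; then the ultrafilter property forces $\neg a,\neg b\in F$, and closure under meets gives $\neg a\wedge \neg b = \neg(a\vee b)\in F$. Together with $a\vee b\in F$ this yields $0\in F$, contradicting properness.

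Part (2) is more straightforward and purely filter-theoretic. If $F\in \mathcal{V}_{a\wedge b}$ then $a\wedge b\in F$, and upward closure (using $a\wedge b\leq a$ and $a\wedge b\leq b$) gives $a,b\in F$, so $F\in \mathcal{V}_{a}\cap \mathcal{V}_{b}$. Conversely, if $a,b\in F$ then closure of $F$ under meets gives $a\wedge b\in F$. No appeal to the ultrafilter axiom is needed here.

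Part (3) is the only part requiring real work, and will be the main obstacle since it rests on having enough ultrafilters. The contrapositive is cleaner: assume $a\neq b$ and produce an ultrafilter separating them. Without loss of generality (swapping roles if necessary) $a\not\leq b$, which in the Boolean algebra $S$ is equivalent to $a\wedge \neg b\neq 0$. I would then extend the principal filter generated by the non-zero element $a\wedge \neg b$ to an ultrafilter $F$ using Zorn's lemma in the usual way (the union of a chain of proper filters is a proper filter, and maximal proper filters in a Boolean algebra are ultrafilters). By construction $a\wedge \neg b\in F$, so $a\in F$ by upward closure, while $\neg b\in F$ forces $b\notin F$ since otherwise $0 = b\wedge \neg b\in F$. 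Hence $F\in \mathcal{V}_{a}\setminus \mathcal{V}_{b}$, proving $\mathcal{V}_{a}\neq \mathcal{V}_{b}$. The key point to record is that this is essentially the injectivity half of Stone duality, and the argument relies on the (weak form of the) Boolean prime ideal theorem via Zorn's lemma.
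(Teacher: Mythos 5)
Your proof is correct and follows essentially the same route as the paper: part (2) is identical, part (1) differs only in using the complement dichotomy for ultrafilters where the paper uses maximality via a proper-filter extension, and in part (3) your separating element $a\wedge\neg b$ is exactly the paper's $c=(1\setminus ab)a$, with your argument being the contrapositive of the paper's (you exhibit an ultrafilter through the non-zero $c$, while the paper shows no ultrafilter can contain $c$ when $\mathcal{V}_a=\mathcal{V}_b$). Both versions rest on the same use of the Boolean structure and the existence of ultrafilters through non-zero elements, which you make more explicit than the paper does.
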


\begin{proof}
\begin{enumerate}
\item It is clear that $\mathcal{V}_{a}\cup \mathcal{V}_{b} \subseteq \mathcal{V}_{a\vee b}$, so we just prove the other inclusion. Let $F\in \mathcal{V}_{a\vee b}$. Suppose first that $ac \neq 0$ for all $c\in F$. Then $F\cup \left\{a\right\}$ will generate a proper filter. Since $F$ is an ultrafilter, it follows that $a\in F$. Now suppose $a,b\notin F$. There must be $c,d\in F$ with $ac = 0 = bd$. Since $c,d\in F$, we must have $cd\in F$ and $cd(a\vee b)\in F$. But
$$cd(a\vee b) = cda \vee cdb = 0 \vee 0 = 0,$$
a contradiction. Thus either $a\in F$ or $b\in F$.  
\item Let $F\in \mathcal{V}_{a}\cap \mathcal{V}_{b}$. Then $a,b\in F$ and thus $ab\in F$. On the other hand, if $F \in \mathcal{V}_{a\wedge b}$ then $a,b\in F$ and so $F\in \mathcal{V}_{a}\cap \mathcal{V}_{b}$.
\item Suppose $\mathcal{V}_{a} = \mathcal{V}_{b}$. Then 
$$\mathcal{V}_{a\wedge b} = \mathcal{V}_{a}\cap \mathcal{V}_{b} = \mathcal{V}_{a}.$$
Let $c = (1\setminus ab)a$. Then $abc = 0$ and $ab\vee c = a$. Let $F\in \mathcal{V}_{c}$. Then $c\in F$ implies $a\in F$ and so $F\in \mathcal{V}_{a} = \mathcal{V}_{ab}$. But then $ab\in F$ which implies $0 = abc\in F$, a contradiction.                                
\end{enumerate}
\end{proof}

Since $B(S)$ has the sets $\mathcal{V}_{a}$, $a\in S$, as a basis, for each open set $U$ there exist a collection of elements $a_{i}$, $i\in I$, with $U = \cup_{i\in I}{\mathcal{V}_{a_{i}}}$. Further, $\mathcal{V}_{1} = B(S)$. Let $N(B(S))$ denote the set of continuous functions from $B(S)$ to $\mathbb{N}\cup \left\{0\right\}$. It is easy to see that $N(B(S))$ forms a ring under pointwise multiplication.

For $0\neq a\in S$, define $f_{a}:B(S)\rightarrow \mathbb{N}$ by $f_{a}(x) = 1$ if $x\in \mathcal{V}_{a}$ and $0$ otherwise. Then $f_{a}$ is a continuous function since $\mathcal{V}_{a}$ is open and $B(S)\setminus \mathcal{V}_{a} = V_{1\setminus a}$ is open.

For 
$$\mathbf{a} = (a_{1},a_{2},\ldots,a_{m}),$$
let $\Delta(\mathbf{a}) = E\in M_{\omega}(S)$ be the matrix with entries $e_{ii} = a_{i}$ for $i = 1,\ldots,m$ and $0$ everywhere else.
Define $f_{\mathbf{a}}:B(S)\rightarrow \mathbb{N}$ by $f_{\mathbf{a}} = f_{a_{1}} + \ldots + f_{a_{m}}$. This again will be a continuous function.

Suppose $a_{1},a_{2}\in S$ are such that $a_{1}a_{2} = 0$. Then we have $\mathcal{V}_{a_{1}}\cap \mathcal{V}_{a_{2}} = \emptyset$ and so $f_{(a_{1},a_{2})} = f_{a_{1}\vee a_{2}}$. Thus $\Delta(\mathbf{a})\, \mathcal{D}\, \Delta(\mathbf{b})$ implies $f_{\mathbf{a}} = f_{\mathbf{b}}$. On the other hand Lemma \ref{Boollem} (3) tells us that if $f_{\mathbf{a}} = f_{\mathbf{b}}$ then $\Delta(\mathbf{a})\, \mathcal{D}\, \Delta(\mathbf{b})$.

It follows that we have a well-defined semigroup monomorphism $\theta: A(M_{\omega}(S))\rightarrow N(B(S))$ given by
$$\theta([\Delta(\mathbf{a})]) = f_{\mathbf{a}}.$$ 

Now let $f\in N(B(S))$ be an arbitrary continuous function. Then since $f$ is continuous and $B(S)$ is compact, $\im(f)$ is compact and therefore $|\im(f)|$ is finite. Further, for all $x\in \mathbb{N}\cup\left\{0\right\}$ we have $f^{-1}(x)$ is clopen (and therefore compact) and so $f^{-1}(x) = U_{a}$ for some $a$. Thus $\theta$ is an isomorphism.

Let $Z(B(S))$ denote the set of continuous functions from $B(S)\rightarrow \mathbb{Z}$. It follows from the remarks of the preceding paragraph that

$$K(S) \cong Z(B(S)).$$

Since $S$ is commutative, we know that $A(S)$ will be a semiring. In fact, we see that $\theta(E\otimes F) = \theta(E)\theta(F)$, where $(\theta(E)\theta(F))(x) = \theta(E)(x)\theta(F)(x)$. Thus $K(S)$ has the structure of a ring. 

We can actually view the Boolean algebra $S$ as a ring by defining $+$ to be symmetric difference: 
$$e + f = (e\setminus f) \vee (f\setminus e).$$
In the case where $ef = 0$, $e+f = e\vee f$. It follows from \cite{Magid} that (algebraic) 
$$K_{0}(S) \cong Z(B(S)).$$  

\begin{comment}
We begin first with a lemma which will be used shortly.

\begin{lemma}
\label{setminusopen}
Let $U,V\subseteq X$ be open subsets. Then $V\setminus U$ is also open. 
\end{lemma}

\begin{proof}
Since $\emptyset$ is open, we assume $U\cap V \neq U$ and $U\cap V\neq \emptyset$. Let $e_{i},f_{j}\in S$ be such that
$$U = \bigcup_{i\in I}{\mathcal{V}_{e_{i}}}$$
and
$$V = \bigcup_{j\in J}{\mathcal{V}_{f_{j}}}$$
for some index sets $I,J$. Let 
$$U^{\prime} = \bigcup_{i\in I}{\mathcal{V}_{1\setminus e_{i}}}.$$
Then $U\cap U^{\prime} = \emptyset$ and $U\cup U^{\prime} = X$. Thus $V\setminus U = V\cup U^{\prime}$ and so $V\setminus U$ is open.
\end{proof}
\end{comment}

Now let us consider the topological $K$-theory of the space $X = B(S)$. Let $p:E\rightarrow X$ be a locally-trivial finite-dimensional vector bundle over $\mathbb{C}$. For each $n\in \mathbb{N}\cup\left\{0\right\}$ we define
$$U_{n} = \left\{x\in X| \rank_{E}(x) = n\right\}.$$
Since the function $\rank_{E}:X\rightarrow \mathbb{N}\cup\left\{0\right\}$ is continuous, there are only finitely many $n$ with $U_{n}$ non-zero. Furthermore each $U_{n}$ is a compact open subset of $X$. Thus $U_{n} = \mathcal{V}_{e_{n}}$ for some $e_{n}\in S$. Since $p:E\rightarrow X$ is locally-trivial, for each $x\in U_{n}$ there is an open set $U_{x}$ containing $x$ with 
$$p|_{p^{-1}(U_{x})}:p^{-1}(U_{x})\rightarrow U_{x}$$
vector bundle isomorphic to the trivial bundle $U_{x}\times \mathbb{C}^{n}\rightarrow U_{x}$. Since open sets are unions of compact open sets it follows that we can pick $U_{x}$ to be $\mathcal{V}_{e}$ for some $e\in S$. Lemma \ref{Boollem} then tells us that we may assume that $U_{x} \cap U_{y}$ is either empty or $U_{x} = U_{y}$ for each $x,y\in U_{n}$. It then follows that 
$$p|_{p^{-1}(U_{n})}:p^{-1}(U_{n}) \rightarrow U_{n}$$
is isomorphic to a trivial vector bundle for each $n$. Thus $p:E\rightarrow U$ is isomorphic to the disjoint union of a finite number of trivial vector bundles, so we may assume 
$$E = \coprod_{k=1}^{m}{\mathcal{V}_{e_{k}}\times \mathbb{C}^{n_{k}}}$$
and $p:E\rightarrow X$ is given by $p(x,v) = x$. 

Let $f:X\rightarrow \mathbb{N}\cup \left\{0\right\}$ be an arbitrary continuous function with $\im(f)=\left\{x_{1},\ldots,x_{m}\right\}$ and let $a_{1},\ldots,a_{m}\in S$ be such that $f^{-1}(x_{k}) = \mathcal{V}_{a_{k}}$. Then define a vector bundle on $B(S)$ by 
$$E_{f} = \coprod_{k=1}^{m}{(\mathcal{V}_{a_{k}}\times \mathbb{C}^{x_{k}})}.$$

It is now not hard to see that such vector bundles are in $1-1$ correspondence with continuous functions on $X$.
Further, the vector bundle associated to $f + g$ will be isomorphic to $E_{f}\oplus E_{g}$. Thus
$$K^{0}_{\mathbb{C}}(X)\cong K(S).$$      

\subsection{Cuntz-Krieger semigroups}

We will now compute the $K$-groups of the Boolean completions of graph inverse semigroups whose underlying graph is finite.
Before going further let us recall the Lenz arrow relation $\rightarrow$. Let $S$ be an inverse $\wedge$-semigroup with $0$ and let $s,s^{\prime}\in S$. We will write $s\rightarrow s^{\prime}$ if for all non-zero $t\leq s$ we have $t\wedge s^{\prime} \neq 0$. If $s,s_{1},\ldots,s_{m}$ are elements of $S$ then we will write
$$s\rightarrow \left\{s_{1},\ldots,s_{m}\right\}$$
if for every non-zero $t\leq s$ we have $t\wedge s_{i} \neq 0$ for some $1\leq i\leq m$.
We write $\left\{s_{1},\ldots,s_{m}\right\} \rightarrow \left\{t_{1},\ldots,t_{n}\right\}$ if $s_{i}\rightarrow \left\{t_{1},\ldots,t_{n}\right\}$ for each $1\leq i\leq m$. We say $s\leftrightarrow t$ if $s\rightarrow t$ and $t\rightarrow s$, and 
$$\left\{s_{1},\ldots,s_{m}\right\} \leftrightarrow \left\{t_{1},\ldots,t_{n}\right\}$$
if $\left\{s_{1},\ldots,s_{m}\right\} \rightarrow \left\{t_{1},\ldots,t_{n}\right\}$ and $\left\{t_{1},\ldots,t_{n}\right\}\rightarrow \left\{s_{1},\ldots,s_{m}\right\}$.

Let $\mathcal{G}$ be a finite directed graph and $P_{\mathcal{G}}$ be the associated graph inverse semigroup (see Section 3.9 for the construction).
% We can generalise the results of the previous subsection to certain inverse semigroups defined on graphs.
We will denote the orthogonal completion of $P_{\mathcal{G}}$ by $D_{\mathcal{G}}$. Elements of $D_{\mathcal{G}}$ are of the form $A^{0}$ where $A$ is a finite, possibly empty, set of mutually orthogonal non-zero elements of $P_{\mathcal{G}}$ and $A^{0}$ is $A\cup \left\{0\right\}$. 
Under elementwise multiplication $D_{\mathcal{G}}$ forms an orthogonally complete inverse semigroup (for details see \cite{LawOrthCompPoly}). 
An element $A^{0}\in D_{\mathcal{G}}$ is idempotent if and only if every element of $A$ is an idemptotent in $P_{\mathcal{G}}$.

We can define a congruence on $D_{\mathcal{G}}$ by $A^{0}\equiv B^{0}$ iff $A\leftrightarrow B$ as sets of elements of $P_{\mathcal{G}}$ (recall that graph inverse semigroups are $E^{\ast}$-unitary and are therefore examples of inverse $\wedge$-semigroups and so we can consider $\rightarrow$ on $P_{\mathcal{G}}$). We denote $D_{\mathcal{G}}/\equiv$ by $CK_{\mathcal{G}}$ and call it the \emph{Cuntz-Krieger semigroup} of $\mathcal{G}$. 
These semigroups are studied in detail in \cite{JonesLawsonGraph}, as a generalisation of the Cuntz monoids introduced in \cite{LawPolyThomp}. 
It was shown in \cite{JonesLawsonGraph} that $CK_{\mathcal{G}}$ is a Boolean inverse monoid and therefore in particular an orthogonally complete inverse semigroup. 
Denote elements of $CK_{\mathcal{G}}$ by $[A^{0}]$ where $A^{0}\in D_{\mathcal{G}}$.
Clearly if $A^{0}\in D_{\mathcal{G}}$ is an idempotent then $[A^{0}]$ is an idempotent in $CK_{\mathcal{G}}$.
It was shown in \cite{JonesLawsonGraph} that $\equiv$ is an idempotent pure congruence. In fact:

\begin{lemma}
If $[A^{0}]$ is an idempotent element of $CK_{\mathcal{G}}$ then $A^{0}$ is an idempotent in $D_{\mathcal{G}}$
\end{lemma}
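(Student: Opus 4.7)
The plan is to invoke the idempotent pure property of the congruence $\equiv$, which has been stated just before this lemma. Recall that a congruence $\rho$ on an inverse semigroup is \emph{idempotent pure} when the $\rho$-class of any idempotent contains only idempotents; equivalently, if some element is $\rho$-related to an idempotent, then it itself is an idempotent. So the strategy reduces to exhibiting, for a given $A^{0}\in D_{\mathcal{G}}$ with $[A^{0}]$ idempotent in $CK_{\mathcal{G}}$, some genuine idempotent $E\in D_{\mathcal{G}}$ satisfying $E\equiv A^{0}$.

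The obvious candidate is $E = A^{0}(A^{0})^{-1}$ (or equally $(A^{0})^{-1}A^{0}$). Since $D_{\mathcal{G}}$ is an inverse semigroup, $E$ is automatically idempotent irrespective of whether $A^{0}$ is. Because $\equiv$ is a congruence and the quotient map $D_{\mathcal{G}}\to CK_{\mathcal{G}}$ is a homomorphism, we have $[E] = [A^{0}][A^{0}]^{-1}$ in $CK_{\mathcal{G}}$. The assumption that $[A^{0}]$ is idempotent forces $[A^{0}]^{-1} = [A^{0}]$, and then $[E] = [A^{0}][A^{0}] = [A^{0}]$. Therefore $E\equiv A^{0}$, as required.

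With this identification in hand, the idempotent pure property of $\equiv$ applied to the pair $(A^{0},E)$ yields that $A^{0}$ is an idempotent in $D_{\mathcal{G}}$, which is precisely the conclusion of the lemma. The only point at which one must be a little careful is ensuring that all the manipulations happen on the nose in $D_{\mathcal{G}}$ (so that $E$ really is an idempotent there) rather than only modulo $\equiv$; this is automatic because $D_{\mathcal{G}}$ is already an inverse semigroup, so $A^{0}(A^{0})^{-1}\in E(D_{\mathcal{G}})$ by a standard semigroup identity. The main step, and the only genuine content, is the earlier assertion that $\equiv$ is idempotent pure (which was quoted from \cite{JonesLawsonGraph}); given that, the lemma is a short formal consequence.
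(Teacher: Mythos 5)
Your proof is correct, and it takes a genuinely different route from the paper. The paper argues combinatorially: writing $A^{0}=\{[x_{1},y_{1}],\ldots,[x_{m},y_{m}]\}^{0}$, it analyses the non-zero products $[x_{i},y_{i}][x_{j},y_{j}]$ via the prefix structure of $\mathcal{G}^{\ast}$ and the orthogonality of the entries (using the earlier lemma that in $S(M)$ a non-zero meet forces comparability) to conclude that $(A^{0})^{2}$ consists entirely of idempotents of $P_{\mathcal{G}}$, and only then invokes idempotent purity of $\equiv$ to transfer this back to $A^{0}$. You bypass all of that by observing the general inverse-semigroup fact that idempotents of a quotient lift: since $[A^{0}]$ is idempotent, $[A^{0}]^{-1}=[A^{0}]$, so $[A^{0}(A^{0})^{-1}]=[A^{0}][A^{0}]^{-1}=[A^{0}]$, exhibiting the genuine idempotent $A^{0}(A^{0})^{-1}\in E(D_{\mathcal{G}})$ in the $\equiv$-class of $A^{0}$; idempotent purity then finishes. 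Both arguments lean on the same quoted input (purity of $\equiv$ from the Jones--Lawson paper), but yours shows the lemma is a purely formal consequence of it, valid for any idempotent pure congruence on any inverse semigroup, whereas the paper's computation additionally reveals the explicit form $\{[z_{1},z_{1}],\ldots,[z_{n},z_{n}]\}^{0}$ of the square, structural information about $D_{\mathcal{G}}$ that your argument does not produce (though it is not needed for the statement). One stylistic caveat: your phrase "$[A^{0}]^{-1}$" presumes the quotient map preserves inverses, which is automatic for a congruence on an inverse semigroup but worth saying explicitly, as you do implicitly when noting $CK_{\mathcal{G}}$ is itself inverse.
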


\begin{proof}
Let $s = \left\{[x_{1},y_{1}],\ldots,[x_{m},y_{m}]\right\}^{0} \in D_{\mathcal{G}}$, $m\geq 1$, and suppose $s^{2} \equiv s$ in $D_{\mathcal{G}}$.
Since $m\geq 1$ we must have $[x_{i},y_{i}][x_{j},y_{j}]\neq 0$ for some $1\leq i,j\leq m$. We have two cases: either $x_{j}$ is a prefix of $y_{i}$ or $y_{i}$ is a prefix of $x_{j}$. First suppose $x_{j} = y_{i}p$ for some element $p\in \mathcal{G}^{\ast}$. Then
$$[x_{i},y_{i}][x_{j},y_{j}] = [x_{i}p,y_{j}]$$
Since $s^{2}\equiv s$ we must have $[x_{i}p,y_{j}]\wedge [x_{k},y_{k}]$ for some $1\leq k\leq m$. By Lemma \ref{curlemasssem} and the fact that elements of $s$ are orthogonal we have $y_{j} = y_{k}$, $x_{i} = x_{k}$ and $p$ is empty. Thus $x_{i} = y_{i} = x_{j} = y_{j}$. 
A similar argument shows that if $x_{j}p = y_{i}$ then we again have $x_{i} = y_{i} = x_{j} = y_{j}$.
It follows that 
$$s^{2} = \left\{[z_{1},z_{1}],\ldots,[z_{n},z_{n}]\right\}^{0}$$ 
for some $z_{i}$'s in $\mathcal{G}^{\ast}$. Since $\leftrightarrow$ is idempotent pure, we must have
$$s = \left\{[z_{1},z_{1}],\ldots,[z_{n},z_{n}]\right\}^{0}.$$
\end{proof}

A couple of remarks:

\begin{remark}
\label{usefulremarksckis}
\begin{enumerate}
\item We have 
$$\left\{[x_{1},x_{1}],\ldots,[x_{n},x_{n}]\right\}^{0}\, \mathcal{D}\, \left\{[y_{1},y_{1}],\ldots,[y_{n},y_{n}]\right\}^{0}$$
in $D_{\mathcal{G}}$ if $\dom(x_{i}) = \dom(y_{i})$ for each $i$, and up to re-ordering of elements this describes $\mathcal{D}$ completely for idempotent elements of $D_{\mathcal{G}}$.
\item If $y$ is a route in $\mathcal{G}$ and $x_{1},\ldots,x_{n}$ are all the edges of $\mathcal{G}_{1}$ with $\ran(x_{i}) = \dom(y)$ then
$$\left\{[yx_{1},yx_{1}],\ldots,[yx_{n},yx_{n}]\right\} \leftrightarrow \left\{[y,y]\right\}$$
in $P_{\mathcal{G}}$. In fact, $\equiv$ on $E(CK_{\mathcal{G}})$ is the equivalence relation generated by
$$\left\{[y_{1}x_{1},y_{1}x_{1}],\ldots,[y_{1}x_{n},y_{1}x_{n}],[y_{2},y_{2}],\ldots,[y_{m},y_{m}]\right\}^{0} \equiv \left\{[y_{1},y_{1}],\ldots,[y_{m},y_{m}]\right\}^{0},$$
where $[y_{1},y_{1}],\ldots,[y_{m},y_{m}]$ are mutually orthogonal elements of $E(P_{\mathcal{G}})$ and \\
$x_{1},\ldots,x_{n}\in \mathcal{G}_{1}$ are all the edges with $\ran(x_{i}) = \dom(y_{1})$.
\end{enumerate}
\end{remark}

Since $[x,x]\,\mathcal{D}\,[\dom(x),\dom(x)]$ in $P_{\mathcal{G}}$ for every $x\in \mathcal{G}^{\ast}$ and since 
$$\left\{[x_{1},x_{1}],\ldots,[x_{n},x_{n}]\right\}^{0} = \bigvee_{i=1}^{n}{\left\{[x_{i},x_{i}]\right\}^{0}}$$
in $CK_{\mathcal{G}}$ it follows that the group $K(CK_{\mathcal{G}})$ can be generated by the elements $\left\{[a,a]\right\}^{0}$ where $a\in \mathcal{G}_{0}$. 
For brevity we will denote the element $[\left\{[a,a]\right\}^{0}]$ in $K(CK_{\mathcal{G}})$ by $a$.
Remark \ref{usefulremarksckis} (2) tells us that
$$\left\{[a,a]\right\}^{0} = \bigvee_{\stackrel{x\in \mathcal{G}_{1}}{\ran(x) = a}}{\left\{[x,x]\right\}^{0}}$$
in $CK_{\mathcal{G}}$. By splitting up this join in $M_{\omega}(CK_{\mathcal{G}})$ and replacing $[x,x]$ by $[\dom(x),\dom(x)]$ using the $\mathcal{D}$-relation for $P_{\mathcal{G}}$ we obtain the relation
$$a = \sum_{\stackrel{x\in \mathcal{G}_{1}}{\ran(x) = a}}{\dom(x)}$$
in $K(CK_{\mathcal{G}})$. 
More generally, consider the relation
$$\left\{[y_{1}x_{1},y_{1}x_{1}],\ldots,[y_{1}x_{n},y_{1}x_{n}],[y_{2},y_{2}],\ldots,[y_{m},y_{m}]\right\}^{0} \equiv \left\{[y_{1},y_{1}],\ldots,[y_{m},y_{m}]\right\}^{0}$$
in $D_{\mathcal{G}}$ where $[y_{1},y_{1}],\ldots,[y_{m},y_{m}]$ are mutually orthogonal elements of $E(P_{\mathcal{G}})$ and $x_{1},\ldots,x_{n}\in \mathcal{G}_{1}$ are all the edges with $\ran(x_{i}) = \dom(y_{1})$. Then splitting up the joins in $M_{\omega}(CK_{\mathcal{G}})$ and replacing $[x,x]$ by $[\dom(x),\dom(x)]$ for each route $x\in \mathcal{G}^{\ast}$ using the $\mathcal{D}$-relation for $P_{\mathcal{G}}$ gives the relation 
$$\sum_{i=1}^{n}{\dom(x_{i})} + \sum_{j=2}^{m}{\dom(y_{j})} = \sum_{j=1}^{m}{\dom(y_{j})}$$
in $K(CK_{\mathcal{G}})$. Since $K(CK_{\mathcal{G}})$ is cancellative this gives
$$\sum_{i=1}^{n}{\dom(x_{i})} = \dom(y_{1}),$$
which we knew already.
% Using Remark \ref{usefulremarksckis} we see this relation generates all other relations in $K(CK_{\mathcal{G}})$.
Thus 
$$K(CK_{\mathcal{G}}) \cong FCG(\mathcal{G}_{0}) / N$$
where $FCG$ denotes taking the free commutative group (written additively and with $0$ as identity) and $N$ is the normal subgroup generated by the relations
$$a = \sum_{\stackrel{x\in \mathcal{G}_{1}}{\ran(x) = a}}{\dom(x)}.$$

\begin{comment}
Using these facts one can show that $K(CK_{\mathcal{G}})$ is the free commutative group (written additively with identity $0$) on $\mathcal{G}_{0}$ modulo the relations 
$$a = \sum_{\ran(x) = a}{\dom(x)}$$
for each $a\in \mathcal{G}_{0}$. 
\end{comment}

This agrees with $K^{0}(\mathcal{O}_{\mathcal{G}})$ for $\mathcal{O}_{\mathcal{G}}$ the Cuntz-Krieger algebra on the graph $\mathcal{G}$ (see e.g. Remark 4.6 of \cite{EphremSpielberg}).

For example, suppose $\mathcal{G}$ is a graph with a single vertex and $n$ edges where $n\geq 2$. Then $P_{\mathcal{G}}$ is simply the polycyclic monoid on $n$ generators and $CK_{\mathcal{G}} \cong C_{n}$, the Cuntz monoid on n generators. In this case $K(CK_{\mathcal{G}})$ will be generated by a single element $a$ (corresponding to the one vertex) and subject to the relation
$$a = \sum_{\stackrel{x\in \mathcal{G}_{1}}{\ran(x) = a}}{\dom(x)} = \sum_{i=1}^{n}{a}$$
and so
$$K(CK_{\mathcal{G}}) \cong \langle a | a = a^{n} \rangle \cong \mathbb{Z}_{n-1},$$
which agrees with $K^{0}(\mathcal{O}_{n})$ (see e.g. Example V.I.3.4 of \cite{BlackadarOA}).
As $C_{n}$ is the Boolean completion of a 0-bisimple inverse semigroup the natural ring structure of $\mathbb{Z}_{n-1}$ arises because of the natural semiring structure on $A(C_{n})$ (since the tensor product described in Section 4.7 makes sense). 

\chapter{Discussion and Further Directions}

We have seen in this thesis that self-similar group actions and left Rees monoids appear in a number of different places, with the underlying theme being self-similarity. This self-similarity can be seen in the similarity transformations of attractors of iterated function systems, recursion in automata and in the normal form of HNN-extensions. One might hope that it may be possible to describe further ideas from self-similar group actions and fundamental groups of graphs of groups in terms of the structure of some underlying semigroups.
It seems that although the fractals which appear in this theory can be geometrically very different that at least some properties of certain classes of fractals will be incorporated in the associated Rees monoid. One may also be able to study the representation theory of the inverse semigroups associated to left Rees monoids in a similar manner to the representation theory of polycyclic monoids.

In Chapter 3 we saw that left Rees categories have a number of different characterisations. It was indicated in Section 3.8 that there exist connections with the representation theory of algebras. The author believes that there may be some fruitful future work in pursuing this further. The automata in Section 3.6 are similar to ones appearing in theoretical computer science. It may therefore be possible to apply ideas about left Rees categories to understand ideas there better. The theory of graph iterated function systems is not as well-developed as that for iterated function systems and so it may be discovered in the future that Rees categories have a r\^{o}le to play in this area.

In Chapter 4 I gave a possible definition of a $K$-group of an orthogonally complete inverse semigroup $S$, by analogy with algebraic $K$-theory. This definition was given in terms of an appropriate notion of projective modules and in terms of idempotent matrices over $S$, and these definitions were shown to be equivalent. It was found that for several examples that the group one calculates is isomorphic to the $K_{0}$-group of an associated $C^{\ast}$-algebra. The next step would be to characterise the classes of semigroups for which this is true. It may also be possible to prove a result along the lines of 
$$K_{0}(D(S)\otimes K)\cong K_{0}(C(S))$$
where $S$ is a particular kind of inverse semigroup with $0$ (for example, strongly $E^{\ast}$-unitary or $F$-inverse), $D(S)$ is its distributive completion, $K$ is a semigroup analogue of operators on a compact space, $\otimes$ is some form of tensor product of inverse semigroups and $C(S)$ is some form of $C^{\ast}$-algebra constructed from $S$ via $D(S)$.

One motivation for the theory that has been developed comes from tilings. Given a tiling, one can define a tiling semigroup $S$ and from that a tiling $C^{\ast}$-algebra $C(S)$. These $C^{\ast}$-algebras are used to model observables in certain quantum systems (\cite{KellendonkInteger}). It was proposed by Bellissard (\cite{Bellissard}) that one can use trace functions defined on $C(S)$ and thus also on $K_{0}(C(S))$ as part of a gap-labelling theory, giving information about quantum mechanics on certain tilings appearing in solid state physics. Tiling semigroups are an example of semigroups to which one should be able to apply the above the theory. It was shown in Section 4.8 that one can define trace functions on orthogonally complete inverse semigroups and by extension on their $K$-groups, and this suggests that one might be able to describe this gap-labelling theory in terms of inverse semigroups.

We saw in Section 4.4 that these $K$-groups could be defined in terms of modules. It was found that the category of modules $\Mod_{S}$ over an orthogonally complete inverse semigroup $S$ is in fact a cocomplete concrete category and so I believe one should be able to study the representation theory of such inverse semigroups via this category, and by extension the representation theory of the corresponding $C^{\ast}$-algebras. In addition, it might be possible to make use of the fact that right ideals of orthogonally complete inverse semigroups are premodules in this representation theory. 

Lawson and I are in the process of studying more about the rook matrices and their properties. This may yield additional insight into how to take this theory further. In particular, it may be possible to define higher $K$-groups for inverse semigroups in terms of these matrices.

In the introduction it was mentioned that Morita equivalence has recently been found to work in a nice way for inverse semigroups, and that the different definitions one might want to use to describe Morita equivalence are in fact equivalent (\cite{FunkLawStein}). One might hope to relate $K(S)$ to morita equivalence, in particular by studying the underlying inductive groupoid of the inverse semigroup as in Section 4.3.

\appendix

\chapter{Scala Implementation}

It is possible to describe in Scala a self-similar action, which allows one to easily perform calculations with the associated left Rees monoid. What follows is an implementation of the similarity monoid of the Sierpinski gasket. The procedures restrictionGX, actionGX and productXGYH will be the same whatever the left Rees monoid is.

\begin{verbatim}
def genAction(x:String, g:String) = (x,g) match {
    case ("L","s") => ("R", "s")
    case ("R","s") => ("L", "s")
    case ("T","s") => ("T", "s")
    case ("L","r") => ("T", "r")
    case ("R","r") => ("L", "r")
    case ("T","r") => ("R", "r")
}
\end{verbatim}

\begin{verbatim}
def restrictionGX(x:String,g:String):String=(x,g) match{
    case ("","") => ""
    case ("", h) => h
    case (y, "") => ""
    case (y,h) if (y+h).size == 2 => genAction(y,h)._2
    case (y,h) => restrictionGX(y.tail,restrictionGX(
        actionGX(Character.toString(y.head),
        Character.toString(h.last)),h.init)
        + restrictionGX(Character.toString(y.head),
        Character.toString(h.last)))
}
\end{verbatim}

\begin{verbatim}

def actionGX(x:String, g:String): String = (x,g) match {
    case ("","") => ""
    case ("", h) => ""
    case (y, "") => y
    case (y,h) if (y+h).size == 2 => genAction(y,h)._1
    case (y,h) => actionGX(actionGX(Character.toString(
        y.head),Character.toString(h.last)),h.init) 
        + actionGX(actionGX(y.tail,restrictionGX(
        Character.toString(y.head), 
        Character.toString(h.last))),restrictionGX(
        actionGX(Character.toString(y.head),
        Character.toString(h.last)),h.init)) 

} 

\end{verbatim}

\begin{verbatim}
def productXGYH(x:String,g:String,y:String,h:String): 
    (String,String) = {
    (x + actionGX(y,g), restrictionGX(y,g) + h)
}

println(productXGYH("L","rsr","RTL","s"))
\end{verbatim}

After this program has been run, the output is (LLTR,rsrs)

\bibliographystyle{plain}

\end{document}